\documentclass[11pt]{amsart}
\usepackage{dynkin-diagrams}
 \usepackage{tikz}

\usepackage{listings}[2013/08/05]

\usepackage{bigints}

\usepackage{multirow}

\definecolor{darkgreen}{rgb}{.2,.6,.2}
\definecolor{MyDarkBlue}{rgb}{0.1,0,0.55}

\lstdefinelanguage{GAP}{%
  morekeywords={%
    Assert,Info,IsBound,QUIT,%
    TryNextMethod,Unbind,and,break,%
    continue,do,elif,%
    else,end,false,fi,for,%
    function,if,in,local,%
    mod,not,od,or,%
    quit,rec,repeat,return,%
    then,true,until,while%
  },%
  sensitive,%
  morecomment=[l]\#,%
  morestring=[b]",%
  morestring=[b]',%
}[keywords,comments,strings]

\usepackage[T1]{fontenc}
\usepackage[variablett]{lmodern}
\usepackage{xcolor}
\lstset{
  basicstyle=\ttfamily,
  keywordstyle=\color{red},
  stringstyle=\color{blue},
  commentstyle=\color{green!70!black},
  columns=fullflexible,
}

\usepackage{lmodern}

\usepackage{color}
\usepackage[all,cmtip,color,matrix,arrow]{xy}

\textheight=620pt
\textwidth=360pt


\usepackage[new]{old-arrows}
\usepackage{geometry}
\geometry{margin=95pt}

\usepackage{graphicx}
\usepackage{lipsum}
\newsavebox{\myimage}

\usepackage{adjustbox}

\usepackage{hyperref}

\newcommand{\sk}{\smallskip}
\newcommand{\mk}{\medskip}
\newcommand{\bk}{\bigskip}

\usepackage{shuffle}

\usepackage{bigints}

\usepackage{tikz}
\makeatletter
\newcommand{\xleftrightarrow}[2][]{\ext@arrow 3359\leftrightarrowfill@{#1}{#2}}
\makeatother

\newcommand{\xdasharrow}[2][->]{
\tikz[baseline=-\the\dimexpr\fontdimen22\textfont2\relax]{
\node[anchor=south,font=\scriptsize, inner ysep=1.5pt,outer xsep=2.2pt](x){#2};
\draw[shorten <=3.4pt,shorten >=3.4pt,dashed,#1](x.south west)--(x.south east);
}
}

\usepackage{amsmath}
\usepackage{amssymb}
\usepackage{amsthm}
\usepackage{latexsym}
\usepackage{pstricks}
\usepackage{amsbsy}
\usepackage{amscd}
\usepackage{mathrsfs}
\usepackage{tipa}
\usepackage{txfonts}
\usepackage{amsfonts}
\usepackage{graphicx}
\usepackage{tabularx}
\usepackage{listings}
\usepackage{tikz}
\usepackage{hyperref}

\usepackage{scalerel,stackengine}
\stackMath
\newcommand\reallywidehat[1]{%
\savestack{\tmpbox}{\stretchto{%
  \scaleto{%
    \scalerel*[\widthof{\ensuremath{#1}}]{\kern-.6pt\bigwedge\kern-.6pt}%
    {\rule[-\textheight/2]{1ex}{\textheight}}
  }{\textheight}%
}{0.5ex}}%
\stackon[1pt]{#1}{\tmpbox}%
}
\parskip 1ex

\newtheorem{thm}{Theorem}[section]
\newtheorem{cor}[thm]{Corollary}
\newtheorem{lem}[thm]{Lemma}

\newtheorem{exm}{Example}

\newtheorem{prop}[thm]{Proposition}

\newtheorem{defn}[thm]{Definition}
\newtheorem{rem}[thm]{Remark}
\newtheorem{defn-prop}[thm]{Definition-Proposition}

\newtheorem{question}[thm]{Question}

\usepackage{graphicx,epsfig,amscd,graphics,xypic}

 \newcommand{\eq}[1][r]
   {\ar@<-3pt>@{-}[#1]
    \ar@<-1pt>@{}[#1]|<{}="gauche"
    \ar@<+0pt>@{}[#1]|-{}="milieu"
    \ar@<+1pt>@{}[#1]|>{}="droite"
    \ar@/^2pt/@{-}"gauche";"milieu"
    \ar@/_2pt/@{-}"milieu";"droite"}

  \newcommand{\incl}[1][r]
  {\ar@<-0.2pc>@{^(-}[#1] \ar@<+0.2pc>@{-}[#1]}

 \newcommand{\WdPq}{\boldsymbol{\mathcal W}_{{\bf dP}_4}}
 
 \newcommand{\WdPqq}{\boldsymbol{\mathcal W}_{ \hspace{-0.07cm}{\rm dP}_4}}

\author[L. Pirio]{\href{mailto:luc.pirio@uvsq.fr}{Luc Pirio}\textsuperscript{$\dagger$}}
\thanks{${}^{}$\hspace{-0.4cm}\textsuperscript{$\dagger$}
\href{https://lmv.math.cnrs.fr}{Laboratoire de Math\'ematiques de Versailles}, Univ.\,Paris-Saclay \& CNRS (UMR 8100), 78000 Versailles, France.}


\newcommand{\mc}{\mathfrak{c}}

\newcommand{\cK}{\mathcal{K}}

\def\ra{\rightarrow}



\title[The 10-web by conics on the quartic del Pezzo surface]
 {On the 10-web by conics on the quartic del Pezzo surface}

\hyphenation{ap-pro-xi-ma-tion}

\begin{document}

\maketitle

\begin{abstract}
We study and compare the webs $\boldsymbol{\mathcal W}_{{\rm dP}_d}$ defined by the conic fibrations on a 
smooth  
del Pezzo surface ${\rm dP}_d$ of degree $d$ for $d=4$ and $d=5$. 
 In a previous paper,  we proved that for any positive $d\leq 6$, the web 
$\boldsymbol{\mathcal W}_{{\rm dP}_d}$ 
 carries a particular abelian relation 
${\bf HLog}_d$, whose components all are weight $7-d$ antisymmetric hyperlogarithms.  
The web $\boldsymbol{\mathcal W}_{{\rm dP}_5}$ is a geometric model of the exceptional Bol's web and the relation ${\bf HLog}_5$ corresponds to the famous `Abel's identity' $(\boldsymbol{{\mathcal A}b})$ of the dilogarithm. 
Bol's web together with 
 $(\boldsymbol{{\mathcal A}b})$
enjoy several remarkable properties of different kinds.
We show that almost all of them admit natural generalizations
to the pair $\big( \boldsymbol{\mathcal W}_{{\rm dP}_4}, {\bf HLog}_4\big)$. 
\end{abstract}
\bk 
\mk 

In the whole paper, we work over the field $\mathbf C$ of complex numbers, in the analytic or algebraic category depending on the context.
We will often work with the affine coordinates $x,y$ on the complex projective plane $\mathbf P^2$ 
which correspond to the embedding $\mathbf C^2\hookrightarrow \mathbf P^2,\, (x,y)\mapsto [x:y:1]$.

\section{\bf Introduction}
\label{S:Introduction}
`{\it Cauchy's identity}' 
\begin{equation}
\label{Eq:EFA-3-terms-Log}
{\rm Log}(x)+{\rm Log}(y)-{\rm Log}(xy) = 0
\end{equation}
satisfied by the logarithm 
${\rm Log}(\cdot)$
is certainly one of the most important functional identities in mathematics. It admits a `weight 2 ' generalization, the so-called `{\it Abel's}' or `{\it five-term identity}'
$$
\boldsymbol{\big(\mathcal Ab\big)}
\hspace{3cm}
R(x)-R(y)-R\left(\frac{x}{y}\right)-R\left(\frac{1-y}{1-x}\right)
+R\left(\frac{x(1-y)}{y(1-x)}\right)=0\,
\hspace{3cm} {}^{}
$$
which is satisfied for  any $(x,y)\in \mathbf R^2$ such that $0<x<y<1$, by \href{https://mathworld.wolfram.com/RogersL-Function.html}{\it Rogers' dilogarithm} that is the function defined by 
$R(x)= {\bf L}{\rm i}_2(x) + \frac{1}{2}{\rm Log}(x){\rm Log}(1 - x) - 
{\pi^2}/{6}$
for $x\in ]0,1[$.\footnote{Here ${\bf L}{\rm i}_2$ stands for the classical bilogarithm, defined as the sum of the series  ${\bf L}{\rm i}_2(z)=\sum_{n=1}^{+\infty} z^n/n^2$ for $\lvert z \lvert <1$.}
Abel's dilogarithmic identity 
$\boldsymbol{\big(\mathcal Ab\big)}$ appears in several domains of mathematics hence connecting them.\footnote{To cite a few:  hyperbolic geometry, $K$-theory of number fields, web geometry, theory of cluster algebras, mirror symmetry of log CY manifolds,  scattering amplitudes, etc.}  

In \cite{CP}\footnote{See also the longer preliminary version \cite{Hlog}.}, 
we described a series of hyperlogarithmic functional identities ${\bf HLog}_d$ for $d$ ranging from 1 to 6, constructed from del Pezzo surfaces in a uniform way and such that ${\bf HLog}_6$ and ${\bf HLog}_5$ precisely coincide with the 
classical identities \eqref{Eq:EFA-3-terms-Log} and $\boldsymbol{\big(\mathcal Ab\big)}$ respectively.  
For any $d\in \{1,\ldots,6\}$, there is one  identity ${\bf HLog}_d$ for each smooth degree $d$ del Pezzo surface  ${\rm dP}_d$, and the former functional equation 
can be described geometrically as follows: up to post-compositions by projective automorphisms, there exists only a finite number $\kappa_d$ of `conic fibrations'    $U_i: {\rm dP}_d\rightarrow \mathbf P^1$ ($i=1,\ldots,\kappa_d$). For each $U_i$, 
the set $\Sigma_i$ of  values $\sigma\in   \mathbf P^1$ such that the fiber $U_i^{-1}(\sigma)$ decomposes in the some of two `lines' has cardinality $8-d$. 
To each $\Sigma_i$ is canonically associated  a certain `complete antisymmetric hyperlogarithm  $AH_{\Sigma_i}$' on $\mathbf P^1$, of weight $7-d$, which is well-defined up to sign.  
In \cite{CP}, we proved that for a suitable choice of the $AH_{\Sigma_i}$'s\footnote{In \cite{CP}, we give an effective construction of the suitable hyperlogarithms $AH_{\Sigma_i}$'s  such that  the functional identity $\sum_{i=1}^{\kappa_d}AH_{\Sigma_i}\big(U_i\big) =0$ be indeed satisfied. Moreover, we give a conceptual interpretation of this construction in terms of representations of the Weyl group associated to the considered del Pezzo surface.},  the following identity holds true at the generic point of the considered del Pezzo surface ${\rm dP}_d$: 
$$
\boldsymbol{\big({\bf HLog}_d \big)}
\hspace{5cm}
\sum_{i=1}^{\kappa_d}AH_{\Sigma_i}\big(U_i\big) \equiv 0\,. 
\hspace{8cm} {}^{}
$$

Considering the importance of the functional identities $(1)\simeq {\bf HLog}_6$ and  $\boldsymbol{\big(\mathcal Ab\big)}\simeq {\bf HLog}_6$, it is natural to wonder about the case of the other $\boldsymbol{\big({\bf HLog}_d \big)}$'s. 
In this paper, 
we deal with $\boldsymbol{\big({\bf HLog}_4 \big)}$ that we investigate through the prism of web geometry.
\mk

Web geometry is the study of webs which are geometric objects formed (locally) by the configuration of a finite number of foliations whose leaves intersect transversaly. It proved to be an interesting tool to study geometrically functional identities such as the $\boldsymbol{\big({\bf HLog}_d \big)}$'s.  Let $V_1,\ldots,V_k$ be holomorphic submersions on a complex manifold $M$, of dimension 2 say, such that $dV_i\wedge dV_j$ does not vanish identically for any $i\neq j$. The level sets of the $V_i$'s define a $k$-web denoted by  
$$ \boldsymbol{\mathcal W}= \boldsymbol{\mathcal W}\big( V_1,\ldots,V_{k}\big) \, .$$ 

Via projective duality, one classically associates a linear web $\boldsymbol{\mathcal W}_C$ to any reduced algebraic curve $C$, giving rise to the important notions of `{\it algebraic and algebraizable webs'}.
It is relevant to try mimicking for a general web, all the  constructions of classical algebraic geometry which apply to algebraic curves. This led to the notion of {\it `abelian relation'} (ab.\,AR) for an arbitrary web such as 
the $k$-web $\boldsymbol{\mathcal W}$ above: it is a $k$-tuple $(F_i)_{i=1}^k$ of (germs of) holomorphic functions such that the following functional identity is satisfied:
\begin{equation}
\label{Eq:AR}
{}^{} 
\hspace{0.5cm}
\sum_{i=1}^{k}F_i\big(V_i\big) =0\,. 
\end{equation}
The abelian relations of $\boldsymbol{\mathcal W}$ form a vector space\footnote{Actually, globally the ARs form a local system but we will not elaborate further on this  here.} $\boldsymbol{AR}( \boldsymbol{\mathcal W})$ whose dimension is known as the {\it rank} of the web and denoted by 
${\rm rk}( \boldsymbol{\mathcal W} )=\dim\,\boldsymbol{AR}( \boldsymbol{\mathcal W})$. By a result of Bol, we always have 
${\rm rk}( \boldsymbol{\mathcal W} )\leq (k-1)(k-2)/2$. 
For an algebraic web $\boldsymbol{\mathcal W}_C$, 
the space $\boldsymbol{AR}( \boldsymbol{\mathcal W}_C) $ linearly identifies with  ${\bf H}^0(C,\omega^1_C)$ hence the rank of $\boldsymbol{\mathcal W}_C$ is equal to the arithmetic genus of $C$:  one has ${\rm rk}( \boldsymbol{\mathcal W}_C )=p_a(C)=(d-1)(d-2)/2$ with $d=\deg(C)$  therefore Bol's bound actually is an equality in this case. 
\sk

The identity $\boldsymbol{\big({\bf HLog}_d \big)}$ above precisely is of the form 
\eqref{Eq:AR} which naturally leads to consider the associated web, namely the web 
$$ \boldsymbol{\mathcal W}_{ {\rm dP}_d }= \boldsymbol{\mathcal W}\big( U_1,\ldots,U_{\kappa_d}\big)$$ 
on ${\rm dP}_d$, defined by all the fibrations in conics on it.  For $d=6$, taking ${\rm dP}_6$ as the blow-up of $\mathbf P^2$ at the vertices $p_1,p_2,p_3$ of the standard projective frame relative to the choice of the affine embedding $\mathbf C^2\subset \mathbf P^2$ given by $(x,y)\mapsto [x:y:1]$ in coordinates, we obtain that 
$ \boldsymbol{\mathcal W}_{ {\rm dP}_6 }= \boldsymbol{\mathcal W}\big( \, x \,  , \, y
\, , \, x/y \,
\big)$ 
which is one of the two classical normal forms of the so-called 
{\it hexagonal 3-web}.\footnote{The other normal form we are referring to here being 
$\boldsymbol{\mathcal W}( \, x \,  , \, y
\, , \, x+y\,  )\, $.} 

   The case of $ \boldsymbol{\mathcal W}_{ {\rm dP}_5 }$ is much more interesting: 
viewing ${\rm dP}_5$  as the blow up of the plane at the points $p_i$ for $i=1,\ldots,4$ with $p_1,p_2,p_3$ the same as the points defined above and $p_4=[1:1:1]$, we obtain that with respect to the standard affine coordinates $(x,y)$, one has 
$$ \boldsymbol{\mathcal W}_{ {\rm dP}_5}\simeq  \boldsymbol{\mathcal W}\Bigg( \, x \,  , \, y
\, , \, \frac{x}{y} 
\, , \, \frac{1-y}{1-x} 
\, , \, \frac{x(1-y)}{y(1-x)} 
\, 
\Bigg)\, .$$

The five rational functions appearing as first integrals for $\boldsymbol{\mathcal W}_{ {\rm dP}_5}$ in the chosen coordinates are precisely those appearing as arguments of Rogers' dilogarithm in Abel's identity $\boldsymbol{\big(\mathcal Ab\big)}$. We recognize the five first integrals of the famous Bol's web $\boldsymbol{\mathcal B}$. 
This web is quite famous in web geometry since first, it is the first example  of an exceptional web ever discovered and second, because it enjoys several remarkable properties we are going to discuss below. Considering this, it is natural to investigate 
the webs $ \boldsymbol{\mathcal W}_{ {\rm dP}_d}$ for $d=4,3,2,1$. 

Each web $ \boldsymbol{\mathcal W}_{ {\rm dP}_d}$ carries the hyperlogarithmic AR corresponding to $({\bf HLog}_d)$ hence it is interesting to look at the whole space of the ARs and more generally to its properties. 
In what follows, we first discuss several properties of Bol's web then discuss in analogy the case of $ \boldsymbol{\mathcal W}_{ {\rm dP}_4}$ which is the main object of study of this text. \sk

\noindent{\bf Notation:} 
{\it In what follows, 
we will also denote by 
${\bf HLog}^{7-d}$ the identity ${\bf HLog}_{d}$ (for any $d=1,\ldots,6$), this in order to emphasize
the weight $7-d$ of the hyperlogarithms involved in it.}

\noindent{\bf Warning:} 
{\it 
Some of the statements in the next two subsections may look a bit cryptic at first sight, even for 
people familiar with web geometry. Everything will be carefully explained further.}

\subsection{Many remarkable properties of Bol's web}
\label{S:Web-WdP5-properties}
Abel's identity $\boldsymbol{(\mathcal Ab)}$ gives rise to an abelian relation for $\boldsymbol{\mathcal B}$, that we will denote by  $\boldsymbol{\mathcal Ab}$. 

Given a planar web  $\boldsymbol{\mathcal W}$, 
one sets  $\boldsymbol{AR}\big(\boldsymbol{\mathcal W}\big)$  for 
the space of its abelian relations.  Inspired by the terminology introduced by Damiano in  
\cite{Damiano}, we will say that an AR of $\boldsymbol{\mathcal W}$ 
is  {\it `combinatorial'} if it is of minimal length, that is if only 3 of its components 
are not trivial, and we will denote by $\boldsymbol{AR}_C\big(
\boldsymbol{\mathcal W}\big)$ the subspace of $\boldsymbol{AR}\big(
\boldsymbol{\mathcal W}\big)$ spanned by the ARs of this kind.

It has been known for a long time that Bol's web 
 is very particular as a planar web since it enjoys all the following  
 properties:  
\begin{itemize}
\item[] ${}^{}$ 
\hspace{-1cm}
{\bf 1.\,[\,Geometric definition\hspace{0.03cm}].} {\it 
$\boldsymbol{\mathcal B}\simeq \boldsymbol{\mathcal W}_{{\rm dP}_5}$: 
Bol's web is equivalent to the web by conics on ${\rm dP}_5$.}
\bk 
\item[] ${}^{}$ 
\hspace{-1cm}
{\bf 2.\,[\,Non linearizability\hspace{0.03cm}].} {\it 
Bol's web is not linearizable hence not algebraizable.}
\bk  
\item[] ${}^{}$ 
\hspace{-1cm}
{\bf 3.\,[\,Abelian relations\hspace{0.03cm}].}
{\it $\boldsymbol{a.}$ All the ARs of $\boldsymbol{\mathcal B}$ are polylogarithmic, of weight 1 or 2:}\sk \\
${}^{}$ \hspace{0.9cm} 
{\it   $-$ the subspace  $\boldsymbol{AR}_{log}\big(
\boldsymbol{\mathcal B}
\big)$ of logarithmic ARs of $\boldsymbol{\mathcal B}$ has dimension 5. It coincides}\\
${}^{}$ \hspace{0.9cm} 
{\it   \textcolor{white}{$-$}  with
 the space $\boldsymbol{AR}_C\big(
\boldsymbol{\mathcal B}\big)$ of combinatorial ARs of $\boldsymbol{\mathcal B}$;}\sk \\
${}^{}$ \hspace{0.9cm} 
{\it   $-$ the subspace  $\boldsymbol{AR}^2\big(
\boldsymbol{\mathcal B}
\big)$ of dilogarithmic ARs of $\boldsymbol{\mathcal B}$ 
is spanned by $\boldsymbol{\mathcal Ab}$; and} \sk \\
${}^{}$ \hspace{0.9cm} 
  {\it $-$ there is a decomposition in direct sum}\sk \\
${}^{}$ \hspace{5cm} 
$\boldsymbol{AR}\big(\boldsymbol{\mathcal B}\big)=\boldsymbol{AR}_{log}\big(\boldsymbol{\mathcal B}\big)\oplus \big\langle \boldsymbol{\mathcal Ab} \big\rangle  \, .
$ 
\hfill 
 $(\star)$\bk \\
 ${}^{}$ \hspace{-0.4cm} 
 $\boldsymbol{b.}$ {\it Abel's AR $ \boldsymbol{\mathcal Ab}$ spans the whole space 
$ \boldsymbol{AR}_{log}\big(\boldsymbol{\mathcal B}\big)$ by residues/monodromy.}\sk\\
${}^{}$ \hspace{-0.4cm} 
 $\boldsymbol{c.}$ {\it Conversely,  one can reconstruct $ \boldsymbol{\mathcal Ab}$  
from a basis of the space $\boldsymbol{AR}^{1}\big(\boldsymbol{\mathcal B}\big)$.}
\mk
\item[] ${}^{}$ 
\hspace{-1cm}
{\bf 4.\,[\,Rank\hspace{0.03cm}].} {\it Bol's web has maximal rank hence is exceptional}.
\mk 
\item[] ${}^{}$ 
\hspace{-1cm}
{\bf 5.\,[\,`Algebraization'\hspace{0.03cm}].} 
{\it  Let $\boldsymbol{\mathcal W}$ be a germ of 5-web 
 at the origin of $\mathbf C^2$,  
equivalent to Bol's web. There are two distinct canonical ways to recover 
$\boldsymbol{\mathcal B}$ (and ${\rm dP}_5$) from 
$\boldsymbol{\mathcal W}$:}
\sk \\
${}^{}$ \hspace{-0.1cm} 
{\it   $-$ the first is from the 
space $\boldsymbol{AR}_C\big(
\boldsymbol{\mathcal W} \big)$ of 
combinatorial ARs of $\boldsymbol{\mathcal W}$;}
\sk \\
${}^{}$ \hspace{-0.1cm} 
{\it   $-$ the second approach is via the canonical 
map $\varphi_{\boldsymbol{\mathcal W}}: 
(\mathbf C^2,0)
\rightarrow \mathcal M_{0,5}$ associated to  $\boldsymbol{\mathcal W}$;}
 \sk
\vspace{-0.26cm}
  \item[] ${}^{}$ 
\hspace{-1cm}
{\bf 6.\,[\,Weyl group action\hspace{0.03cm}].} {\it $\boldsymbol{a.}$ There is a natural action of 
 $W(A_4)=\mathfrak S_5$ on  the space of abelian 
  relations for which  $(\star)$ is the decomposition into $\mathfrak S_5$-irreducible representations.}\sk
  
\noindent{\it $\boldsymbol{b.}$ In particular,  $ \big\langle \boldsymbol{\mathcal Ab} \big\rangle$ is $\mathfrak S_5$-stable and 
isomorphic to the signature  as a $\mathfrak S_5$-representation.}
  \mk 
\vspace{-0.5cm}
\item[] ${}^{}$ 
\hspace{-1cm}
 {\bf 7.\,[\,Hexagonality \& Characterization\hspace{0.03cm}].} {\it 
 Bol's web is hexagonal and is essentially characterized by this property 
 since 
 `for any $k\geq 3$, a  hexagonal planar $k$-web  either is linearizable and   equivalent to a web formed by $k$ pencils of lines  or $k=5$ and it is equivalent to $\boldsymbol{\mathcal B}$'.}
 \mk
\item[] ${}^{}$ 
\hspace{-1cm}
  {\bf 8.\,[\,Construction 
  \textit{\textbf{\`a la}}
  GM\hspace{0.03cm}].} 
  {\it  
 $\boldsymbol{\mathcal B}$  
   can be constructed following the approach of Gelfand and MacPherson \cite{GM}:  it can be seen as  the quotient, under the action of the Cartan torus      $H_4\subset {\rm SL}_5(\mathbf C)$, of a natural $H_4$-equivariant web defined on the grassmannian 
 $G_2(\mathbf C^5)$.}
  \mk
\item[] ${}^{}$ 
\hspace{-1cm} 
{\bf 9.\,[\,Modularity\hspace{0.03cm}].} 
{\it Bol's web is modular: it is equivalent to the web 
$\boldsymbol{\mathcal W}_{{0,5}}$ on $\overline{\mathcal M}_{0,5}$
defined by the five forgetful maps 
$\overline{\mathcal M}_{0,5}\rightarrow \overline{\mathcal M}_{0,4}$.}
\mk
\item[] ${}^{}$ 
\hspace{-1cm} 
{\bf 10.\,[\,Cluster web\hspace{0.03cm}].} 
{\it Bol's web is of cluster type: up to equivalence, it can be defined by means of the $\mathscr  X$-cluster variables of the finite type cluster algebra of type $A_2$.}
 \end{itemize}

Some of the statements above are obvious/tautological ({\it e.g.}\,the first), some others are very classical ({\bf 2}, {\bf 3} and {\bf 7} for instance) or have been proved rather recently (such as {\bf 6}, {\bf 8} or {\bf 9}).  The fifth statement
 is new (and actually provocative without further explanations, as it seems to contradict the fourth!). In any case, all of these are well-known or easy to prove and it is the fact that all are satisfied by   $(\boldsymbol{\mathcal B}, \boldsymbol{\mathcal Ab} \big)$ which makes this pair such an interesting object. 

Although rather long, the list above is not complete since an interesting feature of  
$( \boldsymbol{\mathcal Ab})$, related to {\bf 8}, is  missing in it. Namely that the real identity  
$ (\boldsymbol{\mathcal Ab})$
can be obtained geometrically within Gelfand-MacPherson formalism \cite{GM}, by means of the invariant representative of the first Pontryagin class of the tautological bundle on the real grassmannian $G_2(\mathbf R^5)$. Since this is a phenomenon taking place in a real setting and because we do not have any counterpart for ${\bf HLog}^3$ yet, this result has not been included in the list above (although it truly deserves to be part of it).  We will discuss this matter  a bit further in \S\ref{SS:HLog3-a-la-GM}. 
\begin{center}
\vspace{-0.0cm}
$\star$
\end{center}

Since $\boldsymbol{\mathcal B}$ is equivalent to $\boldsymbol{\mathcal W}_{ {\rm dP}_5}$ and because what can be considered as its most important feature, namely the fact it carries the dilogarithmic abelian relation  
$\boldsymbol{\mathcal Ab}$, generalizes to $\boldsymbol{\mathcal W}_{ {\rm dP}_4}$, one can wonder whether the properties listed above admit analogs for this 10-web or not.
It turns out that it is indeed the case as we are going to discuss now.

\subsection{Many remarkable properties of  ${\mathcal W} \hspace{-0.48cm}{\mathcal W}_{{\bf dP}_4}$}
\label{S:Web-WdP4-properties}
The main aim of this text is to prove that the following facts hold true, where 
${\rm dP}_4$ stands for an arbitrary (fixed)  smooth del Pezzo quartic surface. 
\sk

\vspace{-0.4cm}
\begin{itemize}
\item[] ${}^{}$ 
\hspace{-1cm}
{\bf 1.\,[\,Geometric definition\hspace{0.03cm}].} {\it 
$ \boldsymbol{\mathcal W}_{{\rm dP}_4}$ 
is the web defined by the 10 pencils of conics on ${\rm dP}_4$.}
\mk \sk
\item[] ${}^{}$ 
\hspace{-1cm}
{\bf 2.\,[\,Non linearizability\hspace{0.03cm}].} 
{\it $\boldsymbol{\mathcal W}_{ {\rm dP}_4}$ is not linearizable hence not equivalent to an algebraic web.}\mk  \sk
\item[] ${}^{}$ 
\hspace{-1cm}
{\bf 3.\,[\,Abelian relations\hspace{0.03cm}].}
$\boldsymbol{a.}$  
{\it All the ARs of $\boldsymbol{\mathcal W}_{ {\rm dP}_4}$ are hyperlogarithmic, of weight 1, 2 or 3:\mk}\\
${}^{}$ \hspace{-0.7cm} 
{\it   $-$ the subspace  $\boldsymbol{AR}^1\big(
\boldsymbol{\mathcal W}_{ {\rm dP}_4}
\big)$ of logarithmic ARs of $\boldsymbol{\mathcal W}_{ {\rm dP}_4}$ has dimension 20. It coincides with\\ ${}^{}$ \hspace{-0.4cm}  the space $\boldsymbol{AR}_C\big(
\boldsymbol{\mathcal W}_{ {\rm dP}_4}
\big)$ of combinatorial ARs of $\boldsymbol{\mathcal W}_{ {\rm dP}_4}$;}\mk \\
 ${}^{}$ \hspace{-0.7cm}  {\it  
  $-$ the subspace  $\boldsymbol{AR}^2\big(
\boldsymbol{\mathcal W}_{ {\rm dP}_4}
\big)$ of weight 2 hyperlogarithmic ARs has dimension 15: it is the} \\
 ${}^{}$ \hspace{-0.8cm}   {\it \textcolor{white}{$-$} direct sum
 of two subspaces,  
a first one,  $\boldsymbol{AR}^{2}_{\rm sym}$,  of `symmetric' ARs, of dimension 5 and }
\\
 ${}^{}$ \hspace{-0.8cm}   {\it \textcolor{white}{$-$} another one denoted by $ \boldsymbol{AR}^{2}_{\rm asym}$ of `antisymmetric' ARs, of dimension 10;} \mk\\ 
${}^{}$ \hspace{-0.7cm} 
{\it   $-$ the subspace  $\boldsymbol{AR}^3\big(
\boldsymbol{\mathcal W}_{ {\rm dP}_4}
\big)$ of weight 3 hyperlogarithmic ARs of $\boldsymbol{\mathcal W}_{ {\rm dP}_4}$ 
is spanned by ${\bf Hlog^3}$;}\mk \\
${}^{}$ \hspace{-0.7cm} 
  {\it $-$ there is a decomposition in direct sum} 
 \\ 
${}^{}$ \hspace{3cm} 
$\boldsymbol{AR}\big( 
\boldsymbol{\mathcal W}_{ {\rm dP}_4} 
 \big)=\boldsymbol{AR}^{1}\big(\boldsymbol{\mathcal W}_{ {\rm dP}_4}\big)\oplus 
 \Big(
 \boldsymbol{AR}^{2}_{\rm sym}\oplus 
 \boldsymbol{AR}^{2}_{\rm asym}
 \Big)
 \oplus
 \big\langle  \,
 {\bf Hlog^3}\, 
 \big\rangle \, .
$  \qquad  $(\clubsuit)$\mk \\
${}^{}$ \hspace{-0.7cm} 
 $\boldsymbol{b.}$ {\it ${\bf Hlog^3}$
 spans the whole space 
 of antisymmetric ARs 
 of $\boldsymbol{\mathcal W}_{ {\rm dP}_4}$ 
by residues/monodromy.}
\mk \\
${}^{}$ \hspace{-0.7cm} 
 $\boldsymbol{c.}$ {\it Conversely,  one can reconstruct ${\bf Hlog^3}$
from a basis of the space $\boldsymbol{AR}^{2}_{\rm asym}$.}
\bk
\item[] ${}^{}$ 
\hspace{-1cm}
{\bf 4.\,[\,Rank\hspace{0.03cm}].} 
{\it The web $\boldsymbol{\mathcal W}_{ {\rm dP}_4}$ has maximal rank hence is exceptional}. 
\bk 
\item[] ${}^{}$ 
\hspace{-1cm}
{\bf 5.\,[\,`Algebraization'\hspace{0.03cm}].} 
{\it  Let $\boldsymbol{\mathcal W}$ be a germ of 10-web 
 at the origin of $\mathbf C^2$,  
equivalent to $\boldsymbol{\mathcal W}_{ {\rm dP}_4}$. There are two distinct canonical ways to recover 
the latter (and ${\rm dP}_4$) from 
$\boldsymbol{\mathcal W}$:}
\mk \\
${}^{}$ \hspace{-0.1cm} 
{\it   $-$ the first is from the 
space $\boldsymbol{AR}_C\big(
\boldsymbol{\mathcal W} \big)$ of 
combinatorial ARs of $\boldsymbol{\mathcal W}$;}
\mk \\
${}^{}$ \hspace{-0.1cm} 
{\it   $-$ the second approach is via the canonical 
map $\varphi_{\boldsymbol{\mathcal H\mathcal W}_5}: 
(\mathbf C^2,0)
\rightarrow \mathcal M_{0,5}$ associated to}
\sk \\
${}^{}$ \hspace{-0.1cm} 
{\it \textcolor{white}{$-$} any linearizable hexagonal 5-subweb 
$\boldsymbol{\mathcal H\mathcal W}_5$ of 
$\boldsymbol{\mathcal W}$. }
 \bk   
\item[] ${}^{}$ 
\hspace{-1cm}
{\bf 6.\,[\,Weyl group action\hspace{0.03cm}].} 
{\it $\boldsymbol{a.}$ The Weyl group $W=W(D_5)$ acts naturally on $\boldsymbol{AR}\big(\boldsymbol{\mathcal W}_{ {\rm dP}_4} \big)$ and $(\clubsuit)$
 actually is  its decomposition into irreducible $W$-modules.}\mk
 
\noindent{\it $\boldsymbol{b.}$ In particular,  $ \big\langle {\bf Hlog^3} \big\rangle$ is $W$-stable and 
isomorphic to the signature  as a $W$-representation.} 
  \bk 
\vspace{-0.5cm}
\item[] ${}^{}$ 
\hspace{-1cm}
 {\bf 7.\,[\,Hexagonality \& Characterization\hspace{0.03cm}].} 
{\it $\boldsymbol{\mathcal W}_{ {\rm dP}_4}$ is characterized by  its hexagonal 3-subwebs.}
 \bk
\item[] ${}^{}$ 
\hspace{-1.05cm}
  {\bf 8.\,[\,Construction 
   \textit{\textbf{\`a la}}
   GM\hspace{0.03cm}].} 
 {\it  
  $\boldsymbol{\mathcal W}_{ {\rm dP}_4}$ can be seen as  the quotient, under the action of the Cartan torus   $H_5\subset {\rm Spin}_{10}(\mathbf C)$, of a natural $H_5$-equivariant web defined on the 
spinor variety $\mathbb S_{5}$.
}
 \bk
\vspace{-0.4cm} 
   \item[] ${}^{}$ 
\hspace{-1cm} 
{\bf 9.\,[\,Modularity\hspace{0.03cm}].} 
{\it  The web 
$\boldsymbol{\mathcal W}_{ {\rm dP}_4}$ is modular:  
it can be obtained as the pull-back under a natural map ${\rm dP}_4\dashrightarrow {\rm Conf}_6(\mathbf P^2)$ of 
the web induced by the 30 rational maps ${\rm Conf}_6(\mathbf P^2)\dashrightarrow 
{\rm Conf}_4(\mathbf P^1)=
{\mathcal M}_{0,4}\simeq \mathbf P^1$
on the space ${\rm Conf}_6(\mathbf P^2)$ of projective configurations of six points on 
$\mathbf P^2$. 
}
\bk
\item[] ${}^{}$ 
\hspace{-1.05cm}
{\bf 10.\,[\,Cluster web\hspace{0.03cm}].} 
{\it Del pezzo's web $\boldsymbol{\mathcal W}_{ {\rm dP}_4}$ is of cluster type: it can be obtained by means of some of the $\mathscr X$-cluster  variables of the finite type cluster algebra of type $D_4$.} 
  \mk
 \end{itemize}

A few remarks about these statements are in order.
\sk
\begin{itemize}
\item As in the case of Bol's web  in \S\ref{S:Web-WdP5-properties}, all the assertions above do not have the same status. For instance, the first is tautological and the second is almost obvious.  The fourth point was known before (see just below), $\boldsymbol{6.b}$ has been established in \cite{Hlog}.  All the other statements (namely {\bf 3}, {\bf 5}, $\boldsymbol{6.a}$, and from {\bf 7} to {\bf 10}) are new.
\mk
\item 
Classically in web geometry, a maximal rank web is said to be exceptional precisely when it is not algebraizable: for such a web, one of these two adjectives means exactly the opposite of the other.  
Since both $\boldsymbol{\mathcal W}_{ {\rm dP}_5}\simeq \boldsymbol{\mathcal B}$
and $\boldsymbol{\mathcal W}_{ {\rm dP}_4}$ are exceptional in the classical 
sense, the use of the term `algebraization'  about them could appear completely 
 inconsistent
at first reading.  
The terminology that we introduce here is a bit provocative and has to be understood as follows: if these two webs are not linearizable, 
each of them  admits for natural model a web formed by the pencils of conics on a certain rational surface. The term `algebraization' used here in relation to them refers to the process of constructing an equivalence to this `natural algebraic model' when starting from any other web analytically equivalent to it. 
\mk
\item  Birationally equivalent models of $\boldsymbol{\mathcal W}_{ {\rm dP}_4}$ were considered in the author's PhD thesis \cite{PThese}, in the unpublished text \cite{Robert}  and also later in the article \cite{Pereira}.  That this web is exceptional (which is equivalent to the points {\bf 2.} and {\bf 4.}\,together) is mentioned in \cite{PThese} and \cite{Robert}  and more thoroughly investigated by Pereira who computed the hyperlogarithmic ranks  of a birational model of $\boldsymbol{\mathcal W}_{ {\rm dP}_4}$ 
({\it cf.}\,the proof of Theorem 5.1 in  \cite{Pereira}).
\mk
\item Many of the results established in the present text were previously announced without proof in our preprint \cite{Hlog} (in particular, in \S4.2 therein). 
\end{itemize}

What one has to have in mind is that,  considering that almost all (if not all) the remarkable properties of the pair Abel's identity $\boldsymbol{\big(\mathcal Ab\big)}\simeq {\bf HLog}_5$ / Bol's web $\boldsymbol{\mathcal B}\simeq \boldsymbol{\mathcal W}_{ {\rm dP}_5}$  admit formally very similar counterparts for the pair $ {\bf HLog}_4$ /  $\boldsymbol{\mathcal W}_{ {\rm dP}_4}$, then is is not 
unreasonable to see 
 the latter  as the most natural weight 3 generalization of the former.  
 
 The quest of natural generalizations to any weight of Abel's dilogarithmic identity is a long standing one which has received many contributions by several authors. But except until very recently and for very few cases, all these contributions were sticking to the case of polylogarithms or more precisely to iterated integrals on $\mathbf P^1\setminus \{0,1,\infty\}$ constructed from words in the alphabet with 
the two logarithmic forms 
$d{\log}(u)=du/u$ and $d{\log}(u-1)=du/(u-1)$ as letters.  The discovery of the identities ${\bf HLog}_d$ for $d=4,3,2,1$  as well as the striking fact that $\boldsymbol{\mathcal B}\simeq \boldsymbol{\mathcal W}_{ {\rm dP}_5}$ and $\boldsymbol{\mathcal W}_{ {\rm dP}_4}$ look so similar regarding the numerous remarkable properties they satisfy 
may be an indication that natural generalizations to higher weights of the five terms relation of the dilogarithm have to be looked for in the more general setting of hyperlogarithms. 

However, what makes the functional equations of polylogarithms so interesting is the role that these identities are playing in other areas of mathematics, such as in hyperbolic geometry (volume of hyperbolic polytopes) or in the K-theory of number fields (regarding to regulators and Zagier's polylogarithmic conjecture). For the moment, we are not aware of any occurence of the identity ${\bf HLog}^3$ within other fields of mathematics, which would be necessary for it to be fully recognized as the genuine  weight 3 generalization of Abel's identity.

\newpage
\subsection{Structure of the paper}
The rest of the paper is organized as follows.

 In {\bf Section \S\ref{S:Preliminary-Material}}, 
we introduce the material about hyperlogarithms, webs and del Pezzo surfaces which will be used in the rest of the text.   Everything here is well-known or straightforward, hence no proof has been included but only some references when they are needed. {\bf Section \S\ref{S:Bol-web}} is where we discuss  all the properties of Bol's web cited in \S\ref{S:Web-WdP5-properties} above with more details.  
  That $\boldsymbol{\mathcal W}_{{\rm dP}_4}$ satisfies all the properties 
 listed in \S\ref{S:Web-WdP4-properties} is proved in 
  {\bf Section \S\ref{S:WdP4}}, which constitues the main part of the paper. 
 In the last {\bf Section \S\ref{S:What-next}}, we discuss  some questions left open by the results obtained here that we find interesting, such as that of constructing 
 several other versions of 
 the identity ${\bf HLog}^3$. 
  \begin{center}
\vspace{-0.0cm}
$\star$\sk
\end{center}
 
 To help the reader to navigate throughout the text, we offer 
 Table \ref{Table:map}  which indicates 
  where in the text are discussed the different properties of the webs under consideration.
\begin{table}[!h]
\begin{tabular}{|l|c|c|}
\hline
\begin{tabular}{c} ${}^{}$\vspace{-0.27cm}  \\
${}^{}$ \hspace{-0.2cm} {\bf Property}\vspace{0.09cm} \\
\end{tabular}
 &
 $\boldsymbol{\mathcal B}\simeq \boldsymbol{\mathcal W}_{ {\bf dP}_5 }$  & 
$\boldsymbol{\mathcal W}_{ {\bf dP}_4 }$
\\
\hline\hline 
 \begin{tabular}{c}${}^{}$\vspace{-0.33cm}  \\
 {\bf Description of the space of ARs as a $\boldsymbol{W}$-module} 
\vspace{0.09cm} \\
\end{tabular}
  &  
 \S\ref{SS:ARs-WdP5}
  \& 
  \S\ref{SS:Bir-Symmetries-W-action} & 
    \S\ref{SS:Symbolic-ARs-WdP4}
\\
\hline
 \begin{tabular}{c}${}^{}$\vspace{-0.33cm}  \\
 {\bf Canonical algebraizations }
\vspace{0.09cm} \\
\end{tabular}
  & \S\ref{S:Algebraization-WdP5} & 
   \S\ref{SS:WdP4-algebraization}
\\
\hline
 \begin{tabular}{c}${}^{}$\vspace{-0.33cm}  \\
 {\bf Combinatorial characterization}
\vspace{0.09cm} \\
\end{tabular}
  & \S\ref{SS:Combinatorial-characterization} & 
  \S\ref{SS:WdP4-Combinatorial-characterization}
\\
\hline
\begin{tabular}{c}${}^{}$\vspace{-0.33cm}  \\
{\bf Description as a modular web} 
\vspace{0.09cm} \\
\end{tabular} &  
p.\,\pageref{Modular-W-M05}
&   \S\ref{SS:WdP4-as-a-modular-web}
\\
\hline
\begin{tabular}{c}${}^{}$\vspace{-0.33cm}  \\
{\bf Description \`a la Gelfand-MacPherson} 
\vspace{0.09cm} \\
\end{tabular} &  \S\ref{SS:Gelfand-MacPherson--construction-WdP5} & 
\S\ref{SS:WdP4-GM}
\\
\hline
\begin{tabular}{c}${}^{}$\vspace{-0.33cm}  \\
{\bf Description as  a cluster web}
\vspace{0.09cm} \\
\end{tabular} & 
\S\ref{SS:Clsuter-Web-type-A2}
 & 
 \S\ref{SS:WdP4-as-a-cluster-web}
\\
\hline 
\end{tabular}
\bk
\caption{}
\label{Table:map}
\end{table}

 
 \vspace{-0.3cm}
\subsection{\bf Acknowledgements}
 During the preparation of this text, the author benefited of interesting discussions with 
 Ana-Maria Castravet  and Nicolas Perrin. He is very grateful to them for that.  
  The author would also like to thank Brubru for her proofreading and for all the corrections she was able to point out. Noucnouc did a few remarks about the English too, we thank him for that.

\newpage
\section{\bf Hyperlogarithms, webs and del Pezzo surfaces in a nutshell}
\label{S:Preliminary-Material}
Here we first give a quick account of the classical theory of hyperlogarithms  then 
we discuss some points 
of web geometry.  After short reminders on webs, we discuss some new (although rather elementary) notions concerning abelian relations, related to hyperlogarithms. We also introduce a simple formalism which will prove to be convenient to investigate the rich combinatorial structure of the spaces of ARs of the del Pezzo webs we will study in the next sections. 
Then we recall some well-known facts about the geometry of del Pezzo surfaces and the Weyl groups actions on their Picard lattices. 
\sk

Almost no examples are given in this section. The two cases of ${\boldsymbol{\mathcal W}}_{{\rm dP}_5}$ and 
${\boldsymbol{\mathcal W}}_{{\rm dP}_4}$ studied in detail in the following sections will serve to show, among other things, the relevance of the notions introduced here.  

\subsection{\bf Hyperlogarithms}                                                                                                                                                                                                                                                                                                                                                                                                                                                                                                                                                                                                                                                                                                                                                                                                                                                                                                                        
Hyperlogarithms are special functions considered first by Poincar\'e (1884) and Lappo-Danilevski (1928) in the realm of one variable complex differential equations. 
Some modern references are \cite{Brown2004}, \cite{VanhoveZerbini2018} and \cite{EZ}. For more specific references on 
 hyperlogarithms in relation with the notion of abelian relation in web geometry, see  
\cite[\S1.2]{ClusterWebs}, \cite[\S2.1]{Hlog} or  \cite{CP}. 

Let $Y$ be a connected complex manifold. 
We fix a  finite dimensional subspace ${\bf H} \subset {\bf H}^0\big( Y, \Omega_Y^1\big)$ we are going to work with.   
We also assume that ${\bf H}$ 
has a basis $(\omega_1, \ldots,\omega_m)$  whose elements satisfy the following assumptions: 
these forms are closed and their wedge products all vanish, {\it i.e.}
\begin{equation}
\label{Eq:Conditions-II}
d\omega_i=0\qquad \mbox{ and } \qquad \omega_i\wedge \omega_j=0\qquad 
\mbox{ for all}\quad i,j=1,\ldots,m\,.
\end{equation}

Given a base point $y\in Y$, we consider 
 continuous paths  $\gamma^z: [0,1]\rightarrow Y$ joigning $y$ to another point $z\in Y$, all supported in a small open ball $B_y$ around $y$ in $Y$.  For any `weight' $w\geq 1$,  we identify any pure tensor $\underline{\omega}=\omega_{i_1}\otimes \cdots \otimes \omega_{i_w}\in {\bf H}^{\otimes w}$ with the word $\omega_{i_1}\cdots \omega_{i_w}$. 
Setting inductively 
\begin{equation}
\label{Eq:Integration-formulas}
L_{\omega_{i_1}}: z\mapsto  \int_{\gamma^z} \omega_{i_1} \, , 
\quad 
L_{\omega_{i_1}\omega_{i_2}}: z\mapsto  \int_{\gamma^z} L_{\omega_{i_1}}\, \omega_{i_1}
\quad 
\mbox{ and }\quad 
L_{\omega_{i_1}\cdots \omega_{i_w}}: z\mapsto  
 \int_{\gamma^z} L_{\omega_{i_2}\cdots \omega_{i_w}}\, \omega_{i_1}\, , 
\end{equation}
we obtain iterated integrals which, thanks to the conditions \eqref{Eq:Conditions-II},  do not depend on the path $\gamma^z$ but only on its endpoint hence   give rise to holomorphic germs  $L_{\underline{\omega}}\in \mathcal O_{Y,y}$. Extending  $\underline{\omega}\mapsto L_{\underline{\omega}}$ linearly to the whole tensorial power ${\bf H}^{\otimes w}$, we define a realization map
\begin{equation}
\label{Eq:IIyw}
{\bf II}_y^w : {\bf H}^{\otimes w}\longrightarrow \mathcal O_{Y,y}
\end{equation}
which is obviously $\mathbf C$-linear and turns out to be injective.  The 
shuffle product $\shuffle$ on words in the letters $\omega_1,\ldots,\omega_m$ 
gives rise to another product on the full tensorial algebra  
${\bf H}^{\otimes \bullet}=\oplus_{w\geq 1} 
{\bf H}^{\otimes w}$, again denoted by $\shuffle$ and which enjoys the nice property of making of 
$$
{\bf II}_y=\oplus_{w\geq 1}{\bf II}_y^w  : \Big(\, {\bf H}^{\otimes \bullet}, \shuffle \, \Big)
\longrightarrow \mathcal O_{Y,y}
$$ an injective map of complex algebras.\footnote{The product on $\mathcal O_{Y,y}$ is of course the one induced by the usual (pointwise) product of holomophic functions.} For $L\in {\rm Im}( {\bf II}_y)$, one defines its `{\it weight}' as the minimum 
$w(L)$  of the set of
 $w$'s such that $L\in  {\bf II}_y\big( \oplus_{w'\leq w} 
{\bf H}^{\otimes w'}\big)$ and the {\it `symbol of $L$'} (at $y$ a priori) is the element $\mathcal S_y(L)\in{\bf H}^{\otimes w(L)}$ such that 
the weight of $L-L_{\mathcal S_y(L)}$ is strictly less than that of $L$. 

Using the formulas \eqref{Eq:Integration-formulas}, one sees that any 
iterated integral $L \in {\rm Im}({\bf II}_y)$ extends holomorphically along any path in $Y$ and gives rise to a holomorphic multivalued function on the whole $Y$, again denoted by $L$.  If $L^\gamma$ stands for the analytic continuation 
along a path $\gamma: [0,1]\rightarrow Y $ joining $y$ to the other extremity $y'=\gamma(1)\in Y$, then 
$L^\gamma\in  {\rm Im}({\bf II}_{y'})\subset \mathcal O_{Y,y'}$ and moreover 
 $w(L)= 
w\big(L^\gamma \big)$ 
and $\mathcal S_y(L)=\mathcal S_{y'}(L^\gamma)$.  This shows that the 
notions of weight and of symbol of an iterated integral constructed from ${\bf H}$ do not depend on the chosen base point but only on its analytic extension as a multivalued function on the whole $Y$.  

One difficulty when working with multivalued holomorphic functions, 
is that it is often painful to be rigorous and precise about which branch of them has to be considered so that a certain property be satisfied. This can be avoided for 
iterated integrals by using their symbols which 
 offer an efficient way to deal with them algebraically. 
\begin{center}
\vspace{-0.15cm}
$\star$
\end{center}

The hyperlogarithms which are involved in this text (in relation with the abelian relations of planar webs) are among the simplest, and are those obtained when $Y$ is the complement of a finite set $\Sigma$ in $\mathbf P^1$ and one takes ${\bf H}={\bf H}^0\big( \mathbf P^1, \mathscr O_{\mathbf P^1}({\rm Log}\,\Sigma)\big)$. In this case, if $\sigma_1,\ldots,\sigma_{m+1}$ stand for the elements of $\Sigma$ (assumed to be pairwise distinct), one takes an affine coordinate $z$ such that $\sigma_{m+1}=\infty$ and 
the basis of ${\bf H}$ we are going to work with is that whose elements are the logarithmic 1-forms 
$\omega_i=d{\rm Log}(z-\sigma_i)=dz/(z-\sigma_i)$ for $i=1,\ldots,m$. 

Hyperlogarithms on $\mathbf P^1$ have been considered by many authors, ancient or contemporary. The properties of these functions,  for instance their monodromy, are well-known: as explained in 
\cite{Brown2004} ({\it cf.} Corollary 6.5 therein), given an element 
 ${\varpi}$ being an element of ${\bf H}^{w-1}$ (for some weight $w\geq 1$), then 
modulo ${\bf HLog}^{\leq w}$, for any $i\in \{1,\ldots,k\}$, one has 
\begin{equation}
\label{Eq:Monodromy-hyperlog-Lw}
\Big(\mathcal M_{\gamma_k} - {\bf I}{\rm d}\Big) \big( L_{\omega_i {\varpi}}  
\big)
\equiv 
\begin{cases}
 \hspace{0.3cm} 0 \hspace{0.78cm} \mbox{ if } \, k\neq i \, \\
 \hspace{0.1cm} 
 2i\pi \,L_{\varpi} 
 \hspace{0.15cm} \mbox{ if } \, k=i.
\end{cases}
\end{equation}

\subsection{\bf Webs}
\label{SS:Webs}
As references on webs, we mention the books \cite{BB} and \cite{PP}. The first part of our memoire  \cite{ClusterWebs} may be useful too. \sk

\subsubsection{}In this text, we only deal with a very restrictive class of planar webs, our definition will be restricted to them.  Here a $d$-web $\boldsymbol{\mathcal W}$ for $d\geq 1$ on a (connected and smooth) algebraic surface $X$ will designate  a $d$-tuple  $(\mathcal F_{U_i})_{i=1}^d$ of foliations formed by the level sets of non constant rational functions $U_i:X\dashrightarrow \mathbf P^1$ which are such that $dU_i\wedge dU_j\neq 0$ at the generic point of $ X$, for any $i,j=1,\ldots,d$ distinct.  One can and will assume that each of the $U_i$'s is non composite (or equivalently, the generic fibers of the $U_i$'s are connected).  In this case we use the following notation to denote $\boldsymbol{\mathcal W}$:
$$\boldsymbol{\mathcal W}=\boldsymbol{\mathcal W}\big(\,U_1
\, , \, 
\ldots
\, , \, 
U_k\, \big)\, .$$

  Any non constant map $\widetilde U_i$ defined on an open subset of $X$ such that $dU_i\wedge d \widetilde U_i=0$ is a first integral (for the $i$-th foliation) of $\boldsymbol{\mathcal W}$.  Another $d$-web $\boldsymbol{\mathcal W}'$ defined on another surface $X'$ is said to be {\it `equivalent'} to $\boldsymbol{\mathcal W}$ if there exists a (local) biholomophism $\varphi$ between two open subsets in $X'$ and $X$ such that the pull-backs $\varphi^*(U_i)=U_i\circ \varphi$ for $i=1,\ldots,d$ form a complet set of first integrals for $\boldsymbol{\mathcal W}'$ (possibly up to a relabelling of the foliations). 

\subsubsection{} An {\it `abelian relation'} (ab.\,AR) for $\boldsymbol{\mathcal W}(\,U_1,\ldots,U_k)$ is a tuple of exact holomorphic 1-forms $(dF_i)_{i=1}^d$ such that $\sum_{i=1}^d U_i^*(dF_i)=0$ or equivalently, such that 
$
\sum_{i=1}^d F_i(U_i)=0
$
holds true identically up to the constants (on a suitable open domain in $X$). Any AR extends holomorphically along any path supported in the regular set $Y=Y_{\boldsymbol{\mathcal W}}$ defined as the biggest open subset in $X$ on which all the foliations of the web are regular and intersect transversely. For that reason, the abelian relations form a local system $\boldsymbol{AR}( \boldsymbol{\mathcal W})$ on $Y$ rather than a vector space. However we will use, a bit abusively, the same notation to designate the restriction of $\boldsymbol{AR}( \boldsymbol{\mathcal W})$ on any open domain in $Y$, restriction which canonically has the structure of a vector space. 

\subsubsection{} For any $i$, let $\mathfrak R_i$ be the finite subset of $ \mathbf P^1$ whose elements are the $\lambda$'s such that there exists an irreducible component of $U_i^{-1}(\lambda)$ which is invariant by another foliation $\mathcal F_{U_j}$  ($j\neq i$) of the web.  Then the open subset $Z=Z_{\boldsymbol{\mathcal W}}=X\setminus \cup_{i=1}^d U_i^{-1}( \mathfrak R_i)$ contains $Y$ and one verifies that any germ of AR $(F_i(U_i))_{i=1}^d$ at some point $y\in Y$ extends holomorphicaly along any continuous path supported in $Z$. Equivalently, for any $i$, setting $y_i=U_i(y)$,   the germ $F_i\in \mathcal O_{ \mathbf P^1, y_i}$ extends to a global but a priori multivalued holomorphic  function on $\mathbf P^1\setminus \mathfrak R_i$.

\subsubsection{\bf Combinatorics of the space of abelian relations}
\label{SS:Combinatorial-characterization}
Let us introduce general notions which we believe are relevant 
for studying webs. Let $\boldsymbol{\mathcal W}$ be a planar $d$-web, formed by $d$ distinct foliations $\mathcal F_1,\ldots,\mathcal F_d$ on its definition domain in $\mathbf C^2$.

For any $k=3,\ldots, d$, one sets/defines
\begin{itemize}
\vspace{-0.15cm}
\item ${\rm SubW}_k(\boldsymbol{\mathcal W})$ is the set of $k$-subwebs of  $\boldsymbol{\mathcal W}$;
\item ${\rm SubW}(\boldsymbol{\mathcal W})$ is the set of subwebs of  $\boldsymbol{\mathcal W}$ \big({\it i.e.}\,${\rm SubW}(\boldsymbol{\mathcal W})=\cup_{k=3}^d {\rm SubW}_k(\boldsymbol{\mathcal W})$\big);
\item $\boldsymbol{AR}_3(\boldsymbol{\mathcal W})$  denotes the subspace of $\boldsymbol{AR}(\boldsymbol{\mathcal W})$ spanned by the ARs of the 3-subwebs of $\boldsymbol{\mathcal W}$;  
\item ${r}_3(\boldsymbol{\mathcal W})$ stands for the dimension of $\boldsymbol{AR}_3(\boldsymbol{\mathcal W})$;  
\item  $r_{ \boldsymbol{\mathcal W}} : {\rm SubW}(\boldsymbol{\mathcal W}) \rightarrow \mathbf N$ is the function associating $r_3(\boldsymbol{W})$ to any subweb $\boldsymbol{W}$ of 
$\boldsymbol{\mathcal W}$;
\item $r_{ \boldsymbol{\mathcal W},k} : {\rm SubW}_k(\boldsymbol{\mathcal W}) \rightarrow \mathbf N$ is the   restriction of $r_{ \boldsymbol{\mathcal W}}$ to ${\rm SubW}_k(\boldsymbol{\mathcal W})$.
\end{itemize}

The functions $r_{ \boldsymbol{\mathcal W},k}$'s are combinatorial objects invariantly attached to  $\boldsymbol{\mathcal W}$. They allow to state in a nice and concise way some properties of webs.  For instance, let us consider the interesting case of Bol's web $\boldsymbol{\mathcal B}\simeq 
\boldsymbol{\mathcal W}_{{\rm dP}_5}$: since it is hexagonal, one has identically $r_{ \boldsymbol{\mathcal B},3}=1$ and $r_{ \boldsymbol{\mathcal B},4} =3$ and one verifies that  $r_{ \boldsymbol{\mathcal B},5}\equiv 5$.  On the other hand,  if $\boldsymbol{\mathcal L}$ is a planar  web formed by 5 pencils of lines, then $r_{ \boldsymbol{\mathcal L},3}\equiv 1$ (hexagonality), 
$r_{ \boldsymbol{\mathcal L},4} \equiv 3$ and $r_{ \boldsymbol{\mathcal L},5}\equiv 6$. Hence Bol's characterization of $\boldsymbol{\mathcal B}$ can be stated nicely as follows:

\noindent {\bf Theorem.} (Bol \cite{Bol})
{\it Bol's web is characterized by $r_{ \boldsymbol{\mathcal B}}$: 
any 5-web $\boldsymbol{\mathcal W}$ such that $r_{ \boldsymbol{\mathcal W},3}\equiv 1$
and $r_{ \boldsymbol{\mathcal W},5}\leq 5$ is equivalent to Bol's web.}
\label{Page:BolsTHM}

This result shows that the  $r_{ \boldsymbol{\mathcal W},k}$'s can be useful to study webs up to equivalence. Given two planar $d$-webs $ \boldsymbol{\mathcal W}'$ and $ \boldsymbol{\mathcal W}''$ as above, let us say that the
functions 
 $r_{\boldsymbol{\mathcal W}'}$ and $r_{\boldsymbol{\mathcal W}''}$ (or 
$r_{\boldsymbol{\mathcal W}',3}$ and $r_{\boldsymbol{\mathcal W}'',3}$) are equivalent, denoted by $r_{\boldsymbol{\mathcal W}'}\sim r_{\boldsymbol{\mathcal W}''}$ (and similarly for $r_{\boldsymbol{\mathcal W}',3}$ and $r_{\boldsymbol{\mathcal W}'',3}$) if both coincide,  
possibly up to a relabelling of the foliations composing one of them.

In \cite{Burau}, Burau first determines $r_{\boldsymbol{\mathcal W}_{{\rm dP}_3},3}$  and then proves the following characterization result:

\noindent
 {\bf Theorem.} (Burau \cite{Burau})
{\it The class of 27-webs $\boldsymbol{\mathcal W}_{{\rm dP}_3}$'s is characterized by $r_{\boldsymbol{\mathcal W}_{{\rm dP}_3},3}$: any 27-web 
$\boldsymbol{\mathcal W}$ such that $r_{ \boldsymbol{\mathcal W},3}\sim r_{\boldsymbol{\mathcal W}_{{\rm dP}_3},3}$
 is equivalent to  the 27-web in conics on a cubic surface in $\mathbf P^3$.}

In \S\ref{SS:WdP4-Combinatorial-characterization}, we will show that there is an  analogous result for the web $\boldsymbol{\mathcal W}_{{\rm dP}_4}$, this for any smooth 
quartic del Pezzo surface ${\rm dP}_4$.

\subsubsection{\bf Hyperlogarithmic abelian relations}
\label{SSS:Hyperlog-AR}
As we are going to prove, all the he abelian relations of $\boldsymbol{\mathcal W}_{ {\rm dP}_4}$ are hyperlogarithmic hence all can be found using the `symbolic' approach we are going to describe. This method is elementary and amounts to elementary linear algebra computations in tensorial spaces. It has been used to find some ARs of webs first by Robert \cite{Robert}, then by Pereira \cite{Pereira}. But the use of a symbolic approach for solving a functional  equation can even be found before, for instance in  \cite[\S4]{HM}.

We continue to use the notation used above. For any $i$, 
let $\boldsymbol{\mathcal H}_i$ stand for the space of rational 1-forms on $\mathbf P^1$, regular on 
$\mathbf P^1\setminus \mathfrak R_i$ and having at most logarithmic poles at  each point of $\mathfrak R_i$, {\it i.e.}
$${\bf H}_i={\bf H}^0\Big( \mathbf P^1,\Omega^1_{\mathbf P^1}\big( {\rm Log}\, \mathfrak R_i\big)
\Big)\, .$$ 
We set $\mathfrak R_i=\{ \mathfrak r_{i,s} \}_{s=0}^{m_i}$ with 
the $\mathfrak r_{i,s}$'s pairwise distinct and 
$\mathfrak r_{i,0}=\infty$. Then the 1-forms $\theta_{i,s}=d{\rm Log}(z-\mathfrak r_{i,s})=dz/(z-\mathfrak r_{i,s})$  for $s=1,\ldots,m_i$ form a basis of $\boldsymbol{\mathcal H}_i$ hence their pull-backs $\Theta_{i,s}=U_i^*(\eta_{i,s})=d{\rm Log}(U_i-\mathfrak r_{i,s})=dU_i/(U_i-\mathfrak r_{i,s})$  form a basis of  a subspace 
${\bf H}_i=U_i^*\big(\boldsymbol{\mathcal H}_i\big)$ of the 
space  $\Omega^1_{\mathbf C(X)}\cap {\bf H}^0\big(Y,\Omega_Y^1)$ of  rational 1-forms on $X$ whose restrictions on $Y$ are holomorphic. We set 
$$
{\bf H}={\bf H}_{\boldsymbol{\mathcal W}}=
 \sum_{i=1}^d {\bf H}_i\subset \Omega^1_{\mathbf C(X)}\cap {\bf H}^0\big(Y,\Omega_Y^1\big)\, .
$$
Then for any weight $w\geq 1$, 
we define a vector space of {\it `(symbolic) weigth $w$ hyperlogarithmic abelian relations'}, denoted by  $  {\bf HLogAR}^w= {\bf HLogAR}^w(\boldsymbol{\mathcal W})$, 
by requiring that 
the following sequence of vector spaces be exact: 
\begin{equation}
\label{Eq:ExactSequence-1}
0 \rightarrow {\bf HLogAR}^w\longrightarrow \oplus_{i=1}^{10} {\bf H}_i^{\otimes w} \stackrel{\Phi^w}{\longrightarrow} {\bf H}^{\otimes w}\, ,
\end{equation}
where the  map $\Phi^w$ is the one defined as follows: 
if $\boldsymbol{\Omega}=\big( \Omega_i\big)_{i=1}^d$ 
with $\Omega_i\in  {\bf H}_i^{\otimes w}$ for $i=1,\ldots,d$,  then $\Phi^w(\boldsymbol{\Omega})=
\sum_{i=1}^d  \iota_i^w(\Omega_i)$, where 
for any $i$, $\iota_i^w$ stands for the monomorphism ${\bf H}_i^{\otimes w} \hookrightarrow {\bf H}^{\otimes w}$ naturally induced by the inclusion 
${\bf H}_i \subset {\bf H}$.

We fix an arbitrary base point $y\in Y$ and we set $y_i=U_i(y)\in \mathbf P^1\setminus \mathfrak R_i$ for $i=1,\ldots,d$. 
For any $i$, since ${\bf H}_i=U_i^*\big(\boldsymbol{\mathcal H}_i\big)$ and because this is obvious for $\boldsymbol{\mathcal H}_i$, the $\Theta_{i,s}$ satisfy the conditions \eqref{Eq:Conditions-II}, hence there are well-defined iterated integrations morphisms
$$
{\bf II}^w_{Y,y} : {\bf H}_i^{\otimes w} \longrightarrow \mathcal O_{Y,y}
\qquad \mbox{and}\qquad 
{\bf II}^w_{\mathbf P^1,y_i} : \boldsymbol{\mathcal H}_i^{\otimes w} \longrightarrow \mathcal O_{\mathbf P^1,y_i}
$$
which are easily seen to satisfy ${\bf II}^w_{Y,y}=U_i^*\circ {\bf II}^w_{\mathbf P^1,y_i}$ where $U_i^*$ stands here for the natural map $
\boldsymbol{\mathcal H}_i^{\otimes w}\simeq {\bf H}_i^{\otimes w}$ 
induced by the pull-back map $U_i^* : 
\boldsymbol{\mathcal H}_i\simeq {\bf H}_i$. In other terms, 
for any $\boldsymbol{\Theta}=\Theta_{i,s_1}\otimes \cdots \otimes \Theta_{i,s_w}=U_i^*\big( \boldsymbol{\eta}\big) $ with $\boldsymbol{\eta}=
\eta_{i,s_1}\otimes \cdots \otimes \eta_{i,s_w}$, one has 
 $$ L_{\boldsymbol{\Theta}}={\bf II}^w_{Y,y}\big(\boldsymbol{\Theta} \big) 
={\bf II}^w_{\mathbf P^1,y_i}\big(\boldsymbol{\eta} \big) =L_{\boldsymbol{\eta}}\circ U_i
$$
as holomophic germs at $y$ and this extends by linearity to any element of ${\bf H}_i^{\otimes w}$. 

From the elementary facts above, it follows that for any element $\boldsymbol{\Omega}=\big( \Omega_i\big)_{i=1}^d\in  {\bf HLogAR}^w$, then 
there exists $\big( \omega_i\big)_{i=1}^d \in \oplus_{i=1}^d \boldsymbol{\mathcal H}_i^{\otimes w}$ such that one has $\Omega_i=U_i^*(\omega_i)$ for any $i=1,\ldots,d$.  Thus the $d$-tuple of 
 hyperlogarithms $(L_{\omega_i})_{i=1}^d$ satisfies $\sum_{i=1}^d L_{\omega_i}(U_i)=0$ (equality in $\mathcal O_{Y,y}$) hence $L_{\boldsymbol{\Omega}}=
\big(L_{\omega_i}(U_i)\big)_{i=1}^d $ is a hyperlogarithmic AR of weight $w$ for $\boldsymbol{\mathcal W}$ at $y$.  
Denoting by $\boldsymbol{AR}(\boldsymbol{\mathcal W})$ the space of ARs of $\boldsymbol{\mathcal W}$ on any fixed but arbitrary open domain containing $y$, we thus obtain a linear map ${\bf II}^w:    {\bf HLogAR}^w \rightarrow \boldsymbol{AR}(\boldsymbol{\mathcal W})$ whose injectivity follows at once from that of 
\eqref{Eq:IIyw}. Using the symbol, we obtain that all the spaces ${\bf II}^w( {\bf HLogAR}^w)$'s for $w\geq 1$ are in direct sum in $\boldsymbol{AR}(\boldsymbol{\mathcal W})$ from which we obtain a linear injective map 
\begin{equation}
\label{Eq:II-W}
{\bf II}={\bf II}_{ \boldsymbol{\mathcal W} } \, : \,  {\bf HLogAR}=\oplus_{w\geq1}  {\bf HLogAR}^w \longrightarrow \boldsymbol{AR}(\boldsymbol{\mathcal W})\, . 
\end{equation}
For each $w\geq 1$, we set $\boldsymbol{A\hspace{-0.03cm}R}^w(\boldsymbol{\mathcal W})=
{\bf II}^w( {\bf HLogAR}^w)$ and call its elements the weight $w$ hyperlogarithmic ARs  of
$\boldsymbol{\mathcal W}$ (at $y$).  A global perspective requires a bit of additional formalism: the $\boldsymbol{\hspace{-0.03cm}R}^w(\boldsymbol{\mathcal W})$'s defined locally above actually do not give rise to global local systems on $Y$, but the 
direct sums $\boldsymbol{A\hspace{-0.03cm}R}^{\leq w}(\boldsymbol{\mathcal W})
= \oplus_{w'\leq w} \boldsymbol{AR}^{w'}(\boldsymbol{\mathcal W})$ do 
(this follows from the fact that hyperlogarithms have unipotent monodromies). 
They are  the pieces of what we call the  {\it `hyperlogarithmic filtration'} 
$\boldsymbol{HF}_{\boldsymbol{\mathcal W}}=
\boldsymbol{AR}^{\leq \, \bullet }(\boldsymbol{\mathcal W})
$ of $\boldsymbol{\mathcal W}$. As one verifies easily, for any weight $w$, one recovers the 
symbolic space 
$ {\bf HLogAR}^w$ as the $w$-th piece of the graded space associated to 
$\boldsymbol{HF}_{\boldsymbol{\mathcal W}}$:
$$
{\rm Gr}^w \left(\boldsymbol{HF}_
{\boldsymbol{\mathcal W}}\right)={\bf HLogAR}^w\, .
$$

Since they are algebraic objects, it is much more convenient to work with the symbolic spaces ${\bf HLogAR}^w$ for $w\geq 1$, than with the associated genuine (multivalued) hyperlogarithmic ARs, because of their multivaluedness. This is what we will do with the del Pezzo webs 
${\boldsymbol{\mathcal W}}_{ {\rm dP}_5} $ and 
${\boldsymbol{\mathcal W}}_{ {\rm dP}_4} $ which 
we will study further.

We define the `{\it weight $w$} (resp.\,the {\it total}) {\it hyperlogarithmic rank} ${\rm HLrk}^w({\boldsymbol{\mathcal W}})$'  (resp.\,${\rm HLrk}({\boldsymbol{\mathcal W}})$) of the web ${\boldsymbol{\mathcal W}}$ as the dimension 
of $ {\bf HLogAR}^w$ (resp.\,of ${\bf HLogAR}$): one has 
$$
{\rm HLrk}^w({\boldsymbol{\mathcal W}})=\dim\,  {\bf HLogAR}^w\\
\quad \mbox{ and } \quad 
{\rm HLrk}({\boldsymbol{\mathcal W}})= \dim\,  {\bf HLogAR}=\sum_{w\geq 1}
{\rm HLrk}^w({\boldsymbol{\mathcal W}})\, . 
$$
We will say that $\boldsymbol{\mathcal W}$ has {\it `all its ARs hyperlogarithmic'} when the map \eqref{Eq:II-W} is an isomorphism or equivalently, when 
${\rm HLrk}({\boldsymbol{\mathcal W}})={\rm rk}({\boldsymbol{\mathcal W}})$.

For $w\geq 1$ fixed, 
the symmetric group $\mathfrak S_w$ acts naturally 
on the tensor spaces ${\bf H}_i^{\otimes w}$'s and ${\bf H}^{\otimes w}$ and the map
 $\Phi^w$ is equivariant  for these actions. 
 It follows that $\mathfrak S_w$ acts naturally on ${\bf HLogAR}^w$ and that 
  \eqref{Eq:ExactSequence-1} actually can be seen 
 as a  short exact sequence in the category of $\mathfrak S_w$-modules. 
As a consequence, it follows that there is a decomposition of 
${\bf HLogAR}^w$ as a direct sum of irreducible 
$\mathfrak S_w$-modules. These being encoded by the partitions $\boldsymbol{\lambda} $ of $w$, one obtains a $\mathfrak S_w$-equivariant decomposition 
$$
{\bf HLogAR}^w=\oplus_{   \boldsymbol{\lambda} \vdash w} 
{\bf HLogAR}^w_{  \boldsymbol{\lambda} }
$$
where for any partition $ \boldsymbol{\lambda} \vdash w$, ${\bf HLogAR}^w_{  \boldsymbol{\lambda} }$ stands for the sum of all the submodules of 
${\bf HLogAR}^w$ which are isomorphic to the `the' irreducible  
$\mathfrak S_w$-module $V_{\boldsymbol{\lambda}}$ associated with the partition $\boldsymbol{\lambda}$. 

There is nothing new  for $w=1$, the first interesting case is when $w=2$. There are only two partitions to consider,  $(2)$ and $(1,1)$ and the corresponding 
decomposition can be written 
$$
{\bf HLogAR}^2=
{\bf HLogAR}^2_{\rm asym} \oplus 
{\bf HLogAR}^2_{\rm sym} 
$$
where ${\bf HLogAR}^2_{\rm sym}$ (resp.\,${\bf HLogAR}^2_{\rm asym}$) stands for the space of $d$-tuples $(\Omega_i)_{i=1}^d\in 
 \oplus_{i=1}^d {\bf H}_{i}^{\otimes 2}$ such that $\sum_i \Omega_i=0$ 
 and 
 where 
 each symbol $\Omega_i$ is a linear combination of symmetrized elements 
 $\eta_{i,s}\circ \eta_{i,s'}=(\eta_{i,s}\otimes \eta_{i,s'}+\eta_{i,s'}\otimes \eta_{i,s})/2$ 
  (resp.\,antisymmetrized elements 
 $\eta_{i,s}\wedge \eta_{i,s'}=(\eta_{i,s}\otimes \eta_{i,s'}-\eta_{i,s'}\otimes \eta_{i,s})/2$) with $s,s'=1,\ldots,m_i$. Accordingly, the corresponding decomposition of $\boldsymbol{AR}^2
 ( \boldsymbol{\mathcal W})$ will be written
 $$
 \boldsymbol{AR}^2
 ( \boldsymbol{\mathcal W})=
 \boldsymbol{AR}^2_{\rm asym}
 ( \boldsymbol{\mathcal W})\oplus
 \boldsymbol{AR}^2_{\rm sym}
 ( \boldsymbol{\mathcal W})\, .
  $$

\subsubsection{\bf Monodromies and residues}
\label{SSS:Monodromies-Residues}
Here we describe two ways to get new ARs from a given one.  The first is topological and goes by analytic continuation and applies to any abelian relation.  The second is algebraic and only concerns hyperlogarithmic ARs. 

We continue to use the notation considered above. Let $y $ be in $ Y$ and let $\boldsymbol{A}$ be a germ of abelian relation at this point. As explained above, 
$\boldsymbol{A}$ extends holomorphically along any path $\gamma$ in $Y$ originating from $y$. 
If  $\gamma$ is a loop based at $y$, the analytic continuation 
$ \mathcal M_{[\gamma]}(\boldsymbol{A})= \boldsymbol{A}^\gamma$ of $\boldsymbol{A}$ along $\gamma$ 
is an AR of $\boldsymbol{\mathcal W}$ at $y$ which only depends on the homotopy class $[\gamma]$ of $\gamma$ as a based loop. 
We thus define a {\it `monodromy representation'}  
\begin{align}
\label{Eq:Monodromy}
\pi_1(Y,y) & \longrightarrow {\rm GL}\Big( \boldsymbol{AR}
 \big( \boldsymbol{\mathcal W}\big) \Big)
\footnotemark
  \\
 [\gamma] & \longmapsto \,  \mathcal M_{[\gamma]} :  \boldsymbol{A} \longmapsto 
  \boldsymbol{A}^\gamma
 \nonumber
\end{align}
 \footnotetext{More rigorously, the monodromy representation has values in 
 $ {\rm GL}\Big( \boldsymbol{AR}
 \big( \boldsymbol{\mathcal W}\big)_y \Big)$ where $\boldsymbol{AR}
 \big( \boldsymbol{\mathcal W}\big)_y$ stands for the vector space of {\it `germs of ARs of $\boldsymbol{\mathcal W}$ at $y$'}.  
  We take the liberty here and after 
   of not being very rigorous with the notation.}
whose orbits it could be interesting to study. For instance, given 
 a certain subspace $E\subset \boldsymbol{AR}
 ( \boldsymbol{\mathcal W})$, one will say that {\it `the abelian relation $\boldsymbol{A}$ spans $E$ by monodromy'} whenever one has 
 $${\rm Monod}\big(\boldsymbol{A}\big) \stackrel{\scalebox{0.8}{${\rm def}$}}{=} 
 \Big\langle \, \pi_1(Y,y) \cdot \boldsymbol{A}\,  \Big\rangle=E\, .$$  
In the extremal case when ${\rm Monod}\big(\boldsymbol{A}\big) =\boldsymbol{AR}
 ( \boldsymbol{\mathcal W})$, then $\boldsymbol{A} $ has to be considered as `the'   fundamental AR for the web $\boldsymbol{\mathcal W}$ since all the others can be obtained from it by analytic continuation (up to linear combinations). Abel's AR of Bol's web is an example of such a very specific AR.

If one limits oneself to considering only hyperlogarithmic ARs, there are even more structures associated with the action of monodromy.  Let 
$ 
{\bf HLogAR}^{\leq \bullet}$
 be the trivial filtration 
 defined by 
 ${\bf HLogAR}^{\leq w}=\oplus_{w'\leq w} {\bf HLogAR}^{w'}$  for any $w\geq 1$. Because hyperlogarithms have unipotent monodromy, the choice of any base point 
$y\in Y$ gives rise to a structure of unipotent $\pi_1(Y,y)$-module on the 
filtered space ${\bf HLogAR}^{\leq \bullet}$ given by 
\begin{equation}
\label{Eq:Pi1(Y,y)-module}
[\gamma]\cdot \boldsymbol{\Omega}= {\bf II}_y^{-1} \left( \gamma\cdot {\bf II}_y\big( 
\boldsymbol{\Omega}\big) 
\right)
\end{equation}
for any $[\gamma]\in \pi_1(Y,y)$ and any $\boldsymbol{\Omega}\in {\bf HLogAR}^{w}$. This structure of $\pi_1(Y,y)$-module defined by \eqref{Eq:Pi1(Y,y)-module}  depends on the base point $y$ but in a non essential way that we let the reader to make precise.  To \eqref{Eq:Pi1(Y,y)-module}   is associated a representation 
$\rho_{Y,y} : \pi_1(Y,y)\rightarrow {\rm Aut}\big( 
{\bf HLogAR}^{ \leq \bullet} \big) $  which depends on $y$. We let the reader verify that the image  
$\Pi_1={\rm Im}(\rho_{Y,y})$ is a  subgroup of the automorphism 
group of the filtration ${\bf HLogAR}^{\leq \bullet}$ which is well-defined 
up to conjugation.
\sk

For hyperlogarithmic ARs, there is another, more algebraic, way to get new ARs from a given one. Assume that all the elements of the space ${\bf H}=\sum_i {\bf H}_i$ have logarithmic poles along  any irreducible component of $Z=X\setminus Y$. For such a component $D$, there is a residue map ${\rm Res}_D: {\bf H} \mapsto \mathbf C$.\footnote{A priori, the image of an element of ${\bf H}$  by 
${\rm Res}_D$ is a holomorphic function on $D$ minus }
 For any pure tensor $ h_1\otimes \cdots \otimes h_w\in {\bf H}^{\otimes w}$, one sets 
\begin{equation}
\label{Eq:Res-D}
{\rm Res}_D\big( h_1\otimes \cdots \otimes h_w\big)={\rm Res}_D( h_1)\, \big(h_2\otimes \cdots \otimes h_w\big)
\end{equation}
 and extending it by linearity, we get a linear map ${\rm Res}_D: {\bf H}^{\otimes w}\rightarrow 
{\bf H}^{\otimes (w-1)}$. We define similarly 
 ${\rm Res}_{i,D}: {\bf H}_i^{\otimes w}\rightarrow 
{\bf H}_i^{\otimes (w-1)}$ for each $i$ and one verifies that the residue maps commute, which gives us a residue map at the level of symbolic hyperlogarithmic abelian relations:
\begin{align*}
{\rm Res}_D : {\bf HLogAR}^w& \longrightarrow  {\bf HLogAR}^{w-1}\\
\big( \Omega_i\big)_{i=1}^d & \longmapsto  \Big( {\rm Res}_{i,D}(\Omega_i)\Big)_{i=1}^d\, .
\end{align*}

Given a hyperlogarithmic  abelian relation $\boldsymbol{\Omega} \in {\bf HLogAR}^w $ and an irreducible component $D$ as above, the iterates $({\rm Res}_D)^{\circ k}(\boldsymbol{\Omega} )\in {\bf HLogAR}^{w-k}$ for $k=0,1,\ldots,w-1$ generate 
the smallest ${\rm Res}_D$-stable subspace of ${\bf HLogAR}$ containing $\boldsymbol{\Omega}$: by definition, it is the {\it `space spanned by $\boldsymbol{\Omega}$ by taking residues along $D$'}. 
When taking the span of these spaces for all the irreducible components $D$ of $Z$, we obtain the 
{\it `space obtained from $\boldsymbol{\Omega}$ by residues'}, denoted by 
$$
{\rm Res}(\boldsymbol{\Omega})= \Big\langle \, \big({\rm Res}_D\big)^{\circ k}(\boldsymbol{\Omega} ) \, \big\lvert
\, k\geq 0, \, D\subset Z\, \mbox{ irred}\, \Big\rangle  \,.  $$ 

For  hyperlogarithmic ARs, the operations to get new ARs for a given one by monodromy or by `residues' are related, at least `locally'. Indeed, let $y_D$ be a generic point of  an irreducible component $D$  of $Z$. We fix a small transversal  to $D$ at $y_D$, namely a holomorphic embedding $T_D: \Delta\rightarrow X$  of the unit disk $\Delta\subset \mathbf C$ such that $T_D(\Delta)$ is transverse to $D$ at $T_D(0)=y_D$.  We fix $z\in \Delta^*$, we set $y=T_D(z)$  and we consider the loop $\gamma_D: [0,1]\rightarrow \Delta, \, t\mapsto z\,e^{2i\pi t}$ whose homotopy class spans $\pi_1(\Delta^*,z)\simeq \mathbf Z$.  Then $\widetilde \gamma_D= T_D\circ \gamma_D$ spans the {\it `local fundamental group along $D$ (at $y_D$)'} and  the monodromy 
map $\mathcal M_{[\widetilde \gamma_D]}$ ({\it cf.}\,\eqref{Eq:Monodromy}) induces 
an endomorphism of $\boldsymbol{F}^w{\bf HLogAR}=\oplus_{w'\leq w} {\bf HLogAR}^{w'}$ for any weight $w\geq 1$, that we call the `{\it local monodromy operator around $D$ at $y_D$'} and denote by $\mathcal M_{D,y_D}^{\rm loc}$.

Considering the monodromy of hyperlogarithms which is 
well-known (see \eqref{Eq:Monodromy-hyperlog-Lw}) and in particular unipotent, one obtains first that the 
`{\it local variation around $D$ at $y_D$'}, which by definition is the linear operator 
$$V_{[\widetilde \gamma_D]}= \mathcal M_{[\widetilde \gamma_D]}-{\bf I}{\rm d}\, , $$
sends ${\bf HLogAR}^{w}$ into 
${\bf HLogAR}^{w-1}$.  A bit more work gives us that, as linear maps from ${\bf HLogAR}^{w}$ to 
${\bf HLogAR}^{w-1}$,   the following equality  holds true:
$$
V_{[\widetilde \gamma_D]}=2i\pi \,{\rm Res}_D \, .
$$

For $\boldsymbol{\Omega}\in {\bf HLogAR}^w $ and $y\in Y$ arbitrary, 
the space $\langle \pi_1(Y,y)\cdot \boldsymbol{\Omega}\rangle$ spanned by the global monodromy of course contains those spanned by the `local monodromies' around all the irreducible components of $Z$.\footnote{These claims are not fully correct as currently stated. We let the reader make this precise and fully rigorous.} It follows that 
$$  {\rm Res}\big(\boldsymbol{\Omega}\big)
\subset
{\rm Monod}\big(\boldsymbol{\Omega}\big)
\, .$$

When the local monodromies $\mathcal M_{D}^{\rm loc}$ 
 around the irreducible 
divisors $D$ of $Z$ span the global mondromy in  \eqref{Eq:Monodromy},\footnote{Details must be added to make this sentence valid. This is left to the reader.} 
constructing new ARs from 
a hyperlogarithmic abelian relation 
 by residues is just an algebraic 
counterpart to the topological method by monodromy and  in particular one has 
\begin{equation}
\label{Eq:Res=Monod}
{\rm Res}\big(\boldsymbol{\Omega}\big)
=
{\rm Monod}\big(\boldsymbol{\Omega}\big)
\end{equation}
 for any   $\boldsymbol{\Omega}\in {\bf HLogAR}$.  The case when $Y$ can be taken as the complement of a curve in $\mathbf P^2$ is interesting because it covers the cases of del Pezzo surfaces as we will see further 
(cf.\,\eqref{Eq:U-L}).  Given a reduced algebraic curve $C\subset \mathbf P^2$ of degree $d$, there is a classical description of the set of generators of the fundamental group of the complement $\mathbf P^2\setminus C$ due to Van Kampen. Let $L\subset \mathbf P^2$ be a line intersecting $C$ transversely in $d$ distinct points $q_1,\ldots,q_d$.  One sets $L^*=L\setminus C$, 
one chooses a base point $y\in L^*$ and for all $i=1,\ldots,d$, one takes a loop $\gamma_i$  supported in $ L^*$, starting from $y$ and going close to $q_i$, making a small circle around this point in the direct order, and then returning to $y$ by traveling along the same path as at the beginning but in the opposite direction. Moreover, one assumes that the supports of the loops $\gamma_i$ do not cross. 
Then the homotopy classes of the $\gamma_i$'s generate $\pi_1( L^*,y)$ hence generate $\pi_1( \mathbf P^2\setminus C,y)$ up to the natural embedding 
of the former fundamental group into the latter. It follows that, in this situation, equality  \eqref{Eq:Res=Monod} holds true. 

An  interesting and specific case is when $\boldsymbol{\mathcal W}$ has maximal rank with a peculiar AR $\boldsymbol{\mathcal A}$ such that 
$${\rm Res}\big(\boldsymbol{\mathcal A}\big)
=
{\rm Monod}\big(\boldsymbol{\mathcal A}\big)
= \boldsymbol{AR}\big( \boldsymbol{\mathcal W}\big)\, .$$
 In such a case, $\boldsymbol{\mathcal A}$ is essentially uniquely defined and has to be considered as `the' most fundamental abelian relation of the considered web $\boldsymbol{\mathcal W}$. 
 

\subsection{Del Pezzo surfaces and their webs by conics}
\label{SS:dP-surfaces-webs-conics}
We  first recall some well-known material about del Pezzo surfaces then we discuss the webs induced by the fibrations in conics on them. We end by remarking that the Weyl group naturally acting on the Picard lattice of such a surface acts also on the space of (symbolic) hyperlogarithms we have to work with in order to handle the hyperlogarithmic ARs of the del Pezzo webs. 
We are going to be fairly succinct in this subsection, referring to 
the fourth chapter of Manin's book 
\cite{Manin} or in the eigth of Dolgachev's one \cite{Dolgachev} for
everything to do with the basics of the theory of del Pezzo's surfaces.

\subsubsection{\bf Del Pezzo surfaces}
\label{SSS}
Here we denote by  $r$  an integer in $\{3, 4,5,6,7,8\}$ and $d$ stands  for $9-r\in \{1,2,3,4,5,6\}$. A del Pezzo surface is the 
total space of the 
blow-up $b: {\rm dP}_d\rightarrow \mathbf P^2$ of the projective plane at 
$r$ points $p_1,\ldots,p_r$ in general position.  To indicate the number of points in the blow up, we use the notation $X_r$ as well. 
The anticanonical  sheaf $-K_{X_r}$ is ample (even very ample if $r\leq 6$) with anticanonical degree $(-K_r)^2$ equal to $d$. 

The Picard lattice ${\bf Pic}(X_r)$ is freely spanned over $\mathbf Z$ by 
the class $h$ of the pull-back under $b$ of a general line in $\mathbf P^2$ together with 
the classes of the exceptional divisor $\ell_i=b^{-1}(p_i)$ for $i=1,\ldots,$: one has ${\bf Pic}(X_r)=\mathbf Z \boldsymbol{h}\oplus \bigoplus_{i=1}^r \mathbf Z \boldsymbol{\ell}_i\simeq \mathbf Z^{r+1}$. Moreover, the intersection pairing $(\cdot,\cdot): {\bf Pic}(X_r)^2\rightarrow \mathbf Z$ is non degenerate with signature $(1,r)$. The canonical class of $X_r$ is given by 
$$K_r=K_{X_r}=-3\boldsymbol{h}+\sum_{i=1}^r \boldsymbol{\ell}_i$$
 and is   such that 
$K^2_r=d$.  Its orthogonal $K_r^\perp=\big\{ \, \boldsymbol{\rho} \in {\bf Pic}(X_r)\, \big\lvert \, 
(\boldsymbol{\rho}\cdot K_r)=0\, \big\}$ is free of rank $r$ and spanned by the $r$ classes 
\begin{equation}
\label{Eq:Fundamental-Roots}
\boldsymbol{\rho}_i=\boldsymbol{\ell}_i-\boldsymbol{\ell}_{i+1}  \quad \big( \, i=1,\ldots,r-1\,\big)\qquad 
\mbox{ 
 and } \qquad \boldsymbol{\rho}_r=\boldsymbol{h}-\boldsymbol{\ell}_1-\boldsymbol{\ell}_2-\boldsymbol{\ell}_3\, .
 \end{equation}

Each of the class $\boldsymbol{\rho}_i$ is such that 
$(\boldsymbol{\rho}_i,K_r)=0$ and $ \boldsymbol{\rho}_i^2=-2$. Together with the positive definite symmetric form $ - \big( \cdot,\cdot)\lvert_{K_r^\perp}$, the $\rho_i$'s define a root system of type $E_r$, with the convention that $E_4=A_4$ and $E_5=D_5$. The corresponding set of roots is $\mathcal R_r=\{ \, \boldsymbol{\rho}\, \lvert \, (\boldsymbol{\rho},K_r)=0\, \mbox{ and } \, \boldsymbol{\rho}^2=-2\}$ which is contained in the associated 
root space $R_r=K_r^\perp\otimes_{\mathbf Z} \mathbf R=\oplus_{i=1}^r \mathbf R\,\boldsymbol{\rho}_i$. 
Endowed with (the restriction of)  $-(\cdot,\cdot)$, the latter is  a Euclidean vector space.
For each root $\boldsymbol{\rho}$, the map 
\begin{equation}
\label{Eq:s-rho}
s_{\boldsymbol{\rho}}: R_r\rightarrow R_r, \, d\mapsto d+(d,{\boldsymbol{\rho}})\,{\boldsymbol{\rho}}
\end{equation}
 is a hyperplane reflection of $R_r$, admitting ${\boldsymbol{\rho}}$ as $(-1)$-eigenvector hence with invariant hyperplane ${\boldsymbol{\rho}}^\perp\subset R_r$.  The subgroup of the group of orthogonal transformations of  $R_r$ 
spanned by the $s_i=s_{{\boldsymbol{\rho}}_i}$ for $i=1,\ldots,r$ is a Coxeter group (that is it is finite). It is the so-called `Weyl group of type $E_r$' and will be denoted by $W(E_r)$ or shortly by $W_r$. 

In the two cases we are interested in in this paper, namely when $r=4,5$,
there are isomorphisms 
$$
W_4=W(A_4)\simeq \mathfrak S_5 \qquad \mbox{and}\qquad 
W_5=W(D_5)\simeq \big( \mathbf Z/{2\mathbf Z}\big)^4  \ltimes \mathfrak S_5
$$
which can be described explicitly in terms of the generators $s_i$ of the two Weyl groups (this is left to the reader). 

\subsubsection{\bf Lines and conics} 
\label{SSS:Lines-Conics-properties}
We now discuss several notions, objects and facts related to the lines  and to the conics contained in del Pezzo surfaces.

By definition, a {\it `line'} is a class $\boldsymbol{\ell} \in {\bf Pic}(X_r)$ such that $(K_r,\boldsymbol{\ell})=\boldsymbol{\ell}^2=-1$. 
This terminology makes sense for the following reason: from $\boldsymbol{\ell}^2=-1$, one deduces first that the associated linear system $\lvert \,\boldsymbol{\ell} \,\lvert$ is a singleton which is a smooth rational curve embedded in $X_r$, denoted by ${\ell}$ (ie. by the same but unbolded notation). Second, when $-K$ is very ample (that is when $r \leq 6$), the condition $(K_r,\boldsymbol{\ell})=-1$ means that $\varphi_{\lvert -K\lvert}({\ell})$ is a projective line in the target projective space $ \lvert\, -K_r\lvert^\vee\simeq \mathbf P^d$. 

The set  $\boldsymbol{\mathcal L}_r$ of lines included in $X_r$ is finite and its elements all can be explicitly given. In the two cases we will consider in this paper, these sets are 
\begin{align*}
\boldsymbol{\mathcal L}_4=  \Big\{
\hspace{0.1cm} \boldsymbol{\ell}_i\, ,\,  
\boldsymbol{h}-\boldsymbol{\ell}_j-\boldsymbol{\ell}_k
\hspace{0.1cm}
\Big\} \qquad 
\mbox{and }\qquad  
\boldsymbol{\mathcal L}_5=  \Big\{
\hspace{0.1cm} \boldsymbol{\ell}_i\, ,\,  
\boldsymbol{h}-\boldsymbol{\ell}_j-\boldsymbol{\ell}_k\, , \, 
2\boldsymbol{h}-\boldsymbol{\ell}_{tot} \hspace{0.1cm}
\Big\}
\end{align*}
where for $r=4,5$, the indices $i,j,k$ are  such that $i=1,\ldots,r$ and $1\leq j<k\leq  r$ and where we use the notation $\boldsymbol{\ell}_{tot}=\sum_{i=1}^r 
\boldsymbol{\ell}_i$.  It follows that one has 
$\lvert \, \boldsymbol{\mathcal L}_4 \, \lvert=10$ and 
$\lvert \, \boldsymbol{\mathcal L}_5 \, \lvert=16$.

A `{\it conic class}' on $X_r$ is an element  $\boldsymbol{\mathfrak c}\in {\bf Pic}(X_r)$ such that $(-K_r,\boldsymbol{\mathfrak c})=2$ and $\boldsymbol{\mathfrak c}^2=0$.  From the second condition, one deduces that for any such class, 
the linear system $\lvert \,\boldsymbol{\mathfrak c} \,\lvert$ has dimension 1 and gives rise to a  fibration
 $ \varphi_{\boldsymbol{\mathfrak c}}
: X_r\rightarrow 
\lvert \,\boldsymbol{\mathfrak c} \,\lvert ^\vee\simeq \mathbf P^1$ whose fibers are conics ({\it i.e.} rational 
curves with anticanonical degree 2). The description of conic classes is classical and not difficult:  
for instance, see Table 2 in \cite{CP} for the case $r=8$ (that is, case of del Pezzo surface of degree 1), from which the corresponding descriptions for any $r$ can be deduced easily.  In the two cases $r=4$ and $r=5$ of interest in this paper, we 
recall that $ \boldsymbol{\ell}_{tot}= \sum_{i=1}^r \boldsymbol{\ell}_{i}$ 
and setting  $\boldsymbol{\ell}_{\hat \imath}=
\boldsymbol{\ell}_{tot}-\boldsymbol{\ell}_i=\sum_{j\neq i} \boldsymbol{\ell}_j$ 
for $i=1,\ldots,r$, one has:  
\begin{equation}
\label{Eq:K4-K5}
\boldsymbol{\mathcal K}_4= \Big\{  \hspace{0.1cm}  \boldsymbol{h}-\boldsymbol{\ell}_i\, , \, 2\boldsymbol{h}- \boldsymbol{\ell}_{tot}
\hspace{0.1cm}  \big\lvert \hspace{0.1cm}  
i=1,\ldots,4\, 
 \Big\}
\quad 
\mbox{ and } 
\quad 
\boldsymbol{\mathcal K}_5=  \Big\{  \hspace{0.1cm} 
\boldsymbol{h}-\boldsymbol{\ell}_i
\, , \, 
2\boldsymbol{h}- \boldsymbol{\ell}_{\hat \imath}
\hspace{0.1cm}  \big\lvert \hspace{0.1cm}  
i=1,\ldots,5\, 
\Big\}\, .
\end{equation}

 The Weyl group $W_r$ acts transitively on the set  of lines $\boldsymbol{\mathcal L}_r$ which gives rise to a structure of $W_r$-module on $\boldsymbol{R}^{\boldsymbol{\mathcal L}_r}$ for any ring $\boldsymbol{R}$ (such as the ring of integers $\mathbf Z$ or the field $\mathbf C$).  
Since the action of each generator $s_i$  of $W_r$ on the set of lines ${\boldsymbol{\mathcal L}_r}$ can be described explicitly, 
 determining the decomposition of $\mathbf C^{ \boldsymbol{\mathcal L}_r}$  into irreducible $W_r$-modules can be done by straightforward computations within the theory of characters of $W_r$ (see \S3.2 and the Appendix in our preprint \cite{Hlog}{\rm )}: 
\begin{prop}
\label{Prop:Key-C-Lr-Wr-modules}
For $r=4,\ldots,8$, one has the following decompositions of\, $\mathbf C^{ \boldsymbol{\mathcal L}_r}$ in irreducible $W_r$-modules:\footnote{See \cite[Remark 3.3]{Hlog} for some explanations about the notations used for the $W_r$-irreducibles.} 
\begin{align}
\label{Eq:Decomposition-Wr}
\mathbf C^{ \boldsymbol{\mathcal L}_4}=& \, {\bf 1}\oplus V_{[41]}^4\oplus V_{[32]}^5 \nonumber \\
\mathbf C^{ \boldsymbol{\mathcal L}_5}=&  \, 
{\bf 1}\oplus V_{[4,1]}^5\oplus V_{[3,2]}^{10} 
 \nonumber   \\
\mathbf C^{ \boldsymbol{\mathcal L}_6}=&  
\, {\bf 1}\oplus V^{6,1}\oplus V^{20,2} 
\\
\mathbf C^{ \boldsymbol{\mathcal L}_7}=&   \, 
{\bf 1} \oplus V^{7,1}\oplus V^{21,3}\oplus V^{27,2}
%
%
\nonumber 
\\
\mathbf C^{ \boldsymbol{\mathcal L}_8}=&   \,  
{\bf 1} \oplus V^{8,1} 
\oplus V^{35,2} 
\oplus V^{84,4} 
\oplus V^{112,3} \, .
\mk
 \nonumber 
 \sk
\end{align}
\end{prop}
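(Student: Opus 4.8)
The plan is to read $\mathbf C^{\boldsymbol{\mathcal L}_r}$ as the complex permutation module attached to the $W_r$-action on the finite set of lines $\boldsymbol{\mathcal L}_r$, so that the whole statement reduces to classical character theory. Writing $\chi_r$ for the permutation character, one has $\chi_r(w)=\#\{\,\boldsymbol{\ell}\in\boldsymbol{\mathcal L}_r\mid w\cdot\boldsymbol{\ell}=\boldsymbol{\ell}\,\}$, and the multiplicity of an irreducible $W_r$-module $V$ in $\mathbf C^{\boldsymbol{\mathcal L}_r}$ is the inner product
\[
\big\langle \chi_r,\chi_V\big\rangle=\frac{1}{|W_r|}\sum_{w\in W_r}\chi_r(w)\,\overline{\chi_V(w)}\,.
\]
Since $W_r$ acts transitively on $\boldsymbol{\mathcal L}_r$ (as recalled just before the statement), the trivial module $\mathbf 1$ occurs with multiplicity exactly one, which already yields the summand $\mathbf 1$ in each line of \eqref{Eq:Decomposition-Wr}. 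Equivalently, $\mathbf C^{\boldsymbol{\mathcal L}_r}=\mathrm{Ind}_{H_r}^{W_r}\mathbf 1$ for $H_r$ the stabilizer of a line, so by Frobenius reciprocity every multiplicity equals $\dim V^{H_r}$; the relevant stabilizers are the Weyl groups of the orthogonal root subsystems, forming the chain $H_4=W(A_1\times A_2)$, $H_5=W(A_4)=\mathfrak S_5$, $H_6=W(D_5)$, $H_7=W(E_6)$, $H_8=W(E_7)$.

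Before identifying the constituents I would pin down how many there are by computing the rank of the permutation action, i.e.\ the number of $W_r$-orbits on $\boldsymbol{\mathcal L}_r\times\boldsymbol{\mathcal L}_r$, which by Burnside equals $\langle\chi_r,\chi_r\rangle=\sum_V\langle\chi_r,\chi_V\rangle^2$. For configurations of lines on del Pezzo surfaces these orbits are exactly the fibres of the intersection pairing $(\boldsymbol{\ell},\boldsymbol{\ell}')\mapsto(\boldsymbol{\ell}\cdot\boldsymbol{\ell}')$, since $W_r$ acts transitively on ordered pairs of lines with any fixed intersection number (a classical property of del Pezzo line configurations). The admissible values of $(\boldsymbol{\ell}\cdot\boldsymbol{\ell}')$ are $-1$ on the diagonal together with $0,1$ for $r\le 6$, with one further value (namely $2$) for $r=7$ and two further values ($2$ and $3$) for $r=8$, whence $\langle\chi_r,\chi_r\rangle$ equals $3,3,3,4,5$ for $r=4,\dots,8$. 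Consequently, once $3,3,3,4,5$ distinct irreducibles have each been shown to occur, the equality $\sum_V\langle\chi_r,\chi_V\rangle^2=\langle\chi_r,\chi_r\rangle$ forces all those multiplicities to equal $1$ and no further constituent to appear, so the decompositions are multiplicity-free.

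It remains to identify the constituents, which I would do by pairing $\chi_r$ against the character table of $W_r$ (equivalently, by computing $\dim V^{H_r}$ through the branching rules along the stabilizer chain above). For $r=4$ this is the classical decomposition of the $\mathfrak S_5$-action on the $\binom{5}{2}=10$ two-element subsets of $\{1,\dots,5\}$ as $\mathbf 1\oplus V_{[4,1]}\oplus V_{[3,2]}$ of dimensions $1,4,5$, and for $r=5$ it is comparably elementary using $W(D_5)=(\mathbf Z/2\mathbf Z)^4\rtimes\mathfrak S_5$; both are within reach by hand. The main obstacle is purely computational and concentrated in the cases $r=6,7,8$: one must assemble the fixed-point counts $\chi_r(w)$ over all conjugacy classes of the large exceptional Weyl groups $W(E_6),W(E_7),W(E_8)$, of orders $51840$, $2903040$ and $696729600$, and pair them against the corresponding character tables. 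This bookkeeping is best delegated to a computer algebra system such as GAP, with the dimension identities $1+6+20=27$, $1+7+21+27=56$ and $1+8+35+84+112=240$ serving as a convenient consistency check on the output.
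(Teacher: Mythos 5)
Your proposal is correct, and at bottom it travels the same road as the paper: the paper's proof of this proposition consists precisely of the observation that the action of each generator $s_i$ on $\boldsymbol{\mathcal L}_r$ is explicit, so that the permutation character can be computed and decomposed ``by straightforward computations within the theory of characters of $W_r$'' --- computations carried out in GAP (see the Appendix of the paper and \S 3.2 of the cited preprint \cite{Hlog}), which is exactly the computer-assisted pairing against character tables that your final step delegates to GAP for $W(E_6)$, $W(E_7)$, $W(E_8)$. What you add, and what the paper leaves implicit, is the structural layer that constrains the answer before any such computation: first, the identification $\mathbf C^{\boldsymbol{\mathcal L}_r}\simeq{\rm Ind}_{H_r}^{W_r}\mathbf 1$ with the correct stabilizer chain $W(A_1\times A_2)\subset \mathfrak S_5\subset W(D_5)\subset W(E_6)\subset W(E_7)$ (the orders confirm this, e.g. $\lvert W(E_6)\lvert/27=1920=\lvert W(D_5)\lvert$ and $\lvert W(E_8)\lvert/240=\lvert W(E_7)\lvert$), so that by Frobenius reciprocity every multiplicity is a fixed-vector dimension $\dim V^{H_r}$; second, the Burnside count $\langle \chi_r,\chi_r\rangle=$ number of $W_r$-orbits on $\boldsymbol{\mathcal L}_r\times \boldsymbol{\mathcal L}_r$, which by the classical transitivity of $W_r$ on ordered pairs of lines with fixed intersection number (a result going back to Manin's book) equals the number of intersection values, namely $3,3,3,4,5$. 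This pins down multiplicity-freeness and the exact number of constituents a priori, so the residual computation in the exceptional Weyl groups only has to certify that each listed irreducible actually occurs, and even that is cross-checked by the dimension identities $1+6+20=27$, $1+7+21+27=56$, $1+8+35+84+112=240$. The trade-off is clear: the paper's route is shorter to state but is pure brute force on the full character table, while yours spends two classical geometric inputs (the line stabilizers and the two-point transitivity) to turn the verification into a smaller, multiplicity-free, self-correcting computation; your logic handling the borderline case $r=8$ (where $\sum_{V\neq \mathbf 1}m_V^2=4$ could in principle come from a single multiplicity-$2$ constituent) is also sound, since exhibiting five distinct constituents forces all multiplicities to be one.
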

 
 By its very definition, the set of lines $\boldsymbol{\mathcal L}_r$ is included in 
 the Picard group of $X_r$ hence there is a  natural $\mathbf Z$-linear map 
 \begin{equation}
 \label{Z-Lr->Pic(Xr)}
{\mathbf Z}^{\boldsymbol{\mathcal L}_r} \longrightarrow {\bf Pic}(X_r)
 \end{equation}
which is easily seen to be $W_r$-equivariant and surjective.  Because $W_r$ acts by permutations on $\boldsymbol{\mathcal L}_r$, it lets invariant the total sum $\sum_{\boldsymbol{\ell}\in \boldsymbol{\mathcal L}_r}\boldsymbol{\ell}$ hence it follows that the trivial $W_r$-representation ${\bf 1}$ is a subrepresentation of both spaces in \eqref{Z-Lr->Pic(Xr)}.  Because $K^\perp$ is a subrepresentation of 
${\bf Pic}(X_r)$ which is the fundamental reflection representation $V^{r,1}$ of $W_r$ (with 
$V^{4,1}=V_{[41]}^4$ and $V^{5,1}=V_{[4,1]}^5$ for $r=4,5$ respectively), we get that  for $r=4,\ldots,8$, 
as a $W_r$-module, one has 
$$ {\bf Pic}(X_r)={\bf 1} \oplus V^{r,1}\, .$$
 
For any $r$, one denotes by ${\bf K}_r$ the kernel of  \eqref{Z-Lr->Pic(Xr)} in the category of $W_r$-representations. 
\begin{cor} As $W_r$-representations, one has  
\begin{align}
\label{Eq:Kr}
\mathbf K_4=  &\, V_{[32]}^5\hspace{2cm} 
\mathbf K_5=V_{[3,2]}^{10} \hspace{3cm} 
\mathbf K_6=V^{20,2} \\
\nonumber 
\mathbf K_7=&\, V^{21,3}\oplus V^{27,2}
\quad \quad 
\mathbf K_8=
 V^{35,2} 
\oplus V^{84,4} 
\oplus V^{112,3} \, .
\end{align}
\end{cor}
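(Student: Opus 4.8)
The plan is to deduce everything formally from the short exact sequence of $W_r$-representations underlying the surjection \eqref{Z-Lr->Pic(Xr)}, together with the decomposition of $\mathbf C^{\boldsymbol{\mathcal L}_r}$ recorded in Proposition \ref{Prop:Key-C-Lr-Wr-modules} and the identity ${\bf Pic}(X_r)={\bf 1}\oplus V^{r,1}$ established just above. First I would observe that, since the target ${\bf Pic}(X_r)$ of \eqref{Z-Lr->Pic(Xr)} is a free $\mathbf Z$-module and the map is surjective, the kernel $\mathbf K_r$ is a $W_r$-stable direct summand of $\mathbf Z^{\boldsymbol{\mathcal L}_r}$; in particular the defining exact sequence
$$0\longrightarrow \mathbf K_r \longrightarrow \mathbf Z^{\boldsymbol{\mathcal L}_r}\longrightarrow {\bf Pic}(X_r)\longrightarrow 0$$
stays exact after tensoring with $\mathbf C$ over $\mathbf Z$.

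Next I would invoke semisimplicity. As $W_r$ is finite, Maschke's theorem makes every $\mathbf C[W_r]$-module completely reducible, so the complexified sequence splits and yields an isomorphism of $W_r$-representations
$$\mathbf C^{\boldsymbol{\mathcal L}_r}\simeq \big(\mathbf K_r\otimes_{\mathbf Z}\mathbf C\big)\,\oplus\, \big({\bf Pic}(X_r)\otimes_{\mathbf Z}\mathbf C\big).$$
Since ${\bf Pic}(X_r)\otimes_{\mathbf Z}\mathbf C={\bf 1}\oplus V^{r,1}$, the uniqueness of the isotypic decomposition forces $\mathbf K_r\otimes_{\mathbf Z}\mathbf C$ to be exactly the sum of the remaining irreducible summands of \eqref{Eq:Decomposition-Wr}, once the summands ${\bf 1}$ and $V^{r,1}$ have been removed. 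For this subtraction to be unambiguous one checks, by inspection of \eqref{Eq:Decomposition-Wr}, that both ${\bf 1}$ and $V^{r,1}$ (with $V^{4,1}=V^4_{[41]}$ and $V^{5,1}=V^5_{[4,1]}$) occur with multiplicity exactly one in each $\mathbf C^{\boldsymbol{\mathcal L}_r}$, which is immediate since within each line of \eqref{Eq:Decomposition-Wr} the listed irreducibles have pairwise distinct dimensions.

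Finally I would read off the answer for each $r=4,\ldots,8$: deleting ${\bf 1}\oplus V^{r,1}$ from the corresponding line of \eqref{Eq:Decomposition-Wr} leaves $V^5_{[32]}$, $V^{10}_{[3,2]}$, $V^{20,2}$, $V^{21,3}\oplus V^{27,2}$ and $V^{35,2}\oplus V^{84,4}\oplus V^{112,3}$ respectively, which are precisely the claimed values of $\mathbf K_r$ in \eqref{Eq:Kr}. I do not expect any genuine obstacle here; the only point deserving a word of care is the passage from $\mathbf Z$ to $\mathbf C$ and the implicit identification of $\mathbf K_r$ with its complexification, since the irreducibles $V^{d,k}$ in the statement are complex representations. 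Everything else is a direct consequence of complete reducibility and the multiplicity-one observation above.
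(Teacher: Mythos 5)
Your proof is correct and takes essentially the same route as the paper, which presents \eqref{Eq:Kr} as an immediate consequence of Proposition \ref{Prop:Key-C-Lr-Wr-modules} together with the identity ${\bf Pic}(X_r)={\bf 1}\oplus V^{r,1}$: complexify the defining sequence, split it by complete reducibility, and remove the summands ${\bf 1}$ and $V^{r,1}$ from \eqref{Eq:Decomposition-Wr}. Your explicit treatment of the passage from $\mathbf Z$ to $\mathbf C$ and the multiplicity-one check merely spell out what the paper leaves implicit.
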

\noindent (We will give a natural geometric interpretation of the $W_r$-module ${\bf K}_r$ 
below  in 
 \S\ref{SSS:Del-Pezzo-Properties}).

For each conic fibration $\varphi_{\boldsymbol{{\mathfrak c}}}: X_r\rightarrow  \mathbf P^1$, 
we denote by $\Sigma_{\boldsymbol{{\mathfrak c}}}$ its {\it `spectrum'} which by definition is the subset formed by the points $\sigma$ of the target projective line such that the preimage $\varphi_{\boldsymbol{{\mathfrak c}}}^{-1}(\sigma)$ is a non irreducible conic, namely the sum of two lines lines of $X_r$: one has
$$\Sigma_{\boldsymbol{{\mathfrak c}}}=\Big\{ 
\hspace{0.1cm} \sigma \in \mathbf P^1\hspace{0.1cm} \big\lvert 
\hspace{0.1cm} \exists \, \ell_{\boldsymbol{{\mathfrak c}}}
,  \ell_{\boldsymbol{{\mathfrak c}}}'
\in \boldsymbol{{\mathcal L}}_r \hspace{0.1cm}\mbox{ such that  }\hspace{0.1cm} 
\varphi_{\boldsymbol{{\mathfrak c}}}^{-1}(\sigma)=\ell_{\boldsymbol{{\mathfrak c}}} 
+ \ell_{\boldsymbol{{\mathfrak c}}}'
\hspace{0.1cm} \Big\}\, .
$$
It is known that $\Sigma_{\boldsymbol{{\mathfrak c}}}$ 
admits exactly $r-1$ elements, which can also be characterized as the 
singular values of $\varphi_{\boldsymbol{{\mathfrak c}}}$. 
One verifies easily that any tangency point between two distinct conic fibrations on $X_r$ 
necessarily lies on a line which is invariant by both fibrations. In particular, if one denotes by  $U_r=X_r\setminus L_r$ the complement of the union of lines $L_r=\cup_{\boldsymbol{\ell} \in \boldsymbol{\mathcal L}_r} \ell \subset X_r$, then  
\begin{equation}
\label{Eq:dPhic-wedge-dPhic'}
\forall\, \boldsymbol{\mathfrak c}, \boldsymbol{\mathfrak c}'\in \boldsymbol{\mathcal K}_r 
\hspace{0.2cm} : 
\hspace{0.2cm} 
 \boldsymbol{\mathfrak c} \neq \boldsymbol{\mathfrak c}' 
\hspace{0.2cm}  
 \Longrightarrow 
 \hspace{0.2cm}  
 \mbox{\it the wedge-product }\, 
 d\varphi_{\boldsymbol{{\mathfrak c}}}\wedge 
d \varphi_{\boldsymbol{{\mathfrak c}}'} 
\hspace{0.15cm} \mbox{\it does not vanish on }\, U_r\, .
\end{equation}
\begin{center}
$\star$
\end{center}

The objects and quantities just considered above regarding lines and conic classes are given in explicit form in the following table: 
\begin{table}[!h]
\scalebox{1}{\begin{tabular}{|l||c|c|c|c|c|c|}
\hline
${}^{}$ \hspace{0.4cm}  
\begin{tabular}{c}\vspace{-0.35cm}\\
$\boldsymbol{r}$
\vspace{0.13cm}
\end{tabular} & $\boldsymbol{3}$ &  $\boldsymbol{4}$ & $\boldsymbol{5}$  & $\boldsymbol{6}$ & $\boldsymbol{7}$  & $\boldsymbol{8}$\\ \hline \hline
${}^{}$ \quad \begin{tabular}{c}\vspace{-0.35cm}\\
$\boldsymbol{E_r}$
\vspace{0.1cm}
\end{tabular}
 & $A_2\times A_1$ & $A_4$ & $D_5$   & $E_6$ & $E_7$  & $E_8$  \\ \hline 
${}^{}$ \quad 
\begin{tabular}{c}\vspace{-0.35cm}\\
$\boldsymbol{W_r=W(E_{r})}$
\vspace{0.1cm}
\end{tabular}
& 
$\mathfrak S_3\times \mathfrak S_2$
 & $\mathfrak S_5$ & $\big(\mathbf Z/2\mathbf Z)^4
 \ltimes  \mathfrak S_5  
$   & $W(E_6)$ & $W(E_7)$ & $W(E_8)$  \\ \hline 
${}^{}$ \quad  
\begin{tabular}{c}\vspace{-0.35cm}\\
$\boldsymbol{\omega_r=
\lvert W_r\lvert}$
\vspace{0.1cm}
\end{tabular}
& $12$ & $5!$ & $2^4\cdot
5!
$   & $2^7\cdot 3^4\cdot 5$ & $2^{10}\cdot 3^4 \cdot 5\cdot7$ & 
$2^{14}\cdot 3^5 \cdot 5^2\cdot7$
  \\ \hline 
${}^{}$ \quad 
\begin{tabular}{c}\vspace{-0.35cm}\\
$\boldsymbol{l_r={\lvert \mathcal L_r \lvert}}$
\vspace{0.1cm}
\end{tabular}
&6 & 10 &  16  & 27 & 56 & 240  \\ \hline
${}^{}$ \quad 
\begin{tabular}{c}\vspace{-0.35cm}\\
$\boldsymbol{\kappa_r={\lvert \mathcal K_r \lvert}}$
\vspace{0.13cm}
\end{tabular}
& 3 & 5 &    10 & 27 & 126 & 2160 \\
\hline
\end{tabular}}
\bk 
\caption{}
\label{Table1!}
\end{table}

\subsubsection{\bf Some other properties of del Pezzo surfaces}
\label{SSS:Del-Pezzo-Properties}
We briefly discuss some properties of del Pezzo surfaces which we will use at some points further on.

Let $X_r={\rm dP}_d$ be a del Pezzo surface as above (with $d=9-r$). By combining Riemann-Roch theorem and Kodaira vanishing, it follows that for any ample divisor $A$ on 
$X_r$, then one has $h^0(\mathcal O_{X_r}(A))=\chi(\mathcal O_{X_r}(A))=(1/2)A^2-\big(A ,K\big)+1$.  Since $-K$ is ample, one gets 
$$
{}^{} \qquad 
h^0\big(\mathcal O_{X_r}( -m K)\big)=  \frac{1}{2}\big( m^2+m\big)\,K^2+1= 
\frac{1}{2} (m^2+m)d+1 \qquad \forall\, m\in \mathbf N^*\, . 
$$

Let us now discuss some differential 1-forms with logarithmic poles along lines on $X_r$. 
First, remark that there is a short exact sequence of sheaves
$$
0 \rightarrow \Omega_{X_r}^1\longrightarrow 
\Omega_{X_r}^1\big({\rm Log}\,L_r\big)
\stackrel{\rm Res}{\longrightarrow} \bigoplus_{ \ell \in \boldsymbol{\mathcal L}_r}  \mathcal O_\ell
\rightarrow 0
$$
where for any line $\ell$, the structure sheaf $\mathcal O_\ell$ is identified with its push-forward 
$i^\ell_*( \mathcal O_\ell )$ 
on $X_r$ by the inclusion map $i^\ell : \ell\hookrightarrow X_r$.  
 The first terms of the  associated  
long exact sequence of cohomology groups are
\begin{equation}
\label{Eq:mook}
0\rightarrow   {\bf H}^0\Big( X,\Omega_{X_r}^1\Big)  \longrightarrow 
{\bf H}^0\Big( X_r,\Omega_{X_r}^1\big( {\rm Log}\,L_r \big) \Big) 
\stackrel{\rm Res}{\longrightarrow}  \mathbf C^{ \boldsymbol{\mathcal L}_r}
 \longrightarrow {\bf H}^1\big(X_r,\Omega^1_{X_r} \big)
\end{equation}
where the map ${\rm Res}$ is the direct sum of the ${\rm Res}_\ell$ for $\ell\in \boldsymbol{\mathcal L}_r$, each map ${\rm Res}_\ell$ consisting in taking the residue along the line $\ell\subset X_r$. 

Since $X_r$ is rational, the space ${\bf H}^0\big( X_r,\Omega_{X_r}^1\big)$ is trivial 
hence the space of logarithmic 1-forms 
$$ {\bf H}_{X_r}={\bf H}^0\Big( X_r,\Omega_{X_r}^1\big( {\rm Log}\,L_r \big) \Big) $$ identifies with the kernel of the map 
$\mathbf C^{ \boldsymbol{\mathcal L}_r}
 \longrightarrow {\bf H}^1\big(X_r,\Omega^1_{X_r} \big)$.
 Because  of the identifications 
${\bf H}^1\big(X_r,\Omega^1_{X_r} \big)\simeq {\bf H}^{1,1}\big(X_r,\mathbf C\big)=
{\bf H}^{2}\big(X_r,\mathbf C\big){=}  {\bf Pic}(X_r)_{\mathbf C}={\bf Pic}(X_r)\otimes_{\mathbf Z}\mathbf C$, this map  
 can be interpreted as  the complexification of the surjective cycle map 
$\mathbf Z^{ \boldsymbol{\mathcal L}}\rightarrow {\bf Pic}(X)$, $\ell\mapsto \boldsymbol{\ell}$ 
considered above in \eqref{Z-Lr->Pic(Xr)}. Hence $ {\bf H}_{X_r}$ identifies naturally with the kernel ${\bf K}_r$ discussed above.   We deduce the following corollary for any $r=4,\ldots,8$: 
\begin{cor}
\label{Cor:H-Xr}
  By means of the  map  {\rm Res} in \eqref{Eq:mook}, 
 the space of logarithmic forms ${\bf H}_{X_r}$ is naturally a subrepresentation of $\mathbf C^{ \boldsymbol{\mathcal L}_r}$.  And as such, ${\bf H}_{X_r}$ is isomorphic to the $W_r$-module ${\bf K}_r$ in \eqref{Eq:Kr}. 
\end{cor}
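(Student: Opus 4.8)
The plan is to extract everything from the long exact cohomology sequence \eqref{Eq:mook}, the only non-formal ingredient being the identification of its connecting homomorphism with the cycle map \eqref{Z-Lr->Pic(Xr)}. First I would record what the text already notes, namely that $X_r$ is a smooth projective rational surface, so that ${\bf H}^0\big(X_r,\Omega^1_{X_r}\big)=0$. Feeding this vanishing into the four-term sequence \eqref{Eq:mook} shows that the residue map
$$
{\rm Res}\,:\, {\bf H}_{X_r}={\bf H}^0\big(X_r,\Omega^1_{X_r}({\rm Log}\,L_r)\big)\longrightarrow \mathbf C^{\boldsymbol{\mathcal L}_r}
$$
is injective, with image exactly the kernel of the subsequent arrow $\delta\colon\mathbf C^{\boldsymbol{\mathcal L}_r}\to {\bf H}^1\big(X_r,\Omega^1_{X_r}\big)$. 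Hence ${\rm Res}$ realizes ${\bf H}_{X_r}$ as a subspace of $\mathbf C^{\boldsymbol{\mathcal L}_r}$, and the whole statement reduces to identifying that subspace and checking its $W_r$-stability.

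The heart of the matter, and the step I expect to be the main obstacle, is to show that $\delta$ coincides (up to the standard Hodge-theoretic identifications and a normalizing scalar, irrelevant for kernels) with the complexified cycle map $\mathbf Z^{\boldsymbol{\mathcal L}_r}\to{\bf Pic}(X_r)$ of \eqref{Z-Lr->Pic(Xr)}. Here I would first invoke the chain of isomorphisms valid for a rational surface, ${\bf H}^1\big(X_r,\Omega^1_{X_r}\big)\simeq {\bf H}^{1,1}(X_r,\mathbf C)={\bf H}^2(X_r,\mathbf C)={\bf Pic}(X_r)\otimes_{\mathbf Z}\mathbf C$, using $b_1(X_r)=0$, the degeneration of the Hodge decomposition in degree $2$, and the fact that for such a surface the N\'eron--Severi group, the Picard group and $ {\bf H}^2(X_r,\mathbf Z)$ all agree (and are torsion-free). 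Then the content is the classical compatibility of the logarithmic residue sequence with Gysin maps: the connecting homomorphism sends the generator $1\in {\bf H}^0(\mathcal O_\ell)$ of the $\ell$-summand to the cohomology class (Poincar\'e dual / first Chern class) of the divisor $\ell\subset X_r$, i.e.\ to $\boldsymbol{\ell}\in {\bf Pic}(X_r)_{\mathbf C}$. This is the one point that genuinely uses geometry rather than formal nonsense, and I would either give the short Dolbeault/\v{C}ech argument or cite it as the standard description of the connecting map of the sheaf sequence $0\to\Omega^1_{X_r}\to\Omega^1_{X_r}({\rm Log}\,L_r)\to\bigoplus_\ell\mathcal O_\ell\to0$. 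Granting it, the image of ${\rm Res}$ is precisely $\ker\big(\mathbf C^{\boldsymbol{\mathcal L}_r}\to{\bf Pic}(X_r)_{\mathbf C}\big)$, which is ${\bf K}_r$ by its very definition.

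It then remains only to transport the representation structure. The cycle map \eqref{Z-Lr->Pic(Xr)} was already observed to be $W_r$-equivariant and surjective, so its kernel ${\bf K}_r\subset\mathbf C^{\boldsymbol{\mathcal L}_r}$ is a $W_r$-submodule; this is what lets me speak of ${\bf H}_{X_r}$ as ``naturally a subrepresentation'' of $\mathbf C^{\boldsymbol{\mathcal L}_r}$, the $W_r$-action on the logarithmic forms being by definition the one pulled back along the isomorphism ${\rm Res}\colon {\bf H}_{X_r}\xrightarrow{\ \sim\ }{\bf K}_r$. With this structure the asserted isomorphism ${\bf H}_{X_r}\simeq{\bf K}_r$ is tautological, and the explicit decomposition into $W_r$-irreducibles is read off from \eqref{Eq:Kr} in the preceding corollary. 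I would close by noting that no compatibility of the $W_r$-action with automorphisms of $X_r$ itself is needed or claimed, precisely because the action is defined combinatorially on $\mathbf C^{\boldsymbol{\mathcal L}_r}$ and only imported onto ${\bf H}_{X_r}$ through ${\rm Res}$.
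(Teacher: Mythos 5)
Your proposal is correct and follows essentially the same route as the paper: exploit the vanishing of ${\bf H}^0(X_r,\Omega^1_{X_r})$ in the long exact sequence \eqref{Eq:mook}, identify the connecting map $\mathbf C^{\boldsymbol{\mathcal L}_r}\to{\bf H}^1(X_r,\Omega^1_{X_r})\simeq{\bf Pic}(X_r)_{\mathbf C}$ with the complexified cycle map \eqref{Z-Lr->Pic(Xr)}, and transport the $W_r$-structure through ${\rm Res}$ so that ${\bf H}_{X_r}\simeq{\bf K}_r$ becomes tautological. The only difference is one of emphasis: the paper simply asserts the compatibility of the connecting homomorphism with the cycle map, whereas you correctly single it out as the one genuinely geometric step (the Gysin/first-Chern-class description of the extension class of the residue sequence) to be proved or cited.
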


 As far we are aware of, the description of ${\bf H}_{X_r}= {\bf H}^0\big( X_r,\Omega_{X_r}^1\big( {\rm Log}\,L_r \big) \big)$ as a $W_r$-module and its decomposition in irreducibles does not appear in the existing literature, except in the case when $r=4$ which is the subject of \cite[Lemma 2.1]{DFL}.

\begin{rem}
\label{Rem:rem-rem}
\noindent {\rm 1.}  The group 
${\rm Aut}(X_r)$ of 
automorphisms of $X_r$ naturally  acts  on 
${\bf H}_{X_r}$ as well. One verifies that 
this action is compatible with the $W_r$-action defined just above up to the natural monomophism of groups ${\rm Aut}(X_r) \hookrightarrow W_r$.  

When $r=4$, one has $X_4={\rm dP}_5\simeq \overline{\mathcal M}_{0,5}$  with ${\rm Aut}(X_4)={\rm Aut}( \overline{\mathcal M}_{0,5})= {\rm Aut}( {\mathcal M}_{0,5})=\mathfrak S_5$ and in this case  ${\rm Aut}(X_4) \hookrightarrow W_4$ happens to be an isomorphism.  
This does not longer holds true for $r>4$.  For instance when $r=5$, one has 
${\rm Aut}(X_5)\simeq (\mathbf Z/2\mathbf Z)^4$ for a generic del Pezzo quartic surface $X_5={\rm dP}_4$ ({\it cf.} \cite[\S8.6.4]{Dolgachev}{\rm )} and up to identifying $W_5$ with $(\mathbf Z/2\mathbf Z)^4\rtimes \mathfrak S_5$, the morphism ${\rm Aut}(X_5) \hookrightarrow W_5$ corresponds to the natural inclusion $(\mathbf Z/2\mathbf Z)^4\hookrightarrow (\mathbf Z/2\mathbf Z)^4\ltimes \mathfrak S_5$ of the first factor. 
\sk

\vspace{-0.5cm}
\noindent {\rm 2.} Because the  map $\mathbf Z^{\boldsymbol{\mathcal L} }\rightarrow   {\bf Pic}(X)$ is defined over $\mathbf Z$, \eqref{Eq:mook} can be lifted to a short exact sequence of $W_r$-representations $0\rightarrow   
{\bf H}_{X_r}^{\mathbf Z}
  \longrightarrow 
\mathbf Z^{ \boldsymbol{\mathcal L}}
 \longrightarrow   {\bf Pic}(X)\rightarrow 0$ defined over the ring of integers,  
 where 
 ${\bf H}_{X_r}^{\mathbf Z}
$ stands for the free $\mathbf Z$-module of elements in  ${\bf H}_{X_r}={\bf H}^0\big( X_r,\Omega_{X_r}^1\big( {\rm Log}\,L_r \big) \big)$ whose residues along the lines in $X_r$ all are integers. 
We will not use this arithmetical fact in what follows.
\end{rem}

\subsubsection{\bf The del Pezzo web ${\mathcal W} \hspace{-0.46cm}{\mathcal W}_{ {\bf dP }_d}$}
\label{SSS:Del-Pezzo-webs}
Here we just give the general definition of the webs $\boldsymbol{\mathcal W}_{ {\rm dP }_d}$ before focusing on the two cases we are interested in in this paper, namely $\boldsymbol{\mathcal W}_{ {\rm dP }_5}$ and $\boldsymbol{\mathcal W}_{ {\rm dP }_4}$ that we make explicit in some affine coordinates.
\mk 

By definition, the {\it `del Pezzo web'} of the considered del Pezzo surface $X_r={\rm dP}_d$, denoted by $\boldsymbol{\mathcal W}_{X_r}$ or $\boldsymbol{\mathcal W}_{{\rm dP}_d}$,  is the web formed by all the pencils of conics on the surface:  one has 
$$\boldsymbol{\mathcal W}_{{\rm dP}_d}= 
\boldsymbol{\mathcal W}_{X_r}
=\boldsymbol{\mathcal W}\Big(\hspace{0.05cm} 
\varphi_{ \boldsymbol{\mathfrak c}} \hspace{0.15cm} \big\lvert \hspace{0.15cm} 
\boldsymbol{\mathfrak c} \in 
\boldsymbol{\mathcal K}_r \hspace{0.05cm}  \Big)\, . 
$$ 
From \eqref{Eq:dPhic-wedge-dPhic'}, it follows that the web $\boldsymbol{\mathcal W}_{{\rm dP}_d}$ is regular on the complement $U_r\subset X_r$ of the union of all lines contained in the considered del Pezzo surface.  In the case when $r=4$, $U_4$ corresponds to $\mathcal M_{0,5}$ up to the natural identification $X_4={\rm dP}_5\simeq 
\overline{\mathcal M}_{0,5}$.

In the two cases $r=4$ and $r=5$ of interest in this paper, 
the sets $\boldsymbol{\mathcal K}_r$ are described explicitly  in \eqref{Eq:K4-K5} which allows to give simple birational models for the two del Pezzo webs 
$\boldsymbol{\mathcal W}_{{\rm dP}_5}$ and $\boldsymbol{\mathcal W}_{{\rm dP}_4}$. 
Let $p_1,\ldots,p_4$ and $p_5$ be the points (in general position) in $\mathbf P^2$ such that $X_r$ is the total space of the blow-up $b=b_r: X_r\rightarrow \mathbf P^2$. We assume 
that the first four points $p_i$  are 
the vertices of the fundamental tetrahedron in $\mathbf P^2$ (that is $p_1=[1:0:0]$, $p_2=[0:1:0],\,\ldots$, $p_4=[1:1:1]$ 
and $p_5=[a:b:1]$ with 
affine coordinate $(a,b) \in \mathbf C^2$ such that $ab(a-1)(b-1)(a-b)\neq 0$. Then there 
is no difficulty to see that the 
following holds true: 
\begin{prop}
\label{P:b*WdPr-on-P2}
  {\rm 1.} The direct image of $\boldsymbol{\mathcal W}_{{\rm dP}_5}$ by $b$ 
is the 5-web on $\mathbf P^2$ formed by the four pencils of lines with vertices $p_1,\ldots,p_4$ plus the pencil of conics passing through these four points. 

Analytically, one has $b_*\big( 
\boldsymbol{\mathcal W}_{{\rm dP}_5}
\big)=\boldsymbol{\mathcal W}\Big(\, x
\, , \, 
y 
\, , \, \frac{x}{y}
\, , \,  \frac{1-y}{1-x}
\, , \, 
 \frac{x(1-y)}{y(1-x)}\,\Big)\, . 
$\sk 


\noindent {\rm 2.} The direct image of $\boldsymbol{\mathcal W}_{{\rm dP}_4}$ by $b$ 
is the 10-web on $\mathbf P^2$  formed by the five pencils of lines with vertices $p_1,\ldots,p_5$ plus the five pencils of conics passing through four points among these five.

 Analytically, one has $ 
b_*\big( 
\boldsymbol{\mathcal W}_{{\rm dP}_5}
\big)=\boldsymbol{\mathcal W}\big(\, U_1,\ldots, U_{10}\,\big)
$ where the $U_i$'s are the following ten rational functions (with $P$ standing for the polynomial $(1-b )x - (1-a )y - (a - b)$): 
$$
\scalebox{1.1}
{
\begin{tabular}{lllll}
$U_1= x$ & $ U_2=  \frac1y$ 
& $ U_3=  \frac{y}{x} $ 
& $U_4= 
\frac{x-y}{x-1} $ 
& $  U_5= 
   \frac{b(a-x ) }{a y - b x} $ 
  \vspace{0.25cm}  \\
 $ U_6=    \frac{
 P}{ (x - 1)(y-b)}$ 
   & $ U_7=     \frac{ (x - y)(y-b)}{ y\,P}$ 
    &
$    U_8=  \frac{x\,P}{ (x - y)(x - a)} $
     & 
$ U_9= \,
          \frac{  
      y(x - a)}{ x (y-b)}  $ 
      & 
      $U_{10}=  \frac{
       x(y - 1)}{y (x - 1)} \, .$
\end{tabular}
}
$$
\end{prop}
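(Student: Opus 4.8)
The plan is to reduce both statements to the explicit list of conic classes in \eqref{Eq:K4-K5} together with the fact that $b\colon X_r\to\mathbf P^2$ is an isomorphism over $\mathbf P^2\setminus\{p_1,\ldots,p_r\}$. First I would translate each conic class into the linear system it defines on $X_r$ and push it forward. For a class $\boldsymbol h-\boldsymbol\ell_i$, the complete linear system $|\boldsymbol h-\boldsymbol\ell_i|$ is the strict transform of the net of lines through $p_i$, so $\varphi_{\boldsymbol h-\boldsymbol\ell_i}$ is the lift of the projection $\pi_{p_i}\colon\mathbf P^2\dashrightarrow\mathbf P^1$ and its push-forward foliation is the pencil of lines through $p_i$. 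For a class $2\boldsymbol h-\boldsymbol\ell_{tot}$ (case $r=4$) or $2\boldsymbol h-\boldsymbol\ell_{\hat\imath}$ (case $r=5$) the linear system is the strict transform of the pencil of conics through the corresponding four marked points, so its push-forward is exactly that pencil of conics. Since $b$ is birational and an isomorphism over the locus where the webs are regular, $b_*\boldsymbol{\mathcal W}_{{\rm dP}_d}$ is the web on $\mathbf P^2$ formed by these pencils, which proves the geometric (first) sentence of each part.

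It then remains to produce explicit first integrals in the affine chart $(x,y)=[x:y:1]$, recalling that a first integral is only defined up to post-composition with a Möbius transformation of the target $\mathbf P^1$. For the pencils of lines this is immediate: the lines through $p_1=[1:0:0]$, $p_2=[0:1:0]$, $p_3=[0:0:1]$, $p_4=[1:1:1]$, $p_5=[a:b:1]$ admit the first integrals $y$, $x$, $y/x$, $(y-1)/(x-1)$, $(y-b)/(x-a)$, and a direct check shows each $U_i$ attached to a pencil of lines is a Möbius function of the corresponding ratio (for instance $U_4=1-(y-1)/(x-1)$ and $U_5=-b\,\big(a(y-b)/(x-a)-b\big)^{-1}$). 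For the pencils of conics I would use that a pencil of conics through four points contains exactly three degenerate members, each the union of two of the lines $\overline{p_jp_k}$ pairing up the base points; writing two such degenerate conics as products of the linear forms cutting out these joining lines and taking their ratio yields a first integral of the pencil.

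Concretely, each joining line $\overline{p_jp_k}$ is cut out by the vanishing of the $3\times 3$ determinant with rows $(x,y,z)$, $p_j$, $p_k$; in particular $\overline{p_4p_5}$ is the zero locus of $(1-b)x-(1-a)y-(a-b)z$, whose dehomogenisation is precisely the polynomial $P$ of the statement. Matching then becomes routine: homogenising each $U_i$ to a ratio of two conics and factoring numerator and denominator into linear forms identifies them as products $\overline{p_jp_k}\cdot\overline{p_{j'}p_{k'}}$, and reading off the four common base points shows that $U_6,\ldots,U_{10}$ are the pencils of conics omitting $p_3,p_2,p_1,p_4,p_5$ respectively (and that $U_{10}$ recovers the conic first integral of part~1). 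The only genuine labour is this bookkeeping in the second part: one has five conic pencils and must keep track of which pairs of lines — several passing through the general point $p_5=[a:b:1]$ — form the degenerate members, and then fix the Möbius normalisation so as to land on the precise listed expressions. No step is conceptually difficult, so the main obstacle is simply organising these explicit factorisations without error.
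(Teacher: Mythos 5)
Your proof is correct and is exactly the routine verification the paper leaves implicit: the paper gives no proof at all (it states only that ``there is no difficulty to see'' the result), and your argument — identifying each conic class $\boldsymbol{h}-\boldsymbol{\ell}_i$, $2\boldsymbol{h}-\boldsymbol{\ell}_{tot}$, $2\boldsymbol{h}-\boldsymbol{\ell}_{\hat\imath}$ with the pencil of lines or conics it pushes forward to, then matching the listed $U_i$ up to M\"obius transformations by factoring degenerate members of each conic pencil into products of joining lines — is precisely the intended computation, and all your explicit identifications (e.g.\ $U_6,\ldots,U_{10}$ omitting $p_3,p_2,p_1,p_4,p_5$) check out. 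The only slip is terminological: the lines through $p_i$ form a pencil, not a net, but this does not affect the argument.
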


\subsubsection{\bf The Weyl group action on the hyperlogarithmic ARs of a del Pezzo' web}
\label{SSS:Weyl-group-action-on-ARs}
Since the Weyl group $W_4\simeq \mathfrak S_5$ identifies with the group of automorphism of $U_4\simeq \mathcal M_{0,5}$, it is not surprising (and it hs been known  for a long time) that it acts linearly on the space of abelian relations of $\boldsymbol{\mathcal W}_{ {\rm dP}_5}$. Since the inclusion ${\rm Aut}(U_r)\hookrightarrow W_r$ is strict when $r>4$ (see Remark \ref{Rem:rem-rem}.1),  that a similar fact happens for all the other del Pezzo webs is not obvious.  This is what we discuss below.

Given a conic class $\boldsymbol{\mc}  \in \boldsymbol{\cK}_r$, we denote by $\boldsymbol{\mc}^{red}$ the set of $r-1$ non irreducible conics in the 
 linear system $\lvert \, \boldsymbol{\mc} \, \lvert \subset \mathbf P{\bf H}^0\big(  X_r, 
-2K_r\big)$. As a 1-cycle, each non irreducible conic $c\in \boldsymbol{\mc}^{red}$ can be written $c=l_c+l'_c$  for two lines  $l_c,l'_c \in \boldsymbol{\mathcal L}_r$. 
Then we can define a natural injection $\boldsymbol{\mc}^{red}\hookrightarrow \mathbf C^{\boldsymbol{\mathcal L}_r}$ which in turn 
allows us to consider $\mathbf C^{\boldsymbol{\mc}^{red}}$ as a subspace of 
$\mathbf C^{\boldsymbol{\mathcal L}_r}$. 
The stabilizer $W_{\boldsymbol{\mc}}$ of $\boldsymbol{\mc} $  in $W_r$ acts by permutations on $\boldsymbol{\mc}^{red}$ from which one gets a $W_{\boldsymbol{\mc}}$-action on 
$\mathbf C^{{\boldsymbol{\mc} }^{red}}$. Considering the action of the latter group on $\boldsymbol{\mathcal L}_r$ induced by restriction of  the action of $W_r$, one obtains that $i_{\boldsymbol{\mc} }$ is a morphism of 
$W_{\boldsymbol{\mc}}$-representations.  Since $W_{\boldsymbol{\mc}}$ acts by permutations on $\boldsymbol{\mathfrak c}^{red}$, the element $1_{\boldsymbol{\mc}}=\sum_{ c\in \boldsymbol{\mathfrak c}^{red}} c\in \mathbf C^{\mathfrak c^{red}}$ is $W_{\boldsymbol{\mc}}$-invariant  and 
admits as $W_{\boldsymbol{\mc}}$-invariant supplementary the subspace 
$\mathcal U_{\boldsymbol{\mc}}$ 
 spanned over $\mathbf C$ by the elements 
$c-c'\in  \mathbf C^{\boldsymbol{\mathfrak c}^{red}}$ for all $c,c'\in \boldsymbol{\mathfrak c}^{red}$.  We denote by ${\bf 1}_{\boldsymbol{\mathfrak c}}=\langle\, 1_{\boldsymbol{\mc}}\, \rangle$ the trivial representation spanned by 
$1_{\boldsymbol{\mc}}$. 
\begin{lem}
\label{L:Fc-module}
 {\rm 1.} As a group,  $W_{\boldsymbol{\mathfrak c}}$ is isomorphic to $W(D_{r-1})$.

\noindent ${}^{}$ \; {\rm 2.} The decomposition $ \mathbf C^{\boldsymbol{\mathfrak c}^{red}}={\bf 1}\oplus \boldsymbol{\mathcal U}_{\boldsymbol{\mathfrak c}}$ 
actually is a direct sum of irreducible $W_{\boldsymbol{\mc}}$-modules. 

\noindent ${}^{}$ \; {\rm 3.}
As $W_{\boldsymbol{\mc}}$-representations,  
 one has  $\boldsymbol{\mathcal U}_{\boldsymbol{\mathfrak c}} \simeq V_{[.(r-2)1]}$  
for $r=4,6$ and $ \boldsymbol{\mathcal U}_{\boldsymbol{\mathfrak c}} \simeq V_{[2.2]^+}$ for $r=5$.\footnote{We recall that the irreducible representations  
of $W(D_n)$ are encoded by bipartitions of $n$. This is a 1-1 correspondence when $n$ is odd. When $n$ is even, given a partition $\eta$ of $n/2$, there are two non isomorphic $W(D_n)$-irreps associated to the bipartition $(\eta,\eta)$ which are denoted by $[\eta.\eta]^+$ 
and $[\eta.\eta]^-$. For more details, see \cite[\S5.6]{GP}.}
\end{lem}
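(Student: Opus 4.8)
The plan is to reduce everything to the combinatorics of the lines meeting a fixed conic fibration, together with the representation theory of the groups $W(D_{r-1})$. Throughout I would fix the representative $\boldsymbol{\mathfrak c}=\boldsymbol{h}-\boldsymbol{\ell}_1$ (legitimate, since $W_r$ acts transitively on $\boldsymbol{\mathcal K}_r$, so every conic class is $W_r$-equivalent to this one); its $r-1$ reducible members are the conics $c_i=(\boldsymbol{h}-\boldsymbol{\ell}_1-\boldsymbol{\ell}_i)+\boldsymbol{\ell}_i$ for $i=2,\ldots,r$, so that $\boldsymbol{\mathfrak c}^{red}=\{c_2,\ldots,c_r\}$.

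For part {\rm 1}, I would first observe that $W_{\boldsymbol{\mathfrak c}}$ fixes $\boldsymbol{\mathfrak c}$, hence fixes its orthogonal projection onto the root space $R_r=K_r^\perp\otimes_{\mathbf Z}\mathbf R$; by the standard fact that the stabiliser of a vector in a finite reflection group is generated by the reflections fixing it (Steinberg), $W_{\boldsymbol{\mathfrak c}}$ is generated by the $s_{\boldsymbol{\rho}}$ with $(\boldsymbol{\rho},\boldsymbol{\mathfrak c})=0$. A direct intersection-number computation then identifies the roots orthogonal to $\boldsymbol{h}-\boldsymbol{\ell}_1$: they are the $\boldsymbol{\ell}_i-\boldsymbol{\ell}_j$ and the $\boldsymbol{h}-\boldsymbol{\ell}_1-\boldsymbol{\ell}_i-\boldsymbol{\ell}_j$ with $i,j\in\{2,\ldots,r\}$ (one checks that the extra root $2\boldsymbol{h}-\boldsymbol{\ell}_{tot}$ appearing for $r=6$ is \emph{not} orthogonal, and similarly in the higher cases). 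This yields $2(r-1)(r-2)$ roots of rank $r-1$, i.e.\ a root system of type $D_{r-1}$, whence $W_{\boldsymbol{\mathfrak c}}\cong W(D_{r-1})$; I would cross-check the order against $|W_r|/\kappa_r$ via Table \ref{Table1!}.

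For part {\rm 2}, the crucial input is the way $W_{\boldsymbol{\mathfrak c}}$ permutes $\boldsymbol{\mathfrak c}^{red}$. A short calculation with the reflection formula \eqref{Eq:s-rho} shows that \emph{both} $s_{\boldsymbol{\ell}_i-\boldsymbol{\ell}_j}$ and $s_{\boldsymbol{h}-\boldsymbol{\ell}_1-\boldsymbol{\ell}_i-\boldsymbol{\ell}_j}$ act on $\{c_2,\ldots,c_r\}$ as the transposition exchanging $c_i$ and $c_j$ (fixing the others). Hence the image of $W_{\boldsymbol{\mathfrak c}}\to\mathrm{Sym}(\boldsymbol{\mathfrak c}^{red})$ is the full symmetric group $\mathfrak S_{r-1}$ acting standardly; in particular the action is $2$-transitive, so the permutation character $\chi$ satisfies $\langle\chi,\chi\rangle=2$ and splits as $\chi=\mathbf 1+\chi_{\boldsymbol{\mathcal U}_{\boldsymbol{\mathfrak c}}}$ with $\boldsymbol{\mathcal U}_{\boldsymbol{\mathfrak c}}$ irreducible. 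Since $\boldsymbol{\mathcal U}_{\boldsymbol{\mathfrak c}}$ is precisely the span of the differences $c-c'$, this gives the claimed decomposition $\mathbf C^{\boldsymbol{\mathfrak c}^{red}}=\mathbf 1\oplus\boldsymbol{\mathcal U}_{\boldsymbol{\mathfrak c}}$ into irreducibles, with $\boldsymbol{\mathcal U}_{\boldsymbol{\mathfrak c}}$ the pullback of the standard $(r-2)$-dimensional representation $V_{(r-2,1)}$ of $\mathfrak S_{r-1}$.

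For part {\rm 3} it remains to name $\boldsymbol{\mathcal U}_{\boldsymbol{\mathfrak c}}$ in the bipartition classification of $W(D_{r-1})$-irreducibles, which I would do by matching the fixed-point character of $\mathbf C^{\boldsymbol{\mathfrak c}^{red}}$ against the character table of $W(D_{r-1})$ from \cite{GP}. When $r-1$ is odd (cases $r=4,6$) there is no triality, the surjection onto $\mathfrak S_{r-1}$ found above is the canonical quotient, and $\boldsymbol{\mathcal U}_{\boldsymbol{\mathfrak c}}$ is the inflation of $V_{(r-2,1)}$, carrying the label $V_{[.(r-2)1]}$. The delicate case is $r=5$: here $W(D_4)$ has triality, and the normal subgroup of $W_{\boldsymbol{\mathfrak c}}$ acting trivially on $\boldsymbol{\mathfrak c}^{red}$ is a triality-twist of the sign-change subgroup that fixes the standard bipartition labels, so that although $\boldsymbol{\mathcal U}_{\boldsymbol{\mathfrak c}}$ is abstractly the $3$-dimensional ``standard'' representation, in the reference labeling it becomes one of the two irreducibles $V_{[2.2]^{\pm}}$; comparing fixed-point values on representatives of the conjugacy classes of $W(D_4)$ then pins it down, up to the sign convention, as $V_{[2.2]^{+}}$. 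I expect this triality bookkeeping to be the main obstacle, as it is exactly what makes the $r=5$ answer differ in shape from the $r=4,6$ ones; everything else is a short intersection-theoretic computation or a routine appeal to the representation theory of $W(D_{r-1})$.
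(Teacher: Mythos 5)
Your proof is correct, and its skeleton is the same as the paper's: reduce by transitivity to $\boldsymbol{\mathfrak c}=\boldsymbol{h}-\boldsymbol{\ell}_1$, identify the stabilizer with $W(D_{r-1})$, work out the permutation action on $\boldsymbol{\mathfrak c}^{red}$, and conclude by character theory. The differences lie in how each step is justified, and at every step your route is more self-contained. For part~1 the paper merely asserts (``one verifies'') that $W_{\boldsymbol{\mathfrak c}}=\langle s_2,\ldots,s_r\rangle$; your combination of Steinberg's fixed-point theorem with the enumeration of the roots orthogonal to $\boldsymbol{h}-\boldsymbol{\ell}_1$ actually proves this, uniformly in $r$ (to pin down the type you can exhibit the simple system $\boldsymbol{\ell}_i-\boldsymbol{\ell}_{i+1}$, $i=2,\ldots,r-1$, together with $\boldsymbol{h}-\boldsymbol{\ell}_1-\boldsymbol{\ell}_2-\boldsymbol{\ell}_3$, which recovers exactly the paper's generators). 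For part~2 the paper records only the action of the simple generators and delegates irreducibility of $\boldsymbol{\mathcal U}_{\boldsymbol{\mathfrak c}}$ to the GAP3 character computations of the Appendix; your finer observation that \emph{both} reflections $s_{\boldsymbol{\ell}_i-\boldsymbol{\ell}_j}$ and $s_{\boldsymbol{h}-\boldsymbol{\ell}_1-\boldsymbol{\ell}_i-\boldsymbol{\ell}_j}$ induce the same transposition $(c_i\,c_j)$ shows at once that the image in $\mathrm{Sym}(\boldsymbol{\mathfrak c}^{red})$ is all of $\mathfrak S_{r-1}$, hence the action is $2$-transitive, $\langle\chi,\chi\rangle=2$, and part~2 follows with no computation at all. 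For part~3 both arguments ultimately match characters (the paper with GAP3, you by hand against the tables of \cite{GP}), but your kernel-uniqueness remark for $r=4,6$ (any surjection $W(D_{r-1})\to\mathfrak S_{r-1}$ with $r-1\neq 4$ kills exactly the even sign changes) and your triality explanation for $r=5$ supply the conceptual reason, absent from the paper, why the quartic case carries a label of a different shape.

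One comment on your $r=5$ discussion, which is not a gap but should be made explicit: which of the three $3$-dimensional irreducibles $V_{[.31]}$, $V_{[2.2]^+}$, $V_{[2.2]^-}$ one ends up naming depends on how the isomorphism $W_{\boldsymbol{\mathfrak c}}\simeq W(D_4)$ is rigidified, i.e.\ which outer node of the $D_4$ sub-diagram is declared to be the ``tail''; triality permutes precisely these three labels. With the identification inherited from the ambient $D_5$ (the one used by the GAP3 computation in the Appendix), the two fork generators $s_4$ and $s_5$ act as the \emph{distinct} transpositions $(c_4\,c_5)$ and $(c_2\,c_3)$ --- this is exactly the signature of a triality twist of the canonical quotient --- and the character comparison then returns $V_{[2.2]^-}$. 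So your ``up to the sign convention'' is exactly right; indeed the paper itself is not internally consistent on this point, stating $V_{[2.2]^+}$ in Lemma~\ref{L:Fc-module} but $V_{[2.2]^-}$ in Lemma~\ref{Lem:Hc1} and in the Appendix, the discrepancy being purely one of labeling conventions for the split pair of $W(D_4)$.
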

\begin{proof} 
 Since $W_r$ acts transitively on $\boldsymbol{\mathcal K}_r$, it suffices to prove the lemma when $\boldsymbol{\mathfrak c}= \boldsymbol{h}-\boldsymbol{\ell}_1$. 
 In this case, one verifies that $W_{\boldsymbol{\mc}_1}$ is isomorphic to the subroup of $W_r$ generated by 
 the $s_i$'s for $i=2,\ldots,r$. Thus $W_{\boldsymbol{\mc}_1}\simeq W(D_{r-1})$ (with the convention that $D_3=A_3$). 

That $ \mathbf C^{\boldsymbol{\mathfrak c}^{red}}={\bf 1}\oplus \boldsymbol{\mathcal U}_{\boldsymbol{\mathfrak c}}$  is a decomposition in $W_{\boldsymbol{\mc}_1}$-modules is clear.  To determine $\boldsymbol{\mathcal U}_{\boldsymbol{\mathfrak c}}$, we notice that the elements of $\boldsymbol{\mathfrak c}^{red}$ are the conics $c_j=[\boldsymbol{h}-\boldsymbol{l}_1-\boldsymbol{l}_j]+[\boldsymbol{l}_j]$'s for $j=2,\ldots,r$, where each element between brackets is an element  of $\boldsymbol{\mathcal L}_r$ and the notation means that $c_j$ has the two  lines $\boldsymbol{h}-\boldsymbol{l}_1-\boldsymbol{l}_j$ and $\boldsymbol{l}_j$ (viewed here as curves contained in $X_r$) as irreducible components. Direct computations give that as permutations of $\boldsymbol{\mathfrak c}^{red}=\{ c_2,\ldots,c_r\}$: $s_j$ is the transposition exchanging $c_j$ and $c_{j+1}$ for $j=2,\ldots,r-1$ and $s_r$  is the transposition exchanging $c_2$ and $c_{3}$. 
By elementary character theory of $W(D_{r-1})$ (see the Appendix), one gets 
3.  
\end{proof}

A point which will be  relevant for our purpose, is that there is a nice geometric interpretation of $\boldsymbol{\mathcal U}_{\boldsymbol{\mathfrak c}} $. 
We denote by $\varphi_{\boldsymbol{\mathfrak c}}: X_r\rightarrow \lvert \, \boldsymbol{\mc} \, \lvert \simeq \mathbf P^1$ the fibration in conics associated to 
$\boldsymbol{\mathfrak c}$. It is known that  
 the {\it spectrum'} $\Sigma_{ \boldsymbol{\mathfrak c}} 
 =\big\{ \hspace{0.1cm}  \sigma \in \mathbf P^1
\hspace{0.1cm}\big\lvert
\hspace{0.1cm}  \varphi_{\boldsymbol{\mathfrak c}}^{-1}(\sigma)=\ell_{\boldsymbol{\mathfrak c}}+
\ell'_{\boldsymbol{\mathfrak c}} \hspace{0.1cm}\mbox{ with } \hspace{0.1cm}\ell_{\boldsymbol{\mathfrak c}},
\ell'_{\boldsymbol{\mathfrak c}} \in \boldsymbol{\mathcal L}_r \hspace{0.1cm} \big\}
 $ of $\varphi_{\boldsymbol{\mathfrak c}}$ is 
 has cardinality $r-1$ from which it follows that the 
 space of logarithmic holomorphic 1-forms
$$\boldsymbol{\mathcal H}_{ \boldsymbol{\mathfrak c}}=
{\bf H}^0
\Big( \mathbf P^1, \Omega_{\mathbf P^1}\big( \, {\rm Log}\, \
\Sigma_{ \boldsymbol{\mathfrak c}}
 \big)\Big)$$ has dimension $r=2$. If $z$ stands for an affine coordinate on $\mathbf P^1$ with respect to which  $\infty$ belongs to $\Sigma_{ \boldsymbol{\mathfrak c}}$, then denoting by $\sigma_{ \boldsymbol{\mathfrak c}}^1,\ldots,\sigma_{ \boldsymbol{\mathfrak c}}^{r-2}$ the elements of $\Sigma_{ \boldsymbol{\mathfrak c}}\cap \mathbf C$, one has that 
 the logarithmic 1-forms $dz/(z-\sigma_{ \boldsymbol{\mathfrak c}}^i)$, $i=1,\ldots,r-2$, form a basis of $\boldsymbol{\mathcal H}_{ \boldsymbol{\mathfrak c}}$. Since $\varphi_{\boldsymbol{\mathfrak c}}$ is a morphism, it follows that 
$$
{\bf H}_{ \boldsymbol{\mathfrak c}} = 
\varphi_{\boldsymbol{\mathfrak c}}^*\Big(
 \boldsymbol{\mathcal H}_{ \boldsymbol{\mathfrak c}} 
 \Big)   =\left\langle\hspace{0.1cm}
 \frac{d\varphi_{\boldsymbol{\mathfrak c}}}{\varphi_{\boldsymbol{\mathfrak c}}-\sigma_{ \boldsymbol{\mathfrak c}}^i} \hspace{0.1cm} 
 \hspace{0.1cm} 
 \right\rangle_{i=1}^{r-2} \simeq \mathbf C^{r-2}
 $$
is naturally a subspace of  
${\bf H}_{X_r}={\bf H}^0\Big(  X_r, 
\Omega^1_{X_r}\big(  {\rm Log}\, L_r\big) \Big)$  hence of $\mathbf C^{\boldsymbol{\mathcal L}_r}$ (as it follows from the 
injectivity of the 
 map ${\rm Res}$ in \eqref{Eq:mook}).
For any $ \boldsymbol{\mathfrak c}$, we denote the linear inclusion of 
${\bf H}_{ \boldsymbol{\mathfrak c}}  $  into $\mathbf C^{\boldsymbol{\mathcal L}_r}$ by 
\begin{equation}
\label{Eq:iota-c}
\iota_{\boldsymbol{\mathfrak c} }  : {\bf H}_{ \boldsymbol{\mathfrak c}} \hookrightarrow \mathbf C^{\boldsymbol{\mathcal L}_r}\, .
\end{equation}

\begin{lem}
\label{Lem:Hc}
As subspaces of $\mathbf C^{\boldsymbol{\mathcal L}_r}$, 
${\bf H}_{ \boldsymbol{\mathfrak c}}  $ and $\boldsymbol{\mathcal U}_{ \boldsymbol{\mathfrak c}} $  coincide. In particular, ${\bf H}_{ \boldsymbol{\mathfrak c}}  $ is $W_{ \boldsymbol{\mathfrak c}}  $-stable and is an irreducible representation.
\end{lem}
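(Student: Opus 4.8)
The plan is to show that the two subspaces ${\bf H}_{\boldsymbol{\mathfrak c}}$ and $\boldsymbol{\mathcal U}_{\boldsymbol{\mathfrak c}}$ of $\mathbf C^{\boldsymbol{\mathcal L}_r}$ are the span of one and the same explicit family of $r-2$ vectors. Recall that $\boldsymbol{\mathcal H}_{\boldsymbol{\mathfrak c}}$ has dimension $r-2$, with the forms $dz/(z-\sigma_{\boldsymbol{\mathfrak c}}^{i})$, $i=1,\dots,r-2$, as a basis, and that $\iota_{\boldsymbol{\mathfrak c}}=\mathrm{Res}\circ\varphi_{\boldsymbol{\mathfrak c}}^{*}$ in \eqref{Eq:iota-c} is injective, by the injectivity of the residue map $\mathrm{Res}$ in \eqref{Eq:mook}. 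Thus it suffices to evaluate $\iota_{\boldsymbol{\mathfrak c}}$ on this basis and to recognize the images as generators of $\boldsymbol{\mathcal U}_{\boldsymbol{\mathfrak c}}$.

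The heart of the argument is a residue computation. Write $c_{0}=\varphi_{\boldsymbol{\mathfrak c}}^{-1}(\infty)$ and $c_{i}=\varphi_{\boldsymbol{\mathfrak c}}^{-1}(\sigma_{\boldsymbol{\mathfrak c}}^{i})$ for $i=1,\dots,r-2$; these are exactly the $r-1$ reducible members forming $\boldsymbol{\mathfrak c}^{red}$, and each decomposes as a \emph{reduced} sum $c=\ell_{c}+\ell'_{c}$ of two distinct lines of $X_{r}$ taken with multiplicity one (reducedness is forced by $\boldsymbol{\mathfrak c}^{2}=0$ and $(-K_{r},\boldsymbol{\mathfrak c})=2$, which forbid any component occurring with multiplicity $>1$). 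On $\mathbf P^{1}$ the form $dz/(z-\sigma)$ has residue $+1$ at $z=\sigma$, residue $-1$ at $z=\infty$, and is regular elsewhere; pulling back by the morphism $\varphi_{\boldsymbol{\mathfrak c}}$ and using the reducedness just recalled, one obtains for every line $\ell\in\boldsymbol{\mathcal L}_{r}$
\[
\mathrm{Res}_{\ell}\!\left(\frac{d\varphi_{\boldsymbol{\mathfrak c}}}{\varphi_{\boldsymbol{\mathfrak c}}-\sigma_{\boldsymbol{\mathfrak c}}^{i}}\right)=
\begin{cases}
+1 & \text{if } \ell\in\{\ell_{c_i},\ell'_{c_i}\},\\
-1 & \text{if } \ell\in\{\ell_{c_0},\ell'_{c_0}\},\\
\phantom{+}0 & \text{otherwise.}
\end{cases}
\]
In the paper's notation this says precisely that $\iota_{\boldsymbol{\mathfrak c}}\big(d\varphi_{\boldsymbol{\mathfrak c}}/(\varphi_{\boldsymbol{\mathfrak c}}-\sigma_{\boldsymbol{\mathfrak c}}^{i})\big)=i_{\boldsymbol{\mathfrak c}}(c_{i})-i_{\boldsymbol{\mathfrak c}}(c_{0})=i_{\boldsymbol{\mathfrak c}}(c_{i}-c_{0})$.

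To conclude I would observe that the differences $c_{i}-c_{0}$, $i=1,\dots,r-2$, form a basis of the augmentation subspace of $\mathbf C^{\boldsymbol{\mathfrak c}^{red}}$, which is exactly $\boldsymbol{\mathcal U}_{\boldsymbol{\mathfrak c}}$ (any $c-c'$ equals $(c-c_{0})-(c'-c_{0})$). Hence $\iota_{\boldsymbol{\mathfrak c}}({\bf H}_{\boldsymbol{\mathfrak c}})=\langle\, i_{\boldsymbol{\mathfrak c}}(c_{i}-c_{0})\,\rangle_{i=1}^{r-2}=\boldsymbol{\mathcal U}_{\boldsymbol{\mathfrak c}}$ as subspaces of $\mathbf C^{\boldsymbol{\mathcal L}_{r}}$, the injectivity of $i_{\boldsymbol{\mathfrak c}}$ and $\iota_{\boldsymbol{\mathfrak c}}$ ensuring that both sides have the common dimension $r-2$. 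The final assertion is then immediate: $\boldsymbol{\mathcal U}_{\boldsymbol{\mathfrak c}}$ is $W_{\boldsymbol{\mathfrak c}}$-stable and irreducible by Lemma \ref{L:Fc-module}, so these properties transfer to ${\bf H}_{\boldsymbol{\mathfrak c}}$ through the identification just established.

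The main obstacle lies entirely in the middle step: making the residue of the pulled-back logarithmic form rigorous along each line requires knowing that the reducible fibers of $\varphi_{\boldsymbol{\mathfrak c}}$ are reduced and that the lines occurring in distinct fibers are pairwise distinct (which is what makes $i_{\boldsymbol{\mathfrak c}}$ injective). Both are standard features of conic fibrations on del Pezzo surfaces — distinct fibers of a morphism to $\mathbf P^{1}$ are disjoint, and a fiber component $\ell$ satisfies $(\ell,\boldsymbol{\mathfrak c})=0$ — but since they are exactly what underpins the clean formula for $\iota_{\boldsymbol{\mathfrak c}}$, they deserve to be spelled out carefully.
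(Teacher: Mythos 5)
Your proof is correct and takes essentially the same route as the paper's: the paper deduces the inclusion ${\bf H}_{\boldsymbol{\mathfrak c}}\subset\boldsymbol{\mathcal U}_{\boldsymbol{\mathfrak c}}$ from the residue theorem applied to the basis forms $dz/(z-\sigma_{\boldsymbol{\mathfrak c}}^i)$ and then concludes by noting that both spaces have dimension $r-2$. Your explicit residue computation (values $+1$ on the two lines of the fiber over $\sigma_{\boldsymbol{\mathfrak c}}^i$, $-1$ on the two lines of the fiber over $\infty$, $0$ elsewhere), the identification of the images with the differences $c_i-c_0$ spanning $\boldsymbol{\mathcal U}_{\boldsymbol{\mathfrak c}}$, and the appeal to Lemma \ref{L:Fc-module} for irreducibility are just a more detailed rendering of that same argument.
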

\begin{proof}
That ${\bf H}_{ \boldsymbol{\mathfrak c}}  $ is included in $\boldsymbol{\mathcal U}_{ \boldsymbol{\mathfrak c}} $ (when both spaces are seen as subspaces of $\mathbf C^{\boldsymbol{\mathcal L}_r}$) immediately follows from the residues theorem applied to the 1-forms $dz/(z-\sigma_{ \boldsymbol{\mathfrak c}}^i)$ which form a basis of $
\boldsymbol{\mathcal H}_{ \boldsymbol{\mathfrak c}}$. One concludes by noticing that both ${\bf H}_{ \boldsymbol{\mathfrak c}}  $ and $\boldsymbol{\mathcal U}_{ \boldsymbol{\mathfrak c}} $ have dimension $r-2$. 
\end{proof}
From this lemma together with what has been said above about $\boldsymbol{\mathcal U}_{ \boldsymbol{\mathfrak c}} $ in Lemma \ref{L:Fc-module}, we get that 
${\bf H}_{ \boldsymbol{\mathfrak c}} $ naturally is an irreducible  representation of 
$W_{ \boldsymbol{\mathfrak c}}$.  For any $w$ in this group, we denote by $w\cdot \omega$ the action of  $w$ on any  $\omega\in 
{\bf H}_{ \boldsymbol{\mathfrak c}}$ when this space is viewed as a $W_{ \boldsymbol{\mathfrak c}}$-representation. We will use the same notation, with a dot $\cdot$, for the action of the whole Weyl group $W_r$ on ${\bf H}_{X_r}$ or $\mathbf C^{ \boldsymbol{\mathcal L}_r}$. Thanks to Lemma \ref{Lem:Hc}, all these notations are compatible. \sk

From now on we fix a base conic class in $\boldsymbol{\mathcal K}$ that we denote by 
$\boldsymbol{\mathfrak c}_0$. Since $W_r$ acts transitively on $\boldsymbol{\mathcal K}$, for each element $\boldsymbol{\mathfrak c}$ in it, there 
exists $w_{\boldsymbol{\mathfrak c }_0}^{\boldsymbol{\mathfrak c}} $ (with 
$w_{\boldsymbol{\mathfrak c }_0}^{\boldsymbol{\mathfrak c}_0}=1$) 
such that 
$w_{\boldsymbol{\mathfrak c }_0}^{\boldsymbol{\mathfrak c}} \cdot 
{\boldsymbol{\mathfrak c }_0}= {\boldsymbol{\mathfrak c}}$ for every 
conic class $\boldsymbol{\mathfrak c }$.  
Since $W_r/W_{\boldsymbol{\mathfrak c }_0}\simeq \boldsymbol{\mathcal K}_r$, it follows that 
\begin{equation}
\label{Eq:Wr-sqcup}
W_r=\bigsqcup_{ \boldsymbol{\mathfrak c} \in \boldsymbol{\mathcal K}} \,
w_{\boldsymbol{\mathfrak c }_0}^{\boldsymbol{\mathfrak c}} W_{ \boldsymbol{\mathfrak c }_0}\, . 
\end{equation}
Remark that  for any $\boldsymbol{\mathfrak c } \in 
   \boldsymbol{\mathcal K }_r$, one has $w_{\boldsymbol{\mathfrak c }_0}^{\boldsymbol{\mathfrak c}} W_{ \boldsymbol{\mathfrak c }_0}=\big\{ \, w \in W_r \, \lvert\, 
   w \cdot \boldsymbol{\mathfrak c }_0=\boldsymbol{\mathfrak c }\, \big\}$.

Given a fixed weight $\nu>0$, the inclusions \eqref{Eq:iota-c} actually land in 
${\bf H}_{X_r}$ hence 
give rise to inclusions between the $\nu$-th tensorial powers ${\bf H}_{ \boldsymbol{\mathfrak c} }^{\otimes \nu} \hookrightarrow {\bf H}_{X_r}^{\otimes \nu}$, 
 again denoted by 
$\iota_{ \boldsymbol{\mathfrak c} } $. 
For any $w \in  W_{r}$ and any $\eta\in {\bf H}_{ \boldsymbol{\mathfrak c }}^{\otimes \nu}$, 
letting $w$ act on $ \iota_{\boldsymbol{\mathfrak c }}( \eta)$ gives an element of  
${\rm Im}\big(  \iota_{w \boldsymbol{\mathfrak c }}
\big)\subset {\bf H}_{X_r}^{\otimes \nu} $ hence one can set 
\begin{equation}
w\bullet \eta= \iota_{w \boldsymbol{\mathfrak c }}^{-1}\big(   w \cdot 
 \iota_{\boldsymbol{\mathfrak c }}( \eta)\big)\in  {\bf H}_{ w \boldsymbol{\mathfrak c }}
 ^{ \otimes \nu}\, , 
\end{equation}
a definition which gives rise to a map 
\begin{equation}
\label{Eq:bullet}
\bullet  : W_r\times \left(\oplus_{ \boldsymbol{\mathfrak c } \in \boldsymbol{\cK}} {\bf H}_{\boldsymbol{\mathfrak c }}^{\otimes \nu}  \right)
\longrightarrow 
\oplus_{ \boldsymbol{\mathfrak c } \in \boldsymbol{\cK}} {\bf H}_{\boldsymbol{\mathfrak c }}^{\otimes \nu}\, .
\end{equation}

From now on, we no longer write the inclusions $\iota_{ \boldsymbol{\mathfrak c }}$ to simplify formulas.  For any $\boldsymbol{\mathfrak c } \in 
   \boldsymbol{\mathcal K }_r$ 
we have $w_{\boldsymbol{\mathfrak c }_0}^{\boldsymbol{\mathfrak c}} 
 {\bf H}_{ \boldsymbol{\mathfrak c }_0}^{\otimes \nu}= {\bf H}_{  \boldsymbol{\mathfrak c }}
 ^{\otimes \nu}$ and for any $w\in W$, setting 
 $\boldsymbol{\mathfrak c }'=w\boldsymbol{\mathfrak c }$, we have $(w w_{\boldsymbol{\mathfrak c }_0}^{\boldsymbol{\mathfrak c}}) {\boldsymbol{\mathfrak c }_0}={\boldsymbol{\mathfrak c }}'$ hence $w w_{\boldsymbol{\mathfrak c }_0}^{\boldsymbol{\mathfrak c}}\in  w_{\boldsymbol{\mathfrak c }_0}^{\boldsymbol{\mathfrak c}'}W_{ \boldsymbol{\mathfrak c }_0}$ thus  there exists ${w}'\in W_{ \boldsymbol{\mathfrak c }_0}$ such that $w w_{\boldsymbol{\mathfrak c }_0}^{\boldsymbol{\mathfrak c}}=w_{\boldsymbol{\mathfrak c }_0}^{\boldsymbol{\mathfrak c}'} {w}'$ in $W_r$. From this, one gets that given an element $\eta_{\boldsymbol{\mathfrak c }}$ 
 of 
$ {\bf H}_{\boldsymbol{\mathfrak c }}^{\otimes \nu}$, first one has 
$\eta_{\boldsymbol{\mathfrak c }}=
w_{\boldsymbol{\mathfrak c }_0}^{\boldsymbol{\mathfrak c}} 
\eta_{\boldsymbol{\mathfrak c }_0}$ for some 
$\eta_{\boldsymbol{\mathfrak c }_0}
\in  {\bf H}_{\boldsymbol{\mathfrak c}_0}^{\otimes \nu}$ hence 
$$w \bullet \eta_{\boldsymbol{\mathfrak c }}=w\cdot w_{\boldsymbol{\mathfrak c }_0}^{\boldsymbol{\mathfrak c}} 
\eta_{\boldsymbol{\mathfrak c }_0}= (w w_{\boldsymbol{\mathfrak c }_0}^{\boldsymbol{\mathfrak c}}) \,
\eta_{\boldsymbol{\mathfrak c }_0}= (w_{\boldsymbol{\mathfrak c }_0}^{\boldsymbol{\mathfrak c}'} {w}')\, \eta_{\boldsymbol{\mathfrak c }_0}
= w_{\boldsymbol{\mathfrak c }_0}^{\boldsymbol{\mathfrak c}'} ({w}'\, \eta_{\boldsymbol{\mathfrak c }_0})\, .
$$
As explained in \cite[\S3.3]{FultonHarris}, this defines a $W_r$-action on $\oplus_{ \boldsymbol{\mathfrak c } \in \boldsymbol{\cK}} {\bf H}_{\boldsymbol{\mathfrak c }}^{\otimes \nu} $, which is isomorphic to  the one induced by the $W_{\boldsymbol{\mathfrak c }_0}$-action on ${\bf H}_{\boldsymbol{\mathfrak c }_0}^{\otimes \nu}$.  This gives us the following

\begin{lem}
\label{Lem:yoyo}
 \hspace{0.2cm}{\rm 1.} The map $\bullet$ defined in \eqref{Eq:bullet} makes of the direct sum 
$ \oplus_{ \boldsymbol{\mathfrak c } \in \boldsymbol{\cK}} {\bf H}_{\boldsymbol{\mathfrak c }}^{\otimes \nu}$ a $W_r$-representation isomorphic to  the induced representation ${\rm Ind}_{W_{\boldsymbol{\boldsymbol{\mathfrak c} }_0}}^{W_r}\Big( 
{\bf H}_{\boldsymbol{\mathfrak c }_0}^{\otimes \nu}
\Big)$. 
\sk

\noindent \hspace{0.2cm}{\rm 2.} The linear map 
\begin{align}
\Phi^\nu_{r} =\Phi^\nu_{X_r} 
:  
\oplus_{ \boldsymbol{\mathfrak c } \in \boldsymbol{\cK}} {\bf H}_{\boldsymbol{\mathfrak c }}^{\otimes \nu} & \longrightarrow {\bf H}_{X_r}^{\otimes \nu} \\
 \big(\eta_{ \boldsymbol{\mathfrak c }}\big)_{ \boldsymbol{\mathfrak c } \in \boldsymbol{\cK}}& \longmapsto \sum_{ { \boldsymbol{\mathfrak c }
\in \boldsymbol{\cK}}} \eta_{ \boldsymbol{\mathfrak c }}
\nonumber
\end{align}
 is $W_r$-equivariant  
therefore requiring that 
\begin{equation}
\label{Eq:ARHlog-W-module}
0\ra {\bf ARHLog}^\nu\longrightarrow \oplus_{ \boldsymbol{\mathfrak c } \in \boldsymbol{\cK}} {\bf H}_{\boldsymbol{\mathfrak c }}^{\otimes \nu} 
\stackrel{
\Phi^\nu_{r} 
}{\longrightarrow} {\bf H}_{X_r}^{\otimes \nu}
\end{equation}
be an  exact sequence of $W_r$-representations naturally defines a $W_r$-action on 
the kernel ${\bf ARHLog}^\nu$ which identifies with 
the space of weight $\nu$  hyperlogarithmic abelian relations of the web $\boldsymbol{\mathcal W}_{{\rm dP}_d}$. 
\end{lem}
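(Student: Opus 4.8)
The plan is to prove the two assertions separately, observing that the bulk of the work for the first has already been carried out in the display immediately preceding the statement; what remains is to repackage that computation as an induced representation and to record the single geometric input on which the very definition of $\bullet$ rests. That geometric input — and really the only non-formal point in the whole lemma — is the compatibility
$w\cdot {\bf H}_{\boldsymbol{\mathfrak c}}={\bf H}_{w\boldsymbol{\mathfrak c}}$ (as subspaces of $\mathbf C^{\boldsymbol{\mathcal L}_r}$) for every $w\in W_r$ and every $\boldsymbol{\mathfrak c}\in\boldsymbol{\mathcal K}_r$, so I would establish it first.

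By Lemma~\ref{Lem:Hc} one has ${\bf H}_{\boldsymbol{\mathfrak c}}=\boldsymbol{\mathcal U}_{\boldsymbol{\mathfrak c}}$, the subspace spanned by the differences $c-c'$ of the reducible members of the pencil $\lvert\,\boldsymbol{\mathfrak c}\,\rvert$. Since $W_r$ acts by permutations on $\boldsymbol{\mathcal L}_r$ and preserves the intersection pairing on ${\bf Pic}(X_r)$, it sends a reducible conic $c=l_c+l'_c\in\boldsymbol{\mathfrak c}^{red}$ to the reducible conic $w\cdot l_c+w\cdot l'_c\in(w\boldsymbol{\mathfrak c})^{red}$, inducing a bijection $\boldsymbol{\mathfrak c}^{red}\xrightarrow{\sim}(w\boldsymbol{\mathfrak c})^{red}$; hence $w\cdot\boldsymbol{\mathcal U}_{\boldsymbol{\mathfrak c}}=\boldsymbol{\mathcal U}_{w\boldsymbol{\mathfrak c}}$, which is the asserted compatibility. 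This is exactly what makes the formula $w\bullet\eta=\iota_{w\boldsymbol{\mathfrak c}}^{-1}(w\cdot\iota_{\boldsymbol{\mathfrak c}}(\eta))$ meaningful, since $w\cdot\iota_{\boldsymbol{\mathfrak c}}(\eta)$ then lands in ${\rm Im}(\iota_{w\boldsymbol{\mathfrak c}})$. Moreover the defining relation $\iota_{w\boldsymbol{\mathfrak c}}(w\bullet\eta)=w\cdot\iota_{\boldsymbol{\mathfrak c}}(\eta)$, together with the fact that $\cdot$ is a genuine linear $W_r$-action on ${\bf H}_{X_r}^{\otimes\nu}$ and the injectivity of the maps $\iota_{\boldsymbol{\mathfrak c}}$ from \eqref{Eq:iota-c}, yields $\iota_{w_1w_2\boldsymbol{\mathfrak c}}\big(w_1\bullet(w_2\bullet\eta)\big)=(w_1w_2)\cdot\iota_{\boldsymbol{\mathfrak c}}(\eta)=\iota_{w_1w_2\boldsymbol{\mathfrak c}}\big((w_1w_2)\bullet\eta\big)$, so that $\bullet$ is indeed a $W_r$-action.

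For part~1, I would then note that \eqref{Eq:Wr-sqcup} exhibits $\{\,w_{\boldsymbol{\mathfrak c}_0}^{\boldsymbol{\mathfrak c}}\,\}_{\boldsymbol{\mathfrak c}\in\boldsymbol{\mathcal K}_r}$ as a complete set of left-coset representatives of $W_{\boldsymbol{\mathfrak c}_0}$ in $W_r$, while ${\bf H}_{\boldsymbol{\mathfrak c}}^{\otimes\nu}=w_{\boldsymbol{\mathfrak c}_0}^{\boldsymbol{\mathfrak c}}\cdot{\bf H}_{\boldsymbol{\mathfrak c}_0}^{\otimes\nu}$ by the compatibility just proved. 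The computation already displayed — writing $w\,w_{\boldsymbol{\mathfrak c}_0}^{\boldsymbol{\mathfrak c}}=w_{\boldsymbol{\mathfrak c}_0}^{\boldsymbol{\mathfrak c}'}w'$ with $\boldsymbol{\mathfrak c}'=w\boldsymbol{\mathfrak c}$ and $w'\in W_{\boldsymbol{\mathfrak c}_0}$, whence $w\bullet\big(w_{\boldsymbol{\mathfrak c}_0}^{\boldsymbol{\mathfrak c}}\eta_{\boldsymbol{\mathfrak c}_0}\big)=w_{\boldsymbol{\mathfrak c}_0}^{\boldsymbol{\mathfrak c}'}\big(w'\eta_{\boldsymbol{\mathfrak c}_0}\big)$ — is precisely the coset-representative model of the induced representation described in \cite[\S3.3]{FultonHarris}. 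This identifies $\big(\oplus_{\boldsymbol{\mathfrak c}}{\bf H}_{\boldsymbol{\mathfrak c}}^{\otimes\nu},\bullet\big)$ with ${\rm Ind}_{W_{\boldsymbol{\mathfrak c}_0}}^{W_r}\big({\bf H}_{\boldsymbol{\mathfrak c}_0}^{\otimes\nu}\big)$, proving assertion~1.

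For part~2, the $W_r$-equivariance of $\Phi^\nu_r$ is a one-line check: since $\iota_{\boldsymbol{\mathfrak c}'}\big(w\bullet\eta_{w^{-1}\boldsymbol{\mathfrak c}'}\big)=w\cdot\iota_{w^{-1}\boldsymbol{\mathfrak c}'}\big(\eta_{w^{-1}\boldsymbol{\mathfrak c}'}\big)$, summing over $\boldsymbol{\mathfrak c}'$, re-indexing by $\boldsymbol{\mathfrak c}=w^{-1}\boldsymbol{\mathfrak c}'$, and using the linearity of $\cdot$, one gets $\Phi^\nu_r(w\bullet\boldsymbol{\eta})=w\cdot\Phi^\nu_r(\boldsymbol{\eta})$. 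Hence $\ker\Phi^\nu_r={\bf ARHLog}^\nu$ is a $W_r$-subrepresentation, so \eqref{Eq:ARHlog-W-module} is an exact sequence of $W_r$-modules and endows ${\bf ARHLog}^\nu$ with a canonical $W_r$-action. Finally, to identify this kernel with the weight $\nu$ hyperlogarithmic ARs of $\boldsymbol{\mathcal W}_{{\rm dP}_d}$, I would match \eqref{Eq:ARHlog-W-module} against the defining sequence \eqref{Eq:ExactSequence-1}: for the del Pezzo web the foliations are indexed by $\boldsymbol{\mathfrak c}\in\boldsymbol{\mathcal K}_r$, the singular-value set $\mathfrak R_{\boldsymbol{\mathfrak c}}$ coincides with the spectrum $\Sigma_{\boldsymbol{\mathfrak c}}$, and ${\bf H}_{\boldsymbol{\mathfrak c}}=\varphi_{\boldsymbol{\mathfrak c}}^*(\boldsymbol{\mathcal H}_{\boldsymbol{\mathfrak c}})$; as the inclusion $\sum_{\boldsymbol{\mathfrak c}}{\bf H}_{\boldsymbol{\mathfrak c}}\hookrightarrow{\bf H}_{X_r}$ is injective, replacing the target $\big(\sum_{\boldsymbol{\mathfrak c}}{\bf H}_{\boldsymbol{\mathfrak c}}\big)^{\otimes\nu}$ by ${\bf H}_{X_r}^{\otimes\nu}$ leaves the kernel unchanged, so the two sequences share the same kernel and ${\bf ARHLog}^\nu={\bf HLogAR}^\nu(\boldsymbol{\mathcal W}_{{\rm dP}_d})$. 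As stressed above, the entire argument downstream of the geometric compatibility $w\cdot{\bf H}_{\boldsymbol{\mathfrak c}}={\bf H}_{w\boldsymbol{\mathfrak c}}$ is the standard formalism of induced representations, so that compatibility is the only real obstacle.
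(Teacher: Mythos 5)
Your proof is correct and follows essentially the same route as the paper: the coset computation $w\,w_{\boldsymbol{\mathfrak c}_0}^{\boldsymbol{\mathfrak c}}=w_{\boldsymbol{\mathfrak c}_0}^{\boldsymbol{\mathfrak c}'}w'$ identifying the $\bullet$-action with the coset-representative model of ${\rm Ind}_{W_{\boldsymbol{\mathfrak c}_0}}^{W_r}$ \`a la Fulton--Harris, followed by the (immediate) equivariance of the summation map $\Phi^\nu_r$, is exactly the paper's argument, stated just before the lemma. The only difference is that you spell out what the paper leaves implicit — the compatibility $w\cdot{\bf H}_{\boldsymbol{\mathfrak c}}={\bf H}_{w\boldsymbol{\mathfrak c}}$ via Lemma \ref{Lem:Hc}, and the kernel comparison with \eqref{Eq:ExactSequence-1} — which are sound and welcome additions rather than deviations.
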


For any $\nu\geq 1$, let $h_r^\nu$ stand for the dimension of the image of ${\Phi}^\nu_{r} :  \oplus_{ \boldsymbol{\mathfrak c } \in \boldsymbol{\cK}} {\bf H}_{\boldsymbol{\mathfrak c }}^{\otimes \nu}
\longrightarrow {\bf H}_{X_r}^{\otimes \nu}$.  Obviously, one has 
$\dim_{\mathbf C}\, \big({\bf ARHLog}^\nu\big)=  \kappa_r\cdot (r-2)^\nu-h_r^\nu$.  
Thanks to the preceding lemma, one can easily compute the dimension $h_r^\nu$ in the simplest cases: 
because  $\Phi^\nu_{r} $ is $W_r$-equivariant and non trivial (since 
any ${\bf H}_{\boldsymbol{\mc}}^{\otimes \nu} \ra {\bf H}_{X_r}^{\otimes \nu}$ is an embedding hence non trivial), its image is a non zero $W_r$-submodule of $ {\bf H}_{X_r}^{\otimes \nu}$. If the latter tensor product is irreducible as a $W_r$-module then 
necessarily $\nu=1$ and 
$\Phi^1_{r} $ is surjective. This implies that  $h^1_r=\dim_{\mathbf C} {\bf H}_{X_r}=
l_r -r-1$ (with $l_r=\lvert \, \boldsymbol{\mathcal L }_r\, \lvert$). 
Then from Corollary \eqref{Cor:H-Xr} and \eqref{Eq:Kr} we deduce the 
\begin{lem}
\label{Lem:lolo}
For $r=4,5,6$, one has 
$ {\rm HLrk}^1_r=
\dim_{\mathbf C}\, \big({\bf ARHLog}^1\big)=  \kappa_r\cdot (r-2)-l_r+r+1$.\sk
%
\end{lem}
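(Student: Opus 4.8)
The plan is to read off $\dim_{\mathbf C}\big({\bf ARHLog}^1\big)$ directly from the defining exact sequence \eqref{Eq:ARHlog-W-module} taken at weight $\nu=1$, so that the only genuine content is the surjectivity of the map $\Phi^1_r$. First I would record the dimension of the source: each summand ${\bf H}_{\boldsymbol{\mathfrak c}}$ has dimension $r-2$ and there are $\kappa_r=\lvert\boldsymbol{\mathcal K}_r\rvert$ conic classes, so $\dim_{\mathbf C}\big(\oplus_{\boldsymbol{\mathfrak c}\in\boldsymbol{\mathcal K}}{\bf H}_{\boldsymbol{\mathfrak c}}\big)=\kappa_r\cdot(r-2)$. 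Exactness of \eqref{Eq:ARHlog-W-module} then gives
\[
\dim_{\mathbf C}\big({\bf ARHLog}^1\big)=\kappa_r\cdot(r-2)-h^1_r,
\]
where $h^1_r=\dim_{\mathbf C}{\rm Im}(\Phi^1_r)$. Thus everything reduces to showing $h^1_r=\dim_{\mathbf C}{\bf H}_{X_r}=l_r-r-1$, i.e. that $\Phi^1_r$ is onto.

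The core step is to establish this surjectivity for $r=4,5,6$ by a soft representation-theoretic argument. By Lemma \ref{Lem:yoyo}.2 the map $\Phi^1_r$ is $W_r$-equivariant, and it is nonzero because each inclusion $\iota_{\boldsymbol{\mathfrak c}}:{\bf H}_{\boldsymbol{\mathfrak c}}\hookrightarrow{\bf H}_{X_r}$ of \eqref{Eq:iota-c} is injective, so an element supported on a single conic class already has nonzero image. Hence ${\rm Im}(\Phi^1_r)$ is a nonzero $W_r$-submodule of ${\bf H}_{X_r}$. Now by Corollary \ref{Cor:H-Xr} together with \eqref{Eq:Kr}, the module ${\bf H}_{X_r}\simeq{\bf K}_r$ is a single irreducible $W_r$-representation exactly in these three cases (namely $V_{[32]}^5$, $V_{[3,2]}^{10}$ and $V^{20,2}$ for $r=4,5,6$). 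Irreducibility forces ${\rm Im}(\Phi^1_r)={\bf H}_{X_r}$, which is the desired surjectivity.

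It then remains to compute $\dim{\bf H}_{X_r}$. Since the cycle map \eqref{Z-Lr->Pic(Xr)} is surjective with source of rank $l_r$ and target ${\bf Pic}(X_r)$ of rank $r+1$, its kernel ${\bf K}_r$ has dimension $l_r-r-1$, and by Corollary \ref{Cor:H-Xr} this equals $\dim{\bf H}_{X_r}$. Combining with the surjectivity just obtained yields $h^1_r=l_r-r-1$, whence
\[
{\rm HLrk}^1_r=\kappa_r\cdot(r-2)-\big(l_r-r-1\big)=\kappa_r\cdot(r-2)-l_r+r+1,
\]
as claimed.

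The main obstacle is precisely the irreducibility invoked in the second step, and it is exactly what restricts the statement to $r\le 6$. For $r=7,8$ the module ${\bf K}_r$ splits into several pairwise non-isomorphic irreducibles (see \eqref{Eq:Kr}: $V^{21,3}\oplus V^{27,2}$ and $V^{35,2}\oplus V^{84,4}\oplus V^{112,3}$), so a nonzero $W_r$-equivariant image need no longer be all of ${\bf H}_{X_r}$; surjectivity of $\Phi^1_r$ can genuinely fail and $h^1_r$ would have to be read off from a multiplicity/character computation rather than from this soft argument. In other words, the only delicate input here is the irreducibility of ${\bf H}_{X_r}$ guaranteed by Proposition \ref{Prop:Key-C-Lr-Wr-modules} and \eqref{Eq:Kr} for the three values $r=4,5,6$.
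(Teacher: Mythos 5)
Your proof is correct and follows essentially the same route as the paper: exactness of \eqref{Eq:ARHlog-W-module} reduces everything to computing $h^1_r$, and the surjectivity of $\Phi^1_r$ is obtained exactly as in the text, from its $W_r$-equivariance and nontriviality together with the irreducibility of ${\bf H}_{X_r}\simeq{\bf K}_r$ for $r=4,5,6$ supplied by Corollary \ref{Cor:H-Xr} and \eqref{Eq:Kr}. Your closing remark on why the argument breaks down for $r=7,8$ is a nice explicit articulation of what the paper leaves implicit, but it is the same argument.
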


The  explicit values of $l_r=\lvert \, \boldsymbol{\mathcal L }_r\, \lvert$, $\kappa_r
=\lvert \, \boldsymbol{\mathcal K }_r\, \lvert$ and ${\rm HLrk}^1_r$ for $r=4,5,6$ are given in the following table: \sk
$$ \begin{tabular}{|c||c|c|c|}
\hline
\, $\boldsymbol{r}$  &   4& 5  & 6 \\ \hline \hline
\, $\boldsymbol{l_r}$ & 10 &  16  & 27   \\ \hline
\, $\boldsymbol{\kappa_r}$  & 5 &    10 & 27  \\ \hline 
\, \begin{tabular}{l}\vspace{-0.25cm} \\
$  \boldsymbol{{\bf HLrk}^1_r}$  
\vspace{0.15cm}
\end{tabular}
& 5 &    20 & 88 
 \\
\hline
\end{tabular}
$$

\subsubsection{\bf Some remarks about the space of  hyperlogarithmic ARs of del Pezzo webs}
Let $\boldsymbol{\mathcal W}$ be a web as in \S\ref{SS:Webs} and in 
\S\ref{SSS:Hyperlog-AR} whose notations we use here.  One assumes that this web  carries many  
hyperlogarithmic abelian relations, 
 so that its spaces ${\bf HLogAR}^w$ of symbolic hyperlogarithmic ARs  are not all trivial. 
From the preceding subsections, it comes that the following facts hold true (for any $y\in Y$): 
\begin{enumerate}
\item[$1.$]  each weighted piece ${\bf HLogAR}^w$ is naturally a $\mathfrak S_w$-module;
\sk 
\item[$2.$] the filtered space ${\bf HLogAR}^{\leq \bullet}$ naturally is a (unipotent) $\pi_1(Y,y)$-module;
\sk 
\item[$3.$]  there is a well-defined residue map 
${\rm Res}_D: {\bf HLogAR}^w\longrightarrow {\bf HLogAR}^{w-1}$
for any irreducible component $D\subset Z$.
\end{enumerate}
The case of a  
del Pezzo web $\boldsymbol{\mathcal W}_{{\rm dP}_d}$ (with $d=9-r$) is particularly interesting since  in addition to the above, the following holds true as well 
\begin{enumerate}
\item[$4.$]    the Weyl group $W(E_r)$ also acts on each space ${\bf HLogAR}^w$. 
\end{enumerate}

In \cite{CP}, we have proved that any del Pezzo web $\boldsymbol{\mathcal W}_{{\rm dP}_d}$ for $d=1,\ldots,6$ carries a hyperlogarithmic AR ${\bf HLog}^{7-d}$ of weight $7-d$. 
Considering the above four points, it follows that this web is likely to carry many other hyperlogarithmic ARs, possibly of smaller weights. In other words, it is natural to expect that the spaces ${\bf HLogAR}^w$ for $w\leq 7-d$ are 'rather big'.  Having spaces of a priori high dimension compatible with all the algebraic structures listed in the fourth points above is not at all trivial and makes of these spaces interesting objects of study, at least for us. We hope to come back to this in a future work.

\subsection{Cluster algebras and cluster webs}
In this subsection, we introduce and discuss some elements of the theory of cluster algebras and some interesting webs one can derive from some of them. 
Our treatment below is very succinct, we refer to \S\ref{SS:WdP4-as-a-cluster-web} below for more specific details 
and  to our long memoir \cite{ClusterWebs} for a more panoramic perspective.
\sk

We start with some very basic definitions of the theory of cluster algebras.  Let $n$ be a fixed positive integer. 
A {\it (labeled) seed of rank $n$} is a triple $\boldsymbol{\mathcal S}=(\boldsymbol{a},\boldsymbol{x},B)$ where $\boldsymbol{a}=(a_1,\ldots,a_n)$ is a $n$-tuple of indeterminates, 
$\boldsymbol{x}=(x_1,\ldots,x_n)$ is another one (independent of the former) and  $B = (b_{ij})_{i,j=1}^n$ is a skew-symmetrizable $n \times n$  integer matrix, called the {\it exchange matrix} of the seed.  
The associated 
{\it (labeled) $\boldsymbol{\mathcal A}$-seed}  (resp.\,{\it $\boldsymbol{\mathcal X}$-seed})  is the pair $(\boldsymbol{a},B)$ (resp.\,$(\boldsymbol{x},B)$).  The $n$-tuples $\boldsymbol{a}$ and $\boldsymbol{x}$ are called {\it `$\boldsymbol{\mathcal A}-$} (resp.\,{\bf $\boldsymbol{\mathcal X}-$}) {\it clusters'} and their elements  $a_i$ and $x_i$ 
 {\it `$\boldsymbol{\mathcal A}-$} (resp.\,{\bf $\boldsymbol{\mathcal X}-$}) {\it cluster variables'} respectively.  Note that all these objects are a priori associated to the considered seed. \mk 
  
 By definition, for $k\in \{1,\ldots,n\}$, the {\it (cluster) mutation} in the $k$-th direction  of the seed $\boldsymbol{\mathcal S}$ is the new seed 
 $$\boldsymbol{\mathcal S}'=\big(\,\boldsymbol{a}'\,  , \, \boldsymbol{x}'\,  , \, B'\,\big)=\Big(\, \mu_k\big(\boldsymbol{a},B\big)\,  , \, \mu_k\big(\boldsymbol{x},B\big)\,  , \,\mu_k(B)\, \Big)=\mu_k\big(\boldsymbol{\mathcal S}\big)
 $$ 
  with $B'=(b_{ij}')_{i,j=1}^n=\mu_k(B)$ being the {\it matrix mutation of $B$ in direction $k$'} 
whose coefficients $b_{ij}'$ are given by the following formulas: 
$$
b_{ij}'=\begin{cases}
\, - b_{ij} \hspace{1.95cm} \mbox{ if }\,  \,  k \in \{i,j\} ;  \\
\, \hspace{0.2cm} b_{ij}\hspace{2cm} \mbox{ if } \,  k\not \in \{i,j\}  \, \mbox{  and   } \,   b_{ik}b_{kj}\leq 0;\\
\, \hspace{0.2cm} b_{ij}+\lvert b_{ik}\lvert \, b_{kj}\hspace{0.5cm} \mbox{ if } \,  k\not \in \{i,j\} \, \mbox{ and  }\,   b_{ik}b_{kj}> 0, 
\end{cases}
$$
and where the new clusters $\boldsymbol{a}'=(a_1',\ldots,a_n')=\mu_k(\boldsymbol{a},B)$ and $\boldsymbol{x}'=(x_1',\ldots,x_n')=\mu_k(\boldsymbol{x},B)$ are defined by the following formulas for the corresponding cluster variables $a_j'$s and $x_j'$'s:
\begin{align}
\label{Eq:A-X-Mutation-formulae}
a_{j}'= & \, a_j  \hspace{-0.5cm} &\mbox{if }\, j\neq k   \qquad \quad    &\mbox{ and }\quad \qquad
a_k'=  \frac{1}{a_k}\left( \,   \prod_{
b_{\ell k}>0} a_\ell^{b_{\ell k}}+
 \prod_{
   b_{lk }<0} a_l^{-b_{l k  }}
\, \right) \hspace{0.6cm}  \mbox{if }\, j= k \sk \\
x_{j}'= & \, {x_j}^{-1}  \hspace{-0.5cm} &\mbox{ if }\, j= k   \qquad \quad    &\mbox{ and }\quad \qquad
x_j'=  x_j\,\left(1+x_k^{\max\big(0,- b_{kj}\big)}\right)^{-b_{kj}}
\hspace{0.59cm}  \quad  \mbox{if }\, j\neq  k\, .\nonumber
\end{align}
Taking  $\boldsymbol{S}=\big( \boldsymbol{a}, \boldsymbol{x}, B\big)$ as initial seed,  by considering 
all the seeds obtained by successive mutations from it, one can construct the associated `{\it cluster exchange pattern}', which is the countable family of seeds   $\boldsymbol{S}_t=\big( \boldsymbol{a}^t, \boldsymbol{x}^t, B^t\big)$ indexed by the vertices $t$ of the $n$-regular tree $\mathbb T^n$, with the initial seed 
$\boldsymbol{S}=\boldsymbol{S}_{t_0}$ associated to the
root $t_0$ of this tree, the edges of  $\mathbb T^n$ being labeled by elements of $\{1,\ldots,n\}$ in the natural consistent way:  two vertices are  linked as follows 
$ t
 \hspace{-0.1cm}
\begin{tabular}{c} 
$ \stackrel{k}{\line(1,0){25}}$ \vspace{-0.35cm}\\  ${}^{}$
 \end{tabular} 
\hspace{-0.2cm}
 t'$ in $\mathbb T^n$ if and only if the two corresponding seeds satisfy 
$\boldsymbol{S}_{t'}=\mu_k\big(\boldsymbol{S}_t\big) $ (or equivalently $\boldsymbol{S}_{t}=\mu_k\big(\boldsymbol{S}_{t'}\big) $)

For $\boldsymbol{\mathcal Z}$ standing for $\boldsymbol{\mathcal A}$ or $\boldsymbol{\mathcal X}$ 
and accordingly $\boldsymbol{z}=(z_i)_{i=1}^s$ standing for $\boldsymbol{a}$ or $\boldsymbol{x}$, the {\it `$\boldsymbol{\mathcal Z}$-cluster algebra $\boldsymbol{Z}_{\boldsymbol{\mathcal S}}$'} with initial seed $\boldsymbol{\mathcal S}$ is the subalgebra of rational functions in the initial cluster coordinates $z_i$'s
spanned by all the $\boldsymbol{\mathcal Z}$-cluster variables of  seeds obtained from $\boldsymbol{\mathcal S}$ by a finite sequence of successive mutations : one has 
$
\boldsymbol{Z}_{\boldsymbol{\mathcal S}}=\big\langle
\, z_i^t\, \big\lvert 
\begin{tabular}{l}
$t\in \mathbb T^n$, 
$i=1,\ldots,n$\end{tabular}
\big\rangle \subset \mathbf Q(\boldsymbol{z})= \mathbf Q(z_1,\ldots,z_n)
\, .
$

To each vertex $t$ on $\mathbb T^n$, 
 one sets $\boldsymbol{{\mathcal Z}{\bf T}}^t={\rm Spec}\big( 
 \mathbf Q[z_1^t,\ldots,z_n^t, (z_1^t)^{-1},\ldots,1/z_n^t]\big)$ for $\boldsymbol{\mathcal Z}\in \{ \boldsymbol{\mathcal A}\, , \, \boldsymbol{\mathcal X}\}$ and one considers the monomial map 
 $p^t: \boldsymbol{{\mathcal A}{\bf T}}^t\rightarrow \boldsymbol{{\mathcal X}{\bf T}}^t$ 
 characterized by the relations $(p^t)^*(x_i^t)= \prod_{j=1}^n (a_i^t)^{b_{ij}^t}$ for $i=1,\ldots,n$. 
The cluster tori $\boldsymbol{{\mathcal Z}{\bf T}}^t$ can be glued together by means of the corresponding (sequences of) mutations t
and give rise to the so-called  {\it $\boldsymbol{\mathcal Z}$-cluster variety}: 
\begin{equation}
 \label{Eq:Cluster-variety}
\boldsymbol{\mathcal Z}_{ \boldsymbol{\mathcal S}} =\Bigg(\,  \bigsqcup_{t\in \mathbb T^n}
\boldsymbol{\mathcal Z} \mathbf T^t\, \Bigg)\, 
 /_{\boldsymbol{\mathcal Z}{-mut}}\, 
 \end{equation}
which has the structure of scheme (not necessarily of finite type nor separated\footnote{The 
$\boldsymbol{\mathcal A}$-cluster variety is always separated but  in general not the associated $\boldsymbol{\mathcal X}$-cluster variety.}). 

The monomial maps $p^t$ commute with mutations hence can be seen as the restrictions of a global well-defined cluster map $p: \boldsymbol{\mathcal A}_{ \boldsymbol{\mathcal S}}\longrightarrow 
\boldsymbol{\mathcal X}_{ \boldsymbol{\mathcal S}}$  between the two corresponding cluster varieties. 
The triple $( \boldsymbol{\mathcal A}_{ \boldsymbol{\mathcal S}}, 
\boldsymbol{\mathcal X}_{ \boldsymbol{\mathcal S}},p)$ is the {\it `cluster ensemble'} associated to the initial seed $\boldsymbol{S}$. To simplify the writing and since it still makes sense (despite not being fully rigorous), one often replaces  the subscript $\boldsymbol{S}$ by $B$ in the notations, and even sometimes just removes it.

Actually, there are more structures associated to $( \boldsymbol{\mathcal A}_{B},  \boldsymbol{\mathcal X}_{B}, p)$ since, as explained in \cite[\S2]{FG}: 
\begin{enumerate}
\item[$-$] 
the map $p: \boldsymbol{\mathcal A}_{B}\rightarrow \boldsymbol{\mathcal X}_{D_4}$ corresponds to the quotient map under the $H_{\boldsymbol{\mathcal A}}$-action
of a certain algebraic torus $H_{\boldsymbol{\mathcal A}}$ acting on the 
$\boldsymbol{\mathcal A}$-cluster variety;
\sk 
\item[$-$] there is an `exact sequence' of cluster varieties/maps
\begin{equation}
\label{Eq:ClusterSequenceP}
 \boldsymbol{\mathcal A}_{B}
\stackrel{p}{\longrightarrow} \boldsymbol{\mathcal X}_{B}\stackrel{\lambda}{\longrightarrow}
H_{\boldsymbol{\mathcal X}}\rightarrow 1
\end{equation}
where  $H_{\boldsymbol{\mathcal X}}$ is a torus and  $\lambda$ is a map with a monomial expression in each $\boldsymbol{\mathcal X}$-cluster torus $\boldsymbol{{\mathcal X}{\bf T}}^t$. 
That \eqref{Eq:ClusterSequenceP} be `exact' means that one has 
$\boldsymbol{\mathcal U}_B={\rm Im}(p)={\lambda}^{-1}(1)$ as subvarieties of 
$ \boldsymbol{\mathcal X}_{B}$. The subvariety $\boldsymbol{\mathcal U}_B$ is called 
the {\it `secondary cluster variety'} by some authors and has been proved to be relevant regarding web geometry and the theory of functional of polylogarithms (eg. see  \cite[Theorem 0.7]{ClusterWebs}).
\end{enumerate}

We will consider the cluster variables as rational functions on the initial cluster tori. They 
enjoy several remarkable properties (separation formula, sign-coherence, positivity, Laurent phenomenon for the $\boldsymbol{\mathcal A}$-cluster variables) that we will not review here.  
 Given a finite set $\Sigma$ of cluster variables, one can consider the {\it `cluster web'}  
${\boldsymbol{\mathcal W}}_\Sigma$ formed by the foliations admitting the elements of $\Sigma$ as first  integrals.   A nice feature of the theory of cluster algebras is that it comes with several ways to get finite sets $\Sigma$ of cluster variables giving rise to webs carrying polylogarithmic ARs. 
The first way is given by the so called cluster algebras  of `{\it finite type}', which are by definition those admitting only a finite number of clusters. An early fundamental result of Fomin and Zelevinsky shows that the classification of such algebras is parallel to that of Dynkin diagrams. Thus given a Dynkin diagram $\Delta$ of rank $n\geq 2$, the set $\Sigma_\Delta$ of all the $\boldsymbol{\mathcal X}$-cluster variables of the corresponding cluster algebra is finite, which allows us to define the `{\it cluster web of Dynkin type $\Delta$}' as the web
$$
{\boldsymbol{\mathcal X\hspace{-0.05cm} \mathcal W}}_{\hspace{-0.05cm} \Delta}=
{\boldsymbol{\mathcal W}}_{\Sigma_\Delta}\, . 
$$
It is a (generalized) web of codimension 1 in $n$-variables.  

 \sk

More elaborated constructions of cluster webs can be considered, such as the {\it `secondary cluster web'} of Dynkin type $\Delta$, which by definition is the trace of ${\boldsymbol{\mathcal X\hspace{-0.05cm} \mathcal W}}_{\hspace{-0.05cm} \Delta}$ along the secondary cluster manifold $\boldsymbol{\mathcal U}\subset \boldsymbol{\mathcal X}$: 
$$
{\boldsymbol{\mathcal U\hspace{-0.05cm} \mathcal W}}_{\hspace{-0.05cm} \Delta}=
\Big( {\boldsymbol{\mathcal X\hspace{-0.05cm} \mathcal W}}_{\hspace{-0.05cm} \Delta}\Big)\big\lvert_{ 
\boldsymbol{\mathcal U}
}\, .
$$
Note that ${\boldsymbol{\mathcal U\hspace{-0.05cm} \mathcal W}}_{\hspace{-0.05cm} \Delta}$ is distinct from  ${\boldsymbol{\mathcal X\hspace{-0.05cm} \mathcal W}}_{\hspace{-0.05cm} \Delta}$ if and only 
if $\boldsymbol{\mathcal U}$ is a proper subvariety of $\boldsymbol{\mathcal X}$, that is if and only if 
the initial exchange matrix $B$ has rank strictly less that $n$, in which case one has 
$\dim\big( \boldsymbol{\mathcal U}\big)={\rm rk}(B)$.  
The relevance of the notion of cluster web is illustrated by several webs 
associated to classical functional identities satisfied by some polylogarithms (such as 
Spence-Kummer identity of the trilogarithm, or Kummer's one of the tetralogarithm)  which can be proved to be webs of cluster type (see \cite[\S5.2]{ClusterWebs}). 
 But for the purpose of this paper, we need to go further and define a more general notion of cluster web. \sk

Let $\Sigma$ be a finite set of $ \boldsymbol{\mathcal X}$-cluster variables defining a cluster web 
${\boldsymbol{\mathcal X\hspace{-0.05cm} \mathcal W}}_{\hspace{-0.05cm} \Sigma}$.  Assume that the map $p$ in \eqref{Eq:ClusterSequenceP} has rank $m<n$, or equivalently $H_{ \boldsymbol{\mathcal X}}$ has positive dimension $n-m$. Since $\lambda$ is dominant, it  has rank $n-m$ hence for any $\tau\in 
H_{ \boldsymbol{\mathcal X}}$, the fiber $ \boldsymbol{\mathcal X}_{B,\tau}=\lambda^{-1}(\tau)$ is 
a $m$-dimensional subvariety of  $ \boldsymbol{\mathcal X}_B$. 
One defines a more general kind of cluster web by considering the trace of  ${\boldsymbol{\mathcal X\hspace{-0.05cm} \mathcal W}}_{\hspace{-0.05cm} \Sigma}$ along $ \boldsymbol{\mathcal X}_{B,\tau}$: 
one sets
$$
{\boldsymbol{\mathcal X\hspace{-0.05cm} \mathcal W}}_{\hspace{-0.05cm} \Sigma,\tau }
= \Big( 
{\boldsymbol{\mathcal X\hspace{-0.05cm} \mathcal W}}_{\hspace{-0.05cm} \Sigma}
\Big)\big\lvert _{\lambda=\tau}\, . 
$$

In  \S\ref{SS:WdP4-as-a-cluster-web}, we will prove that any del Pezzo's web $\boldsymbol{\mathcal W}_{\hspace{-0.05cm} {\rm dP}_4 }$ is a cluster web of the form just above.  
\begin{center}
$\star$
\end{center} 

In the next two sections, we  make as explicit as possible the general material of the current section for the two del Pezzo webs 
$\boldsymbol{\mathcal W}_{{\bf dP}_5}$ and $\boldsymbol{\mathcal W}_{{\bf dP}_4}$. 

\section{\bf Bol's web}
\label{S:Bol-web}
There is no moduli for quintic del Pezzo surface: for ${\rm dP}_5$, we take  the total space 
of the blow-up $b: {\rm dP}_5 \longrightarrow \mathbf P^2$ at the four points 
$p_1=[1:0:0]$, $p_2=[0:1:0]$, $p_3=[0:0:1]$ and $p_4=[1:1:1]$. Instead of dealing with 
$\boldsymbol{\mathcal W}_{{\bf dP}_5}$ on ${{\bf dP}_5}$, we are going to work on 
$\mathbf P^2$ 
with  the push-forward web $b_*\big( \boldsymbol{\mathcal W}_{{\bf dP}_5}\big)$, which is entirely equivalent (cf. Proposition \ref{P:b*WdPr-on-P2}). This push-forward is classically known as Bol's web, denoted by $\boldsymbol{\mathcal B}$ which,  in the coodinates $x,y$ corresponding 
to the affine embedding $\mathbf C^2\rightarrow \mathbf P^2,\, (x,y)\mapsto [x:y:1]$, is 
 the web given by the arguments of  Rogers' dilogarithm in $\boldsymbol{\big(\mathcal Ab\big)}$: 
 $$
  \boldsymbol{\mathcal B}= 
 \boldsymbol{\mathcal W}\bigg( \, x\, , \, 
 y\, , \, 
 \frac{x}{y}\, , \, 
  \frac{y-1}{x-1}\, , \, 
   \frac{x(y-1)}{y(x-1)} \, 
   \bigg)\, . 
 $$
 
Bol's web is regular on $U=b({\rm dP}_5\setminus L_5)\subset \mathbf P^2$ which coincides with the complement in $\mathbf  C^2$ of the affine arrangement of 
  five lines  $\mathscr A\subset \mathbf C^2$ cut out by 
 $$
 xy(x-1)(y-1)(x-y)=0\, .
$$ 
The map  $(x,y)\longmapsto [0,1,x,y,\infty]$
where the latter expression stands for the projective equivalence class of the 
configuration  of 5 points $(0,1,x,y,\infty)$ on the projective line induces an affine 
biholomorphism  
$\varphi : U\stackrel{\sim}{\rightarrow} \mathcal M_{0,5}$ which extends to an isomorphism 
${\rm dP}_5\simeq  \overline{\mathcal M}_{0,5}$.  The web  
$\varphi_*(\boldsymbol{\mathcal B})$ on 
 $\mathcal M_{0,5}$ is the 5-web 
 denoted by $\boldsymbol{\mathcal W}_{ \mathcal M_{0,5} }$ 
 whose first integrals are 
 the five forgetful maps 
 $ \mathcal M_{0,5}\rightarrow  \mathcal M_{0,4}$. 
 \label{Modular-W-M05}
\begin{center}
\vspace{-0.2cm}
$\star$
\end{center} 
 
 Our goal below is to establish all the properties of this web stated in \S\ref{S:Web-WdP5-properties}.  Some of them are classical and well known, we will be quite succinct about them.  
 
 \sk 
 
 \subsection{Non linearizability}
For completeness, we recall the standard argument for proving the non linearizability of Bol's web (see \cite[\S6.1]{PP} for more details): 
in the coordinates $x,y$ we are working with, Bol's web is seen to be formed by  four pencils of lines  plus a pencil of conics. A classical result says that any (local) linearization of a linear planar 4-web is necessarily  induced by a projective transformation of $\mathbf P^2$. Since such a map cannot send any smooth conic onto a line, there is no chance that it could linearize the pencil of conics of $\boldsymbol{\mathcal B}$ which is therefore non linearizable  (and consequently non algebraizable). 

\subsection{Cluster web of type ${A\hspace{-0.225cm}A}_2$}
\label{SS:Clsuter-Web-type-A2}
To deal with the ARs of $\boldsymbol{\mathcal B}$, it is convenient to 
perform a change of coordinates  in order to get its cluster model. As an unordered 5-web, Bol's web     is obviously equivalent to 
     $$
  \boldsymbol{\mathcal W}\bigg(
     \, 
     \frac{1}{x-1}
     \, , \, 
   \frac{y-x}{x - 1}
      \, , \, 
    y-1
       \, , \, 
       \frac{y(x-1)}{y-x}
         \, , \,  
    \frac{x}{y-x}
      \,\bigg)
    $$
      and using the birational change of variables 
    $  \Psi:  (u_1,u_2)\mapsto (x,y)$  
      defined by   
            $$x = \frac{1 + u_1}{u_1} \qquad \mbox{ and } \qquad  y = \frac{1 + u_1 + u_2}{u_1}$$
       ({\it cf.}\,the `zig-zag map' of \cite[\S6.3.2]{ClusterWebs}),  
          one obtains that 
           $\boldsymbol{\mathcal B}$ is equivalent  to the web 
          defined by the following five rational first integrals 
         $ u_1$, $ u_2$, 
$(1+u_2)/u_1$, $(1+u_1+u_2)/(u_1u_2)$ and 
$ (1+u_1)/u_2$. 
This web is  the $ \mathcal X$-cluster web of type $A_2$  (see 
 \cite[\S3.2.1]{ClusterWebs}, especially Figure 10 there):  
  one has 
\begin{equation}
\label{Eq:Psi*(B)=XW-A2}
 \Psi^*\big(\boldsymbol{\mathcal B}\big)= 
   \boldsymbol{\mathcal W}\bigg( \, u_1\, , \,   u_2\, , \,
 \frac{1+u_2}{u_1}\, , \, 
  \frac{1+u_1+u_2}{u_1u_2}\, , \,
   \frac{1+u_1}{u_2} \, 
   \bigg)= \boldsymbol{
 \mathcal X \hspace{-0.03cm}
  \mathcal W}_{\rm A_2}\, . 
\end{equation}
 
\subsection{The abelian relations of 
$ {\mathcal X} \hspace{-0.27cm}{\mathcal X}
\hspace{-0.03cm}
{\mathcal W} \hspace{-0.46cm}{\mathcal W}_{{A\hspace{-0.2cm}A}_2}$}
\label{SS:ARs-WdP5}
 Dealing with the cluster model  of Bol's web above is quite convenient to write down its ARs in a nice form. The presentation below is essentially taken from  
 \cite[\S5.1.1]{ClusterWebs}. 

\subsubsection{}
Denoting by  $X_\ell$ for $\ell=1,\ldots,5$ the first integrals of 
$\boldsymbol{
 \mathcal X \hspace{-0.03cm}
  \mathcal W}_{\rm A_2}$ given in \eqref{Eq:Psi*(B)=XW-A2}, that is 
\begin{equation}
\label{Eq:lola}
  X_1=u_1\, , \qquad X_2=u_2\, , \qquad 
X_3= \frac{1+u_2}{u_1}\, , \qquad
X_4=  \frac{1+u_1+u_2}{u_1u_2}\, , \qquad
X_5 =   \frac{1+u_1}{u_2}   
  \end{equation}
   and setting $X_0=X_5=(1+u_1)/u_2$ as well as 
  $X_6=X_1=u_1$, one verifies that the identity  
  $$ 
  \frac{X_{\ell-1}X_{\ell+1}}{1+X_\ell}=1
  $$
  is satisfied for any $\ell=1,\ldots,5$ 
 (these are the identities defining the {\it `$Y$-system of type $A_2$'}, see \cite[\S3.3.1.3]{ClusterWebs}). For any such $\ell$, taking the logarithm of the previous identity gives
 \begin{equation}
 \label{Eq:AR-log-X-ell}
 {\rm Log}\big(\, X_{\ell-1}\,\big) 
  -{\rm Log}\big(\, 1+X_{\ell}\,\big) 
 +{\rm Log}\big(\, X_{\ell+1}\,\big) =0\, , 
\end{equation}
  a functional identity which corresponds to a logarithmic and combinatorial AR for 
  $\boldsymbol{
 \mathcal X \hspace{-0.03cm}
  \mathcal W}_{\rm A_2}$, that we will denote by $ARLog_\ell$. 
 The $\ell$-th component of  $ARLog_\ell$ is $ -{\rm Log}\big(\, 1+X_{\ell}\,\big) $ for each $\ell$. Since the $ {\rm Log}\big(\, 1+X_{\ell}\,\big) $'s (for $\ell=1,\ldots,5$) are linearly independent,  it follows that the $ARLog_\ell$'s form a free family of elements of 
 $\boldsymbol{AR}_{log}\Big( 
 \boldsymbol{
 \mathcal X \hspace{-0.03cm}
  \mathcal W}_{\rm A_2}
 \Big)$ which actually is a basis of this space. One has 
 $$
 \boldsymbol{AR}_{log}\Big( 
 \boldsymbol{
 \mathcal X \hspace{-0.03cm}
  \mathcal W}_{\rm A_2}
 \Big)=
 \boldsymbol{AR}_{C}\Big( 
 \boldsymbol{
 \mathcal X \hspace{-0.03cm}
  \mathcal W}_{\rm A_2}
 \Big)=\Big\langle
\,  ARLog_1
\,,\,\ldots\, ,\, 
ARLog_5\, 
 \Big\rangle\simeq \mathbf C^5\,.
 $$

 The {\it cluster Rogers' dilogarithm ${\mathrm R}$} is the function defined 
 by the integral formula 
$$
 {}^{}\hspace{0.4cm}
{\mathrm R}(x)=
\frac{1}{2} \bigintsss_{0}^x \left( \frac{{\rm Log}(1+v)}{v}-
 \frac{{\rm Log}(v)}{1+v}\right)\,  dv\,
$$ 
for any $x\geq 0$.  It is well known that it satisfies the 
functional identity 
\begin{equation}
\label{Eq:R-A2}
{\mathrm R}(u_1) + {\mathrm R}(u_2) + {\mathrm R}\left(\frac{1 + u_2}{u_1}\right) + 
{\mathrm R}\left(\frac{1 + u_1+u_2}{u_1u_2}\right) +{\mathrm R}\left(\frac{1 + u_1}{u_2}\right)=
\frac{{}^{}\hspace{0.1cm}\pi^2}{2}
\end{equation}
 for any $u_1,u_2>0$, which is nothing else but Abel's 5-term identity written in cluster form. 
 One denotes  by $\boldsymbol{ {\bf Ab}}$ the weight 2  polylogarithmic `cluster abelian 
  relation' associated to \eqref{Eq:R-A2}. 
 
 One thus has 
   \begin{equation}
  \label{Eq:AR-XWA2}
 \boldsymbol{AR}\Big( \boldsymbol{\mathcal X\mathcal W}_{\hspace{-0.07cm}A_2} \Big)= 
 \Big\langle
\,  ARLog_1\,,\,\ldots\, ,\, ARLog_5\, 
 \Big\rangle \oplus 
 \big\langle \, {\bf Ab}  \, \big\rangle\,   \, , 
 \end{equation}
 from which one recovers the well-known fact (due to Bol) that 
  $
 \boldsymbol{
 \mathcal X \hspace{-0.03cm}
  \mathcal W}_{\rm A_2}\simeq  \boldsymbol{
 \mathcal B}$ has maximal rank, with all its ARs being polylogarithmic of weight 1 or 2.

Another nice feature of choosing the cluster variables $X_\ell$ as first integrals for the $\boldsymbol{\mathcal X}$-cluster web of type $A_2$ 
 is that one can easily construct 
the dilogarithmic identity \eqref{Eq:R-A2} from the five logarithmic abelian relations $LogAR_{\ell}$'s. Indeed, considering the differential of 
$LogAR_{\ell}$, it follows that 
$$
\frac{dX_\ell}{1+X_\ell}=\frac{dX_{\ell-1}}{X_{\ell-1}}+\frac{dX_{\ell+1}}{X_{\ell+1}}
$$
for any $\ell$. Then, summing for $\ell $ in $ \mathbf Z/5\mathbf Z$, one gets 

\begin{align}
\label{Eq:RA2-from-LogARl}
-\sum_{\ell}  LogAR_{\ell}\,  \frac{dX_\ell}{X_\ell}= & \, 
\sum_{\ell }
\bigg( -{\rm Log}\big(X_{\ell-1}\big)+{\rm Log}\big(1+X_\ell\big)-{\rm Log}\big(X_{\ell+1}\big) \bigg)
\,
  \frac{dX_\ell}{X_\ell} \nonumber \\ 
  = & \,  
 \sum_{\ell}
 {\rm Log}\big(1+X_\ell\big)\,
  \frac{dX_\ell}{X_\ell} 
  -
  \sum_{\ell}
  {\rm Log}\big(X_{\ell}\big)\, \bigg( \frac{dX_{\ell-1}}{X_{\ell-1}}+\frac{dX_{\ell+1}}{X_{\ell+1}} 
  \bigg) 
 \nonumber   \\
   = & \,  \sum_{\ell}
\bigg( \frac{{\rm Log}\big(1+X_{\ell}\big)}{X_\ell}
- \frac{{\rm Log}\big(X_{\ell}\big)}{1+X_\ell}
\bigg) \, 
{dX_\ell}\\
  = & \, \sum_{\ell} 2 \,{\rm R}'\big(X_\ell\big) \,dX_\ell=
  2 \,d\bigg(   \sum_{\ell} {\rm R}\Big(X_\ell \Big) 
  \bigg)\,. \nonumber
\end{align}

Since all the $LogAR_{\ell}$'s vanish identically, the same holds true for 
 the total derivative of $ \sum_{\ell=1}^5 {\rm R}(u_\ell)$ hence this sum is  identically equal to a constant. We thus have recovered very symmetrically the $A_2$-cluster dilogarithmic identity \eqref{Eq:R-A2} from the logarithmic identities $LogAR_{\ell}$'s.

Using \eqref{Eq:lola} and  the identity  $
 2 \,d\big(   \sum_{\ell} {\rm R}(X_\ell) 
  \big)=
-\sum_{\ell}  LogAR_{\ell}\cdot {dX_\ell}/{X_\ell}$, one can compute easily the residues of the abelian relation ${\bf Ab}$ associated to   \eqref{Eq:R-A2}. 
Given $P$ one of the $F$-polynomials $u_1,u_2,1+u_1,1+u_2$ and $1+u_1+u_2$, one denotes by ${\rm Res}_P( {\bf Ab})$ the residue of ${\bf Ab}$ along the divisor cut out by $P=0$ in $\mathbf C^2$. Then easy computations show that, up to non zero multiplicative  constants,  one has: 
\begin{align*}
{\rm Res}_{u_1}\big(  {\bf Ab}\big)= &\, 
LogAR_{1}-LogAR_{3}-LogAR_{4}
 \\
 {\rm Res}_{u_2}\big(  {\bf Ab}\big)= &\, 
 LogAR_{2}-LogAR_{4}-LogAR_{5}
 \\
 {\rm Res}_{1+u_1}\big(  {\bf Ab}\big)= &\, 
 LogAR_{5}
 \\
 {\rm Res}_{1+u_2}\big(  {\bf Ab}\big)= &\, 
  LogAR_{3}
 \\
 \mbox{and }\quad 
 {\rm Res}_{1+u_1+u_2}\big(  {\bf Ab}\big)= &\, 
  LogAR_{4}\, .
\end{align*}
Clearly, the five residue abelian relations above  form another basis of the space 
$ \boldsymbol{AR}_{log}\Big( 
 \boldsymbol{
 \mathcal X \hspace{-0.03cm}
  \mathcal W}_{\rm A_2}
 \Big)$. 

\subsubsection{The symbolic abelian relations of 
$ {\mathcal X} \hspace{-0.27cm}{\mathcal X}
\hspace{-0.03cm}
{\mathcal W} \hspace{-0.46cm}{\mathcal W}_{{A\hspace{-0.2cm}A}_2}$}

It is interesting to discuss succinctly the structure of the space of symbolic ARs 
 of $\boldsymbol{
 \mathcal X \hspace{-0.03cm}
  \mathcal W}_{\rm A_2}$ (if only to prepare the reader for what is to come, since this is the approach we will use to describe the ARs of $\boldsymbol{\mathcal W}_{{\rm dP}_4}$ in the next section).

We set 
$$
F_1=u_1\, ,  \quad 
F_2=u_2\, ,  \quad
F_3=1+u_1\, ,  \quad 
F_4=1+u_2\qquad \mbox{  and  } \qquad F_5=1+u_1+u_2\, .
$$
Then the rational differential 1-forms 
\begin{equation}
\label{kappa-l}
\kappa_\ell=d{\rm Log}(F_\ell)=dF_\ell/F_\ell
\end{equation} 
for $ \ell=1,\ldots,5$ 
are such that their pull-backs under the blow-up map $b: X_4={\rm dP}_5\rightarrow \mathbf P^2$ form a basis of the space ${\bf H}={\bf H}_{X_4}$ of rational 1-forms on ${\rm dP}_5$   with logarithmic poles along the divisor of lines $L_4\subset X_4$  and holomorphic on $U_4=X_4\setminus L_4$. \sk

One sets $\Sigma=\{0,1,\infty\}\subset \mathbf P^1$. The logarithmic 1-forms $d{\rm Log}(z)=dz/z$ and $d{\rm Log}(z+1)=dz/(z+1)$ form a basis of 
${\bf H}^0\big( \mathbf P^1,\Omega^1_{ {\mathbf P}^1}(\,{\rm Log}(\Sigma)\big)\big)$.  
Hence for any $\ell=1,\ldots,5$, 
$$
\nu_{\ell,1}=X_\ell^*\left(\frac{dz}{z}\right)=\frac{dX_\ell}{X_\ell} \qquad \mbox{ and }\qquad \nu_{\ell,2}=X_\ell^*\left(\frac{dz}{z+1}\right)=
\frac{dX_\ell}{X_\ell+1} 
$$
form a basis of ${\bf H}_\ell=X_\ell^*\big( {\bf H}^0\big( \mathbf P^1,\Omega^1_{ {\mathbf P}^1}(\,{\rm Log}(\Sigma)\big)\big) \big)$  which is a subspace of the 5-dimensional vector space ${\bf H}$. Hence any $\nu_{\ell,s}$ ($s=1,2$) admits a unique expression as a linear combination in the $\kappa_i$'s. These decompositions are the following (with $\nu_\ell=\big(\nu_{\ell,1}, \nu_{\ell,2}\big)$ for any $\ell$): 
\begin{align}
\label{Eq:nu-i}
\nu_1= & \, \Big(\, \kappa_1\, , \, \kappa_3\,\Big) \nonumber\\
\nu_2= & \, \Big(\, \kappa_2\, , \, \kappa_4\,\Big)\nonumber\\
\nu_3= & \, \Big(\, \kappa_4-\kappa_1\, , \, \kappa_5-\kappa_1\,\Big)\\
\nu_4= & \, \Big(\, \kappa_5-\kappa_1-\kappa_2\, , \, \kappa_3+\kappa_4
-\kappa_1-\kappa_2
\,\Big)\nonumber\\
\mbox{and} \quad
\nu_5= & \, \Big(\, \kappa_3-\kappa_2\, , \, \kappa_5-\kappa_2\,\Big)\nonumber
\end{align}
We set $\nu_0=\nu_5$ and $\nu_6=\nu_1$. 
For any $\ell=1,\ldots,5$, the symbolic relation corresponding to  the identity 
\eqref{Eq:AR-log-X-ell} is 
\begin{equation}
\label{Eq:AR-log-X-ell-nu}
\nu_{\ell-1,1}-\nu_{\ell,2}+\nu_{\ell+1,1}=0
\end{equation}
and the weight 2 antisymmetric symbolic relation corresponding to the functional identity 
\eqref{Eq:R-A2} is 
\begin{equation}
\label{Eq:moulot}
\sum_{\ell=1}^5 \nu_{\ell,1} \wedge \nu_{\ell,2}=0\, , 
\end{equation}
an identity in $\wedge^2 {\bf H}$ which can be shown to be  satisfied by an easy direct computation.

\subsection{Birational symmetries and the Weyl group action}
\label{SS:Bir-Symmetries-W-action}
The two following maps  
$$
\Phi: \, (u_1,u_2)  \longmapsto \bigg( \,\frac{1+u_2}{u_1}\, , \,  \frac{1+u_1+u_2}{u_1u_2}\, \bigg) \qquad 
\mbox{and}\qquad 
\Psi: \, (u_1,u_2)  \longmapsto \bigg( \,\frac{1+u_1}{u_2}\, , \,  \frac{1+u_1+u_2}{u_1u_2}\, \bigg) \,, 
$$
  are birational symmetries of $\boldsymbol{\mathcal X\mathcal W}_{\!A_2}$ (which can be qualified as `cluster symmetries' since their components are cluster coordinates). 
 The former map has order 5 whereas the second is an involution. Their pull-backs  on 
 a certain affine chart $\mathbf C^2$ of 
 $\mathcal M_{0,5}$ under the map  $(x,y)\mapsto [\infty,0,1,x,y]\mapsto (u_1,u_2) = \big(\, 1/(x - 1),  (x -y)/(1-x)\,\big)$ ({\it cf.}\,the map ${\bf U}_2$ in \cite[\S6.3.2]{ClusterWebs}) correspond 
 on $\mathcal M_{0,5}$ 
to the cyclic shift $\phi : [p_1,\ldots,p_5]\longmapsto [p_5,p_1,\ldots, p_4]$ and  to the dihedral reflection 
$\psi : [p_1,\ldots,p_5]\longmapsto [p_2,p_1,p_5,p_4,p_3]$  respectively. It follows that $\Phi$ and $\Psi$ generate a group of cluster symmetries of 
$\boldsymbol{\mathcal X\mathcal W}_{\!A_2}$  isomorphic to the dihedral group of the pentagon (note that, instead of $\Psi$, one could have considered the simpler involution $\Gamma: (u_1,u_2)\mapsto (u_2,u_1)$ since one has $\langle \, \Phi\, , \,\Psi\,\rangle=\langle \,\Phi \, , \, \Gamma\,\rangle$   as groups of Cremona transformations).   \sk

 The involution 
 exchanging the fourth and the fifth points of any configuration in $\mathcal M_{0,5}$ is written  $(x,y)\mapsto (y,x)$ in the affine coordinates $x,y$ and 
as follows  in the coordinates $u_1,u_2$: 
 $$ J: (u_1,u_2)\longmapsto  \left( \, \frac{u_1}{1+u_2}\, , \, -\frac{u_2}{1+u_2}
\,  \right) \, . 
$$  

The 5-cycle $(12\cdots5)$ together with the transposition $(45)$ generate the whole permutation group $\mathfrak S_5$ which identifies naturally 
to ${\rm Aut}( \mathcal M_{0,5})$.  To this group corresponds, in the coordinates $u_1,u_2$, the one  
 generated by $\Phi$ and $J$ which is the group 
of birational symmetries of $\boldsymbol{\mathcal X\mathcal W}_{\!A_2}$:  one has natural identifications 
$$
\mathfrak S_5\simeq {\rm Aut}( \mathcal M_{0,5})\simeq \big\langle \, \Phi\, , \, J\, \big\rangle\, .
$$

It is straightforward to verify that the space ${\bf H}$ spanned by the $\kappa_\ell$'s is stable by pull-backs under the maps $\Phi$ and $J$ and that the two corresponding linear actions are characterized by the following relations: 
\begin{align}
\label{Eq:Phi-J-*-kappa}
\Phi^*(\kappa_1)=& \, \kappa_4-\kappa_1 &&  J^*(\kappa_1)=\kappa_1-\kappa_4  \nonumber \\
\Phi^*(\kappa_2)=& \, \kappa_5-\kappa_1-\kappa_2 &&  J^*(\kappa_2)=\kappa_2-\kappa_4 \nonumber\\
\Phi^*(\kappa_3)=& \, \kappa_5-\kappa_1-\kappa_2 &&  J^*(\kappa_3)=\kappa_5-\kappa_4 \\
\Phi^*(\kappa_4)=& \, \kappa_3+\kappa_4-\kappa_1-\kappa_2 &&  J^*(\kappa_4)=-\kappa_4\nonumber\\
\Phi^*(\kappa_5)=& \, \kappa_4+\kappa_5-\kappa_1-\kappa_2 &&  J^*(\kappa_5)=\kappa_3-\kappa_4\, .\nonumber
\end{align}
 
 The space ${\bf H}$ is easily seen to be an irreducible $\mathfrak S_5$-module of dimension 5. Moreover one immediately gets that the trace of the matrix of $J^*$ in the basis $(\kappa_\ell)_{\ell=1}^5$ is 1. From the character table of $\mathfrak S_5$, one gets  that as a 
 $\mathfrak S_5$-representation, ${\bf H}$ is isomorphic to the one with Young symbol $[32]$. 
 
 From the explicit formulas \eqref{Eq:nu-i}, \eqref{Eq:AR-log-X-ell-nu}, 
 and \eqref{Eq:Phi-J-*-kappa}, it is just a matter of computations in linear algebra 
to determine   the action of $\Phi^*$ and $J^*$ on the space ${\bf ARHLog}^1$ (isomorphic to $\boldsymbol{AR}_{log}(\boldsymbol{{\mathcal X}
\hspace{-0.03cm}
{\mathcal W} }_{A_2})$) which is freely spanned as a complex vector space by the formal ARs corresponding to the algebraic relations \eqref{Eq:AR-log-X-ell-nu} for $\ell=1,\ldots,5$. We obtain that, as a $\mathfrak S_5$-representation, ${\bf ARHLog}^1$ is irreducible and isomorphic to the irreducible representation with Young symbol $[221]$. 
 Finally, from the formulas \eqref{Eq:Phi-J-*-kappa} above, one also gets $J^*( \nu_{1,1}) = -J^*( \nu_{3,1})$ and 
$J^*( \nu_{1,2}) = J^*( \nu_{3,2})$ hence $J^*( \nu_{1,1}\wedge \nu_{1,2})= 
-  \nu_{3,1}\wedge \nu_{3,2}$.  It follows immediately from this that 
  the weight 2 antisymmetric AR corresponding to 
 \eqref{Eq:moulot} is a non-trivial 1-dimensional representation of $\mathfrak S_5\simeq 
 \big\langle \, \Phi\, , \, J\, \big\rangle$: we obtain that  
 $\big\langle {\bf Ab} \big\rangle$ is isomorphic to the signature representation of $\mathfrak S_5$. 
 
\begin{rem} 
The isomorphisms of $\mathfrak S_5$-representations 
$$
{\bf ARHLog}^1\simeq V^5_{[221]}\qquad \mbox{and} \qquad 
\big\langle {\bf Ab} \big\rangle \simeq {\bf sign}=V^1_{[1^5]}
$$
have been stated explicitly first by Damiano (see  \cite{Damiano} or the introduction of \cite{Pirio-W-M06}{\rm )}.
\end{rem}
 
\subsection{\bf Canonical algebraizations of Bol's web}
\label{S:Algebraization-WdP5}
For planar webs with maximal rank, being exceptional is precisely the opposite of being algebraizable in the classical terminology. Hence one has to explain the meaning of the heading of this subsection.
 
Here we take the standpoint that Bol's web is (naturally equivalent to) the web $\boldsymbol{\mathcal W}_{{\mathcal M}_{0,5}}$ which is defined by natural rational (hence algebraic) first integrals on the algebraic surface $\overline{\mathcal M}_{0,5}\simeq {\rm dP}_5$.  What we mean by the notion of a `{\it canonical algebraization of Bol's web}'  is a canonical way, given a germ of 5-web $\boldsymbol{\mathcal W}$ (at the origin of $\mathbf C^2$ say) to  first recognize whether  
 $\boldsymbol{\mathcal W}$ is equivalent to $\boldsymbol{\mathcal B}$, then, if it is indeed the case,  to construct in a canonical way a local biholomorphism $\psi_{\boldsymbol{\mathcal W}} :  \big( \mathbf C^2,0\big)\rightarrow {\mathcal M}_{0,5}$ such that the push-forward of $\boldsymbol{\mathcal W}$ by $ \psi_{\boldsymbol{\mathcal W}}$ coincides with 
 $\boldsymbol{\mathcal W}_{{\mathcal M}_{0,5}}\simeq \boldsymbol{\mathcal B}$ (possibly up to reindexing the foliations). 
 
 We describe below two constructions of such canonical maps. These two  constructions are a priori distinct but both are canonical, hence both coincide for Bol's web. The first construction is the most direct and elementary but seems to be new. The second approach is more in the classical spirit of web geometry, still  is new as well. It consists in constructing an algebraization map by means of (some of) the ARs of the web under consideration. 
\sk

In the rest of this subsection, $\boldsymbol{\mathcal W}$ stands for  a  fixed 5-web defined on an open domain $U \subset \mathbf C^2$.

 \subsubsection{Algebraization via the combinatorial abelian relations}
 \label{SS:algebraization-via-AR-C}
Let $\boldsymbol{AR}_C(\boldsymbol{\mathcal W})$ be the space of combinatorial ARs of 
$\boldsymbol{\mathcal W}$ (on $U$) which by definition is the subspace of 
$\boldsymbol{AR}(\boldsymbol{\mathcal W})$ spanned by the ARs carried by the 3-subwebs of $\boldsymbol{\mathcal W}$.  We define $\boldsymbol{ar}_C(\boldsymbol{\mathcal W})$ as the subspace of $\Omega^1(U)$ spanned by the components of all the elements of $\boldsymbol{AR}_C(\boldsymbol{\mathcal W})$. Note that $\boldsymbol{ar}_C(\boldsymbol{\mathcal W})$ is invariantly attached to $\boldsymbol{\mathcal W}$: for any biholomorphism $F: \widetilde U\rightarrow U$, one has $F^*\big( \boldsymbol{AR}_C(\boldsymbol{\mathcal W})\big)= \boldsymbol{AR}_C\big( F^*(\boldsymbol{\mathcal W})\big)$ hence
\begin{equation}
\label{Eq:FC}
F^*\Big( \boldsymbol{ar}_C(\boldsymbol{\mathcal W})\Big)= \boldsymbol{ar}_C\big( F^*(\boldsymbol{\mathcal W})\big)
 \, .
\end{equation}
  We can call the dimension of $ \boldsymbol{ar}_C(\boldsymbol{\mathcal W})$ 
the  `{\it combinatorial $\boldsymbol{a}$-rank}' of $\boldsymbol{\mathcal W}$. 
 Remark that  the dimension of $\boldsymbol{ar}_C(\boldsymbol{\mathcal W})$ is positive if and only if 
$\boldsymbol{AR}_C(\boldsymbol{\mathcal W})$ is non-trivial, which is often verified by the webs we consider in practice. In what follows, we set 
$$\mathfrak r=\dim \Big( \boldsymbol{ar}_C\big(\boldsymbol{\mathcal W}\big)\Big)\,.$$

\begin{lem}
If  $\mathfrak r$ is positive then necessarily  $\mathfrak r\geq \red{2}$ and the evaluation map ${\rm ev}_u : \boldsymbol{ar}_C(\boldsymbol{\mathcal W}) 
\longrightarrow \Omega^1_{U,u}\simeq \mathbf C^2$ is a surjective linear map of $\mathbf C$-vector spaces for any $u\in U$.
\end{lem}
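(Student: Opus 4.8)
The plan is to deduce both assertions from the analysis of a single nontrivial combinatorial abelian relation. Since $\mathfrak r=\dim\big(\boldsymbol{ar}_C(\boldsymbol{\mathcal W})\big)>0$, the space $\boldsymbol{AR}_C(\boldsymbol{\mathcal W})$ is nonzero, so there is a nontrivial combinatorial AR; after reindexing the foliations I may assume it is carried by the $3$-subweb $\boldsymbol{\mathcal W}(U_1,U_2,U_3)$ and has the form $\eta_1+\eta_2+\eta_3=0$ with each component $\eta_m=U_m^*(dF_m)=F_m'(U_m)\,dU_m$ nonzero. Setting $c_m=F_m'(U_m)$, each $c_m$ is a nonzero first integral of the $m$-th foliation (a function constant along its leaves) and $\eta_m=c_m\,dU_m$ is proportional to $dU_m$.

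First I would record the pointwise linear dependence. At any $u\in U$ the three covectors $dU_1|_u,dU_2|_u,dU_3|_u$ live in the two-dimensional space $\Omega^1_{U,u}$, and any two of them are linearly independent because the foliations of a web are pairwise transverse, i.e. $dU_i\wedge dU_j\neq 0$. Hence there is, up to a scalar, a unique linear relation among the three, and necessarily all its coefficients are nonzero (if one vanished, two of the covectors would be dependent). Comparing this with the relation $c_1(u)\,dU_1|_u+c_2(u)\,dU_2|_u+c_3(u)\,dU_3|_u=0$ shows that $\big(c_1(u),c_2(u),c_3(u)\big)$ is proportional to those coefficients; consequently the three numbers $c_1(u),c_2(u),c_3(u)$ either all vanish or are all nonzero.

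The crucial step is then to prove that none of the $c_m$ vanishes anywhere on $U$. Suppose on the contrary that $c_1(u_0)=0$ for some $u_0\in U$, and let $\ell_1$ be the leaf of the first foliation through $u_0$. Since $U_1$ is constant along $\ell_1$, one has $c_1\equiv 0$ on $\ell_1$, whence by the previous step $c_2\equiv 0$ on $\ell_1$ as well. But $U_2$ is non-constant along $\ell_1$ by transversality of the first two foliations, so $U_2|_{\ell_1}$ is an open map and its image is an open subset of $\mathbf{P}^1$ on which $F_2'$ vanishes; by the identity theorem $F_2'\equiv 0$, hence $\eta_2\equiv 0$, contradicting that $\eta_2$ is a nontrivial component. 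Therefore $c_1,c_2,c_3$ are everywhere nonzero on $U$. This nowhere-vanishing statement is the one genuinely delicate point, and it is exactly where the web structure — transversality combined with the first-integral nature of the $c_m$ — is used.

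The two conclusions then follow immediately. For every $u\in U$ the forms $\eta_1|_u=c_1(u)\,dU_1|_u$ and $\eta_2|_u=c_2(u)\,dU_2|_u$ are nonzero and proportional to the linearly independent covectors $dU_1|_u,dU_2|_u$, so they span $\Omega^1_{U,u}\simeq\mathbf{C}^2$; thus ${\rm ev}_u$ is surjective. In particular $\eta_1$ and $\eta_2$ are linearly independent elements of $\boldsymbol{ar}_C(\boldsymbol{\mathcal W})$, giving $\mathfrak r\geq 2$. All the remaining verifications are formal, the only substantive ingredient being the nowhere-vanishing of the coefficient functions established above.
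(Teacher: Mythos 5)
Your proof is correct and follows essentially the same route as the paper: the paper's one-sentence proof simply invokes the classical fact that a non-trivial abelian relation with three terms has zero valuation at every point (i.e., nowhere-vanishing components), from which both conclusions follow exactly as in your final paragraph, since two of the components evaluate at any $u\in U$ to nonzero multiples of the linearly independent covectors $dU_1|_u$ and $dU_2|_u$. The only difference is that you additionally supply a complete proof of that invoked fact (via the constancy of $c_m=F_m'(U_m)$ along leaves, transversality, and the identity theorem), which the paper states without proof.
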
 
\begin{proof}
This follows immediately from the fact that the valuation of a non-trivial abelian relation with 3 terms is zero at any point. 
\end{proof}

We assume that $\mathfrak r>0$ from now on.  In this case, it follows from the preceding lemma that  
${\rm Ker}({\rm ev}_u)$ is a subspace of codimension $2$ of $\boldsymbol{ar}_C(\boldsymbol{\mathcal W})$ hence its annihilator, denoted by $An_u$, is a 2-plane in 
the dual space $\boldsymbol{ar}_C(\boldsymbol{\mathcal W})^\vee $. 
Composing the map $u   \longmapsto An_u$ from $U$ into the grassmannian 
$G_2\big( \boldsymbol{ar}_C(\boldsymbol{\mathcal W})^\vee
\big)$ with the Pl\"ucker embedding $\varrho$ 
  this grassmannian variety  into 
  the second wedge product of $ \boldsymbol{ar}_C(\boldsymbol{\mathcal W})^\vee$, we obtain a morphism 
\begin{align*}
{}^{} \qquad 
\Psi_{\boldsymbol{\mathcal W}} :  U & \longrightarrow 
 \mathbf P \Big( \wedge^2  \boldsymbol{ar}_C(\boldsymbol{\mathcal W})^\vee
\Big)\simeq \mathbf P^{ { \mathfrak r\choose 2}-1}
\\
 u & \longmapsto \wedge^2 An_u\, .
\end{align*}
We let the reader verify that $\Psi_{\boldsymbol{\mathcal W}}$ is invariantly attached to 
${\boldsymbol{\mathcal W}}$ as well in the sense that, for any biholomorphism $F$ as above, one has $$\Psi_{\boldsymbol{\mathcal W}}\circ F= \Psi_{F^*(\boldsymbol{\mathcal W})}$$ 
up to the natural identification between the target spaces induced by \eqref{Eq:FC}.

Given local coordinates $x,y$ and a basis $\eta_1,\ldots,\eta_r$ of $ \boldsymbol{ar}_C(\boldsymbol{\mathcal W})$, the map  $\Psi_{\boldsymbol{\mathcal W}}$  can be described explicitly as follows:  
for any $i=1,\ldots,r$,  let $\eta_i=\eta_i^1dx+\eta_i^2dy$ be the decomposition of 
$\eta_i$ in the basis $(dx,dy)$ and let us set $\eta^s=(\eta^s_1,\ldots,\eta_r^s)$ for $s=1,2$.  
Then up to the linear identification between $\boldsymbol{ar}_C(\boldsymbol{\mathcal W})$ and its dual induced by the choice of the basis $(\eta_i)_{i=1}^r$, one has that $\eta^1\wedge \eta^2\in \wedge^2 \mathbf C^{r}$ identifies with $\wedge^2 An_u$ from which it follows that 
$\Psi_{\boldsymbol{\mathcal W}}$ is the projectivization  of the map 
 with  components the $2\times 2$ minors of the $2\times r$ matrix whose two lines are $\eta^1$ and $\eta^2$: 
one has 
$$
\Psi_{\boldsymbol{\mathcal W}}=\Big[ \,\eta_i^1\eta_j^2-\eta_i^2\eta_j^1\, \Big]_{1\leq i<j\leq r} : 
U\longrightarrow \mathbf P^{ {r \choose 2}-1}\, . 
$$

When $\Psi_{\boldsymbol{\mathcal W}}$ is \'etale at some point, then 
$S_{\boldsymbol{\mathcal W}}={\rm Im}\big( \Psi_{\boldsymbol{\mathcal W}} \big)$ is a (a priori analytic)  surface geometrically attached to $\boldsymbol{\mathcal W}$ (up to a projective automorphism of the target projective space) hence the push forward web 
$ \big(\Psi_{\boldsymbol{\mathcal W}}\big)_*\big( \boldsymbol{\mathcal W} \big)$  is a canonical model of ${\boldsymbol{\mathcal W}}$.  

\begin{rem}
Note that when ${\boldsymbol{\mathcal W}}$ is a maximal rank 5-web, another canonical model of it has been previously considered, as the push-forward 
of ${\boldsymbol{\mathcal W}}$ by the associated Poincar\'e-Blaschke map $P_{\boldsymbol{\mathcal W}}$ ({\it cf.}\,\S5.2 and \S5.3 of \cite{PP} for more details).  When 
${\boldsymbol{\mathcal W}}$ is such that 
both push-forwards $ \big(\Psi_{\boldsymbol{\mathcal W}}\big)_*\big( \boldsymbol{\mathcal W} \big)$ and $ \big(P_{\boldsymbol{\mathcal W}}\big)_*\big( \boldsymbol{\mathcal W} \big)$
are well-defined, it would be interesting to compare these `two canonical models'.  Since the combinatorial ARs are the less transcendant ones among the ARs, one can expect the surface $S_{\boldsymbol{\mathcal W}}={\rm Im}\big( \Psi_{\boldsymbol{\mathcal W}} \big)$ to be less transcendantal/more algebraic (in a vague sense here but which it would be advisable to make more precise) than Poincar\'e-Blaschke surface $\Sigma_{\boldsymbol{\mathcal W}}={\rm Im}\big( P_{\boldsymbol{\mathcal W}} \big)$. 
\end{rem}
\begin{center}
$\star$
\end{center}

We now specialize the material introduced above in the specific case of Bol's web. For this aim, 
as a model for this web, 
it is more convenient to work with the web $\boldsymbol{\mathcal W}_{ \boldsymbol{\mathcal M}_{0,5} }$ induced by the five forgetting maps 
 on $\boldsymbol{\mathcal M}_{0,5}$. For this web, one has 
\begin{prop}
\label{P:Algeb-Bol-1}
\begin{enumerate}
\item[] ${}^{}$ \hspace{-1.3cm} {\rm 1.} The space  $\boldsymbol{ar}_C\big(\boldsymbol{\mathcal W}_{ \boldsymbol{\mathcal M}_{0,5} }\big)$ coincides 
with the space of holomorphic 1-forms on  ${\boldsymbol{\mathcal M}}_{0,5}$ with logarithmic poles along 
$\partial  {\boldsymbol{\mathcal M}}_{0,5}$, {\it i.e.} as subspaces of 
$\Omega^1( {\boldsymbol{\mathcal M}}_{0,5})$, one has 
$$
\boldsymbol{ar}_C\big(\boldsymbol{\mathcal W}_{ \boldsymbol{\mathcal M}_{0,5} }\big)
=
{\bf H}^0\left(  \, \overline{\boldsymbol{\mathcal M}}_{0,5}\, , \, 
\Omega^1_{ \overline{\boldsymbol{\mathcal M}}_{0,5}}
\Big( \, {\rm Log}\,\big( \partial  {\boldsymbol{\mathcal M}}_{0,5} \big) 
\,
\Big)\,
\right)\,.
$$
In particular $r=5$ and $\boldsymbol{ar}_C\big(\boldsymbol{\mathcal W}_{ \boldsymbol{\mathcal M}_{0,5} }\big)$ is well-defined on the whole  
$\boldsymbol{\mathcal M}_{0,5}$.
\mk
\item[{\rm 2.}] The map $\Psi_{\boldsymbol{\mathcal W}_{ \boldsymbol{\mathcal M}_{0,5}}}$ takes its values into a 5-dimensional linear subspace 
$\mathbf PW\simeq \mathbf P^5$ of $  \mathbf P\big(
\wedge^2 \boldsymbol{ar}_C\big(\boldsymbol{\mathcal W}_{ \boldsymbol{\mathcal M}_{0,5} }\big)^\vee
\big)\simeq \mathbf P^9$.  Moreover the corresponding map 
$ \boldsymbol{\mathcal M}_{0,5}
\longrightarrow \mathbf PW\simeq \mathbf P^5$
is the restriction to $ \boldsymbol{\mathcal M}_{0,5}$ of  the anticanonical embedding 
of $\overline{\boldsymbol{\mathcal M}}_{0,5}$ 
into $\mathbf P^5$.
\end{enumerate}
\end{prop}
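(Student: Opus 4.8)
The plan is to carry out both parts in the cluster model $\boldsymbol{\mathcal X\mathcal W}_{A_2}\simeq \boldsymbol{\mathcal W}_{\boldsymbol{\mathcal M}_{0,5}}$ of \S\ref{SS:ARs-WdP5}; this is harmless because $\boldsymbol{ar}_C$ and $\Psi_{\boldsymbol{\mathcal W}}$ are invariantly attached to the web by \eqref{Eq:FC}. In the coordinates $(u_1,u_2)$ the space ${\bf H}={\bf H}_{X_4}$ of logarithmic $1$-forms on $\overline{\boldsymbol{\mathcal M}}_{0,5}$ has the basis $(\kappa_1,\dots,\kappa_5)$ of \eqref{kappa-l}.

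For part 1, I would first note that $\boldsymbol{\mathcal B}$ is hexagonal ($r_{\boldsymbol{\mathcal B},3}\equiv 1$), so every $3$-subweb carries a one-dimensional space of abelian relations and the associated combinatorial AR is \emph{logarithmic}: its components are closed $1$-forms with at most logarithmic poles along $\partial \boldsymbol{\mathcal M}_{0,5}$, hence lie in ${\bf H}_{X_4}$. This gives $\boldsymbol{ar}_C\big(\boldsymbol{\mathcal W}_{\boldsymbol{\mathcal M}_{0,5}}\big)\subseteq {\bf H}_{X_4}$. For the reverse inclusion I would read off the explicit symbolic expressions \eqref{Eq:nu-i}: the components $\nu_{\ell,1},\nu_{\ell,2}$ ($\ell=1,\dots,5$) of the relations $ARLog_\ell$ in \eqref{Eq:AR-log-X-ell-nu} already span the whole basis $\kappa_1,\dots,\kappa_5$, whence $\boldsymbol{ar}_C={\bf H}_{X_4}$. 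Since $\dim {\bf H}_{X_4}=\dim V_{[32]}^5=5$ by Corollary \ref{Cor:H-Xr}, this yields $\dim\boldsymbol{ar}_C=5$ and the global definedness on $\boldsymbol{\mathcal M}_{0,5}$, logarithmic forms being defined everywhere.

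For part 2, the idea is to recognise $\Psi_{\boldsymbol{\mathcal W}}$ as the anticanonical map. Writing $\kappa_i\wedge\kappa_j=m_{ij}\,du_1\wedge du_2$, the minor formula for $\Psi_{\boldsymbol{\mathcal W}}$ shows that $\Psi_{\boldsymbol{\mathcal W}}(u)=\sum_{i<j}m_{ij}(u)\,\kappa_i^{*}\wedge\kappa_j^{*}$ in $\mathbf P\big(\wedge^2{\bf H}^\vee\big)$; hence the linear forms vanishing on its image are exactly the kernel of the wedge (cup-product) map $\mu:\wedge^2{\bf H}\to {\bf H}^0\big(\overline{\boldsymbol{\mathcal M}}_{0,5},\Omega^2({\rm Log}\,\partial)\big)$. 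I would then compute $\Omega^2({\rm Log}\,\partial)=K_{X_4}+\partial$ and use that the sum of the ten lines satisfies $\partial=L_4=-2K_{X_4}$ in ${\bf Pic}(X_4)$, so that $\Omega^2({\rm Log}\,\partial)\cong -K_{X_4}$; the $h^0$-formula of \S\ref{SSS:Del-Pezzo-Properties} gives $h^0(-K_{X_4})=6$. Granting that $\mu$ is surjective, its kernel is $4$-dimensional, the image of $\Psi_{\boldsymbol{\mathcal W}}$ lies in $\mathbf PW$, where $W=\mathrm{Ann}(\ker\mu)\subset\wedge^2{\bf H}^\vee$ is isomorphic to ${\bf H}^0(-K_{X_4})^\vee$ and has dimension $6$, and the induced map $\boldsymbol{\mathcal M}_{0,5}\to \mathbf P\big({\bf H}^0(-K_{X_4})^\vee\big)$, $u\mapsto\big(\omega\mapsto \omega(u)/(du_1\wedge du_2)\big)$, is by definition the anticanonical map of $\overline{\boldsymbol{\mathcal M}}_{0,5}={\rm dP}_5$, embedding it as a quintic in $\mathbf P^5$.

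The hard part is the surjectivity of $\mu$, equivalently $\dim\,\mathrm{Im}(\mu)=6$. The reliable route is a direct rank computation: the eight nonzero forms $m_{ij}$ (all pairs except $\{1,3\}$ and $\{2,4\}$, which give $0$) span, over the common denominator $u_1u_2(1+u_1)(1+u_2)(1+u_1+u_2)$, a $6$-dimensional space of rational functions. Conceptually this is explained by $W_4\simeq\mathfrak S_5$-equivariance of $\mu$: a character computation gives $\wedge^2V_{[32]}^5=V_{[21^3]}^4\oplus V_{[31^2]}^6$, and since ${\bf H}^0(-K_{X_4})\simeq V_{[31^2]}^6$ and $\mu\neq 0$, Schur's lemma forces $\mathrm{Im}(\mu)=V_{[31^2]}^6$ and $\ker\mu=V_{[21^3]}^4$. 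Either computation closes the proof.
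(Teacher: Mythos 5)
In part 1 your argument has a genuine gap: the forward inclusion $\boldsymbol{ar}_C\big(\boldsymbol{\mathcal W}_{\boldsymbol{\mathcal M}_{0,5}}\big)\subseteq {\bf H}_{X_4}$ is justified by the claim that hexagonality of the $3$-subwebs forces their abelian relations to have components with at most logarithmic poles along $\partial\boldsymbol{\mathcal M}_{0,5}$. That implication is false as a general principle: the hexagonal $3$-web $\boldsymbol{\mathcal W}(x,y,x+y)$ has the abelian relation $x+y-(x+y)=0$, whose components $dx$, $dy$, $-d(x+y)$ acquire \emph{double} poles along the line at infinity of $\mathbf P^2$; hexagonality says nothing about pole behaviour on a compactification. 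For Bol's web the claim is true, but the only available justification is the explicit knowledge of the full space of ARs, i.e.\ the decomposition \eqref{Eq:AR-XWA2}: any AR with at most three nonzero components must have trivial ${\bf Ab}$-component (the weight-$2$ part of ${\bf Ab}$ is nonzero in every slot and cannot be cancelled by the weight-$1$ relations $ARLog_\ell$), hence $\boldsymbol{AR}_C=\big\langle ARLog_1,\ldots,ARLog_5\big\rangle$ and therefore $\boldsymbol{ar}_C=\langle\kappa_1,\ldots,\kappa_5\rangle$. This is exactly how the paper's proof proceeds; it then verifies directly, in the $(x,y)$-coordinates, that these five forms pull back under the blow-up to a basis of ${\bf H}^0\big(\overline{\boldsymbol{\mathcal M}}_{0,5},\Omega^1({\rm Log}\,\partial\boldsymbol{\mathcal M}_{0,5})\big)$, which is the identification you instead take from \S\ref{SS:ARs-WdP5} and Corollary \ref{Cor:H-Xr}. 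So the gap is repairable with material you already use for the reverse inclusion, but as written the key step rests on a false principle rather than on \eqref{Eq:AR-XWA2}.

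Your part 2, by contrast, is correct and takes a genuinely different route from the paper, which simply delegates this point to \cite[\S2.4]{DFL}. You reconstruct the content of that reference: identifying $\Psi_{\boldsymbol{\mathcal W}}$ with the projectivized wedge map, computing $\Omega^2_{X_4}({\rm Log}\,L_4)\simeq \mathcal O_{X_4}(-K_{X_4})$ from $L_4=6\boldsymbol{h}-2\sum_i\boldsymbol{\ell}_i=-2K_{X_4}$, getting $h^0(-K_{X_4})=6$ from the Riemann--Roch formula of \S\ref{SSS:Del-Pezzo-Properties}, and reducing everything to the surjectivity of the cup-product map $\mu:\wedge^2{\bf H}_{X_4}\to{\bf H}^0(-K_{X_4})$. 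Your two proposed proofs of surjectivity are both sound, with one caveat: the character identity $\wedge^2V^5_{[32]}=V^4_{[21^3]}\oplus V^6_{[31^2]}$ checks out, but the Schur-lemma route still requires knowing that ${\bf H}^0(-K_{X_4})$ is the irreducible $\mathfrak S_5$-module $V^6_{[31^2]}$, which is itself an input from \cite{DFL}; only the elementary rank computation on the eight nonzero cubics $\kappa_i\wedge\kappa_j$ makes the argument fully self-contained. What your approach buys is precisely this self-containedness (and an explicit description of $\ker\mu$, spanned by $\kappa_1\wedge\kappa_3$, $\kappa_2\wedge\kappa_4$ and two further relations); what the paper's citation buys is brevity.
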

\begin{proof}[Proof of Proposition \ref{P:Algeb-Bol-1}] 
In the case of $\boldsymbol{\mathcal X\mathcal W}_{\!A_2}$, let $U$ be an open domain containing the positive real orthant $(\mathbf R_{>0})^2$. 
Then it follows from \eqref{Eq:AR-XWA2} that  $\boldsymbol{ar}_C(\boldsymbol{\mathcal X\mathcal W}_{\!A_2})$  is the vector space spanned by  the  five 1-forms $\kappa_\ell$'s of 
\eqref{kappa-l} hence in particular one has $r=5$. According to \cite[Prop.\,6.21]{ClusterWebs}, the affine rational map $\mathbf C^2\dashrightarrow  \boldsymbol{\mathcal M}_{0,5}$, $(x,y)\longmapsto [\infty,0,1,x,y]$ induces an affine isomorphism between $\mathbf C^2\setminus A$ where $A$ is the arrangement of lines   cut out by $xy(x-1)(y-1)(x-y)=0$ and the complement of the cluster arrangement cut out by 
$u_1u_2(1+u_1)(1+u_2)(1+u_1+u_2)=0$ is the `cluster affine chart' 
$\mathbf C^2_{u_1,u_2}={\rm Spec}\big( \mathbf C[u_1,u_2]\big)$. 
 What corresponds to $\boldsymbol{ar}_C(\boldsymbol{\mathcal X\mathcal W}_{\!A_2})$ in the
in the coordinates $x,y$ is the vector space with basis 
\begin{equation}
\label{Eq=dxdx}
\frac{dx}{x}\,, \quad \qquad \frac{dy}{y}\,, \quad\qquad \frac{dx}{x-1}\,, \quad\qquad \frac{dy}{y-1}\qquad \quad
\mbox{and} \qquad \quad
\frac{dx-dy}{x-y}\, .
\end{equation}
  If $b: {\bf Bl}_{p_1,\ldots,p_5}(\mathbf P^2)\rightarrow \mathbf P^2$ stands the blow up at the points $p_1=[1,0,0]$, $p_2=[0,1,0]$, $p_3=[0,0,1]$ and $p_4=[1,1,1]$, then up to the identification $ {\bf Bl}_{p_1,\ldots,p_5}(\mathbf P^2)\simeq \overline{\boldsymbol{\mathcal M}}_{0,5}$, one easily verifies that the pull-backs under $b$ of the 1-forms \eqref{Eq=dxdx} form a basis of the space of 1-forms on ${\boldsymbol{\mathcal M}}_{0,5}$ with logarithmic poles along $\partial   \overline{\boldsymbol{\mathcal M}}_{0,5}$. 
  
Once the first point has been established, the second follows from \cite[\S2.4]{DFL}. \end{proof}

Because $\boldsymbol{ar}_C\big(\boldsymbol{\mathcal W}\big)$ 
and $\Psi_{\boldsymbol{\mathcal W}}$
 are attached to $\boldsymbol{\mathcal W}$ in an invariant way (see above), the previous proposition gives us immediately the following result which provides an effective tool to verify 
 whether a given 5-web is equivalent to Bol's web or not. 
 
 Recall that following C. Segre \cite{Segre} (see also \cite[\S1.4.4]{PP}), 
 any surface $S\subset \mathbf P^5$ with non-degenerate second-order osculation at a generic point either is included in the Veronese surface $v_2(\mathbf P^2)\subset \mathbf P^5$ or carries a 5-web $\boldsymbol{\mathcal W}_S$ which is projectively attached to $S$.  Moreover, in the last case (which is the generic one), given any  \'etale map $\varphi: U\rightarrow  \mathbf C^6$ inducing a 
 local parametrization  of $S$ when projectivized, there is an explicit formula for a global 
 symmetric form   $W_\varphi\in {\rm Sym}^5(T^*_U)$  
 defining the  pull-back web $\varphi^*\big( \boldsymbol{\mathcal W}_S \big)$ on $U$. 
 \begin{cor}
 \label{C:33}
A  5-web  $\boldsymbol{\mathcal W}$ on a domain 
$U\subset \mathbf C^2$ 
is equivalent to Bol's web if and only if  all the three following statements hold true: 
\begin{enumerate}
\item[{\rm 1.}] 
\vspace{-0.1cm}
one has $r=\dim \, \big( \boldsymbol{ar}_C(\boldsymbol{\mathcal W}_{ \boldsymbol{\mathcal M}_{0,5} })\big)=5$;\sk 
\item[{\rm 2.}]  the map $\Psi_{\boldsymbol{\mathcal W}} $ has values in a linear subspace $\mathbf PW\subset \mathbf P^9$ of dimension 5 and ${\rm Im}(\Psi_{\boldsymbol{\mathcal W}} ) =\Psi_{\boldsymbol{\mathcal W}} (U)$ is included in a quintic del Pezzo surface anticanonically embedded in  $\mathbf PW$;\sk
\item[{\rm 3.}] the initial web $\boldsymbol{\mathcal W}$ and the pull-back 
$\Psi_{\boldsymbol{\mathcal W}}^*\big( \boldsymbol{\mathcal W}_\Sigma\big)$ coincide.
\end{enumerate} 
 \end{cor}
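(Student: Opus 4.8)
The plan is to read off the three conditions as a chain of invariant reductions, using at each step that both $\boldsymbol{ar}_C(\boldsymbol{\mathcal W})$ and $\Psi_{\boldsymbol{\mathcal W}}$ are attached to $\boldsymbol{\mathcal W}$ in a way that is equivariant under biholomorphisms (equations \eqref{Eq:FC} and the identity $\Psi_{\boldsymbol{\mathcal W}}\circ F=\Psi_{F^*(\boldsymbol{\mathcal W})}$). The forward implication is immediate: if $\boldsymbol{\mathcal W}$ is equivalent to Bol's web, there is a biholomorphism $F$ with $F^*\big(\boldsymbol{\mathcal W}_{\boldsymbol{\mathcal M}_{0,5}}\big)=\boldsymbol{\mathcal W}$, and the invariance properties transport all three conclusions of Proposition \ref{P:Algeb-Bol-1} directly. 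Concretely, \eqref{Eq:FC} gives $r=5$ (point 1), the equivariance of $\Psi_{\boldsymbol{\mathcal W}}$ together with point 2 of Proposition \ref{P:Algeb-Bol-1} gives that $\Psi_{\boldsymbol{\mathcal W}}$ lands in a $\mathbf P^5$ and that its image is the anticanonically embedded quintic del Pezzo surface (point 2), and point 3 holds because the push-forward web coincides with $\boldsymbol{\mathcal W}_\Sigma$ by construction.

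For the converse I would assume the three conditions and reconstruct the equivalence. The key geometric input is Segre's theorem quoted just above: a surface $S\subset\mathbf P^5$ with non-degenerate second-order osculation either lies in the Veronese or carries a projectively attached $5$-web $\boldsymbol{\mathcal W}_S$. Condition 2 places $\operatorname{Im}(\Psi_{\boldsymbol{\mathcal W}})$ inside a quintic del Pezzo surface $S=\mathrm{dP}_5$ anticanonically embedded in $\mathbf PW\simeq\mathbf P^5$; since $\mathrm{dP}_5$ is not contained in a Veronese surface, Segre's construction produces $\boldsymbol{\mathcal W}_S$, and by Proposition \ref{P:Algeb-Bol-1}(2) this projectively attached web is precisely $\boldsymbol{\mathcal W}_{\boldsymbol{\mathcal M}_{0,5}}\simeq\boldsymbol{\mathcal B}$ under the anticanonical identification $\mathrm{dP}_5\simeq\overline{\boldsymbol{\mathcal M}}_{0,5}$. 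Condition 3 then says exactly that $\boldsymbol{\mathcal W}=\Psi_{\boldsymbol{\mathcal W}}^*\big(\boldsymbol{\mathcal W}_S\big)$, so $\Psi_{\boldsymbol{\mathcal W}}$ itself realizes the sought equivalence, provided it is a local biholomorphism onto its image.

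The step I expect to be the main obstacle is verifying that $\Psi_{\boldsymbol{\mathcal W}}$ is \'etale onto $S$, so that pulling back the projectively attached web genuinely recovers $\boldsymbol{\mathcal W}$ rather than a degenerate trace. Here I would argue that condition 3 forces non-degeneracy: if $\Psi_{\boldsymbol{\mathcal W}}$ dropped rank on an open set, the pull-back $\Psi_{\boldsymbol{\mathcal W}}^*\big(\boldsymbol{\mathcal W}_S\big)$ could not coincide with the honest $5$-web $\boldsymbol{\mathcal W}$ (whose leaves are pairwise transverse by definition), giving the required \'etaleness at a generic point. One must also check that the linear span being exactly $5$-dimensional (condition 2) rules out the case where $S$ degenerates to a surface inside a Veronese, which is where condition 1, fixing $r=5$, is used to guarantee that the Pl\"ucker target $\mathbf P^{\binom{5}{2}-1}=\mathbf P^9$ and the osculation data are the generic ones matching $\mathrm{dP}_5$. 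The remaining verifications---that the two del Pezzo models agree up to relabelling of foliations---are then formal consequences of the invariance of the whole construction.
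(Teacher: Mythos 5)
Your proposal is correct and follows essentially the same route as the paper, which states Corollary \ref{C:33} as an immediate consequence of Proposition \ref{P:Algeb-Bol-1} together with the invariance properties \eqref{Eq:FC} and $\Psi_{\boldsymbol{\mathcal W}}\circ F= \Psi_{F^*(\boldsymbol{\mathcal W})}$, exactly as in your forward and converse steps. Your additional discussion of the generic rank-2 (\'etaleness) requirement for $\Psi_{\boldsymbol{\mathcal W}}$ is a point the paper only addresses implicitly (it makes the analogous remark explicitly after Proposition \ref{P:Algeb-Bol-2}), and your observation that condition 3 itself forces this non-degeneracy is a sound way to close that gap.
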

 
This result has to be compared to the following one:
\begin{prop}[Bol's theorem \cite{Bol}]
Let  $\boldsymbol{\mathcal W}$ be a hexagonal $d$-web. Then
  either it is linearizable hence equivalent to $d$ pencils of lines,  or 
 $d=5$ and $\boldsymbol{\mathcal W}$ is equivalent to Bol's web.
\end{prop}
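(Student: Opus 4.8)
The plan is to argue by the number $d$ of foliations, and for $d\geq 4$ to split according to whether $\boldsymbol{\mathcal W}$ is linearizable. The starting point is the classical fact that a $3$-web is hexagonal precisely when its Blaschke curvature vanishes, and that this vanishing is equivalent to local equivalence with the trivial web by three pencils of parallel lines; composing with a projective chart turns these into three genuine pencils of lines. This already proves the statement for $d=3$. For $d\geq 4$ the hypothesis is that \emph{every} $3$-subweb is hexagonal, i.e.\ $r_{\boldsymbol{\mathcal W},3}\equiv 1$ in the notation of \S\ref{SS:Combinatorial-characterization}, and the whole problem becomes that of deciding linearizability: once $\boldsymbol{\mathcal W}$ is known to be linearizable the first alternative will follow, and all the difficulty is concentrated in the non-linearizable case.

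Assume first that $\boldsymbol{\mathcal W}$ is linearizable. After linearizing I would study the resulting linear hexagonal $d$-web through the Graf--Sauer theorem: a rectilinear $3$-web is hexagonal if and only if its three families of lines are the tangent lines of a (possibly degenerate) plane curve of class three, equivalently are dual to a plane cubic. Applying this to each of the $\binom{d}{3}$ triples of line-families, the task is to show that the compatibility of all these cubics forces each of the $d$ families to be a pencil of lines, i.e.\ forces the underlying curves of class three to degenerate into points. This identifies $\boldsymbol{\mathcal W}$ with $d$ pencils of lines and yields the first alternative.

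It remains to treat the non-linearizable case, which I would first reduce to $d=5$. For $d=4$ one invokes Mayrhofer's theorem, that a hexagonal $4$-web is already linearizable (hence equivalent to four pencils of lines), so non-linearizability forces $d\geq 5$; combined with the uniqueness up to projective transformation of the linearization of any planar $k$-web with $k\geq 4$, every $4$-subweb of $\boldsymbol{\mathcal W}$ then carries a canonical projective structure. For $d\geq 6$ one shows that Bol's web is not a subweb of any larger hexagonal web, so that no $5$-subweb can be non-linearizable; all $5$-subwebs being linearizable, their (unique) linearizations glue to a global one and $\boldsymbol{\mathcal W}$ falls back into the linearizable alternative. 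Hence a non-linearizable hexagonal web necessarily has $d=5$, and for such a web I would conclude by the combinatorial characterization of $\boldsymbol{\mathcal B}$ recalled on p.\,\pageref{Page:BolsTHM}: being hexagonal it satisfies $r_{\boldsymbol{\mathcal W},3}\equiv 1$, and failing to be equivalent to five pencils of lines (for which $r_{\boldsymbol{\mathcal W},5}\equiv 6$) it must satisfy $r_{\boldsymbol{\mathcal W},5}\leq 5$, whence $\boldsymbol{\mathcal W}\simeq\boldsymbol{\mathcal B}$.

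\textbf{The main obstacle} is the non-linearizable case, and concretely the two rigidity inputs on which the reduction to $d=5$ rests: that a hexagonal $4$-web is linearizable, and that Bol's web admits no hexagonal enlargement. Both amount to controlling the obstruction to gluing the canonical projective structures of the $4$-subwebs into a single linearization --- an explicit curvature tensor whose non-vanishing Bol showed to occur only in the Bol configuration. I would try to bypass Bol's original and delicate computation by arguing entirely at the level of ranks and abelian relations, using that $r_{\boldsymbol{\mathcal W},5}=6$ forces a linear model while $r_{\boldsymbol{\mathcal W},5}\leq 5$ triggers the combinatorial characterization of p.\,\pageref{Page:BolsTHM}; making the implication ``$r_{\boldsymbol{\mathcal W},5}=6 \Rightarrow$ linearizable'' precise is the delicate point that such an approach would still have to settle.
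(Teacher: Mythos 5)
First, a point of comparison: the paper offers no proof of this proposition at all — it is quoted from \cite{Bol} as a classical result, exactly like the combinatorial characterization stated on p.~\pageref{Page:BolsTHM}. So your proposal has to stand on its own, and as written it does not: the two places where the entire difficulty of Bol's theorem is concentrated are either assumed or left open, and you say so yourself. (i) For $d\geq 6$ you assume that Bol's web admits no hexagonal enlargement. But this is precisely the theorem at $d=6$: if a hexagonal $6$-web had a non-linearizable $5$-subweb, that subweb would be equivalent to $\boldsymbol{\mathcal B}$ by the $d=5$ dichotomy, i.e.\ $\boldsymbol{\mathcal B}$ would admit a hexagonal enlargement. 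Asserting non-extendability is therefore assuming the contrapositive of what must be proved; it is a genuine statement requiring work (for instance, the list of hexagonal $3$-subwebs of $\boldsymbol{\mathcal W}_{{\rm dP}_4}$ in \S\ref{SS:WdP4-Combinatorial-characterization} shows that adjoining to $\boldsymbol{\mathcal B}$ the pencil of lines through a fifth point always creates non-hexagonal triples of the form $\boldsymbol{\mathcal W}(\mathcal F_i^+,\mathcal F_i^-,\mathcal F_j^{\pm})$), and your proposal contains no such argument. (ii) For $d=5$ you invoke the characterization of p.~\pageref{Page:BolsTHM}. That statement is Bol's theorem in another guise, from the same source, so this is a reduction between equivalent forms rather than a proof; and even granting it, you still need the implication ``$r_{\boldsymbol{\mathcal W},5}=6\Rightarrow\boldsymbol{\mathcal W}$ linearizable'', which you flag as unsettled. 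Hexagonality only gives $r_{\boldsymbol{\mathcal W},3}\equiv 1$; nothing in your argument excludes a non-linearizable hexagonal $5$-web whose ten three-term abelian relations span a $6$-dimensional space, and without excluding this the dichotomy does not follow.

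What is sound is the easy periphery: the $d=3$ case via Blaschke curvature; the linearizable case via Graf--Sauer (for $d\geq4$, comparing the class-$3$ condition on overlapping triples forces all the envelope classes to be equal, hence equal to $1$, hence every family is a pencil); Mayrhofer's theorem for $d=4$, which at least is an independent, earlier result and so a legitimate external input; and the gluing of the unique linearizations of $5$-subwebs for $d\geq6$ \emph{once} those are known to be linearizable. This correctly isolates the hard kernel of the theorem. But that kernel --- the rigidity computation showing that the only non-linearizable hexagonal $5$-web is $\boldsymbol{\mathcal B}$, and that $\boldsymbol{\mathcal B}$ cannot be enlarged --- is exactly what is missing, so the proposal is an outline of a reduction to Bol's results rather than a proof of them.
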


Since a hexagonal web $\boldsymbol{\mathcal W}$ has maximal rank, in order to verify if such a web is linearizable, it suffices to verify whether it is compatible with a projective connexion 
or not (see \cite[\S30]{BB} or \S6.1.5 in the modern reference \cite{PP}) which is equivalent to the fact that the slopes of the foliations of $\boldsymbol{\mathcal W}$ satisfy some differential algebraic identities (see \cite[Prop.\,6.1.10]{PP}).  Because hexagonality can be verified by means of formal computations (of differential algebras), Bol's theorem above provides an effective way to verify if a given 5-web is equivalent to Bol's web or not. However, the criterion evoked is non constructive in the sense that, when $\boldsymbol{\mathcal W}$ is equivalent to Bol's web, 
it does not indicate a natural (or even better, a canonical)  way to construct a map $\varphi$ from the definition domain of $\boldsymbol{\mathcal W}$  to $\boldsymbol{\mathcal M}_{0,5}$ such that 
$\boldsymbol{\mathcal W}=\varphi^*\big( \boldsymbol{\mathcal W}_{ \boldsymbol{\mathcal M}_{0,5}}\big)$.

Our Corollary \ref{C:33} for its part, is constructive and gives a canonical way of constructing such a map. However, it has its own disadvantage which is of relying on the explicit determination of the space $\boldsymbol{ar}_C(\boldsymbol{\mathcal W})$ which requires to determine all the combinatorial ARs of the web under scrutiny.  From a computational perspective, determining the AR of a hexagonal 3-web is a bit more involved than just verifying that this 3-web is indeed flat since it requires one additional step which consists in solving a homogeneous linear ODE of the first order. Solving such a differential equation can be done by performing one integration but this is a supplementary task which does not occur when using the criterion for characterizing Bol's web described in the preceding paragraph.
%
%

The new material introduced above has also the  interesting feature that it makes 
sense not only for 
hexagonal webs but more generally for webs with sufficiently many combinatorial ARs. 
Obviously, one has $\mathfrak r=\dim\big(\boldsymbol{ar}_C(\boldsymbol{\mathcal W})\big)
\leq \dim \big(\boldsymbol{AR}_C(\boldsymbol{\mathcal W})\big)\leq {\rm rk}(\boldsymbol{\mathcal W})$ and there are several examples of webs with maximal rank for which $\mathfrak r$ is significantly lower than the rank.  The map $\Psi_{\boldsymbol{\mathcal W}}$ could turn out to be an interesting tool to study webs of maximal rank and with sufficiently many combinatorial ARs. 
\begin{center}
\vspace{-0.4cm}$\star$
\end{center}

But it turns out that, given a  5-web $\boldsymbol{\mathcal W}$, there exists a canonical and more direct effective criterion for first ascertaining whether it is 
 equivalent to Bol's web and second, when it is the case, to build in a canonical way a local biholomorphism  $\varphi$ for which one has  $\boldsymbol{\mathcal W}=\varphi^*\big( \boldsymbol{\mathcal W}_{ \boldsymbol{\mathcal M}_{0,5}}\big)$. This is elementary but has not been remarked before and it is the purpose of the next subsection.

 \subsubsection{Algebraization via the canonical map $\Phi_{{\mathcal W}}$}
 \label{SSS:Via-Canonical-Map-WdP5}
%
%
%
We first introduce a general elementary construction (namely the `canonical map $\Phi_{\boldsymbol{\mathcal W}}$' associated to a given web $\boldsymbol{\mathcal W}$) which is the main ingredient which is considered below and will be used further on in \S\ref{SSS:Via-Canonical-Map-WdP4} when dealing with $\WdPq$.
\mk 

Given a holomorphic submersion $X$ on a domain of $\mathbf C^2$, 
 its `{\it slope function} $\zeta_X$' is  the meromorphic function 
defined by the formula   
$$\zeta_X=\partial_{u_1}(X)/\partial_{u_2}(X)\, , $$
 with the convention 
that $\zeta_X$ is constant and identically equal to $\infty\in \mathbf P^1$ in the case when 
$\partial_{u_2}(X)$ vanishes identically. 
\sk 

Now let $d$ be bigger than or equal to 3 and let $\boldsymbol{\mathcal W}$ be a $d$-web defined on an open domain $\Omega \subset \mathbf C^2$, with no singularity. Then there exist  holomorphic submersions $U_i: \Omega\rightarrow \mathbf C$ for $i=1,\ldots,d$, such that $\boldsymbol{\mathcal W}$ is formed by the foliations defined by the $U_i$'s.  Because $\boldsymbol{\mathcal W}$ has been assumed to have no singular points,  the slope function $\zeta_{U_i}$ can be seen as a holomorphic 
morphism $\zeta_{U_i} : \Omega\rightarrow \mathbf P^1$ for any $i$ and the $\zeta_{U_i}$'s take pairwise distinct values at any point of $\Omega$. 
Hence the map
\begin{equation}
\label{Eq:Phi-W}
\Phi_{\boldsymbol{\mathcal W}} := \big[\zeta_{U_i} \big]_{i=1}^d  :  \Omega\longrightarrow 
\boldsymbol{\mathcal M}_{0,d}
\end{equation}
is easily verified to be a well-defined  holomorphic morphism  which  is  independent of the choice of any local coordinates and is invariantly attached
to the web, {\it i.e.}\,for any local biholomorphism $F$ taking values into $\Omega$, as holomorphic maps from 
$F^{-1}(\Omega)$ to $\boldsymbol{\mathcal M}_{0,d}$, 
one has
$$
\Phi_{F*(\boldsymbol{\mathcal W})} = \Phi_{\boldsymbol{\mathcal W}} \circ F\, .
$$  

Now recall that $\boldsymbol{\mathcal M}_{0,d}$ naturally carries a ${ d \choose 4}$-web of codimension 1, noted by $\boldsymbol{\mathcal W}_{\hspace{-0.1cm}\boldsymbol{\mathcal M}_{0,d}}$, 
which is the one admitting as first integrals the forgetful maps
$f_I : \boldsymbol{\mathcal M}_{0,d} \rightarrow \boldsymbol{\mathcal M}_{0,4}\simeq \mathbf P^1\setminus \{0,1,\infty\}$, $[p_1,\ldots,p_d]\longmapsto [p_{i_1},\ldots,p_{i_4}]$
for all 4-tuples $I=(i_1,\ldots,i_4)$ such that $1\leq i_1<i_2<i_3<i_4\leq d$. 
\sk

When $\Phi_{\boldsymbol{\mathcal W}}$ has rank 2 at some point, the pull-back $\Phi_{\boldsymbol{\mathcal W}}^*\big( \boldsymbol{\mathcal W}_{\hspace{-0.1cm}\boldsymbol{\mathcal M}_{0,d}}\big)$ is a web on $\Omega$, possibly with singularities and possibly formed by strictly less than ${ d \choose 4}$ foliations, but which is invariantly attached to $\boldsymbol{\mathcal W}$. 
It is then natural to try 
to better understand the initial web $\boldsymbol{\mathcal W}$ via 
its canonical map $\Phi_{\boldsymbol{\mathcal W}}$, in terms of some  properties of 
$\boldsymbol{\mathcal W}_{\hspace{-0.1cm}\boldsymbol{\mathcal M}_{0,d}}$. 
\begin{center}
$\star$
\end{center}

We now look at the previous construction when specialized to Bol's web. 
To perform the computations,  it is more convenient to work with 
 $\boldsymbol{\mathcal X\mathcal W}_{\!A_2}$ and its cluster first integrals $X_\ell$ given in 
 \eqref{Eq:lola}.  The 5-tuple of slopes associated to the cluster first integrals of $  \boldsymbol{
 \mathcal X \hspace{-0.03cm}
  \mathcal W}_{\rm A_2}$ is 
   $$
 \zeta_  {\rm A_2}= 
\bigg( \, \infty \, , \,   0\, , \,
\frac{-( 1 + u_2)}{u_1}\, , \, 
\frac{u_2
(1 + u_2)}{u_1(1 + u_1)}
\, , \, 
 \frac{-u_2}{1 + u_1} \, 
   \bigg)\, . 
   $$   
   At any point $(u_1,u_2)\in (\mathbf R_{>0})^2$, the coordinates of  $\zeta_  {\rm A_2}$ are pairwise distinct points of $\mathbf P^1$  and  the map $\Phi_{\boldsymbol{
 \mathcal X \hspace{-0.03cm}
  \mathcal W}_{\rm A_2}}$  has rank 2. Thus  
   the same holds true at any point of a sufficiently small  open domain of $\mathbf C^2$ containing this positive quarter of plane. We fix such an open domain that we denote by $\Omega$.  
   The map $\Phi_{\boldsymbol{
 \mathcal X \hspace{-0.03cm}
  \mathcal W}_{\rm A_2}}: \Omega\rightarrow \boldsymbol{\mathcal M}_{0,5}$ being \'etale,  the 
  pull-back 
  of $ 
  \boldsymbol{\mathcal W}_{ \boldsymbol{\mathcal M}_{0,5}}$ under it, 
 that we will denote by  
 $ \boldsymbol{ \mathcal X \hspace{-0.03cm}
  \mathcal W}_{\rm A_2}'$, 
   is a genuine regular 5-web on $\Omega$. 
  
  It is not difficult to make $ \boldsymbol{
 \mathcal X \hspace{-0.03cm}
  \mathcal W}_{\rm A_2}'$  explicit in the coordinates $u_1,u_2$.  For $i=1,\ldots, 5$, one denotes by $ \zeta_  {\rm A_2}(i)$ the 4-tuple obtained from  $\zeta_  {\rm A_2}$ by removing its $i$-th coordinate.  As a (cluster) cross-ratio, we choose 
 the one defined by 
$$
\kappa(a_1,a_2,a_3,a_4)=-\frac{(a_1 - a_4)(a_2 - a_3)}{(a_1 - a_3)(a_2 - a_4)}\,
$$
for any 4-tuple $(a_1,\ldots,a_4)$  of pairwise distinct points of $\mathbf P^1$. 
The first integrals of $ \boldsymbol{
 \mathcal X \hspace{-0.03cm}
  \mathcal W}_{\rm A_2}'$ are the five rational maps 
  $\kappa\big(  \zeta_  {\rm A_2}(i)\big)$ for $i=1,\ldots,5$ and through elementary computations, 
  one obtains 
  that  as an ordered 5-web on $\Omega$, one has 
 $$
 \boldsymbol{
 \mathcal X \hspace{-0.03cm}
  \mathcal W}_{\rm A_2}'=
   \boldsymbol{\mathcal W}\bigg( \, u_1\, , \,   -(1+u_2)\, , \,
 \frac{1+u_2}{u_1}\, , \, 
 -\left( 1+\frac{1+u_1+u_2}{u_1u_2}\right)\, , \,
   \frac{1+u_1}{u_2} \, 
   \bigg)\, .
   $$ 
The map $\kappa\big(  \zeta_  {\rm A_2}(\ell)\big)$ coincides with $X_\ell$ for $\ell$ odd, and with $-(1+X_\ell)$ for $\ell$ even.  We thus have that $ \boldsymbol{
 \mathcal X \hspace{-0.03cm}
  \mathcal W}_{\rm A_2}$ and $ \boldsymbol{
 \mathcal X \hspace{-0.03cm}
  \mathcal W}_{\rm A_2}'$ are the same (moreover as ordered 5-webs). 
This observation immediately gives us the 
\begin{prop}
\label{P:Algeb-Bol-2}
A 5-web $\boldsymbol{\mathcal W}$ is equivalent to Bol's web if and only if 
$\boldsymbol{\mathcal W}=\Phi_{\boldsymbol{\mathcal W}}^*\big( \boldsymbol{\mathcal W}_{{\mathcal M}_{0,5}} \big)$.
\end{prop}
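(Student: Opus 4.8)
The plan is to deduce the proposition formally from two ingredients already in place: the naturality of the canonical map $\Phi_{\boldsymbol{\mathcal W}}$ under biholomorphisms, recorded just after \eqref{Eq:Phi-W}, and the explicit model identity $\Phi_{\boldsymbol{\mathcal X\mathcal W}_{A_2}}^*\big(\boldsymbol{\mathcal W}_{\mathcal M_{0,5}}\big)=\boldsymbol{\mathcal X\mathcal W}_{A_2}$ that the computation preceding the statement has just established for the cluster model of Bol's web (there it was checked, as \emph{ordered} $5$-webs, that $\boldsymbol{\mathcal X\mathcal W}_{A_2}'$ and $\boldsymbol{\mathcal X\mathcal W}_{A_2}$ coincide). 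Everything then reduces to transporting this model identity along an equivalence, for the direct implication, and to an elementary rank argument, for the converse.

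For the direct implication, I would start from a $5$-web $\boldsymbol{\mathcal W}$ equivalent to $\boldsymbol{\mathcal B}$. Since $\boldsymbol{\mathcal X\mathcal W}_{A_2}\simeq\boldsymbol{\mathcal B}$ by \eqref{Eq:Psi*(B)=XW-A2}, one may write $\boldsymbol{\mathcal W}=F^*\big(\boldsymbol{\mathcal X\mathcal W}_{A_2}\big)$ for a suitable local biholomorphism $F$ whose image lies in the domain $\Omega$ on which $\Phi_{\boldsymbol{\mathcal X\mathcal W}_{A_2}}$ was seen to be \'etale (equivalence being a local and generic notion, this can always be arranged). Naturality then gives $\Phi_{\boldsymbol{\mathcal W}}=\Phi_{\boldsymbol{\mathcal X\mathcal W}_{A_2}}\circ F$, whence
$$
\Phi_{\boldsymbol{\mathcal W}}^*\big(\boldsymbol{\mathcal W}_{\mathcal M_{0,5}}\big)
=F^*\Big(\Phi_{\boldsymbol{\mathcal X\mathcal W}_{A_2}}^*\big(\boldsymbol{\mathcal W}_{\mathcal M_{0,5}}\big)\Big)
=F^*\big(\boldsymbol{\mathcal X\mathcal W}_{A_2}\big)=\boldsymbol{\mathcal W},
$$
the middle equality being exactly the model identity recalled above. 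The only point demanding care is the labelling: the preceding computation identifies the pull-back of the $\ell$-th forgetful foliation with the $\ell$-th foliation of $\boldsymbol{\mathcal X\mathcal W}_{A_2}$, so the equality holds as \emph{ordered} webs, and this ordering is preserved under $F^*$.

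For the converse, I would suppose $\boldsymbol{\mathcal W}=\Phi_{\boldsymbol{\mathcal W}}^*\big(\boldsymbol{\mathcal W}_{\mathcal M_{0,5}}\big)$. Since the left-hand side is a genuine $5$-web, the pull-back on the right must itself consist of five pairwise transverse foliations; this is impossible if $\Phi_{\boldsymbol{\mathcal W}}$ had generic rank $\le 1$ (its fibres would then collapse all the pulled-back foliations into one), so $\Phi_{\boldsymbol{\mathcal W}}$ has rank $2$ at the generic point. As a map between the surface $\Omega$ and the surface $\mathcal M_{0,5}$ it is therefore \'etale near a generic point, hence a local biholomorphism onto an open subset of $\mathcal M_{0,5}$; consequently $\boldsymbol{\mathcal W}$ is equivalent to $\boldsymbol{\mathcal W}_{\mathcal M_{0,5}}$, which is equivalent to Bol's web (cf.\,p.\,\pageref{Modular-W-M05}).

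Since the substantive computation $\Phi_{\boldsymbol{\mathcal X\mathcal W}_{A_2}}^*\big(\boldsymbol{\mathcal W}_{\mathcal M_{0,5}}\big)=\boldsymbol{\mathcal X\mathcal W}_{A_2}$ is already carried out, I do not expect a genuine obstacle here; the proof is essentially a formal consequence of naturality. The two places requiring attention are purely bookkeeping: keeping track of the indexing of the foliations (ordered versus unordered webs) in the forward direction, and observing, in the converse, that the very meaningfulness of the defining equality already presupposes the \'etaleness of $\Phi_{\boldsymbol{\mathcal W}}$ rather than merely its dominance.
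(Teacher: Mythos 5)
Your proof is correct and follows essentially the same route as the paper: the paper derives the proposition "immediately" from the computed model identity $\Phi_{\boldsymbol{\mathcal X\mathcal W}_{A_2}}^*\big(\boldsymbol{\mathcal W}_{\mathcal M_{0,5}}\big)=\boldsymbol{\mathcal X\mathcal W}_{A_2}$ together with the invariance of $\Phi_{\boldsymbol{\mathcal W}}$ under equivalence, which is exactly the transport argument you spell out. Your observation that the defining equality itself forces $\Phi_{\boldsymbol{\mathcal W}}$ to have generic rank $2$ is a nice touch matching the paper's remark immediately after the proposition, where this rank condition is instead assumed implicitly.
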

It should be noted that the condition that $\Phi_{\boldsymbol{\mathcal W}}^*\big( \boldsymbol{\mathcal W}_{{\mathcal M}_{0,5}} \big)$ is a 5-web requires that 
$\Phi_{\boldsymbol{\mathcal W}}$ is generically of rank 2, a condition that we assume to be implicitly satisfied here. \sk

Proposition \ref{P:Algeb-Bol-2} may look tautological at first sight, but actually it is not  and even better, an explicit and effective criterion for characterizing Bol's web can be extracted from it. 
Assuming that 
$\boldsymbol{\mathcal W}= \boldsymbol{\mathcal W}(U_1,\ldots,U_5)$  for some first integrals $U_1,\ldots,U_5$, one sets 
$$
X_i= \big({\partial U_i}/{\partial y} \big)\,\partial_x-
\big({\partial U_i}/{\partial x} \big)\,\partial_y
 $$ 
 for $ i=1,\ldots,5$ and for any $j,k$ such that $1\leq j<k\leq 5$, one denotes by 
 ${\mathcal J}_{jk}$ the jacobian determinant of the map $(U_j,U_k)$ in the coordinates $x,y$, {\it i.e.} 
$$
  {\mathcal J}_{jk}=
\begin{vmatrix}\,
{\partial U_j}/{\partial x} & {\partial U_j}/{\partial y}\,{}^{}\vspace{0.07cm}\\
\,{\partial U_k}/{\partial x} & {\partial U_k}/{\partial y}\,{}^{}
\end{vmatrix}\, .
$$

 Then the condition  
$\boldsymbol{\mathcal W}=\Phi_{\boldsymbol{\mathcal W}}^*\big( \boldsymbol{\mathcal W}_{{\mathcal M}_{0,5}} \big)$ in   Proposition \ref{P:Algeb-Bol-2} is equivalent to the fact that 
for any $i,j,k,l,m$ such that 
$\{i,j,k,l,m\}=\{1,\ldots,5\}$,  the following relation is identically satisfied
$$
X_i \left( \, \frac{ {\mathcal J}_{jm} \,{\mathcal J}_{kl}}{ {\mathcal J}_{jl}\, {\mathcal J}_{km}}
\, \right) =0\, . 
$$

Verifying that these relations hold true is straightforward and only requires to compute 
rational expressions in the $U_i$'s and their partial derivatives up to order 2. We thus get a new criterion for characterizing 5-webs equivalent to Bol's web, which, from our point of view, is better than the two considered in the preceding subsection: it requires less computations and furthermore for any 5-web 
$\boldsymbol{\mathcal W}\simeq \boldsymbol{\mathcal B}$, it furnishes the `algebraization map' essentially for free, by means of rational expressions in the first order partial derivatives of any first integrals of the web $\boldsymbol{\mathcal W}$ under scrutiny.

\subsection{The Gelfand-MacPherson construction}
 \label{SS:Gelfand-MacPherson--construction-WdP5}
 We now discuss the nice geometric construction of Bol's web as the quotient of an equivariant web given by Gelfand and MacPherson in their paper \cite{GM}. We first use the formalism they used (in terms of grassmannians, etc) then recast most things from another perspective (the key notion being that of `Cox variety') which is the one which will generalize and will be used to construct geometrically $\WdPq$ further in \S\ref{SS:WdP4-GM}.

\subsubsection{}
\label{SSS:GM-Weg-G2(V)}
 Let $n\in \mathbf N^*$ be arbitrary, set $N=n+3\geq 4$ 
and denote by 
 $G_2(V)$  the grassmannian variety of 2-planes in $V=\mathbf K^{N}$ 
where 
$\mathbf K$ stands for $\mathbf R$ or $\mathbf C$. Let  $(e_i)_{i=1}^N $ be the canonical basis of   $V$ and  $(x_i)_{i=1}^N$ be the corresponding linear coordinates.   Denoting by 
$$P: G_2(V) \hookrightarrow \mathbf P\big( \wedge^2 V\big): \xi\mapsto 
\big[ 
\Delta_{ij}(\xi)
\big]_{1\leq i<j\leq N}
$$ 
the corresponding Pl\"ucker embedding, we define 
$G_2^*(V)$ as the Zariski open subset of $G_2(V)$ formed by {\it `generic'} 2-planes, namely the 2-planes whose all Pl\"ucker coordinates do not vanish: 
 $$G_2^*(V)= P^{-1}\Big(  \wedge^2 V \setminus \big( \cup_{1\leq i<j\leq N}\{ \Delta_{ij}=0\} \big) \Big)\,.$$ 
 The special linear group ${\rm SL}_N(\mathbf K)$ acts on the grassmannian and its Cartan torus $H_{N}=H_{A_{n+2}}$ formed by diagonal matrices lets 
 $
G_2^*(V)$ invariant and acts freely on it in such a way that $
G_2^*(V)/H_{N}$ is  a smooth manifold with the quotient map $\chi_N=\chi_{H_{A_{n+2}}}: 
G_2^*(V)\longrightarrow G_2^*(V)/H_{N}$ being a $\mathbf K$-algebraic submersion. 
According to Gelfand and MacPherson, there is a nice geometric interpretation of the image 
$\chi_N(P)$ in the quotient of any element $P\in G_2^*(V)$: being generic, such a 2-plane intersects transversally each of the  coordinate hyperplanes $H_i=\{\,x_i=0\,\}$ hence the intersections $P\cap H_i$ are $N$ lines in $P$, which moreover are pairwise distinct. 
Setting $\ell_i(P)$ for the line $P\cap H_i$ ($i=1,\ldots,N$) viewed as an element of $\mathbf P(P)$, 
  and identifying the latter with a fixed projective line $\mathbf P^1$, one constructs 
  a well-defined map $ \mu_N : G_2^*(V)\longrightarrow {\rm Conf}_N(\mathbf P^1)=\boldsymbol{\mathcal M}_{0,N}$, $P\longmapsto [\ell_i(P)]_{i=1}^N$ which can be verified to be  not only $H_{N}$-invariant, but to provide a geometric model for the quotient map $\chi$: there is a natural isomorphism $
  G_2^*(V)/H_{N}\simeq\boldsymbol{\mathcal M}_{0,N}$ such that the following diagram commutes: 
  $$
  \xymatrix@R=0.4cm@C=1.5cm{ 
  {G}_{2}^{*}\big(V\big) 
\ar@/_1pc/@{->}[dr]_{\mu_N }  \ar@{->}[r]^{ \pi  \hspace{0.1cm}} &  {G}_{2}^{*}\big(V\big)/H_{N}  
\eq[d]
\\
& \boldsymbol{\mathcal M} _{0,N} \, . 
  }
$$
 
 For $i=1,\ldots,N$, we set $V_i=V/\langle e_i \rangle\simeq \mathbf K^{N-1}$ and we denote by 
 $\Psi_i: V\rightarrow V_i$ the corresponding linear projection. Since any generic 2-plane $P$ intersects $\langle e_i \rangle $ transversally, the image  $\Psi_i(P)$ is a 2-plane in $V_i$ which can be verified to be generic as well. We thus have a well-defined morphism 
 $ {G}_{2}^{*}\big(V\big)\rightarrow    {G}_{2}^{*}\big(V_i\big) $, $P\mapsto \Psi_i(P)$, again denoted by $\Psi_i$,  which is such that the following diagram also is commutative (where $\psi_i : \boldsymbol{\mathcal M}_{0,N}  \rightarrow \boldsymbol{\mathcal M}_{0,N-1}$ stands for the $i$-th forgetful map): 
\begin{align}
\label{Eq:quoquo}
\xymatrix@R=0.7cm@C=1.5cm{ 
  {G}_{2}^{*}\big(V\big) 
\ar@{->}[d]_{\mu_N }  \ar@{->}[r]^{ \Psi_i   \hspace{0.1cm}} &   {G}_{2}^{*}\big(V_i\big)   
\ar@{->}[d]^{\mu_{N-1} }
\\
\boldsymbol{\mathcal M}_{0,N}  \ar@{->}[r]^{ \psi_i   \hspace{0.1cm}} & 
\boldsymbol{\mathcal M}_{0,N-1} \, . 
  }
\end{align}

Each map $\Psi_i$ is a surjective $\mathbf K$-algebraic regular submersion inducing a codimension $k$ foliation on ${G}_{2}^{*}(V) $. Taking these foliations together gives us what we call the {\it `Gelfand-MacPherson web'}
$$
\boldsymbol{\mathcal W}_{ G_2(V)}^{GM}=\boldsymbol{\mathcal W}\Big( \, \Psi_1\, , \, \ldots, 
\Psi_{n+3}\, \Big)\, , 
$$
which is a $(n+3)$-web in a generalized sense on the grassmannian $G_2(V)$ (without singularities on  its generic part $G_2^*(V)$).  One easily checks that the foliations of $
\boldsymbol{\mathcal W}_{ G_2(V)}^{GM}$ are $H_N$-equivariant (actually the same can be said about the maps $\Psi_i$'s themselves, up to some technical details left to the reader). It follows that
the direct image/quotient $(\mu_N)_*\big( \boldsymbol{\mathcal W}_{ G_2(V)}^{GM}\big)=
\boldsymbol{\mathcal W}_{ G_2(V)}^{GM}/H_N$ exists as a web on $G_2^*(V)/H_N
\simeq  \boldsymbol{\mathcal M}_{0,n+3}$. Considering the diagrams \eqref{Eq:quoquo} for $i=1,\ldots,n+3$, it follows that
$$
{\boldsymbol{\mathcal W}_{ G_2(V)}^{GM}}_{\big/H_{n+2}}=\boldsymbol{\mathcal W}_{
\hspace{-0.05cm}
\boldsymbol{\mathcal M}_{0,n+3}}
$$
where the right hand side stands for Burau-Damiano's  curvilinear web on $\boldsymbol{\mathcal M}_{0,n+3}$, which is the  web by rational curves  induced by the 
forgetful maps 
$\psi_i : \boldsymbol{\mathcal M}_{0,n+3} \rightarrow  \boldsymbol{\mathcal M}_{0,n+2}$ for $i=1,\ldots,n+3$.\footnote{We refer the interested reader to our recent paper \cite{Pirio-W-M06} for much more about the webs $\boldsymbol{\mathcal W}_{ \hspace{-0.05cm} \boldsymbol{\mathcal M}_{0,n+3}}$, for $n\geq 2$.}

From the preceding discussion, we deduce that 
\begin{equation}
\label{Eq:GM-description-statement}
\begin{tabular}{l}
{\it Gelfand-MacPherson's web $\boldsymbol{\mathcal W}_{ G_2(V)}^{GM}$ is $H_{n+2}$-equivariant and its torus quotient}\\
{\it  naturally identifies with Burau-Damiano's curvilinear web 
$\boldsymbol{\mathcal W}_{
\hspace{-0.05cm}
 \boldsymbol{\mathcal M}_{0,n+3}}$ on $\boldsymbol{\mathcal M}_{0,n+3}$.}
\end{tabular}
\end{equation}

For $n=2$, 
we have $\boldsymbol{\mathcal W}_{
\hspace{-0.05cm}
 \boldsymbol{\mathcal M}_{0,5}}\simeq \boldsymbol{\mathcal B}$ hence the above statement applies and gives a description of Bol's web $\boldsymbol{\mathcal B}$ as an equivariant quotient by a torus action of a natural web defined by linear maps on the grassmannian of 2-planes in $\mathbf K^5$.  
 
 \begin{rem}
 The above geometric way to get Bol's web 
  is particularly interesting if we consider the major result obtained by Gelfand and MacPherson
 which is to construct geometrically (the real version of) Abel's dilogarithmic identity, by integrating along the fibers of the actions of the different tori involved 
   invariant representatives of some compatible characteristic classes on the grassmannians of 2-planes in $\mathbf R^5$ and in $\mathbf R^4_i=\mathbf R^5/\langle e_i\rangle$ for $i=1,\ldots,5$  (see \cite[\S0.3]{GM} or \S\ref{SS:HLog3-a-la-GM} further in this text{\rm )}. 
 \end{rem}

\subsubsection{}
\label{SSS:Cox-stuff}
In order to generalize the above geometric construction of Bol's web $\boldsymbol{\mathcal B}\simeq \boldsymbol{\mathcal W}_{ {\rm dP}_5}$ to $\boldsymbol{\mathcal W}_{ {\rm dP}_4}$, it is interesting to recast it from the bottom, that is in terms of del Pezzo's quitinc surface ${\rm dP}_5\simeq  \overline{\boldsymbol{\mathcal M}}_{0,5}$.   Our main references for the material below are 
\cite{Skorobogatov1} (see also 
 \cite[\S3]{SkorobogatovBook}), 
\cite{BatyrevPopov}, \cite{SerganovaSkorobogatov} and \cite{Derenthal}. 
We identify the quintic del Pezzo surface to the total space, denoted by $X_4$ below, of the blow-up of $\mathbf P^2$ at four points $p_1,\ldots,p_4$ in general position: 
${\rm dP}_5\simeq X_4= {\bf Bl}_{p_1+\cdots+p_4}(\mathbf P^2)$. As a $\mathbf Z$-basis for the Picard lattice ${\bf Pic}_{ \mathbf Z}(X_4)\simeq \mathbf Z^5$ 
of $X_4$, we take $\boldsymbol{\mathcal E}= (\boldsymbol{\ell}_i)_{i=0}^4$ where $
\boldsymbol{\ell}_0=\boldsymbol{h}$ stands for the pull-back of the class of a generic line in $\mathbf P^2$ and where $\boldsymbol{\ell}_i$ is the class of the exceptional divisor in $X_4$ associated to $p_i$ for $i=1,\ldots,4$.

For $\boldsymbol{\ell}\in\{\boldsymbol{h},\boldsymbol{\ell}_1,\ldots,\boldsymbol{\ell}_4\}$, let $\mathscr L_{\boldsymbol{\ell}}^\circ$ stand for the total space of the line bundle $\mathcal O_{X_4}(\boldsymbol{\ell})$ with the image of the zero section removed 
and let us set 
$${\mathcal T}_{\hspace{-0.07cm}X_4}=\mathscr L_{\boldsymbol{\ell}_0}^\circ\times_{X_4} 
 \cdots \times_{X_4} \mathscr L_{\boldsymbol{\ell}_4}^\circ\, .$$
   It is a torus bundle over $X_4$ on which naturally acts 
 the 
 {\it N\'eron-Severi torus} of $X_4$, defined by 
 $$T_{NS}(X_4)={\rm Hom}\big({\bf Pic}_{\mathbf Z}(X_4), \mathbf C^*\big)
\simeq  (\mathbf C^*)^{\boldsymbol{\mathcal E}}\, . 
 $$ 
It naturally acts on  ${\mathcal T}_{\hspace{-0.07cm}X_4}$,  and the associated quotient map identifies with the bundle map ${\mathcal T}_{\hspace{-0.07cm}X_4}\longrightarrow X_4$. 
 Taking the intersection  with the anticanonical class defines a linear form 
 $(-K,\cdot)$ on ${\bf Pic}_{\mathbf Z}(X_4)$ which gives rise to a 
 1-parameter subgroup $\tau_\kappa=\mathbf C^*(-K,\cdot)
\subset   T_{NS}(X_4)$.  Setting 
$ {\rm T}_{X_4}=T_{NS}(X_4)/\tau_\kappa \simeq (\mathbf C^*)^4$ we obtain that 
$\tau : {\mathscr T}_{\hspace{-0.02cm}X_4}={\mathcal T}_{\hspace{-0.07cm}X_4}/\tau_\kappa \rightarrow X_4 $ is 
a ${\rm T}_{X_4}$-torsor over $X_4$. Then we set 
$$
{\mathscr T}_{\hspace{-0.02cm}X_4}^*=\tau^{-1}\big(X_4^*\big) 
\subset {\mathscr T}_{\hspace{-0.02cm}X_4}
$$
where $X_4^*$ is the complement of the line divisor 
$L_4=\cup_{ \ell \in  \boldsymbol{\mathcal L}_4} \ell$ in $X_4$, that is  $X_4^*=X_4\setminus L_4$.

For  each line $\ell\in \mathcal L_4$,  we fix a nonzero section $x_\ell \in {\bf H}^0(X_4 , \mathcal O(\ell))$
  which is determined
up to multiplication by a nonzero scalar since ${\bf H}^0(X_4 , \mathcal O(\ell))$ has dimension 1. Since $x_\ell$ does not vanish on 
$ {\mathscr T}_{\hspace{-0.02cm}X_4}^* $ for any $\ell$, there exists a morphism 
\begin{align}
\label{Eq:embedding-F}
F :  {\mathscr T}_{\hspace{-0.02cm}X_4}^*  &  \longrightarrow  \mathbf P\big(  \mathbf C^{ \boldsymbol{\mathcal L}_4 }\big) \simeq \mathbf P^{l_4-1}=\mathbf P^9
 \\
 t & \longmapsto  \big[x_\ell(t)\big]_{ \ell \in  \boldsymbol{\mathcal L}_4 }
 \nonumber 
\end{align}
which can be proved to be an embedding. Note that $F$ is canonically defined up to 
post-composition by an element of the torus formed by 
the linear automorphisms of $\mathbf P\big(\mathbf C^{ \boldsymbol{\mathcal L}_4 }\big)$ 
with a diagonal representative  with respect to the basis $\boldsymbol{\mathcal L}_4$.  In order to describe the image of $F$ in $\mathbf P^9$, let $\big(X_\ell\big)_{\ell \in \mathcal L_4}$ be the homogeneous coordinates corresponding to the $x_\ell$'s considered above and  for any $\ell$, let $H_\ell$ stand for the coordinate hyperplane cut out by the equation $X_\ell=0$. 
Then the Zariski closure $\mathbf G_{2,5}$ of 
$ F\big( 
 {\mathscr T}_{\hspace{-0.02cm}X_4}^*
 \big)$ in 
$ \mathbf P\big(  \mathbf C^{ \mathcal L_4 }\big)$
is  isomorphic to the image of a  Pl\"ucker embedding  $P: G_2(\mathbf C^5)\hookrightarrow \mathbf P^9$ and moreover, one has 
$$
 {\mathscr T}_{\hspace{-0.02cm}X_4}^*
 \stackrel{\sim}{\longrightarrow}
 F\Big( 
   {\mathscr T}_{\hspace{-0.02cm}X_4}^*
 \Big)= \mathbf G_{2,5}^* =
 P\Big( 
 G_2^*\big(\mathbf C^5\big)
 \Big)
 =
 \mathbf G_{2,5}\setminus \Big(
 \cup_{\ell \in \boldsymbol{\mathcal L}_4} H_\ell\, 
 \Big)
 \subset \mathbf G_{2,5} \subset \mathbf P^9
 \, . 
$$
The grassmannian $\mathbf G_{2,5}$ is homogeneous under the action of a subroup $G$ of ${\rm PGL}
\big(  \mathbf C^{ \boldsymbol{\mathcal L}_4 }\big)$ isomorphic to  ${\rm PGL}_5(\mathbf C)$. Moreover, there exists an embedding ${\rm T}_{X_4}\hookrightarrow G$ 
whose image will be denoted by 
$\boldsymbol{T}_{X_4}$, 
 making of $F:  {\mathscr T}_{\hspace{-0.02cm}X_4}^* \longrightarrow 
 \mathbf G_{2,5}^*
$
 a $({\rm T}_{X_4}, \boldsymbol{T}_{X_4})$-equivariant isomorphism.  Because the action of 
$\boldsymbol{T}_{X_4}$ on $ \mathbf G_{2,5}^*$ is isomorphic to the one of $H_{4}$ on $G_2^*(\mathbf C^5)$ discussed above,  this gives an intrinsic way to obtain the geometric framework 
of Gelfand and MacPherson allowing to construct Bol's web geometrically as an equivariant quotient. 

Actually, not only the fibration $ G_{2}^*(\mathbf C^5)\rightarrow X_4^*$ can be recovered that way but its extension over the whole del Pezzo surface $X_4$ can be as well. The main ingredient for that 
is  the {\it Cox ring} of $X_4$ (with respect to $\boldsymbol{\mathcal E}$) which by definition is the commutative $\mathbf C$-algebra
 $$
{\rm  Cox}\big(X_4\big)
=\bigoplus_{ (m_0,m_1,\ldots,m_4)\in \mathbf Z^5
}
{\bf H}^0\Big( {X}_4, \mathcal O\big( m_0\,H+m_1 \,E_1+
 \cdots+ m_4 \,E_4 \big)\Big) 
 $$
 where $H$ is a fixed generic element of $\lvert \boldsymbol{h}\lvert$ and $E_i$ is the exceptional divisor associated to $p_i$ for $i=1,\ldots,4$, with the multiplication being induced by the pointwise multiplication of sections. 
 It is naturally $\mathbf Z_{\geq 0}$-graded (by taking the intersection  with the anticanonical class $-K_{X_4}$) and it is known (eg.\,see \cite{BatyrevPopov}) to be  generated by degree 1 elements which are 
 global nonzero  elements of ${\bf H}^0\Big( {X}_4, \mathcal O\big( \ell \big)\Big)$ for  $\ell$ ranging 
 in the set of lines included in $X_4$, with relations which all are homogeneous of degree 2 with respect to the considered grading. From this, it can be deduced that 
 $$
 {\bf P}(X_4)={\rm Proj}\Big( {\rm  Cox}\big(X_4\big)
\Big) 
 $$
 is a projective variety, which contains $ {\mathscr T}_{\hspace{-0.02cm}X_4}^*$ as a Zariski open subset and 
is such that \eqref{Eq:embedding-F} extends to a canonical embedding 
 $\overline{F} :    {\bf P}(X_4) \hookrightarrow \mathbf P\big(  \mathbf C^{ \mathcal L_4 }\big)$ whose image is the grassmannian ${\bf G}_{2,5}$. Moreover, the ${\rm T}_{X_4}$-action on $ {\mathscr T}_{\hspace{-0.02cm}X_4}^*$ extends to ${\bf P}(X_4)$ and the map $\overline{F}$ is $({\rm T}_{X_4}, \boldsymbol{T}_{X_4})$-equivariant, as is 
its restriction  to  $ {\mathscr T}_{\hspace{-0.02cm}X_4}^*$. 
 The set ${\bf G}_{2,5}^s$ of stable points for the $\boldsymbol{T}_{X_4}$-action coincides with that of semi-stable points (cf.\,\cite{Skorobogatov1}) and contains 
 $ {\mathscr T}_{\hspace{-0.02cm}X_4}^*$ as a Zariski open subset. One thus recovers the whole del Pezzo surface as the quotient of 
${\bf G}_{2,5}^s$ by $\boldsymbol{T}_{X_4}$: one has $X_4={\bf G}_{2,5}^s/\boldsymbol{T}_{X_4}$. 

From the discussion above, one deduces that 
\begin{equation}
\label{Eq:Cox-theoretic-description-statement}
\begin{tabular}{l}
{\it Gelfand-MacPherson's geometric description of Bol's web \eqref{Eq:GM-description-statement}
can be recovered}\\
{\it  in a intrinsic and almost canonical way, by means of 
the Cox ring theoretic mater-}\\
{\it ial discussed just above.}
\end{tabular}
\end{equation}

The point is that this Cox-ring theoretic material can be generalized to all the del Pezzo surfaces of degree $d\in \{2,\ldots,5\}$ which opens the door for a perspective \`a la Gelfand and MacPherson on  the corresponding del Pezzo's webs $\boldsymbol{\mathcal W}_{ {\rm dP}_d }$. We will discuss the case when $d=4$ further below in \S\ref{SS:WdP4-GM}.

\section{\bf The web 
${\mathcal W} \hspace{-0.46cm}{\mathcal W}_{ {\rm dP}_4
 \hspace{-0.4cm}
  {\rm dP}_4}$
}
\label{S:WdP4}

This section is the main core of this article. In it, we prove that the web of conics of a quartic del Pezzo surfaces satisfies all the properties listed in 
\S\ref{S:Web-WdP4-properties} above. 
%
%
\subsection{The web ${\mathcal W} \hspace{-0.46cm}{\mathcal W}_{ {\rm dP}_4
 \hspace{-0.4cm}
  {\rm dP}_4}$ and the identity ${\bf HLog}^3$ in explicit form}
 \label{SS:Explicit-WdP4}
 It is interesting to make the web $\boldsymbol{\mathcal W}_{ {\rm dP}_4}$
 and the hyperlogarithmic identity ${\bf HLog}^{3}$ as explicit as possible.\sk

Let ${\rm dP}_4$ be a fixed smooth del Pezzo quartic surface, that we see 
as the total space blow-up $ b : {\rm dP}_4\rightarrow \mathbf P^2$ at the following 
five points:  
\begin{equation}
\label{Eq:points-pi}
p_1=\big[\,1:0:0\,\big]\, , \hspace{0.25cm} p_2=\big[\,0:1:0\,\big]
\, , \hspace{0.25cm} 
p_3=\big[\,0:0:1\,\big]
\, , \hspace{0.25cm} 
p_4=\big[\,1:1:1\,\big]
\quad \mbox{and} \quad 
p_5=\big[\,\pi : \gamma : 1\,\big]
\end{equation}
for some fixed parameters $\pi,\gamma\in \mathbf C$ such that these five points are in general position. This latter condition  
is equivalent to the fact that 
\begin{equation}
\label{Eq:PG-pi-gamma}
\pi\gamma(\pi-1)(\gamma-1)(\pi-\gamma)\neq 0\, , 
\end{equation}
a condition that 
 we assume to be satisfied in what follows. 
 Let $x,y$ be the  affine coordinates corresponding to the affine embedding $\mu: (x,y)\mapsto [x:y:1]$. One verifies easily that the pull-back 
of $\boldsymbol{\mathcal W}_{ {\rm dP}_4}$
under 
the birational map 
$b^{-1}\circ  \mu$ is the 
web $\boldsymbol{\mathcal W}\big( U_1,\ldots,U_{10}\big)$
 defined  by the ten following rational functions: 
\begin{align}
\label{Eq:WdP4-Ui}
U_1= & \,  x && U_6=     \frac{
   (1 - x)\gamma + x + (\pi - 1)y - \pi}{ (x - 1)(y-\gamma)}
 \nonumber \\
U_2= & \,  \frac1y
&& U_7=
          \frac{ 
    (x - y)(y-\gamma)}{ y(\pi y - \gamma x - \pi + \gamma + x - y)}
   \nonumber  \\
U_3= & \, \frac{y}{x} && 
U_8=  \frac{    
     -x(x(\gamma - 1) + (1-y )\pi - \gamma + y)}{ (x - y)(x - \pi)}
\\
U_4= & \, 
\frac{x-y}{x-1} 
&&
U_9= \,
          \frac{  
      y(x - \pi)}{ x (y-\gamma)} 
 \nonumber\\
  U_5= & \, 
   \frac{\gamma(\pi-x ) }{\pi y - \gamma x}
&& 
U_{10}=  \frac{
       x(y - 1)}{y (x - 1)} 
        \nonumber
\end{align} 

The rational curves in $\mathbf P^2$  corresponding to the 11 lines in ${\rm dP}_4$ distinct from 
  the exceptional divisors $\ell_i=b^{-1}(p_i)$ ($i=1,\ldots,5$) 
are the lines at infinity plus the closures in $\mathbf P^2$ of the affine curves with equation $\mathscr L_i =0$, where the $\mathscr L_i's$ are the components of the following 10-tuple
  of polynomials in $x,y$:
\begin{align*}
 \mathscr L= \big( 
\mathscr L_i 
\big)_{i=1}^{10}
=
\bigg( \, x \, , &  \,  y \, , \,   y -\gamma  \, , \,    x -1\, , \,  x -\pi \, , \,   x -y \, , \,  y -1\, , \,   \\
&\, \gamma\,  \Big((x -y) \pi +x (y -1)\Big)  -\pi \, y   \big(x -1\big)
\, , \,  
\gamma \, \big(x -1\big)  -\pi\, (y -1)  +y -x \, , \,   \gamma  \, x -\pi  \, y\, 
\bigg) \, .
\end{align*}
We denote by $A_{\mathscr L}$ the union in $\mathbf C^2$ of the curves cut out by the equations $\mathscr L_i=0$ with $i=1,\ldots,10$. Then  
$\boldsymbol{\mathcal W}\big( U_1,\ldots,U_{10}\big)$ is a non singular web on the Zariski open set $ b( {\rm dP}_4\setminus L_4)= \mathbf C^2 \setminus A_{\mathscr L}$.

One considers the  associated logarithmic forms  (for $i=1,\ldots,10$): 
\begin{equation}
\label{Eq:hi}
  h_i=d\log(\mathscr L_i)={d\mathscr L_i}/{\mathscr L_i}\,.
  \end{equation} 
 Then for each $i$, the spectrum\footnote{By definition, the `spectrum' of  
a map $f: {\rm dP}_4\dashrightarrow \mathbf P^1$  
is the set of values $\lambda \in \mathbf P^1$ such that  $f^{-1}(\lambda)$ is not irreducible.} of the fibration in conics  $U_i\circ b : {\rm dP_4}\rightarrow \mathbf P^1$  is 
the support of the following normalized 4-tuple
 $${\mathfrak R}_i=\big( \, 0\, , \,  1 \, , \, {r}_i\, , \,  \infty \, \big)$$
 where for each $i$, $\mathfrak r_i$ is the $i$-th component of the following $10$-tuple of complex numbers: 
$$ 
{\mathfrak r}=
\big( {\mathfrak r}_i\big)_{i=1}^{10}
=
\left(\,
\pi 
 \, , \,
\frac{1}{\gamma}
 \, , \,
\frac{\gamma}{\pi}
 \, , \,
\frac{\pi-\gamma}{\pi -1}
 \, , \,
\frac{\gamma  (\pi -1)}{\pi-\gamma}
 \, , \,
\frac{\gamma -\pi}{\gamma}
 \, , \,
\frac{1}{1-\pi}
 \, , \,
1-\gamma
 \, , \,
\frac{\pi -1}{\gamma -1}
 \, , \,
\frac{\pi  (\gamma -1)}{\gamma  (\pi -1)} \, 
\right)\, .
$$
Remark that since \eqref{Eq:PG-pi-gamma} is assumed to hold true:
\begin{itemize}
\vspace{-0.2cm}
\item[$(i).$] all the $\mathscr L_i$'s are linearly independent as affine equations in $x$ and $y$; hence \sk
\item[$(ii).$]  the same holds true for the $h_i$'s (as logarithmic forms in the same variables); and  \sk
\item[$(iii).$] none of the $
r_i$'s coincides with an element of $\{0,1,\infty\}\subset \mathbf P^1$ hence each ${\mathfrak R}_i$ indeed is a 4-tuple of pairwise distinct elements of $\mathbf P^1$.
\end{itemize}

The $h_i$'s form a basis of a subspace denoted by ${\bf H}$, of the space of rational 1-forms on $\mathbf P^2$.  
For every $i$,  one sets $\eta_i=(\eta_{i,1}, \eta_{i,2}, \eta_{i,3})$ 
with $\eta_{i,s}=d{\rm Log}(U_i-\mathfrak R_{i,s})$ for $s=1,2,3$, that is 
$$
\eta_{i,1}=\frac{dU_i}{U_i}\, , 
\qquad 
\eta_{i,2}=\frac{dU_i}{U_i-1}
\qquad \mbox{ and } 
\qquad 
\eta_{i,3}=\frac{dU_i}{U_i-r_i}\, . 
$$

Any $\eta_{i,s}$ is an  element of ${\bf H}$ 
hence admits a unique expression as a linear combination in the 
$h_j$'s (see below for a conceptual explanation of this fact) and all of these decompositions  can be obtained in explicit form by means of straightforward computations. On gets the following formulas: 
 \begin{align}
\label{Eq:hh}
\eta_1=& \, \Big( h_{1} \, , \,h_{4} \, , \,h_{5}\Big)  \nonumber 
\\ 
\eta_2= & \, 
\Big(-h_{2} \, , \,h_{7}-h_{2} \, , \,h_{3}-h_{2}\Big) \nonumber 
\\
 \eta_3= & \, 
\Big(-h_{1}+h_{2} \, , \,-h_{1}+h_{6} \, , \,-h_{1}+h_{10}\Big)\nonumber 
\\  
\eta_4=  & \, 
\Big(-h_{4}+h_{6} \, , \,h_{7}-h_{4} \, , \,h_{9}-h_{4}\Big)\nonumber 
\\
 \eta_5= & \, 
\Big(-h_{10}+h_{5} \, , \,h_{3}-h_{10} \, , \,h_{9}-h_{10}\Big) 
\\  
\eta_6= & \, 
\Big( -h_{3}+h_{9}-h_{4} \, , \,h_{7}-h_{3}-h_{4}+h_{5} \, , \,-h_{3}-h_{4}+h_{8}\Big)\nonumber 
\\
 \eta_7= & \, 
\Big(h_{3}-h_{9}+h_{6}-h_{2} \, , \,h_{7}-h_{9}+h_{10}-h_{2} \, , \,-h_{9}-h_{2}+h_{8}\Big) \nonumber 
\\
 \eta_8= & \, 
\Big( h_{9}+h_{1}-h_{5}-h_{6} \, , \,h_{4}-h_{5}-h_{6}+h_{10} \, , \,-h_{5}-h_{6}+h_{8}\Big)\nonumber 
\\
 \eta_9= & \, 
\Big( 
-h_{3}-h_{1}+h_{5}+h_{2} \, , \,-h_{3}-h_{1}+h_{10} \, , \,-h_{3}-h_{1}+h_{8}\Big)
\nonumber  \\
 \mbox{and }\quad \eta_{10}= & \, 
\Big( 
h_{7}+h_{1}-h_{4}-h_{2} \, , \,-h_{4}+h_{6}-h_{2} \, , \,-h_{4}-h_{2}+h_{8}\, \Big)
\, . \nonumber 
\end{align}

For a triple $(a,b,c)$ of pairwise distinct points on $\mathbf C$ and given a base point $\zeta \in \mathbf C\setminus \{a,b,c\}$, we consider the  weight 3 hyperlogarithm $H_{a,b,c}^{\zeta}$ 
defined  by
$$ 
H_{a,b,c}^{\zeta}(z)= \int_{\zeta}^{z}   \Bigg(\int_{\zeta}^{u_3} \bigg(\int_{\zeta}^{u_2} \frac{du_1}{u_1-c}\bigg) \frac{du_2}{u_2-b}\Bigg)  \frac{du_3}{u_3-a} $$
for any $z$ sufficiently close to $\zeta$, and we denote by $AH_{a,b,c}^{\zeta}$ its antisymmetrization:
\begin{equation*}
AH_{a,b,c}^{\zeta}=\frac{1}{6} \, \bigg(\, H_{a,b,c}^{\zeta}-H_{a,c,b}^{\zeta}-H_{b,a,c}^{\zeta}+H_{b,c,a}^{\zeta}+H_{c,a,b}^{\zeta}-H_{c,b,a}^{\zeta}
\bigg) 
\, .
\end{equation*}

We now fix a base point $\xi\in \mathbf C^2\setminus A_{\mathcal L}$ and 
for $i=1,\ldots,10$, we set  $\xi_i=U_i(\xi)\in \mathbf C\setminus \{0,1,r_i\}$  and
 $$
 AH_i^3=AH^{\xi_i}_{0,1,r_i}\, .
 $$

 For any $i$, the symbol 
$\mathcal S_i=\mathcal S\big( AH_i^3(U_i)\big)$
of $AH_i^3\big(U_i\big)=U_i^*\big(AH_i^3\big)$ is the antisymmetric tensor 
$$ \mathcal S_i=\frac{dU_i}{U_i} \wedge   \frac{dU_i}{U_i-1}  
\wedge  
 \frac{dU_i}{U_i-r_i} \in \wedge ^3 {\bf H}\, .  $$ 
A basis of the space of weight 3 tensors $\wedge ^3 {\bf H}$ is given by the 
$h_i\wedge h_j\wedge h_k$'s for all triples $(i,j,k)$ such that $1\leq i< j< k\leq 10$.  From the formulas in \eqref{Eq:hh}, it is just an elementary computational matter to 
express each symbol 
$\mathcal S_i$ as a linear linear combination of the $h_i\wedge h_j\wedge h_k$'s. 
For instance, one has 
\begin{align*}
\mathcal S_1=& \, h_1\wedge h_4\wedge h_5\\
\mathcal S_2=& \, -h_2\wedge h_3\wedge h_7\\
\mathcal S_3=& \, h_1\wedge h_2\wedge h_{10}
-h_1\wedge h_2\wedge h_{6}
-h_1\wedge h_6\wedge h_{10}
+h_2\wedge h_6\wedge h_{10}\, ,\, \mbox{etc.}
\end{align*}

With the explicit expressions of the symbols $\mathcal S_i$'s at hand, another elementary computation gives us that 
$$ {}^{} \qquad \qquad 
\sum_{i=1}^{10} \mathcal S_i= \sum_{i=1}^{10}
\left( \,\frac{dU_i}{U_i} \wedge   \frac{dU_i}{U_i-1}  
\wedge  
 \frac{dU_i}{U_i-r_i}\,\right)
= 0
$$
in $\wedge ^3 {\bf H}$, from which one immediately deduces that the identity 
${\bf HLog}^3$ is written in explicit form 
\begin{equation}
\label{Eq:Eq-explicit-d=4}
\sum_{i=1}^{10} AH_i^3\big(U_i\big)=0\, , 
\end{equation}
a functional relation which holds true identically 
 on any sufficiently small neighbourhood of $\zeta$.
\mk 

Another thing which can be obtained quite easily from the explicit formulas 
 \eqref{Eq:hh} is the determination of the weight 2 antisymmetric ARs one can obtain from ${\bf HLog}^3$ by taking residues.  Indeed, for any $i=1,\ldots, 10$, 
 let us denote by $\mathcal R_{i}$ the
  weight 2 identity obtained by taking the residue of 
  ${\bf HLog}^3$ along the `line' $\ell_i$, {\it i.e.} 
  $$\mathcal R_{i}={\rm Res}_{\ell_i}\big( {\bf HLog}^3 \,\big)=
\Big(
{\rm Res}_{\ell_i}\big( \mathcal S_j\big)
\Big)_{j=1}^{10}
  \,.$$
Let $\chi_1,\ldots,\chi_{10}$ be the elements of the formal dual basis of the basis $(h_1,\ldots,h_{10})$ of ${\bf H}$: the  $\chi_i$'s are uniquely determined by the relations $h_j(\chi_i )=\delta_{ij}$ for all $i,j=1,\ldots,10$. 
For each $i$, one denotes by ${\bf H}_i$ the subspace of ${\bf H}$ with basis $dU_i/U_i$, $dU_i/(U_i-1)$ and $dU_i/(U_i-r_i)$. 
\begin{lem}
For any $i,j=1,\ldots,10$, the following hold true: 
\begin{itemize}
\item the residue ${\rm Res}_{\ell_j}\big( \mathcal S_i \big)$ is non-zero if and only if $\ell_j$ is contracted by $U_i$;\mk 
\item  when $\ell_j$ is contracted by $U_i$, then $z_{ij}=U_i(\ell_j)$ is one of the elements of 
the 4-tuple $\mathfrak R_i=(0,1,r_i,\infty)$, the $k$-th say.  Then 
${\rm Res}_{\ell_j}\big( \mathcal S_i \big)$ coincides with $(-1)^{k-1}/3$ times 
the pull-back under $U_i$ of the weight two symbol of $AH^2_{\mathfrak R_{i\hat{k}}}$ where 
$\mathfrak R_{i\hat{k}}=(\mathfrak R_{i,s}\big)_{i=1,i\neq k}^4 $. The explicit formula for ${\rm Res}_{\ell_j}\big( \mathcal S_i \big)$ according to the value of $z_{ij}$ is given in the following table: \mk \\
\scalebox{0.96}{
${}^{}$
\hspace{-1.6cm}
\vspace{-0.1cm}
\begin{tabular}{|c||c|c|c|c|}
\hline
\begin{tabular}{c}
\vspace{-0.4cm}\\
$\boldsymbol{z_{ij}}$ \vspace{0.1cm}\\
\end{tabular}
 & $\boldsymbol{0}$   &  $\boldsymbol{1}$ &  $\boldsymbol{r_i}$  &  $\boldsymbol{\infty}$ \\
\hline \hline  
\begin{tabular}{c}
\vspace{-0.2cm}\\
$\boldsymbol{3\,{\rm Res}_{\ell_j}\big( \mathcal S_i \big)}$\vspace{0.2cm}\\
\end{tabular}
 & $U_i^*\Big(\frac{dz}{z-1} \wedge \frac{dz}{z-r_i}\Big)$  &
$ - U_i^*\Big(\frac{dz}{z} \wedge \frac{dz}{z-r_i}\Big) $
  & $U_i^*\Big(\frac{dz}{z} \wedge \frac{dz}{z-1}\Big)$ &
  $- U_i^*\Big(\Big(\frac{dz}{z}-\frac{dz}{z-r_i}\Big) \wedge \Big( \frac{dz}{z-1}-\frac{dz}{z-r_i} \Big)\Big)$
\\ \hline
\end{tabular}}
\end{itemize}
\end{lem}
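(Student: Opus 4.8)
The plan is to run the entire argument at the level of symbols, using the residue operation of \eqref{Eq:Res-D}, which acts on the leftmost tensor factor. The first point I would record is that, since $h_k=d\log(\mathscr L_k)$ has residue $1$ along the line $\ell_k=\{\mathscr L_k=0\}$ and residue $0$ along $\ell_j$ for $j\neq k$, one has ${\rm Res}_{\ell_j}(h_k)=\delta_{jk}$; in other words ${\rm Res}_{\ell_j}$ agrees on ${\bf H}$ with the coordinate form $\chi_j$. Hence $\rho_s:={\rm Res}_{\ell_j}(\eta_{i,s})$ is nothing but the coefficient of $h_j$ in the expansion \eqref{Eq:hh} of $\eta_{i,s}$, and writing $\omega_s=\eta_{i,s}$ ($s=1,2,3$) the whole lemma reduces to (i) identifying the triple $(\rho_1,\rho_2,\rho_3)$ and (ii) feeding it into the antisymmetrised symbol $\mathcal S_i=\omega_1\wedge\omega_2\wedge\omega_3$.

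For step (i) I would argue geometrically. Each $\eta_{i,s}=d\log(U_i-\mathfrak R_{i,s})$ has a nonzero residue along $\ell_j$ precisely when $U_i-\mathfrak R_{i,s}$ acquires a zero or a pole there, that is precisely when $\ell_j$ is contracted by $U_i$ to the value $\mathfrak R_{i,s}$ or to $\infty$. If $\ell_j$ is not contracted, then $U_i|_{\ell_j}$ is dominant, every $U_i-\mathfrak R_{i,s}$ is a unit at the generic point of $\ell_j$, and all $\rho_s$ vanish. If $\ell_j$ is contracted, then since the singular values of the conic fibration $U_i$ are exactly the four points of its spectrum $\mathfrak R_i=(0,1,r_i,\infty)$, the image $z_{ij}=U_i(\ell_j)$ is one of these four values; and because each singular fibre is a reduced sum of two lines (as is the fibre over $\infty$), the function $U_i-z_{ij}$ vanishes, resp. $U_i$ has a pole, to order exactly $1$ along $\ell_j$. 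Therefore $(\rho_1,\rho_2,\rho_3)$ equals the $k$-th standard basis vector when $z_{ij}=\mathfrak R_{i,k}$ is finite ($k\in\{1,2,3\}$), and equals $(-1,-1,-1)$ when $z_{ij}=\infty$. The first bullet of the lemma is then immediate from the explicit value computed in step (ii).

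For step (ii) I would expand $\mathcal S_i=\tfrac16\sum_{\sigma\in\mathfrak S_3}{\rm sgn}(\sigma)\,\omega_{\sigma(1)}\otimes\omega_{\sigma(2)}\otimes\omega_{\sigma(3)}$ and apply ${\rm Res}_{\ell_j}$ to the first factor. Grouping the six permutations by the value $t=\sigma(1)$, the two permutations with $\sigma(1)=t$ contribute $2\,\varepsilon_t\,\rho_t\,(\omega_a\wedge\omega_b)$, where $\{a,b\}=\{1,2,3\}\setminus\{t\}$ with $a<b$ and $\varepsilon_1=\varepsilon_3=1$, $\varepsilon_2=-1$; this yields the master formula
\[
3\,{\rm Res}_{\ell_j}(\mathcal S_i)=\rho_1\,\omega_2\wedge\omega_3-\rho_2\,\omega_1\wedge\omega_3+\rho_3\,\omega_1\wedge\omega_2 ,
\]
the factor $1/3$ arising from the $1/6$ of the antisymmetrisation combined with the factor $2$ coming from the antisymmetry of the two surviving slots. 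Note in particular that the output is automatically antisymmetric, i.e. lands in $\wedge^2{\bf H}$, as it should. Substituting the four triples $(\rho_1,\rho_2,\rho_3)$ found in step (i) --- namely $(1,0,0)$, $(0,1,0)$, $(0,0,1)$ and $(-1,-1,-1)$ --- reproduces verbatim the four columns of the table, with the sign $(-1)^{k-1}$ for a finite value $z_{ij}=\mathfrak R_{i,k}$ and with $-(\omega_1-\omega_3)\wedge(\omega_2-\omega_3)$ for $z_{ij}=\infty$; this is also the promised $(-1)^{k-1}/3$ times the pull-back of the weight $2$ symbol $AH^2_{\mathfrak R_{i\hat k}}$. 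The only delicate part of the whole argument is the sign-and-normalisation bookkeeping in the master formula; the geometric input (contracted lines map to the spectrum, with reduced fibres forcing order-$1$ multiplicities) is routine given the del Pezzo material of \S\ref{SS:dP-surfaces-webs-conics}. As a cross-check, one may read the coefficients of $h_j$ directly off \eqref{Eq:hh}: e.g.\ $h_3$ occurs with coefficient $-1$ in all three components of $\eta_6$, which exhibits $\ell_3$ as an $\infty$-fibre component of $U_6$.
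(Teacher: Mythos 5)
Your proof is correct. All the ingredients check out: the identification ${\rm Res}_{\ell_j}\lvert_{\bf H}=\chi_j$ (since $h_k=d\log\mathscr L_k$ has residue $\delta_{jk}$ along $\ell_j$), the geometric determination of the residue triples $(\rho_1,\rho_2,\rho_3)\in\{e_1,e_2,e_3,(-1,-1,-1),0\}$ (contracted lines are components of reducible fibres, hence map to the spectrum $\{0,1,r_i,\infty\}$, and reducedness of those fibres forces multiplicity one), and the master formula $3\,{\rm Res}_{\ell_j}(\mathcal S_i)=\rho_1\,\omega_2\wedge\omega_3-\rho_2\,\omega_1\wedge\omega_3+\rho_3\,\omega_1\wedge\omega_2$, whose signs and normalisation I verified against the paper's conventions ($\wedge$ as antisymmetrisation with the $1/w!$ factor, residue acting on the leftmost tensor slot as in \eqref{Eq:Res-D}); substituting the four triples reproduces the table exactly, including the $\infty$ column $-(\omega_1-\omega_3)\wedge(\omega_2-\omega_3)$.

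Your route is, however, genuinely different from the paper's. The paper states the lemma without a displayed proof: the intended argument, as the surrounding text makes clear, is a direct finite computation, reading off for each pair $(i,j)$ the coefficients of $h_j$ in the three components of $\eta_i$ from the explicit expansions \eqref{Eq:hh} and wedging. You replace this case-by-case enumeration by two uniform facts — the residue of $d\log(U_i-\mathfrak R_{i,s})$ along a line is governed by the geometry of the conic fibration, and the antisymmetrisation algebra collapses to a single master formula — so that \eqref{Eq:hh} is needed only as a cross-check. What your approach buys is conceptual transparency and generality: nothing is special to $r=5$ or to the chosen coordinates, so the same argument gives the analogous residue tables for the webs $\boldsymbol{\mathcal W}_{{\rm dP}_d}$ of any degree. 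What the paper's computational approach buys is that the same explicit data \eqref{Eq:hh} is reused immediately afterwards to write down the ten residues ${\rm Res}_{\ell_j}({\bf HLog}^3)$ in closed form. Two small points you may wish to make explicit: the nonvanishing claimed in the first bullet uses that $\eta_{i,1},\eta_{i,2},\eta_{i,3}$ are linearly independent (they form a basis of ${\bf H}_i$), so their pairwise wedges are linearly independent in $\wedge^2{\bf H}$; and the reducedness of the singular fibres, which you invoke for the multiplicity-one statement, deserves the one-line justification that a fibre of a conic class through a line must split as a sum of two $(-1)$-curves.
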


For $a,b\in \mathbf C$ and $c\in \mathbf P^1$, one considers the formal expression 
$$\boldsymbol{\mathfrak R}_{a,b,c}^{V}= 
\left( \frac{dV}{V-a}-\frac{dV}{V-c}\right)\wedge 
 \left( \frac{dV}{V-b}-\frac{dV}{V-c}\right)
$$
with the convention that $ \frac{dV}{V-c}=0 $ when $c=\infty$ (hence the elements of the second row in the table above may have been written $\boldsymbol{\mathfrak R}_{1,r_i,\infty}^{U_i}$, 
$ - \boldsymbol{\mathfrak R}_{0,r_i,\infty}^{U_i}$,  $  \boldsymbol{\mathfrak R}_{0,1,\infty}^{U_i}$  and 
  $- \boldsymbol{\mathfrak R}_{0,1,r_i}^{U_i}$, in this order).

With the preceding lemma at hand, it is straightforward to compute by hand the weight 2 abelian relations obtained by taking residues of ${\bf HLog}^3$  along the $\ell_i$'s.  One has: 
%
%
\begin{align*}
{\rm Res}_{\ell_1}\big({\bf HLog}^3\big)=\, & \bigg(\, 
 \boldsymbol{\mathfrak R}_{1,r_1,\infty}^{U_1}\, , \, 
0 
\, , \,  
-{ \boldsymbol{\mathfrak R}_{0,1,r_3}^{U_3}}
 0 
\, , \,
0 
\, , \,
0 
\, , \,
0 
\, , \,
0 
\, , \,
\boldsymbol{\mathfrak R}_{1,r_8,\infty}^{U_8}
\, , \, 
- \boldsymbol{\mathfrak R}_{0,1,r_{9}}^{U_{9}}
\, , \, 
\boldsymbol{\mathfrak R}_{1,r_{10},\infty}^{U_{10}}\, \bigg) \\
{\rm Res}_{\ell_2}\big({\bf HLog}^3\big)= \, & \bigg(\, 
0 
\, , \, 
 - \boldsymbol{\mathfrak R}_{0,1,r_2}^{U_2}
\, , \,  
  \boldsymbol{\mathfrak R}_{1,r_3,\infty}^{U_3}
\, , \,
0 
\, , \,
0 
\, , \,
0 
\, , \,
 -\boldsymbol{\mathfrak R}_{0,1,r_7}^{U_7}
\, , \,
0
\, , \, 
 \boldsymbol{\mathfrak R}_{1,r_9,\infty}^{U_9}
\, , \, 
- \boldsymbol{\mathfrak R}_{0,1,r_{10}}^{U_{10}} \, \bigg) \\
{\rm Res}_{\ell_3}\big({\bf HLog}^3\big)= \, & \bigg(\, 
0 
\, , \, 
\boldsymbol{\mathfrak R}_{0,1,\infty}^{U_{2}}
\, , \,  
 0 
\, , \,
0 
\, , \,
-\boldsymbol{\mathfrak R}_{0,r_5,\infty}^{U_{5}}
\, , \,
 -\boldsymbol{\mathfrak R}_{0,1,r_6}^{U_6}
\, , \,
\boldsymbol{\mathfrak R}_{1,r_7,\infty}^{U_{7}}
\, , \,
0
\, , \, 
 -\boldsymbol{\mathfrak R}_{0,1,r_9}^{U_9}
\, , \, 
0\, \bigg) \\
 {\rm Res}_{\ell_4}\big({\bf HLog}^3\big)= \, & \bigg(\, 
- \boldsymbol{\mathfrak R}_{0,r_1,\infty}^{U_1}
\, , \, 
0 
\, , \,  
 0 
\, , \,
- \boldsymbol{\mathfrak R}_{0,1,r_4}^{U_4}
\, , \,
0 
\, , \,
- \boldsymbol{\mathfrak R}_{0,1,r_6}^{U_6}
\, , \,
0 
\, , \,
 - \boldsymbol{\mathfrak R}_{0,r_8,\infty}^{U_8}
\, , \, 
0
\, , \, 
- \boldsymbol{\mathfrak R}_{0,1,r_{10}}^{U_10}
\, \bigg) \\
 {\rm Res}_{\ell_5}\big({\bf HLog}^3\big)= \, & \bigg(\, 
\boldsymbol{\mathfrak R}_{0,1,\infty}^{U_1}
\, , \, 
0 
\, , \,  
 0 
\, , \,
0 
\, , \,
\boldsymbol{\mathfrak R}_{1,r_5,\infty}^{U_5}
\, , \,
-\boldsymbol{\mathfrak R}_{0,r_6,\infty}^{U_6}
\, , \,
0 
\, , \,
-\boldsymbol{\mathfrak R}_{0,1,r_8}^{U_8}
\, , \, 
\boldsymbol{\mathfrak R}_{1,r_9,\infty}^{U_9}
\, , \, 
0\, \bigg) \\
 {\rm Res}_{\ell_6}\big({\bf HLog}^3\big)= \, & \bigg(\, 
0 
\, , \, 
0 
\, , \,  
 -\boldsymbol{\mathfrak R}_{0,r_3,\infty}^{U_3}
\, , \,
\boldsymbol{\mathfrak R}_{1,r_4,\infty}^{U_4}
\, , \,
0 
\, , \,
0 
\, , \,
\boldsymbol{\mathfrak R}_{1,r_7,\infty}^{U_7}
\, , \,
- \boldsymbol{\mathfrak R}_{0,1,r_8}^{U_8}
\, , \, 
0
\, , \, 
-\boldsymbol{\mathfrak R}_{0,r_{10},\infty}^{U_{10}} 
\, \bigg) \\
 {\rm Res}_{\ell_7}\big({\bf HLog}^3\big)= \, & \bigg(\, 
0 
\, , \, 
- \boldsymbol{\mathfrak R}_{0,r_2,\infty}^{U_2}
\, , \,  
 0 
\, , \,
- \boldsymbol{\mathfrak R}_{0,r_4,\infty}^{U_4}
\, , \,
0 
\, , \,
- \boldsymbol{\mathfrak R}_{0,r_6,\infty}^{U_6}
\, , \,
- \boldsymbol{\mathfrak R}_{0,r_7,\infty}^{U_7}
\, , \,
0
\, , \, 
0
\, , \, 
\boldsymbol{\mathfrak R}_{1,r_{10},\infty}^{U_{10}}
\, \bigg) \\
 {\rm Res}_{\ell_8}\big({\bf HLog}^3\big)= \, & \bigg(\, 
0 
\, , \, 
0 
\, , \,  
 0 
\, , \,
0 
\, , \,
0 
\, , \,
\boldsymbol{\mathfrak R}_{0,1,\infty}^{U_{6}} 
\, , \,
\boldsymbol{\mathfrak R}_{0,1,\infty}^{U_{7}} 
\, , \,
\boldsymbol{\mathfrak R}_{0,1,\infty}^{U_{8}} 
\, , \, 
\boldsymbol{\mathfrak R}_{0,1,\infty}^{U_{9}} 
\, , \, 
\boldsymbol{\mathfrak R}_{0,1,\infty}^{U_{10}} \, \bigg) \\
 {\rm Res}_{\ell_9}\big({\bf HLog}^3\big)= \, & \bigg(\, 
0 
\, , \, 
0 
\, , \,  
 0 
\, , \,
\boldsymbol{\mathfrak R}_{0,1,\infty}^{U_{4}}
\, , \,
\boldsymbol{\mathfrak R}_{0,1,\infty}^{U_{5}}
\, , \,
\boldsymbol{\mathfrak R}_{1,r_6,\infty}^{U_{6}}
\, , \,
-\boldsymbol{\mathfrak R}_{0,1,r_7}^{U_{7}}
\, , \,
\boldsymbol{\mathfrak R}_{1,r_8,\infty}^{U_{8}}
\, , \, 
0
\, , \, 
0\, \bigg) \\
 \mbox{ and } \quad {\rm Res}_{\ell_{10}}\big({\bf HLog}^3\big)= \, & \bigg(\, 
0 
\, , \, 
0 
\, , \,  
\boldsymbol{\mathfrak R}_{0,1,\infty}^{U_{3}}
\, , \,
0 
\, , \,
-\boldsymbol{\mathfrak R}_{0,1,r_5}^{U_{5}}
\, , \,
0 
\, , \,
-\boldsymbol{\mathfrak R}_{0,r_7,\infty}^{U_{7}}
\, , \,
-\boldsymbol{\mathfrak R}_{0,r_8,\infty}^{U_{8}}
\, , \, 
-\boldsymbol{\boldsymbol{\mathfrak R}}_{0,r_9,\infty}^{U_{9}}
\, , \, 
0\, \bigg) \, .
\end{align*}

For any $i=1,\ldots,10$, there exist exactly 3 residues ${\rm Res}_{\ell_j}\big( {\bf HLog}^3\big)$ the $i$-th components of which are non zero. Moreover, these components  form a basis of
$\wedge^2 {\bf H}_i$. For instance, for $i=1$,  ${\rm Res}_{\ell_j}\big( {\bf HLog}^3\big)$ has a non zero first component only for $j=1,4,5$, and these components are 
 $\mathfrak R_{1,r_1,\infty}^{U_1}$, $ -\mathfrak R_{0,r_1,\infty}^{U_1}$ and $ \mathfrak R_{0,1,\infty}^{U_1}$ respectively, that is are the pull-backs under $U_i$ of the three 2-wedge products
 $$
 \frac{du}{u-1}\wedge  \frac{du}{u-r_1}\, , \qquad 
  - \frac{du}{u}\wedge  \frac{du}{u-r_1} \qquad \mbox{ and } \qquad 
  \frac{du}{u}\wedge  \frac{du}{u-1}
 $$
which obviously form a basis of $\wedge^2 {\bf H}^0\big( \mathbf P^1,\Omega_{\mathbf P^1}^1\big( 
{\rm Log}\,\mathfrak R_1\big)\Big)$. From this, one immediately deduces that the ten residues ${\rm Res}_{\ell_i}\big( {\bf HLog}^3\big)$'s are linearly independent. Since 
${\rm Res}_{\ell_i}\big( {\bf HLog}^3\big) \in \boldsymbol{AR}^2_{asym}\big(  \boldsymbol{\mathcal W}_{{\rm dP}_4}\big)$ and because this space is of dimension 10 (see the next subsection), it follows that 
\begin{equation}
\label{Eq:lomo}
{\rm Res}\Big( {\bf HLog}^3 \Big)=
\Big\langle \,  
{\rm Res}_{\ell_i}\big({\bf HLog}^3\big)\, \big\lvert \, i=1,\ldots,10
\, \Big\rangle=
 \boldsymbol{AR}^2_{asym}\big(  \boldsymbol{\mathcal W}_{{\rm dP}_4}\big)\, . 
\end{equation}

 \subsection{\bf The symbolic abelian relations of ${\mathcal W} \hspace{-0.46cm}{\mathcal W}_{\hspace{-0.01cm}  {\rm dP}_4
 \hspace{-0.41cm}
  {\rm dP}_4}$}
 \label{SS:Symbolic-ARs-WdP4}
Here we apply the approach of \S\ref{SSS:Weyl-group-action-on-ARs} to the web under scrutiny: 
${\bf H}$ stands for the space of rational 1-forms, regular on $\mathbf C^2\setminus A$ and with logarithmic singularities along the components of $A$ spanned  by the 
$h_i=d{\rm Log}({\mathcal L}_i)$ for $i=1,\ldots,10$. These are linearly independent hence $\dim \, {\bf H}=10$. For any $i$, let  ${\bf H}_i$ be the subspace of ${\bf H}$ with basis the 1-form $\eta_{i,s}=d{\rm Log}(U_i-\mathfrak r_{i,s})$ for $s=1,2,3$. 

For any weight $w \geq 1$, one defines a vector space of {\it `symbolic weigth $w$ hyperlogarithmic abelian relations'}, denoted by  $  {\bf HLogAR}^w= {\bf HLogAR}^w(\boldsymbol{\mathcal W}_{ {\rm dP}_4})$, 
by requiring that 
the following sequence of vector spaces be exact: 
\begin{equation}
\label{Eq:ExactSequence}
0 \rightarrow  {\bf HLogAR}^w\longrightarrow \oplus_{i=1}^{10} {\bf H}_i^{\otimes w} \longrightarrow {\bf H}^{\otimes w}\, .
\end{equation}

The symmetric group $\mathfrak S_w$ naturally acts  linearly on ${\bf H}^{\otimes w}$ by permuting the components of the pure tensors: one has 
$\sigma\cdot k_{1}\otimes \cdots \otimes k_{w}= k_{\sigma(1)}\otimes \cdots \otimes k_{\sigma(w)}$ for any $\sigma\in \mathfrak S_w$ and any $k_1\otimes \cdots \otimes k_w\in 
 {\bf H}^{\otimes w}$ and this action is extended linearly to the whole space ${\bf H}^{\otimes w}$. Each of the inclusions ${\bf H}_i\hookrightarrow {\bf H}$ is obviously $\mathfrak S_w$-equivariant, hence  \eqref{Eq:ExactSequence} is a sequence
 of $\mathfrak S_w$-representations, with 
 the map ${\bf HLogAR}^w\longrightarrow \oplus_{i=1}^{10} {\bf H}_i^{\otimes w} $ being injective 
 by the very definition of ${\bf HLogAR}^w$ for any $w\geq 1$. In particular for $w=2$, one has a decomposition in direct sum
 $$
   {\bf HLogAR}^2=
  {\bf HLogAR}^2_{\rm sym}\oplus  {\bf HLogAR}^2_{\rm asym}
 $$
 where $ {\bf HLogAR}^2_{\rm sym}$ (resp.\,$ {\bf HLogAR}^2_{\rm asym}$)
 stands for the symbolic weight 2 hyperlogarithmic ARs which are symmetric (resp.\,antisymmetric) with respect to the action of $\mathfrak S_2\simeq \{ \pm 1\}$.  
 \sk
 
 Using the explicit decomposition of the $\eta_{i,s}$ in the basis formed by the $h_j$ with $j=1,\ldots,10$, it is just a matter of linear algebra computations to express 
 any tensor $\eta_{i,s_1}\otimes \cdots \otimes \eta_{i,s_w}\in {\bf H}_i^{\otimes w}$ as a linear combination in the $h_{i_1\ldots i_w}=h_{i_1}\otimes \cdots 
 \otimes h_{i_w} \in {\bf H}^{\otimes w}$ for all tuples $(i_1,\ldots,i_w)$ such that $1\leq i_1\leq \cdots \leq i_w\leq 10$.  By direct elementary computations, we obtain the following points: 
\begin{itemize}
\item the space $ {\bf HLogAR}^1$ has dimension $20$ (coherently with Lemma \ref{Lem:lolo}). 
This space admits a basis formed by combinatorial ARs hence  the subspace of combinatorial abelian relations 
$\boldsymbol{AR}_C(\boldsymbol{\mathcal W}_{ {\rm dP}_4})$ coincides with that of logarithmic ones  
$\boldsymbol{AR}_{log}(\boldsymbol{\mathcal W}_{ {\rm dP}_4})
\simeq {\bf HLogAR}^1$;
\sk 
\item the spaces $ {\bf HLogAR}^2_{\rm sym}$ and $ {\bf HLogAR}^2_{\rm asym}$ have dimension $5$ and 10 respectively, with $ {\bf HLogAR}^2_{\rm asym}$ spanned by the residues of 
${\bf HLog}^3$ (see \eqref{Eq:lomo} above);
\sk
\item the space $ {\bf HLogAR}^3$ is  spanned by 
${\bf HLog}^3=\big( \eta_{i,1}\wedge \eta_{i,2} \wedge \eta_{i,3} \big)_{i=1}^{10} $ 
hence has dimension 1;
\sk
\item we have ${\rm rk}(\boldsymbol{\mathcal W}_{ {\rm dP}_4})=
\sum_{k=1}^3 \dim\,  \big( {\bf HLogAR}^k\big)=
 20+(10+5)+1=36$ hence 
$\boldsymbol{\mathcal W}_{ {\rm dP}_4}$ has maximal rank with all its ARs being hyperlogarithmic of weight  less than or equal to 3.
\end{itemize}

As explained in \S\ref{SSS:Weyl-group-action-on-ARs}, all the subspaces of ARs considered in the first three points above are naturally acted upon by the Weyl group $W=W(D_5)$. It is interesting to describe more precisely the structures of these spaces as $W$-representations. It is the purpose of the next subsection. 
\subsubsection{\bf  Subspaces of ARs as $W$-modules}
\label{SSS:subspaces-of-ARs-as-W-modules}
We recall some notations and introduce new ones
 which we will use here, in relation with the representations 
of the Weyl group $W$ and in particular its actions on the set of lines included in the fixed del Pezzo quartic surface we are working with.
 \begin{itemize}
 \item we use the notation of \S\ref{SS:dP-surfaces-webs-conics}:  for instance,  $\boldsymbol{\mathcal K}$ stands for the set of conic classes in the Picard lattice of the considered del Pezzo quartic, etc. We set 
 $\boldsymbol{\mathfrak c}_1=\boldsymbol{h}-\boldsymbol{\ell}_1$: it is the class of the privileged conic fibration we will consider (which is the one such that $b_*\lvert \,\boldsymbol{\mathfrak c} _1\, \lvert$ is the linear system of lines on the projective plane  passing through the point $p_1$);
 \sk
 \item then $ \boldsymbol{\mathfrak c}_1^{red}$ denotes the set of non irreducible conics in the  pencil $\lvert\boldsymbol{\mathfrak c}_0\lvert$: a bit abusively, one can write
$$ \boldsymbol{\mathfrak c}_1^{red}= \big\{\, (\boldsymbol{h}-\boldsymbol{\ell}_1-\boldsymbol{\ell}_i)+(\boldsymbol{\ell}_i) \, \lvert \, i=2,\ldots,5\, \big\}\, , $$
with the convention here that  $ (\boldsymbol{h}-\boldsymbol{\ell}_1-\boldsymbol{\ell}_i)+(\boldsymbol{\ell}_i) $ stands for the non irreducible conic whose two irreducible components are the two lines $h-\ell_1-\ell_i$ and $\ell_i$, this for any $i=2,\ldots,5$
\sk
\item we set 
$W_{\boldsymbol{\mathfrak c}_1}={\rm Fix}_W(\boldsymbol{\mathfrak c}_1)=\big\langle s_2,\ldots,s_5\big\rangle$. 
This subgroup of $W=W(D_5)=\big\langle s_1,\ldots,s_5\big\rangle$ is naturally isomorphic to 
 $ W(D_4)$ and naturally acts (by permutations) on $\boldsymbol{\mathfrak c}_1^{red}$;
 \sk
\item we use the classical labelling of irreducible representation of $W=W(D_5)$ or $W_{\boldsymbol{\mathfrak c}_1}=W(D_4)$ by means of bipartitions of $5$ and $4$ respectively ({\it e.g.} see \cite[\S5.6]{GP}).
\sk
\item we will  denote by ${\bf 1}$ the trivial representation of the considered Weyl group. For instance, 
we have 
 ${\bf 1}= V_{[.5]}^1$  when dealing with $W\simeq W(D_5)$; 
 \sk
 \item we denote by 
$\mathbf Z^{\boldsymbol{\mathfrak c}_1^{red}}$ the free $\mathbf Z$-module spanned by the elements of 
${\boldsymbol{\mathfrak c}_1^{red}}$ and by ${\bf H}_{\boldsymbol{\mathfrak c}_1}$ the submodule of elements with vanishing trace. Since $W_{\boldsymbol{\mathfrak c}_1}$ acts by permutations on ${\boldsymbol{\mathfrak c}_1^{red}}$, 
$\mathbf Z^{\boldsymbol{\mathfrak c}_1^{red}}$ naturally is a $W_{\boldsymbol{\mathfrak c}_1}$-representation  of which 
 ${\bf H}_{\boldsymbol{\mathfrak c}_1}$ is a direct factor. More precisely,   one has 
 $\mathbf Z^{\boldsymbol{\mathfrak c}_1^{red}}={\bf 1}\oplus {\bf H}_{\boldsymbol{\mathfrak c}_1}$  as $W_{\boldsymbol{\mathfrak c}_1}$-modules ({\it cf.}\,Lemma \ref{L:Fc-module}).
\end{itemize} 
 
\paragraph{\bf Weight 1 abelian relations}
A description of the $W$-module structure of ${\bf HLogAR}^1$ will follow from the 
\begin{lem} 
\label{Lem:Hc1}
{\rm 1.} As $W_{{\mathfrak c}_1}$-representations, one has 
$\mathbf Z^{{\mathfrak c}_1^{red}}={\bf 1}\oplus {\bf H}_{\mathfrak c_1}$ with ${\bf H}_{\mathfrak c_1}\simeq V^3_{[ 2,2]^{\textcolor{black}{-}}}$.
\sk

\noindent {\rm 2.} As $W$-representations, 
one has $ \oplus_{ \mathfrak c \in \boldsymbol{\mathcal K} } {\bf H}_{\mathfrak c}\simeq {\rm Ind}_{W_{\mathfrak c_1}}^W
\big( {\bf H}_{\mathfrak c_1} \big)\simeq V^{10}_{[2,3]}\oplus V^{20}_{[2,21]}$.
\end{lem}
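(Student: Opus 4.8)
The plan is to prove the two isomorphisms in Lemma \ref{Lem:Hc1} separately, the first by a direct character computation over $W_{\mathfrak c_1}\simeq W(D_4)$ and the second by invoking the general induction result already established in Lemma \ref{Lem:yoyo}, followed by a decomposition of the induced representation into $W(D_5)$-irreducibles.

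\emph{Part 1.} First I would recall from Lemma \ref{L:Fc-module} that $\mathbf Z^{\mathfrak c_1^{red}}={\bf 1}\oplus {\bf H}_{\mathfrak c_1}$ as $W_{\mathfrak c_1}$-modules, where $W_{\mathfrak c_1}=\langle s_2,\ldots,s_5\rangle\simeq W(D_4)$ acts by permutations on the four reducible conics $c_j=(\boldsymbol h-\boldsymbol\ell_1-\boldsymbol\ell_j)+(\boldsymbol\ell_j)$, $j=2,\ldots,5$. The proof of Lemma \ref{L:Fc-module} already records exactly how the generators $s_j$ act: $s_j$ transposes $c_j$ and $c_{j+1}$ for $j=2,3,4$, while $s_5$ transposes $c_2$ and $c_3$. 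Thus $\mathbf Z^{\mathfrak c_1^{red}}$ is the permutation module on four letters with a specified $W(D_4)$-action, and ${\bf H}_{\mathfrak c_1}$ is its $3$-dimensional trace-zero part. To identify ${\bf H}_{\mathfrak c_1}$ as the specific irreducible $V^3_{[2,2]^-}$, I would compute the character $\chi$ of this $3$-dimensional module on a set of conjugacy-class representatives of $W(D_4)$ (using the explicit permutation action to read off fixed points, hence the permutation character, then subtracting the trivial character) and compare with the character table of $W(D_4)$; the $\pm$ distinction between $[2,2]^+$ and $[2,2]^-$ is settled by evaluating $\chi$ on an element lying outside the index-two rotation subgroup. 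This is the place where care is genuinely required, since the two half-spin-type representations $[2,2]^{\pm}$ have the same dimension and agree on the even subgroup, so one must pin down the sign on an odd class; I would use the convention of \cite[\S5.6]{GP} referenced in the text to fix which sign corresponds to which label.

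\emph{Part 2.} For the second isomorphism, Lemma \ref{Lem:yoyo}.1 (applied with $\nu=1$, so that ${\bf H}_{\mathfrak c}^{\otimes 1}={\bf H}_{\mathfrak c}$) gives directly that $\oplus_{\mathfrak c\in\boldsymbol{\mathcal K}}{\bf H}_{\mathfrak c}\simeq {\rm Ind}_{W_{\mathfrak c_1}}^{W}\big({\bf H}_{\mathfrak c_1}\big)$ as $W$-representations. It then remains to decompose this induced module. Since $W=W(D_5)$ has order $2^4\cdot 5!=1920$ and $W_{\mathfrak c_1}\simeq W(D_4)$ has index $\kappa_4=10$ (consistent with $|\boldsymbol{\mathcal K}_4|=10$ and $\dim{\bf H}_{\mathfrak c_1}=3$, so the induced module has dimension $30=10+20$), I would compute the induced character $\mathrm{Ind}(\chi)$ on the conjugacy classes of $W(D_5)$ via the standard induction formula $\mathrm{Ind}(\chi)(w)=\frac{1}{|W_{\mathfrak c_1}|}\sum_{g:\,g^{-1}wg\in W_{\mathfrak c_1}}\chi(g^{-1}wg)$, and then take inner products against the irreducible characters of $W(D_5)$ from its character table (again in the bipartition labelling of \cite[\S5.6]{GP}). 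The claim is that the only irreducibles appearing are $V^{10}_{[2,3]}$ and $V^{20}_{[2,21]}$, each with multiplicity one; I would verify this by checking $\langle \mathrm{Ind}(\chi),\chi_{[2,3]}\rangle=\langle \mathrm{Ind}(\chi),\chi_{[2,21]}\rangle=1$ and that the sum of squared multiplicities equals $\langle\mathrm{Ind}(\chi),\mathrm{Ind}(\chi)\rangle$, which by Frobenius reciprocity equals $\langle\chi,\mathrm{Res}\,\mathrm{Ind}(\chi)\rangle$ and can be cross-checked dimensionally via $10+20=30$.

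The main obstacle I anticipate is purely bookkeeping of the character tables: correctly parametrizing the conjugacy classes of $W(D_4)$ and $W(D_5)$ (signed permutations with an even number of sign changes) and getting the sign-label conventions for the self-associate representations $[2,2]^{\pm}$ right, so that the induction-restriction inner products land on the stated irreducibles rather than their sign-twisted partners. A useful consistency check I would run at the end is to feed this decomposition into the expected $W$-module structure of $\boldsymbol{AR}^1(\boldsymbol{\mathcal W}_{\mathrm{dP}_4})\simeq{\bf HLogAR}^1$: the exact sequence \eqref{Eq:ARHlog-W-module} for $\nu=1$ identifies ${\bf HLogAR}^1$ with the kernel of the $W$-equivariant map $\Phi^1_4:\oplus_{\mathfrak c}{\bf H}_{\mathfrak c}\to{\bf H}_{X_4}$, and since $\dim{\bf HLogAR}^1=20$ (Lemma \ref{Lem:lolo}) with $\dim{\bf H}_{X_4}=l_4-r-1=10-5=5$, the image $\mathrm{Im}(\Phi^1_4)$ must be the $5$-dimensional piece and the kernel the $20$-dimensional one; matching these against $V^{10}_{[2,3]}\oplus V^{20}_{[2,21]}$ forces the kernel to be $V^{20}_{[2,21]}$ and the image to be (isomorphic to) $V^{10}_{[2,3]}$, cross-validating Part 2 against Corollary \ref{Cor:H-Xr} and \eqref{Eq:Kr}, where ${\bf H}_{X_4}\simeq{\bf K}_4=V^5_{[32]}$.
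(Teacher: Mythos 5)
Your core argument (Parts 1 and 2) is essentially the paper's own proof: the paper establishes both statements by explicit character-theoretic computations in GAP3, detailed in its Appendix. There, Part 1 is done exactly as you describe by hand --- the permutation matrices of the generators acting on $\boldsymbol{\mathfrak c}_1^{red}$ are written down, the character $\chi_{\boldsymbol{\mathfrak c}_1}$ is evaluated by taking traces on explicit conjugacy-class representatives, and the trace-zero part is matched against the $W(D_4)$ character table (the computed character $(3,-1,3,1,-1,1,-1,3,-1,0,0,-1,1)$ picks out $[2.2]^-$ rather than $[2.2]^+$ precisely on the classes you identify as the delicate ones); Part 2 uses the isomorphism $\oplus_{\mathfrak c}{\bf H}_{\mathfrak c}\simeq {\rm Ind}_{W_{\mathfrak c_1}}^{W}({\bf H}_{\mathfrak c_1})$ and GAP's {\ttfamily InductionTable} command in place of your by-hand Frobenius computation. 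So the route is the same; the only difference is manual versus machine bookkeeping.

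However, your final consistency check contains a concrete error coming from a confusion between the two surfaces $X_4={\rm dP}_5$ and $X_5={\rm dP}_4$. The lemma concerns the \emph{quartic} del Pezzo surface, which is $X_5$ in the paper's indexing (blow-up at $r=5$ points, $l_5=16$ lines, $\kappa_5=10$ conic classes --- your ``$\kappa_4=10$'' and ``$|\boldsymbol{\mathcal K}_4|=10$'' suffer the same slip). The target of $\Phi^1$ is therefore ${\bf H}_{{\rm dP}_4}\simeq {\bf K}_5=V^{10}_{[2,3]}$, of dimension $l_5-r-1=16-5-1=10$, and not ${\bf H}_{X_4}\simeq {\bf K}_4=V^{5}_{[32]}$ of dimension $5$ as you wrote. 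As stated your check is self-contradictory: a $5$-dimensional image would force a $25$-dimensional kernel, incompatible with $\dim{\bf HLogAR}^1=20$, and a $5$-dimensional space cannot be isomorphic to $V^{10}_{[2,3]}$. With the indices corrected the check does go through --- the image is the $10$-dimensional irreducible $V^{10}_{[2,3]}$ and the kernel is $V^{20}_{[2,21]}$ --- and this is exactly what the paper records in its diagram \eqref{Eq:HLogAR1-W-structure}.
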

\begin{proof} This is obtained 
by explicit computations in GAP3 (detailed in the Appendix). 
\end{proof}

Since  $\oplus_{ \mathfrak c \in \boldsymbol{\mathcal K} } {\bf H}_{\mathfrak c}
\rightarrow {\bf H}_{{\rm dP}_4} $ is non zero and because the target is an irreducible $W$-representation, it follows that this map is surjective. Hence 
for $w=1$, \eqref{Eq:ExactSequence} is an exact sequence and because we have vertical isomorphisms as follows
\begin{equation}
\label{Eq:HLogAR1-W-structure}
\scalebox{1}{
  \xymatrix@R=0.4cm@C=1.1cm{ 
 0 
\ar@{->}[r]  &   {\bf HLogAR}^1  \ar@{->}[r]   
&  
 \oplus_{ \mathfrak c \in \boldsymbol{\mathcal K} } {\bf H}_{\mathfrak c} \eq[d] 
  \ar@{->}[r] & 
  {\bf H}_{{\rm dP}_4} \eq[d]  \ar@{->}[r] &  0 \\
 &   
&  
V^{20}_{[2,21]}\oplus V^{10}_{[2,3]}   \ar@{->}[r] & 
 V^{10}_{[2,3]}   \ar@{->}[r] &  0 \, , 
}}
\end{equation}
%
we deduce 
that as a $W$-representation, ${\bf HLogAR}^1 $ is irreducible and isomorphic to $V^{20}_{[2,21]}$

\paragraph{\bf Weight 2 abelian relations}
%
%
Aiming to decompose into $W$-irreducibles the space of weight 2 hyperlogarithmic ARs
   ${\bf HLogAR}^2$, we first determine those of the two  $W$-modules 
  $\oplus_{ \mathfrak c \in \boldsymbol{\mathcal K} } {\bf H}_{\mathfrak c}^{\otimes 2} $ and 
  $  {\bf H}_{{\rm dP}_4}^{\otimes 2}   $.   Since $M^{\otimes 2}=M^{\wedge 2}\oplus M^{\odot}$ for any $W$-module $M$    (with $M^2={\rm Asym}(M)=\wedge^2 M$ and $M^{\odot 2}={\rm Sym}(M)$), one can deal with symmetric and antisymmetric weight 2 tensors separately. 
   
 We deal with the case of antisymmetric ARs first.  
From straightforward  computations   with GAP3 ({\it cf.}\,the Appendix),  we easily obtain the following decompositions into $W$-irreducible representations: one has 
\begin{equation}
\label{Eq:AsymAR2}
\wedge^2 {\bf H}_{\mathfrak c_1}=  
\wedge^2 V^3_{[2.2]^-}
\simeq 
V_{[11]^-}^{3}
\quad 
\mbox{ and } 
\quad 
\wedge^2 {\bf H}_{ {\rm dP}_4}\simeq 
\wedge^2 V_{[2.3]}^{10}=
V_{[11.21]}^{20}\oplus 
V_{[11.3]}^{10}\oplus 
 V_{[1.31]}^{15} \, .
\end{equation}

On the other hand, remarking that 
 $\oplus_{ \mathfrak c \in \boldsymbol{\mathcal K} } \wedge^2 {\bf H}_{\mathfrak c}\simeq 
 {\rm Ind}_{W_{\mc_1}}^W\big( \wedge^2 {\bf H}_{\mathfrak c_1} \big) $ and computing with GAP3 again, one gets
\begin{equation}
\label{Eq:II-AsymAR2}
\oplus_{ \mathfrak c \in \boldsymbol{\mathcal K} } \wedge^2 {\bf H}_{\mathfrak c}\simeq {\rm Ind}_{W_{\mc_1}}^W\big( \wedge^2 {\bf H}_{\mathfrak c_1} \big) 
\simeq   {\rm Ind}_{W_{\mc_1}}^W\big( V^3_{[11]^-} \big)=
V_{[11.111]}^{10}\oplus V_{[11.21]}^{20}\, . 
\end{equation}
%

Putting \eqref{Eq:AsymAR2} and \eqref{Eq:II-AsymAR2} together into the following diagram of $W$-representations
$$
\scalebox{1}{
  \xymatrix@R=0.4cm@C=1.1cm{ 
 0 
\ar@{->}[r]  &   {\bf HLogAR}^2_{\rm asym}  \ar@{->}[r]   
&  
 \oplus_{ \mathfrak c \in \boldsymbol{\mathcal K} } \wedge^2 {\bf H}_{\mathfrak c} \eq[d] 
  \ar@{->}[r] & 
  \wedge^2 {\bf H}_{{\rm dP}_4} \eq[d]  &   \\
  & 
&  
V_{[11.111]}^{10}\oplus V_{[11.21]}^{20}  \ar@{->}[r] & 
V_{[11.21]}^{20}\oplus 
V_{[11.3]}^{10}\oplus 
V_{[1.31]}^{15}\, , 
  &  
}}
$$
we deduce that $ {\bf HLogAR}^2_{\rm asym}$ is an irreducible $W$-module isomorphic to $V_{[11.111]}^{10}$. 
\mk 

We proceed similarly regarding the symmetric weight 2 hyperlogarithmic ARs. By computations with GAP3 (see the Appendix), one gets 
\begin{align*}
\big({\bf H}_{\mathfrak c_1}\big)^{\odot 2} \simeq  &\,  V^3_{[2
.2]^-}\oplus V^2_{[.22]}\oplus {\bf 1}\\ 
\mbox{ and }\quad 
\big({\bf H}_{{\rm dP}_4}\big)^{\odot 2}\simeq &\, 
 V_{[2.21]} ^{20}  \oplus 
V_{[1.22]}^{10} \oplus 
V_{[2.3]}^{10}\oplus 
V_{[.32]}^{5}\oplus 
V_{[1.4]}^5\oplus 
V_{ [.41]}^4\oplus 
{\bf 1}\, . 
\end{align*}

On the other hand, since 
\begin{align} 
{\rm Ind}_{W_{{\mathfrak c}_1}}^{W}\Big( \,  V^3_{[ 2,2]^-} \Big)=& \, V_{[2.3]}^{10}\oplus V_{[2.21]}^{20}
\nonumber
\\
{\rm Ind}_{W_{{\mathfrak c}_1}}^{W}\Big(   \,V^2_{[ ,22]}\, \Big)=& \, V_{[.32]}^{5}\oplus V_{[.221]}^{5}\oplus V_{[1.22]}^{10}
\label{Eq:III} \\ 
{\rm Ind}_{W_{{\mathfrak c}_1}}^{W}\big(  {\bf 1}\big)= & \, V_{[1.4]}^5\oplus V_{[.14]}^4 \oplus V_{[.5]}^1 
\nonumber
\end{align}
(as it follows from Table \ref{Table:Ind-WD4-WD5}), 
one deduces that  $ {\bf HLogAR}^2_{\rm sym}\simeq V_{[.221]}^{5}$
as a $W$-module.

\paragraph{\bf The weight 3 abelian relation}
In \cite{Hlog} (see also \cite{CP}), we proved that ${\bf HLog}^3$ can be identified 
with an explicit element $(\omega_{ \mathfrak c})_{\mathfrak c \in \boldsymbol{\mathcal K} }
$ of $ \oplus_{\mathfrak c \in \boldsymbol{\mathcal K} } \wedge^3 {\bf H}_{ \mathfrak c }$ which is $W$-stable and spans a 1-dimensional subrepresentation 
$\big\langle (\omega_{ \mathfrak c})_{\mathfrak c \in \boldsymbol{\mathcal K} } \big\rangle$ 
isomorphic to the signature representation ${\bf sign}_{D_5}$ of $W=W(D_5)$.
\begin{center}
$\star$
\end{center}

We gather all the results of the above considerations in the following 
\begin{prop} 
\begin{enumerate}
\item[] ${}^{}$ \hspace{-1.4cm} {\rm 1.}\, The Weyl group $W(D_5)$ naturally acts on the spaces of hyperlogarithmic ARs 
${\bf HLogAR}^1\big( \boldsymbol{\mathcal W}_{{\rm dP}_4}\big)$, 
${\bf HLogAR}^2_{\rm sym}\big( \boldsymbol{\mathcal W}_{{\rm dP}_4}\big)$,  
${\bf HLogAR}^2_{\rm asym}\big( \boldsymbol{\mathcal W}_{{\rm dP}_4}\big)$ and 
${\bf HLogAR}^3\big( \boldsymbol{\mathcal W}_{{\rm dP}_4}\big)$. \mk
\item[] ${}^{}$ \hspace{-0.8cm} {\rm 2.}\,  As $W(D_5)$-modules, the following isomorphisms hold true:  \mk 
\begin{itemize}
\item[] {\rm [weight 1]} \quad${\bf HLogAR}^1\simeq V_{[2,21]}^{20}$;\mk 
\item[] {\rm [weight 2]} \quad ${\bf HLogAR}^2_{\rm asym} \simeq V_{[11,111]}^{10}$ \quad   and \quad 
${\bf HLogAR}^2_{\rm sym}\simeq V_{[.221]}^{5}$;\mk 
\item [] {\rm [weight 3]} \quad ${\bf HLogAR}^3=\big\langle \, {\bf HLog}^3\, \big\rangle
\simeq V_{[\cdot,1^5]}^{1}={\bf sign}_{D_5}$.\mk
\end{itemize}
In particular, all these subspaces of ARs are irreducible $W(D_5)$-modules.
\end{enumerate}
\end{prop}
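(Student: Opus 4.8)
The plan is to assemble the four isomorphisms as direct consequences of the representation-theoretic computations already prepared in the preceding subsections, so that the final Proposition becomes essentially a bookkeeping statement. First I would recall the framework of Lemma~\ref{Lem:yoyo}: for each weight $w$, the sequence
\begin{equation*}
0\to {\bf HLogAR}^w\to \oplus_{\boldsymbol{\mathfrak c}\in\boldsymbol{\mathcal K}}{\bf H}_{\boldsymbol{\mathfrak c}}^{\otimes w}\xrightarrow{\Phi^w}{\bf H}_{{\rm dP}_4}^{\otimes w}
\end{equation*}
is exact in the category of $W(D_5)$-representations, which immediately establishes part~1 of the statement: since $\Phi^w$ is $W$-equivariant, its kernel ${\bf HLogAR}^w$ inherits a natural $W(D_5)$-action, and likewise for the symmetric and antisymmetric summands in weight~2 (because $\mathfrak S_w$ commutes with the $W$-action and the Schur decomposition $M^{\otimes 2}=\wedge^2M\oplus M^{\odot 2}$ is a decomposition of $W$-modules).

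For part~2 the strategy is uniform across weights: compute both the source and the target of $\Phi^w$ (or of its symmetric/antisymmetric restriction) as explicit sums of $W(D_5)$-irreducibles, then identify ${\bf HLogAR}^w$ as the complement. The key input is that ${\bf H}_{{\rm dP}_4}\simeq V^{10}_{[2,3]}$ is irreducible (Corollary~\ref{Cor:H-Xr} together with Lemma~\ref{Lem:Hc1}.1), while the source $\oplus_{\boldsymbol{\mathfrak c}}{\bf H}_{\boldsymbol{\mathfrak c}}^{\otimes w}\simeq{\rm Ind}_{W_{\boldsymbol{\mathfrak c}_1}}^{W}\big({\bf H}_{\boldsymbol{\mathfrak c}_1}^{\otimes w}\big)$ is an induced representation whose decomposition is obtained by the GAP3 character computations cited in the Appendix. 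In weight~1, irreducibility of the target forces $\Phi^1$ to be surjective, and comparing $V^{20}_{[2,21]}\oplus V^{10}_{[2,3]}$ (source) with $V^{10}_{[2,3]}$ (target) in the diagram \eqref{Eq:HLogAR1-W-structure} yields ${\bf HLogAR}^1\simeq V^{20}_{[2,21]}$. In weight~2 I would treat the antisymmetric and symmetric parts separately, invoking the decompositions \eqref{Eq:AsymAR2}, \eqref{Eq:II-AsymAR2} and \eqref{Eq:III}: matching $V^{10}_{[11.111]}\oplus V^{20}_{[11.21]}$ against $V^{20}_{[11.21]}\oplus V^{10}_{[11.3]}\oplus V^{15}_{[1.31]}$ identifies ${\bf HLogAR}^2_{\rm asym}\simeq V^{10}_{[11.111]}$, and the analogous symmetric comparison gives ${\bf HLogAR}^2_{\rm sym}\simeq V^{5}_{[.221]}$.

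The weight~3 case is handled differently and more directly: rather than decomposing $\oplus_{\boldsymbol{\mathfrak c}}\wedge^3{\bf H}_{\boldsymbol{\mathfrak c}}$ in full, I would simply exhibit the explicit generator ${\bf HLog}^3=(\omega_{\boldsymbol{\mathfrak c}})_{\boldsymbol{\mathfrak c}\in\boldsymbol{\mathcal K}}$ constructed in \cite{CP,Hlog}, verify that its span is $W$-stable (which follows from the transitivity of the $W$-action on $\boldsymbol{\mathcal K}$ and the compatibility of the antisymmetrized symbols under the generators $s_i$), and compute the scalar by which a single reflection acts; this pins down the one-dimensional character as the signature ${\bf sign}_{D_5}=V^1_{[\cdot,1^5]}$, exactly as established in the cited references. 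The final remark that all four summands are irreducible is then immediate from the identifications above.

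The main obstacle I anticipate is not conceptual but computational and organizational: the whole argument rests on having the correct decompositions of the induced representations ${\rm Ind}_{W_{\boldsymbol{\mathfrak c}_1}}^{W}\big({\bf H}_{\boldsymbol{\mathfrak c}_1}^{\otimes w}\big)$ and of the tensor/exterior powers of $V^{10}_{[2,3]}$ into $W(D_5)$-irreducibles, and on the dimension counts matching so that exactly one irreducible constituent of the source fails to appear in the target. Getting the $W(D_4)$-labels right for ${\bf H}_{\boldsymbol{\mathfrak c}_1}\simeq V^3_{[2.2]^-}$ (with the correct sign on the $[2.2]$ bipartition, since $D_4$ has the two distinguished irreducibles $[2.2]^{\pm}$), and then tracking these through induction to $W(D_5)$, is where an error would most easily creep in; hence I would rely on the machine verification recorded in the Appendix to certify each branching/induction rule rather than attempting these by hand.
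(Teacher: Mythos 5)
Your proposal is correct and follows essentially the same route as the paper: the $W(D_5)$-action comes from the equivariant exact sequence of Lemma \ref{Lem:yoyo}, the weight 1 and weight 2 identifications come from comparing the GAP3 decompositions of ${\rm Ind}_{W_{\boldsymbol{\mathfrak c}_1}}^{W}\big({\bf H}_{\boldsymbol{\mathfrak c}_1}^{\otimes w}\big)$ (via ${\bf H}_{\boldsymbol{\mathfrak c}_1}\simeq V^3_{[2.2]^-}$) against those of ${\bf H}_{{\rm dP}_4}^{\otimes w}$, and the weight 3 case is delegated to the explicit $W$-stable generator of \cite{CP,Hlog} spanning the signature. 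The only point worth making explicit (which you acknowledge and the paper also uses implicitly) is that in weight 2 the matching of irreducibles alone does not determine whether $V^{20}_{[11.21]}$ lies in the kernel of $\Phi^2$; one must combine the representation theory with the previously computed dimensions $\dim {\bf HLogAR}^2_{\rm asym}=10$ and $\dim {\bf HLogAR}^2_{\rm sym}=5$ from \S\ref{SS:Symbolic-ARs-WdP4}.
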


 \subsection{\bf Subwebs and combinatorial characterization of ${\mathcal W} \hspace{-0.46cm}{\mathcal W}_{ {\rm dP}_4
 \hspace{-0.4cm}
  {\rm dP}_4}$}
 \label{SS:WdP4-Combinatorial-characterization}
The purpose of the present subsection is  
to give a description of the remarkable subwebs of 
$\boldsymbol{\mathcal W}_{ {\rm dP}_4}$ and to deduce from this a combinatorial characterization for this web similar to those stated about Bol's web and Burau's one $\boldsymbol{\mathcal W}_{ {\rm dP}_3}$
in \S\ref{SS:Combinatorial-characterization}. \sk

Because our considerations below are invariant up to (local analytic) equivalence, we are going to work with a simple and quite concrete model for $\boldsymbol{\mathcal W}_{ {\rm dP}_4}$, namely its birational model 
on the projective plane obtained as its direct image $b_*\big( \boldsymbol{\mathcal W}_{ {\rm dP}_4}\big) $ 
by the blow-up $b: {\rm dP}_4\rightarrow \mathbf P^2$ at five points in general position.  We denote by $p_1,\ldots,p_5$ these points and by $P$ their set: $P=\{p_1,\ldots,p_5\}$. Then for $i=1,\ldots, 5$, we set 

$-$ $\mathcal L_i$ for the pencil of lines of the plane passing through $p_i$;  and 

$-$ 
$\mathcal C_{\widehat \imath}$ for the pencil of conics passing through all the $p_k$'s except $p_i$.
\sk

Then we have 
$$
b_*\big( \boldsymbol{\mathcal W}_{ {\rm dP}_4}\big) =
\boldsymbol{\mathcal W}_P=
\boldsymbol{\mathcal W}\Big( 
\, \mathcal L_1\, , \, \ldots, \mathcal L_5\, , \, 
 \mathcal C_{\widehat 1} \, , \, \ldots\, , \,  \mathcal C_{\widehat 5}\, 
\Big)
$$
and this latter web is the planar model for  $\boldsymbol{\mathcal W}_{ {\rm dP}_4}$ we are going to work with in this subsection.

To describe in an efficient way some subwebs of $\boldsymbol{\mathcal W}_P$, it will be useful to use the following alternative notations for its foliations: for any $i=1,\ldots, 5$, we set
$$
\mathcal F_i^{+}=\mathcal L_i\qquad \mbox{and} 
\qquad \mathcal F_i^{-}= \mathcal C_{\widehat \imath} \, . 
$$
It will be also convenient to use the following notations: for $\underline{\epsilon}=(\epsilon_i)_{i=1}^5\in \{ \pm 1\}^5$, we will denote by $p(\underline{\epsilon})\in \{0,\ldots,5 \}$ the number of indices $i$ such that $\epsilon_i=+1$; and $I$ will stand for $\{ 1,\ldots,5 \}$. 
\begin{center}
$\star$
\end{center}

By direct (but lenghty) computations, it is not difficult to establish the following list of remarkable subwebs of 
$\boldsymbol{\mathcal W}_P$: 
\begin{itemize}
\item {\bf [Hexagonal 3-subwebs of $\boldsymbol{{\mathcal W}_{P}}$]}: 
the subwebs of  $\boldsymbol{{\mathcal W}}_{P}$ of this kind are exactly those of the form 
$$
\boldsymbol{\mathcal W}\Big(
\,  \mathcal F^\epsilon_i,
\mathcal F^\epsilon_j\, , \, 
\mathcal F^\epsilon_k\, \Big) \quad \mbox{ and } 
\quad 
\boldsymbol{\mathcal W}\Big(\, 
\mathcal F^\epsilon_i\, , \,
\mathcal F^\epsilon_j\, , \, 
\mathcal F^{-\epsilon}_k\, \Big)
$$
where  $i,j,k $  stand for pairwise distinct elements of $I$ and $\epsilon=\pm$.  
There are $2{ 5 \choose 3}=20$  subwebs of the first type and 
$2\big( { 5 \choose 2}\times 3)=60$ subwebs of the second kind. Thus there are as many 
associated abelian relations $LogAR^\epsilon_{ijk}$ and $LogAR^{\epsilon,-}_{ijk}$. By elementary linear algebra, one gets that there  are precisely 60 linear relations between these 80 ARs, hence it follows that the length 3  logarithmic ARs of $\boldsymbol{{\mathcal W}}_{P}$ span a subspace of $\boldsymbol{AR}^1(\boldsymbol{{\mathcal W}}_{P})$, of dimension 20. Since 
it is precisely the dimension of both
 $\boldsymbol{AR}^1(\boldsymbol{\mathcal W}_{ {\rm dP}_4})$ and  
$ \boldsymbol{AR}^1(\boldsymbol{\mathcal W}_{ P})$, these spaces coincide  hence  one has 
$$
\Big\langle\, LogAR^\epsilon_{ijk}\, , \, LogAR^{\epsilon,-}_{ijk}
\, \Big\rangle
=\boldsymbol{AR}_C\big(\boldsymbol{ \mathcal  W}_{P}\big)
={\bf HLogAR}^1\big(\boldsymbol{\mathcal  W}_{P}\big)\, .
$$
The description above of the hexagonal 3-subwebs of 
$\boldsymbol{\mathcal W}_P\simeq \boldsymbol{\mathcal W}_{ {\rm dP}_4}$
fully characterizes the map $r_{ \boldsymbol{\mathcal W}_P, 3} = 
r_{  \boldsymbol{\mathcal W}_{ {\rm dP}_4} , 3}
$. 
\bk
\item  {\bf [Hexagonal 4-subwebs of $\boldsymbol{{\mathcal W}_{P}}$]}: 
the subwebs of $\boldsymbol{{\mathcal W}_{P}}$ of 
this type are exactly those of the form 
$$
\boldsymbol{\mathcal W}\Big(\,\mathcal F_{i_1}^{\epsilon_1}\, , \, \ldots 
\, , \,
\mathcal F_{i_4}^{\epsilon_4}\, 
\Big)
$$
with $1\leq i_1<\ldots <i_4\leq 5$ and $\underline{\epsilon}=(\epsilon_i)_{i=1}^4\in \{ \pm 1\}^4$. 
\bk
\label{papage}
\item {\bf [Hexagonal 5-subwebs of $\boldsymbol{{\mathcal W}_{P}}$]}:   the subwebs of this type are exactly those of the form 
$$
\boldsymbol{\mathcal W}\Big(\,\mathcal F_1^{\epsilon_1}\, , \, \ldots 
\, , \,
\mathcal F_5^{\epsilon_5}\, 
\Big)
$$
with   $\underline{\epsilon}=(\epsilon_i)_{i=1}^5\in \{ \pm 1\}^5$. 
The ones which are hexagonal  and linearizable hence equivalent to five pencils of lines,  exactly are the ones with $p(\underline{\epsilon})$ even, all  the others (with  $p(\underline{\epsilon})$ odd) being equivalent to Bol's web.   \sk

From above, one immediately deduces the following fact which we will use later: for any hexagonal 5-subweb 
$
\boldsymbol{\mathcal W}\big(\,\mathcal F_i^{\epsilon_i}\, 
\big)_{i=1}^5
$, the complementary subweb $
\boldsymbol{\mathcal W}\big(\,\mathcal F_i^{-\epsilon_i}\,
\big)_{i=1}^5
$ is hexagonal too, but of the opposite type: if the former web is linearizable, then the latter is equivalent to Bol's web, and vice versa. 
\bk
\item Finally, it can be verified that $\boldsymbol{{\mathcal W}_{P}}$ does not admit any hexagonal $k$-subweb for $k\geq 6$. 
\end{itemize}

\newpage
In view of giving a combinatorial characterization of $\boldsymbol{\mathcal W}_{ {\rm dP}_4}$, let us introduce the following terminology: 
\begin{itemize}
\item 
given  a smooth quartic del Pezzo surface ${\rm dP}_4$, we denote by 
$\langle{\rm dP}_4  \rangle$ the set of all   unordered projective configurations 
$
[\{p_1,\ldots,p_5\}]\in {\rm Conf}_5(\mathbf P^2)/\mathfrak S_5$ such that ${\rm dP}_4\simeq {\bf Bl}_{p_1,\ldots,p_5}(\mathbf P^2)$. It is  
(classically?)  
 known that $\langle{\rm dP}_4  \rangle$ actually is a singleton (for a recent reference, see (2) of \cite[Theorem 2.1]{Hosoh} for instance).\footnote{We think it is worth mentioning that there is another way of obtaining $\langle{\rm dP}_4  \rangle$: the anticanonical image of ${\rm dP}_4$ in $\mathbf P^4$ is cut out by a pencil of quadrics $P_{{\rm dP}_4}$ among which exactly five are singular. The non-ordered projective configuration formed by these five singular quadrics as elements of 
$P_{{\rm dP}_4}\simeq \mathbf P^1$ 
coincides with $\langle{\rm dP}_4  \rangle$ (see \cite[\S5]{AM}).}
\sk
\item
given a 10-web $\boldsymbol{\mathcal W}$, one will say that  
$r_{ \boldsymbol{\mathcal W},3}$ is of del Pezzo's type if, possibly up to a relabelling of its foliations, one has 
$r_{\boldsymbol{\mathcal W},3}=r_{ \boldsymbol{\mathcal W}_{ {\rm dP}_4}, 3}$.
\end{itemize}
\begin{thm}
{\rm 1.} Let $\boldsymbol{\mathcal W}$ be a planar 10-web such that $r_{ \boldsymbol{\mathcal W}, 3}$ be 
of del Pezzo's type.
  Then there exists a quartic del Pezzo surface ${\rm dP}_4$ such that $\boldsymbol{\mathcal W} \simeq \boldsymbol{\mathcal W}_{ {\rm dP}_4}$. \sk 
  
  \noindent {\rm 2.}   Given another smooth quartic del Pezzo surface ${\rm dP}_4'$, 
the following assertions are equivalent: 
\begin{itemize}
\item[-] \vspace{-0.2cm}
  the two webs 
  $\boldsymbol{\mathcal W}_{ {\rm dP}_4}$ and $\boldsymbol{\mathcal W}_{ {\rm dP}_4'}$ are analytically equivalent (as unordered webs);
\item[-]  the two projective configurations  $\langle{\rm dP}_4  \rangle$ and $\langle{\rm dP}_4'  \rangle$ coincide: one has $\langle{\rm dP}_4  \rangle= \langle{\rm dP}_4'  \rangle$;
\item[-]  the two considered del Pezzo quartic surfaces are isomorphic: one has ${\rm dP}_4  \simeq {\rm dP}_4' $.
\end{itemize}
\vspace{-0.1cm}
Actually, the following stronger result holds true: any local analytic equivalence between the del Pezzo webs $\boldsymbol{\mathcal W}_{ {\rm dP}_4}$ and $\boldsymbol{\mathcal W}_{ {\rm dP}_4'}$ is the restriction of a global isomorphism between ${\rm dP}_4$ and ${\rm dP}_4'$. 
\end{thm}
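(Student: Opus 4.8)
The plan is to prove the theorem by combining a combinatorial reconstruction step (to recover the incidence structure of lines and conics from $r_{\boldsymbol{\mathcal W},3}$) with a rigidity step (to upgrade a local web equivalence to a global surface isomorphism). First I would establish part 1. Starting from a $10$-web $\boldsymbol{\mathcal W}$ whose function $r_{\boldsymbol{\mathcal W},3}$ is of del Pezzo type, I would use the explicit description of the hexagonal $3$-subwebs of $\boldsymbol{\mathcal W}_{{\rm dP}_4}$ obtained just above: the nonzero values of $r_{\boldsymbol{\mathcal W}_{{\rm dP}_4},3}$ occur exactly on the $20$ triples $\{\mathcal F_i^\epsilon,\mathcal F_j^\epsilon,\mathcal F_k^\epsilon\}$ and the $60$ triples $\{\mathcal F_i^\epsilon,\mathcal F_j^\epsilon,\mathcal F_k^{-\epsilon}\}$. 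The key observation is that this combinatorial datum encodes the $D_5$-indexing $i\mapsto\{\mathcal F_i^+,\mathcal F_i^-\}$ together with the sign pairing. Concretely, I would show that the relation ``$\mathcal F,\mathcal F'$ never lie together in a hexagonal $3$-subweb with a third common foliation of matching sign'' recovers the pairing of the ten foliations into the five unordered pairs $\{\mathcal L_i,\mathcal C_{\widehat\imath}\}$, and then that the hexagonality pattern reconstructs which foliations are the five line-pencils. This lets me realize the abstract incidence data as the line/conic configuration of a blow-up of $\mathbf P^2$ at five points, producing a candidate quartic del Pezzo surface ${\rm dP}_4$ with $\boldsymbol{\mathcal W}\simeq\boldsymbol{\mathcal W}_{{\rm dP}_4}$ as \emph{unordered} webs.

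Next I would handle the equivalences in part 2. The implications ${\rm dP}_4\simeq{\rm dP}_4'\Rightarrow\langle{\rm dP}_4\rangle=\langle{\rm dP}_4'\rangle\Rightarrow\boldsymbol{\mathcal W}_{{\rm dP}_4}\simeq\boldsymbol{\mathcal W}_{{\rm dP}_4'}$ are the easy direction: the first is precisely the statement that $\langle{\rm dP}_4\rangle$ is a singleton (cited from \cite{Hosoh}), and the second is tautological since the web is defined intrinsically from the conic pencils, which are determined by the blow-up data up to the projective automorphism carrying one configuration to the other. The substance is the remaining implication, that analytic equivalence of the webs forces ${\rm dP}_4\simeq{\rm dP}_4'$, which I would deduce from the stronger final rigidity assertion.

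For the rigidity statement, the plan is as follows. Let $\varphi$ be a local analytic equivalence between $\boldsymbol{\mathcal W}_{{\rm dP}_4}$ and $\boldsymbol{\mathcal W}_{{\rm dP}_4'}$, defined on some open set $\Omega$ in the regular locus $U_5$. Then $\varphi$ must permute the ten foliations, and by the combinatorial reconstruction of part 1 this permutation respects the incidence structure, hence is induced by an element of the Weyl group $W(D_5)$ acting on the line configuration. The line divisor $L_4=\cup_{\boldsymbol{\ell}}\ell$ is intrinsically attached to the web as the closure of its singular locus, so $\varphi$ sends (a piece of) $L_4$ to $L_4'$ compatibly. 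I would then argue that $\varphi$ extends: since each foliation of $\boldsymbol{\mathcal W}_{{\rm dP}_4}$ is a \emph{global} morphism $\varphi_{\boldsymbol{\mathfrak c}}\colon{\rm dP}_4\to\mathbf P^1$ with known singular fibers, and $\varphi$ matches these foliations with those of ${\rm dP}_4'$, the pair of first integrals $(\varphi_{\boldsymbol{\mathfrak c}},\varphi_{\boldsymbol{\mathfrak c}'})$ for two distinct classes gives local coordinates whose composition with $\varphi$ is rational; by the identity theorem for meromorphic functions, the locally defined $\varphi$ is the restriction of a birational self-correspondence that, respecting $-K$ (being built from conic classes, i.e.\ elements of $-2K$-pencils) and the exceptional configuration, is a genuine isomorphism ${\rm dP}_4\xrightarrow{\sim}{\rm dP}_4'$.

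The main obstacle I expect is the rigidity/extension step rather than the combinatorial bookkeeping. The delicate point is ruling out that the web equivalence $\varphi$ could mix foliations in a way not induced by a projective transformation of the underlying plane: a priori a local biholomorphism preserving the configuration of leaves need not a priori come from an automorphism of the surface. Overcoming this requires showing that the cross-ratio-type data carried by the four singular values of each conic fibration (the spectra $\Sigma_{\boldsymbol{\mathfrak c}}$, with $|\Sigma_{\boldsymbol{\mathfrak c}}|=r-1=4$) are preserved by $\varphi$ and that these cross-ratios rigidify the configuration $\{p_1,\dots,p_5\}$ up to $\mathrm{PGL}_3$; the key input here is that these spectra are exactly the moduli $\langle{\rm dP}_4\rangle$ detects, so that matching all ten spectra forces $\langle{\rm dP}_4\rangle=\langle{\rm dP}_4'\rangle$ and hence, via the singleton property, ${\rm dP}_4\simeq{\rm dP}_4'$ with $\varphi$ the restriction of the induced isomorphism.
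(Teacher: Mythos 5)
Your proposal has a genuine gap at its core, in part 1. The hypothesis only gives you the \emph{combinatorial} function $r_{\boldsymbol{\mathcal W},3}$ (which $3$-subwebs are hexagonal), and from this you propose to ``realize the abstract incidence data as the line/conic configuration of a blow-up of $\mathbf P^2$ at five points.'' But combinatorial incidence data cannot, by itself, produce a surface or an analytic equivalence; moreover the pattern of hexagonal $3$-subwebs (the $20$ triples $\{\mathcal F_i^\epsilon,\mathcal F_j^\epsilon,\mathcal F_k^\epsilon\}$ and the $60$ triples $\{\mathcal F_i^\epsilon,\mathcal F_j^\epsilon,\mathcal F_k^{-\epsilon}\}$) is invariant under the global swap $\mathcal F_i^+\leftrightarrow\mathcal F_i^-$, so your claim that ``the hexagonality pattern reconstructs which foliations are the five line-pencils'' is false as stated: that distinction is analytic, not combinatorial. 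What actually bridges the gap in the paper is a normal-form argument: Bol's theorem (a hexagonal $5$-web is either linearizable, hence five pencils of lines, or equivalent to Bol's web) applied to the hexagonal $5$-subweb $(\mathcal F_1,\ldots,\mathcal F_5)$, combined with the uniqueness of the linearization of a linear $d$-web for $d\geq 4$. In the resulting coordinates each remaining foliation is forced, one at a time, to be a pencil of lines or of conics (using that suitable hexagonal $5$-subwebs become linear after an explicit coordinate change such as $(x,y)\mapsto(1/x,1/y)$), and then the vanishing of the curvatures of the remaining hexagonal $3$-subwebs yields a polynomial system whose solutions exhibit $\boldsymbol{\mathcal W}$ explicitly as $\boldsymbol{\mathcal W}_P$ for five points $P$ in general position. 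One must also dispose of the case in which no $5$-subweb is of Bol type: there the web would be hexagonal, forcing $r_{\boldsymbol{\mathcal W},3}\equiv 1$, contradicting the hypothesis. None of these steps — which are the entire analytic content of part 1 — appears in your sketch.

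The rigidity step (end of part 2) is also not sound as written. From the fact that a local equivalence $\varphi$ matches foliations you only get relations of the form $\psi'_{\boldsymbol{\mathfrak c}}\circ\varphi=g_{\boldsymbol{\mathfrak c}}\circ\psi_{\boldsymbol{\mathfrak c}}$ where the $g_{\boldsymbol{\mathfrak c}}$ are a priori arbitrary local biholomorphisms between discs in $\mathbf P^1$; an ``identity theorem'' argument cannot upgrade $\varphi$ to a birational map unless you first know the $g_{\boldsymbol{\mathfrak c}}$ are M\"obius, which is precisely the point at issue. The paper obtains rigidity directly: $\varphi$ carries a linearizable hexagonal $5$-subweb of $\boldsymbol{\mathcal W}_{{\rm dP}_4}$ (whose unique linear model consists of the pencils of lines through the five points of $\langle{\rm dP}_4\rangle$) to the analogous subweb of $\boldsymbol{\mathcal W}_{{\rm dP}_4'}$, and since for $d\geq 4$ a linearizable $d$-web admits a unique linearization up to composition with an element of ${\rm PGL}_3(\mathbf C)$, the equivalence is projective in the linear models; this identifies $\langle{\rm dP}_4\rangle=\langle{\rm dP}_4'\rangle$ (hence ${\rm dP}_4\simeq{\rm dP}_4'$, by the singleton property you correctly cite) and simultaneously shows that $\varphi$ is the restriction of the induced global isomorphism. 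Your spectra/cross-ratio idea gestures in this direction, but you never supply the mechanism (uniqueness of linearization) that forces $\varphi$ to be projective in suitable coordinates.
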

\begin{proof} The proof is by a case analysis, our two main tools being 
first the fact that for any $d\geq 4$, a linearizable $d$-web admits only one linearization up to composition by projective transforms (see \cite[Cor.\,6.1.9]{PP}); and second, Bol's theorem giving a description of hexagonal webs 
(cf.\,page \pageref{Page:BolsTHM} above).

Let  $\boldsymbol{\mathcal W}=\big( {\mathcal F}_1,\ldots,{\mathcal F}_{10}\big)$ be as in the first point of the statement.  One can and will moreover assume that 
the two combinatorial functions $r_{ \boldsymbol{W},3}$ and $r_{ \boldsymbol{\mathcal W}_{ {\rm dP}_4}, 3}$ coincide. In particular, the 5-subweb $\boldsymbol{\mathcal H}_5=\big( {\mathcal F}_1,\ldots,{\mathcal F}_{5}\big)$ is hexagonal hence according to Bol's theorem, there are two possibilities: either $\boldsymbol{\mathcal H}_5$ is equivalent to a web formed by 5 pencils of lines or 
 $\boldsymbol{\mathcal H}_5$ is equivalent to Bol's web. 

Let us first assume that the second alternative occurs. Thus one can find and one fixes (a priori local) holomorphic coordinates $x$ and $y$ such that  $\boldsymbol{\mathcal H}_5=
\boldsymbol{\mathcal W}(U_1,\ldots,U_5)$ 
with $U_1= x$, $U_2 = y$, $U_3=x/y$, $U_4=(y-1)/(x-1)$ and $ U_5={x(y-1)}/{(y(x-1))}$.
One denotes by $V_1,\ldots,V_5$ five other first integrals such that $
\boldsymbol{\mathcal W}= \boldsymbol{\mathcal W}(U_1,\ldots,U_5,V_1,\ldots,V_5)$ and with the corresponding function 
$r_{\boldsymbol{\mathcal W},3}$ not only of del Pezzo's type but coinciding exactly with 
$r_{\boldsymbol{\mathcal W}_{ {\rm dP}_4} ,3}$. 
Considering the description of the hexagonal 5-subwebs of 
$\boldsymbol{\mathcal W}_{P}\simeq \boldsymbol{\mathcal W}_{ {\rm dP}_4}$ given above, it follows that the subweb $\boldsymbol{\mathcal W}(U_1,\ldots,U_4,V_5)= \boldsymbol{\mathcal W}(x ,  y  ,  {x}/{y}  , 
(y-1)/(x-1) ,V_5)$ is hexagonal too. Because $\boldsymbol{\mathcal W}(x ,  y  ,  {x}/{y}  , 
(y-1)/(x-1) $ is linear,  the foliation defined by $V_5$ is necessarily a pencil of lines  when considered in the  coordinates $x,y$.  Similarly, the subweb 
$\boldsymbol{\mathcal W}(U_1, U_2,U_3,U_5,V_4)= \boldsymbol{\mathcal W}(x ,  y    , 
x/y, {x(y-1)}/{y(x-1)} ,V_4)$ is hexagonal. In the coordinates $X,Y$ defined by $X=1/x$ and $Y=1/y$, its first four foliations become the pencils of lines whose some first integrals are the functions $X$, $Y$, $X/Y$ and $(X-1)/(Y-1)$.  Since the foliations $\mathcal F_{4}$ and $\mathcal F_{9}$ defined by $U_4$ and $V_4$ are distinct, the latter is a pencil of lines in the coordinates $X,Y$, from which it follows that $\mathcal F_{V_4}$ is a pencil of conics when considered in the initial coordinates $x,y$. Arguing similarly, one gets that 
in these coordinates, one has 
$$
\boldsymbol{\mathcal W}\bigg( \, x\, , \, y \, , \,  \frac{x}{y} \, , \,  
\frac{y-1}{x-1}
\, , \, 
\frac{x(y-1)}{y(x-1)}
\,  , \, 
V_1\,  , \, \ldots\,  , \,  V_5\, 
\bigg)
$$
with $\mathcal F_{V_k}$ being a pencil of conics for $k=1,\ldots,4$ and 
$\mathcal F_{V_5}$ a pencil of lines. 

From the description of hexagonal 3-subwebs of $\boldsymbol{\mathcal W}$ which corresponds to the one for $\boldsymbol{\mathcal W}_P$ given above, 
it follows that for any triple of indices $(i,j,k)$ such that $1\leq i<j<k\leq 5$, the 3-subwebs  $\boldsymbol{\mathcal W}(U_i,U_j,V_k)$, $\boldsymbol{\mathcal W}(U_i,V_j,V_k)$ and 
$\boldsymbol{\mathcal W}(V_i,V_j,V_k)$ are hexagonal or equivalently, their Blaschke-Dubourdieu's curvatures  vanish identically. These vanishings give several PDEs which have to be satisfied by the $V_i$'s which,  in turn,  give rise to many polynomial relations satisfied by the coefficients  of the pencils of conics/lines that the $V_i$'s are.  It is straightforward to solve the corresponding polynomial system (using a computer algebra system) to get that  the web under scrutiny is of the form 
$$\scalebox{1.2}{$
\boldsymbol{\mathcal W}
\bigg(\, x\,  , \, y\,  , \,  \frac{x}{y}\,  , \,   \frac{y -1}{x -1}\,  , \, 
\frac{x \left(y -1\right)}{y\left(x -1\right)}\,  , \, 
\frac{ \left(x -y \right)(y-\beta) }{\left(y -1\right) (\beta x -\alpha y )}
\,  , \, 
\frac{ \left(x -y \right) \left(x -\alpha \right)}{\left(x -1\right) (\beta x  -\alpha  y)}
\,  , \, 
\frac{\left(y -1\right) \left(x -\alpha \right) }{\left(x -1\right) \left(y -\beta  \right)}
\,  , \, 
\frac{y\left(x -\alpha  \right) }{x \left(y -\beta \right)}
\,  , \, 
 \frac{x -\alpha}{y-\beta }\, \bigg)
$} $$
for some complex parameters  $\alpha$ and  $\beta$.  In other terms, one has $\boldsymbol{\mathcal W}=\boldsymbol{\mathcal W}_P$ for  $P=\{p_1,\ldots,p_5\}$ with $p_1=[1:0:0], \ldots, p_4=[1:1:1]$ and $p_5=[\alpha: \beta:1]$. This proves the first point of the theorem when the initial hexagonal 5-subweb considered is assumed  not to be linearizable.
\sk

Let us now consider the case when $\boldsymbol{\mathcal W}$ does not admit any 5-subweb equivalent to Bol's web (otherwise the analysis just above would apply and give that $\boldsymbol{\mathcal W}=\boldsymbol{\mathcal W}_P$  for some set $P$ of 5 points on $\mathbf P^2$). Then  our initially considered hexagonal subweb $\boldsymbol{\mathcal H}_5$ as well as its complement $\boldsymbol{\mathcal H}_5^c$, which by definition is the 5-web formed by the foliations of $\boldsymbol{\mathcal W}$ which are not in 
$\boldsymbol{\mathcal H}_5$, both are equivalent to 5 pencils of lines.  Arguing as above, 
 one could endeavour to show
 that 
there exist affine coordinates 
with respect to which 
both $\boldsymbol{\mathcal H}_5$ and $\boldsymbol{\mathcal H}_5^c$ are formed by pencils of lines.  But this is not possible: it would imply that $\boldsymbol{\mathcal W}=
\boldsymbol{\mathcal H}_5\boxtimes \boldsymbol{\mathcal H}_5^c$ is hexagonal 
or equivalently  that one has $r_{ \boldsymbol{\mathcal W},3}\equiv 1$, contradicting 
the hypothesis $r_{ \boldsymbol{\mathcal W},3}= r_{\boldsymbol{\mathcal W}_{ {\rm dP}_4} , 3}$ since $r_{\boldsymbol{\mathcal W}_{ {\rm dP}_4} , 3}\not \equiv 1$.
%
%

Given a linearizable hexagonal 5-subweb $\boldsymbol{\mathcal H}_5$ of 
$\boldsymbol{\mathcal W}_{ {\rm dP}_4}$, it admits a linear model formed by the pencils of lines 
through five points $q_{\boldsymbol{\mathcal H}_5,1},\ldots,q_{\boldsymbol{\mathcal H}_5,5}$  in general position in $\mathbf P^2$. 
It is not difficult to verify that 
the unordered configuration 
$
\big\{
q_{\boldsymbol{\mathcal H}_5,i}\big\}_{i=1}^5\in {\rm Conf}_5(\mathbf P^2)/\mathfrak S_5$  coincides with $\langle{ {\rm dP}_4} \rangle$
which immediately implies the first part of 2. That any  any local analytic equivalence between  $\boldsymbol{\mathcal W}_{ {\rm dP}_4}$ and $\boldsymbol{\mathcal W}_{ {\rm dP}_4'}$ is the restriction of a global isomorphism between ${\rm dP}_4$ and ${\rm dP}_4'$ follows from the fact that any linearizable  5-web admits only one linearization up to composition with a projective transformation (details are left to the reader). This ends the proof of the theorem.
\end{proof}

 \subsection{\bf Algebraization of 
${\mathcal W} \hspace{-0.46cm}{\mathcal W}_{ {\rm dP}_4
 \hspace{-0.4cm}
  {\rm dP}_4}$}
 \label{SS:WdP4-algebraization}
In this subsection we explain how the algebraization results of 
\S\ref{S:Algebraization-WdP5} generalize to the web $\WdPq$.

 \subsubsection{Algebraization of $\WdPq$ via its combinatorial abelian relations}
 \label{SSS:Via-Combinatorial-ARs-WdP4}
We now specialize the material of \S\ref{SS:algebraization-via-AR-C} to the case of 
$\WdPq$. 

By direct computations, one can determine a basis of 
$\boldsymbol{AR}_C(\boldsymbol{\mathcal W}_{{\rm dP}_4})$ from which one can deduce that 
the logarithmic 1-forms  $h_i$'s defined in \eqref{Eq:hi} (with $i=1,\ldots,10$) form a basis of 
$\boldsymbol{ar}_C(\boldsymbol{\mathcal W}_{{\rm dP}_4})$. It is then straightforward, by 
direct computations again, to get the following 
 \begin{prop}
 \label{P:Caract-WdP4-via-AR-C}
A  10-web  $\boldsymbol{\mathcal W}$ defined on a domain of 
$\mathbf C^2$
is equivalent to a del Pezzo's web $\WdPq$
if and only if  all the three following statements hold true: 
\begin{enumerate}
\item[{\rm 1.}] 
\vspace{-0.1cm}
one has $r=\dim \, \big( \boldsymbol{ar}_C(\boldsymbol{\mathcal W}_{{\rm dP}_4})\big)=10$;\sk 
\item[{\rm 2.}]  the map $\Psi_{\boldsymbol{\mathcal W}} $ has values in a linear subspace $\mathbf PW\subset \mathbf P^{{10 \choose 2}-1}$ of dimension 24 and ${\rm Im}(\Psi_{\boldsymbol{\mathcal W}} ) =\Psi_{\boldsymbol{\mathcal W}} (U)$ is included in 
 the image of the triple anticanonical embedding of 
 a quartic del Pezzo surface ${\rm dP}_4$ in $\mathbf P^{24}$: one has 
 $\Psi_{\boldsymbol{\mathcal W}} (U)\subset {\rm Im}\big( \Phi_{\lvert -3K_{{\rm dP}_4}\lvert } \big)\subset \mathbf P^{24}$;
 \sk
\item[{\rm 3.}] the two push-forward webs 
$\big(\Psi_{\boldsymbol{\mathcal W}}\big)_*\big(\boldsymbol{\mathcal W}\big)
$ and $ \big(\Phi_{\lvert -3K_{{\rm dP}_4}\lvert}\big)_*\big(  \boldsymbol{\mathcal W}_{{\rm dP}_4} \big)$ coincide.
\end{enumerate} 
 \end{prop}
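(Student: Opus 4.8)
The plan is to mirror, step by step, the argument used for Bol's web in Corollary~\ref{C:33}, replacing the quintic del Pezzo surface and its anticanonical embedding by the quartic del Pezzo surface and its \emph{triple} anticanonical embedding. The key structural input is that the objects $\boldsymbol{ar}_C(\boldsymbol{\mathcal W})$ and $\Psi_{\boldsymbol{\mathcal W}}$ are invariantly attached to the web (cf.\,\eqref{Eq:FC} and the relation $\Psi_{\boldsymbol{\mathcal W}}\circ F=\Psi_{F^*(\boldsymbol{\mathcal W})}$ established in \S\ref{SS:algebraization-via-AR-C}), so it suffices to verify the three numbered conditions for the model web $\boldsymbol{\mathcal W}_{{\rm dP}_4}$ itself and then transport them along any equivalence. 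The ``only if'' direction is then immediate: if $\boldsymbol{\mathcal W}\simeq \boldsymbol{\mathcal W}_{{\rm dP}_4}$ via a biholomorphism $F$, the three conditions hold for $\boldsymbol{\mathcal W}_{{\rm dP}_4}$ by the computation below and pull back verbatim to $\boldsymbol{\mathcal W}$ by invariance.

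First I would establish condition~1 for the model. By the explicit description of the $20$-dimensional space $\boldsymbol{AR}_C(\boldsymbol{\mathcal W}_{{\rm dP}_4})={\bf HLogAR}^1$ obtained in \S\ref{SS:Symbolic-ARs-WdP4}, whose components are spanned by the logarithmic forms $h_i=d{\rm Log}(\mathscr L_i)$ of \eqref{Eq:hi}, and using the linear independence of the $h_i$'s (point $(ii)$ after \eqref{Eq:PG-pi-gamma}), one gets that $\boldsymbol{ar}_C(\boldsymbol{\mathcal W}_{{\rm dP}_4})=\langle h_1,\ldots,h_{10}\rangle$ has dimension $r=10$. This is the quartic analog of the identity $\boldsymbol{ar}_C(\boldsymbol{\mathcal X\mathcal W}_{A_2})=\langle \kappa_1,\ldots,\kappa_5\rangle$ from the proof of Proposition~\ref{P:Algeb-Bol-1}. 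Next I would identify $\boldsymbol{ar}_C(\boldsymbol{\mathcal W}_{{\rm dP}_4})$ with the space ${\bf H}_{X_5}={\bf H}^0\big(X_5,\Omega^1_{X_5}({\rm Log}\,L_5)\big)$ of logarithmic $1$-forms on the del Pezzo surface, via Corollary~\ref{Cor:H-Xr}; this is exactly the content of \cite[Lemma 2.1]{DFL} in the case $r=5$, which is cited in the excerpt as covering the quartic case. This identification is what makes $\boldsymbol{ar}_C$ globally defined on all of ${\rm dP}_4$ and feeds the projective-geometric step.

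For condition~2, the core computation is to show that the map $\Psi_{\boldsymbol{\mathcal W}_{{\rm dP}_4}}$, whose components are the $2\times 2$ minors of the matrix of coordinates of the $h_i$ (as in the explicit minor formula of \S\ref{SS:algebraization-via-AR-C}), factors through a $24$-dimensional linear subspace $\mathbf PW\simeq\mathbf P^{24}$ of $\mathbf P\big(\wedge^2\boldsymbol{ar}_C^\vee\big)\simeq\mathbf P^{44}$, and that the resulting map ${\rm dP}_4\to\mathbf P^{24}$ is the triple anticanonical embedding $\Phi_{\lvert -3K_{{\rm dP}_4}\rvert}$. The dimension count is $h^0(\mathcal O(-3K))=\tfrac12(9+3)\cdot 4+1=25$ from the Riemann--Roch formula recalled in \S\ref{SSS:Del-Pezzo-Properties} with $d=4$ and $m=3$, giving the target $\mathbf P^{24}$; this is why the \emph{triple} anticanonical system appears here, in contrast to the simple anticanonical embedding for ${\rm dP}_5$. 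I expect this identification of $\Psi$ with $\Phi_{\lvert -3K\rvert}$ to be the main obstacle, since one must check that the degree-$2$ (in the $h_i$) expressions cut out by the minors land exactly in the image of the triple-anticanonical Veronese-type map and recover the correct linear span; the natural route is to compute the symbol/Plücker-type relations satisfied by the minors and match them against the quadratic generators of the Cox ring of ${\rm dP}_4$ (the degree-$2$ relations mentioned in \S\ref{SSS:Cox-stuff}), which is the quartic counterpart of the reduction ``the second point follows from \cite[\S2.4]{DFL}'' used for Bol's web. Finally, condition~3 follows from the tautological compatibility: the web $\boldsymbol{\mathcal W}_{{\rm dP}_4}$ on ${\rm dP}_4$ is by construction the pencils-of-conics web, and $\Psi_{\boldsymbol{\mathcal W}_{{\rm dP}_4}}$ is an embedding onto (an open part of) $\Phi_{\lvert -3K\rvert}({\rm dP}_4)$ that carries the conic fibrations to the conic fibrations, so the two push-forward webs agree; the ``if'' direction then runs in reverse, using that conditions 1--3 force $\Psi_{\boldsymbol{\mathcal W}}$ to be an étale parametrization of a quartic del Pezzo surface whose induced web is $\WdPq$, hence $\boldsymbol{\mathcal W}\simeq\WdPq$ by invariance.
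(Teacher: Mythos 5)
Your overall skeleton is the same as the paper's: both arguments reduce, by the invariance of $\boldsymbol{ar}_C(\cdot)$ and of $\Psi_{(\cdot)}$ under equivalences, to checking the three conditions on the explicit model \eqref{Eq:WdP4-Ui}, and both begin by verifying that the ten logarithmic forms $h_i$ of \eqref{Eq:hi} form a basis of $\boldsymbol{ar}_C\big(\boldsymbol{\mathcal W}_{{\rm dP}_4}\big)$, giving $r=10$. Your Riemann--Roch count $h^0\big(\mathcal O(-3K_{{\rm dP}_4})\big)=25$ is also the correct explanation of why $\mathbf P^{24}$ and the \emph{triple} anticanonical system enter.

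Where you diverge is precisely at the step you flag as ``the main obstacle'' and do not carry out. The paper's mechanism there is elementary and direct: writing $h_i=d\mathscr L_i/\mathscr L_i$, each component $h_i^1h_j^2-h_i^2h_j^1$ of $\Psi_{\boldsymbol{\mathcal W}}$ is a rational function which, after clearing the common denominator $\prod_k\mathscr L_k$, becomes a polynomial of degree nine vanishing to order three at each of the five points $p_i$ of \eqref{Eq:points-pi}. Hence $\Psi_{\boldsymbol{\mathcal W}}$ is induced by the linear system $\mathfrak L$ of degree-nine plane curves with triple points at the $p_i$'s, and $\mathfrak L$ is exactly the push-forward $b_*\lvert -3K_{{\rm dP}_4}\lvert$ under the blow-up map; this single observation yields conditions 2 and 3 simultaneously. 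Your proposed detour --- matching Pl\"ucker-type relations among the minors against the degree-two generators of the Cox ring of ${\rm dP}_4$ --- addresses a different question (equations for the \emph{image} of the embedding), not the identification of $\Psi_{\boldsymbol{\mathcal W}}$ with $\Phi_{\lvert -3K_{{\rm dP}_4}\lvert}$, so as written the crux of condition 2 remains unproved in your text. A further small slip: \cite[Lemma 2.1]{DFL} concerns the case $r=4$, i.e.\ the \emph{quintic} surface ${\rm dP}_5$ (it is what the paper invokes for Bol's web in Proposition \ref{P:Algeb-Bol-1}), not the quartic case you attribute it to; for ${\rm dP}_4$ the corresponding facts are established by direct computation.
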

%
%
%
\begin{proof}[Proof\,(rough sketch)]
It suffices to verify that the three points of the Proposition indeed hold true when 
the 10-web under scrutiny is equivalent to $\boldsymbol{\mathcal W}_{{\rm dP}_4}$. 
For such a web, we take the web  $\boldsymbol{\mathcal W}$ on $\mathbf C^2\setminus A_{\mathscr L}$ defined by the 
10 rational functions in \eqref{Eq:WdP4-Ui}. Then it is just a matter of computation to verify that 
the 
logarithmic 1-forms $h_i$ defined in \eqref{Eq:hi} (with $i=1,\ldots,10$)  form a basis of 
$\boldsymbol{ar}_C(\boldsymbol{\mathcal W})$, from which one gets that the map $\Psi_{\boldsymbol{\mathcal W}}$ is rational and induced by the linear system $\mathfrak L
$ of plane curves of degree nine having a point of multiplicity 3 at each of the points $p_i$ (cf.\,\ref{Eq:points-pi}).  The Proposition follows from the fact that $\mathfrak L$  precisely is the push-forward of $\lvert -3K_{{\rm dP}_4}\lvert $
by the blow-up map $b: {\rm dP}_4\rightarrow \mathbf P^2$ at the $p_i$'s
\end{proof}


\subsubsection{Algebraization of $\WdPq$ via a canonical map $\Phi_{{\mathcal W}}$}
 \label{SSS:Via-Canonical-Map-WdP4}
In order to generalize Proposition \ref{P:Algeb-Bol-2} to $\WdPq$, the most direct attempt 
would be 
to 
consider the associated full canonical map $\Phi_{\WdPq}$ which takes values into the 
7-dimensional moduli space $\boldsymbol{\mathcal M}_{10}$.  But it can be verified that 
the pull-back of the 210-web $\boldsymbol{\mathcal W}_{\hspace{-0.1cm}\boldsymbol{\mathcal M}_{0,10}}$ under $\Phi_{\WdPq}$ is a 80-web, hence is a web formed by a relatively important number of foliations. Therefore, it is possibly not the most convenient web to consider in view of stating a criterion characterizing $\WdPq$ as a  10-web. 

Instead of dealing with $\Phi_{\WdPq}$, we are going to consider the canonical map 
$\Phi_{\boldsymbol{\mathcal H}_5}$ associated to a fixed (but otherwise arbitrary) hexagonal 5-subweb $\boldsymbol{\mathcal H}_5$ of $\WdPq$.   By elementary (but lengthy) computations, one verifies that  $ \boldsymbol{\mathcal W}_{{\rm dP}_4}$ is the 10-web obtained by taking the juxtaposition 
of $\boldsymbol{\mathcal H}_5$ with the pull-back of $ \boldsymbol{\mathcal W}_{\hspace{-0.1cm}\boldsymbol{\mathcal M}_{0,5}}$ under $\Phi_{\boldsymbol{\mathcal H}_5}$: 
 one has 
$$
 \boldsymbol{\mathcal W}_{{\rm dP}_4}= \boldsymbol{\mathcal H}_5 \boxtimes 
\Phi_{\boldsymbol{\mathcal H}_5}^*\big( \boldsymbol{\mathcal W}_{\hspace{-0.1cm}\boldsymbol{\mathcal M}_{0,5}}\big)\, .
%
%
%
$$

This immediately gives us the following result which has to be seen as an invariant criterion characterizing del Pezzo's web $\WdPq$: 
\begin{prop}
\label{P:Algeb-WdP4-2} 
Let $\boldsymbol{\mathcal W}$ be a planar 10-web. The three following properties are equivalent: 
\begin{enumerate}
\item[{\rm 1.}] 
\vspace{-0.1cm} the web 
$\boldsymbol{\mathcal W}$ is equivalent to a del Pezzo's web $ \boldsymbol{\mathcal W}_{{\rm dP}_4}$;
\sk 
\item[{\rm 2.}]
there exists a hexagonal 5-subweb $\boldsymbol{\mathcal H}_5\subset \boldsymbol{\mathcal W}$ such that 
\begin{equation}
\label{Eq:tokul}
\boldsymbol{\mathcal W}= \boldsymbol{\mathcal H}_5 \boxtimes 
\Phi_{\boldsymbol{\mathcal H}_5}^*\big( \boldsymbol{\mathcal W}_{\hspace{-0.1cm}\boldsymbol{\mathcal M}_{0,5}}\big)\, ; 
\end{equation}
\item[{\rm 3.}] $\boldsymbol{\mathcal W}$ admits a hexagonal 5-subweb and 
\eqref{Eq:tokul} holds true 
for any such subweb 
$\boldsymbol{\mathcal H}_5$.
\end{enumerate}
\end{prop}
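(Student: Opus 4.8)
The plan is to establish the three equivalences by proving $1 \Rightarrow 3 \Rightarrow 2 \Rightarrow 1$, so that the substantive content sits in the implication $1 \Rightarrow 3$ (which must hold for \emph{every} hexagonal 5-subweb), while $3 \Rightarrow 2$ is trivial once one knows $\boldsymbol{\mathcal W}_{{\rm dP}_4}$ possesses at least one hexagonal 5-subweb, and $2 \Rightarrow 1$ will follow from the combinatorial characterization already proved in \S\ref{SS:WdP4-Combinatorial-characterization}.

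For the key implication $1 \Rightarrow 3$, I would work with the explicit planar model $\boldsymbol{\mathcal W}_P = b_*(\boldsymbol{\mathcal W}_{{\rm dP}_4})$ and exploit the classification of hexagonal 5-subwebs established on page \pageref{papage}: these are exactly the subwebs $\boldsymbol{\mathcal W}(\mathcal F_1^{\epsilon_1},\ldots,\mathcal F_5^{\epsilon_5})$ for $\underline{\epsilon}\in\{\pm1\}^5$. Fix one such $\boldsymbol{\mathcal H}_5$; by the symmetry of the construction under the Weyl group action one may assume (up to relabelling) that $\boldsymbol{\mathcal H}_5 = \boldsymbol{\mathcal W}(\mathcal F_1^+,\ldots,\mathcal F_5^+)=\boldsymbol{\mathcal W}(\mathcal L_1,\ldots,\mathcal L_5)$, the five pencils of lines. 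The heart of the argument is then to compute the canonical map $\Phi_{\boldsymbol{\mathcal H}_5}=[\zeta_{U_i}]_{i=1}^5$ of \eqref{Eq:Phi-W} explicitly, and to identify its composition with the forgetful webs $\boldsymbol{\mathcal W}_{\hspace{-0.1cm}\boldsymbol{\mathcal M}_{0,5}}$. The point is that for a pencil of lines through $p_i$, the slope function $\zeta_{\mathcal L_i}$ is precisely the coordinate on $\mathbf P^1=\mathbf P(T_x\mathbf P^2)$ recording the direction towards $p_i$, so $\Phi_{\boldsymbol{\mathcal H}_5}(x)$ is the configuration of the five directions $[x,p_1],\ldots,[x,p_5]$ viewed in $\mathbf P^1$. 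One then checks, by a direct cross-ratio computation, that the five forgetful maps $f_I\circ\Phi_{\boldsymbol{\mathcal H}_5}$ recover exactly the five remaining first integrals of $\boldsymbol{\mathcal W}_P$ — namely the conic pencils $\mathcal C_{\widehat\imath}=\mathcal F_i^-$. This yields \eqref{Eq:tokul} in the form $\boldsymbol{\mathcal W}_P = \boldsymbol{\mathcal H}_5\boxtimes\Phi_{\boldsymbol{\mathcal H}_5}^*(\boldsymbol{\mathcal W}_{\hspace{-0.1cm}\boldsymbol{\mathcal M}_{0,5}})$, and invariance of $\Phi_{\boldsymbol{\mathcal W}}$ under biholomorphisms transports it to any web equivalent to $\boldsymbol{\mathcal W}_{{\rm dP}_4}$. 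The case where $\boldsymbol{\mathcal H}_5$ is a Bol-type (non-linearizable) hexagonal 5-subweb is handled symmetrically: by the complementarity observation on page \pageref{papage}, its complement is a pencil-of-lines web, and one applies the same cross-ratio identification after the relevant change of coordinates, or more cleanly invokes the $W(D_5)$-symmetry to reduce to the linear case.

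For $2\Rightarrow 1$, I would argue that the decomposition \eqref{Eq:tokul} forces the combinatorial structure of $\boldsymbol{\mathcal W}$ to match that of $\boldsymbol{\mathcal W}_{{\rm dP}_4}$. Indeed, $\Phi_{\boldsymbol{\mathcal H}_5}^*(\boldsymbol{\mathcal W}_{\hspace{-0.1cm}\boldsymbol{\mathcal M}_{0,5}})$ is a 5-web whose hexagonal 3-subweb pattern is dictated by that of $\boldsymbol{\mathcal W}_{\hspace{-0.1cm}\boldsymbol{\mathcal M}_{0,5}}$ (Bol's web), and juxtaposing with the hexagonal $\boldsymbol{\mathcal H}_5$ reproduces the function $r_{\boldsymbol{\mathcal W},3}$ of del Pezzo's type; one then invokes the characterization Theorem of \S\ref{SS:WdP4-Combinatorial-characterization} to conclude $\boldsymbol{\mathcal W}\simeq\boldsymbol{\mathcal W}_{{\rm dP}_4'}$ for some quartic del Pezzo surface.

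The main obstacle I anticipate is the explicit verification that the cross-ratios $f_I\circ\Phi_{\boldsymbol{\mathcal H}_5}$ of the five slopes reproduce precisely the conic first integrals $U_i$ of \eqref{Eq:WdP4-Ui}, rather than merely \emph{some} 5-web equivalent to the conic pencils. This is a genuine computation (five cross-ratios of five slope functions) and the bookkeeping of which forgetful index $I$ matches which conic class $\mathcal C_{\widehat\imath}$ requires care; it is exactly analogous to the computation carried out for Bol's web in \S\ref{SSS:Via-Canonical-Map-WdP5}, where $\kappa(\zeta_{{\rm A}_2}(\ell))$ was shown to equal $X_\ell$ up to the affine normalization $X_\ell\leftrightarrow -(1+X_\ell)$. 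I expect the same type of harmless affine reparametrization of each $\mathbf P^1$-factor to appear here, and the real work is confirming it does not disturb the \emph{unordered} web equality \eqref{Eq:tokul}.
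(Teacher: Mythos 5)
Your overall architecture (the cycle $1 \Rightarrow 3 \Rightarrow 2 \Rightarrow 1$, with the weight carried by an explicit slope/cross-ratio computation for $1\Rightarrow 3$) coincides with the paper's, and your computation in the linearizable case is essentially the verification the paper refers to. However, your handling of the Bol-type hexagonal 5-subwebs is wrong, because what you are trying to prove for them is false. If $\boldsymbol{\mathcal H}_5\subset \boldsymbol{\mathcal W}_{{\rm dP}_4}$ is hexagonal and equivalent to Bol's web, then Proposition \ref{P:Algeb-Bol-2} gives $\Phi_{\boldsymbol{\mathcal H}_5}^*\big(\boldsymbol{\mathcal W}_{\hspace{-0.1cm}\boldsymbol{\mathcal M}_{0,5}}\big)=\boldsymbol{\mathcal H}_5$ itself, so the right-hand side of \eqref{Eq:tokul} is a 5-web, not the 10-web $\boldsymbol{\mathcal W}$: the identity \eqref{Eq:tokul} fails for \emph{every} Bol-type subweb. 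It can only hold for linearizable hexagonal 5-subwebs whose vertices are in general position; this is precisely fact (1) in the paper's proof of $2\Rightarrow 1$, and it is why statement {\bf 5} of \S\ref{S:Web-WdP4-properties} quantifies only over \emph{linearizable} hexagonal 5-subwebs — which is how conditions 2 and 3 of the proposition must be read. Your proposed rescue by symmetry cannot work either: no equivalence of webs carries a non-linearizable 5-web to a linearizable one, and in any case $W(D_5)$ does not act on ${\rm dP}_4$ by automorphisms — only the subgroup $(\mathbf Z/2\mathbf Z)^4$ does (Remark \ref{Rem:rem-rem}.1), and it preserves each of the two classes of hexagonal 5-subwebs described on page \pageref{papage}.

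Your $2\Rightarrow 1$ also diverges from the paper's and has a gap at its pivotal step. The paper argues directly from two computed facts: (1) if the juxtaposition $\boldsymbol{\mathcal H}_5\boxtimes\Phi_{\boldsymbol{\mathcal H}_5}^*\big(\boldsymbol{\mathcal W}_{\hspace{-0.1cm}\boldsymbol{\mathcal M}_{0,5}}\big)$ is a genuine 10-web at all, then $\boldsymbol{\mathcal H}_5$ is linearizable to five pencils of lines with vertices $v_1,\ldots,v_5$ in general position; (2) in that case the pullback foliations are exactly the five pencils of conics through four of the $v_i$'s, so the juxtaposition is the blow-up model of the del Pezzo web of ${\bf Bl}_{v_1,\ldots,v_5}(\mathbf P^2)$ — equivalence with a del Pezzo web follows at once, with no appeal to the characterization theorem. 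You instead assert that \eqref{Eq:tokul} forces $r_{\boldsymbol{\mathcal W},3}$ to be of del Pezzo type and then invoke the theorem of \S\ref{SS:WdP4-Combinatorial-characterization}. That assertion is unproven and cannot be obtained abstractly: the hexagonality of the \emph{mixed} 3-subwebs (foliations taken from both $\boldsymbol{\mathcal H}_5$ and the pullback) is not determined by the hexagonality patterns of the two 5-subwebs separately; verifying it requires identifying the pullback foliations explicitly, i.e., proving fact (2), after which the combinatorial characterization is redundant. Moreover, you never establish from condition 2 alone that $\boldsymbol{\mathcal H}_5$ is linearizable with a non-degenerate vertex configuration — the prerequisite for setting up any such computation, and exactly what the paper's fact (1) supplies.
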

\begin{proof}
As mentioned before, that 1.\,implies 3 
can be verified by straightforward computations. Since 3.\,$\Rightarrow$ 1.\,is obvious, it suffices to prove that 2.\,implies 1. But this follows from the following two facts, which can be established by elementary computations as well: (1)  a hexagonal 5-web $\boldsymbol H$ is such that $\boldsymbol{\mathcal H} \boxtimes 
\Phi_{\boldsymbol{\mathcal H}}^*\big( \boldsymbol{\mathcal W}_{\hspace{-0.1cm}\boldsymbol{\mathcal M}_{0,5}}\big)$ is a 10-web if and only if there exists a local biholomorphism 
$\varphi$ such that ${\bf H}=\varphi_*(\boldsymbol H)$ is the linear web formed by  five pencils of lines whose vertices $v_1,\ldots,v_5$ are five points in general position in $\mathbf P^2$; (2) in this case $\boldsymbol{{\bf H}} \boxtimes 
\Phi_{{\bf H}}^*\big( \boldsymbol{\mathcal W}_{\hspace{-0.1cm}\boldsymbol{\mathcal M}_{0,5}}\big)$ is the 10-web formed by ${\bf H}$ together with the five pencils of conics passing through all the $v_i$'s except one, that is $\boldsymbol{{\bf H}} \boxtimes 
\Phi_{{\bf H}}^*\big( \boldsymbol{\mathcal W}_{\hspace{-0.1cm}\boldsymbol{\mathcal M}_{0,5}}\big)$ is the direct image by the blow-up map of the del Pezzo's web on the del Pezzo quartic surface 
${\rm dP}_4={\bf Bl}_{v_1,\ldots,v_5}(\mathbf P^2)$. 
\end{proof}

As in the case of Bol's web ({\it cf.}\,the end of \S\ref{SSS:Via-Canonical-Map-WdP5}), an effective criterion can be deduced from the above proposition for
 a web $\boldsymbol{\mathcal W}(U_1,\ldots,U_d)$ to be equivalent to a del Pezzo web $\WdPq$ (the explicit statement of this criterion being a bit involved, it is left to the interested reader). 
\mk 

To end this section, we mention that several preliminary experiments led us to realize that one can obtain many webs carrying interesting ARs (in particular exceptional webs) under the form 
$$ \boldsymbol{\mathcal H}_k \boxtimes 
\Phi_{\boldsymbol{\mathcal H}_k}^*\big( \boldsymbol{\mathcal W}_{\hspace{-0.1cm}\boldsymbol{\mathcal M}_{0,k}}\big)
$$
where $\boldsymbol{\mathcal H}_k$ is ranging the space of planar hexagonal 
$k$-webs with $k\geq 4$.  See \S\ref{SS:H-Phi-H(WM0N)} at the end where some interesting cases are considered. We believe that there 
is much more to be understood regarding the notion of canonical map $\Phi_{\boldsymbol{W}}$ of a web  $\boldsymbol{W}$, especially when the latter is hexagonal. We hope to investigate this further in a future work.

 \subsection{\bf Geometric construction of 
${\mathcal W} \hspace{-0.46cm}{\mathcal W}_{ {\rm dP}_4
 \hspace{-0.4cm}
  {\rm dP}_4}$ 
  \textit{\textbf{\`a la}} Gelfand-MacPherson}
 \label{SS:WdP4-GM}
This subsection is devoted to establishing that 
there is a geometric construction of $\boldsymbol{\mathcal W}_{ {\rm dP}_4}$ which is similar to 
Gelfand-MacPherson's 
one  of Bol's web $\boldsymbol{\mathcal B}\simeq \boldsymbol{\mathcal W}_{ {\rm dP}_5}$ recalled above  in \S\ref{SS:Gelfand-MacPherson--construction-WdP5}.  
However the case of $\boldsymbol{\mathcal W}_{ {\rm dP}_4}$ is more involved than the latter one and we need to  introduce some material and recall some results about it first.

In the first subsection below, building on some work  by  Gelfand and some of its collaborators, 
for any simple Lie group $G$, we sketch a general notion of Gelfand-MacPherson web on any rational homogeneous space $G/P$ equivariantly embedded in the projectivization 
$\mathbf PV_\omega$ 
of an irreducible $G$-representation $V_\omega$.  Since we will only consider a specific case in what follows, we are very sketchy and do not give many details (nor proofs). 
\sk

In the next subsection, after having recalled some results about the embedding of the Cox variety of del Pezzo surfaces into certain homogeneous spaces, 
we explain how the web  $\WdPq$ can be obtained from 
Gelfand-MacPherson's web on the tenfold spinor variety $\mathbb S_{5}\subset \mathbf P^{15}$. 

 \subsubsection{Webs associated to moment polytopes of projective homogeneous spaces}
 \label{SSS:GM-webs}
 The material presented below is taken from \cite{GS}.  The case of type $A$ (which corresponds to that of usual grassmannian manifolds) has been considered before by Gelfand and MacPherson in  \cite{GM}.  We will also use freely some results of  
\cite{Atiyah} and  
\cite[\S3]{Vinberg}.  The presentation below is quite succinct, 
more details and proofs will be provided in the paper in preparation \cite{PirioGMWebs}. 

Let $D$ be a connected Dynkin diagram of rank $r$ and  $G=G(D)$  a  simple Lie group of type $D$. We fix a Cartan torus $H$ with Lie algebra $\mathfrak h$ and set of roots $\Pi\subset \mathfrak h^*_{\mathbf R}$ and choose a subset of positive roots $\Pi^+$ which in turn defines the associated subset of 
simple roots $\Phi=\{ \alpha_1,\ldots,\alpha_r\}\subset \Pi^+ $ which is naturally in bijection with the nodes of $D$. We denote by  $\omega_1,\ldots,\omega_r$ the fundamental weights of the considered root system.  We fix a standard parabolic subgroup $P\subset G$  and we set $I(P)$ for the subset of $\{1,\ldots,r\}$ whose elements are the indices $i$  such that $\alpha_i$ is also a root of $P$.  Then $\omega=\omega(P)=\sum_{i\in I(P)} \omega_i$ is a dominant weight and we denote by 
$V=V_\omega$ the $G$-module such that the representation $G\rightarrow {\rm GL}(V_\omega)$ is irreducible with highest weight $\omega$.  
One has ${\rm Stab}_G(e_\omega)=P$ 
for a highest weight vector $e_\omega\in V_\omega$, hence we get an 
 $G$-equivariant  embedding $\rho=\rho_\omega: G/P\hookrightarrow \mathbf PV_\omega$.  The image $X=\rho(G/P)$ is a $G$-homogeneous projective variety which is the projectivization of the $G$-orbit of the  highest-weight vector $e_\omega$.  Let  $\mathfrak W$ be the set of weights in $V$, their multiplicities being taken into account, and let $(e_w)_{w\in  \mathfrak W}$ be a weight basis of $V$. Then for 
   $x\in G/P$, there exists  
a $\mathfrak W$-tuple of complex numbers 
$(p^w(x))_{w\in \mathfrak W}\in \mathbf C^{\mathfrak W}$, unique up to a non-zero rescaling, such that $$\rho(x)=\bigg[\sum_{ w \in \mathfrak W} p^w(x)e_w \bigg]\in \mathbf PV$$ where $\big[\cdot \big] : V\setminus \{0\}\rightarrow \mathbf P V$ stands for the projectivization map. The 
$p^w(x)$'s for $w\in \mathfrak W$ are  called 
the  `{\it generalized Pl\"ucker coordinates}' of $x$. 
\sk

Let $W=N_G(H)/H$ be the associated Weyl group, which acts orthogonally on $\mathfrak h^*_{\mathbf R}$ leaving invariant the considered root lattice. 
The $W$-orbit $W\cdot \omega$ of the highest weight  
is included in $\mathfrak W$ and the points of $W\cdot \omega$ are the vertices of the so-called {\it weight}  or {\it moment polytope}
$$\Delta=\Delta_{D,\omega}\subset \mathfrak h^*_{\mathbf R}\simeq  \mathbf R^r\,.$$
 The elements of $\mathfrak W\setminus W \hspace{-0.04cm} \cdot \hspace{-0.04cm}\omega$ all lie inside $\Delta$.    In the two cases we are interested in in this paper, 
 the Weyl group $W$ acts transitively on $\mathfrak W$, that is the representation $V_\omega$ is `{\it minuscule}'. From now on, 
 essentially to simplify the exposition below, 
 we assume that $V_\omega$ is of this kind.  This implies in particular that the considered highest weight $\omega=\omega(P)$  is one of the fundamental weights $\omega_i$ hence corresponds to one of the nodes of the Dynkin diagram $D$. Equivalently, $P$ is one of the $r$ maximal standard parabolic subgroups of $G$.
 To exclude trivial situations, we also require that the dimension of $H$ is strictly less than the one of $G/P$, this in order that quotienting the latter space by the action of $H$ gives rise  to a positive dimensional quotient   (in some sense to be made precise). This amounts to leave aside the minuscule pairs of type $(A_n,\omega_1)$
 and $(A_n,\omega_n)$ for which one has $X=G/P\simeq \mathbf P^n$ which is such that $X/H$ is a point. \sk

By definition, the associated  {\it moment map} $\mu=\mu_X=\mu_{D,\omega}$ is the map 
\begin{align*}
\mu : X & \longrightarrow \mathfrak h^*_{\mathbf R}\,,\\
\hspace{0.15cm} 
x &  \longmapsto   \frac{\sum_{ w\in \mathfrak W} \,\lvert 
\,{p}^w(x)\,\lvert^2\,w}{  \sum_{ w\in \mathfrak W}\, \lvert 
\,{p}^w(x)\,\lvert^2}\, .
\end{align*}
It is a real-analytic map 
which 
satisfies nice convexity properties. 
Let   $H_{>0}\simeq (\mathbf R_{>0})^r$ stand for the connected component 
of the identity of the split real form $H(\mathbf R)$ of the Cartan torus $H$. 
Then  the following hold: 
\begin{itemize}
\item 
\vspace{-0.15cm}
one has $\mu(X)=\Delta$; 
\sk 
\item $X^*=\mu^{-1}(\mathring{\Delta})$ is a  Zariski-open subset of $X$ on which the moment map is regular (a real-analytic submersion of rank $r$ at any $x^*\in X^*$); 
\sk 
\item  for any $x^*\in X^*$, $\mu$ induces a real-analytic isomorphism between the real orbit 
$H_{>0}\!\cdot\! x^*$ and the interior $\mathring{\Delta}$ of the weight polytope, which extends to an isomorphism of real analytic varieties with corners  to the closures: $\mu : \overline{H_{>0}\!\cdot\! x^*}\stackrel{\sim}{\rightarrow} \Delta$;
\sk 
\item for $\sigma\in N_G(H)$, let $[\sigma]$ be the corresponding element in $N(H)/H\simeq W$. 
Then the vertex $[\sigma]\cdot \omega$ of $\Delta$ is the image by $\mu$ of the point of $X$ corresponding to the coset  $\sigma\cdot P$ viewed as a point of $G/P$
\sk 
\item the $H$-action on $X^*$ \textcolor{red}{is sufficiently nice for} $\boldsymbol{\mathcal Y}^*=X^* /H$ to be a smooth quasi-projective manifold with the quotient map $\chi_H: X^*\rightarrow \boldsymbol{\mathcal Y}^*$ being a $H$-torsor. Moreover, the quotient $\boldsymbol{\mathcal Y}^*$ is rational.\footnote{This has been explained to us by N. Perrin.}
\sk 
\end{itemize}
\begin{rem}
The quotient $\boldsymbol{\mathcal Y}^*=X^*/H$ is not the most natural one to be considered from the point of view of geometric invariant theory.  Indeed, in \cite{SerganovaSkorobogatov}, the authors show that in some cases (which include in particular the `minuscule cases' we are interested in), the set $X^{s}$ of 
$H$-stable  points of $X$ contains all the $x\in X$ for which there exists at most one weight $w$ such that  ${p}^w(x)=0$. Setting $X^{sf}=X^s\cap X^f$ where 
$X^f$ stands for the Zariski open subset of points $x\in X$ such that ${\rm Stab}_H(x)=Z(G)$, one has ${\rm codim}(X\setminus X^{sf})\geq 2$ and 
$X^*$ is strictly contained in $X^{sf}$.  The quotient $\boldsymbol{\mathcal Y}=X^{sf}/H$ 
is a smooth quasi projective variety 
 containing $\boldsymbol{\mathcal Y}^*$ as a Zariski open subset and enjoying nice properties\footnote{For instance: the Weyl group $W$ embeds into the automorphisms group  of  
 $\boldsymbol{\mathcal Y}$ (see \cite[Thm\,2.2]{Skorobogatov}) and the Picard lattice of $\boldsymbol{\mathcal Y}$ is naturally isomorphic to that of a del Pezzo surface obtained as the blow-up of $\mathbf P^2$ at $r$ points in general position, cf. \cite[p.\,418]{SerganovaSkorobogatov}.} which are a priori nicer than those satisfied by $\mathcal Y^*$. Depending on what one is aiming for, it might be more interesting to work with  $\boldsymbol{\mathcal Y}$ instead of 
 its dense open subset $\boldsymbol{\mathcal Y}^*$, but  it will not be the case in this text. 
\end{rem}

The main objects we are going to work with are the (closed) faces of the weight polytope $\Delta$, especially its {\it `facets'} that is its faces of codimension 1. We now recall some facts/results of \cite[\S3]{Vinberg} which will be relevant regarding our purpose. The Weyl group $W$ acts on the set $\mathfrak F(\Delta)$ of faces of $\Delta$. Given such a face $F$, let $W_F$ be the subgroup of ${\rm Stab}_W(F)$ generated by the transformations $w\in W$ leaving $F$ invariant and such that the mirror hyperplane ${\rm ker}(w)\subset \mathfrak h_{\mathbf R}^*$ passes through the center of mass $m_F$ of $F$ but does not contain the whole face (equivalently: $m_F$ belongs to ${\rm ker}(w)$ which is orthogonal to $F$).   
Then $W_F$ acts on the affine span   $\langle F\rangle$ with $m_F$ as unique fixed point. 
Making of  $\langle F\rangle$  a vector space by taking $m_F$ as origin, one obtains that 
$\Pi_F=\Pi\cap \langle F\rangle$ is a root system  of rank $\dim(F)$ and $W_F$ identifies as its Weyl group. Moreover, 
$W_F$ acts transitively on the set of vertices of $F$.

We will be essentially interested in the facets of the weight polytope $\Delta$. These faces are weight polytopes of rank $r-1$ whose types can be described as follows: let $C\subset \mathfrak h^*_{\mathbf R}$ be the dominant Weyl chamber associated to the chosen set of simple roots $\Phi$.  Given a facet $F$, there exists $w\in W$ such that $w\,F$ is a `{\it dominant facet}' (in Vinberg's terminology), that is a 1-codimensional face of $\Delta$ 
such that its intersection with $C$ has dimension $r-1$. Hence in order to describe the different facets of $\Delta$ one can restrict to study the dominant ones. 

 One proceeds as follows to get a complete list of the Dynkin type of the dominant facets of $\Delta$: let $D_\omega$ be the Dynkin diagram $D$ with the vertex corresponding to $\omega$ in black, all the other vertices being white.\footnote{Since the representation under consideration is minuscule, the corresponding dominant weight $\omega$ is fundamental hence naturally identifies to one of the nodes of the Dynkin diagram.}  Then the marked Dynkin diagrams associated to the dominant facets are those, noted by $D_{\omega,\omega'}$  obtained 
by removing an extremal vertex $\omega'\neq \omega$ from $D_\omega$ (see the figure below for the case relevant to describing $\WdPq$ \`a la Gelfand-MacPherson further on). 
Given an arbitrary facet $F$, let $w\in W$ such that $wF$ be dominant. 
One sets $D_F=D_{\omega,\omega'}$ where the latter 
is the rank $r-1$ Dynkin diagram with a blacked vertex associated to $wF$, according to the procedure described just above and one denotes by $\omega_F$ the marked vertex of $D_F$. One verifies that the pair $(D_F,\omega_F)$ is well-defined, that is does not depend 
 on the considered  element $w$ of the Weyl group such that $wF$ be dominant. 
 \sk
 \begin{figure}[h!]
\begin{center}
\scalebox{2.1}{
 \includegraphics{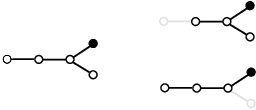}}
 \vspace{0.17cm}
\caption{The marked Dynkin diagram of type $(D_5,\omega_4)$ on the left and the two possible types of rank four 1-marked Dynkin diagrams for the facets on the right: of type $(D_4,\omega_3)$  (up) and 
$(A_4,\omega_4)$ (bottom).}
\label{Fig:Marked_Dynkin_Diagram_D5_omega4}
\end{center}
\end{figure}

We are now going to associate geometric objects (homogeneous varieties, rational maps, etc)  to the faces of $\Delta$.   Actually, since one is mainly interested in facets in this paper, we will restrict to this case below, even if most things could be stated in much more generality.

Given  a 1-codimensional face $F$ of $\Delta$,  always assumed to be topologically closed, we set/define: 
\begin{itemize}
\item  $V_F=\oplus_{w\in F\cap \mathfrak W} \,\mathbf C e_w$ and $V^F=\oplus_{w\in  \mathfrak W 
\setminus F}  \mathbf C e_w$ so that $V=V_F\oplus V^F$; 
\sk 
\item  $\Pi_F  : V=V_F\oplus V^F\rightarrow V_F$ is the linear projection 
with kernel $V^F$; 
\sk
\item  $\Gamma^F={\rm Stab}_\Gamma(V_F\oplus V^F)= \big\{\ \gamma \in \Gamma \ \big\lvert \  \gamma(V_F)= V_F \ \mbox{ and }\,  \gamma(V^F)= V^F \ \big\}$ for  $\Gamma $ one of the groups $G$, $P$ or $H$; note that since 
$H$ is the Cartan torus of $G$ and because $\mathfrak W$ is a weight basis, 
 one has $H^F=H$; 
\sk
\item $K^F=\big\{ \ g\in G^F \hspace{0.1cm} \lvert  \hspace{0.1cm}
\exists\ \lambda\in \mathbf C^*\hspace{0.1cm} \mbox{ s.t. } \hspace{0.1cm} 
   g \lvert_{V_F}=\lambda \,{\rm Id}_{V_F}
  \ \big\}$. 
  There is a sequence of inclusions of subgroups $K^F\subset P^F\subset G^F$;
  \sk
\item $\Gamma_F=\Gamma^F /\big( K^F \cap \Gamma^F\big)$ for $\Gamma$ 
being $G$, $P$ or $H$. More explicitly, one has 
$$G_F=G^F/K^F\, , \qquad H_F=H/\big(H\cap K^F\big)\qquad  \mbox{and}
\qquad  
P_F=P^F/K^F
\, .$$
\item  $X_F=X\cap \mathbf PV_F$
 and $X_F^*=\mu^{-1}\big(\mathring{F} \big)$ where $\mathring{F}$ stands for the relative interior of $F$. 
\end{itemize}

With these notations, one can state the
\begin{prop}
\label{Prop=XF=GF/PF}
\begin{enumerate}
\item[] ${}^{}$ \hspace{-1.1cm}{\rm 1.} The group $G_F$ is a simple complex Lie group of type $D_F$, 
$H_F$ is a Cartan subgroup and $P_F$ is the minimal standard parabolic subgroup of $G_F$ associated to the fundamental root $\omega_F$. 
\sk 
\item[2.] As a $G_F$-representation, $V_F$ is  irreducible and is the irrep of 
 highest weight  $\omega_F$. 
\sk
\item[3.] The variety $X_F$ is homogeneous under the natural action of $G^F$ and this action factorizes through the quotient map $G^F\rightarrow G_F$. Moreover, one has $X_F\simeq G^F/P^F=G_F/P_F$. 
\sk
\item[4.]
There is a natural affine isomorphism between the affine span of $F$ in $\mathfrak h_{\mathbf R}^*$ and 
the dual of the real Lie algebra of the torus $H_F$ such that the restriction of the moment map on $X_F$ identifies with that on $X_F$ associated to the triple $(G_F,P_F,V_F)$, {\it i.e.} one has $\mu\lvert_{X_F}=\mu_{D_F,\omega_F}$.  
In particular, $F$ identifies with the moment polytope $\Delta_{D_F,\omega_F}$ of $X_F$.
\end{enumerate}
\end{prop}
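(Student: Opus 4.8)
The plan is to reduce everything to the case of a \emph{dominant} facet and then to recognize the stabiliser $G^F$ as the Levi subgroup of a maximal parabolic, at which point the four assertions become standard facts about minuscule representations and their homogeneous models. The reduction rests on the naturality of all the objects attached to a facet under $N_G(H)$: if $\sigma\in N_G(H)$ represents $[\sigma]=w\in W$, then $\sigma$ permutes the weight basis $(e_{w'})_{w'\in\mathfrak W}$ according to $w$, hence carries $V_F$ to $V_{wF}$ and $V^F$ to $V^{wF}$, so that $\sigma G^F\sigma^{-1}=G^{wF}$, $\sigma K^F\sigma^{-1}=K^{wF}$ and $\sigma X_F=X_{wF}$. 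Consequently $G_F\simeq G_{wF}$, $H_F\simeq H_{wF}$, $P_F\simeq P_{wF}$ and $X_F\simeq X_{wF}$, while $w$ carries $\langle F\rangle$ isometrically onto $\langle wF\rangle$ and intertwines the two restricted moment maps. Since every facet is $W$-conjugate to a dominant one and the pair $(D_F,\omega_F)$ was already seen to be $W$-invariant, it suffices to prove the proposition for a fixed dominant facet $F$.

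The core step is a root-space computation using minusculeness. Let $u_F\in\mathfrak h_{\mathbf R}$ be dual to the outer normal of the dominant facet $F$; by Vinberg's description $F$ is cut out by removing one extremal node $\omega'\neq\omega$ from $D_\omega$, and I would identify $u_F$ with the corresponding fundamental coweight, so that $\langle\,\cdot\,,u_F\rangle$ attains its maximum over $\Delta$ exactly along $F$. Grading $\mathfrak g=\mathfrak h\oplus\bigoplus_{\alpha\in\Pi}\mathfrak g_\alpha$ and $V=\bigoplus_w V_w$ by the eigenvalues of $u_F$, the top eigenspace of $V$ is precisely $V_F$. Because $V_\omega$ is minuscule, a root vector $X_\alpha$ satisfies $X_\alpha e_w\neq 0$ if and only if $w+\alpha\in\mathfrak W$, in which case $X_\alpha e_w\in\mathbf C^*\,e_{w+\alpha}$; from this one deduces that $\mathfrak g_\alpha$ preserves the decomposition $V=V_F\oplus V^F$ exactly when $\langle\alpha,u_F\rangle=0$. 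Hence the Lie algebra of $G^F$ is the Levi subalgebra $\mathfrak l=\mathfrak h\oplus\bigoplus_{\langle\alpha,u_F\rangle=0}\mathfrak g_\alpha$ of the maximal parabolic attached to $\omega'$, and $G^F$ is its Levi subgroup $L$. Since $\omega'$ is extremal, deleting it from the connected diagram $D$ leaves the connected diagram $D_F$, which is the Dynkin type of the semisimple part of $L$. As $K^F$ is by definition the subgroup of $L$ acting by scalars on the irreducible top piece $V_F$, the quotient $G_F=G^F/K^F$ is the simple group of type $D_F$, with $H_F=H/(H\cap K^F)$ a Cartan subgroup and $P_F=P^F/K^F$ the standard parabolic attached to $\omega_F$; this is assertion 1.

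As the top graded piece for $u_F$, the subspace $V_F$ is an irreducible $L$-module with highest-weight vector $e_\omega$; restricting $\omega$ to the span of $\Pi_F$ produces the $G_F$-dominant weight $\omega_F$, so $V_F\simeq V_{\omega_F}$ as $G_F$-modules, and this representation is again minuscule since all its weights lie in the single $W_F$-orbit $F\cap\mathfrak W$ with multiplicity one (here I invoke Vinberg's transitivity of $W_F$ on the vertices of $F$, and check that $\omega_F$ matches his marked node). This gives assertion 2. For assertion 3, note that $[e_\omega]\in X_F=X\cap\mathbf PV_F$ and that its $L$-stabiliser is $P\cap L=P^F$, so the closed orbit $L\cdot[e_\omega]\simeq G^F/P^F=G_F/P_F$ is the minuscule variety of $G_F$ inside $\mathbf PV_F$. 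The remaining inclusion $X_F\subseteq L\cdot[e_\omega]$ I would obtain from the standard fact that the linear section of a minuscule variety by a weight-coordinate subspace is the corresponding sub-minuscule variety (equivalently, $X\cap\mathbf PV_F$ is irreducible of the expected dimension and contained in the unique closed $L$-orbit). Thus $X_F\simeq G^F/P^F=G_F/P_F$.

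The last point is essentially formal. For $x\in X_F$ one has $p^w(x)=0$ for every $w\notin F$, so
\[
\mu(x)=\frac{\sum_{w\in F\cap\mathfrak W}\lvert p^w(x)\rvert^2\,w}{\sum_{w\in F\cap\mathfrak W}\lvert p^w(x)\rvert^2}
\]
is a convex combination of weights of $F$ and hence lies in the affine span $\langle F\rangle$. Identifying $\langle F\rangle$ with the dual of the real Lie algebra of $H_F$, this formula is verbatim the moment map $\mu_{D_F,\omega_F}$ of the triple $(G_F,P_F,V_F)$ furnished above; applying the general theory to $G_F$ gives $\mu_{D_F,\omega_F}(X_F)=\Delta_{D_F,\omega_F}$, while $\mu(X_F)=F$ because $X_F^*=\mu^{-1}(\mathring F)$ is dense in $X_F$, and comparing the two identifies $F$ with $\Delta_{D_F,\omega_F}$, which is assertion 4. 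The main obstacle is the second step: one must check that the outer normal of a dominant facet is a single fundamental coweight (so that $G^F$ is a \emph{maximal}-parabolic Levi and not a smaller subgroup), that minusculeness forces $\mathfrak g^F$ to be \emph{exactly} this Levi with no extra root spaces, and that the induced highest weight coincides with Vinberg's $\omega_F$. All of this is controlled by the combinatorics of the minuscule weight poset, where the genuine content lies; the remaining steps are then formal consequences of the structure theory of minuscule flag varieties.
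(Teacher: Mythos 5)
Two things to note before the substance: the paper itself does not prove Proposition \ref{Prop=XF=GF/PF}. It explicitly postpones the proofs of this proposition and of Proposition \ref{Prop:X->XF} to the forthcoming paper \cite{PirioGMWebs}, offering only the comments that most points follow from \cite[\S 3]{Vinberg}, that the third point is the one ``requiring the most effort'', and that the relevant geometric tool is Bia\l{}ynicki-Birula theory for the $\mathbf C^*$-action generated by the normal $\zeta_F$ of the facet. So your sketch can only be compared against these hints. Measured that way, your architecture is sound and is essentially the Vinberg-flavoured route the paper gestures at: the $N_G(H)$-equivariant reduction to a dominant facet is correct; identifying $\mathfrak g^F$ with the Levi subalgebra determined by the grading of $\mathfrak g$ and $V$ by the coweight $u_F$ is the right mechanism (the two combinatorial facts you flag, namely that the outer normal of a dominant facet is the fundamental coweight of the removed extremal node and that no root space of nonzero $u_F$-degree preserves the splitting $V=V_F\oplus V^F$, are true and checkable on the minuscule weight poset); and point 4 is indeed formal once 1--3 are available, since $F$ and $\Delta_{D_F,\omega_F}$ are convex hulls of the same weights under your identification.

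The genuine gap is in your assertion 3 --- exactly the point the paper singles out as the hardest. You obtain $G^F\cdot[e_\omega]\subseteq X_F$, but for the reverse inclusion $X_F=X\cap\mathbf P V_F\subseteq G^F\cdot[e_\omega]$ you invoke ``the standard fact that the linear section of a minuscule variety by a weight-coordinate subspace is the corresponding sub-minuscule variety''. That ``fact'' \emph{is} assertion 3: it is classical for Grassmannians and for spinor varieties (the paper reproves the spinor case by hand in \S\ref{SSS:GM-Web-S5}), but it is not an off-the-shelf general statement, so citing it is circular; nor is your parenthetical reformulation (irreducibility and expected dimension of $X\cap\mathbf P V_F$) any more available, since a priori this intersection could be reducible or of excess dimension. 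The missing idea is the one the paper points to for Proposition \ref{Prop:X->XF}: use the one-parameter subgroup $h_F(t)=\exp(t\,u_F)$. All weights occurring in $V_F$ have the same pairing with $u_F$, so $h_F$ acts on $V_F$ by a single character and hence $X\cap\mathbf P V_F$ lies in the fixed locus $X^{h_F}$. The fixed locus of a subtorus of $H$ on $X=G/P$ is a disjoint union of orbits of its centralizer, which by your Levi computation is the connected group underlying $G^F$, and each connected fixed component lies in $\mathbf P$ of a single $u_F$-eigenspace. Finally, $\mathrm{Stab}_W(F)$ fixes $u_F$ (it preserves the supporting hyperplane of $F$, which does not pass through the origin), hence lies in the Weyl group of the Levi; since $W_F\subseteq\mathrm{Stab}_W(F)$ acts transitively on the vertices of $F$ by Vinberg's result, all the $H$-fixed points $[e_v]$ with $v\in F\cap\mathfrak W$ lie in one such orbit, so exactly one fixed component meets $\mathbf P V_F$, namely $G^F\cdot[e_\omega]$. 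This closes the gap, and it also yields the density $X_F=\overline{X_F^*}$ that your argument for assertion 4 quietly uses.
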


 Both the projectivization $\mathbf PV\dashrightarrow \mathbf P V_F$ of the  linear projection
 $\Pi_F$ and its restriction on  $X=G/P\subset \mathbf PV$ will be again denoted the same in what follows. 
\begin{prop}
\label{Prop:X->XF}
\begin{enumerate}
\item[] ${}^{}$ \hspace{-1.3cm}{\rm 1.}  The image of the projection $\Pi_F: X\dashrightarrow \mathbf PV_F$ coincides with $X_F$. 
 In short: 
 $$
X_F\stackrel{def}{=}X\cap \mathbf PV_F= \Pi_F( X)= G_F/P_F\, .
$$
Moreover, in terms of the moment map, one has $X_F=
\mu^{-1}\big({F}\big)= 
\overline{X_F^*}$.
\mk 
\item[2.] The induced dominant rational map $\Pi_F: X\dashrightarrow X_F$ is $(G^F,G_F)$-equivariant. 
\mk 
\item[3.] The map $ \Pi_F$ is defined at any point of $X^*$, one has $\Pi_F(X^*)=X_F^*$ and the induced surjective morphism $\Pi_F: X^*\rightarrow X_F^*$ is $(H,H_F)$-equivariant. 
\end{enumerate}
\end{prop}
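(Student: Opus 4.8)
The plan is to prove the three assertions together, the engine being a one-parameter degeneration inside the Cartan torus that realizes the linear projection $\Pi_F$ as a limit along the direction normal to the facet $F$. First I would fix an inner normal $\xi\in\mathfrak h_{\mathbf R}$ to $F$, i.e. an element such that $\langle\xi,w\rangle$ attains its maximal value $c$ over $\Delta$ exactly on $F$; thus $\langle\xi,w\rangle=c$ for $w\in F\cap\mathfrak W$ and $\langle\xi,w\rangle<c$ for $w\in\mathfrak W\setminus F$. Letting $\lambda(t)=\exp((\log t)\,\xi)\in H_{>0}$ act on the weight basis by $\lambda(t)\cdot e_w=t^{\langle\xi,w\rangle}e_w$, for a point $x=[\sum_{w}p^w(x)e_w]\in X$ one computes in $\mathbf PV$ that $\lambda(t)\cdot x=[\sum_w p^w(x)\,t^{\langle\xi,w\rangle-c}e_w]$, so that $\lim_{t\to+\infty}\lambda(t)\cdot x=\big[\sum_{w\in F}p^w(x)e_w\big]=\Pi_F(x)$ whenever this limit is formed. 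Since $X$ is projective, hence compact and closed in $\mathbf PV$, the limit exists and lies in $X$; as it lies in $\mathbf PV_F$ by construction, it belongs to $X\cap\mathbf PV_F=X_F=G_F/P_F$ (Proposition \ref{Prop=XF=GF/PF}). This already gives $\Pi_F(X)\subseteq X_F$.

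The reverse inclusion is immediate because $\Pi_F$ restricts to the identity on $\mathbf PV_F$ and $X_F\subset\mathbf PV_F$, whence $\Pi_F(X)\supseteq\Pi_F(X_F)=X_F$; this establishes the displayed equality of assertion 1. The moment-map identity $X_F=\mu^{-1}(F)=\overline{X_F^*}$ then follows from Proposition \ref{Prop=XF=GF/PF} together with the standard fact that $\mu(x)$ lies in the relative interior of the convex hull of the support of $x$, so that $\mu(x)\in F$ forces the support of $x$ to be contained in $F$, i.e. $x\in\mathbf PV_F$. For assertion 2, the $G^F$-equivariance of the linear map $\Pi_F:V\to V_F$ is tautological, since by definition $G^F$ stabilises the decomposition $V=V_F\oplus V^F$ and therefore commutes with $\Pi_F$; projectivising and restricting to $X$ gives $\Pi_F(g\cdot x)=g\cdot\Pi_F(x)$ for $g\in G^F$. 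As $K^F$ acts on $V_F$ by scalars it acts trivially on $\mathbf PV_F$, so the induced action on $X_F$ factors through $G^F\to G_F=G^F/K^F$, and $\Pi_F:X\dashrightarrow X_F$ is $(G^F,G_F)$-equivariant, its dominance being part of assertion 1.

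The core of assertion 3 is the precise behaviour of $\Pi_F$ on the open stratum $X^*=\mu^{-1}(\mathring\Delta)$. Fix $x^*\in X^*$. By the convexity properties recalled before the statement, $\mu$ restricts to a real-analytic isomorphism $\overline{H_{>0}\cdot x^*}\xrightarrow{\sim}\Delta$; in particular there is a unique point $z\in\overline{H_{>0}\cdot x^*}$ with $\mu(z)\in\mathring F$, and $z\in\mu^{-1}(\mathring F)=X_F^*$. I would then identify $z$ with the limit $\lim_{t\to+\infty}\lambda(t)\cdot x^*$ computed above: since $\lambda$ is the normal $1$-parameter subgroup to $F$ and $\mu$ is an isomorphism on the orbit closure, the trajectory $t\mapsto\lambda(t)\cdot x^*$ must converge to the unique boundary point lying over $\mathring F$. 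This forces $\sum_{w\in F}p^w(x^*)e_w\neq 0$, so $\Pi_F$ is defined at every point of $X^*$, and yields $\Pi_F(x^*)=z\in X_F^*$, i.e. $\Pi_F(X^*)\subseteq X_F^*$; the $H$-equivariance through $H=H^F\to H_F=H/(H\cap K^F)$ is then simply the restriction of the equivariance of assertion 2.

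The remaining surjectivity $\Pi_F(X^*)=X_F^*$ is the step I expect to be the main obstacle. Here I would run the moment-map isomorphism in reverse, showing that every $y\in X_F^*$ lies on the closure of a single $H_{>0}$-orbit meeting $X^*$ transversally to $F$, so that $y$ is the normal limit of some $x^*\in X^*$, hence $y=\Pi_F(x^*)$; an alternative is to observe that $\Pi_F(X^*)$ is a dense, $H_F$-stable, constructible subset of the irreducible variety $X_F^*$ (density following from $\overline{\Pi_F(X)}=X_F$ and $\Pi_F(X^*)\subseteq X_F^*$) and to upgrade density to equality using the $H_{>0}$-orbit stratification of $X_F^*$ furnished by Proposition \ref{Prop=XF=GF/PF}. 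Making this reverse construction fully rigorous—controlling exactly which $H_{>0}$-orbit closures of $X$ pass through a prescribed point of the facet stratum $X_F^*$—is the delicate point, and the one I would spend the most effort on.
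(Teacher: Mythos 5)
Your overall strategy coincides with the one the paper itself indicates: the paper gives no detailed proof of this proposition (it is explicitly deferred to the forthcoming text on Gelfand--MacPherson webs) and only sketches that everything follows from Bia\l{}ynicki--Birula theory for the $\mathbf C^*$-action attached to the one-parameter subgroup normal to the facet, with $X_F$ a component of the fixed locus and $\Pi_F(x)=\lim h_F(t)\cdot x$ on $X^*$. Your treatment of assertions 1 and 2 — the limit computation in weight coordinates, the reverse inclusion via $\Pi_F\lvert_{\mathbf PV_F}=\mathrm{id}$, the support/relative-interior argument giving $\mu^{-1}(F)=X\cap\mathbf PV_F$, and the tautological $(G^F,G_F)$-equivariance — is sound and in fact more detailed than what the paper records.

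Assertion 3, however, contains a genuine error. You assert that there is ``a unique point $z\in\overline{H_{>0}\cdot x^*}$ with $\mu(z)\in\mathring F$'' and that the trajectory $\lambda(t)\cdot x^*$ must converge to ``the unique boundary point lying over $\mathring F$''. This is false: since $\mu$ restricts to an isomorphism $\overline{H_{>0}\cdot x^*}\simeq\Delta$, the part of the orbit closure lying over $\mathring F$ is a copy of $\mathring F$, of dimension $r-1>0$, not a point; and nothing you wrote excludes that the trajectory converges to a point over a lower-dimensional stratum of $\partial\Delta$ --- which is exactly what happens if $\mathrm{supp}(x^*)\cap F=\emptyset$, the very case you must rule out, so the argument is circular. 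The fact you need is true but requires a different argument: because $\mu$ maps $\overline{H_{>0}\cdot x^*}$ \emph{onto} $\Delta$, each vertex $w_0$ of $\Delta$ equals $\mu(z)$ for some $z$ in the orbit closure; a vertex is a strictly positive convex combination of support weights only if the support is $\{w_0\}$, so $z=[e_{w_0}]$, and since $p^{w}$ vanishes at $x^*$ if and only if it vanishes on all of $\overline{H\cdot x^*}$, this forces $w_0\in\mathrm{supp}(x^*)$. In the minuscule case all weights are extremal, so $\mathrm{supp}(x^*)=\mathfrak W$; hence $\Pi_F$ is defined at $x^*$, $\Pi_F(x^*)$ has support equal to all of $F\cap\mathfrak W$, and $\mu(\Pi_F(x^*))$, a strictly positive convex combination of those weights, lies in $\mathring F$, i.e. $\Pi_F(x^*)\in X_F^*$. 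Finally, you are right to single out the surjectivity $\Pi_F(X^*)=X_F^*$ as the remaining substantive point: note that for any $x$ in the Bia\l{}ynicki--Birula attracting cell $A_y$ over $y\in X_F^*$ the $F$-coordinates of $x$ are automatically proportional to those of $y$, hence nonzero, so the issue reduces precisely to showing $A_y\not\subset\bigcup_{w\in\mathfrak W\setminus F}\{p^w=0\}$; neither of your two suggested strategies carries this out, so this part of your proposal remains a sketch — which, to be fair, is also its status in the paper.
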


We postpone the proofs 
of the  two above propositions to the paper to come \cite{PirioGMWebs}. Here we make only the following general commentaries: 
 most of the points of the proposition \ref{Prop=XF=GF/PF} follow quite directly from the results of \cite[\S3]{Vinberg}, the one requiring the most effort being the third point.
 
 As for Proposition \ref{Prop:X->XF}, if $F$ is a facet of the weight polytope, its affine span in $\mathfrak h_{\mathbf R}^*$ admits an equation of the form $\varphi_F=\alpha_F$ where $\varphi_F$ is a non zero linear form and $\alpha_F$ a certain real number.  Because the dual of $\mathfrak h_{\mathbf R}^*$ identifies with 
 $\mathfrak h_{\mathbf R}$, there exists $ \zeta_F\in  \mathfrak h_{\mathbf R}$ 
 such that $\varphi_F (\cdot)=(\zeta_F, \cdot)$ as linear forms on 
 $\mathfrak h_{\mathbf R}^*$.  Since  $ \zeta_F\neq 0$, 
 there is a one-parameter subgroup 
 $h_F: \mathbf C^*\rightarrow H, t\mapsto \exp(t \zeta_F)$
 associated to it.  Then most of Proposition \ref{Prop:X->XF} follows from Bia\l{}ynicki-Birula theory for the $\mathbf C^*$-action on $X$ provided by  $h_F$.  For instance, $X_F$ 
 is an irreducible component of the 
 subvariety $X^{h_F}$ of $X$ formed by its $h_F$-invariant points, 
  one has 
$\Pi_F(x)= \lim_{t\rightarrow 0}h_F(t )\cdot x$ for any $x\in X^*$, etc. 
 More details will be given in a forthcoming text. 
  

From the last point of the preceding proposition one immediately gets the 
\begin{cor}
\label{C:cocor}
For any facet $F$ of the moment polytope, there exists a well-defined surjective morphism $\pi_F:  \boldsymbol{\mathcal Y}^*\rightarrow 
 \boldsymbol{\mathcal Y}^*_F=X_F^*/H_F$ such that the following diagram commutes: 
$$\xymatrix@R=0.8cm@C=1.3cm{
X^*  \ar@{->}[d]_{\chi_H}  \ar@{->}[r]^{\Pi_F }   
& \ar@{->}[d]^{\chi_{H_F}}  X_F^* \\
\boldsymbol{\mathcal Y}^*  \ar@{->}[r]_{\pi_F }   
& \boldsymbol{\mathcal Y}^*_F}
$$
\end{cor}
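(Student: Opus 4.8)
The plan is to obtain $\pi_F$ purely formally from the universal property of the quotient map $\chi_H$, feeding in the $(H,H_F)$-equivariance established in Proposition \ref{Prop:X->XF}.3. First I would recall, as stated above, that $\chi_H : X^*\rightarrow \boldsymbol{\mathcal Y}^*$ is an $H$-torsor; in particular it is a geometric quotient, so that any morphism out of $X^*$ which is constant on the $H$-orbits factors uniquely as a morphism through $\chi_H$. The analogous statement holds for $\chi_{H_F} : X_F^*\rightarrow \boldsymbol{\mathcal Y}^*_F$ relative to the torus $H_F=H/(H\cap K^F)$.

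The key step is then to verify that the composite $\chi_{H_F}\circ \Pi_F : X^*\rightarrow \boldsymbol{\mathcal Y}^*_F$ is $H$-invariant. By Proposition \ref{Prop:X->XF}.3 the map $\Pi_F : X^*\rightarrow X_F^*$ is a genuine morphism (not merely a rational map) on all of $X^*$ and is $(H,H_F)$-equivariant, meaning that $\Pi_F(h\cdot x)=\overline{h}\cdot \Pi_F(x)$ for every $h\in H$ and $x\in X^*$, where $\overline{h}$ denotes the image of $h$ under the canonical surjection $H\rightarrow H_F$. Since $\chi_{H_F}$ is constant on $H_F$-orbits and $\overline{h}\in H_F$, I would compute
\begin{equation*}
\chi_{H_F}\big(\Pi_F(h\cdot x)\big)=\chi_{H_F}\big(\overline{h}\cdot \Pi_F(x)\big)=\chi_{H_F}\big(\Pi_F(x)\big),
\end{equation*}
so that $\chi_{H_F}\circ \Pi_F$ is indeed constant on each $H$-orbit in $X^*$.

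Applying the universal property of the torsor $\chi_H$ to this $H$-invariant morphism then produces a unique morphism $\pi_F : \boldsymbol{\mathcal Y}^*\rightarrow \boldsymbol{\mathcal Y}^*_F$ with $\pi_F\circ \chi_H=\chi_{H_F}\circ \Pi_F$, which is exactly the commutativity of the asserted square. Surjectivity of $\pi_F$ is then immediate: both $\Pi_F : X^*\rightarrow X_F^*$ (again by Proposition \ref{Prop:X->XF}.3) and $\chi_{H_F}$ are surjective, hence so is the composite $\pi_F\circ\chi_H$, and therefore $\pi_F$ is surjective as well. The only point requiring a little care, and hence the mild obstacle, is to ensure that the factorization furnished by the universal property is a genuine morphism of quasi-projective varieties rather than merely a set-theoretic map; this is guaranteed by the fact that $\chi_H$ is an $H$-torsor, in particular faithfully flat, so that faithfully flat descent applies to the $H$-invariant morphism $\chi_{H_F}\circ\Pi_F$ and the induced $\pi_F$ is automatically a morphism.
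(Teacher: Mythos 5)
Your proof is correct and follows essentially the same route as the paper, which obtains the corollary directly (and without further comment) from the equivariance and surjectivity statements of Proposition \ref{Prop:X->XF}.3. Your proposal merely spells out the steps the paper treats as immediate: the $H$-invariance of $\chi_{H_F}\circ\Pi_F$, the factorization through the torsor quotient $\chi_H$, and the surjectivity of the composite.
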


\begin{rem}
The face maps $\Pi_F$ and $\pi_F$ considered above are specific cases of more general mappings 
associated with projective varieties acted upon by a torus already considered in the literature, {\it e.g.}\,in \cite{GoM} or in \cite{BrionProcesi}. 
\end{rem}

The map $\pi_F:  \boldsymbol{\mathcal Y}^*\rightarrow 
 \boldsymbol{\mathcal Y}^*_F$  is constant hence defines 
 a trivial foliation on $\boldsymbol{\mathcal Y}^*$ if and only if $ \boldsymbol{\mathcal Y}^*_F$ 
 reduces to a point.  Such a case is not interesting for our purpose and a facet of this kind will be said {\it `$\boldsymbol{\mathcal W}$-irrelevant'}. Accordingly, facets not of this type will be said to be {\it `$\boldsymbol{\mathcal W}$-relevant'}. 
\begin{exm}
\label{Ex:GM-Web-G2C5}
The case of the pair $(A_4,\omega_2)$ is considered in depth in \cite{GM} (see also 
\S\ref{SS:WdP4-GM} above). In this case,  one has $X=G/P=G_2(\mathbf C^5)$ and the image $\mu(X)\subset \mathbf R^5$ of the associated moment map is the  hypersimplex $\Delta_{5,2}$ formed by the 5-tuples 
of elements in $[0,1]$ summing up to 2:  one has $\Delta_{5,2}=\big\{\,  
(t_i)_{i=1}^5\in [0,1]^5\, \big\lvert \, 
\sum_{i=1}^5 t_i = 2\, \big\}$. This polytope has 10 facets, which are the intersections 
$\Delta_{5,2}(i,\epsilon)=\Delta_{5,2}\cap\{ t_i=\epsilon\}$ for $i=1,\ldots,5$ and $\epsilon=0,1$. 
For any $i$, the linear projection from $\mathbf R^5$ to $\mathbf R^4$ given by disregarding the $i$th coordinate induces an isomorphism from the facet $\Delta_{5,2}(i,1)$ onto a 3-simplex in $\mathbf R^4$, which reflects the fact that $X_{\Delta_{5,2}(i,1)}\simeq \mathbf P^3$. Moreover
the action of $H_{ \Delta_{5,2}(i,1)}\simeq \big(\mathbf C^*)^3$ on it identifies with the usual toric action hence it follows that $\boldsymbol{\mathcal Y}^*_{ \Delta_{5,2}(i,1) }$ is a point. 
This shows that the facets $\Delta_{5,2}(i,1)$ ($i=1,\ldots,5$) all are $\boldsymbol{\mathcal W}$-irrelevant. Each  facet $\Delta_{5,2}(i,0)$ is isomorphic to the hypersimplex 
$\Delta_{4,2}=\big\{\,  
(\tau_j)_{j=1}^4\in [0,1]^4\, \big\lvert \, 
\sum_{j=1}^4 \tau_j = 2\, \big\}$ which is the moment polytope of the 
${\rm PGL}_4(\mathbf C)$-homogeneous variety $X_{\Delta_{5,2}(i,0)}$ which is isomorphic to 
 the grassmannian $G_2(\mathbf C^4)$.  
One has $ \boldsymbol{\mathcal Y}^*=G_2(\mathbf C^5)^*/ H_4\simeq \boldsymbol{\mathcal M}_{0,5}$ and 
for each facet 
$\Delta_{5,2}(i,0)$, $ \boldsymbol{\mathcal Y}^*_{\Delta_{5,2}(i,0)}=G_2(\mathbf C^4)^*/  H_3\simeq \boldsymbol{\mathcal M}_{0,4}\simeq \mathbf P^1\setminus \{0,1,\infty\}$ 
and 
up to the above identifications, the map 
$\pi_{\Delta_{5,2}(i,0)} : \boldsymbol{\mathcal Y}^* \rightarrow  \boldsymbol{\mathcal Y}^*_{\Delta_{5,2}(i,0)}$ of Corollary \ref{C:cocor} 
corresponds to the $i$th forgetful map $ \boldsymbol{\mathcal M}_{0,5}\longrightarrow 
\boldsymbol{\mathcal M}_{0,4}$. In particular, all five facets $\Delta_{5,2}(i,0)$  are 
 $\boldsymbol{\mathcal W}$-relevant. 
\end{exm}

From the preceding example, it follows that Gelfand-MacPherson's web on $G_2(\mathbf C^5)$ 
(resp.\,its equivariant quotient by the $H_4$-action) previously  considered in  \S\ref{SSS:GM-Weg-G2(V)} is the web on $G_2(\mathbf C^5)^*$ (resp.\,on $ \boldsymbol{\mathcal Y}^*\simeq 
\boldsymbol{\mathcal M}_{0,5}$) whose first integrals exactly are the maps $\Pi_F : 
G_2(\mathbf C^5)^*\rightarrow G_2(\mathbf C^4)^*$ (resp.\,$\pi_F$) for $F$ ranging in the set of $ \boldsymbol{\mathcal W}$-relevant facets of the associated moment polytope.  
This suggests the following definition (in which we continue to use the notations introduced above and where we denote by $\mathfrak F_{\boldsymbol{\mathcal W}}( \Delta)$ the set of 
$\boldsymbol{\mathcal W}$-relevant facets of the considered moment polytope $\Delta$): 
\begin{defn}
Given a minuscule pair $(D,\omega)$ as above, the `Gelfand-MacPherson web'
 associated to it, denoted by 
$\boldsymbol{\mathcal W}^{GM}_{\hspace{-0.07cm} X}$,  is the `web' on $X^*$  induced by the facet maps $\Pi_F : 
X^*\rightarrow X_F^*$ for all $\boldsymbol{\mathcal W}$-relevant facets $F$: 
one has 
$$
\boldsymbol{\mathcal W}^{GM}_{\hspace{-0.07cm} X}=\boldsymbol{\mathcal W}\bigg(\, 
\Pi_F\, \, \big\lvert\  \, \forall \, F \in \mathfrak F_{\boldsymbol{\mathcal W}}( \Delta)\, 
\bigg) \, . 
$$

This web is equivariant under the action of $H$ hence descends to the quotient 
$\boldsymbol{\mathcal Y}^*$ and gives rise to a web denoted by $\boldsymbol{\mathcal W}^{GM}_{\hspace{-0.07cm} \boldsymbol{\mathcal Y}}$, which will be also called the `Gelfand-MacPherson web' associated to $(D,\omega)$ (but on $\boldsymbol{\mathcal Y}^*$): 
one has 
\begin{equation}
\label{Eq:W-GM/H}
\boldsymbol{\mathcal W}^{GM}_{\hspace{-0.07cm}\boldsymbol{\mathcal Y}}=
\big(\boldsymbol{\mathcal W}^{GM}_{\hspace{-0.07cm}X}\big)\big/_{ H}=
\boldsymbol{\mathcal W}\Big(\, 
\pi_F\, \, \big\lvert\  \, \forall \, F \in \mathfrak F_{\boldsymbol{\mathcal W}}( \Delta)\, 
\Big) 
 \, . 
\end{equation}
\end{defn}
A remark is in order about the use of the term `web' in this definition: it does not refer to the classical notion of web but to the more general one introduced in \cite{ClusterWebs} under the name of {\it `generalized web'}, which is just a collection of (possibly singular and/or of different codimensions) foliations on a given space, with the unique requirement that two distinct foliations intersect transversally. 
\begin{exm}
\label{Ex:GM-webs}
{\rm 1.} As it follows from Example {\rm \ref{Ex:GM-Web-G2C5}} above, for the pair $(A_4,\omega_2)$, 
one recovers Bol's web since in this case, one has $$
\boldsymbol{\mathcal W}^{GM}_{\hspace{-0.07cm}\boldsymbol{\mathcal Y}}=
\boldsymbol{\mathcal W}_{ \hspace{-0.07cm}\boldsymbol{\mathcal M}_{0,5}}\simeq 
 \boldsymbol{\mathcal W}_{ \hspace{-0.07cm} {\rm dP}_5}\simeq 
  \boldsymbol{\mathcal B}\, .$$
  
   \noindent{\rm 2.}  From \cite{GM}, one gets a simple general description of any Gelfand-MacPherson web associated to a pair $(A_n,\omega_k)$ with $k$ such that $1<k\leq n/2$ (which corresponds to $X=G/P=G_k\big(\mathbf C^{n+1}\big)${\rm )}.   
  Assume that both $n$ and $g$ are greater or equal to $3$ (this just in order to give a simple uniform description for the associated Gelfand-MacPherson web). Then $\boldsymbol{\mathcal Y}^*$ can be identified with a Zariski open subset of ${\rm Conf}_{n+k}\big(\mathbf P^{k-1}\big)$, that is the space of projective equivalence classes of $(n+1)$-tuples of points in general position in $\mathbf P^{k-1}$.    The facet maps all are $\boldsymbol{\mathcal W}$-relevant in this case and can be described as follows as rational maps defined on the birational model ${\rm Conf}_{n+k}\big(\mathbf P^{k-1}\big)$ of $\boldsymbol{\mathcal Y}^*$: there are 
 $2(n+k)$ facets hence as many facet maps, 
 the half of which are  
  the 
  forgetful maps $
   {\rm Conf}_{n+k}\big(\mathbf P^{k-1}\big)\rightarrow {\rm Conf}_{n+k-1}\big(\mathbf P^{k-1}\big)$, the $n+k$ others being the maps $
   {\rm Conf}_{n+k}\big(\mathbf P^{k-1}\big)\rightarrow {\rm Conf}_{n+k-1}\big(\mathbf P^{k-2}\big)$ induced by 
  the linear projection from a point of the configuration.  The forgetful maps 
and those induced by projections  from a point   
   define foliations of dimension $k-1$ and $n-k$ respectively.  Hence except when $n=2k-1$, the Gelfand-MacPherson web as defined above is a $2(n+k)$-web of mixed codimensions.
\end{exm}

  The case which we will prove to be the relevant one  for describing   del Pezzo's web $\boldsymbol{\mathcal W}_{     \hspace{-0.07cm} {\rm dP}_4}$
  \`a  la Gelfand-MacPherson  
   is the one associated to the pair $(D_5,\omega_4)$ or equivalently, to the spinor tenfold $\mathbb S_5$. In this case, $\boldsymbol{\mathcal Y}^*$ is 5-dimensional and 
  the Gelfand-MacPherson web  $\boldsymbol{\mathcal W}^{GM}_{\hspace{-0.07cm}\boldsymbol{\mathcal Y}}$ is a 10-web of codimension 2.  In the next subsection, we describe quite explicitly this web ({\it cf.}\,Proposition \ref{P:W-GM-Y5}) and then just after, we recall some results about the Cox variety of 
a del Pezzo surface   
and explain how one can deduce from them  a way to geometrically construct $\boldsymbol{\mathcal W}_{     \hspace{-0.07cm} {\rm dP}_4}$  from Gelfand-MacPherson's web 
$ \boldsymbol{\mathcal W}^{GM}_{\hspace{-0.01cm}\boldsymbol{\mathbb S}_{5}}$ (see Proposition \ref{Prop:WdP4-from-WGMS5}).

\subsubsection{\bf The GM-web of the spinor tenfold $\mathbb S_5 \hspace{-0.38cm} \mathbb S_5 $}
\label{SSS:GM-Web-S5}
Our goal here is to specialize the theory sketched above in 
\S\ref{SSS:GM-webs} in the case to be used for describing $\boldsymbol{\mathcal W}_{     \hspace{-0.07cm} {\rm dP}_4}$, namely the one associated to the pair $(D_5,\omega_4)$. 
In this case $G={\rm Spin}_{10}(\mathbf C)$ and $X=G/P$ is the spinor tenfold $\mathbb S_{5}$ 
and our ultimate goal here is to describe as explicitly as possible the associated Gelfand-MacPherson webs. 
 
We start by stating some general facts about spinor varieties, especially the spinor tenfold we are interested in. Everything here is classical hence we do not give any proof.  Among several recent  references on spinor varieties, we mention \cite{Manivel} and \cite{Corey}.

 \paragraph{\bf Notations and generalities.}
 All what is discussed in this preliminary paragraph actually holds true for all spinor varieties $\mathbb S_n$ with $n\geq 5$.  We start below by stating everything in the general case $n\geq 5$, with the exception of some details that we make explicit in the case $n=5$ which is the one relevant for our purpose.  

For any $n\geq 5$, we will use the following notations: 
\begin{itemize}
\item[$\bullet$]  one sets $i^*=i+n$ for $i=1,\ldots,n$;
\sk
\item[$\bullet$]  we set 
$I=\{1,\ldots,n\}$, $I^*=\{1^*,\ldots,n^*\}$ and $J=I\sqcup I^*$;
\sk
\item[$\bullet$] 
 setting $(i^*)^*=i$ for $i\in I$, the map  $j\mapsto j^*$  defines an 
 involution of $J$ exchanging $I$ and $I^*$; 
 \sk
\item[$\bullet$] we consider a complexe vector space  $E$ of dimension $2n$, with a 
basis $(e_j)_{j\in J}$ indexed by $J$. By means of this basis, we identify $E$ with 
$\mathbf C^{2n}$;
\sk
\item[$\bullet$]
for  $1\leq i_1<\ldots<i_k\leq n$, we write $e_{i_1\ldots i_k}$ for $e_{i_1}\wedge \cdots \wedge e_{i_k}$;
\sk
\item[$\bullet$]  we denote by $x_i,x_{i^*}$ for $i=1,\ldots,n$ the linear coordinates on $E$ dual to the $e_i,e_{i^*}$;  
\sk
\item[$\bullet$] we endow/equip $E$ with the quadratic form 
$
Q(x)=\sum_{i=1}^n x_i x_{i^*}
$;
\sk
\item[$\bullet$] a $n$-dimensional subspace $U\subset E$ is totally isotropic if $Q$ vanishes identically  on $U$; 
\sk 
\item[$\bullet$] we set 
$V=\oplus_{i=1}^n \mathbf C e_i$ and $W=\oplus_{i=1}^n \mathbf C e_{i^*}$. Both are totally isotropic $n$-planes. Moreover each of them is the dual of the other (where the duality is the one induced on $V$ by $Q$);
\sk
\item[$\bullet$] the orthogonal grassmannian $OG_n(\mathbf C^{2n})$ is the subset of $G_n(\mathbf C^{2n})$ formed by the totally isotropic $n$-dimensional subspaces of 
$\mathbf C^{2n}$;
\sk
\item[$\bullet$] 
${\rm SO}(\mathbf C^{2n},Q)$ (or just ${\rm SO}(\mathbf C^{2n})$ to simplify) denotes the algebraic subgroup of ${\rm SL}(\mathbf C^{2n})$ formed by the linear transformations of $E$  with determinant 1 and letting $Q$ invariant. The fundamental group of ${\rm SO}(\mathbf C^{2n})$ is $\mathbf Z/2\mathbf Z$ and its universal covering is the (complex) {\it spin group} ${\rm Spin}_{2n}={\rm Spin}\big(\mathbf C^{2n}\big)$; 
\sk
\item[$\bullet$] for $i=1,\ldots,5$, we set 
\begin{itemize}
\item[$-$]
$\pi_i: E\rightarrow E/\langle e_i\rangle$  and 
$\pi_{i^*}: E\rightarrow E/\langle e_{i^*}\rangle$; 
\item[$-$]
$H_i=\{ x_i=0\}=\big(\oplus_{j\neq i} \mathbf C e_j\big) \oplus W$ and 
$H_{i^*}=\{ x_{i^*}=0\}=V\oplus \big( \oplus_{j^*\neq i^*} \mathbf C e_{j^*}\big)$;
\end{itemize}
\sk  
\end{itemize}

 \paragraph{\bf Spinor varieties, half-spin representations and the Wick embeddings}
Let $\xi=[ A,B ]$ be a $n\times 2n$ matrix by blocks, with $A,B\in M_n(\mathbf C)$ such that ${\rm rk}(\xi)=2n$. Then one denotes by 
 $\langle \xi\rangle \in G_n(\mathbf C^{2n})$  the subspace of $\mathbf C^{2n}$ spanned by the $n$ row vectors of $\xi$. When $A$ (resp.\,$B$) is invertible, one  obviously has $\langle [ A,B ]\rangle
 =\langle [ \,{\rm Id}_n,A^{-1}B \,] \rangle$ (resp.\,$\langle [ A,B ]\rangle
 =\langle [\, B^{-1}A,{\rm Id}_n \,] \rangle$).  As is well known, $\langle \xi\rangle =\langle [ A,B ]\rangle $ is a totally isotropic subspace  hence belongs to the orthogonal grassmannian $
 OG_n(E)$ if and only if the matrix $A^tB$ is antisymmetric. 
 
 Another well-known fact is that the orthogonal grassmannian has two isomorphic disjoint connected components : 
$$
OG_n\big(\mathbf C^{2n}\big)=OG_n^+\big(\mathbf C^{2n}\big)\sqcup 
OG_n^-\big(\mathbf C^{2n}\big)\,.
$$
Let $\xi_1=[ A_1,B_1]$ and $\xi_2=[ A_2,B_2]$ be  two $n\times 2n$-matrices  defining two 
totally isotropic $n$-planes in $\mathbf C^{2n}$. These 
 two   $n$-planes belong to the same connected component $OG_n^{\pm}\big(\mathbf C^{2n}\big)$ of $OG_n\big(\mathbf C^{2n}\big)$
 if and only if  
 $\dim\big( \langle \xi_1\rangle+\langle \xi_2\rangle\big)={\rm rk}\big(
 \scalebox{0.7}{\Big[ \hspace{-0.2cm}\scalebox{0.9}{\begin{tabular}{l} $A_1$ $B_1$ \vspace{-0.1cm}\\ $A_2$ $B_2$ 
\end{tabular}}  \hspace{-0.4cm}
\Big]}\big)$ has the same parity as  $n$.  

There is a decomposition in direct sum $\wedge^n E=\wedge^n E^+\oplus \wedge^n E^-$ such that 
for  $\varepsilon=\pm $, one has $\mathbf P {\wedge^nE}^\varepsilon =\big\langle \,OG_n^\varepsilon\big(E\big)\,
\big\rangle$ and $OG_n^\varepsilon\big(E\big)=  G_n\big( E\big)
 \cap
\mathbf P {\wedge^nE}^\varepsilon
$. Both embeddings $OG_n^\varepsilon\big(E\big)\subset \mathbf P {\wedge^nE}^\varepsilon$ are 
(non canonically) isomorphic. Each is not minimal but is given by the second quadratic Veronese map of a minimal embedding of $OG_n^\pm(E)$ into the projectivization of an irreducible representation $ S^\pm=S_n^\pm$  of dimension $2^{n-1}$ of the spin group ${\rm Spin}_{2n}$, called the 
{\it `half-spin representation'}.
The corresponding variety in $\mathbf P S^\pm$ is denoted by $\mathbb S_n^\pm$ and is called the $n$-th spinor variety. It is the ${\rm Spin}_{2n}$-orbit of a highest weight vector in $S^\pm$  hence one has $ \mathbb S^\pm={\rm Spin}_{2n}/P$ for a certain parabolic subgroup $P$. 
The group ${\rm Spin}_{2n}$ is a simple complex Lie group of type $D_n$ and the parabolic subgroup $P$ just mentioned is the maximal one associated to the $(n-1)$-th node of the associated Dynkin diagram (in black in the figure below): 
 \begin{figure}[h!]
\begin{center}
\scalebox{2.1}{
 \includegraphics{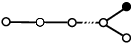}}
 \vspace{0.17cm}
\caption{The marked Dynkin diagram associated to the spinor variety  
$\mathbb S_n^+={\rm Spin}_{2n}/P_{n-1}$.}
\label{Fig:Marked_Dynkin_Diagram_Spinor-variety}
\end{center}
\end{figure}

\vspace{-0.3cm}
The two half-spin representations $S^\pm$ are abstractly indistinguishable from each other but for conveniency  we will use the notations  of \cite{Manivel} by identifying 
$S^\pm $ with $\wedge^{\rm \pm\varrho} V$ where
$\varrho=\varrho(n)\in \{ odd, even\}$ stands for the parity of $n$ (and $-\varrho$ for the opposite) 
and where 
$$\wedge^{even} V=  \oplus_{k=0}^{\lfloor n/2\rfloor} 
\wedge^{2k} V\qquad \mbox{ and } \qquad \wedge^{odd} V=  \oplus_{k=0}^{\lfloor n/2\rfloor} 
\wedge^{2k+1} V\, . $$ 

It is interesting to describe a classical affine parametrization of (a Zariski open subset of) $\mathbb  S^+$ in terms of antisymmetric matrices. We denote by ${\rm Asym}_n(\mathbf C)$ the vector space of antisymmetric matrices of size $n$.   Given a subset  $K\subset 
[ \hspace{-0.06cm}[\, n\,]\hspace{-0.06cm}]$
  and a matrix $M=(m_{ij})_{i,j=1}^n \in {\rm Asym}_n(\mathbf C)$, we set 
\begin{enumerate}
\vspace{-0.2cm}
\item[$-$] $\ell= \ell(K)={\rm Card}(K)\in \{0,\ldots,n\} $; 
\item[$-$]  $k_1,\ldots,k_\ell$ for the elements of $K$ labeled by increasing order: $1\leq k_1<\ldots<k_\ell\leq n$; 
\item[$-$]  $e_K=e_{k_1}\wedge \cdots \wedge e_{k_\ell} \in \wedge^\ell V$; 
\item[$-$]   ${}^c \hspace{-0.06cm}K= [ \hspace{-0.06cm}[\, n\,]\hspace{-0.06cm}] \setminus  K $; 
\item[$-$]  $M_K$ for the antisymmetric submatrix $(m_{k_u k_v})_{u,v=1}^\ell\in {\rm Asym}_\ell(\mathbf C)$;
\item[$-$] ${\rm Pf}(M)$ (resp.\,${\rm Pf}_K(M)$) for the Pfaffian of $M$ (resp.\,of $M_K$). 
\end{enumerate}

Then the {\it `Wick parametrization'} of $\mathbb S_n^+$ is the morphism defined as the projectivisation of the affine embedding 
\begin{align}
\label{Eq:Wick-Param}
W_n^+ : \, 
{\rm Asym}_n(\mathbf C) & \longmapsto 
S^+_n =
\wedge^{\varrho} V 
\\
M &\longmapsto \sum_{K} {\rm Pf}(M_K) e_{K^c}
\nonumber 
\end{align}
where in the sum $K$ ranges in the set of subsets of $[ \hspace{-0.06cm}[\, n\,]\hspace{-0.06cm}]$ of even cardinality.  
We will denote the Wick parametrization  ${\rm Asym}_n(\mathbf C) \rightarrow \mathbf P S_n^+$ by $W_n^+$ as well. 
It is known that 
$W_n^+$ embeds ${\rm Asym}_n(\mathbf C)\simeq \mathbf C^{n(n-1)/2}$ into $\mathbf PS_n^+$ 
and parametrizes a Zariski open subset of $\mathbb S_n^+$. One verifies that with the notation introduced above, the totally isotropic $n$-plane $W_n(A)$ associated to a given 
antisymmetric matrix $A\in {\rm Asym}_n(\mathbf C)$ is represented by the $n\times 2n$-matrix 
$\big[ {\rm Id}_n,A\big]$. 

Because this case is relevant for our purpose, let us make  the fifth Wick embedding  $W_5^+$ more explicit. We set $e_{klm}=e_k\wedge e_l\wedge e_m\in \wedge^3 V $ for $k,l,m$ such that $1\leq k<l<m\leq 5$,  
$e_{1\ldots5}=e_1\wedge e_2\wedge \ldots\wedge e_5\in \wedge^5 V\simeq \mathbf C$ and 
$\Sigma_5=\{ (i,j)\in \mathbf N^2\, \big\lvert \, 1\leq i<j \leq 5 \, \}$. 
For $x=(x_{ij})_{(i,j)\in \Sigma_5}\in \mathbf C^{ \Sigma_5}$, we denote by $A(x)$ the antisymmetric matrix whose $(i,j)$-th coefficient is $x_{ij}$ for any $\Sigma_5$. 
Then we have 
\begin{align}
\label{Eq:Wick}
\mathbf C^{ \Sigma_5}\simeq {\rm Asym}_5(\mathbf C) & \longrightarrow  \mathbf P\left( \wedge^{odd} V 
\right)\simeq \mathbf P^{15}
\\
(x_{ij})_{(i,j)\in \Sigma_5}
=x\mapsto A(x) \,  & \longmapsto  
e_{1\ldots 5}+\sum_{1\leq i<j\leq 5} x_{ij} \,e_{klm} + 
\sum_{i=1,\ldots,5}
{\rm Pf}\big( A_{\hat \imath}\big) \,e_i\, , 
\nonumber
\end{align}
where for $(i,j)\in \Sigma_5$, $(k,l,m)$ stands for the triple of increasing integers such that 
$\{i,j,k,l,m\}=\{1,\ldots,5\}$ and where for any $i$, $A_{\hat \imath}$ stands for the $4\times 4$ antisymmetric matrix obtained by removing 
the $i$th row and the $i$th column to $A(x)$.

\paragraph{\bf The Cartan torus and the weights.}
The Cartan torus $H$ we are dealing with is the rank $n$ torus formed of diagonal matrices 
\begin{equation}
\label{Eq:D-h}
D_h=
\scalebox{1}{\bigg[ \hspace{-0.2cm}\begin{tabular}{l} $h$ \, 0 \vspace{-0.1cm}\\ 0 \, $h^{-1}$ 
\end{tabular}  \hspace{-0.4cm}
\bigg]} \in {\rm SO}\big(\mathbf C^{2n}\big) \qquad \mbox{ with } 
\qquad 
h={\rm Diag}(h_1,\ldots,h_n)\in {\rm GL}_{2n}(\mathbf C)\, .
\end{equation}
The action of 
$D(h)$ on the $n$-plane associated to $\xi=[ A,B ]$ (with $A,B$ such that 
$ A^tB\in {\rm Asym}_n(\mathbf C)$ is given by 
$\langle\xi \rangle\cdot D_h=
\big\langle\, [ A,B]\cdot D_h \big \rangle $  with 
$
[ A,B]\cdot D_h=[ A h,B h^{-1}]
$. In the case when $A={\rm Id}_n$ (with $B$ antisymmetric), one has 
$$ 
\Big\langle \, \big[ \, {\rm Id}_n ,B\, \big] \, \Big\rangle \cdot D_h
=\Big\langle \, \big[ \,  {\rm Id}_n, h^{-1} B h^{-1}\, \big]\, \Big\rangle\, . 
$$

For any $i$, we set $\nu_i\in \mathfrak h^*_{\mathbf R}$ for the map associating 
$h_i$ to $D_h$ with $h={\rm Diag}(h_1,\ldots,h_n)$. Then the $\nu_i$'s form a basis of 
$\mathfrak h^*_{\mathbf R}$ allowing to identify it with $\mathbf R^n$.

From the property of the pfaffian that ${\rm Pf}(M C M^t)={\rm det}(M){\rm Pf}(C)$ 
for any antisymmetric matrix $C$ and any arbitrary square matrix $M$ (both of the same size), one deduces that for any $D_h\in H$,  any $A\in {\rm Asym}_n(\mathbf C)$ and any $K\subset [ \hspace{-0.06cm}[\, n\,]\hspace{-0.06cm}]$ of even cardinality, one has 
\begin{equation}
\label{Eq:Weight-e-Kc}
{\rm Pf}_K\big(h^{-1} Ah^{-1}\big)={\rm Pf}_K(A)/h^K
\end{equation}
 with $h^K=\prod_{l=1}^\ell h_{\kappa_l}$ if $K=\{\kappa_1,\ldots,\kappa_\ell\}$.  It follows that for any such subset $K$, the line $\mathbf C e_{K^c} \subset \wedge^{ \varrho}V$  is stable under the action of the Cartan torus, hence  $\{ e_{K^c} \}$
for $K$ ranging in the subsets of $[ \hspace{-0.06cm}[\, n\,]\hspace{-0.06cm}]$ with even cardinal 
not only forms a vector basis of $\wedge^\varrho V=S^+$, but actually is a weight basis of it as a ${\rm Spin}_{2n}$-representation.  For any such $K$, let $w(K)$ be the corresponding weight of $e_{K^c}$. It is an element of $\mathfrak h^*_{\mathbf R}=
\oplus_{i=1}^n \mathbf R \nu_i\simeq \mathbf R^n$ which can be determined explicitly in a very down to earth and elementary way from 
\eqref{Eq:Weight-e-Kc}. Combined with the fact that $e_{1\ldots n}=e_1\wedge \cdots \wedge e_n$ is known as being a vector with the highest weight which is $w(\emptyset)=
\frac12 \sum_{i=1}^n \gamma_i\in \mathfrak h^*_{\mathbf R}$, 
it follows that for any $K$ with  even cardinal, one has 
$$
w(K)=w(\emptyset)-\sum_{ k\in K} \gamma_k= \frac12 \sum_{l\in K^c} \gamma_l - 
 \frac12 \sum_{k\in K} \gamma_k\, .
$$

Up to the identification $ \mathfrak h^*_{\mathbf R}\simeq \mathbf R^n$ provided by the basis $(\gamma_i)_{i=1}^n$, we obtain the following explicit description of the set $W_n^+$ of  weights of the half-spin representation $S_n^+$: 
$$
\mathfrak W_n^+=\left\{ \, \frac{1}{2}(\epsilon_i)_{i=1}^n \in \big\{ \pm 1 \big\}^n \, \big\lvert \, 
\epsilon_1\cdots \epsilon_n=1
\,\right\}\, 
$$
and for any $w=(w_1,\ldots,w_n)$ in this set, an associated weight vector is given by 
$e_{w}=\wedge_{  \{k\, \lvert \, w_k>0\}} e_k$.  In this case, $(\frac12,\ldots,\frac12)$ is the highest weight and $e_{1\ldots n}$ is a highest weight vector.

Since it is the case relevant for our purpose in this text, the suitable subsets $K$  together with the associated weights $w(K)$, the weight vector $e_{K^c}$ as well as the corresponding coordinates 
$p_K(x)$  
in the Wick embedding (with $x\in \mathbf C^{\Sigma_5})$ are given in the following table (where the following conventions are used: 
the subsets $K$ are written as even tuples of increasing integers in $[ \hspace{-0.06cm}[\, 5\,]\hspace{-0.06cm}]$ and $i,j,k,l,m$ stand for the elements of $[ \hspace{-0.06cm}[\, 5\,]\hspace{-0.06cm}]$ with $i<j$ and $k<l<m$): 
\begin{table}[!h]
\begin{tabular}{|c||c|c|c|}
\hline
\, 
\begin{tabular}{c} \vspace{-0.35cm}\\
$\boldsymbol{K}$
\vspace{0.1cm}
\end{tabular}
  &   $\emptyset$ &  $(i,j)$  & $(j,k,l,m)$ \\ \hline 
\begin{tabular}{c} \vspace{-0.35cm}\\
 $\boldsymbol{e_{K^c}}$
 \vspace{0.1cm}
\end{tabular}
  & $e_{1\cdots 5}$ &  $e_{klm}$  & $e_i$  \\ \hline
\begin{tabular}{c} \vspace{-0.35cm}\\
$\boldsymbol{p_{K}}(x)$ 
\vspace{0.1cm}
\end{tabular}
& 1 &  $x_{ij}$  & ${\rm Pf}\big(A_{\hat \imath}\big)$  \\ \hline
\begin{tabular}{c} \vspace{-0.35cm}\\
$\boldsymbol{w(K)}$ 
\vspace{0.1cm}
\end{tabular}
 & $\frac{1}{2}(\nu_1+\cdots+\nu_5)$ &    
$\frac{1}{2}( \nu_k+\nu_l+\nu_m-\nu_i-\nu_j) $
 & 
$\frac{1}{2}(  \nu_i- \nu_j- \nu_k- \nu_l- \nu_m) $ 
   \\ \hline  
\end{tabular}
  \bk
\caption{The weight vectors, the associated Wick coordinates and the corresponding weights.}
\label{Table:tokolo}
\end{table}

\vspace{-0.2cm}
We will use a similar description of the weights and of the associated weight vectors for the other half-spin representation $S_n^+$: the set of weights is 
$$
\mathfrak W_n^-=\left\{ \, \frac{1}{2}\big(\epsilon_i\big)_{i=1}^n 
 \hspace{0.1cm}  \Big\lvert \hspace{0.06cm}  
 \begin{tabular}{l}
$\forall\, i\hspace{0.04 cm}  : \hspace{0.04 cm}  \epsilon_i \in 
\{ \pm 1 \}$ \\
$ \epsilon_1\cdots \epsilon_n=-1$ 
 \end{tabular}
\,\right\}\, 
$$
and for any $w=(w_1,\ldots,w_n)$ in this set, an associated weight vector is given by 
$e_{w}=\wedge_{  \{k\, \lvert \, w_k>0\}} e_k$.
In this case, $(\frac12,\ldots,\frac12,-\frac12)$ is the highest weight and $e_1\wedge \cdots \wedge  e_{n-1}$ is a highest weight vector.

\paragraph{\bf Isomorphisms.}
Here we describe some isomorphims between the half-spin representations under consideration which we will use further on. In what follows, $\varepsilon$ stands for $+$ or $-$.

For $i,j\in [ \hspace{-0.06cm}[\, n\,]\hspace{-0.06cm}]$ distinct, let $g_{ij}$ be the linear involution of $E$ defined, in the considered basis $(e_1,\ldots,e_n,e_{1^*},\ldots,e_{n^*})$ by exchanging $e_i$ and $e_j$ as well as  $e_{i^*}$ and 
$e_{j^*}$ and letting all the others $e_k$ and $e_{k^*}$ unchanged.  One has $g_{ij}\in {\rm SO}(E)$
hence interior conjugation by $g_{ij}$ gives rise to another representation that we will denote by $S_{n,ij}^\varepsilon$ to differentiate it from the initial $S_n^\varepsilon$.  
The  underlying vector spaces are the same, namely $\wedge^{\varepsilon\varrho} V$ and the isomorphism $\wedge^{\varepsilon\varrho} V \stackrel{\sim}{\longrightarrow}
\wedge^{\varepsilon\varrho} V$, again denoted by $g_{ij}$,  is the natural map $e_{L}\longmapsto e_{(ij)L}$ where 
$(ij)L$ stands for image of the subset $L\subset [ \hspace{-0.06cm}[\, n\,]\hspace{-0.06cm}]$ by the transposition $(ij)$.
If one denotes by $\mathbb S_{n,ij}^\varepsilon$ the projectivisation of the closed orbit in $\wedge^{\varepsilon\varrho} V $ when this space is viewed as the representation 
$S_{n,ij}^{\varepsilon}$, then the  restriction of the map $g_{ij}\in {\rm GL}(\wedge^{\varepsilon\varrho} V)$ gives rise to an isomorphism from $\mathbb S_n^\varepsilon$ onto $\mathbb S_{n,ij}^\varepsilon$. 
We remark that the line spanned by a highest weight vector in $S_n^\varepsilon$ is left invariant by $g_{ij}$ if and only if $\varepsilon=+$ of $\varepsilon=-$ and $n\not \in \{i,j\}$. Consequently in these cases, and in these cases only, one has $\mathbb S_{n,ij}^\varepsilon=\mathbb S_{n}^\varepsilon$  as subvarieties of $\mathbf P\big(\wedge^{\varepsilon\varrho} V\big)$.

The two half-spin representations $S_n^+$ and $S_n^-$ are isomorphic but not in a canonical way. 
We will need below to consider some  isomorphisms between them that we are going to describe now.

We fix $i\in [ \hspace{-0.06cm}[\, n\,]\hspace{-0.06cm}]$ and for any subset $L\subset 
 [ \hspace{-0.06cm}[\, n\,]\hspace{-0.06cm}]$, we set 
 $$
 L^i=\begin{cases}
 \, L\setminus \{i \} \hspace{0.3cm} \mbox{if }\, i\in L;\\
  \, L\cup \{i \} \hspace{0.2cm} \mbox{if }\, i\not \in L. 
 \end{cases}
 $$
The map $L\mapsto L^i$ induces a bijection from the set of subsets of  $[ \hspace{-0.06cm}[\, n\,]\hspace{-0.06cm}]$ whose cardinal has the same parity as $n$ onto the one of subsets with the opposite parity. Then associating $e_{L^i}$ to $e_L$ gives rise to a linear isomorphism
\begin{equation}
\label{Eq:Gamma-i}
\Gamma_i  : \wedge^{\pm\varrho} V \stackrel{\sim}{\longrightarrow} 
\wedge^{\mp\varrho} V
\end{equation}%
which can be shown to induce an  isomorphism of representations 
from $S_n^\pm $ onto $S_n^\mp$ The induced linear  isomorphism 
 of $\mathfrak h^*_{\mathbf R}$, denoted by $\overline{\Gamma}_i$, is characterized by 
 the relations $\overline{\Gamma}_i(\nu_j)=\nu_j$ if $j\neq i$ and $\overline{\Gamma}_i(\nu_i)=-\nu_i$. The corresponding bijection from $\mathfrak W_n^\pm$ onto $\mathfrak W_n^\mp$ simply involves replacing the $i$-th coordinate of the weight by its opposite, leaving all other coordinates unchanged.

\paragraph{\bf Wick parametrizations of $\mathbb S_n^-$.}
If the Wick parametrization \eqref{Eq:Wick-Param} is a priviledged birational para-  metrization of the spinor variety $\mathbb S_n^+$, we do not have anything similar at disposal regarding the other spinor manifold $\mathbb S_n^-$.  

However one gets several parametrizations of (a Zariski open subset of) $\mathbb S_n^-$ by composing $W_n^+$ with a projective isomorphism $G  $ from $S_n^+ $ onto $ S_n^-$. 
For such a $G$, one can for instance take a composition $\Gamma_n\circ g_{ij}$ for any distinct $i,j\in  [ \hspace{-0.06cm}[\, n\,]\hspace{-0.06cm}]$.  Any such map 
${\rm Asym}_n(\mathbf C)\rightarrow \mathbb S_n^-$ 
 will be called a `Wick parametrization' of $\mathbb S_n^-$.  This generalizes in a straightforward way to any spinor avatars $\mathbb S_{n,ij}^-$.

\paragraph{\bf The weight polytope and its facets.}
For $\varepsilon\in \{ +, -\}$, 
the set of weights of $S_{ {n}}^\varepsilon$ is the set $\mathfrak W_n^\varepsilon$ of $n$-tuples $\frac12 (\epsilon_1,\ldots,\epsilon_n)\in \mathfrak h_{} ^*\simeq \mathbf R^n$ with $\epsilon_i\in\{\pm 1\}$ for all indices $i \in  [ \hspace{-0.06cm}[\, n\,]\hspace{-0.06cm}]$  and such that 
$\epsilon_1\cdots\epsilon_n= \varepsilon 1$ ({\it i.e.}\,the number of $\epsilon_i$'s equal to -1 is even if $\varepsilon=+$ and is odd  when $\varepsilon=-$). 

It is known that the moment polytope $\Delta_n^\varepsilon$ of 
$\mathbb S_n^\varepsilon\subset \mathbf PS_{n}^\varepsilon$
coincides with the weight polytope: one has $\Delta_n^\varepsilon={\rm Conv}\big( \mathfrak W_n^\varepsilon \big)\subset \mathfrak h^*_{ \mathbf R}$. From the description of 
 $\mathfrak W_n^\varepsilon$ recalled just above, it follows that 
this polytope is a half-hypercube of dimension $n$ whose combinatorial properties are well-known. In particular, its facets are known: these are  
$2^n$  hypersimplices and  $2n$ are half-hypercubes. 
The former facets are easily seen to be of type $(A_{n-1}, \omega_{n-1})$ (see Figure \ref{Fig:Marked_Dynkin_Diagram_D5_omega4}) hence all are 
$\boldsymbol{\mathcal W}$-irrelevant. For this reason, we will not consider them further. On the other hand, the 
$2n$ half-hypercubic facets all are 
$\boldsymbol{\mathcal W}$-relevant and  are easy to describe: they come in pairs $(
\Delta_{n,i}^{\varepsilon,+}, \Delta_{n,i}^{\varepsilon,-})$ for $i=1,\ldots,n$, with 
\begin{align}
\label{Eq:Delta-n-i}
\Delta_{n,i}^{ \varepsilon,\pm}= & \, \Delta_{n}^\varepsilon \cap \big\{\,\lambda_i=\pm 1/2\, \big\}= {\rm Conv}( 
\mathfrak W_{n,i}^{\varepsilon,\pm}\big)
\end{align}
where 
${\mathfrak W}_{n,i}^{\varepsilon,\pm }$ stands for the subset of 
$\mathfrak W_{n}^{\varepsilon}$ formed by its elements whose $i$-th coordinates are  $\pm \frac{1}{2}$. Obviously, if $p_i: \mathbf R^n\rightarrow \mathbf R^{n-1}$ stands for the linear projection 
consisting in forgetting the $i$-th coordinate, one has 
$$
p_i\big(\mathfrak W_{n,i}^{ \varepsilon,\pm}\big)=\mathfrak W_{n-1}^{ \pm \varepsilon}
\qquad \mbox{ and  } \qquad 
p_i\big(\Delta_{n,i}^{ \varepsilon,\pm}\big)=\Delta_{n-1}^{ \pm \varepsilon}\, . 
$$

 \paragraph{\bf The face maps.}
 Our goal here is to describe the face maps associated to the $2n$ $\boldsymbol{\mathcal W}$-relevant
 hypercubic facets of $\Delta_n^+$. We fix $i\in  [ \hspace{-0.06cm}[\, n\,]\hspace{-0.06cm}]$ in what follows. 

For $\varepsilon=\pm$, we denote by $S_{n,i}^{+,\varepsilon}$ the subspace of $S_n^+$ spanned by the weight vectors with weight in  $\mathfrak W_{n,i}^{+, \varepsilon}$:  
$S_{n,i}^{+,+}$ (resp.\,$S_{n,i}^{+,-}$) admits as a basis  the set of of weight vectors $e_L$ for all 
$L\subset [ \hspace{-0.06cm}[\, n\,]\hspace{-0.06cm}]$ with $L^c=[ \hspace{-0.06cm}[\, n\,]\hspace{-0.06cm}]\setminus L$ of even cardinal and with $i\in L$ (resp.\,with $i\not \in L$): one has 
$$
S_{n,i}^{+,+}=\bigoplus_{  \substack{L\subset [ \hspace{-0.06cm}[\, n\,]\hspace{-0.06cm}], \, i\in L \\
\#L^c \,\scalebox{0.84}{\mbox{ even}}}} \mathbf C  e_L
\qquad \mbox{ and } 
\qquad
S_{n,i}^{+,-}=\bigoplus_{  \substack{L\subset [ \hspace{-0.06cm}[\, n\,]\hspace{-0.06cm}], \, i\not \in L \\
\#L^c \,\scalebox{0.84}{\mbox{ even}}}} \mathbf C  e_L\,. 
$$

The subgroup ${\rm Spin}_{2n,i}$ of 
${\rm Spin}_{2n}$ which lets  invariant
the  decomposition in direct sum 
\begin{equation}
\label{Eq:DecompDirectSum}
S^+_n=S^{+,+}_{n,i} \oplus S^{+,-}_{n,i}
\end{equation}
 is naturally isomorphic to ${\rm Spin}_{2(n-1)}$. Moreover,  up to the identification ${\rm Spin}_{2n,i}\simeq {\rm Spin}_{2(n-1)}$, $S^{+,+}_{n,i}$ and $S^{+,-}_{n,i}$ identify to the two half-spin representations $S_{n-1}^+$ and $S_{n-1}^-$ respectively: this follows easily from the well-known fact that for any $n\geq 5$, one has 
$
S_{ {n}}^+ \big\downarrow_{ {\rm Spin}_{2(n-1)}}=
S_{{n-1}}^+\oplus 
S_{ {n-1}}^-
$ ({\it cf.}\,Proposition 5.1 in \cite{Deligne} for instance).

For $\varepsilon\in \{ -,+\}$, one denotes by  $\Pi_{n,i}^{\varepsilon}$ the linear projection 
from $S^+_n$ onto $S^{+,\varepsilon }_{n,i}$ relatively to the decomposition in direct sum 
\eqref{Eq:DecompDirectSum}: 
\begin{align*}
\Pi_{n,i}^{\varepsilon} : S^+_n=S^{+,+}_{n,i} \oplus S^{+,-}_{n,i} & \longrightarrow
S^{+,\varepsilon }_{n,i}\,\\
 \,  \big( x^+,x^-\big) \, & \longmapsto \, x^{\varepsilon}\, . 
\end{align*}
We will use the same notation for the linear projection on the corresponding projective spaces $\mathbf P S^+_n\simeq \mathbf P^{2^{n-1}-1}$ and $\mathbf P S^{+,\varepsilon }_{n,i} \simeq \mathbf P^{2^{n-2}-1}$. 

In this setting,  Proposition \ref{Prop:X->XF} takes the following form: 
\begin{prop}
Up to the natural identification $S_{n,i}^{+,\varepsilon}\simeq S_{n-1}^\varepsilon$ mentioned above: 

$-$  the intersection $\mathbb S_{n,i}^\varepsilon\stackrel{{\rm def}}{=} \mathbb S_{n}^+ \cap \mathbf P S_{n,i}^{+,\varepsilon}$ identifies with $\mathbb S_{n-1}^\varepsilon$;

$-$ by restriction, the projection $\Pi_{n,i}^\varepsilon$ induces a dominant rational map 
$\Phi_{n,i}^\varepsilon : \mathbb S_{n}^+ \dashrightarrow \mathbb S_{n-1}^\varepsilon$. 
\end{prop}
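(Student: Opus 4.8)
The plan is to establish the two claims as direct consequences of the general facet theory developed in Propositions \ref{Prop=XF=GF/PF} and \ref{Prop:X->XF}, specialized to the facet $F=\Delta_{n,i}^{+,\varepsilon}$ of the weight polytope $\Delta_n^+$. The key observation is that the abstract facet data $(V_F, \Pi_F, G_F, X_F)$ attached to this facet coincides, under the identifications already set up, with the concrete objects $S_{n,i}^{+,\varepsilon}$, $\Pi_{n,i}^\varepsilon$, ${\rm Spin}_{2(n-1)}$ and $\mathbb S_{n,i}^\varepsilon$ appearing in the statement. So the proof is essentially a dictionary-matching argument followed by an appeal to the general propositions.

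First I would verify that $V_F=S_{n,i}^{+,\varepsilon}$. By definition $V_F=\oplus_{w\in F\cap \mathfrak W}\mathbf C e_w$, and since $F=\Delta_{n,i}^{+,\varepsilon}={\rm Conv}\big(\mathfrak W_{n,i}^{+,\varepsilon}\big)$ with $\mathfrak W_{n,i}^{+,\varepsilon}$ the weights whose $i$-th coordinate equals $\pm\tfrac12$, the intersection $F\cap\mathfrak W$ is exactly $\mathfrak W_{n,i}^{+,\varepsilon}$ (the weights of $S_n^+$ are vertices of the half-hypercube, so no weight other than a vertex lies on a facet). This identifies $V_F$ with the span of the $e_L$ described above, i.e.\ with $S_{n,i}^{+,\varepsilon}$, and correspondingly the abstract linear projection $\Pi_F$ matches $\Pi_{n,i}^\varepsilon$. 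Next, applying Proposition \ref{Prop=XF=GF/PF}, the group $G_F$ is simple of type $D_F$; since the facet $\Delta_{n,i}^{+,\varepsilon}$ is a half-hypercube (a $\boldsymbol{\mathcal W}$-relevant facet), one has $D_F=D_{n-1}$, so $G_F\simeq{\rm Spin}_{2(n-1)}$, and by part 2 of that proposition $V_F$ is the irreducible $G_F$-module of highest weight $\omega_F$. The identification of $\omega_F$ with the half-spin highest weight, together with the branching rule $S_n^+\downarrow_{{\rm Spin}_{2(n-1)}}=S_{n-1}^+\oplus S_{n-1}^-$ cited from \cite{Deligne}, pins down $S_{n,i}^{+,+}\simeq S_{n-1}^+$ and $S_{n,i}^{+,-}\simeq S_{n-1}^-$, which is precisely the identification the proposition requires.

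For the first dash of the statement, Proposition \ref{Prop:X->XF}(1) gives $X_F\stackrel{\rm def}{=}X\cap\mathbf PV_F=G_F/P_F$, and under the above matching this reads $\mathbb S_n^+\cap\mathbf PS_{n,i}^{+,\varepsilon}=\mathbb S_{n-1}^\varepsilon$, which is the desired identification of $\mathbb S_{n,i}^\varepsilon$. For the second dash, Proposition \ref{Prop:X->XF}(2) asserts that the restricted projection $\Pi_F:X\dashrightarrow X_F$ is a dominant $(G^F,G_F)$-equivariant rational map; transporting this through the identifications yields the dominant rational map $\Phi_{n,i}^\varepsilon:\mathbb S_n^+\dashrightarrow\mathbb S_{n-1}^\varepsilon$. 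Thus both claims reduce to the already-available general propositions once the dictionary is in place.

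The main obstacle I expect is not the geometry but the bookkeeping of the identification $S_{n,i}^{+,\varepsilon}\simeq S_{n-1}^\varepsilon$ as genuine ${\rm Spin}_{2(n-1)}$-representations, rather than merely as graded pieces of a vector-space decomposition. One must check that the subgroup ${\rm Spin}_{2n,i}$ stabilizing the decomposition \eqref{Eq:DecompDirectSum} is really (a lift of) ${\rm Spin}_{2(n-1)}$ and that its action on each summand is the correct half-spin action with the right parity $\varepsilon$; this is where the branching rule and a careful tracking of weights via $\overline{\Gamma}_i$ (which flips the sign of the $i$-th coordinate and hence interchanges the two parities) do the real work. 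Once the equivariance of the splitting is confirmed, everything else follows formally from Propositions \ref{Prop=XF=GF/PF} and \ref{Prop:X->XF}, so I would present the argument as a specialization rather than reproving the Bia\l{}ynicki-Birula input underlying those propositions.
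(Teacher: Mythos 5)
Your proposal is correct and follows exactly the route the paper takes: the paper states this proposition with no separate proof, introducing it by the phrase that ``in this setting, Proposition \ref{Prop:X->XF} takes the following form,'' i.e.\ it is precisely the specialization of Propositions \ref{Prop=XF=GF/PF} and \ref{Prop:X->XF} to the half-hypercube facets $\Delta_{n,i}^{+,\varepsilon}$ of $\Delta_n^+$, with the identification $S_{n,i}^{+,\varepsilon}\simeq S_{n-1}^\varepsilon$ justified via the branching rule $S_n^+\downarrow_{{\rm Spin}_{2(n-1)}}=S_{n-1}^+\oplus S_{n-1}^-$ cited from \cite{Deligne}. Your explicit dictionary (matching $V_F$ with $S_{n,i}^{+,\varepsilon}$, $\Pi_F$ with $\Pi_{n,i}^\varepsilon$, and the facet type $(D_{n-1},\omega_{n-2})$ with the spinor variety $\mathbb S_{n-1}^\varepsilon$) simply spells out what the paper leaves implicit.
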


It is easy to give an explicit expression for the map $\Phi_{n,i}^+ : \mathbb S_{n}^+ \dashrightarrow \mathbb S_{n-1}^+$ in terms of `Wick coordinates'. 
Let $\iota: [ \hspace{-0.06cm}[\, n\,]\hspace{-0.06cm}]\setminus \{i\} \stackrel{\sim}{\rightarrow}
[ \hspace{-0.06cm}[\, n-1\,]\hspace{-0.06cm}]$ stand for the bijection preserving the usual orders of the source and of the target.\footnote{More explicitly, for any $j \in [ \hspace{-0.06cm}[\, n\,]\hspace{-0.06cm}]\setminus \{i\}$, one has $\iota(j)=j$ if $j<i$  and $\iota(j)=j-1$ if $i<j$.}
To any such subset $K$, $\iota(K)$ is a subset of $[ \hspace{-0.06cm}[\, n-1\,]\hspace{-0.06cm}]\setminus \{i\}$ with even cardinal too. Hence the map which associates 
$e_{\iota(K)^c}$ to $e_{K^c}$ for any such $K$ gives rise to a linear isomorphism of vector spaces $\underline{\iota} : S_{n,i}^{+,+}\rightarrow  S_{n-1}^+$ whose projectivisation 
$\mathbf P S_{n,i}^{+,+}\rightarrow  \mathbf P S_{n-1}^+$ will be denoted the same. 
 On the other hand, the components of 
the composition $\Pi_{n,i}^\varepsilon\circ W_n^+: {\rm Asym}_n(\mathbf C) \rightarrow \mathbf P S_{n,i}^{+,+}$ are immediately seen to be the pfaffians ${\rm Pf}(A_K)$ for all subsets $K\subset 
[ \hspace{-0.06cm}[\, n\,]\hspace{-0.06cm}]\setminus \{i\}$ with even cardinal.  It is then easy to get the following result giving a nice expression in birational coordinates for 
$\Phi_{n,i}^+$: 
\begin{prop}
\label{Eq:Phi-n-i-+}
The following diagram of rational maps is commutative
$$
\xymatrix@R=1.2cm@C=1.6cm{
{\rm Asym}_n(\mathbf C) \,  \ar@{->}[d]_{\widetilde\Phi_{n,i}^+}
\ar@{^{(}->}[r]^{ {}^{} \hspace{0.6cm} W_n^+} &  \mathbb S_n^+  \, 
 \ar@{->}[d]^{\Phi_{n,i}^+} 
 \ar@{^{(}->}[r]& \mathbf P S_n^+ \ar@{-->}[r]^{ \Pi_{n,i}^{+}}&  \mathbf P S_{n,i}^ {+,+}
  \ar@{->}[d]^{\underline{\iota}}
 \\ 
 {\rm Asym}_{n-1}(\mathbf C)\,
 \ar@{^{(}->}[r]^{ {}^{} \hspace{0.6cm} W_{n-1}^+} &  \mathbb S_{n-1}^+  
  \ar@{^{(}->}[rr] & & \mathbf P S_{n-1}^ {+}\, , 
 }
$$
where $\widetilde\Phi_{n,i}^+ $ stands for the map associating to any antisymmetric matrix 
$A \in {\rm Asym}_n(\mathbf C) $ the submatrix $A_{\hat \imath}\in {\rm Asym}_{n-1}(\mathbf C) $ obtained by removing from it its $i$-th line and $i$-th column. 
\end{prop}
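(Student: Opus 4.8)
The plan is to verify the commutativity directly on the dense affine chart. Every arrow in the diagram is a rational map, and both composites $\underline{\iota}\circ\Pi_{n,i}^{+}\circ W_n^{+}$ and $W_{n-1}^{+}\circ\widetilde{\Phi}_{n,i}^{+}$ are represented by coordinate vectors that are \emph{polynomial} in the entries of the input antisymmetric matrix (the Wick maps are polynomial and $\Pi_{n,i}^{+}$, $\underline{\iota}$ are linear). Hence it suffices to fix $A\in{\rm Asym}_n(\mathbf{C})$ and to check that these two coordinate vectors in $S_{n-1}^{+}$ coincide; I would do this by tracing a weight vector through each path and matching the two expansions term by term.

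First I would compute the projection. By the Wick formula \eqref{Eq:Wick-Param} one has $W_n^{+}(A)=\sum_K{\rm Pf}(A_K)\,e_{K^c}$, the sum running over subsets $K\subset\{1,\dots,n\}$ of even cardinality. Recall that $S_{n,i}^{+,+}$ is spanned by those $e_L$ with $i\in L$ and $\#L^c$ even; writing $L=K^c$, this selects exactly the Wick terms with $i\in K^c$, that is with $i\notin K$ (the parity condition $\#L^c=\#K$ even being automatic). Therefore
\[
\Pi_{n,i}^{+}\big(W_n^{+}(A)\big)=\sum_{\substack{K\subset\{1,\dots,n\}\setminus\{i\}\\ \#K\ \mathrm{even}}}{\rm Pf}(A_K)\,e_{K^c}.
\]
Next I would apply $\underline{\iota}$ and reindex. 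By its definition $\underline{\iota}$ sends $e_{K^c}\mapsto e_{\iota(K)^c}$, the complement now being taken in $\{1,\dots,n-1\}$; setting $K'=\iota(K)$ — so that $K'$ ranges over all even subsets of $\{1,\dots,n-1\}$ as $K$ ranges over the even subsets of $\{1,\dots,n\}\setminus\{i\}$ — yields
\[
\underline{\iota}\,\Pi_{n,i}^{+}\big(W_n^{+}(A)\big)=\sum_{\substack{K'\subset\{1,\dots,n-1\}\\ \#K'\ \mathrm{even}}}{\rm Pf}\big(A_{\iota^{-1}(K')}\big)\,e_{K'^c}.
\]

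On the other side, $\widetilde{\Phi}_{n,i}^{+}(A)=A_{\hat\imath}$, and again by the Wick formula $W_{n-1}^{+}(A_{\hat\imath})=\sum_{K'}{\rm Pf}\big((A_{\hat\imath})_{K'}\big)\,e_{K'^c}$. The two displays therefore match provided
\[
{\rm Pf}\big((A_{\hat\imath})_{K'}\big)={\rm Pf}\big(A_{\iota^{-1}(K')}\big)
\qquad\text{for every even } K'\subset\{1,\dots,n-1\}.
\]
This is the sole point requiring a little care, and it is where I expect the only (modest) obstacle to lie, namely the bookkeeping of complements in $\{1,\dots,n\}$ versus $\{1,\dots,n-1\}$ and the control of Pfaffian signs. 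The identity holds because the order-preserving bijection $\iota:\{1,\dots,n\}\setminus\{i\}\to\{1,\dots,n-1\}$ identifies the rows and columns of $A_{\hat\imath}$ indexed by $K'$ with those of $A$ indexed by $\iota^{-1}(K')$, the increasing orderings on both index sets corresponding under $\iota$; consequently the two antisymmetric submatrices are literally equal, and \emph{crucially no sign is introduced precisely because $\iota$ preserves order}, so their Pfaffians coincide. With this identity the two coordinate vectors agree term by term, which establishes the commutativity of the diagram; everything else is a formal unwinding of the definitions of the Wick embedding and of the projection $\Pi_{n,i}^{+}$.
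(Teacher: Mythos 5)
Your proof is correct and follows essentially the same route as the paper: the text justifies the proposition by observing that the components of $\Pi_{n,i}^{+}\circ W_n^{+}$ are exactly the Pfaffians ${\rm Pf}(A_K)$ for even subsets $K\subset\{1,\dots,n\}\setminus\{i\}$, and then matching these against the Wick parametrization $W_{n-1}^{+}(A_{\hat\imath})$, which is precisely your term-by-term comparison. Your explicit remark that $(A_{\hat\imath})_{K'}=A_{\iota^{-1}(K')}$ as matrices (so no Pfaffian sign arises, because $\iota$ is order-preserving) just spells out the bookkeeping the paper leaves implicit.
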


Even if we will not use this, it is interesting to give a more geometric description of the map $\Phi_{n,i}^+ : \mathbb S_{n}^+ \dashrightarrow \mathbb S_{n-1}^+$. To this aim, we need to identify $ \mathbb S_{n}^+$ with one of the components of the orthogonal grassmannian $OG_n(\mathbf C^{2n})$, which we are going to do as follows:  always assuming that 
$V=\oplus_{i=1}^n \mathbb Ce_i$ is in the component of $OG_n(\mathbf C^{2n})$ identified with 
$\mathbb S_n^+$, we see that the latter identifies with the set of totally isotrope subspaces $U\in 
OG_n(\mathbf C^{2n})$ such that the parity of the codimension
in $\mathbf C^{2n}$ 
 of the space spanned by $U$ and $V$  is the same as that of $n$: 
$$
\mathbb S_n^+\simeq \Big\{ \, U \in OG_n(\mathbf C^{2n})\hspace{0.1cm}   \big\lvert \hspace{0.1cm}   n+{\rm codim}\big( \langle U,V\rangle \big) \mbox{ is even }\, \Big\}\, .
$$ 

\begin{prop}
\label{Prop:Geom-Phini+}
 Up to the identifications above for $\mathbb S_n^+$ and $\mathbb S_{n-1}^+$, 
the face map $\Phi_{n,i}^+ : \mathbb S_n^+\dashrightarrow \mathbb S_{n-1}^+$ is the rational  application associating  to any generic  $\Pi\in \mathbb S_n^+$ the totally isotropic $(n-1)$-plane 
 $\pi_{i^*}(\Pi\cap H_i) = (\Pi\cap H_i)/_{\langle e_{i*}\rangle} $ of $E_{i,i^*}=H_i/{\langle e_{i*}\rangle} $. 
\end{prop}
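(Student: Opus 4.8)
I would prove Proposition~\ref{Prop:Geom-Phini+} by unwinding the algebraic description of the face map $\Phi_{n,i}^+$ given in Proposition~\ref{Eq:Phi-n-i-+} and matching it against the geometry of totally isotropic subspaces. The key idea is that the Wick parametrization $W_n^+$ sends an antisymmetric matrix $A$ to (the projective class of) the totally isotropic $n$-plane $\langle [\,{\rm Id}_n, A\,]\rangle$, and that $\widetilde\Phi_{n,i}^+$ simply deletes the $i$-th row and column of $A$. So the whole content is to check that this deletion operation corresponds, on the level of isotropic subspaces, to the two geometric operations described: intersecting $\Pi$ with the coordinate hyperplane $H_i=\{x_i=0\}$ and then projecting modulo $\langle e_{i^*}\rangle$.

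First I would set up coordinates explicitly. Write $\Pi=\langle [\,{\rm Id}_n, A\,]\rangle$ for a generic $A\in{\rm Asym}_n(\mathbf C)$, so $\Pi$ is spanned by the row vectors $f_k = e_k + \sum_{l=1}^n a_{kl}\, e_{l^*}$ for $k=1,\ldots,n$. The intersection $\Pi\cap H_i$ consists of the vectors $\sum_k \lambda_k f_k$ whose $e_i$-coordinate vanishes, i.e.\ those with $\lambda_i=0$; generically this is the $(n-1)$-plane spanned by $\{f_k\}_{k\neq i}$. Next I would compute the image of this $(n-1)$-plane under $\pi_{i^*}: E\to E/\langle e_{i^*}\rangle$, identified with the space $E_{i,i^*}=H_i/\langle e_{i^*}\rangle$ carrying coordinates indexed by $([\![n]\!]\setminus\{i\})\sqcup(I^*\setminus\{i^*\})$. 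The image of $f_k$ (for $k\neq i$) is $e_k + \sum_{l\neq i} a_{kl}\, e_{l^*}$, since the $e_{i^*}$-term is killed. Relabelling via the order-preserving bijection $\iota$, this is exactly the spanning system of $\langle[\,{\rm Id}_{n-1}, A_{\hat\imath}\,]\rangle$, where $A_{\hat\imath}$ is $A$ with its $i$-th row and column removed. This is precisely $W_{n-1}^+\big(\widetilde\Phi_{n,i}^+(A)\big)$, so the two descriptions agree on the dense Wick chart.

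To finish, I would record that $E_{i,i^*}=H_i/\langle e_{i^*}\rangle$ inherits a nondegenerate quadratic form from $Q=\sum_j x_j x_{j^*}$ (namely $\sum_{j\neq i} x_j x_{j^*}$), that $\pi_{i^*}(\Pi\cap H_i)$ is totally isotropic for it, and that this defines the appropriate component $\mathbb S_{n-1}^+$ via the parity identification stated just before the proposition; the parity bookkeeping is the reason the target is $\mathbb S_{n-1}^+$ rather than $\mathbb S_{n-1}^-$. Since both $\Phi_{n,i}^+$ and the geometric map are morphisms defined on dense open subsets of the irreducible variety $\mathbb S_n^+$ and they coincide on the image of $W_n^+$, they coincide as rational maps.

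\textbf{Main obstacle.} The genuinely delicate point is not the linear algebra of the intersection-and-projection, which is routine once coordinates are fixed, but tracking the parity/sign conventions so that the geometric construction lands in the correct spinor component $\mathbb S_{n-1}^+$ compatibly with the identification $\underline\iota: S_{n,i}^{+,+}\xrightarrow{\sim} S_{n-1}^+$ used in Proposition~\ref{Eq:Phi-n-i-+}. In particular one must verify that for generic $\Pi$ the codimension parity condition $(n-1)+{\rm codim}\langle \pi_{i^*}(\Pi\cap H_i),V_{\hat\imath}\rangle$ is even, matching the description of $\mathbb S_{n-1}^+$ as a component of $OG_{n-1}(\mathbf C^{2(n-1)})$; this is where the hypothesis ``$\Pi$ generic'' is really used, since for special $\Pi$ the intersection with $H_i$ may jump in dimension or land in the wrong component.
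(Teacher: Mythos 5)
Your proof is correct, and it fills in a proof that the paper itself omits (the paper states this proposition without argument, deferring details to a forthcoming companion paper): your route — computing $\Pi\cap H_i=\langle f_k\rangle_{k\neq i}$ and $\pi_{i^*}(f_k)$ on the Wick chart, matching this with $\widetilde\Phi_{n,i}^+(A)=A_{\hat\imath}$ from Proposition~\ref{Eq:Phi-n-i-+}, and concluding by density of the chart in the irreducible variety $\mathbb S_n^+$ — is exactly the argument the paper's setup is designed to make possible. Your parity check (using that ${\rm rk}(A)$ and ${\rm rk}(A_{\hat\imath})$ are even, so the codimension parities match $n$ and $n-1$ respectively) correctly handles the only delicate point, namely that the geometric construction lands in the component identified with $\mathbb S_{n-1}^+$.
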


We now turn to the description of the face maps associated to the facets $\Delta_{n,i}^{+,-}$ of 
$\Delta_{n}^{+}$.  
Rigorously speaking, such a map is a rational map 
$$ \Phi_{n,i}^- : 
\mathbb S_n^+\dashrightarrow \mathbb S_{n-1}^-\,.$$ 
The preceding geometric description of the maps 
$\Phi_{n,i}^+ : \mathbb S_n^+\dashrightarrow \mathbb S_{n-1}^+$ 
associated to the facets $\Delta_{n,i}^{+,+}$ 
 has the peculiarity of suggesting the corresponding statement for the other facets we are now considering.  
\begin{prop}
\label{Prop:Geom-Phini-}
The face map $\Phi_{n,i}^- : \mathbb S_n^+\dashrightarrow \mathbb S_{n-1}^-$ is the rational  application associating  to any generic  $\Pi\in \mathbb S_n^+$ the totally isotropic $(n-1)$-plane 
 $\pi_{i}(\Pi\cap H_{i^*}) = (\Pi\cap H_{i^*})/_{\langle e_{i}\rangle} $ of $E_{i^*,i}=H_{i^*}/{\langle e_{i}\rangle} $. 
\end{prop}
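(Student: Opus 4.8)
The plan is to prove Proposition~\ref{Prop:Geom-Phini-} by reducing it to the already-established Proposition~\ref{Prop:Geom-Phini+} via one of the isomorphisms $\Gamma_i$ of \eqref{Eq:Gamma-i} between the two half-spin representations. The key observation is that the map $L\mapsto L^i$ swapping membership of the index $i$ interchanges the weight subspaces $S_{n,i}^{+,+}$ and $S_{n,i}^{+,-}$: indeed $\Gamma_i$ carries weights with $i$-th coordinate $+\tfrac12$ to weights with $i$-th coordinate $-\tfrac12$ (this is exactly the characterization $\overline{\Gamma}_i(\nu_i)=-\nu_i$, $\overline{\Gamma}_i(\nu_j)=\nu_j$ for $j\neq i$), so that $\Gamma_i\big(S_{n,i}^{+,+}\big)=S_{n,i}^{-,-}$ and $\Gamma_i$ intertwines $\Pi_{n,i}^{-}$ on $S_n^+$ with $\Pi_{n,i}^{+}$ on $\Gamma_i(S_n^+)=S_n^-$. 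Geometrically, $\Gamma_i$ realizes the orthogonal reflection $g$ of $E$ that exchanges $e_i\leftrightarrow e_{i^*}$ and fixes all other basis vectors; this reflection lies in $O(\mathbf C^{2n})\setminus SO(\mathbf C^{2n})$ and therefore swaps the two components of $OG_n(\mathbf C^{2n})$, carrying $\mathbb S_n^+$ to $\mathbb S_n^-$ (and, on the $(n-1)$-level, $\mathbb S_{n-1}^+$ to $\mathbb S_{n-1}^-$).

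First I would make precise the identification of $\mathbb S_n^+$ with the component of $OG_n(\mathbf C^{2n})$ singled out just before Proposition~\ref{Prop:Geom-Phini+}, and check that the reflection $g$ above induces, at the level of $n$-planes $U\in OG_n(\mathbf C^{2n})$, the map $U\mapsto g(U)$ which exchanges the roles of $H_i=\{x_i=0\}$ and $H_{i^*}=\{x_{i^*}=0\}$ and of $\langle e_i\rangle$ and $\langle e_{i^*}\rangle$. Then I would verify the compatibility square: on the one hand $\Phi_{n,i}^+$ sends a generic $\Pi\in\mathbb S_n^+$ to $\pi_{i^*}(\Pi\cap H_i)=(\Pi\cap H_i)/\langle e_{i^*}\rangle$ inside $E_{i,i^*}=H_i/\langle e_{i^*}\rangle$; applying $g$ before and after turns $H_i$ into $H_{i^*}$, $\langle e_{i^*}\rangle$ into $\langle e_i\rangle$, and $\Pi$ into a plane in $\mathbb S_n^-$, so that $g\circ\Phi_{n,i}^+\circ g^{-1}$ is precisely $\Pi\mapsto \pi_i(\Pi\cap H_{i^*})=(\Pi\cap H_{i^*})/\langle e_i\rangle$. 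Since $g$ represents $\Gamma_i$, which intertwines the two projections, the left-hand composite is exactly the face map $\Phi_{n,i}^-$, giving the claimed formula.

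The cleanest way to package this is to run the same commutative-diagram argument as in Proposition~\ref{Eq:Phi-n-i-+}, but through a Wick parametrization of $\mathbb S_n^-$: recall that such a parametrization is obtained as $\Gamma_n\circ g_{ij}\circ W_n^+$. I would instead use $\Gamma_i\circ W_n^+$ and check, from \eqref{Eq:Weight-e-Kc} and the weight table (Table~\ref{Table:tokolo}), that the components of $\Pi_{n,i}^-\circ W_n^+$ are again the Pfaffians ${\rm Pf}(A_K)$ for $K\subset[\hspace{-0.06cm}[\,n\,]\hspace{-0.06cm}]\setminus\{i\}$ of even cardinal, now read in the chart of $\mathbb S_{n-1}^-$ supplied by $\Gamma_{i}$. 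This produces a diagram analogous to the one in Proposition~\ref{Eq:Phi-n-i-+} in which the bottom row is $W_{n-1}^-$ and the vertical map on the left is once more $A\mapsto A_{\hat\imath}$; translating the resulting birational identity back into the orthogonal-grassmannian picture yields the geometric description.

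The main obstacle I anticipate is purely bookkeeping rather than conceptual: keeping the two parity conventions consistent when passing through $\Gamma_i$. Because $\Gamma_i$ flips the $i$-th coordinate of every weight, it exchanges the even/odd half-spin buckets, and one must make sure that the induced identification $S_{n,i}^{+,-}\simeq S_{n-1}^-$ matches the \emph{standard} weight basis of $S_{n-1}^-$ (with $(\tfrac12,\dots,\tfrac12,-\tfrac12)$ as highest weight) up to the correct reindexing $\iota$, and that the sign of the Pfaffian coordinates is tracked correctly through the reflection $g$. Once this sign/parity matching is pinned down, the equivariance of $\Pi_{n,i}^-$ under ${\rm Spin}_{2n,i}\simeq{\rm Spin}_{2(n-1)}$ and the branching $S_n^+\big\downarrow_{{\rm Spin}_{2(n-1)}}=S_{n-1}^+\oplus S_{n-1}^-$ guarantee that the rational map so produced is the genuine face map, and the geometric formula follows exactly as for $\Phi_{n,i}^+$.
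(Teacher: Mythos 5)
You are filling a genuine void rather than reproducing an argument: the paper gives no proof of Proposition~\ref{Prop:Geom-Phini-} at all (it is stated as being ``suggested'' by Proposition~\ref{Prop:Geom-Phini+}, with detailed proofs deferred to \cite{PirioGMWebs}), and your idea of transporting the known description of $\Phi_{n,i}^+$ through the orthogonal symmetry exchanging $e_i$ and $e_{i^*}$ is the natural route. But as written the reduction does not typecheck. The swap $g$ lies in $O(E)\setminus {\rm SO}(E)$, so its spinorial realization $\Gamma_i$ of \eqref{Eq:Gamma-i} exchanges the two half-spin spaces; consequently $\Gamma_i\circ\Pi_{n,i}^{+}\circ\Gamma_i^{-1}$ is a projection defined on $S_n^-$, and in your composite $g\circ\Phi_{n,i}^+\circ g^{-1}$ applied to $\Pi\in\mathbb S_n^+$, the plane $g^{-1}(\Pi)$ lies in the \emph{minus} family, where Proposition~\ref{Prop:Geom-Phini+} as stated asserts nothing. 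Conjugating Proposition~\ref{Prop:Geom-Phini+} by $g$ therefore yields a geometric description of a face map with source $\mathbb S_n^-$, not of $\Phi_{n,i}^-:\mathbb S_n^+\dashrightarrow\mathbb S_{n-1}^-$. To close the loop you must either first establish the mirror of Proposition~\ref{Prop:Geom-Phini+} on $\mathbb S_n^-$ (which needs its own symmetry argument --- precisely the move you are in the middle of making), or, more cleanly, conjugate by the \emph{product of two swaps} $e_i\leftrightarrow e_{i^*}$ and $e_j\leftrightarrow e_{j^*}$ with $j\neq i$: this element lies in ${\rm SO}(E)$, hence lifts to ${\rm Spin}_{2n}$ and preserves $\mathbb S_n^+$; its spin action flips the $i$-th and $j$-th weight coordinates, so it carries $S_{n,i}^{+,+}$ onto $S_{n,i}^{+,-}$ and intertwines $\Pi_{n,i}^+$ with $\Pi_{n,i}^-$, while geometrically it exchanges the pair $(H_i,\langle e_{i^*}\rangle)$ with $(H_{i^*},\langle e_i\rangle)$ and maps the $+$ family of $E_{i,i^*}$ to the $-$ family of $E_{i^*,i}$. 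Conjugating Proposition~\ref{Prop:Geom-Phini+} by this single element gives Proposition~\ref{Prop:Geom-Phini-} directly, with all level-$(n-1)$ identifications matching. (The sign discrepancies between $\Gamma_i$ and the honest pin/Clifford action, which you rightly flag, amount to composing with a torus element and are harmless for these subspace-level statements.)

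Your ``cleanest packaging'' paragraph also contains a concrete error. The components of $\Pi_{n,i}^-\circ W_n^+$ are \emph{not} the Pfaffians ${\rm Pf}(A_K)$ for $K\subset\{1,\ldots,n\}\setminus\{i\}$ of even cardinal: the weight vector $e_{K^c}$ has $i$-th weight coordinate $-\tfrac12$ exactly when $i\in K$, so the components of $\Pi_{n,i}^-\circ W_n^+$ are the ${\rm Pf}(A_K)$ with $\#K$ even and $i\in K$; the ones you list are the components of $\Pi_{n,i}^{+}\circ W_n^+$. Were your claim true, $\Phi_{n,i}^-$ would read $A\mapsto A_{\hat\imath}$ in Wick coordinates, just like $\Phi_{n,i}^+$ --- contradicting the paper's own computation (formulas \eqref{Eq:formulas-x-tilde} and Proposition~\ref{Eq:Phi-n-i--}), whose whole difficulty comes precisely from the fact that every relevant Pfaffian involves the index $i$, forcing the use of Knuth's overlapping-Pfaffian identities to recognize the image as a Wick chart of $\mathbb S_{n-1}$. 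So that paragraph must be discarded or redone with the correct index set; the conjugation argument, once repaired as above, then stands on its own and is a legitimate proof.
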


Although the geometric descriptions of the face maps $\Phi_{n,i}^{+}$
 and $\Phi_{n,i}^{-}$ given above are completely similar,  the description 
  of the  maps 
$\Phi_{n,i}^{-} $ in `Wick coordinates' is not as simple as the one for the maps $\Phi_{n,i}^{-}$ given in Proposition \ref{Eq:Phi-n-i-+}, the main reason behind this being that this requires an identification between 
$\mathbb S_{n-1}^-$ and $\mathbb S_{n-1}^+$ and there is no canonical such identification. 
Actually, given $i$, for any $j\in  [ \hspace{-0.06cm}[\, n-1\,]\hspace{-0.06cm}]\setminus \{ i\}$, 
 using the identification \eqref{Eq:Gamma-i}  (but with $V$ replaced by $ V_{\hat \imath}$),
we can construct a birational model $\widetilde \Phi_{n,i}^{-,j}  : {\rm Asym}_n(\mathbf C)\dashrightarrow 
{\rm Asym}_{n-1}(\mathbf C)$ of $\Phi_{n,i}^{-} $. However, since all these  birational models are obtained in the same way and to avoid writing down too  many formulas, 
we only treat below the case of $\widetilde \Phi_{n,n}^{-,n-1}$ (ie. $i=n$ and $j=n-1$) that we make entirely explicit in Wick coordinates. The corresponding description of any other $\Phi_{n,i}^{-,j}$, for any distinct 
$i$ and $j$,  will follow immediately from that (see Corollary \ref{Cor:Phi---n=5} below 
where all the $\Phi_{5,i}^{-,j}$ are explicitly given). 
\sk

We are going to use the following notations and conventions: 
\begin{itemize}
\item $x=(x_{ij})_{1\leq i<j\leq n}$ stands for an element of $\mathbf C^{\Sigma_n}$ 
and $A=A(x)$ denotes the antisymmetric matrix whose $(i,j)$-th coefficient is $x_{ij}$ for any $(i,j)\in \Sigma_n$; 
\item $\varrho$ stands for the parity of $n$; 
\item one sets $ V_{ \widehat{k}}= \oplus_{i=1, i\neq k }^{n}\mathbf C e_i$ for any $k=1,\ldots,n$;  
\end{itemize}

The  inclusion 
$[ \hspace{-0.06cm}[\, n-1\,]\hspace{-0.06cm}]  \subset 
[ \hspace{-0.06cm}[\, n\,]\hspace{-0.06cm}]$  gives rise to a canonical isomorphism $S_{n,n}-\simeq S_{n-1}^-$ that we will denote by $\Xi_n$.  Then for $j\in [ \hspace{-0.06cm}[\, n-1\,]\hspace{-0.06cm}] $, one gets a linear isomorphism $\Gamma_j\circ \theta_n: S_n^+\rightarrow S_{n-1}^+$. Setting 
$\Lambda_{n,j}= \Gamma_j\circ \theta_n\circ \Pi_{n,n}^- : S_n^+\rightarrow S_{n-1}^+$ and considering the composition $\Lambda_{n,j}\circ W_n^+: {\rm Asym}_n(\mathbf C)\rightarrow S_{n-1}^+$, we get an affine map which gives rise to an affine model for $\Phi_{n,n}^{-}\circ W_{n}^+ : {\rm Asym}_n(\mathbf C)\rightarrow \mathbb S_{n-1}^-$ but takes values into $ \mathbb S_{n-1}^+$.  
Explicit formulas of the maps involved in the previous considerations are made explicit in the following 
diagramm: \sk
$$
\xymatrix@R=0.1cm@C=0.2cm{
{\rm Asym}_n(\mathbf C)   \ar@{->}[r]^{W_n^+} & 
   S_n^+  =\wedge^{\varrho} V \ar@/^3pc/[rrrrrr]_{ \Lambda_{n,j}}
 \ar@{->}[rr]^{\hspace{0.85cm}\Pi_{n,n}^+}  & &  S_{n,n}^-  
  \ar@{->}[rr]^{\Xi_n}   & & S_{n-1}^-
  \ar@{->}[rr]^{\Gamma_{j} \hspace{0.3cm}}  && 
\wedge^{-\varrho} V_{ \widehat{\jmath}}  \, \simeq \, S_{n-1}^+
 \\
 A
  \ar@{|->}[r] &  \hspace{-0.3cm } 
    \sum\limits_{
    \scalebox{0.6}{ \begin{tabular}{c}
 $K\subset  [ \hspace{-0.06cm}[\, n\,]\hspace{-0.06cm}]$  \vspace{-0.1cm} \\
 $\#K = 0\, [2]$ 
 \end{tabular} }}  
  \hspace{-0.3cm } {\rm Pf}\big( A_K\big)\,e_{\,{}^c\hspace{-0.05cm}K}
    \ar@{|->}[rr]  &&  \hspace{-0.4cm } 
   \sum\limits_{ 
\scalebox{0.6}{ \begin{tabular}{c}
 $K\subset  [ \hspace{-0.06cm}[\, n\,]\hspace{-0.06cm}]$  \vspace{-0.07cm} \\
$\#K = 0\, [2]$   \vspace{-0.1cm} \\
  $n\in K$
 \end{tabular} }}     
 \hspace{-0.3cm }{\rm Pf}\big( A_K\big)\,e_{\,{}^c\hspace{-0.05cm}K}
  \ar@{|->}[rr]  &&  
   \hspace{-0.4cm } 
   \sum\limits_{ 
\scalebox{0.6}{ \begin{tabular}{c}
 $L\subset  [ \hspace{-0.06cm}[\, n-1\,]\hspace{-0.06cm}]$  \vspace{-0.07cm} \\
 $\#L = 1\, [2]$ 
 \end{tabular} }}     
 \hspace{-0.5cm }{\rm Pf}\big( A_{L\cup \{n\}}\big)\,e_{\,{}^c\hspace{-0.05cm}L}
   \ar@{|->}[rr]  &&  
   \hspace{-0.4cm } 
   \sum\limits_{ 
\scalebox{0.6}{ \begin{tabular}{c}
 $M\subset  [ \hspace{-0.06cm}[\, n-1\,]\hspace{-0.06cm}]$  \vspace{-0.07cm} \\
 $\#M = 0\, [2]$ 
 \end{tabular} }}     
 \hspace{-0.5cm }{\rm Pf}\big( A_{M^{j}\cup \{n\}}\big)\,e_{\,{}^c\hspace{-0.05cm}M}
    }
$$
We consider now that $j=n-1$ and we denote by $\Psi_n= \Lambda_{n,j}\circ W_n^+$ the composition of the linear maps above.  
Our goal is to find a map ${\rm Asym}_n(\mathbf C)\dashrightarrow 
{\rm Asym}_{n-1}(\mathbf C)$, $A\mapsto \tilde A$ such that for a generic antisymmetric matrix, 
$\Psi (A)$ and $W_{n-1}^+(\tilde A)$ coincide as elements of $S_{n-1}^+$, possibly up to multiplication by a non zero scalar.  To this end, we will proceed by analyse-synth\`ese.

Among the elements of the basis of $S_{n-1}^+$ we are dealing with, the one  of highest weight is $e_{
[ \hspace{-0.06cm}[\, n-1\,]\hspace{-0.06cm}]}=e_1\wedge \cdots \wedge e_{n-1}$ which is $e_{\,{}^c\hspace{-0.05cm}M}$ for 
$M$ the empty set. 
For $A=(x_{ij})_{i,j=1}^n\in {\rm Asym}_n(\mathbf C)$, the 
$e_{
[ \hspace{-0.06cm}[\, n-1\,]\hspace{-0.06cm}]}$-coefficient of $\Psi(A)$ is $x_{n-1,n}$ hence in view 
of our purpose, assuming that this coefficient of $A$ does not vanish, it is relevant to consider 
$ \Psi(A)/x_{n-1,n} $ which is the one likely to be written 
\begin{equation}
\label{Eq:zouk-zouk}
\frac{1}{x_{n-1,n}}\Psi(A) = 
W_{n-1}^+\big( \tilde A\big)
\end{equation}
 for a certain $\tilde A\in {\rm Asym}_{n-1}(\mathbf C)$.  Denoting by $\tilde x_{ij}$ the coefficients of $\tilde A$ with $i,j=1,\ldots,n-1$, we find that in the decomposition of $W_{n-1}^+(\tilde A)$
 in the weight basis: 
 \begin{itemize}
\item for $i<n-1$, $\tilde x_{i,n-1}$ is the coefficient of $e_{\,{}^c\hspace{-0.05cm}M}$ for $M=\{i,n-1\}$. In this case, one has  $M^{n-1}\cup \{n\}=\{ i,n\}$ hence 
the corresponding coefficient of $ \Psi(A)/x_{n-1,n} $ is ${\rm Pf}( A_{\{i,n\}})/x_{n-1,n} =
x_{i,n}/x_{n-1,n}$; 
 \mk 
 \item for $i,j$ with $1\leq i<j<n-1$,  $\tilde x_{i,j}$ is the coefficient of $e_{\,{}^c\hspace{-0.05cm}M}$ for $M=\{i,j\}$.  In this case, one has  $M^{n-1}\cup \{n\}=\{ i,j,n-1,n\}$ hence 
the corresponding coefficient of $ \Psi(A)/x_{n-1,n} $ is the quotient of the  pfaffian of the  $4\times 4$-antisymmetric matrix $A_{\{ i,j,n-1,n\}}$  by $x_{n-1,n}$. 
\end{itemize}

 This short analysis shows that given $A\in {\rm Asym}_n(\mathbf C)$ generic, if there exists  $\tilde A\in {\rm Asym}_{n-1}(\mathbf C)$ such that \eqref{Eq:zouk-zouk} holds true, then it is unique and its coefficients $\tilde x_{i,j}$ 
  are given by 
\begin{equation}
\label{Eq:formulas-x-tilde}
\tilde x_{k,n-1}=\frac{x_{k,n}}{x_{n-1,n}}\quad \mbox{for } k=1,\ldots,n-2 
\qquad \mbox{ and }\qquad 
\tilde x_{i,j}= \frac{ {\rm Pf}\big( A_{\{ i,j,n-1,n\}}
\big)
}{x_{n-1,n}}\quad  \mbox{for } (i,j)\in \Sigma_{n-2}\, . 
\end{equation}

Assume that $\tilde A$ indeed stands for the antisymmetric matrix whose coefficients are given by the above formulas.  That the equality \eqref{Eq:zouk-zouk} holds true in $S_{n-1}^+$ is equivalent to the fact that the following rational identities in the coefficients $x_{ij}$ of $A$ are satisfied: 
\begin{equation}
{}^{}\hspace{3cm}
\frac{{\rm Pf}\Big( A_{M^{n-1}\cup \{n\}}\Big)}{ x_{n-1,n} } =
{\rm Pf}\Big( {\tilde A}_M\Big)
\qquad 
\forall
\, M\subset [ \hspace{-0.06cm}[\, n-1\,]\hspace{-0.06cm}] \, \mbox{ with 
even cardinal}.
\end{equation}

These  identities are known to hold true: they are particular cases of a more general 
family of identities 
between `overlapping pfaffians'  established by Knuth (see \cite[Prop.\,2.1]{Okada}).  

\begin{prop}
\label{Eq:Phi-n-i--}
The following diagram of rational maps is commutative
$$
\xymatrix@R=1.2cm@C=1.6cm{
{\rm Asym}_n(\mathbf C)   \ar@{->}[d]_{\widetilde\Phi_{n,n-1}^-}
\ar@{^{(}->}[r]^{ {}^{} \hspace{0.6cm} W_n^+} &  \mathbb S_n^+  
\ar@{-->}[d]^{ \Phi_{n,n-1}^-}
  \ar@{^{(}->}[r] &  \mathbf P S_{n}^ {+}
   \ar@{-->}[d]^{ \Lambda_{n,n-1}}
 \\ 
 {\rm Asym}_{n-1}(\mathbf C)
 \ar@{^{(}->}[r]^{ {}^{} \hspace{0.6cm} W_{n-1}^+} &  \mathbb S_{n-1}^+  
  \ar@{^{(}->}[r] &  \mathbf P S_{n-1}^ {+}\, , 
 }
$$
where $\widetilde\Phi_{n,n}^- $ stands for the rational map associating to an antisymmetric matrix 
$A \in {\rm Asym}_n(\mathbf C) $ the  antisymmetric $n\times n$-matrix whose coefficients are given by   formulas  \eqref{Eq:formulas-x-tilde}. 
\end{prop}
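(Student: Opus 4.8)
The plan is to read the statement as the assertion that the rational map $\widetilde\Phi_{n,n-1}^-$ defined by the formulas \eqref{Eq:formulas-x-tilde} is, in the Wick charts, a birational model of the face map $\Phi_{n,n-1}^-$; equivalently, that the outer rectangle of the displayed diagram commutes as an identity of rational maps. Since all horizontal arrows are the (locally closed) Wick embeddings and since $\Lambda_{n,n-1}$ is, by its very construction, the projectivized linear composite realizing $\Phi_{n,n-1}^-$ after the identification $\mathbb S_{n-1}^-\simeq \mathbb S_{n-1}^+$, it will suffice to prove the single affine identity \eqref{Eq:zouk-zouk} on the Zariski-dense open locus $\{x_{n-1,n}\neq 0\}\subset {\rm Asym}_n(\mathbf C)$, and then to projectivize and extend by density.

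First I would make the composite $\Lambda_{n,n-1}\circ W_n^+$ completely explicit. Tracing a generic $A=A(x)$ through $W_n^+$ and then through the three weight-basis isomorphisms $\Pi_{n,n}^-$, $\Xi_n$ and $\Gamma_{n-1}$ — a purely combinatorial bookkeeping of how the vectors $e_{{}^cK}$ are permuted and relabelled, already displayed in the large commutative diagram that precedes the statement — one finds that, in the weight basis $(e_{{}^cM})_M$ of $S_{n-1}^+$ indexed by the even subsets $M\subset\{1,\ldots,n-1\}$, the coefficient of $e_{{}^cM}$ in $\Lambda_{n,n-1}(W_n^+(A))$ is exactly ${\rm Pf}\big(A_{M^{n-1}\cup\{n\}}\big)$. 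This step is routine and involves no real obstruction.

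Next I would carry out the analysis/synthesis argument sketched in the body. The coefficient of the highest-weight vector $e_{\{1,\ldots,n-1\}}$ (that is, $M=\emptyset$) equals $x_{n-1,n}$, so on the locus where it does not vanish I normalize and compare $\Lambda_{n,n-1}(W_n^+(A))/x_{n-1,n}$ with the generic Wick parametrization $W_{n-1}^+(\tilde A)=\sum_M {\rm Pf}(\tilde A_M)\,e_{{}^cM}$. Matching only the cardinality-two coefficients forces the entries of $\tilde A$ to be precisely those of \eqref{Eq:formulas-x-tilde}, which both identifies $\widetilde\Phi_{n,n-1}^-$ and establishes uniqueness of the candidate. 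The heart of the proof is then the synthesis: verifying that \emph{all} the remaining coefficients agree, i.e. that
$$\frac{{\rm Pf}\big(A_{M^{n-1}\cup\{n\}}\big)}{x_{n-1,n}}={\rm Pf}\big(\tilde A_M\big)$$
holds for every even $M\subset\{1,\ldots,n-1\}$. I expect this to be the main obstacle. However, after substituting the two shapes of entries of $\tilde A$ (the degenerate entries $\tilde x_{k,n-1}=x_{k,n}/x_{n-1,n}$ and the generic entries $\tilde x_{i,j}={\rm Pf}(A_{\{i,j,n-1,n\}})/x_{n-1,n}$), each such identity becomes an instance of Knuth's family of relations between overlapping Pfaffians, for which I would invoke \cite[Prop.\,2.1]{Okada}; the genuine work is to align the index conventions of that reference with our two pinned columns $n-1$ and $n$, and to treat uniformly the degeneration of the $4\times 4$ Pfaffian that occurs when $n-1\in M$.

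Finally, with \eqref{Eq:zouk-zouk} established on $\{x_{n-1,n}\neq 0\}$, the right-hand square of the diagram commutes after projectivization, and the left-hand square commutes because $\Phi_{n,n-1}^-$ is defined as the descent of $\Lambda_{n,n-1}$ to the spinor varieties. As both composites are rational maps agreeing on a Zariski-dense open set, they coincide as rational maps, which is the assertion of the proposition. I would close by remarking that the analogous explicit description of every $\widetilde\Phi_{n,i}^{-,j}$, for distinct $i,j$, follows at once by relabelling indices, exactly as announced just before Corollary \ref{Cor:Phi---n=5}.
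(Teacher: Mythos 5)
Your proposal is correct and follows essentially the same route as the paper: make $\Lambda_{n,n-1}\circ W_n^+$ explicit in the weight basis, determine $\tilde A$ uniquely by matching the empty and cardinality-two coefficients (yielding \eqref{Eq:formulas-x-tilde}), and reduce the remaining coefficient identities $\mathrm{Pf}\big(A_{M^{n-1}\cup\{n\}}\big)/x_{n-1,n}=\mathrm{Pf}\big(\tilde A_M\big)$ to Knuth's overlapping-Pfaffian identities via \cite[Prop.\,2.1]{Okada}, then conclude by density of the locus $\{x_{n-1,n}\neq 0\}$. The only difference is cosmetic: you flag explicitly the index-alignment with Okada's conventions and the degenerate case $n-1\in M$, which the paper passes over in silence.
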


It is only for convenience that this proposition has been stated for the specific case of the 
face map 
with respect to $\Delta_{n,n}^-$, when the considered linear projection 
$\mathbf P S_{n}^ {+} \dashrightarrow 
\mathbf P S_{n-1}^ {+}$ is induced by $\Lambda_n^{n-1}$.   
Up to an explicit bijection for any $j\in  [ \hspace{-0.06cm}[\, n-1\,]\hspace{-0.06cm}]$,  it is not difficult to give a similar formula in the case when the linear 
projection 
$\mathbf P S_{n}^ {+} \dashrightarrow  \mathbf P S_{n-1}^ {+}$ 
has been chosen to be the one induced by $\Lambda_{n}^j$.
And more generally, for any $i\in  [ \hspace{-0.06cm}[\, n-1\,]\hspace{-0.06cm}]$,  
one can give similar explicit formulas for several models 
${\widetilde{\Phi}}_{n,i}^{-,j} : {\rm Asym}_n(\mathbf C)\dashrightarrow 
{\rm Asym}_{n-1}(\mathbf C)$ for the face map with respect to the facet $\Delta_{n,i}^{-}$, one model for each 
$j\in  [ \hspace{-0.06cm}[\, n-1\,]\hspace{-0.06cm}]\setminus \{i\}$. The details and the (somewhat tedious!) task of making the ${\widetilde{\Phi}}_{n,i}^{-,j}$ explicit in the general case are left to the reader.
\sk 

However, as an example, let us consider the case when $n=5$ which moreover is the one relevant considering the purpose of this paper. 

From easy computations, one gets that  
$\widetilde \Phi_{5,5}^-= \widetilde \Phi_{5,5}^{-,4}: 
{\rm Asym}_5(\mathbf C)  \dashrightarrow {\rm Asym}_4(\mathbf C)$ is the rational map
$$
\big[ x_{i,j} \big]_{i,j=1}^5 
\longmapsto 
\left[\begin{array}{cccc}
0 & \frac{x_{1,2} x_{4,5}-x_{1,4} x_{2,5}+x_{1,5} x_{2,4}}{x_{4,5}} & \frac{x_{1,3} x_{4,5}-x_{1,4} x_{3,5}+x_{1,5} x_{3,4}}{x_{4,5}} & \frac{x_{1,5}}{x_{4,5}} 
\\
 -\frac{x_{1,2} x_{4,5}-x_{1,4} x_{2,5}+x_{1,5} x_{2,4}}{x_{4,5}} & 0 & \frac{x_{2,3} x_{4,5}-x_{2,4} x_{3,5}+x_{2,5} x_{3,4}}{x_{4,5}} & \frac{x_{2,5}}{x_{4,5}} 
\\
 -\frac{x_{1,3} x_{4,5}-x_{1,4} x_{3,5}+x_{1,5} x_{3,4}}{x_{4,5}} & -\frac{x_{2,3} x_{4,5}-x_{2,4} x_{3,5}+x_{2,5} x_{3,4}}{x_{4,5}} & 0 & \frac{x_{3,5}}{x_{4,5}} 
\\
 -\frac{x_{1,5}}{x_{4,5}} & -\frac{x_{2,5}}{x_{4,5}} & -\frac{x_{3,5}}{x_{4,5}} & 0 
\end{array}\right]\, .
$$
In order of describing the others face maps $\widetilde \Phi_{5,i}^{-,j}$ 
for $i,j\in [ \hspace{-0.06cm}[\, 5\,]\hspace{-0.06cm}]$ distinct : 
\begin{itemize}
\vspace{-0.15cm}
\item[$\bullet$] we denote by $x_{ij}$ the rational map 
 associating its $(i,j)$-th coefficient to a matrix; 
\item[$\bullet$] let $k_1,k_2$ and $k_3$ be such that 
$[ \hspace{-0.06cm}[\, 5\,]\hspace{-0.06cm}]\setminus \{i,j\} = \{ k_1,k_2, k_3 \}$ and 
$1\leq k_1<k_2<k_3\leq 5$; 
\item[$\bullet$]  we consider the rational map 
\begin{align*}
\Psi_{i,j} : {\rm Asym}_5(\mathbf C) & \, \dashrightarrow {\rm Asym}_{[ \hspace{-0.06cm}[\, 5\,]\hspace{-0.06cm}]\setminus \{i\}}(\mathbf C)\\
 (x_{ij})_{i,j \in 
[ \hspace{-0.06cm}[\, 5\,]\hspace{-0.06cm}]}  & 
\longmapsto (\tilde x_{u,v})_{u,v \in [ \hspace{-0.06cm}[\, 5\,]\hspace{-0.06cm}]\setminus \{i\}}
\end{align*}
whose  coefficients $ \tilde x_{u,v}$ are given by 
\begin{itemize}
\item[$-$] $\tilde x_{k,j}=x_{ki}/{x_{ij}}$ for $k\in \{ 1,\ldots,i-1\}$ with $k\neq j$;
\item[$-$]  $\tilde x_{k,j}=-x_{ik}/x_{ij}$ for $k \in \{ i+1,\ldots,5\}$ with $k\neq j$;   and 
\item[$-$]  $\tilde x_{k_a,k_b}={\rm Pf}\big( A_{ \{ k_a,k_b, i,j \} }\big) /x_{ij}$ for all $a,b$ such that $1\leq a <b\leq 3$. 
\end{itemize}
\end{itemize}
\begin{cor} 
\label{Cor:Phi---n=5}
For any $i,j \in [ \hspace{-0.06cm}[\, 5\,]\hspace{-0.06cm}]$ distinct, $\Psi_{i,j}$ is an expression in some Wick's coordinates for the face map $\Phi_{5,i}^- : \mathbb S_5^+\dashrightarrow \mathbb S_4$ associated to the facet $\Delta_{5,i}^-$ of the moment polytope $\Delta_{5}$. 
\end{cor}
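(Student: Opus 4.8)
The plan is to deduce Corollary \ref{Cor:Phi---n=5} from the already-established Proposition \ref{Eq:Phi-n-i--} by a symmetry argument, rather than recomputing everything from scratch. The key observation is that Proposition \ref{Eq:Phi-n-i--} treats only the single distinguished case $i=n$ with linear projection induced by $\Lambda_{n,n-1}$, whereas the corollary asks for an explicit Wick expression of $\Phi_{5,i}^-$ for every $i$ and every admissible $j\neq i$. The structure governing these maps is the action of the transpositions $g_{ij}\in {\rm SO}(E)$ on the half-spin representation (via $e_L\mapsto e_{(ij)L}$), under which all the facets $\Delta_{5,i}^-$ are permuted transitively. So the first thing I would do is record precisely how the distinguished map $\widetilde\Phi_{5,5}^{-,4}$ transports under the simultaneous relabelling of indices: conjugating by an appropriate permutation $\sigma\in \mathfrak S_5$ carrying $(5,4)$ to $(i,j)$ sends the facet $\Delta_{5,5}^-$ to $\Delta_{5,i}^-$ and the projection $\Lambda_{5,4}$ to $\Lambda_{5,j}$.

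Concretely, I would first verify that the explicit formula displayed for $\widetilde\Phi_{5,5}^{-}=\widetilde\Phi_{5,5}^{-,4}$ is exactly the $(i,j)=(5,4)$ specialization of the general recipe defining $\Psi_{i,j}$; this is a direct pattern-match between the entries of the displayed $4\times 4$ matrix and the three bullet formulas $\tilde x_{k,j}=x_{ki}/x_{ij}$, $\tilde x_{k,j}=-x_{ik}/x_{ij}$, and $\tilde x_{k_a,k_b}={\rm Pf}(A_{\{k_a,k_b,i,j\}})/x_{ij}$. Having anchored the base case, I would then argue that $\Psi_{i,j}$ for general distinct $i,j$ is obtained from $\Psi_{5,4}$ by precomposing and postcomposing with the coordinate permutations induced by $g$-type symmetries, and that these intertwine $\Phi_{5,i}^-$ with $\Phi_{5,5}^-$ compatibly with the Wick embeddings $W_5^+$ and $W_4^+$. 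The commutativity of Proposition \ref{Eq:Phi-n-i--} together with the equivariance of the Wick parametrization under these permutations then yields that $\Psi_{i,j}$ is indeed a Wick-coordinate expression for $\Phi_{5,i}^-$.

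The main obstacle I anticipate is bookkeeping the signs and the index reorderings correctly. The overlapping-Pfaffian identities of Knuth (\cite[Prop.\,2.1]{Okada}) that underlie Proposition \ref{Eq:Phi-n-i--} are sensitive to the ordering conventions on the subsets $K$, $M^{n-1}\cup\{n\}$, etc., and when one transports by a permutation $\sigma$ the Pfaffian ${\rm Pf}(A_{\{k_a,k_b,i,j\}})$ may acquire a sign from reordering $\{k_a,k_b,i,j\}$ into increasing order. One must check that these signs are absorbed consistently into the overall projective rescaling (recall the maps are only defined up to a nonzero scalar, so a global sign is harmless, but entrywise signs in the antisymmetric matrix $\tilde A$ are not). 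The careful point is that the three cases in the definition of $\Psi_{i,j}$ already build in the correct signs ($+x_{ki}$ versus $-x_{ik}$) precisely to compensate for whether the transported index lies before or after $i$; so verifying that these case distinctions match the permuted version of the base formula \eqref{Eq:formulas-x-tilde} is where the real work lies.

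I would organize the write-up so that the genericity hypotheses (nonvanishing of $x_{n-1,n}$, resp.\ $x_{ij}$, guaranteeing the relevant face map is defined at the point) are stated once and then invoked uniformly; since all statements are about dominant rational maps between irreducible varieties, checking the identity on a dense open set suffices. Because the corollary is explicitly flagged in the text as following "immediately" from the $n=5$ computation, I expect the argument to be short modulo the sign verification, and I would present it essentially as: the formulas defining $\Psi_{i,j}$ are the image of those for $\widetilde\Phi_{5,5}^{-,4}$ under the index permutation, the latter computes $\Phi_{5,5}^-$ by Proposition \ref{Eq:Phi-n-i--}, and $\Phi_{5,i}^-$ is obtained from $\Phi_{5,5}^-$ by the corresponding $g_{ij}$-equivariance, whence the claim.
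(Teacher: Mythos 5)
Your approach is sound, and it is worth knowing that the paper itself contains no proof of this corollary: it is asserted as following ``immediately'' from the displayed computation of $\widetilde\Phi_{5,5}^{-,4}$, on the grounds that all the birational models $\widetilde\Phi_{n,i}^{-,j}$ ``are obtained in the same way'', with the general case explicitly left to the reader. The paper's implicit argument is thus to rerun, for each pair $(i,j)$, the analysis--synthesis computation of Proposition \ref{Eq:Phi-n-i--} (projection $\Pi_{5,i}^-$, identification $\Gamma_j\circ\Xi$, Knuth's overlapping-Pfaffian identities), whereas you transport the single base case $(i,j)=(5,4)$ by the $\mathfrak S_5$-symmetry of the whole construction, using the $g$-type automorphisms of $\mathbb S_5^+$ (which indeed preserve $\mathbb S_5^+$ and permute the facets $\Delta_{5,i}^-$ transitively). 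Your route is the cleaner way to make ``follows immediately'' rigorous, and it buys a conceptual explanation of the case split in the definition of $\Psi_{i,j}$: under conjugation $A\mapsto P_\sigma A P_\sigma^{t}$ of antisymmetric matrices, the Pfaffians of principal submatrices acquire the sign of $\sigma$ restricted to the index subset, and the distinction $x_{ki}/x_{ij}$ for $k<i$ versus $-x_{ik}/x_{ij}$ for $k>i$ is exactly that sign. The price is the equivariance bookkeeping, which the direct recomputation avoids.

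Two caveats on your plan. First, your parenthetical claim that ``a global sign is harmless'' by projective rescaling is not quite right in this setting: with the paper's literal convention that $x_{ij}$ is the $(i,j)$-entry (so $x_{54}=-x_{45}$), the pattern match gives $\Psi_{5,4}=-\widetilde\Phi_{5,5}^{-,4}$ entrywise, and replacing $\tilde A$ by $-\tilde A$ rescales the Wick coefficient ${\rm Pf}(\tilde A_K)$ by $(-1)^{\#K/2}$, which is \emph{not} a projective rescaling of $W_4^+(\tilde A)$ once the $e_{1\cdots 4}$-coefficient is normalized to $1$. The discrepancy is nevertheless harmless, either because one reads $x_{ij}$ order-insensitively, or because $-\tilde A=h^{-1}\tilde A\,h^{-1}$ with $h=\sqrt{-1}\,{\rm Id}_4$, so $\tilde A$ and $-\tilde A$ lie in the same $H_{D_4}$-orbit and the corollary only claims an expression ``in some Wick coordinates''; but you must invoke one of these arguments, not projectivity. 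Second, the equivariance must also be tracked on the target side: changing $j$ changes the identification $S_{5,i}^{+,-}\simeq S_4^+$ (via $\Gamma_j$), i.e.\ the chosen Wick parametrization of $\mathbb S_4^-$, not the face map $\Phi_{5,i}^-$ itself; this is precisely why all the $\Psi_{i,j}$, $j\neq i$, are models of one and the same rational map, and your write-up should state this explicitly.
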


 \paragraph{\bf The Gelfand-MacPherson web of the spinor variety.}
Writting $\mathbb S_n$ for $\mathbb S_n^+$, we can gather all the results above in the following statement which gives a geometric and explicit description of the Gelfand-MacPherson web of 
$\mathbb S_n$: 
\begin{prop}
\label{Prop:W-GM-Sn}
 \begin{itemize}
  \item[{\rm 1.}] As a web defined by rational first integrals on the $n$-th spinor variety, 
 one has  
  $$
  \boldsymbol{\mathcal W}_{ {}^{} \hspace{-0.05cm} \mathbb S_n}^{GM} = 
  \boldsymbol{\mathcal W}\Big( \, 
  \Phi_{n,i}^+\, , \, \Phi_{n,i}^- \hspace{0.15cm}  \big\lvert \hspace{0.15cm}   i=1,\ldots,n\, 
  \Big) \, . 
  $$
  It is a $2n$-web of dimension $n-1$ on  $\mathbb S_n$     (which is of dimension $n(n-1)/2$). 
  \mk
  \item[{\rm 2.}] Geometric descriptions of the face maps $\Phi_{n,i}^+$ and $\Phi_{n,i}^-$ are given above 
  in Proposition \ref{Prop:Geom-Phini+} and Proposition \ref{Prop:Geom-Phini-} respectively. 
  \mk
  \item[{\rm 3.}] 
 Up to the birational identifications ${\rm Asym}_m(\mathbf C)\simeq \mathbb S_m^+$ 
 provided by Wick's maps for $m=n,n-1$, 
 a corresponding birational model of  the face map $\Phi_{n,i}^+ : \mathbb S_n^+
 \dashrightarrow \mathbb S_{n-1}^+ $ is given by 
 \begin{align*}
 \widetilde \Phi_{n,i}^+  :  {\rm Asym}_n(\mathbf C) & \dashrightarrow {\rm Asym}_{n-1}(\mathbf C)\\
 A & \longrightarrow A_{\hat \imath} \, .
 \end{align*}
As for the $n$ other face maps $\Phi_{n,i}^- : \mathbb S_n^+
 \dashrightarrow \mathbb S_{n-1}^- $,  for each $j\in [ \hspace{-0.06cm}[\, n\,]\hspace{-0.06cm}]\setminus \{i\}$ and up to some explicit identifications, it admits a birational model  
  \begin{align*}
 \widetilde \Phi_{n,i}^{-,j}  :  {\rm Asym}_n(\mathbf C) & \dashrightarrow {\rm Asym}_{n-1}(\mathbf C)\\
 A & \longrightarrow \widetilde A_{ij} 
 \end{align*}
 where up to sign, the non trivial coefficients of $\widetilde A_{ij}$ are of the form $ x_{k,i}/x_{i,j}$  or ${\rm Pf}( A_{ \{k,l,i,j\}})/x_{i,j}$ for  $k,l\in 
 [ \hspace{-0.06cm}[\, n\,]\hspace{-0.06cm}]\setminus \{i,j\}$.
\end{itemize}
\end{prop}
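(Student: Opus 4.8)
The plan is to show that Proposition~\ref{Prop:W-GM-Sn} is a synthesis of the structural results already obtained for the minuscule pair $(D_n,\omega_{n-1})$, the only genuinely new point being that the $2n$ facet maps produce pairwise distinct, generically transverse foliations, so that one indeed obtains a $2n$-web.

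First I would unwind the definition of the Gelfand-MacPherson web: by construction $\boldsymbol{\mathcal W}^{GM}_{\mathbb S_n}$ is the web on $X^\ast=(\mathbb S_n)^\ast$ whose first integrals are the facet maps $\Pi_F$ for $F$ ranging over the set $\mathfrak F_{\boldsymbol{\mathcal W}}(\Delta_n^+)$ of $\boldsymbol{\mathcal W}$-relevant facets of the moment polytope. Using the combinatorics of the half-hypercube $\Delta_n^+$ recalled above, its facets split into the $2^n$ hypersimplicial ones---of type $(A_{n-1},\omega_{n-1})$, hence $\boldsymbol{\mathcal W}$-irrelevant---and the $2n$ half-hypercubic ones $\Delta_{n,i}^{+,+}$ and $\Delta_{n,i}^{+,-}$ for $i=1,\dots,n$ (cf.~\eqref{Eq:Delta-n-i}), all of which are $\boldsymbol{\mathcal W}$-relevant. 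Thus $\mathfrak F_{\boldsymbol{\mathcal W}}(\Delta_n^+)=\{\Delta_{n,i}^{+,\pm}\mid i=1,\dots,n\}$ has exactly $2n$ elements. By Proposition~\ref{Prop:X->XF} applied to each facet $F=\Delta_{n,i}^{+,\varepsilon}$, the corresponding facet map $\Pi_F$ is precisely the dominant rational map onto $X_F=\mathbb S_{n,i}^\varepsilon\simeq\mathbb S_{n-1}^\varepsilon$, that is the map denoted $\Phi_{n,i}^\varepsilon$. This immediately yields the expression for $\boldsymbol{\mathcal W}^{GM}_{\mathbb S_n}$ claimed in point~1.

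Next I would carry out the dimension count. Since $\dim\mathbb S_n=n(n-1)/2$ while $\dim\mathbb S_{n-1}^\varepsilon=(n-1)(n-2)/2$, each $\Phi_{n,i}^\varepsilon$ is dominant with generic fibers of dimension $n(n-1)/2-(n-1)(n-2)/2=n-1$; hence each of the $2n$ associated foliations has leaves of dimension $n-1$, as asserted. The hard part will be confirming that these $2n$ foliations are pairwise distinct and generically transverse (in the expected, i.e.~minimal-intersection, sense appropriate to this high codimension), so that a genuine generalized $2n$-web results rather than a collection with coinciding or degenerate foliations. I would settle this at the level of the polytope---distinct facets $F$ determine distinct inward-normal one-parameter subgroups $h_F$, hence distinct Bia\l{}ynicki-Birula contractions $\Pi_F(x)=\lim_{t\to 0}h_F(t)\cdot x$ and thereby distinct fibrations---or, more concretely, by reading the foliations off the Wick-coordinate models of point~3 and checking that their tangent distributions are in general position at a generic antisymmetric matrix.

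Finally, points~2 and~3 require no new argument. The geometric descriptions of the face maps are exactly the contents of Proposition~\ref{Prop:Geom-Phini+} (for $\Phi_{n,i}^+$) and Proposition~\ref{Prop:Geom-Phini-} (for $\Phi_{n,i}^-$), while the birational Wick-coordinate models are furnished by Proposition~\ref{Eq:Phi-n-i-+} for $A\mapsto A_{\hat\imath}$ and by the overlapping-Pfaffian computation leading to Proposition~\ref{Eq:Phi-n-i--} and Corollary~\ref{Cor:Phi---n=5} for the maps $\widetilde\Phi_{n,i}^{-,j}$. I would simply assemble these, observing that the stated shape of the nontrivial coefficients of $\widetilde A_{ij}$---namely $\pm x_{k,i}/x_{i,j}$ and ${\rm Pf}\big(A_{\{k,l,i,j\}}\big)/x_{i,j}$---is read directly from formulas~\eqref{Eq:formulas-x-tilde}, transported along the bijections $\iota$ and the isomorphisms $\Gamma_j$ used to identify the target $\mathbb S_{n-1}^-$ with a Wick model of $\mathbb S_{n-1}^+$.
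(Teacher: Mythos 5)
Your proposal is correct and follows essentially the same route as the paper, which states this proposition explicitly as a gathering of the preceding results: the identification of the $2n$ half-hypercubic facets as the only $\boldsymbol{\mathcal W}$-relevant ones, Proposition \ref{Prop:X->XF} for the facet maps, and Propositions \ref{Prop:Geom-Phini+}, \ref{Prop:Geom-Phini-}, \ref{Eq:Phi-n-i-+}, \ref{Eq:Phi-n-i--} together with Corollary \ref{Cor:Phi---n=5} for points 2 and 3. Your added attention to the pairwise distinctness and transversality of the $2n$ foliations goes slightly beyond the paper's treatment (which implicitly relies on the weak ``generalized web'' notion requiring only that distinct foliations meet transversally), and your suggested argument via the distinct Bia\l{}ynicki-Birula contractions $h_F$ is a sound way to settle it.
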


 \paragraph{\bf Birational models for the quotients of $\mathbb S_5$ and $\mathbb S_4$ by the Cartan tori.}
For any $n\geq 4$, we recall that $\mathbb S_n^*$ stands for the complement in $\mathbb S_n \subset \mathbf P S_n^+$ of the union of the hyperplanes of coordinates (relatively to the weight basis $ \big\{ \, e_{{}^c \hspace{-0.05cm}L}\, \lvert \hspace{0.1cm} L\subset  [ \hspace{-0.06cm}[\, n\,]\hspace{-0.06cm}]\, \mbox{ even }
 \big\}$ of $S_n^+=\wedge^{\varrho} V$). 
 The stabilizer of  any $\xi \in \mathbb S_n^*$ in $H$  is $\{ \pm {\rm Id}_E \}\simeq \mathbf Z/2\mathbf Z$ thus $H'=H/\langle  \pm {\rm Id}_E \rangle$ acts freely on $\mathbb S_n^*$ hence  the quotient 
  $\boldsymbol{\mathcal Y}_n^*=\mathbb S_n^*/H'$ exists as a smooth quasi-projective variety. 
 However, even in the cases when $n=5$ and $n=4$ which are the two cases relevant for describing $\boldsymbol{\mathcal W}_{ \hspace{-0.1cm}{\rm dP}_4 }$ in terms of a Gelfand-MacPherson web, describing precisely $\boldsymbol{\mathcal Y}_n^*$ is a bit delicate. 
 Since it will suffice for what we are aiming for, we will instead consider and work with simple birational models of $\boldsymbol{\mathcal Y}_5^*$
  and $\boldsymbol{\mathcal Y}_4^*$ that we are going to describe now.  Note that everything below can be easily generalized to $n$ arbitrary but we will not elaborate on this as it is of no interest for our purpose.
 
We deal in detail with the case when $n=5$ and are more succinct about the case when $n=4$, since it is treated in a very similar way.  Let  $A_5(x)=(x_{i,j}\big)_{i,j=1}^{5}\in {\rm Asym}_5(\mathbf C)$ be a 
$5\times 5$  antisymmetric matrix  such that any of its non diagonal coefficients $x_{i,j}$ (with $i\neq j$) does not vanish.  One sets $H(x)=
\scalebox{0.75}{\bigg[ \hspace{-0.07cm}\begin{tabular}{l} $h(x)$ \, 0 \vspace{-0.1cm}\\ 0 \, $h(x)^{-1}$ 
\end{tabular}  \hspace{-0.4cm}
\bigg]} $ where $h(x)$ stands for the diagonal $5\times 5$ matrix ${\rm Diag}(\sqrt{h_1},\ldots,\sqrt{h_{5}})$ whose diagonal coefficients are given by 
\begin{align}
\label{Eq:sqrt-hi}
\sqrt{h_{1}}= &\, \frac{x_{3,4} x_{1,5} x_{1,2}}{x_{2,3} x_{4,5}}
 \quad 
&&\sqrt{h_{2}}=\frac{x_{1,2} x_{2,3} x_{4,5}}{x_{3,4} x_{1,5}}
 \quad 
&&\sqrt{h_{3}}=\frac{x_{2,3} x_{3,4} x_{1,5}}{x_{1,2} x_{4,5}}
 \quad \\ 
\sqrt{h_{4}}=& \, \frac{x_{3,4} x_{1,2} x_{4,5}}{x_{2,3} x_{1,5}}
&&\sqrt{h_{5}}=\frac{x_{1,5} x_{2,3} x_{4,5}}{x_{3,4} x_{1,2}} \, . 
\nonumber 
\end{align}

Actually, the presence of square roots in this definition induces a certain ambiguity with the consequence that  $H(x)$ is not well-defined as an element of the diagonal torus in ${\rm GL}(E)$. However, this ambiguity can be resolved arguing as follows: 
the set  of diagonal matrices $D_h$ for $h\in (\mathbf C^*)^5$ described above (in \eqref{Eq:D-h}) is the image of the Cartan torus $H_{D_5}$ of ${\rm Spin}_{10}$ in ${\rm GL}(E)$. The map $(\mathbf C^*)^5\rightarrow \mathbf C^*$, $(h_s)_{s=1}^5\mapsto h_i$, denoted abusively by $h_i$ from now on, is a character on $H$ which admits a square root when pulled-back on $H_{D_5}$, which will be denoted by $\sqrt{h_i}$. 
 With this notation, the coordinate ring of $H_{D_5}$ is $\mathbf C\big[ H_{D_5}\big]=\mathbf C\big[ h_1^{\pm 1/2},\ldots,
 h_5^{\pm 1/2} \big]$ and now the formulas \eqref{Eq:sqrt-hi} unambiguously define a rational map 
 ${\bf H}: {\rm Asym}_5(\mathbf C)\dashrightarrow H_{D_5}$.  For  $x$ as above, ${\bf H}$ is defined at $x$ and 
given an arbitrary matrix $A(y)=(y_{i,j})_{i,j=1}^5 \in {\rm Asym}_5(\mathbf C)$, one has 
$$ A(y) \circ {\bf H}(x)=
h(x)^{-1}\cdot \Big[\, {\rm Id}_5, A(y)\,\Big]\cdot H(x)=
\left[\begin{array}{ccccc}
0 & \frac{y_{1,2}}{\sqrt{h_{1}}\, \sqrt{h_{2}}} & \frac{y_{1,3}}{\sqrt{h_{1}}\, \sqrt{h_{3}}} & \frac{y_{1,4}}{\sqrt{h_{1}}\, \sqrt{h_{4}}} & \frac{y_{1,5}}{\sqrt{h_{1}}\, \sqrt{h_{5}}} 
\\
 -\frac{y_{1,2}}{\sqrt{h_{1}}\, \sqrt{h_{2}}} & 0 & \frac{y_{2,3}}{\sqrt{h_{2}}\, \sqrt{h_{3}}} & \frac{y_{2,4}}{\sqrt{h_{2}}\, \sqrt{h_{4}}} & \frac{y_{2,5}}{\sqrt{h_{2}}\, \sqrt{h_{5}}} 
\\
 -\frac{y_{1,3}}{\sqrt{h_{1}}\, \sqrt{h_{3}}} & -\frac{y_{2,3}}{\sqrt{h_{2}}\, \sqrt{h_{3}}} & 0 & \frac{y_{3,4}}{\sqrt{h_{3}}\, \sqrt{h_{4}}} & \frac{y_{3,5}}{\sqrt{h_{3}}\, \sqrt{h_{5}}} 
\\
 -\frac{y_{1,4}}{\sqrt{h_{1}}\, \sqrt{h_{4}}} & -\frac{y_{2,4}}{\sqrt{h_{2}}\, \sqrt{h_{4}}} & -\frac{y_{3,4}}{\sqrt{h_{3}}\, \sqrt{h_{4}}} & 0 & \frac{y_{4,5}}{\sqrt{h_{4}}\, \sqrt{h_{5}}} 
\\
 -\frac{y_{1,5}}{\sqrt{h_{1}}\, \sqrt{h_{5}}} & -\frac{y_{2,5}}{\sqrt{h_{2}}\, \sqrt{h_{5}}} & -\frac{y_{3,5}}{\sqrt{h_{3}}\, \sqrt{h_{5}}} & -\frac{y_{4,5}}{\sqrt{h_{4}}\, \sqrt{h_{5}}} & 0 
\end{array}\right]
$$
where $\circ$ stands here for the action of ${\bf H}(x)$ on the point of $\mathbb S_5$ corresponding to $A(y)$ and where the dots $\cdot$ in the expression after the first equal sign refer to  matricial products. 
Taking $y=x$ in the above identity, an elementary computation gives us that one has $A(x) \circ {\bf H}(x)=A'(x)$
with 
$$
A'(x)=\left[\begin{array}{ccccc}
0 & 1 & \frac{x_{4,5}\, x_{1,3}}{x_{1,5} \, x_{3,4}} & \frac{x_{2,3} \, x_{1,4}}{x_{3,4} \,x_{1,2}} & 1 
\vspace{0.2cm}\\
 -1 & 0 & 1 & \frac{x_{1,5}\, x_{2,4}}{x_{1,2}\, x_{4,5}} & \frac{x_{3,4} \, x_{2,5}}{x_{2,3} \, x_{4,5}} 
\vspace{0.2cm} \\
 -\frac{x_{4,5} \,x_{1,3}}{x_{1,5} \, x_{3,4}} & -1 & 0 & 1 & \frac{x_{1,2} \, x_{3,5}}{x_{1,5} \, x_{2,3}} 
\vspace{0.2cm}\\
 -\frac{x_{2,3} \, x_{1,4}}{x_{3,4} \, x_{1,2}} & -\frac{x_{1,5} \, x_{2,4}}{x_{1,2} \, x_{4,5}} & -1 & 0 & 1 
\vspace{0.2cm} \\
 -1 & -\frac{x_{3,4} \, x_{2,5}}{x_{2,3} \, x_{4,5}} & -\frac{x_{1,2} \, x_{3,5}}{x_{1,5} \, x_{2,3}} & -1 & 0 
\end{array}\right] \in {\rm Asym}_5(\mathbf C)\,. 
$$
Conversely, one verifies that $A(x)=A'(x) \circ {\bf H}(x)^{-1}$. Then denoting by $u_{a,b}$ the coefficients of 
$A'(x)$ over the diagonal which are distinct from 1 (ie.\,$u_{1,3}=x_{4,5} x_{1,3}/(x_{1,5} x_{3,4})$, 
$u_{2,3}=x_{2,3} \, x_{1,4}/(x_{3,4} \,x_{1,2})$ etc), 
 using $\sqrt{h_i}$ ($i=1,\ldots,5$) for the quantities defined in \eqref{Eq:sqrt-hi} and 
  considering all these quantities as rational functions in the $x_{ij}$'s, 
we get a rational map 
\begin{align} 
\label{Eq:Map-u-h}
{\rm Asym}_5(\mathbf C)& \, \dashrightarrow \mathbf C^5_u\times H_{D_5} 
\\ A(x) & 
 \longmapsto   
 \left(\, 
\big( \, u_{1,3}\, , \, \ldots, u_{3,5}
\,  \big)
\, , \, \Big( \sqrt{h_1},\ldots,\sqrt{h_5}\, 
\Big)
\right)  
\nonumber 
\end{align}
which is easily shown to provide 
  a birational model for the quotient of 
$\mathbb S_5$ by $H_{D_5}$: 
\begin{prop}
\label{Prop:Birat-Model-P5}
 The map \eqref{Eq:Map-u-h} is birational. The corresponding isomorphism of fields of rational functions $\mathbf C( \mathbb S_5) \simeq \mathbf C\big( {\rm Asym}_5(\mathbf C)\big)
\simeq 
\mathbf C(u)\otimes_{ \mathbf C } \mathbf C(H_{D_5})$
 is such that 
$$ \mathbf C\big( \mathbb S_5\big)^{H_{D_5}}\simeq \mathbf C\big( \mathbf C^5_u\times H_{D_5}\big)^{ H_{D_5}}
\simeq \mathbf C\big( u\big)\, .$$ 
In particular,  the rational map 
 $\mathcal P_5 : 
 {\rm Asym}_5 (\mathbf C) \dashrightarrow  \mathbf C^5$ 
given by 
\begin{align*}
 \scalebox{0.79}{$\begin{bmatrix}
0 & x_{1,2} & x_{1,3}& x_{1,4}  &  x_{1,5}\\
 -x_{1,2}&  0& x_{2,3} & x_{2,4} & x_{2,5}\\
 -x_{1,3}&  -x_{2,3} & 0& x_{3,4}  & x_{3,5} \\
-x_{1,4} & -x_{2,4}  & -x_{3,4}& 0 &  x_{4,5}\\
-x_{1,5} & -x_{2,5}  & -x_{3,5}&  -x_{4,5}&  0
 \end{bmatrix} $} \longmapsto 
\left( \, \frac{x_{4,5}\, x_{1,3}}{x_{1,5} \,x_{3,4}}
\, , \, 
\frac{x_{1,5} \,x_{2,4}}{x_{1,2} \, x_{4,5}}
\, , \, 
\frac{x_{1,2} \,x_{3,5}}{x_{1,5}\, x_{2,3}}
\, , \, 
\frac{x_{2,3} \, x_{1,4}}{x_{3,4} \,x_{1,2}}
\, , \, 
\frac{x_{3,4} \,x_{2,5}}{x_{2,3} 
\, x_{4,5}} \right)
\end{align*}
is a  birational model  in Wick's coordinates for the quotient map $\chi_{5} : {\mathbb S}_5\rightarrow \boldsymbol{\mathcal Y}_5 ={\mathbb S}_5/ H_{D_5}$.
\end{prop}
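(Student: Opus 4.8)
The map \eqref{Eq:Map-u-h} is birational, and consequently $\mathcal P_5$ is a birational model in Wick coordinates for the quotient map $\chi_5:\mathbb S_5\rightarrow\boldsymbol{\mathcal Y}_5=\mathbb S_5/H_{D_5}$.

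Let me think about what's actually being asked and how I'd prove it.

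The proposition claims that a specific rational map from $\mathrm{Asym}_5(\mathbf C)$ to $\mathbf C^5_u\times H_{D_5}$ is birational, and that the projection to the $\mathbf C^5_u$ factor gives a birational model of the quotient by the Cartan torus. Let me figure out the natural structure of a proof.

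The preceding computation has already done the heavy lifting: starting from a generic $A(x)$, it constructs $\mathbf H(x)\in H_{D_5}$ via the explicit square-root formulas \eqref{Eq:sqrt-hi}, and shows that $A(x)\circ\mathbf H(x)=A'(x)$, where $A'(x)$ is an antisymmetric matrix whose over-diagonal entries are five specific cross-ratio-like monomials $u_{a,b}$ together with four entries equal to $1$. The key structural fact is that $A'(x)$ depends only on the five quantities $u_{1,3},u_{2,4},u_{3,5},u_{2,3},u_{3,4}$ (the $u$'s), and the action of $\mathbf H(x)$ has been used precisely to gauge-fix four of the entries to $1$.

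So the proof has essentially three moving parts, and I'd sequence them this way.

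First, I would establish that the map \eqref{Eq:Map-u-h} is \emph{dominant with a rational inverse}. The inverse is already written down implicitly: from $(u,h)\in\mathbf C^5_u\times H_{D_5}$ one reconstructs $A'$ (antisymmetric, with the four gauge-fixed $1$'s and the five $u$'s) and then recovers $A(x)=A'\circ\mathbf H^{-1}$ using the identity $A(x)=A'(x)\circ\mathbf H(x)^{-1}$ noted just before the statement. The task here is purely to check that this assignment is a well-defined rational map in the opposite direction and that the two compositions are the identity on a dense open set — both are routine verifications of the explicit formulas, and I would not grind through them.

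Second, and this is where the real content sits, I would verify that the decomposition of $\mathbf C(\mathbb S_5)\simeq\mathbf C(\mathrm{Asym}_5(\mathbf C))$ as $\mathbf C(u)\otimes_{\mathbf C}\mathbf C(H_{D_5})$ is compatible with the torus action, i.e.\ that $H_{D_5}$ acts on the second tensor factor by translation and trivially on $\mathbf C(u)$. Concretely: the five functions $u_{a,b}$ are $H_{D_5}$-invariant monomials in the Wick coordinates, while the $\sqrt{h_i}$ transform as a full system of coordinates on the torus. The $H_{D_5}$-invariance of the $u$'s is the crux — one checks that each $u_{a,b}$ is a Laurent monomial in the $x_{ij}$ whose weight under the Cartan character lattice is zero (equivalently, that the five exponent vectors annihilate the weight-lattice action). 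This is a finite linear-algebra check over $\mathbf Z$: the weight of $x_{ij}$ is $-(\nu_i+\nu_j)$ by \eqref{Eq:Weight-e-Kc}, so one simply confirms that for each of the five monomials the $\nu$-contributions cancel. Granting this, $\mathbf C(u)\subseteq\mathbf C(\mathbb S_5)^{H_{D_5}}$, and since the transcendence degree of $\mathbf C(u)$ is $5=\dim\mathbb S_5-\dim H_{D_5}$, one gets equality $\mathbf C(\mathbb S_5)^{H_{D_5}}=\mathbf C(u)$ by a dimension count (a generic $H_{D_5}$-orbit is $5$-dimensional by the free-action statement for $H'$ on $\mathbb S_5^*$ recalled just before the proposition).

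Third, I would conclude. Birationality of \eqref{Eq:Map-u-h} follows from the two-sided rational inverse of step one. The identification $\mathbf C(\mathbb S_5)^{H_{D_5}}=\mathbf C(u)$ from step two says exactly that the field of $H_{D_5}$-invariant rational functions is generated by the components of $\mathcal P_5$; since $\boldsymbol{\mathcal Y}_5=\mathbb S_5/H_{D_5}$ has function field $\mathbf C(\mathbb S_5)^{H_{D_5}}$, the map $\mathcal P_5$ induces an isomorphism $\mathbf C(\boldsymbol{\mathcal Y}_5)\simeq\mathbf C(u)$, hence is a birational model of $\chi_5$. The main obstacle I anticipate is step two — not the invariance check itself, which is mechanical, but verifying that the five chosen $u_{a,b}$ are \emph{algebraically independent} and generate the whole invariant field rather than a proper subfield; the transcendence-degree count handles independence-and-generation simultaneously, but one must be slightly careful that the generic orbit dimension is genuinely $5$ and that the $u$'s separate generic orbits (which is what the explicit inverse in step one already guarantees, closing the loop).
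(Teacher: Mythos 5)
Your proof follows essentially the same route as the paper, which offers no separate argument beyond the computation preceding the statement: the identity $A(x)\circ {\bf H}(x)=A'(x)$, equivalently $A(x)=A'(x)\circ {\bf H}(x)^{-1}$, is exactly the explicit two-sided rational inverse of \eqref{Eq:Map-u-h}; the $u_{a,b}$ are weight-zero (hence $H_{D_5}$-invariant) Laurent monomials in the $x_{i,j}$ since $x_{i,j}\mapsto x_{i,j}/(h_ih_j)$; and because the generic fibres of $\mathcal P_5$ are then generic $H_{D_5}$-orbit closures (finite stabilizer, as you note), $\mathcal P_5$ is a birational model of $\chi_5$, which is the orbit-separation point you correctly flag as closing the transcendence-degree loop. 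Two cosmetic slips only: $A'(x)$ has five (not four) entries gauge-fixed to $1$, matching $\dim H_{D_5}=5$, and the $u$'s occupy positions $(1,3),(1,4),(2,4),(2,5),(3,5)$, so the quantities $u_{2,3},u_{3,4}$ in your list are in fact among the entries normalized to $1$.
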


This proposition can be generalized for $n\geq 4$ arbitrary. However, this requires
 two similar but formally distinct treatments according to the parity of $n$. We will not elaborate further on this but  only give the corresponding statement in the case when $n=4$ which we are going to use in the next paragraph.

\begin{prop}
\label{Lem:Birat-Model-P4}
 The rational map
 $\mathcal P_4 : 
 {\rm Asym}_4 (\mathbf C) \dashrightarrow  \mathbf C^2$
given by 
\begin{align*}
 \scalebox{0.95}{$\begin{bmatrix}
0 & x_{1,2} & x_{1,3}& x_{1,4} \\
 -x_{1,2}&  0& x_{2,3} & x_{2,4}\\
 -x_{1,3}&  -x_{2,3} & 0& x_{3,4} \\
-x_{1,4} & -x_{2,4}  & -x_{3,4}& 0
\end{bmatrix}$} & \longmapsto \left( 
\, 
\frac{x_{1,4}\,x_{2,3}}{x_{1,2}\,x_{3,4}}
\, , \,  \frac{x_{1,3}\,x_{2,4}}{x_{1,2}\,x_{3,4}}
\, 
\right) 
\end{align*}
is a  birational model for the quotient map  $\chi_{4} : {\mathbb S}_4\dashrightarrow 
\boldsymbol{\mathcal Y}_4 =  {\mathbb S}_4/ H_{D_4}$.
\end{prop}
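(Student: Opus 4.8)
The plan is to follow verbatim the strategy of the proof of Proposition \ref{Prop:Birat-Model-P5}, adapting it to the smaller case $n=4$, where only the combinatorics of the normalization differs slightly. Through the fourth Wick parametrization $W_4^+ : {\rm Asym}_4(\mathbf C) \dashrightarrow \mathbb S_4$, the antisymmetric matrices $A(x)=(x_{i,j})_{i,j=1}^4$ provide birational coordinates on $\mathbb S_4$, and the first thing to record is that in these coordinates the action of $D_h \in H_{D_4}$ reads $A \mapsto h^{-1}Ah^{-1}$, that is $x_{i,j} \mapsto x_{i,j}/(\sqrt{h_i}\sqrt{h_j})$, exactly as in the explicit matrix $A(y)\circ {\bf H}(x)$ written down in the case $n=5$.

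First I would check that the two components of $\mathcal P_4$ are $H_{D_4}$-invariant. In each of $u_1 = x_{1,4}x_{2,3}/(x_{1,2}x_{3,4})$ and $u_2 = x_{1,3}x_{2,4}/(x_{1,2}x_{3,4})$, both numerator and denominator are products of two entries whose indices together exhaust $\{1,2,3,4\}$, each index occurring exactly once; hence the scaling monomial $\sqrt{h_1}\sqrt{h_2}\sqrt{h_3}\sqrt{h_4}$ cancels between top and bottom, and both $u_1,u_2$ lie in $\mathbf C(\mathbb S_4)^{H_{D_4}}$.

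The core of the argument is the construction of a rational normalizing section. For a generic $A(x)$ (all off-diagonal entries nonzero), I would define ${\bf H}(x) \in H_{D_4}$ by solving the four equations $\sqrt{h_1}\sqrt{h_2}=x_{1,2}$, $\sqrt{h_2}\sqrt{h_3}=x_{2,3}$, $\sqrt{h_3}\sqrt{h_4}=x_{3,4}$ and $\sqrt{h_1}\sqrt{h_3}=x_{1,3}$ for the characters $\sqrt{h_i}$ of the spin torus. An elementary computation then gives $A(x)\circ {\bf H}(x) = A'(x)$, where $A'(x)$ has its $(1,2),(1,3),(2,3),(3,4)$ entries equal to $1$ and its $(1,4),(2,4)$ entries equal to $u_1$ and $u_2$ respectively. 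This exhibits a birational map ${\rm Asym}_4(\mathbf C)\dashrightarrow \mathbf C^2_u \times H_{D_4}$, $A(x)\mapsto\big((u_1,u_2),{\bf H}(x)\big)$, which is $H_{D_4}$-equivariant for the translation action on the second factor; hence $\mathbf C(\mathbb S_4)^{H_{D_4}} = \mathbf C(u_1,u_2)$ and $\mathcal P_4$ is a birational model for $\chi_4$.

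The one point requiring care, and the analogue of the role played by the square roots in Proposition \ref{Prop:Birat-Model-P5}, is the choice of the four entries to be normalized. One cannot normalize a $4$-cycle such as $(1,2),(2,3),(3,4),(1,4)$, since the product of the $x$'s around such a cycle is itself an invariant, which makes the corresponding linear system for the $\sqrt{h_i}$ inconsistent. The correct choice is a spanning path together with one \emph{diagonal} edge, here $(1,3)$: writing the action weights as the vectors $\nu_i+\nu_j\in\mathbf Z^4$ attached to the entries, the four chosen weights $\nu_1+\nu_2,\nu_2+\nu_3,\nu_3+\nu_4,\nu_1+\nu_3$ are $\mathbf Q$-linearly independent, with $4\times4$ determinant equal to $\pm2$, and it is precisely this index $2$ that forces one to pass to the double cover $H_{D_4}$ and to extract square roots. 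Verifying that ${\bf H}(x)$ is thereby a well-defined rational map into $H_{D_4}$, with the residual $\mathbf Z/2$-ambiguity coinciding with the stabilizer $\{\pm{\rm Id}_E\}$, is the main (and only mildly delicate) step; it is handled exactly as in the case $n=5$.
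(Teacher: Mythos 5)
Your proof is correct and follows essentially the route the paper intends: the paper derives the $n=5$ case in detail before Proposition \ref{Prop:Birat-Model-P5} and then states the $n=4$ proposition without proof, remarking only that even $n$ requires a ``formally distinct treatment'' --- which is precisely the path-plus-diagonal normalization $(1,2),(2,3),(3,4),(1,3)$ you identify, and your verification that the two residual entries become $u_1=x_{1,4}\,x_{2,3}/(x_{1,2}\,x_{3,4})$ and $u_2=x_{1,3}\,x_{2,4}/(x_{1,2}\,x_{3,4})$ supplies exactly what the paper leaves to the reader. One slip in wording only: the plain product of the entries around the $4$-cycle $(1,2),(2,3),(3,4),(1,4)$ is \emph{not} $H_{D_4}$-invariant (it scales by $(h_1h_2h_3h_4)^{-1}$); what is invariant is the alternating product $x_{1,2}\,x_{3,4}/(x_{2,3}\,x_{1,4})$, equivalently the four cycle weights satisfy the dependence $(\nu_1+\nu_2)-(\nu_2+\nu_3)+(\nu_3+\nu_4)-(\nu_1+\nu_4)=0$, and it is this dependence, not invariance of the product, that makes the cycle normalization inconsistent --- your determinant-$\pm 2$ computation for the chosen edges is the correct and sufficient justification, so nothing in the argument is affected.
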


%
 \paragraph{\bf Birational models for the equivariant quotient of the Gelfand-MacPherson web $\boldsymbol{\mathcal W}_{ {}^{} \hspace{-0.05cm} \mathbb S_5}^{\boldsymbol{GM}}$.}
Our aim here is to give an explicit birational model for the $H_{D_5}$-equivariant quotient 
of Gelfand-MacPherson web as defined in \eqref{Eq:W-GM/H}.

For $n=4,5$, we denote by $\boldsymbol{Y}_n$ the affine complex space which is the target space of the map $\mathcal P_n$ of the two preceding propositions (thus $\boldsymbol{Y}_5=\mathbf C^5$ and $\boldsymbol{Y}_4=\mathbf C^2$).
From  the two previous propositions and Corollary \ref{C:cocor}, it follows that 
there exist birational maps 
$\omega_n : \boldsymbol{Y}_n \stackrel{\sim}{\dashrightarrow} \boldsymbol{\mathcal Y}_n$ for $n=5,4$;  and   dominant rational maps 
$\varphi_{5,i}^\varepsilon : 
\boldsymbol{\mathcal Y}_5 \dashrightarrow 
\boldsymbol{\mathcal Y}_4$ and 
$\psi_i^\varepsilon : \mathbf C^5= \boldsymbol{Y}_5\dashrightarrow \boldsymbol{Y}_4=\mathbf C^2$ for $i=1,\ldots,5$ and $\varepsilon=\pm$, 
 making  
the following diagram  commutative:  
\begin{equation}
\label{Diag:gogogo}
\begin{tabular}{c}
\xymatrix@R=1.3cm@C=1.3cm{
{\rm Asym}_5\big(\mathbf C\big)   \ar@{->}[d]^{ {\mathcal P}_5} \ar@{->}[r]^{ {}^{}\hspace{0.4cm}W_5}  &
\mathbb S_5 
 \ar@{->}[rr]^{ \Phi_{5,i}^\varepsilon}   
\ar@{->}[d]^{\pi_5}
&  &\mathbb S_4  \ar@{->}[d]^{\pi_4}  \ar@{<-}[r]^{W_4}  & {\rm Asym}_4\big(\mathbf C\big)  \ar@{->}[d]^{ {\mathcal P}_4} \\
\ar@/^2pc/@{.>}[u]^{ \scalebox{1.2}{$\textcolor{darkgreen}{\sigma}$}}
\mathbf C^5
\ar@/_2pc/[rrrr]^{ \psi_{5,i}^\varepsilon}
  \ar@{->}[r]^{\omega_5}& \, \,  \boldsymbol{\mathcal Y}_5
 \ar@{->}[rr]^{ \varphi_{5,i}^\varepsilon}  
  & &  \boldsymbol{\mathcal Y}_4\ar@{<-}[r]^{\omega_4}   &  \mathbf C^2}
\end{tabular}\vspace{0.3cm}
\end{equation}
(Remarks: ({\it i}) This is a diagram of rational maps hence  all the arrows in it should be dashed according to our typographic convention. They are not just for aesthetic reasons; ({\it ii}) The green map 
$\sigma$ with the dotted arrow on the left is defined a few paragraphs below, cf. \eqref{Eq:Sigma}). 

Since $\mathcal P_5$ and $\mathcal P_4$ are birational models for the torus quotients of 
$\mathbb S_5$ and $\mathbb S_4$ respectively, it follows that the pull-back under $\omega_5$ of the equivariant quotient of $\boldsymbol{\mathcal W}_{ {}^{} \hspace{-0.05cm} \mathbb S_5}^{{GM}}$ by $H_{D_5}$ is the web on $\mathbf C^5$ defined by the 
rational maps $\psi_{5,i}^{\varepsilon}$: one has 
$$
\omega_5^*\bigg( 
\left( \boldsymbol{\mathcal W}_{ {}^{} \hspace{-0.05cm} \mathbb S_5}^{{GM}}
\right) \big/_{ H_{D_5}} \bigg)=
\boldsymbol{\mathcal W}\Big( \hspace{0.1cm}  \psi_{5,i}^{\varepsilon} \hspace{0.15cm} \big\lvert 
\hspace{0.15cm} 
i=1,\ldots,5
 \, ,  \,   \varepsilon =\pm \hspace{0.1cm} \Big)\, . 
$$

Because  we have explicit rational expressions for the maps $\Phi_{5,i}^\varepsilon$ ( in Wick's coordinates) as well as for $\mathcal P_5$ and $\mathcal P_4$, one can easily get expressions of the same kind for the maps $\psi_{5,i}^{\varepsilon}$'s hence give an explicit birational model for $\boldsymbol{\mathcal W}_{ {}^{} \hspace{-0.05cm} \boldsymbol{\mathcal Y}_5}^{{GM}}= 
\boldsymbol{\mathcal W}_{ {}^{} \hspace{-0.05cm} \mathbb S_5}^{{GM}}
/  H_{D_5}$. 
\mk 

Let us start by making explicit
the maps $\psi_i^+=\psi_{5,i}^{+}$'s (for which there is no ambiguity). 
The rational map $\sigma : \mathbf C^5 \dashrightarrow {\rm Asym}_5(\mathbf C)$ given by
\begin{equation}
\label{Eq:Sigma}
y=
\big(\,  y_{1,3} \, , \,  y_{1,4}
\, , \,  y_{2,4} 
 \, , \,
 y_{2,5} 
 \, , \,
 y_{3,5} 
 \big) \mapsto 
\left[\begin{array}{ccccc}
0 & 1 & y_{1,3} & y_{1,4} & 1 
\\
 -1 & 0 & 1 & y_{2,4} & y_{2,5} 
\\
 -y_{1,3} & -1 & 0 & 1 & y_{3,5} 
\\
 -y_{1,4} & -y_{2,4} & -1 & 0 & 1 
\\
 -1 & -y_{2,5} & -y_{3,5} & -1 & 0 
\end{array}\right]
\end{equation}
is obviously a rational section of $\mathcal P_5$.  It follows that for $i=1,\ldots,5$, one has  
 $$
 \psi_{i}^+=  \mathcal P_4\circ {W_4}^{-1} \circ \Phi_{5,i}^+   \circ W_5  \circ  \sigma : 
  \mathbf C^5 \dashrightarrow \mathbf C^2 \, . 
  $$ 
 Since $\widetilde \Phi_{i}^+= {W_4}^{-1} \circ \Phi_{5,i}^+   \circ W_5$ is known (see 
 Proposition \ref{Prop:W-GM-Sn}.3),  it is just a matter of elementary computations to get the following explicit and quite simple formulas for the  $\psi_{5,i}^+$'s: 
\begin{align}
\label{Eq:Maps-psi-i}
\psi_{1}^+(y)=   &\,  \big(\, y_{2,5} \, , \, y_{2,4} \,y_{3,5}\, \big)\, , 
&&\psi_{2}^+(y)=  \left(\, \frac{1}{y_{1,3}}\, ,\,  \frac{y_{1,4} \,y_{3,5}}{y_{1,3}}\, \right)
\, , \quad  \psi_{3}^+(y)= \left(\, y_{2,4}\, , \, y_{1,4} \, y_{2,5}\, \right) \\
\psi_{4}^+(y)= & \,   \left(\, \frac{1}{y_{3,5}}\, , \,  \frac{y_{1,3}\, y_{2,5}}{y_{3,5}}\, \right)
&&
\psi_{5}^+(y)=  \big(\, y_{1, 4} \, , \,  y_{1, 3}\, y_{2, 4} \, \big)\, . 
\nonumber
\end{align}
We now turn to the  face maps $\psi_{i}^-=\psi_{5,i}^-$ for $i=1,\ldots,5$.  As explained above, for $i\in [ \hspace{-0.06cm}[\, 5\,]\hspace{-0.06cm}]$ given, there is no natural choice for an expression in Wick's coordinates for the 
map $\Phi_{5,i}^-$  and one has to choose and work with one of the maps $\widetilde \Phi_{5,i}^{-,j}$ with $j\neq i$.  But once such a map has been chosen, possibly in an arbitrary way, one can compute an explicit rational expression for a model
of $\psi_{i}^-$ with respect to some Wick's coordinates, which we will denote by 
$\psi_{i,j}^{-}$. 

For example, 
the map $\widetilde \Phi_{5,1}^{- , 5}$ is the rational map 
which associates to a generic antisymmetric matrix $(x_{ij})_{i,j=1}^5 \in {\rm Asym}_5(\mathbf C)$ 
the following one 
$$
\left[\begin{array}{cccc}
0 & \frac{x_{1,2} x_{3,5}-x_{1,3} x_{2,5}+x_{1,5} x_{2,3}}{x_{1,5}} & \frac{x_{1,2} x_{4,5}-x_{1,4} x_{2,5}+x_{1,5} x_{2,4}}{x_{1,5}} & \frac{x_{1,2}}{x_{1,5}} 
\\
 -\frac{x_{1,2} x_{3,5}-x_{1,3} x_{2,5}+x_{1,5} x_{2,3}}{x_{1,5}} & 0 & \frac{x_{1,3} x_{4,5}-x_{1,4} x_{3,5}+x_{1,5} x_{3,4}}{x_{1,5}} & \frac{x_{1,3}}{x_{1,5}} 
\\
 -\frac{x_{1,2} x_{4,5}-x_{1,4} x_{2,5}+x_{1,5} x_{2,4}}{x_{1,5}} & -\frac{x_{1,3} x_{4,5}-x_{1,4} x_{3,5}+x_{1,5} x_{3,4}}{x_{1,5}} & 0 & \frac{x_{1,4}}{x_{1,5}} 
\\
 -\frac{x_{1,2}}{x_{1,5}} & -\frac{x_{1,3}}{x_{1,5}} & -\frac{x_{1,4}}{x_{1,5}} & 0 
\end{array}\right]
\in {\rm Asym}_4(\mathbf C)\, .
$$
From this,  one easily deduces that the rational map $\psi_{1,5}^{-} : \mathbf C^5 \dashrightarrow \mathbf C^2$ is given by
\begin{equation}
\label{Eq:Psi1*5}
\psi_{1,5}^{-} (y)= 
\left(\, 
\frac{y_{1,4} y_{3,5}-y_{1,3}-1}{y_{1,4} \left(y_{1,3} y_{2,5}-y_{3,5}-1\right)}
\, , \, 
\frac{ y_{1,3}\left(y_{1,4} y_{2,5}-y_{2,4}-1\right)}{y_{1,4} \left(y_{1,3} y_{2,5}-y_{3,5}-1\right)}
\, 
\right)
\end{equation}
for  $y= \big(\,  y_{1,3} \, , \,  y_{1,4}
\, , \,  y_{2,4} 
 \, , \,
 y_{2,5} 
 \, , \,
 y_{3,5} 
 \big) \in \mathbf C^5$.  Similar explicit formulas can be given for any of the $\psi_{i,j}^-$'s: for instance,  straightforward but a bit lenghty (hence not reproduced here) computations give us that 
%
%
\begin{align}
\label{Eq:Psii*j}
\psi_{2,5}^-(y)=  &\,  \left(\, 
\frac{y_{2,4} y_{3,5}-y_{2,5}-1}{y_{2,4} \left(y_{1,3} y_{2,5}-y_{3,5}-1\right)}
\, , \, 
\frac{y_{1,4} y_{2,5}-y_{2,4}-1}{y_{2,4} \left(y_{1,3} y_{2,5}-y_{3,5}-1\right)}
\, 
\right) \nonumber  
\\
\psi_{3,5}^-(y)=  &\,  
\left(\, 
\frac{ y_{1,3} \left(y_{2,4} y_{3,5}-y_{2,5}-1\right)}{y_{1,3} y_{2,5}-y_{3,5}-1}
\, , \, 
 \frac{y_{1,4} y_{3,5}-y_{1,3}-1}{y_{1,3} y_{2,5}-y_{3,5}-1}\, \right)
\\
\psi_{4,5}^-(y)=  &\,  \left(\, 
 \frac{y_{1,4}  \left(y_{2,4} y_{3,5}-y_{2,5}-1\right)}{y_{1,4} y_{2,5}-y_{2,4}-1}
\, , \, 
\frac{y_{2,4} \left(y_{1,4} y_{3,5}-y_{1,3}-1\right) }{y_{1,4} y_{2,5}-y_{2,4}-1}
\, 
\right) \nonumber 
\\
\mbox{and } \quad \psi_{5,4}^-(y)=  &\, \left(\, 
\frac{y_{2,4} y_{3,5}-y_{2,5}-1}{y_{3,5} \left(y_{1,4} y_{2,5}-y_{2,4}-1\right)}
\, , \, 
\frac{y_{2,5} \left(y_{1,4} y_{3,5}-y_{1,3}-1\right) }{y_{3,5} \left(y_{1,4} y_{2,5}-y_{2,4}-1\right)}
\, 
\right) \, . \nonumber 
\end{align}
%
%
%
%
 
 To simplify the writing, we set $\psi_i^-=\psi_{i,5}^-$ for $i=1,\ldots,4$ and $\psi_5^-=\psi_{5,4}^-$. \mk 
 
\begin{prop}
\label{P:W-GM-Y5}
In the system of rational coordinates $\big(y_{1,3} \, , \,  y_{1,4}
\, , \,  y_{2,4} 
 \, , \,
 y_{2,5} 
 \, , \,
 y_{3,5} \big)$ on $\mathbf C^5\simeq \boldsymbol{\mathcal Y}_5$, Gelfand-MacPherson's web $\boldsymbol{\mathcal W}_{ {}^{} \hspace{-0.05cm} \boldsymbol{\mathcal Y}_5}^{{GM}}$ is defined by the ten rational first integrals 
$\psi_i^{\pm}$ for $i=1,\ldots,5$.
\end{prop}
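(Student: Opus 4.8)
The statement collects, in final form, the explicit material assembled above, so the plan is simply to chain the commutative diagram \eqref{Diag:gogogo} together with the birational models of the two torus quotients. First I would recall from Proposition \ref{Prop:W-GM-Sn} that $\boldsymbol{\mathcal W}_{\mathbb S_5}^{GM}$ is the $10$-web $\boldsymbol{\mathcal W}\big(\Phi_{5,i}^+,\Phi_{5,i}^-\mid i=1,\ldots,5\big)$, whose foliations are the fibres of the ten face maps attached to the $\boldsymbol{\mathcal W}$-relevant (half-hypercubic) facets $\Delta_{5,i}^{+,\pm}$ of the moment polytope $\Delta_5$. Each such face map being $H_{D_5}$-equivariant by Corollary \ref{C:cocor}, the web descends to $\boldsymbol{\mathcal Y}_5=\mathbb S_5/H_{D_5}$, and by the very definition \eqref{Eq:W-GM/H} one has $\boldsymbol{\mathcal W}_{\boldsymbol{\mathcal Y}_5}^{GM}=\boldsymbol{\mathcal W}\big(\varphi_{5,i}^\varepsilon\mid i=1,\ldots,5,\ \varepsilon=\pm\big)$, where $\varphi_{5,i}^\varepsilon:\boldsymbol{\mathcal Y}_5\dashrightarrow\boldsymbol{\mathcal Y}_4$ is the descended face map.

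Next I would transport this web to $\mathbf C^5$ along $\omega_5$. Because $\mathcal P_5$ and $\mathcal P_4$ are birational models for the quotient maps $\chi_5$ and $\chi_4$ (Propositions \ref{Prop:Birat-Model-P5} and \ref{Lem:Birat-Model-P4}) and the diagram \eqref{Diag:gogogo} commutes, pulling back under $\omega_5$ turns each first integral $\varphi_{5,i}^\varepsilon$ into $\psi_{5,i}^\varepsilon=\omega_4^{-1}\circ\varphi_{5,i}^\varepsilon\circ\omega_5$; since pre- and post-composition by birational isomorphisms leaves the fibre foliation unchanged, this already identifies the pulled-back web with $\boldsymbol{\mathcal W}\big(\psi_{5,i}^\varepsilon\mid i,\varepsilon\big)$. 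The explicit rational expressions would then be produced through the rational section $\sigma$ of $\mathcal P_5$ displayed in \eqref{Eq:Sigma}: writing $\psi_{5,i}^\varepsilon=\mathcal P_4\circ W_4^{-1}\circ\Phi_{5,i}^\varepsilon\circ W_5\circ\sigma$ and substituting the Wick-coordinate models $\widetilde\Phi_{5,i}^+$ and $\widetilde\Phi_{5,i}^{-,j}$ of Proposition \ref{Prop:W-GM-Sn}.3 and Corollary \ref{Cor:Phi---n=5} yields the formulas \eqref{Eq:Maps-psi-i} for the plus maps and \eqref{Eq:Psi1*5}, \eqref{Eq:Psii*j} for the minus maps, the latter after fixing for each $i$ one admissible auxiliary index $j\neq i$, which gives the representatives $\psi_i^{\pm}$ of the statement.

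Finally, to assert that these ten maps genuinely form a $10$-web of codimension $2$ (in the generalized sense), I would check that any two of them cut out transverse foliations, i.e.\ that $d\psi_i^\varepsilon\wedge d\psi_j^\delta$ does not vanish identically for $(i,\varepsilon)\neq(j,\delta)$; this is a direct Jacobian verification at a generic point of $\mathbf C^5$. The hard part is not this transversality but the choice-independence in the computation of the minus maps: the non-canonical identification $\mathbb S_4^-\simeq\mathbb S_4^+$ forces a choice of $j$, and one must confirm that distinct admissible choices yield the same foliation. As in the proof of Corollary \ref{Cor:Phi---n=5}, this ultimately rests on Knuth's overlapping-Pfaffian identities, so that the final web is well defined and is exactly $\boldsymbol{\mathcal W}\big(\psi_i^\pm\mid i=1,\ldots,5\big)$.
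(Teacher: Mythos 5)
Your plan is correct and follows essentially the same route as the paper, which states this proposition as the culmination of the preceding constructions: the commutative diagram \eqref{Diag:gogogo}, the birational quotient models $\mathcal P_5$ and $\mathcal P_4$, the section $\sigma$ of \eqref{Eq:Sigma}, and the explicit formulas \eqref{Eq:Maps-psi-i}, \eqref{Eq:Psi1*5}, \eqref{Eq:Psii*j}. Your closing remark on the choice of the auxiliary index $j$ for the minus maps matches the paper's resolution (fixing $\psi_i^-=\psi_{i,5}^-$ for $i\leq 4$ and $\psi_5^-=\psi_{5,4}^-$), since the various $\psi_{i,j}^-$ differ only by post-composition with birational identifications of the target and hence define the same foliation.
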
 

 \paragraph{\bf Serganova-Skorobogatov's embedding and a description of $\WdPq$ \`a la Gelfand-MacPherson.}
Our goal in this paragraph is to explain how to construct $\WdPqq$ from $\boldsymbol{\mathcal W}_{ {}^{} \hspace{-0.05cm} \mathbb S_5}^{{GM}}$. For this sake, we need to recall first a result (due to Popov in its Diplomarbeit \cite{Popov}, see also \cite{SerganovaSkorobogatov,Derenthal})
 about a natural embedding of the Cox variety of a given del Pezzo quartic surface ${\rm dP}_4$ into the spinor tenfold.

The construction sketched in \eqref{SSS:Cox-stuff} for the del Pezzo quintic surface 
can be generalized to any del Pezzo surface ${\rm dP}_d$ for $d\in \{2,\ldots,6\}$.
Given a del Pezzo surface of this degree, there exists  a blow-up 
$b : X_r={\bf Bl}_{p_1,\ldots,p_r}(\mathbf P^2) \rightarrow \mathbf P^2$ 
of  $r=9-d$ points $p_1,\ldots,p_r$  in general position in $\mathbf P^2$ such that 
the considered del Pezzo surface identifies with $X_r$. Such a description of ${\rm dP}_d$ is not unique\footnote{The set of descriptions of ${\rm dP}_d$ as such a blow-up is in one-to-one correspondence with the set of sets of $r$ pairwise disjoint lines on ${\rm dP}_d$.  
The Weyl group $W_r$ acts transitively on the latter set which allows to compute its cardinal explicitly (cf.\cite{Lee}).}  but we fix one and denote the surface by $X_r$ from now on to indicate this.  We will denote $X_r^*$ the affine Zariski-open subset of $X_r$ which is the complement of the union of all lines contained in $X_r$. 

For any  $i=1,\ldots,r$,  we set $E_i=b^{-1}(p_i)$ and we denote by $\boldsymbol{e}_i$ the class of this exceptional divisor in the Picard lattice ${\bf Pic}(X_r)$ of $X_r$. 
As mentioned above, ${\bf Pic}(X_r)$ is freely spanned over $\mathbf Z$ by the $\boldsymbol{e}_i$'s ($i=1,\ldots,n$) and  by 
the class, denoted by $\boldsymbol{h}$,  of the preimage $H=b^{-1}(L)$ of a line $L\subset \mathbf P^2\setminus 
\{p_1,\ldots,p_r\}$. By definition, the {\it `Cox ring'} associated to the divisors $H,E_1,\ldots,E_r$ whose classes are generators of the Picard lattice is 
$${\rm Cox}(X_r)=
\hspace{-0.2cm}
\bigoplus_{ m_0,\ldots,m_r
\in \mathbf Z} {\bf H}^0\bigg( X_r, \mathcal O_{X_r}\Big(m_0 H+\sum_{i=1}^r m_i E_i\Big) \bigg)
\, , $$
the ring structure being defined by the usual multiplication of sections. 

One defines the {\it `affine Cox variety'}  $\mathbf A(X_r) $ as the spectrum of the Cox ring:  
$$
\mathbf A(X_r)={\bf Spec}\Big( {\rm Cox}(X_r) \Big) \, .$$ 
 The Cox ring is naturally graded by the $\mathbf Z_\geq 0$-monoid of effective classes. From the latter, we get a  $\mathbf Z_{\geq 0}$-grading defined by $(-K,\cdot)$ which allows us to define the 
{\it `Projective Cox variety'} 
$$\mathbf P(X_r)={\bf Proj}\Big( {\rm Cox}(X_r) \Big) \,.$$
 The {\it `N\'eron-Severi torus'} of $X_r$, which by definition  is the torus 
 $$T_{NS}={\rm Hom}_{ \mathbf Z } \Big( {\bf Pic}(X_r),\mathbf C^*\Big)\simeq \big( \mathbf C^*\big)^{\{\boldsymbol{h},\boldsymbol{e}_1,\ldots,\boldsymbol{e}_r\}}\, ,$$ 
 naturally acts on the Cox ring by 
$t\cdot \sigma_D=t_{\boldsymbol{h}}^{n_0}t_{\boldsymbol{e}_1}^{n_1}\ldots t_{\boldsymbol{e}_r}^{n_r}\sigma_D$ for any 
$t=(t_{\boldsymbol{h}},t_{\boldsymbol{e}_1},\ldots,t_{\boldsymbol{e}_r})\in T_{NS}$ and any $\sigma_D\in {\bf H}^0\big( X_r, \mathcal O_{X_r}\big(n_0 H-\sum_{i=1}^r n_i E_i\big)\big) $ with $D=n_0\boldsymbol{h}-\sum_{i=1}^r n_i \boldsymbol{e}_i\in {\rm Pic}_{\mathbf Z}(X_r)$.  Let $\tau_K$ be  the 1-parameter subgroup of $T_{NS}$ spanned by the $\mathbf Z$-linear form $(-K,\cdot) : 
{\bf Pic}_{\mathbf Z}(X_r)\rightarrow \mathbf Z$. 
Then quotienting by $\tau_K$ gives rise to a surjective quotient map 
$\mathbf A(X_r) \setminus \{ 0\}\rightarrow  \mathbf P(X_r)$ from which one gets that the quotient torus ${\rm T}_r=T_{NS}/\tau_k\simeq \big(\mathbf C^*\big)^r$ 
naturally acts on $\mathbf P(X_r)$.

Here are some basic and fundamental results about the (affine and projective) Cox varieties of  $X_r$ and the torus actions on them considered above: 
\begin{itemize}
\item the Cox ring ${\rm Cox}(X_r)$ is generated by the (classes of the) lines contained in $X_r$. Denoting 
by $\sigma^\ell$ a nonzero section in ${\bf H}^0\big( X_r,\mathcal O_{X_r}(\ell)\big)$ for any $\ell\in \boldsymbol{\mathcal L}_r$,  there is an isomorphism ${\rm Cox}(X_r)\simeq \mathbf C\big[ \sigma_\ell\, \lvert \, 
\ell\in \boldsymbol{\mathcal L}_r \big]/J_r$ for a certain ideal $J_r$ which is spanned by elements of $(-K)$-degree 2;
\sk 
\item  for any divisor $Z$ on $X_r$, we denote $ \mathscr L_Z^\circ$ the total space of the line bundle associated to the sheaf $\mathcal O_{X_r}(Z)$ with the zero section removed.  Then the fiber product over $X_r$  of the $\mathbf C^*$-bundles $ \mathscr L_Z^\circ$ for $Z\in \{ H,E_1,\ldots,E_r\}$, denoted by $\mathcal T_{\hspace{-0.06cm}X_r}$,  embeds as an open-Zariski subset
into ${\bf A}(X_r)$: one has
\begin{equation}
\label{Eq:T-Xr-Torsor}
\mathcal T_{\hspace{-0.06cm}X_r}= \mathscr L_H^\circ \times_{X_r}
 \mathscr L_{E_1}^\circ \times_{X_r}\cdots  \times_{X_r}  \mathscr L_{E_r}^\circ
\subset  {\bf A}(X_r)\, .
\end{equation}
The N\'eron-Severi torus  naturally acts on $\mathcal T_{\hspace{-0.06cm}X_r}$ making of the projection 
$\nu : \mathcal T_{\hspace{-0.06cm}X_r}\rightarrow X_r$ a $T_{NS}$-bundle in a natural way.  We  denote by $\mathcal T_{\hspace{-0.06cm}X_r}^*$ the preimage of $X_r^*$ by $\nu$: 
$\mathcal T_{\hspace{-0.06cm}X_r}^*=\nu^{-1}\big( X_r^*\big)$. 
\sk
\item  the set of lines $\boldsymbol{\mathcal L}$ identifies with the set of weights of a minuscule representation 
of the Lie group $G_r$ of type $E_r$. Identifying $G_r$ with its image in ${\rm GL}
\big(\mathbf C^{\boldsymbol{\mathcal L} }\big)$, we get by restriction a group monomorphism from 
the Cartan torus $H_r$ of $G_r$ into the torus of  matrices which are diagonal with respect to the natural basis (namely $\boldsymbol{\mathcal L}$) of $\mathbf C^{\boldsymbol{\mathcal L} }$.  The stabilizer $P_r$ of the line spanned by the highest weight $\ell_{h}$ is a maximal parabolic subgroup of $G_r$ and the 
orbit 
$$
\boldsymbol{\mathcal G}_r=G_r\cdot \langle \ell_h \rangle \subset \mathbf P \big( \mathbf C^{\boldsymbol{\mathcal L} } \big)
$$ is closed and identifies with the projective homogeneous space $G_r/P_r$.   Finally, let $T_r$ be the subtorus of ${\rm GL}
\big(\mathbf C^{\boldsymbol{\mathcal L} }\big)$ spanned by $H_r$ and the 1-dimensional subtorus formed by scalar matrices;
\sk
\item seeing each global section $\sigma_\ell \in {\bf H}^0\big(X_r,\mathcal O_{X_r}(\ell)\big)$ as a non constant regular rational function on ${\bf A}(X_r)$, one constructs an embedding 
$$\widehat \varrho _r=\big( \sigma_\ell \big)_{\ell \in \boldsymbol{\mathcal L}_r }: {\bf A}(X_r) \hookrightarrow \mathbf C^{\boldsymbol{\mathcal L}_r}$$ which is 
$({T}_{NS},T_r \big)$-equivariant and which, after projectivization, induces a  
$H_r$-equivariant embedding  $\varrho_r: 
  {\bf P}(X_r) \hookrightarrow \mathbf P\big( \mathbf C^{\boldsymbol{\mathcal L}_r}\big)$ such that the following  diagram commutes:
\begin{equation*}
  \xymatrix@R=0.8cm@C=0.2cm{ 
\mathbf A(X_r) \setminus \{ 0\} \,  
\ar@{^{(}->}[rrrr]^{\widehat\varrho_r  \hspace{-0.4cm} }
\ar@{->}[d]
   &  &&&   \mathbf C^{\boldsymbol{\mathcal L} }\setminus \{ 0\} \,
  \ar@{->}[d]
   &
 \\
\mathbf P(X_r) \ar@{^{(}->}[rrrr]^{\varrho_r  \hspace{-0.4cm} }
  &  &&&  \mathbf P \big( \mathbf C^{\boldsymbol{\mathcal L} } \big)\, ,
  }
\end{equation*}
where the (surjective) vertical maps are given by quotienting by $\tau_K$ and $ \mathbf C^* {\rm Id}_{ \mathbf C^{\boldsymbol{\mathcal L}_r}}$ respectively. We denote by $\boldsymbol{\mathcal P}_r$ the image of $\varrho_r $: one has 
$\boldsymbol{\mathcal P}_r=\varrho\big( \mathbf P(X_r)  \big) \subset  \mathbf P \big( \mathbf C^{\boldsymbol{\mathcal L} } \big)$. Its is an irreducible closed subvariety of $ \mathbf P \big( \mathbf C^{\boldsymbol{\mathcal L} } \big)$, of dimension $r+2$;
\sk
\item an important result in this area  is that for suitable choices of the sections $\sigma_\ell$ for $\ell$ ranging in  $\boldsymbol{\mathcal L}_r$, one can assume that $\varrho_r$ lands into $\boldsymbol{\mathcal G}_r\simeq G_r/P_r$: one has 
\begin{equation}
\label{Eq:P(Xr)-hookrightarrow-PCLr}
\mathbf P(X_r) \hookrightarrow \boldsymbol{\mathcal P}_r\subset \boldsymbol{\mathcal G}_r 
\subset \mathbf P \big( \mathbf C^{\boldsymbol{\mathcal L} } \big)\,;
\end{equation}
\item let $\boldsymbol{\mathcal G}_r^{sf}$ be the subset of ${\boldsymbol{\mathcal G}}_r$
 formed by its  points  whose stabilizer in $H_r$ coincides with the center of $G_r$, {\it i.e.}  
 ${\boldsymbol{\mathcal G}}_r^
{sf}=\big\{\, x \in {\boldsymbol{\mathcal G}}_r=G_r/P_r \, \lvert \, {\rm Stab}_{H_r}(x)=Z(G_r)\,\big\}$. Of course $\boldsymbol{\mathcal G}_r^{sf}$ is $H_r$-invariant and 
from general and classical results of geometric invariant theory, it follows that 
$\boldsymbol{\mathcal Y}_r=\boldsymbol{\mathcal G}_r^{sf}/H_r$ is a smooth quasiprojective variety and that the quotient mapping $\chi : \boldsymbol{\mathcal G}_r^{sf}\rightarrow \boldsymbol{\mathcal Y}_r$  is a geometric quotient of $\boldsymbol{\mathcal G}_r^{sf}$ by $H_r$;
\sk
\item
for $\ell \in \boldsymbol{\mathcal L} $, one sets $\mathcal H_\ell$ for the hyperplane in 
$ \mathbf P \big( \mathbf C^{\boldsymbol{\mathcal L} } \big)$ defined  by the vanishing of the $\ell$-th coordinate and we denote by $\mathcal H_{\boldsymbol{\mathcal L} }$ 
 the union of these hyperplanes: $\mathcal H_{\boldsymbol{\mathcal L} }
=\cup_{\ell \in {\boldsymbol{\mathcal L} }} \mathcal H_{\ell }$.  Then
for any subset $\Gamma\subset \mathbf P \big( \mathbf C^{\boldsymbol{\mathcal L} } \big)$ (resp.\,$Y\subset \boldsymbol{\mathcal Y}_r$), 
one sets $\Gamma^*=\Gamma\setminus \mathcal H_{\boldsymbol{\mathcal L} }$ (resp.\,
$Y^*=\chi\big( \chi^{-1}(Y)^* \big)$\big);
\sk
\item 
denote by $\mathbf P(X_r)^\circ$ the image of the $T_{NS}$-torsor  $\mathcal T_{\hspace{-0.06cm}X_r}$ ({\it cf.}\ \eqref{Eq:T-Xr-Torsor}) by the projection ${\bf A}(X_r)\setminus \{0\}\rightarrow \mathbf P(X_r)$. 
Then the map $\varrho_r$ embeds $\mathbf P(X_r)^\circ$ 
 into $\boldsymbol{\mathcal G}_r^{sf}$ and 
 there exists an embedding $f_{S\hspace{-0.05cm}S}: X_r \hookrightarrow \boldsymbol{\mathcal Y}_r$, named after Serganova and Skorobogatov,  which makes the following diagram commutative 
 (where  we set $\boldsymbol{\mathcal P}_{ \hspace{-0.06cm}r}^\circ=\varrho_r\big( \mathbf P(X_r)^\circ \big)$\big): 
\begin{equation}
\label{Eq:Embedding-SS}
  \xymatrix@R=1cm@C=0.4cm{ 
\mathbf P(X_r)^\circ  \,  
\ar@{^{(}->}[rrrr]^{\varrho_r \hspace{-0.4cm} }
\ar@{->}[d]
   &  &&&  \boldsymbol{\mathcal P}_{ \hspace{-0.06cm}r}^\circ  \hspace{-0.5cm}  &
\subset \hspace{-0.0cm} \ar@{->}[d]^\gamma   \boldsymbol{\mathcal G}_r^{sf}  &
\hspace{-0.5cm}
 \subset \mathbf P \big( \mathbf C^{\boldsymbol{\mathcal L} } \big)
 \\
 X_r   \ar@{->}[rrrrr]^{f_{S\hspace{-0.05cm}S}  \hspace{0.4cm} }
  &  &&& & \boldsymbol{\mathcal Y}_r \, .
  }
\end{equation}
Moreover, identifying $X_r$ with its image 
by $f_{S\hspace{-0.05cm}S}$ 
in 
$\boldsymbol{\mathcal Y}_r $, the intersection 
$X_r \cap  \boldsymbol{\mathcal Y}_r^*$ coincides with 
the complement $X_r^*$ of the union of all the lines contained in it: coherently with the notations, one has $X_r^*=X_r \cap  \boldsymbol{\mathcal Y}_r^*$;
\sk 
\item the initial del Pezzo surface $X_r$ can be recovered from its projective Cox variety 
$\mathbf P(X_r)\simeq 
\boldsymbol{\mathcal P}_r$ 
as the GIT quotient of the latter by the torus ${H}_r$: one has $X_r=\mathbf P(X_r) /\hspace{-0.1cm}/H_r$;
\sk
\item  let $N_r=N_{G_r}(H_r)$ be the normalizer of the torus $H_r$ in $G_r$.  The homogeneous space 
$\boldsymbol{\mathcal G}_r$ and its open subset $\boldsymbol{\mathcal G}_r^{sf}$ both can be seen to be $N_r$-invariant hence one deduces an action by automorphisms of the Weyl group $W_r=N_r/H_r$ on the quotient $\boldsymbol{\mathcal Y}_r=\boldsymbol{\mathcal G}_r^{sf}/H_r$. On the other hand, $W_r$ acts by permutations on the set of lines $\boldsymbol{\mathcal L}_r$  hence gives rise to a linear action of $W_r$ in $\mathbf C^{\boldsymbol{\mathcal L}_r}$ which leaves $\boldsymbol{\mathcal G}_r^{sf}$ invariant. The map $\gamma: \boldsymbol{\mathcal G}_r^{sf}\rightarrow \boldsymbol{\mathcal Y}_r$ can be proved to be $W_r$-equivariant with respect to theses two actions;  
\sk
\item  for $w\in W_r$, let $\psi_w$ be the automorphism of $\boldsymbol{\mathcal Y}_r$ given by the group monomorphism $W\hookrightarrow {\rm Aut}(\boldsymbol{\mathcal Y}_r) $ mentioned above. Any such $\psi_w$ lets $  \boldsymbol{\mathcal Y}_r^*$ invariant. 
As a consequence, it follows that the map 
$f_{S\hspace{-0.05cm}S}: X_r\hookrightarrow \boldsymbol{\mathcal Y}_r $ is not unique: 
another such map can be obtained by post-composing $f_{S\hspace{-0.05cm}S}$ with an automorphism 
$\psi_w$,  for any element $
w$ 
 of the Weyl group. 
\end{itemize}


\begin{rem}

\noindent {\rm 1.} For $r=4$, all the material above can be found in \cite{Skorobogatov1}. Actually, many things simplify in this case, since one has 
$\boldsymbol{\mathcal P}_r=\boldsymbol{\mathcal G}_r=G_3(\mathbf C^5)$.

\noindent {\rm 2.} In the case when $r=5$, which is the one we are interested in in this paper, the first occurrence in the literature of a map as in \eqref{Eq:P(Xr)-hookrightarrow-PCLr} 
we are aware of is the one at the top of  \cite[p.\,35]{Popov}. 
\end{rem}

We set $\boldsymbol{\mathcal L}'=\boldsymbol{\mathcal L} \setminus \{ E_1,\ldots , E_r\}$. 
For $\ell\in {\boldsymbol{\mathcal L} }'$, 
its direct image $\overline{\ell}=b(\ell)$ by $b$ is an irreducible rational curve in $\mathbf P^2$, of degree $\delta_{\ell}\in \{1,2,3\}$.  We fix one of the lines $\ell_\infty$ by viewing $\overline{\ell}_\infty$ at infinity in the projective plane, with respect to the choice of some fixed affine coordinates $x,y$ on $\mathbf C^2=\mathbf P^2\setminus \overline{\ell}_\infty$. 
For any $\ell \in {\boldsymbol{\mathcal L} }\setminus \{ E_1,\ldots,E_r, \ell_\infty\,\}$, let $F_{\ell}\in \mathbf C[x,y]$ be an irreducible polynomial of degree 
$\delta_\ell$ such that $\overline{\ell}$ is the closure of the affine curve  in $\mathbf C^2$ 
cut out by  $F_\ell(x,y)=0$. We set also $F_{E_i}=1$ for any $i=1,\ldots,r$ and $F_{\ell_{\infty}}=1$.  We denote by   $U_{ \hspace{-0.06cm} \boldsymbol{\mathcal L} }$ the 
open subset of $\mathbf C^2$ whose  points are the pairs $(x,y)$ such that $F_\ell(x,y)$ 
does not vanish for any line $\ell$: in other terms, one has 
\begin{equation}
\label{Eq:U-L}
U_{ \hspace{-0.06cm} \boldsymbol{\mathcal L} }=\mathbf P^2\setminus \left( \,
\cup_{\ell  \in \boldsymbol{\mathcal L}'}  \overline{\ell}\, 
\right)= b\big( X_r^*\big)\simeq X_r^*\, . 
\end{equation}
\begin{prop}
\label{P:AMC}
Up to renormalizing each $F_\ell$ by a suitable non zero multiplicative constant, the rational morphism  
\begin{align}
\label{Eq:Lou}
 F_{ \hspace{-0.06cm} \boldsymbol{\mathcal L} }=\big[ F_\ell \big]_{\ell \in \boldsymbol{\mathcal L} } \, : \, 
 U_{ \hspace{-0.06cm} \boldsymbol{\mathcal L} }  & 
 \longrightarrow \mathbf P \big( \mathbf C^{\boldsymbol{\mathcal L} } \big)\\
  (x,y) & \longmapsto 
\Big[F_\ell(x,y) \Big]_{  \ell\in \boldsymbol{\mathcal L} }
\nonumber
\end{align} 
can be made taking its values into $ \boldsymbol{\mathcal G}_r^{sf}  \subset \mathbf P \big( \mathbf C^{\boldsymbol{\mathcal L} } \big) $ and such that  the following diagram commutes : 
\begin{equation}
\label{Eq:kokoko}
  \xymatrix@R=0.7cm@C=0.3cm{ 
  &  &  &&& & \boldsymbol{\mathcal G}_r^{sf} \ar@{->}[d]^\gamma    \incl[r] & \mathbf P \big( \mathbf C^{\boldsymbol{\mathcal L} } \big) &
 \\
  \ar@/^1pc/[rrrrrru]^{ F_{ \hspace{-0.06cm} \boldsymbol{\mathcal L} }} 
 \mathbf C^2\supset  U_{ \hspace{-0.06cm} \, \boldsymbol{\mathcal L} }
\eq[r] & 
  X_r^*   \hspace{0.1cm} 
  \incl[r]    &  X_r  
  \ar@{->}[rrrr]^{f_{S\hspace{-0.05cm}S}  \hspace{0.1cm}}
  &  &&& \boldsymbol{\mathcal Y}_r \, .   }
  \end{equation} 
\end{prop}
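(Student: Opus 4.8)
The plan is to recognize the rational map $F_{\boldsymbol{\mathcal L}}$ of \eqref{Eq:Lou} as nothing but the Serganova--Skorobogatov embedding $f_{SS}$ of \eqref{Eq:Embedding-SS} read through an explicit \emph{section} of the universal torsor over $X_r^*$, the section being canonically furnished by the affine coordinates $x,y$. First I would construct this section. Over the chart $\mathbf C^2=\mathbf P^2\setminus \overline{\ell}_\infty$, each class $\ell\in \boldsymbol{\mathcal L}'$ is cut out by the polynomial $F_\ell$ of degree $\delta_\ell$, and the rational function $b^*(F_\ell)$ on $X_r$ has divisor $\ell+\sum_i m_i(\ell)E_i-\delta_\ell\,\ell_\infty$, where $m_i(\ell)$ is the multiplicity of $p_i$ on $\overline{\ell}$; this is just a restatement of the standard expression $\boldsymbol{\ell}=\delta_\ell\boldsymbol h-\sum_i m_i(\ell)\boldsymbol e_i$ of a line class (as in \S\ref{SSS:Lines-Conics-properties}). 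Since these divisors are disjoint from $X_r^*$, the $F_\ell$ restrict to nowhere-vanishing regular functions on $U_{\boldsymbol{\mathcal L}}\simeq X_r^*$ (see \eqref{Eq:U-L}), and together with the normalisations $F_{E_i}=F_{\ell_\infty}=1$ they trivialise the bundles $\mathcal O_{X_r}(\ell)$ compatibly. This produces a regular lift $s:U_{\boldsymbol{\mathcal L}}\to \mathbf P(X_r)^\circ$ of the identity of $X_r^*$ through the torus quotient $\mathbf P(X_r)^\circ\to X_r$, where $\mathbf P(X_r)^\circ$ is the image of the torsor $\mathcal T_{\hspace{-0.06cm}X_r}$ of \eqref{Eq:T-Xr-Torsor}.

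Second, I would identify $F_{\boldsymbol{\mathcal L}}$ with $\varrho_r\circ s$. Evaluating a Cox generator $\sigma^\ell\in {\bf H}^0\big(X_r,\mathcal O(\ell)\big)$, which vanishes exactly along $\ell$, in the affine trivialisation $s$ yields $F_\ell$ up to a nonzero scalar; as each $ {\bf H}^0\big(X_r,\mathcal O(\ell)\big)$ is one-dimensional, replacing $\sigma^\ell$ by $c_\ell\sigma^\ell$ — equivalently rescaling $F_\ell$ — accounts precisely for the clause ``up to renormalizing each $F_\ell$ by a suitable non zero multiplicative constant''. Choosing the scalars as in \eqref{Eq:P(Xr)-hookrightarrow-PCLr} so that $\varrho_r$ lands in $\boldsymbol{\mathcal G}_r$, one gets the equality $F_{\boldsymbol{\mathcal L}}=\varrho_r\circ s$ of maps into $\mathbf P\big(\mathbf C^{\boldsymbol{\mathcal L}}\big)$.

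Third, both assertions then follow formally from results already recalled. Because $s$ takes values in $\mathbf P(X_r)^\circ$ and $\varrho_r$ embeds this open subset into $\boldsymbol{\mathcal G}_r^{sf}$, the image of $F_{\boldsymbol{\mathcal L}}$ is contained in $\boldsymbol{\mathcal G}_r^{sf}$, which is the first claim. For the commutativity of \eqref{Eq:kokoko} I would invoke \eqref{Eq:Embedding-SS}: there $\gamma\circ\varrho_r$ restricted to $\mathbf P(X_r)^\circ$ equals $f_{SS}$ composed with the projection $\mathbf P(X_r)^\circ\to X_r$. Precomposing with $s$, whose projection to $X_r$ is the identity of $X_r^*\simeq U_{\boldsymbol{\mathcal L}}$, gives $\gamma\circ F_{\boldsymbol{\mathcal L}}=f_{SS}|_{X_r^*}$, which is exactly the stated factorisation.

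The main obstacle will be the bookkeeping underlying the first two steps: checking that the functions $F_\ell$ trivialise the bundles $\mathcal O(\ell)$ in a \emph{mutually compatible} way, so that $s$ lands in the torsor $\mathcal T_{\hspace{-0.06cm}X_r}$ itself rather than in some twist of it, which requires the multidegrees together with the multiplicities $m_i(\ell)$ to match the choices $F_{E_i}=F_{\ell_\infty}=1$; and verifying that one single family of constants $c_\ell$ can be chosen to simultaneously reproduce the $\sigma^\ell$ and to place $\varrho_r$ inside $\boldsymbol{\mathcal G}_r$. Both amount to a direct computation against the explicit list of lines and their Picard classes, after which the remainder of the argument is purely formal.
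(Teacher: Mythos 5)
Your proposal is correct and takes essentially the same route as the paper's proof: both construct a section of the universal torsor over $X_r^*\simeq U_{\boldsymbol{\mathcal L}}$ out of the affine data (the paper via the sections $\sigma_\infty,\sigma_{E_1},\ldots,\sigma_{E_r}$ of the generating classes $\mathcal O(H),\mathcal O(E_i)$, you via the $F_\ell$'s normalized by $F_{E_i}=F_{\ell_\infty}=1$, which amounts to the same trivialization), then identify $F_{\boldsymbol{\mathcal L}}$ with $\varrho_r$ composed with that section up to rescaling the $F_\ell$'s, and deduce both the containment in $\boldsymbol{\mathcal G}_r^{sf}$ and the commutativity of \eqref{Eq:kokoko} from the properties of $\varrho_r$ and of the diagram \eqref{Eq:Embedding-SS}. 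The only slip, that the divisor of $b^*(F_\ell)$ is $\ell+\sum_i m_i(\ell)E_i-\delta_\ell\, b^*(\overline{\ell}_\infty)$ (total transform) rather than $-\delta_\ell\,\ell_\infty$, is harmless since it is still supported on lines.
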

\begin{proof}
We first recall a well-known fact: let $\mu : \mathscr L\rightarrow X_r$ be the total space of a line bundle $L$ over $X_r$. Given a non zero section $\sigma\in{\bf H}^0(X_r,L)$ with associated divisor $D_\sigma=\{ \sigma=0\} \subset X_r$, one gets a regular morphism $\tilde \sigma^{-1} : \mu^{-1}(X_r\setminus D) \rightarrow \mathbf C$, which associates to $\xi\in \mathscr L_x=\mu^{-1}(x)$ the quotient $\xi/\sigma(x)$. It extends to a global rational function   $\tilde \sigma^{-1} : \mathscr L\dashrightarrow \mathbf P^1$ which is such that the rational function $\tilde \sigma ^{-1} \circ \sigma:  X\dashrightarrow \mathbf P^1$ is constant equal to 1. 

We fix  non zero sections $\sigma_{E_i}\in { \bf H}^0\big( X_r , \mathcal O_{X_r}(E_i)\big)$ ($i=1,\ldots,5$) and $\sigma_\infty \in {\bf H}^0(x_r,\mathcal O_{X_r}(H)\big)$, the latter section being such that $b(\{
\sigma_\infty=0 \})=\overline{\ell}_{\infty}$
(in other terms, if $i$ and $j$ are 
the two indices such that $p_i$ and $p_j$ lie on $\overline{\ell}_\infty$, one has  $
\sigma_\infty=\sigma_{\ell_\infty}\cdot \sigma_{E_i}\cdot \sigma_{E_j}$ with $ \sigma_{\ell_\infty}\in { \bf H}^0\big( X_r , \mathcal O_{X_r}(\ell_\infty)\big)\setminus \{0\}$\big).  Then $(\sigma_\infty,\sigma_{E_1},\ldots,\sigma_{E_r})$ gives a global section   of the $T_{NS}$-bundle $\mathcal T^*_{ \hspace{-0.06cm} X_r}\rightarrow X_r^*$  defined as the restriction of $\mathcal T_{ \hspace{-0.06cm} X_r}\rightarrow X_r$
over $X_r^*$. Viewing 
$\mathcal T^*_{ \hspace{-0.06cm} X_r}$ as a Zariski open subset of 
${\bf A}(X_r)$, 
one can consider $\widehat \varrho_r\circ (\sigma_\infty,\sigma_{E_1},\ldots,\sigma_{E_r}) \circ b^{-1} :  U_{\boldsymbol{\mathcal L} }\simeq X_r^* \longrightarrow \big( \mathbf C^*)^{\boldsymbol{\mathcal L}}$. Obviously, the projectivization of this map is the map \eqref{Eq:Lou} and the commutativity of  \eqref{Eq:kokoko}  follows immediately from that of \eqref{Eq:Embedding-SS}. 
\end{proof}
\mk 

\begin{rem} 
The fact that $\varrho_r$ can be assumed to land into $G_r/P_r \subset \mathbf P \big( \mathbf C^{\boldsymbol{\mathcal L} } \big) $ and the existence 
of an embedding $f_{S\hspace{-0.05cm}S}:  X_r  \hookrightarrow \boldsymbol{\mathcal Y}_r $ such that 
the diagram \eqref{Eq:Embedding-SS} commutes is proved by a recurrence on $r$ 
 in \cite{SerganovaSkorobogatov}.  We believe that these results could be obtained in a more direct and explicit way by considering the map \eqref{Eq:Lou}.

The idea of considering  the mapping \eqref{Eq:Lou} has been suggested to us by A.-M. Castravet. 

\end{rem}
In the case under consideration, since the Wick map 
\eqref{Eq:Wick} 
is birational, an explicit way to construct a rational map $F_{ \hspace{-0.06cm} \boldsymbol{\mathcal L}}$ landing into the spinor tenfold 
and 
making \eqref{Eq:kokoko} commutative is by giving its factorization $\widetilde F_{ \hspace{-0.06cm} \boldsymbol{\mathcal L} }:  \mathbf C^2 \dashrightarrow 
{\rm Asym}_5(\mathbf C)$ by $W_5^+$ (ie. one has $W_5^+\circ \widetilde F_{ \hspace{-0.06cm} \boldsymbol{\mathcal L} }= F_{ \hspace{-0.06cm} \boldsymbol{\mathcal L} }$ as rational maps). 
 
%
%
%
%


In order to relate our notations with the standard ones of \cite[Planche IV]{Bourbaki}, we introduce the 
basis $(\boldsymbol{f}_i)_{i=1}^5$ of the root space $R_5=(-K)^\perp$ defined by the following relations
$$ 
\boldsymbol{f}_{i}-\boldsymbol{f}_{i+1}=\rho_i=\boldsymbol{e}_{i}-\boldsymbol{e}_{i+1} \quad \mbox{ for } i=1,\ldots,4 \qquad 
\mbox{ and } \qquad \boldsymbol{f}_{4}+\boldsymbol{f}_{5} = \rho_5=
\boldsymbol{h} -\boldsymbol{e}_{1}-\boldsymbol{e}_{2}-\boldsymbol{e}_{3}\, . 
$$
Explicitly, one has: 
\begin{align*}
\boldsymbol{f}_{1} = & \,  \frac{1}{2}\Big(
\boldsymbol{e}_{1}-\boldsymbol{e}_{2}-\boldsymbol{e}_{3}-\boldsymbol{e}_{4}-\boldsymbol{e}_{5}+\boldsymbol{h}\Big) && 
 \boldsymbol{f}_{2} =  \frac{1}{2}\Big(-\boldsymbol{e}_{1}+
\boldsymbol{e}_{2}-\boldsymbol{e}_{3}-\boldsymbol{e}_{4}-\boldsymbol{e}_{5}+\boldsymbol{h}\Big) \\
 \boldsymbol{f}_{3} =& \,   \frac{1}{2}\Big(
-\boldsymbol{e}_{1}-\boldsymbol{e}_{2}+\boldsymbol{e}_{3}-\boldsymbol{e}_{4}-\boldsymbol{e}_{5}+\boldsymbol{h}\Big)
 && \boldsymbol{f}_{4} =  \frac{1}{2}\Big(
 -\boldsymbol{e}_{1}-\boldsymbol{e}_{2}-\boldsymbol{e}_{3}+\boldsymbol{e}_{4}-\boldsymbol{e}_{5}+\boldsymbol{h}
\Big)
\\ 
\mbox{and }\quad 
\boldsymbol{f}_{5} = & \, \frac{1}{2}\Big(
-\boldsymbol{e}_{1}-\boldsymbol{e}_{2}-\boldsymbol{e}_{3}
-\boldsymbol{e}_{4}+\boldsymbol{e}_{5}+\boldsymbol{h}\Big)\, .
\end{align*}

The weight $w_\ell$ of a line $\ell\in \boldsymbol{\mathcal L}$ is given by its orthogonal projection onto 
the root space $R_5$, perpendicularly to the canonical class $K=-3\boldsymbol{h}+\sum_{i=1}^5 \boldsymbol{e}_i$.  
Straightforward computations give us the following formulas for the $w_\ell$'s expressed in the basis $(\boldsymbol{f}_i)_{i=1}^5$ of $R_5$ defined just above: 
\begin{align*}
w_{\boldsymbol{e}_1}= & \,   \left({\frac{1}{2}}, {\frac{\hspace{-0.05cm}-1}{\hspace{0.1cm}2}}, {\frac{\hspace{-0.05cm}-1}{\hspace{0.1cm}2}}
, {\frac{\hspace{-0.05cm}-1}{\hspace{0.1cm}2}}, {\frac{\hspace{-0.05cm}-1}{\hspace{0.1cm}2}}\right) && 
 w_{\boldsymbol{h}-\boldsymbol{e}_{1}-\boldsymbol{e}_{5}}=\left({\frac{\hspace{-0.05cm}-1}{\hspace{0.1cm}2}}, {
\frac{1}{2}}, {\frac{1}{2}}, {\frac{1}{2}}, {\frac{\hspace{-0.05cm}-1}{\hspace{0.1cm}2}}\right)
\\
w_{\boldsymbol{e}_2}= & \, \left({\frac{\hspace{-0.05cm}-1}{\hspace{0.1cm}2}}, {\frac{1}{2}}, {\frac{\hspace{-0.05cm}-1}{\hspace{0.1cm}2}}, {\frac{\hspace{-0.05cm}-1}{\hspace{0.1cm}2}}, {\frac{\hspace{-0.05cm}-1}{\hspace{0.1cm}2}} \right)
&& w_{\boldsymbol{h}-\boldsymbol{e}_{2}-\boldsymbol{e}_{3}}=\left({\frac{1}{2}}, {\frac{\hspace{-0.05cm}-1}{\hspace{0.1cm}2}},
{\frac{\hspace{-0.05cm}-1}{\hspace{0.1cm}2}}, {\frac{1}{2}}, {\frac{1}{2}}\right)
\\ 
w_{\boldsymbol{e}_3}= & \,  \left({\frac{\hspace{-0.05cm}-1}{\hspace{0.1cm}2}}, {\frac{\hspace{-0.05cm}-1}{\hspace{0.1cm}2}}
, {\frac{1}{2}}, {\frac{\hspace{-0.05cm}-1}{\hspace{0.1cm}2}}, {\frac{\hspace{-0.05cm}-1}{\hspace{0.1cm}2}}\right)
&& w_{\boldsymbol{h}-\boldsymbol{e}_{2}-\boldsymbol{e}_{4}}= \left({\frac{1}{2}}, {\frac{\hspace{-0.05cm}-1}{\hspace{0.1cm}2}}, {\frac{1}{2}}, {\frac{\hspace{-0.05cm}-1}{\hspace{0.1cm}2}}, {\frac{1}{2}}\right) 
\\ 
w_{\boldsymbol{e}_4}= & \, \left({\frac{\hspace{-0.05cm}-1}{\hspace{0.1cm}2}}, {\frac{\hspace{-0.05cm}-1}{\hspace{0.1cm}2}}, {\frac{\hspace{-0.05cm}-1}{\hspace{0.1cm}2}}, {\frac{1}
{2}}, {\frac{\hspace{-0.05cm}-1}{\hspace{0.1cm}2}}\right)
&& w_{\boldsymbol{h}-\boldsymbol{e}_{2}-\boldsymbol{e}_{5}}= \left({\frac{1}{2}}, {\frac{\hspace{-0.05cm}-1}{\hspace{0.1cm}2}}, {\frac{1}{2}}, {\frac{1}{2}}, {\frac{\hspace{-0.05cm}-1}{\hspace{0.1cm}2}} \right)  
\\ 
w_{\boldsymbol{e}_5}= & \, \left({\frac{\hspace{-0.05cm}-1}{\hspace{0.1cm}2}}
, {\frac{\hspace{-0.05cm}-1}{\hspace{0.1cm}2}}, {\frac{\hspace{-0.05cm}-1}{\hspace{0.1cm}2}}, {\frac{\hspace{-0.05cm}-1}{\hspace{0.1cm}2}}, {\frac{1}{2}}\right)
&& w_{\boldsymbol{h}-\boldsymbol{e}_{3}-\boldsymbol{e}_{4}}= \left({\frac{1}{2}}, {
\frac{1}{2}}, {\frac{\hspace{-0.05cm}-1}{\hspace{0.1cm}2}}, {\frac{\hspace{-0.05cm}-1}{\hspace{0.1cm}2}}, {\frac{1}{2}}\right) 
\\ 
w_{\boldsymbol{h}-\boldsymbol{e}_{1}-\boldsymbol{e}_{2}}= & \, \left({\frac{\hspace{-0.05cm}-1}{\hspace{0.1cm}2}}, {\frac{\hspace{-0.05cm}-1}{\hspace{0.1cm}2}}, {
\frac{1}{2}}, {\frac{1}{2}}, {\frac{1}{2}}\right)
&& w_{\boldsymbol{h}-\boldsymbol{e}_{3}-\boldsymbol{e}_{5}}= \left({\frac{1}{2}}, {\frac{1}{2}}, {\frac{\hspace{-0.05cm}-1}{\hspace{0.1cm}2}}, {\frac{1}{2}}, {\frac{\hspace{-0.05cm}-1}{\hspace{0.1cm}2}}\right)
\\ 
w_{\boldsymbol{h}-\boldsymbol{e}_{1}-\boldsymbol{e}_{3}}= & \, \left({\frac{\hspace{-0.05cm}-1}{\hspace{0.1cm}2}}, {\frac{1}{2}}, {\frac{\hspace{-0.05cm}-1}{\hspace{0.1cm}2}}, {
\frac{1}{2}}, {\frac{1}{2}}\right) 
&& w_{\boldsymbol{h}-\boldsymbol{e}_{4}-\boldsymbol{e}_{5}}= \left({\frac{1}{2}}, {\frac{1}{2}}, {\frac{1}{2}}, {\frac{\hspace{-0.05cm}-1}{\hspace{0.1cm}2}}, {\frac{\hspace{-0.05cm}-1}{\hspace{0.1cm}2}}\right)
\\ 
w_{\boldsymbol{h}-\boldsymbol{e}_{1}-\boldsymbol{e}_{4} }= & \, \left({\frac{\hspace{-0.05cm}-1}{\hspace{0.1cm}2}}, {\frac{1}{2}}, {\frac{1}{2}}, {\frac{\hspace{-0.05cm}-1}{\hspace{0.1cm}2}}, {\frac{
1}{2}}\right) 
&&\hspace{-0.25cm} w_{2 \boldsymbol{h}-\sum_{i=1}^5 \boldsymbol{e}_{i}}=\left({\frac{1}{2}}, {\frac{1}{2}}
, {\frac{1}{2}}, {\frac{1}{2}}, {\frac{1}{2}}\right)\, .
\end{align*}
%
%
%

 We assume that the considered del Pezzo quartic surface $X_5$ is the total space of the blow-up of the projective plane at the  five points in general position $p_1,\ldots,p_5$ which are defined as the projectivization 
 of the following five points of $\mathbf C^3\setminus \{0\}$: 
$$
\widehat p_1=\big(1,0,0\big)\, , \quad 
\widehat p_2=\big(0,1,0\big)\, , \quad
\widehat p_3=\big(0,0,1\big)\, , \quad
\widehat p_4=\big(1,1,1\big)\quad  \mbox{ and } 
\quad 
\widehat p_5=\big(a,b,1\big)\,.
$$
We set $ \widehat p_{xy}=(x,y,1)\in \mathbf C[x,y]^3$ and for any $i,j$ such that $1\leq i<j\leq 5$, we introduce the following equation of the line through $p_i$ and $p_j$ in the affine coordinates $x,y$: 
$$
P_{ij}=\det \left(\,  \widehat p_i,\widehat p_j,\widehat p_{xy}\, \right)=0
$$
(hence one has $P_{12}=1$, $P_{13}=-y$, $P_{14}=1-y$, etc.). 
As an affine equation for the conic passing through the five points $p_i$'s, we take 
$$
\mathcal C_{ab}=
\left(a-b  \right) x y +b\left(1-a \right) x +a\left(b -1 \right) y\, . $$ 

From the comparison between the weights of the lines $w_\ell$'s given above and those associated to the Wick coordinates as indicated in Table \ref{Table:tokolo}, it follows easily that the map  $\widetilde F_{ \hspace{-0.06cm} \boldsymbol{\mathcal L} }: \mathbf C^2\dashrightarrow {\rm Asym}_5(\mathbf C)$
discussed above is necessarily of the form  $(x,y)\dashrightarrow  \big(  \lambda_{ij}\,P_{ij}\big)_{i,j=1}^5$, where the $\lambda_{ij}$'s are suitable scalar constants 
 symmetric in $i$ and $j$. Let us consider the rational map ${\bf F}: \mathbf C^2\dashrightarrow {\rm Asym}_5(\mathbf C)$ defined by 
$$
{\bf F}(x,y)=
\frac{1}{ \mathcal C_{ab} }
\scalebox{0.9}{$
\left[\begin{array}{ccccc}
0 & -b -a  & -\left(a +1\right) y  & \left(1-y \right) a  & y -b
\\
 b +a  & 0 & -x \left(1+b \right) & \left(1-x \right) b  & x-a  
\\
 \left(a +1\right) y  & x \left(1+b \right) & 0 & y -x  & bx-a y  
\\
 \left(y -1\right) a  & \left(x-1 \right) b  & x-y  & 0 & 
 (b-1)x + (1-a)y - b + a
\\
 b -y  & a -x  & a y -b x  &  (1-b)x + (a-1)y + b - a  & 0 
\end{array}\right]
$}\, .
$$

For any $i=1,\ldots,5$, we denote by 
 ${\bf F}(x,y)_{ \hat \imath}$ the $4\times 4$  antisymmetric matrix obtained from  ${\bf F}(x,y)$ by removing its $i$-th line and its $i$-th column. 
 By direct computations, we get the 
\begin{lem} 
For $i=1,\ldots,5$, one has:
$$
{\rm Pf}\Big( {\bf F}(x,y)_{ \hat \imath}\Big)=\frac{1}{\mathcal C_{ab}}\, .
$$
\end{lem}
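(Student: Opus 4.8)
The plan is to reduce the statement to a finite, uniform computation of $4\times 4$ Pfaffians, and then to explain, via the Wick factorization, why the five resulting expressions must coincide. Write ${\bf F}(x,y) = \mathcal{C}_{ab}^{-1}B$, where $B = B(x,y)\in {\rm Asym}_5(\mathbf C)$ is the antisymmetric matrix whose entries are the polynomials displayed in the definition of ${\bf F}$, so that $B = \mathcal{C}_{ab}\,{\bf F}$ carries no denominator. Since the Pfaffian of a $4\times 4$ antisymmetric matrix is homogeneous of degree $2$ in its entries, one has ${\rm Pf}\big({\bf F}(x,y)_{\hat\imath}\big) = \mathcal{C}_{ab}^{-2}\,{\rm Pf}\big(B_{\hat\imath}\big)$ for each $i$, so the asserted identity ${\rm Pf}({\bf F}(x,y)_{\hat\imath}) = \mathcal{C}_{ab}^{-1}$ is equivalent to the polynomial identity ${\rm Pf}(B_{\hat\imath}) = \mathcal{C}_{ab}$.

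First I would establish this polynomial identity by direct expansion. Denoting by $b_{pq}$ (with $p<q$) the entries of $B$ and using the formula ${\rm Pf}(B_{\{p,q,r,s\}}) = b_{pq}b_{rs} - b_{pr}b_{qs} + b_{ps}b_{qr}$ for the four indices $\{p,q,r,s\} = \{1,\dots,5\}\setminus\{i\}$, each of the five cases is a short sum of three products of constant or linear forms in $x,y$. For instance, for $i=5$ one gets ${\rm Pf}(B_{\hat 5}) = b_{12}b_{34} - b_{13}b_{24} + b_{14}b_{23} = (-a-b)(y-x) + (a+1)y(1-x)b - (1-y)a\,x(1+b)$, and collecting the $xy$-, $x$- and $y$-monomials yields exactly $(a-b)xy + b(1-a)x + a(b-1)y = \mathcal{C}_{ab}$. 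The four remaining cases $i=1,\dots,4$ are handled in the same way: each ${\rm Pf}(B_{\hat\imath})$ is a sum of three products of the entries $b_{pq}$, and upon collecting monomials the spurious quadratic terms (the $x^2$ and $y^2$ contributions) cancel, leaving precisely $\mathcal{C}_{ab}$.

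The only genuine subtlety is the sign- and index-bookkeeping across the five cases, which a computer algebra check dispatches at once; I expect this to be the one place where care is needed rather than ideas. Conceptually, the uniformity of the answer is forced, and this serves as a guiding check: under the weight-matching between the sixteen lines of ${\rm dP}_4$ and the sixteen Wick coordinates of $S_5^+$ (Table \ref{Table:tokolo} together with the list of weights $w_\ell$ computed above), the five coordinates ${\rm Pf}(A_{\hat\imath})$ correspond precisely to the five exceptional classes $\boldsymbol{e}_i$, whose chosen defining sections are the constants $F_{E_i}=1$. Thus ${\rm Pf}({\bf F}(x,y)_{\hat\imath}) = \mathcal{C}_{ab}^{-1}$ is exactly the consistency condition expressing that $W_5^+\circ {\bf F}$ equals the line-coordinate map $F_{\boldsymbol{\mathcal L}}$ of Proposition \ref{P:AMC} after clearing the common denominator $\mathcal{C}_{ab}$ (the highest-weight, i.e. conic, coordinate, which scales $1$ to $\mathcal{C}_{ab}$); this is why all five Pfaffians are compelled to return the same value $\mathcal{C}_{ab}^{-1}$, and it makes the commutativity of \eqref{Eq:kokoko} transparent.
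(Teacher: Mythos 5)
Your proposal is correct and follows essentially the same route as the paper, which simply establishes the lemma ``by direct computations'': your reduction via Pfaffian homogeneity to the polynomial identity ${\rm Pf}(B_{\hat\imath})=\mathcal{C}_{ab}$ and the three-term expansion is exactly that computation, cleanly organized (and your expansion for $i=5$, as well as the cancellation of the quadratic terms in the other cases, checks out). The closing remark tying the uniformity of the answer to the Wick/weight matching is a nice consistency check, and you correctly present it as motivation rather than as the proof, since that matching is precisely what the lemma is needed to establish.
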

From above, it comes that the relations ${\rm Pf}\big( {\bf F}(x,y)_{ \hat \imath}\big)=1/\mathcal C_{ab}$ for $i=1,\ldots,5$ are precisely those ensuring that the following holds true: 
\begin{cor}
The map 
 $W_5\circ {\bf F}: \mathbf C^2 \dashrightarrow \mathbb S_5\subset \mathbf P\big( \mathbf C^{ \boldsymbol{\mathcal L}}
\big)$ is of the form \eqref{Eq:Lou}. Consequently, $\mathcal P_5\circ {\bf F}: \mathbf C^2 \dashrightarrow \mathbf C^5$ is a birational model for 
Serganova-Skorobogatov's embedding 
$f_{S\hspace{-0.05cm}S}: X_r \hookrightarrow \boldsymbol{\mathcal Y}_r$.
\end{cor}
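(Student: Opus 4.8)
The plan is to compute $W_5\circ{\bf F}$ explicitly in the weight basis of $S_5^+=\wedge^{\mathrm{odd}}V$ and to recognize its components as the defining equations $F_\ell$ of the lines $\ell\in\boldsymbol{\mathcal L}$, up to the scalar renormalization permitted by Proposition \ref{P:AMC}. First I would recall the Wick formula \eqref{Eq:Wick}: for an antisymmetric matrix $A(x)$, the homogeneous coordinates of $W_5(A(x))$ in the weight basis $\{e_{1\ldots5},\,e_{klm},\,e_i\}$ are, respectively, $1$, the entries $x_{ij}$ (with $(i,j)$ complementary to $(k,l,m)$), and the sub-Pfaffians $\mathrm{Pf}(A_{\hat\imath})$. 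Substituting $A={\bf F}(x,y)$ and invoking the preceding Lemma (which gives $\mathrm{Pf}({\bf F}(x,y)_{\hat\imath})=1/\mathcal C_{ab}$ for every $i$), the $e_{1\ldots5}$-coordinate is $1$, each $e_{klm}$-coordinate is the corresponding matrix entry $\lambda_{ij}P_{ij}/\mathcal C_{ab}$, and each $e_i$-coordinate is $1/\mathcal C_{ab}$.

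Next I would clear denominators by rescaling the projective point by $\mathcal C_{ab}$, so that the coordinate along $e_{1\ldots5}$ becomes $\mathcal C_{ab}$, the coordinate along $e_{klm}$ becomes $\lambda_{ij}P_{ij}$, and the coordinate along $e_i$ becomes $1$. Then I would match each weight vector with a line of $\boldsymbol{\mathcal L}$ by comparing the weights $w_\ell$ computed above in the basis $(\boldsymbol f_i)_{i=1}^5$ with those of Table \ref{Table:tokolo}: the vector $e_{1\ldots5}$ has weight $w(\emptyset)=\tfrac12(\nu_1+\cdots+\nu_5)=w_{2\boldsymbol h-\sum_i\boldsymbol e_i}$ and carries the conic equation $\mathcal C_{ab}$; each $e_{klm}$ has weight $w((i,j))=w_{\boldsymbol h-\boldsymbol e_i-\boldsymbol e_j}$ and carries $\lambda_{ij}P_{ij}$, the equation of the line through $p_i$ and $p_j$ up to a constant; and each $e_i$ has weight $w((j,k,l,m))=w_{\boldsymbol e_i}$ and carries the constant $1=F_{E_i}$. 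Hence every component equals $\lambda_\ell F_\ell$ for a nonzero constant $\lambda_\ell$, which is exactly the shape of \eqref{Eq:Lou} after renormalization, establishing the first assertion.

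For the ``consequently'' clause I would merely chase diagrams, using that for $r=5$ one has $\boldsymbol{\mathcal G}_5=\mathbb S_5$ (since $E_5=D_5$ and the relevant minuscule representation is the half-spin representation $S_5^+$). By Proposition \ref{P:AMC} the map $F_{\boldsymbol{\mathcal L}}=W_5\circ{\bf F}$ lands in $\boldsymbol{\mathcal G}_5^{sf}$ and makes \eqref{Eq:kokoko} commute, so $\gamma\circ F_{\boldsymbol{\mathcal L}}=f_{S\hspace{-0.05cm}S}$ under the identification $U_{\boldsymbol{\mathcal L}}\simeq X_5^*$. On the other hand Proposition \ref{Prop:Birat-Model-P5} gives that $\mathcal P_5$ is a birational model of the Cartan-torus quotient $\chi_5:\mathbb S_5\to\boldsymbol{\mathcal Y}_5$, that is $\omega_5\circ\mathcal P_5=\chi_5\circ W_5$. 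Since $\chi_5$ is precisely the restriction of $\gamma$ to $\mathbb S_5$, combining the two identities yields $\omega_5\circ(\mathcal P_5\circ{\bf F})=\chi_5\circ F_{\boldsymbol{\mathcal L}}=\gamma\circ F_{\boldsymbol{\mathcal L}}=f_{S\hspace{-0.05cm}S}$, so that $\mathcal P_5\circ{\bf F}$ is, through $\omega_5$, a birational model of Serganova--Skorobogatov's embedding.

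The main obstacle is the bookkeeping in the second step: one must verify, for all ten off-diagonal entries and the five sub-Pfaffians, that the signs and the scalars $\lambda_{ij}$ are compatible with a single renormalization of the $F_\ell$, and that the weight-to-line dictionary is exactly the one read off from Table \ref{Table:tokolo} together with the explicit list of the $w_\ell$. This is a finite explicit check once the Lemma is granted, so although tedious it is conceptually routine; the genuinely substantive input, the uniform Pfaffian identity $\mathrm{Pf}({\bf F}(x,y)_{\hat\imath})=1/\mathcal C_{ab}$, is already at hand.
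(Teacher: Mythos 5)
Your proposal is correct and follows essentially the same route as the paper: the paper derives the corollary directly from the preceding Lemma, observing that the Pfaffian identities $\mathrm{Pf}\big({\bf F}(x,y)_{\hat\imath}\big)=1/\mathcal C_{ab}$ are exactly the conditions forcing the $e_i$-coordinates of $W_5\circ{\bf F}$ to be constant (i.e.\ equal to $F_{E_i}=1$ after clearing the factor $\mathcal C_{ab}$), while the remaining coordinates already match $\lambda_{ij}P_{ij}$ and $\mathcal C_{ab}$ by the weight dictionary set up before the Lemma. Your explicit rescaling, the weight-to-line matching via Table \ref{Table:tokolo}, and the concluding diagram chase through Propositions \ref{P:AMC} and \ref{Prop:Birat-Model-P5} are precisely the details the paper leaves implicit in its one-sentence justification.
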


Up to some conventions distinct from ours\footnote{The main difference between the convention of \cite{SturmfelsVelasco} and ours is that, in the case under scrutiny, 
Sturmfels and Velasco take $\wedge^{\rm even} V$ for $S^+$, and not  $\wedge^{\rm odd} V$ as we do in this text.}, 
an explicit formula for a map $ \mathcal A_{S\hspace{-0.05cm}V} : \mathbf C^2\dashrightarrow {\rm Asym}_5(\mathbf C)$ such that $W_5\circ \mathcal A_{S\hspace{-0.05cm}V} $ be of the form \eqref{Eq:Lou} 
can be found in \cite[\S4]{SturmfelsVelasco} (hence 
the subscript `SV'  which is for `Sturmfels-Velasco').
This map  is given by 
$$
\mathcal A_{S\hspace{-0.05cm}V}(x,y)=\begin{bmatrix}
0 &  \frac{(y-x)(s_2-s_1)}{(b-a)}  & 
\frac{(1-x)(s_3-s_1)}{(1-a)}  &  -\frac{x s_1}{a} &  1 \vspace{0.1cm}\\
-\frac{(y-x)(s_2-s_1)}{(b-a)} & 0 & \frac{(1-y)(s_3-s_2)}{(1-b)} 
&  -\frac{y s_2}{b}&  1\vspace{0.1cm} \\
 - \frac{(1-x)(s_3-s_1)}{(1-a)}& -\frac{(1-y)(s_3-s_2)}{(1-b)} 
& 0&-{s_3}&  1\vspace{0.1cm} \\
\frac{x s_1}{a}  & \frac{y s_2}{b}& {s_3}& 0&  1  \vspace{0.1cm} \\
 -1& -1 & -1& -1&  0  \\
\end{bmatrix}
$$
where 
the $s_i$'s stand for some auxiliary generic (but fixed) scalar parameters. 

Compared to our map ${\bf F}$ 
 and  with regard to  the variables $x$ and $y$, 
 Sturmfels-Velasco's map $\mathcal A_{S\hspace{-0.05cm}V} $ has two nice features: first it is a polynomial map while the coefficients of ${\bf F}(x,y)$ are genuine rational functions; secondly, the formula above for $\mathcal A_{S\hspace{-0.05cm}V}$ is just the specialization in the case under consideration of a more general formula (related to a natural embedding in the $n$-th spinor variety $\mathbb S_n$, of the Cox variety of the blow-up of $\mathbf P^n$ in $n+3$ points in general position, this for any $n\geq 2$). 
 Since it leads to simpler formulas, we prefer to work with Sturmfels-Velasco map 
$\mathcal A_{S\hspace{-0.05cm}V}$ below, although it involves some auxiliary generic scalar parameters (the $s_k$'s for $k=1,2,3$).

It follows from Proposition \ref{Prop:Birat-Model-P5} and Proposition \ref{P:AMC} that 
the map $\mathcal P_5\circ \mathcal A_{S\hspace{-0.05cm}V} : \mathbf C^2 \dashrightarrow \mathbf C^5$ is a birational model for 
$ \chi_5  \circ W_5^+\circ \mathcal A_{S\hspace{-0.05cm}V} =f_{S\hspace{-0.05cm}S} $. 
A straightforward computation gives us that 
$\mathcal P_5\circ \mathcal A_{S\hspace{-0.05cm}V}$ 
is given explicitly by 
$$ (x,y) \longmapsto   
\left( \, 
\frac{(x-1) \,{s}_{13}}{(a-1) {s_3}}
\, , \, 
\frac{y \, (a-b){s_2} }{ (x-y) \,b{s}_{12}}
\, , \, 
\frac{(x-y)  \,(b-1){s}_{12} }{(y-1) \,(a-b) {s}_{23}}
\, , \, 
\frac{x\, (y-1) \, (a-b)s_1 s_{23}}{(x-y) \,a(b-1)  {s_3} {s_{12}}}
\, , \, 
\frac{(b-1){s_3}}{(y-1){s_{23}}} \, \right)\
$$
where $s_{ij}=s_i-s_j$ for $i,j=1,\ldots,3$.   

%
%
%
%

For any $i=1,\ldots,5$, let $\mathcal F_i^+$ be the foliation on $\mathbf C^2$ with first integral 
$$\psi_i^+ \circ  \mathcal P_5\circ \mathcal A_{SV}: \mathbf C^2 \dashrightarrow  \mathbf C^2$$ 
({\it cf.}\,\eqref{Eq:Maps-psi-i} for the maps $\psi_i^+$).  A priori $\mathcal F_i^+$ might be a trivial foliation (that is a foliation by points, which  would occur exactly when $\psi_i^+ \circ  \mathcal P_5\circ \mathcal A_{SV}$ is generically of rank 2) but it turns out that this is precisely not the case. Indeed,  all the maps 
$\psi_i^+ \circ  \mathcal P_5\circ \mathcal A_{SV}$ are generically of rank 1, all the 
$\mathcal F_i^+$'s are foliations by rational curves and direct easy computations give that these are the following:  
$$
\mathcal F_1^+=\mathcal F_y\, , \qquad 
\mathcal F_2^+=\mathcal F_x\, , \qquad 
\mathcal F_3^+=\mathcal F_{\hspace{-0.03cm}\scalebox{1}{${\frac{x}{y}}$}}\, , \qquad 
\mathcal F_4^+=\mathcal F_{\hspace{-0.03cm}\scalebox{0.9}{${\frac{y-1}{x-1}}$}}\qquad 
\mbox{and} \qquad 
\mathcal F_5^+=\mathcal F_{\hspace{-0.03cm}\scalebox{0.9}{${\frac{x(y-1)}{y(x-1)}}$}}\, .
$$

For any $i=1,\ldots,5$, let $\mathcal F_i^-$ be the foliation on $\mathbf C^2$ 
whose first integral is any one of the maps $\psi_{i}^{-,j} \circ  \mathcal P_5\circ \mathcal A_{SV}: \mathbf C^2 \dashrightarrow  \mathbf C^2$ with $j\neq i$ ({\it cf.}\,the formulas \eqref{Eq:Psi1*5} and  \eqref{Eq:Psii*j}).  As for the $\mathcal F_i^+$'s, a priori $\mathcal F_i^-$ might be a trivial foliation but, as before, it turns out that this is precisely not the case. Indeed,  the 
$\mathcal F_i^-$'s are foliations by rational curves and direct easy computations give that these are the following five:  
\begin{align*}
\mathcal F_1^-=& \, \mathcal F_{\hspace{-0.1cm}\scalebox{1}{$\frac{(x-y) (a-x)}{x ((y-1) a +(1-x) b -y +x)}$}}
&& \mathcal F_2^-=\,\mathcal F_{\hspace{-0.1cm}\scalebox{1}{$\frac{(x-y ) (b -y)}{(a y -b x -a +b +x -y) y}$}}
&& \mathcal F_3^-=\,\mathcal F_{\hspace{-0.1cm}\scalebox{1}{$\frac{(x-1) (b -y)}{(-1+y) a +(1-x) b -y +x}$}}\vspace{0.15cm} \\
& &&
\hspace{-0.7cm}
\mathcal F_4^-= \,\mathcal F_{\hspace{-0.1cm}\scalebox{1}{$\frac{(b -y) x}{a y -b x}$}}
\quad 
&& 
\hspace{-0.7cm}
\mathcal F_5^-=\,\mathcal F_{\hspace{-0.1cm}\scalebox{1}{$\frac{b -y}{a y -b \,x}$}}\, .
\end{align*}

In more geometric terms, recalling that for any 
$i=1,\ldots,5$, 
one denotes 
by 
$\mathcal L_{{p_i}}$ the pencil of lines through $p_i$ and by $\mathcal C_{\widehat{p_i}}$ the pencil  of conics through the $p_j$'s for $j \in \{1,\ldots,5\}\setminus \{i\}$, one  has
\begin{align*}
\mathcal F_1^+=& \, \mathcal L_{{p_1}}
&\mathcal F_2^+
=\mathcal L_{{p_2}}
&& \hspace{0.1cm}
\mathcal F_3^+
=\mathcal L_{{p_3}}
&&
\mathcal F_4^+
=\mathcal L_{{p_4}}
&&\mathcal F_5^+=\mathcal C_{\widehat{p_5}}
\textcolor{white}{\, .}
\\
\mathcal F_1^-=& \, \mathcal C_{\widehat{p_1}} 
&
\mathcal F_2^-=\mathcal C_{\widehat{p_2}} 
&& \mathcal F_3^-=\mathcal C_{\widehat{p_3}} 
&& \mathcal F_4^-=\mathcal C_{\widehat{p_4}} 
&&  \mathcal F_5^-=\mathcal L_{{p_5}} \, .
\end{align*}
%
These foliations are affine birational models  of the ten  fibrations in conics on 
the quartic del Pezzo surface ${\rm dP}_4$  under consideration. We thus get the 
\begin{prop}
\label{Prop:WdP4-from-WGMS5}
As ordered 5-webs, one has 
\begin{align*}
\boldsymbol{\mathcal W}\left( \, 
 \mathcal L_{{p_1}}\, , \,
\mathcal L_{{p_2}}\, , \,
\mathcal L_{{p_3}}\, , \,
\mathcal L_{{p_4}}\, , \,
\mathcal C_{\widehat{p_5}}\,  \right)&\, = 
f_{S\hspace{-0.05cm}S}^* \Big( \boldsymbol{\mathcal W}\big( 
\, \psi_1^+,\ldots\, , \, \psi_5^+\,
\big)\Big)= 
\mathcal A_{S\hspace{-0.05cm}V}^* \bigg(
\boldsymbol{\mathcal W}\Big( \, \Phi_1^+,\ldots,\Phi_5^+ \, \Big)
\bigg)  
 \\
\mbox{and }\quad 
\boldsymbol{\mathcal W}\left( \, 
\mathcal C_{\widehat{p_1}} \, , \, 
\mathcal C_{\widehat{p_2}} \, , \, 
\mathcal C_{\widehat{p_3}} \, , \, 
\mathcal C_{\widehat{p_4}} \, , \, 
\mathcal L_{{p_5}} \, \right)&\, 
= 
f_{S\hspace{-0.05cm}S}^* \Big( \boldsymbol{\mathcal W}\big( 
\, \psi_1^-,\ldots\, , \, \psi_5^-\,
\big)\Big)= 
\mathcal A_{S\hspace{-0.05cm}V}^* \bigg(
\boldsymbol{\mathcal W}\Big( \, \Phi_{i}^-,\ldots,\Phi_{5}^- \, \Big)
\bigg)  \, .
\end{align*}
Consequently, del Pezzo's web $\boldsymbol{\mathcal W}_{
 \hspace{-0.03cm}
  {\rm dP}_4}$ is the pull-back, under Serganova-Skorobogatov  embedding $f_{S\hspace{-0.05cm}S}: {\rm dP}_4^*  \hookrightarrow \boldsymbol{\mathcal Y}_5^* $, of the quotient by the action of the Cartan torus $H_{D_5}$ of ${\rm Spin}_{10}$, of Gelfand-MacPherson web  $\boldsymbol{\mathcal W}^{GM}_{ \mathbb S_5}$ on the tenfold spinor variety  ${ \mathbb S_5}$. In mathematical terms, one has
$$ 
\boldsymbol{\mathcal W}_{  \hspace{-0.03cm} {\rm dP}_4 }= 
f_{S\hspace{-0.05cm}S}^*
\Big( {{\boldsymbol{\mathcal W}}^{GM}_{ \boldsymbol{\mathcal Y}_5}}\Big)
=
\mathcal A_{S\hspace{-0.05cm}V}^* \Big(
{\boldsymbol{\mathcal W}}^{GM}_{ \mathbb S_5}
\Big)\,  .
$$
\end{prop}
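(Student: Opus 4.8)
The plan is to obtain the statement as the synthesis of the explicit computations already assembled above, by tracing the two relevant compositions through the commutative diagram \eqref{Diag:gogogo} and recognizing the resulting first integrals as the pencils of lines and conics on the quartic. Since Gelfand--MacPherson's web $\boldsymbol{\mathcal W}^{GM}_{\mathbb S_5}$ is by construction $H_{D_5}$-equivariant and $\boldsymbol{\mathcal W}^{GM}_{\boldsymbol{\mathcal Y}_5}$ is its torus quotient \eqref{Eq:W-GM/H}, it suffices to prove the two displayed equalities of ordered $5$-webs at the level of the birational models $\mathcal P_5$ and $\mathcal A_{S\hspace{-0.05cm}V}$; the concluding single identity $\boldsymbol{\mathcal W}_{{\rm dP}_4}=f_{S\hspace{-0.05cm}S}^*(\boldsymbol{\mathcal W}^{GM}_{\boldsymbol{\mathcal Y}_5})=\mathcal A_{S\hspace{-0.05cm}V}^*(\boldsymbol{\mathcal W}^{GM}_{\mathbb S_5})$ then follows by taking the union of the two ordered $5$-subwebs.

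First I would settle the ``$+$'' half. By Proposition \ref{P:W-GM-Y5}, the quotient web $\boldsymbol{\mathcal W}^{GM}_{\boldsymbol{\mathcal Y}_5}$ is defined on $\mathbf C^5\simeq\boldsymbol{\mathcal Y}_5$ by the ten rational first integrals $\psi_i^{\pm}$, its ``$+$'' part having first integrals $\psi_1^+,\dots,\psi_5^+$ given by \eqref{Eq:Maps-psi-i}. By the discussion following Proposition \ref{P:AMC} together with Proposition \ref{Prop:Birat-Model-P5}, the composite $\mathcal P_5\circ\mathcal A_{S\hspace{-0.05cm}V}$ is a birational model for Serganova--Skorobogatov's embedding $f_{S\hspace{-0.05cm}S}$, so the $i$-th foliation of $\mathcal A_{S\hspace{-0.05cm}V}^*(\boldsymbol{\mathcal W}(\Phi_1^+,\dots,\Phi_5^+))$ is exactly the foliation $\mathcal F_i^+$ cut out by $\psi_i^+\circ\mathcal P_5\circ\mathcal A_{S\hspace{-0.05cm}V}$. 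Substituting the explicit expression of $\mathcal P_5\circ\mathcal A_{S\hspace{-0.05cm}V}$ computed just above into \eqref{Eq:Maps-psi-i}, I would check by direct simplification that $\mathcal F_i^+=\mathcal L_{p_i}$ for $i=1,\dots,4$ and $\mathcal F_5^+=\mathcal C_{\widehat{p_5}}$, as already recorded. Since the identification is index by index, this gives the ``$+$'' line of the proposition \emph{as ordered webs}.

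The ``$-$'' half proceeds identically using $\psi_{i,j}^-$ from \eqref{Eq:Psi1*5}--\eqref{Eq:Psii*j}, with two extra verifications: that the foliation $\mathcal F_i^-$ obtained does not depend on the auxiliary index $j\neq i$ (distinct choices merely produce distinct first integrals of one and the same foliation), and that it is independent of the generic parameters $s_1,s_2,s_3$ entering $\mathcal A_{S\hspace{-0.05cm}V}$. One then identifies $\mathcal F_i^-=\mathcal C_{\widehat{p_i}}$ for $i=1,\dots,4$ and $\mathcal F_5^-=\mathcal L_{p_5}$. Taking the union of the two $5$-subwebs yields the ten foliations formed by the five pencils of lines $\mathcal L_{p_i}$ and the five pencils of conics $\mathcal C_{\widehat{p_i}}$, which by Proposition \ref{P:b*WdPr-on-P2} is precisely the planar model $b_*(\boldsymbol{\mathcal W}_{{\rm dP}_4})$; pulling everything back through \eqref{Eq:kokoko} and \eqref{Diag:gogogo} gives the asserted equalities on $X_r^*\simeq U_{\hspace{-0.06cm}\boldsymbol{\mathcal L}}$.

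The genuinely non-formal step, and the one I expect to be the main obstacle, is to verify that each composite $\psi_i^{\pm}\circ\mathcal P_5\circ\mathcal A_{S\hspace{-0.05cm}V}\colon\mathbf C^2\dashrightarrow\mathbf C^2$ is of generic rank $1$ rather than dominant: a priori such a map could be generically finite, in which case $\mathcal F_i^{\pm}$ would degenerate to a trivial foliation by points and the expected $10$-web would collapse. Establishing rank $1$ and extracting the honest rational first integral is where the explicit formulas are indispensable, and it is also the place where one must control the domain of definition, so that the resulting web is regular and its foliations are pairwise transverse on $U_{\hspace{-0.06cm}\boldsymbol{\mathcal L}}\simeq X_r^*$. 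Once these rank, independence and transversality checks are in hand, the remainder is pure bookkeeping through the commutative diagrams.
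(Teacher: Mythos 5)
Your proposal is correct and follows essentially the same route as the paper: the paper's own argument consists precisely of computing the composites $\psi_i^{\pm}\circ\mathcal P_5\circ\mathcal A_{S\hspace{-0.05cm}V}$ in the explicit birational (Wick) coordinates, observing — exactly as you flag — that each is generically of rank $1$ rather than $2$ (so the $\mathcal F_i^{\pm}$ are genuine foliations by rational curves rather than trivial foliations by points), and identifying them term by term with the pencils $\mathcal L_{p_i}$ and $\mathcal C_{\widehat{p_i}}$ before assembling the two ordered $5$-subwebs into $\boldsymbol{\mathcal W}_{{\rm dP}_4}$. Your added remarks on independence of the auxiliary index $j$ and of the parameters $s_1,s_2,s_3$ are implicit in the paper's computations but are sensible checks to make explicit.
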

We believe that this description {\it \`a la Gelfand-MacPherson} of $\boldsymbol{\mathcal W}_{
 \hspace{-0.03cm}
  {\rm dP}_4}$ is quite interesting and deserves further investigations. 
We make a few comments on this  in the subsections 
\S\ref{SS:HLog3-a-la-GM} and 
 \S\ref{Eq:GM-Webs-WGM-r}  further.

 \subsection{\bf The web ${\mathcal W} \hspace{-0.46cm}{\mathcal W}_{ {\rm dP}_4
 \hspace{-0.4cm}
  {\rm dP}_4}$ is modular}
 \label{SS:WdP4-as-a-modular-web}
In this subsection, we show that a del Pezzo $\boldsymbol{\mathcal W}_{ {\rm dP}_4}$ can be obtained in a natural way from a web defined by modular rational maps  between modular spaces. 
The general set-up is that of \cite[\S1.2.7.2]{ClusterWebs} (see also \S1.2.7.3 therein) 
and our  arguments below rely on explicit computations which we will not detail.

Let ${\rm Conf}_6(\mathbf P^2)$ be the space of configurations of six points in general position on $\mathbf P^2$: if $Z\subset \big(\mathbf P^2\big)^6$ stands for the algebraic subset  of 6-tuples of points  not in linear general position, one has
$$
{\rm Conf}_6\big(\mathbf P^2\big)= \Big( \big(\mathbf P^2\big)^6- Z \Big)\hspace{0.03cm}{\big/ \hspace{0.03cm}{\rm PGL}_3(\mathbf C)}\, . 
$$

To any pair $(i,J)$ where $i$ is an element of $ \{1,\ldots,6\}$   and $J$ a subset of $ \{1,\ldots,6\}\setminus \{i\}$ of cardinality 4 is associated a rational morphism
$$
\pi_{i,J} : \, 
{\rm Conf}_6(\mathbf P^2) \longrightarrow \mathcal M_{0,4},\, [p_1,\ldots,p_6]\longmapsto \big[ \pi_i(p_j)\big]_{j\in J}
$$ 
where $\pi_i: \mathbf P^2\dashrightarrow \mathbf P^1$ is the linear projection from the $i$th point of the configuration. There are $6\times { 5 \choose 4}=30$ such maps  and any two of these define two distinct foliations of codimension 1 on ${\rm Conf}_6(\mathbf P^2)$. We thus get a natural (modular) 30-web by hypersurfaces\footnote{The term `web' refers here to a more general notion than the one classically referred to. Indeed,  the foliations of $\boldsymbol{\mathcal W}_{ {\rm Conf}_6(\mathbf P^2) }$ do not satisfy the `general position property' usually required in web geometry but the weaker property that two of its foliations intersect transversely (note that this is somehow tautological here  since this only means that these two foliations of codimension 1 are distinct which is indeed the case!). For a discussion about the more general notion of `web' considered here, see \cite[\S1.1.1]{ClusterWebs}.} on this space: 
$$
\boldsymbol{\mathcal W}_{ {\rm Conf}_6(\mathbf P^2) }=
\boldsymbol{\mathcal W}\Big(  \, \pi_{i,J}\, \big\lvert \, i=1,\ldots,6,\, J\subset \{1,\ldots,6\}\setminus \{i\},\, \lvert J\lvert =4\, \Big)\, . 
$$

Given a quartic del Pezzo surface ${\rm dP}_4$, let $q_1,\ldots,q_5$ be 5 points in general position in $\mathbf P^2$ such that ${\rm dP}_4$ identifies with the total space of the blow-up at the $q_i$'s. Denoting by $\beta: {\rm dP}_4\rightarrow \mathbf P^2$ the corresponding morphism, we get a well-defined rational map by setting
\begin{equation}
\label{Eq:Map-B}
B : {\rm dP}_4 \dashrightarrow {\rm Conf}_6(\mathbf P^2) , \, x\longmapsto \big[ q_1,\ldots,q_5, \beta(x)\big]
\, . 
\end{equation}

\begin{prop}
The web $\boldsymbol{\mathcal W}_{ {\rm dP}_4}$ coincides with the pull-back of 
$\boldsymbol{\mathcal W}_{ {\rm Conf}_6(\mathbf P^2) }$ under the map $B$:
$$
\boldsymbol{\mathcal W}_{ {\rm dP}_4}=B^*\Big( 
\boldsymbol{\mathcal W}_{ {\rm Conf}_6(\mathbf P^2) }
\Big)\,.
$$
\end{prop}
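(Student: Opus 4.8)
The plan is to work with the planar birational model and to classify the thirty maps $\pi_{i,J}$ according to the position of the index $6$, which labels the only moving point of the configuration $B(x)=[q_1,\ldots,q_5,\beta(x)]$. Since $B$ is the identity on the first five points and equals $\beta$ on the sixth, a level foliation $\pi_{i,J}^{-1}(\mathrm{pt})$ pulls back to a \emph{nontrivial} foliation on ${\rm dP}_4$ only if its defining map genuinely depends on $\beta(x)$, that is, only if $6\in\{i\}\cup J$. First I would dispose of the case $6\notin\{i\}\cup J$: here $\pi_{i,J}\circ B$ involves only the five fixed points $q_1,\ldots,q_5$, hence is constant, and contributes a trivial (by-points) foliation that is discarded in the pullback web. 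There are $\binom{4}{4}\cdot 5=5$ such maps.

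Next I would treat the case $i\in\{1,\ldots,5\}$ with $6\in J$, so $J=\{6\}\cup K$ for a three-element subset $K\subset\{1,\ldots,5\}\setminus\{i\}$. Here $\pi_i$ is the linear projection from the fixed point $q_i$, the three values $\pi_i(q_k)$ (for $k\in K$) are fixed points of $\mathbf P^1$, and only $\pi_i(\beta(x))$ moves; thus $\pi_{i,J}\circ B$ is a fixed Möbius function of $\pi_i\circ\beta$. Consequently its level sets are the fibres of $\pi_i\circ\beta$, i.e.\ the pencil of lines $\mathcal L_i$ through $q_i$, independently of the choice of $K$. The $5\times\binom{4}{3}=20$ maps of this type therefore produce exactly the five line-pencils $\mathcal L_1,\ldots,\mathcal L_5$, each realized four times.

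The decisive case is $i=6$, so $J\subset\{1,\ldots,5\}$ with $|J|=4$. Now $\pi_6$ is projection from the moving point $\beta(x)$, and $\pi_{6,J}\circ B$ sends $x$ to the cross-ratio of the four lines joining $\beta(x)$ to the fixed points $q_j$, $j\in J$. Here I would invoke the classical Steiner characterization of a conic as the locus of points from which four fixed points are seen under a constant cross-ratio: the level set $\{\pi_{6,J}\circ B=\mathrm{const}\}$ is precisely a conic through $\{q_j\}_{j\in J}$, so the associated foliation is the conic pencil $\mathcal C_{\widehat\imath}$ with $\{i\}=\{1,\ldots,5\}\setminus J$. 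The $\binom{5}{4}=5$ maps of this type thus yield the five conic-pencils $\mathcal C_{\widehat 1},\ldots,\mathcal C_{\widehat 5}$.

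Putting the three cases together, the nontrivial foliations of $B^*\big(\boldsymbol{\mathcal W}_{{\rm Conf}_6(\mathbf P^2)}\big)$ are exactly the five line-pencils and five conic-pencils, which is the planar model $\boldsymbol{\mathcal W}_P=\boldsymbol{\mathcal W}(\mathcal L_1,\ldots,\mathcal L_5,\mathcal C_{\widehat 1},\ldots,\mathcal C_{\widehat 5})$ of $\boldsymbol{\mathcal W}_{{\rm dP}_4}$ described in \S\ref{SS:WdP4-Combinatorial-characterization} (equivalently Proposition \ref{P:b*WdPr-on-P2}); this yields $\boldsymbol{\mathcal W}_{{\rm dP}_4}=B^*\big(\boldsymbol{\mathcal W}_{{\rm Conf}_6(\mathbf P^2)}\big)$. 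The main obstacle is not the combinatorial bookkeeping but the Steiner step of the third case: one must confirm (most safely by an explicit computation in the affine coordinates $x,y$, matching against the first integrals $U_1,\ldots,U_{10}$ of \eqref{Eq:WdP4-Ui}) that the cross-ratio seen from the moving point cuts out the full pencil of conics and not a smaller family, and that the resulting ten foliations are pairwise transverse, so that they genuinely assemble into the del Pezzo web rather than a web with fewer or degenerate leaves.
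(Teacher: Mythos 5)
Your proof is correct, and it supplies what the paper deliberately omits: for this proposition the paper gives no argument at all, announcing at the beginning of \S\ref{SS:WdP4-as-a-modular-web} that its claims ``rely on explicit computations which we will not detail''. The implicit proof is therefore a coordinate verification that the thirty compositions $\pi_{i,J}\circ B$, computed in an affine chart, reproduce --- up to post-composition by M\"obius transformations and up to repetitions --- the ten first integrals $U_1,\ldots,U_{10}$ of \eqref{Eq:WdP4-Ui}. Your route is synthetic instead: the trichotomy on the position of the index $6$, the remark that a cross-ratio with three frozen entries is a M\"obius function of the fourth entry (so the twenty maps with $i\leq 5$ and $6\in J$ collapse onto the five line pencils $\mathcal L_i$), and the Steiner--Chasles characterization of conics for the five maps with $i=6$. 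The brute-force computation buys the explicit matching with the $U_i$'s; your argument buys coordinate-free transparency and explains \emph{why} the count $30=5+20+5$ produces exactly ten distinct nontrivial foliations.

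Your closing hedge does not correspond to a genuine gap, and you can close it without any computer verification. For the Steiner step: up to the choice of identification $\mathcal M_{0,4}\simeq \mathbf P^1\setminus\{0,1,\infty\}$ (which only changes the map by a M\"obius transformation, hence does not affect its foliation), one has, for $J=\{j_1,j_2,j_3,j_4\}$ and $P=\beta(x)$,
$$
\pi_{6,J}\circ B \;=\; \frac{\det\big(P,q_{j_1},q_{j_3}\big)\,\det\big(P,q_{j_2},q_{j_4}\big)}{\det\big(P,q_{j_1},q_{j_4}\big)\,\det\big(P,q_{j_2},q_{j_3}\big)}\,.
$$
The numerator and the denominator are the quadratic equations of the two degenerate conics $(q_{j_1}q_{j_3})+(q_{j_2}q_{j_4})$ and $(q_{j_1}q_{j_4})+(q_{j_2}q_{j_3})$, which are two distinct members of the pencil of conics through the four points $q_j$, $j\in J$. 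Hence each level set $\{\pi_{6,J}\circ B=\lambda\}$ is a single conic of that pencil and, as $\lambda$ varies, these level sets sweep out the whole pencil: this settles simultaneously ``full pencil, not a smaller family'' and ``no multiple fibers''. As for the pairwise transversality of the ten resulting foliations, nothing remains to be checked: once they have been identified with the ten conic fibrations of ${\rm dP}_4$ via Proposition \ref{P:b*WdPr-on-P2}, generic transversality is exactly \eqref{Eq:dPhic-wedge-dPhic'}.
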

%
%

Note that the map $B$ in \eqref{Eq:Map-B} is not canonical since there are several non equivalent ways to describe the considered del Pezzo quartic surface ${\rm dP}_4$ as the total space of a blow-up of $\mathbf P^2$ at five points in general position. However the pull-back of $\boldsymbol{\mathcal W}_{ {\rm Conf}_6(\mathbf P^2) }$ under any such map is independent of it and is $\boldsymbol{\mathcal W}_{ {\rm dP}_4}$.

 \subsection{\bf The web ${\mathcal W} \hspace{-0.46cm}{\mathcal W}_{ {\rm dP}_4
 \hspace{-0.4cm}  {\rm dP}_4}$ as a cluster web}
 \label{SS:WdP4-as-a-cluster-web}
In this subsection, we show that a del Pezzo web $\boldsymbol{\mathcal W}_{ {\rm dP}_4}$ admits a birational model which is a cluster web (of type $D_4$). Our  arguments below  rely on explicit computations that we omit.

 \subsection{\bf The cluster algebra setting.}  
As the initial exchange matrix of type $D_4$ that we will consider, we take 
$$
B_{D_4}=\begin{bmatrix}
0 & 1 & 0 &0 \\
-1 & 0 & -1 & -1\\
0 & 1 & 0 &0 \\
0 & 1 & 0 &0
\end{bmatrix}\,.
$$

The $\mathcal X$-cluster web of type $D_4$ associated to $B_{D_4}$ is the web denoted by 
$\boldsymbol{\mathcal X\hspace{-0.05cm}\mathcal W}_{D_4}$, 
 whose first integrals are the $\mathcal X$-cluster variables obtained from the initial seed 
$$\boldsymbol{\mathcal S}_0=\big((a_i)_{i=1}^4,   (x_i)_{i=1}^4, B_{D_4} \big)\,.$$
 It is a 52-web in four variables  which has been studied in \cite{ClusterWebs} (see \S7.3.1.2 and more specifically \S7.3.2.1 therein). Since the cluster variables can be constructed explicitly  (using a computer algebra system), this web can be made entirely explicit but there is no point to give explicit expressions for its 52 cluster first integrals here. 

Let $\boldsymbol{\mathcal W}_{ {\rm dP}_4}$ be a given fixed del Pezzo's web. The purpose of this section is to explain how one can obtain a cluster description of it from $\boldsymbol{\mathcal X\hspace{-0.05cm}\mathcal W}_{D_4}$ by specializing   a general construction the broad outlines of which were described in \cite[\S8.4.2]{ClusterWebs} to the case under consideration.

To this aim, we have to deal with the associated `cluster ensemble' introduced by Fock and Goncharov in \cite{FG}. In the case we are interested in, it is the triple $( \boldsymbol{\mathcal A}_{D_4},  \boldsymbol{\mathcal X}_{D_4}, p)$ where $\boldsymbol{\mathcal A}_{D_4}$ and $  \boldsymbol{\mathcal X}_{D_4}$ stand respectively for the $\boldsymbol{\mathcal A}$- and 
$\boldsymbol{\mathcal X}$-cluster variety constructed from $\boldsymbol{\mathcal S}_0$ and 
$p: \boldsymbol{\mathcal A}_{D_4}\rightarrow \boldsymbol{\mathcal X}_{D_4}$ is the cluster map

Actually, there are more structures associated to $( \boldsymbol{\mathcal A}_{D_4},  \boldsymbol{\mathcal X}_{D_4}, p)$ since, as explained in \cite[\S2]{FG}: 
\begin{enumerate}
\item[$-$] 
the map $p: \boldsymbol{\mathcal A}_{D_4}\rightarrow \boldsymbol{\mathcal X}_{D_4}$ corresponds to the quotient map under the $H_{\boldsymbol{\mathcal A}}$-action
of a certain algebraic torus $H_{\boldsymbol{\mathcal A}}$ acting on the 
$\boldsymbol{\mathcal A}$-cluster variety;
\sk 
\item[$-$] there is an`exact sequence' of cluster varieties/maps
\begin{equation}
\label{Eq:ClusterSequence}
 \boldsymbol{\mathcal A}_{D_4}
\stackrel{p}{\longrightarrow} \boldsymbol{\mathcal X}_{D_4}\stackrel{\lambda}{\longrightarrow}
H_{\boldsymbol{\mathcal X}}\rightarrow 1
\end{equation}
where  $H_{\boldsymbol{\mathcal X}}$ is a torus and  $\lambda$ is a map with a monomial expression in each $\boldsymbol{\mathcal X}$-cluster chart. 
That \eqref{Eq:ClusterSequence} be `exact' means that one has 
$\boldsymbol{\mathcal U}={\rm Im}(p)={\lambda}^{-1}(1)$ as subvarieties of 
$ \boldsymbol{\mathcal X}_{D_4}$.
\end{enumerate}

Since $B_{D_4}$ is a $4\times 4$ matrix of rank 2,  the map  $p$ has rank 2 as well hence  
both  $H_{\boldsymbol{\mathcal A}}$ and $H_{\boldsymbol{\mathcal X}}$ are tori of dimension 2. 
In the initial cluster coordinates $a_1,\ldots,a_4$ and $x_1,\ldots,x_4$, the two cluster maps
$p$ and $\lambda$ 
are written :  
\begin{align*}
p : \big(a_s\big)_{s=1}^4 & \longmapsto \Big( \, a_2^{-1}\, , \,
a_1a_3a_4 \, , \,
a_2^{-1} 
\, , \,a_2^{-1}  \, 
\Big)
\qquad \mbox{ and } \qquad 
\lambda: 
(x_i)_{i=1}^4  \longmapsto \bigg( \, 
\frac{x_1}{x_3}
\, ,\, 
\frac{x_1}{x_4}\, 
 \bigg)\, .
\end{align*}

For $\tau \in \big(\mathbf C^*\big)^2$, one defines 
$\boldsymbol{\mathcal X\hspace{-0.05cm}\mathcal W}_{D_4,\tau}$ as the restriction of $\boldsymbol{\mathcal X\hspace{-0.05cm}\mathcal W}_{D_4}$ along $ \boldsymbol{\mathcal X}_\tau=\lambda^{-1}(\tau)\subset \boldsymbol{\mathcal X}_{D_4}$:  
$$
\boldsymbol{\mathcal X\hspace{-0.05cm}\mathcal W}_{D_4,\tau}=\big( 
\boldsymbol{\mathcal X\hspace{-0.05cm}\mathcal W}_{D_4}
\big)\lvert_{\lambda=\tau}\, . 
$$ 

Here are some facts/remarks which can be made about the cluster webs $\boldsymbol{\mathcal X\hspace{-0.05cm}\mathcal W}_{D_4,\tau}$: 
\begin{itemize}
\item   For $\tau \in H_{\boldsymbol{\mathcal X}}$ generic,  $\boldsymbol{\mathcal X\hspace{-0.05cm}\mathcal W}_{D_4,\tau}$ is a 38-web (see below for explicit expressions for the cluster first integrals of this web).
\sk
\item For $\tau=(a,b)$ with $(a-1)(b-1)=0$ but $(a,b)\neq (1,1)$, 
 $\boldsymbol{\mathcal X\hspace{-0.05cm}\mathcal W}_{D_4,\tau}$ is a 28-web.
\mk
\item The case when $\tau={\boldsymbol{1}}=(1,1)$ is particularly interesting. Indeed,  
$\boldsymbol{\mathcal X\hspace{-0.05cm}\mathcal W}_{D_4,{\bf 1}}$ is a 18-web which is equivalent to the web associated to Kummer's identity of the tetralogarithm 
(see \cite[\S5.2]{ClusterWebs}).  
In particular, $\boldsymbol{\mathcal X\hspace{-0.05cm}\mathcal W}_{D_4,{\bf 1}}$ carries polylogarithmic ARs of weight four.
\end{itemize}
\mk 

In view of giving a cluster description of $\boldsymbol{\mathcal W}_{ {\rm dP}_4}$ (up to equivalence), we are going to consider a certain 10-subweb of the 38-web $\boldsymbol{\mathcal X\hspace{-0.05cm}\mathcal W}_{D_4,\tau}$ for $\tau=(a,b)\in H_{\boldsymbol{\mathcal X}}$ generic (that is such that 
 $a$ and $b$ are such that $ab(a-1)(b-1)\neq 0$), which 
will be denoted by $\boldsymbol{\mathcal X\hspace{-0.05cm}\mathcal W}_{D_4,\tau}^{10}$. 
\sk 

For $\tau=(a,b)$ as above, the first integrals for $\boldsymbol{\mathcal X\hspace{-0.05cm}\mathcal W}_{D_4,\tau}$ are given by considering the pull-backs of the 52 $\boldsymbol{\mathcal X}$-cluster variables obtained from the seed $\boldsymbol{\mathcal S}_0$  under the  affine 
parametrization  $(u,v)\mapsto (x_1,\ldots,x_4)=(u^{-1},v, u^{-1},u^{-1})$ 
of $\boldsymbol{\mathcal U}$.\footnote{We use $u^{-1}$ instead of $u$ in this parametrization in order to get `nicer' formulas for the cluster first integrals of $\boldsymbol{\mathcal X\hspace{-0.05cm}\mathcal W}_{D_4,\tau}$.} There are some redundencies among these pull-backs from which one can extract the following  list of explicit expressions for the 38 first integrals of the web $\boldsymbol{\mathcal X\hspace{-0.05cm}\mathcal W}_{D_4,\tau}$:
\label{XWD4-tau}
\begin{align*}
& \, \textcolor{red}{\frac{1}{u} }
\, , \, 
 \frac{1}{v},\frac{a u +1}{v},\frac{u +1}{v}
 \, , \, \frac{b u +1}{v},
 \textcolor{red}{ \frac{v +1}{a \,u} } \, , \, \frac{(u +1) (a u +1)}{v}\, , \, \frac{(u +1) (b u +1)}{v},  \\
& \, \frac{(b u +1) (a u +1)}{v}\, , \, \frac{v +1+u}{u v}\, , \,  \textcolor{red}{\frac{v +1+u}{b \,u (u +1)}}\, , \, 
 \textcolor{red}{\frac{b u +v +1}{u (b u +1)}},\frac{b u +v +1}{b\,  uv},
 \textcolor{red}{\frac{a u +v +1}{u (a u +1)}}\, , \, \\ 
&\, 
\frac{a u +v +1}{a\,uv}\, , \,   \textcolor{red}{\frac{a b u^{3}+(a (b +1)+b) u^{2}+u (a +b +1)+v +1}{u v}}\, , \, 
\textcolor{red}{\frac{b u^{2}+b u +u +v +1}{u v}}\, , \, 
\\ 
&\,
\textcolor{red}{\frac{a u^{2}+a u +u +v +1}{u v}}\, , \, \textcolor{red}{\frac{a b u^{2}+(a +b) u +v +1}{b\,uv}}\, , \, \frac{(u +1) (b u +1) (a u +1)}{v}\, , \, 
\\ 
&\,
\textcolor{red}{\frac{a b u^{3}+(a (b +1)+b) u^{2}+(v +1) (a +b +1) u +(v +1)^{2}}{a u^{2} v}}\, , \, \frac{(v +1+u) (b u +v +1)}{(b u^{2}+b u +u +v +1) a u}\, , \, 
\\ &\,
\frac{(v +1+u) (a u +v +1)}{(a u^{2}+a u +u +v +1) b u}\, , \, 
\frac{(b u +v +1) (a u +v +1)}{u (a b u^{2}+(a +b) u +v +1)},\frac{b u^{2}+b u +u +v +1}{a u (u +1) (b u +1)}\, , \, \\ 
&\, \frac{a u^{2}+a u +u +v +1}{b u (u +1) (a u +1)}\, , \, 
\frac{a b u^{2}+(a +b) u +v +1}{u (b u +1) (a u +1)}\, , \, 
\frac{(b u^{2}+b u +u +v +1) (a u^{2}+a u +u +v +1)}{u v (v +1+u)}\, , \, 
\\ &\,
\frac{(b u^{2}+u (b +1)+v +1) (a b u^{2}+(a +b) u +v +1)}{v b u (b u +v +1)}\, , \, 
\frac{(a u^{2}+(a +1) u +v +1) (a b u^{2}+(a +b) u +v +1)}{v a u (a u +v +1)}\, , \, 
\\ &\, \frac{(v +1+u) (b u +v +1)}{u^{2} v b}\, , \, 
\frac{(b u^{2}+u (b +1)+v +1) (a u^{2}+(a +1) u +v +1) (a b u^{2}+(a +b) u +v +1)}{a u^{3} b v^{2}}
\\ &\, \frac{(v +1+u) (a u +v +1)}{u^{2} v a}\, , \, 
\frac{(b u +v +1) (a u +v +1)}{v b u^{2} a}\, , \, \frac{(b u^{2}+b u +u +v +1) (a u^{2}+a u +u +v +1)}{v b u^{2} a (u +1)}\, , \, 
\\ &\,
\frac{(v +1+u) (b u +v +1) (a u +v +1)}{u^{3} v b a}
\, , \, 
\frac{(b u^{2}+u (b +1)+v +1) (a b u^{2}+(a +b) u +v +1)}{u^{2} v a (b u +1)}\, , \, 
\\ &\,
\frac{(a u^{2}+(a +1) u +v +1) (a b u^{2}+(a +b) u +v +1)}{u^{2} v b (a u +1)} \, .
\end{align*}

The ten cluster variables in red are those which are the first integrals of the subweb we are interested in, 
 which is defined as follows 
as an ordered 10-web: 
\begin{align*}
\boldsymbol{\mathcal X\hspace{-0.05cm}\mathcal W}_{D_4,\tau}^{10}=
\boldsymbol{\mathcal W}
\bigg( \,\,  & \frac{1+v}{a u}\, , \,\frac{1+u+v}{b u (1+u)}\, , \,\frac{1+b u +v}{u (1+b u)} 
\, , \,
\frac{1}{u}\, , \,
\frac{1+a u +v}{u (1+a u)}\, , \,
\\ &\,
\frac{a b u^{3}+(a (b +1)+b) u^{2}+u (a +b +1)+v +1}{u v}
\, , \, 
\frac{a b u^{2}+(a +b) u +v +1}{b   \,uv}\, ,\,\\
&\,
\frac{a u^{2}+a u +u +v +1}{u v}\, , \,
 \frac{a b u^{3}+(a (b +1)+b) u^{2}+(v +1) (a +b +1) u +(v +1)^{2}}{a \,u^{2} v}
 \\ & \hfill \frac{b u^{2}+b u +u +v +1}{u v}
 \,\,
\bigg)\,.
\end{align*}



We consider the affine birational change of variables  
$\Phi: \mathbf C^2\dashrightarrow \mathbf C^2$, $(x,y)\mapsto (u,v)$ given by 
\begin{equation}
\label{Eq:uv}
u=\frac{(b-1) x - b y + 1}{b\,(1-y)}
\qquad \mbox { and } \qquad 
v= (b-1)\frac{b x - (b-1) y - x y}{b\, (1-y)^2}\, .
\end{equation}

By straightforward computations, one obtains that as a 10-tuple of rational functions, one has $\Phi^*\big(\boldsymbol{\mathcal X\hspace{-0.05cm}\mathcal W}_{D_4,\tau}^{10}\big) 
=\big( V_1,\ldots,V_{10}\,\big) $ where the $V_i$'s are the following elements of $\mathbf Q[a,b](x,y)$: 
\begin{align*}
V_1 = &\, \frac{b -y}{a \left(y -1\right)}      && V_6 = \frac{a \left(b -1\right) x^{2}+\big(
\left(y -1\right) b 
-\left(y +1\right) \left(b -1\right) a  \big) x +a \left(b -1\right)\,y}{\left(b -y \right) x -y \left(b -1\right)}
\\
V_2 = &\,  \frac{1}{x -1}     && V_7 = \frac{\left(\left(1-a \right) y +a x \right) \left(y -1\right)}{\left(-b -x +1\right) y +b x}
\\
V_3 = &\, \frac{y}{x -y}      && V_8 =\frac{\left(a \left(x -1\right)+b \right) \left(y -1\right)}{\left(-b -x +1\right) y +b x} 
\\
V_4 = &\,  \frac{b\,\left(y -1\right) }{b\,\left(x -y \right)  -x +1}     && V_ 9=
\frac{\left(\left(a x +b -1\right) y -a x \right) b}{a \big(\left(-b -x +1\right) y +b x \big)} 
\\
V_5 = &\,  \frac{b\,\big((a-1) y +b-a \big)}{a\,\big(
(b-1)x-by+1\big)  +b\,\left(y -1\right) }     && 
V_{10} = \frac{x b \left(y -1\right)}{\left(-b -x +1\right) y +b x}\,\, .
\end{align*}

Then one sets 
\begin{equation}
\label{Eq:ab-pi-gamma}
\big( \, \pi \, , \, \gamma\,\big)= \left(\,
\frac{a - b}{a}\, , \,   \frac{a-b}{a-1}\,\right)
\end{equation}
(which is equivalent to $
\big( \, a \, , \, b \,\big)=
 \big(\,{\gamma}/({\gamma-\pi})\, , \,  {(1-\pi)\gamma}/{(\gamma-\pi) }$\,\big) and 
%
\begin{align*}
\label{BB}
q_1 =  \big[1:  0 : 0  \big]\, , \hspace{0.2cm}
q_2 =  \big[ 0 : 1 : 0 \big]\, , \hspace{0.2cm}
q_3= \,  \big[  1 : 0 : 0  \big] \, , \hspace{0.2cm}
q_4=   \big[ 1 :  -1: 1 \big]
\hspace{0.3cm} \mbox{and} \hspace{0.3cm} 
q_5 = \left[ \pi :  \gamma  :   1  \right] \,.
\end{align*}

From  \eqref{Eq:ab-pi-gamma}, one has $ab(a-1)(b-1)\neq 0$ if and only if $\pi\gamma(\pi-1)(\gamma-1)(\pi-\gamma)\neq 0$. Since the former condition has been assumed, the same holds for the second which geometrically means that  the five points $q_i$  are in general position in $\mathbf P^2$.  One easily checks that for $i=1,\ldots,5$, the rational function $V_i$ is a primitive first integral for the pencil of lines through $q_i$, whereas  
$V_{5+i}$ is a primitive first integral for the pencil of conics through all the $q_k$'s except $q_i$. 
Denoting here by $\beta_{\pi,\gamma}: {\rm dP}_4(\pi,\gamma) ={\bf Bl}_{q_1+\cdots+q_5}(\mathbf P^2\big) \rightarrow \mathbf P^2$ the blow-up of the plane at the $q_i$'s,one obtains the 
\begin{prop} 
For any  parameters $\pi,\gamma\in \mathbf C$ such that $\pi\gamma(\pi-1)(\gamma-1)(\pi-\gamma)\neq 0$,  one has
$$
\boldsymbol{\mathcal W}_{ {\rm dP}_4(\pi,\gamma) } =  
\big( \Phi\circ \beta_{\pi,\gamma}\big)^*\Big( 
\boldsymbol{\mathcal X\hspace{-0.05cm}\mathcal W}_{D_4,\tau}
\Big)
$$
where $\tau=(a,b)$ is related to $(\pi,\gamma)$ via the relation \eqref{Eq:ab-pi-gamma}. 
In particular, $\boldsymbol{\mathcal W}_{ {\rm dP}_4(\pi,\gamma) } $ is cluster. 
\end{prop}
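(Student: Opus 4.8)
The plan is to reduce everything to the explicit planar model of the del Pezzo web. By Proposition~\ref{P:b*WdPr-on-P2} (see also \S\ref{SS:WdP4-Combinatorial-characterization}), after pushing forward under the blow-up $\beta_{\pi,\gamma}$ the web $\boldsymbol{\mathcal W}_{{\rm dP}_4(\pi,\gamma)}$ becomes the planar $10$-web $\boldsymbol{\mathcal W}_{\{q_1,\dots,q_5\}}$ formed by the five pencils of lines $\mathcal L_{q_i}$ together with the five pencils of conics $\mathcal C_{\widehat{q_i}}$ passing through each four-point subset $\{q_k : k\neq i\}$. Thus it suffices to prove that the ten rational functions $V_1,\dots,V_{10}$, obtained by composing the ten distinguished ($\mathcal X$-cluster) first integrals of $\boldsymbol{\mathcal X\hspace{-0.05cm}\mathcal W}_{D_4,\tau}^{10}$ with the birational chart $\Phi$ of \eqref{Eq:uv}, are precisely a complete system of first integrals for this planar web once the parameter dictionary \eqref{Eq:ab-pi-gamma} is imposed.

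First I would substitute $(a,b) = \big(\gamma/(\gamma-\pi),\,(1-\pi)\gamma/(\gamma-\pi)\big)$ into the explicit expressions for the $V_i\in\mathbf Q[a,b](x,y)$ listed above. For each index $i\in\{1,\dots,5\}$ one then checks that $V_i$ is a quotient of two affine-linear forms in $x,y$; its fibers are therefore lines, and a direct evaluation of numerator and denominator shows that both vanish at $q_i$, so the pencil of level curves is exactly $\mathcal L_{q_i}$. For each $i\in\{1,\dots,5\}$ one checks similarly that $V_{5+i}$ is a quotient of two conics and that both conics vanish at the four points $q_k$ with $k\neq i$; hence its fibers form the pencil $\mathcal C_{\widehat{q_i}}$. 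This is a finite sequence of polynomial identities that a computer algebra system verifies at once.

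Next I would record the general-position bookkeeping. Relation \eqref{Eq:ab-pi-gamma} gives $ab(a-1)(b-1)\neq 0$ if and only if $\pi\gamma(\pi-1)(\gamma-1)(\pi-\gamma)\neq 0$; the latter is exactly the standing hypothesis and is precisely the condition ensuring that $q_1,\dots,q_5$ are in general position, so that $\boldsymbol{\mathcal W}_{{\rm dP}_4(\pi,\gamma)}$ is well defined and both $\Phi$ and $\beta_{\pi,\gamma}$ are birational. Combining the two previous paragraphs yields $(\Phi\circ\beta_{\pi,\gamma})^*\big(\boldsymbol{\mathcal X\hspace{-0.05cm}\mathcal W}_{D_4,\tau}^{10}\big)=\beta_{\pi,\gamma}^*\big(\boldsymbol{\mathcal W}_{\{q_1,\dots,q_5\}}\big)=\boldsymbol{\mathcal W}_{{\rm dP}_4(\pi,\gamma)}$. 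Since this web has thereby been realized as the web cut out by a selection of the $\mathcal X$-cluster variables of the type-$D_4$ cluster algebra restricted to the fiber $\boldsymbol{\mathcal X}_\tau=\lambda^{-1}(\tau)$, it is a cluster web in the sense of \S\ref{SS:WdP4-as-a-cluster-web}, which gives the final \emph{in particular} assertion.

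The hard part is not conceptual but bookkeeping: one must pin down the chart \eqref{Eq:uv} and the dictionary \eqref{Eq:ab-pi-gamma} so precisely that the ten composed functions match the ten expected pencils exactly, with the correct identification of base points $q_i$, and not merely up to a relabeling of the foliations or up to an extra M\"obius factor. A related subtlety is to confirm that each of the ten selected cluster variables actually defines a genuine codimension-one, rank-one foliation after restriction to $\boldsymbol{\mathcal X}_\tau$ and pull-back by $\Phi$, rather than degenerating to a trivial (point) foliation; this is exactly what dictates the specific choice of the ten highlighted variables among the $38$ first integrals of $\boldsymbol{\mathcal X\hspace{-0.05cm}\mathcal W}_{D_4,\tau}$.
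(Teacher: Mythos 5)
Your proposal is correct and follows essentially the same route as the paper: the paper's own proof consists precisely of the explicit computations you describe, namely pulling back the ten distinguished $\mathcal X$-cluster variables of $\boldsymbol{\mathcal X\hspace{-0.05cm}\mathcal W}_{D_4,\tau}^{10}$ under the chart \eqref{Eq:uv}, checking (by direct polynomial verification) that the resulting functions $V_1,\dots,V_{10}$ are primitive first integrals of the five pencils of lines $\mathcal L_{q_i}$ and the five pencils of conics $\mathcal C_{\widehat{q_i}}$, and then invoking the planar blow-up model of $\boldsymbol{\mathcal W}_{{\rm dP}_4(\pi,\gamma)}$ together with the dictionary \eqref{Eq:ab-pi-gamma} and the general-position equivalence. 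Your closing remarks on matching the pencils exactly (not merely up to relabeling) and on the non-degeneration of the ten selected variables correspond to the bookkeeping the paper itself acknowledges having carried out by explicit computation.
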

%
%
%
%
%

This proposition is established by means of simple arguments or calculations that are explicit but do not say much about either the conceptual context underlying the proof or the numerous calculations we have performed to obtain it.  We think it is worth saying a few words about the process that led us to establish that any del Pezzo web associated with a quartic del Pezzo surface is a cluster web.  
There were three main points to be investigated: 
\begin{enumerate}
\item[1.]
\vspace{-0.15cm}
 first, given a del Pezzo quartic ${ {\rm dP}_4} $, we had the intuition to look for 
$\boldsymbol{\mathcal W}_{ {\rm dP}_4}$ as a subweb of a certain cluster 38-web 
$\boldsymbol{\mathcal X\hspace{-0.05cm}\mathcal W}_{D_4,\tau}$;
\item[2.] then given a web  $\boldsymbol{\mathcal X\hspace{-0.05cm}\mathcal W}_{D_4,\tau}$, 
 the task was to find one of its 10-subwebs likely to be equivalent to $\boldsymbol{\mathcal W}_{ {\rm dP}_4}$ ;
\item[3.] finally, once such a 10-subweb $\boldsymbol{\mathcal X\hspace{-0.05cm}\mathcal W}_{D_4,\tau}^{10}$  was identified, the problem was to find an analytic equivalence of the latter with $\boldsymbol{\mathcal W}_{ {\rm dP}_4}$.
\end{enumerate}
We dealt with these three points as follows: 
\begin{itemize}
\item[$-$] regarding the first, we recall that  Bol's web $\boldsymbol{\mathcal W}_{ {\rm dP}_5}$,  which lives on ${\rm dP}_5$,  is also (equivalent to) the $\boldsymbol{\mathcal X}$-cluster web of type $A_2$ hence is naturally defined on the cluster variety $\boldsymbol{\mathcal X}_{A_2}$. 
The latter essentially\footnote{`Essentially'  
means here `up to a birational map inducing an isomorphism in codimension 1'.}
is the Looijenga interior ${\rm dP}_5\setminus P$ where $P$ is an anticanonical pentagon in ${\rm dP}_5$.\footnote{An `anticanonical pentagon' is an element of the anticanonical linear system formed by 5 lines cyclically labeled with the $i$-th intersecting only lines with labels $i-1$ and $i+1$ in two distinct points.}
This suggests to consider the case of the Looijenga interior $\boldsymbol{U}={\rm dP}_4\setminus P_4$ where $P_4$ now stands for an anticanonical square in $\lvert -K_{{\rm dP}_4} \lvert$. Using \cite[\S2.4]{Mandel} and especially Theorem 2.13 therein, we come to the conclusion that $ \boldsymbol{U}$ can be described as a fiber of the map $\lambda$ for the cluster ensemble of type $D_4$; \sk 
\item[$-$] having identified the cluster algebra set-up which might be the right one, it is straightforward to get the cluster variables defining $\boldsymbol{\mathcal X\hspace{-0.05cm}\mathcal W}_{D_4,\tau}$. The problem we are facing at this step is that  the latter web  is a 38-web and it is not at all clear, assuming that it is possible (what we proved to be the case), to distinguish 10 of the 38 cluster variables on 
$\boldsymbol{\mathcal X}_{D_4,\tau}$ which could define a web equivalent to 
$\boldsymbol{\mathcal W}_{ {\rm dP}_4}$. We succeeded in this task by brute force computations of some arithmetic and combinatorial invariants of the 10-subwebs of $\boldsymbol{\mathcal X\hspace{-0.05cm}\mathcal W}_{D_4,\tau}$. Heavy and lengthy computations led us to consider more closely the web 
$\boldsymbol{\mathcal X\hspace{-0.05cm}\mathcal W}^{10}_{D_4,\tau}$
defined by the 10 red cluster variables of page \pageref{XWD4-tau}, which share  the same invariants with $\boldsymbol{\mathcal W}_{ {\rm dP}_4}$. 
\sk
\item[$-$] Finally, we deduce the explicit expression \eqref{Eq:uv} for the components of 
an analytic equivalence between  $\boldsymbol{\mathcal X\hspace{-0.05cm}\mathcal W}^{10}_{D_4,\tau}$ and $\boldsymbol{\mathcal W}_{ {\rm dP}_4}$ by means of a 
 more thorough investigation/comparison of the invariants of these two webs. 
\end{itemize}

\section{\bf Questions and perspectives}
\label{S:What-next}
In this section we discuss some questions and perspectives we find interesting considering the results obtained before in this text together with others that we briefly recall.

\subsection{\bf Del Pezzo's web on singular del Pezzo's quartic surfaces.}
\label{S:Singul}

The singular quartic surfaces of $\mathbf P^4$ which can be obtained by 
blowing-up up five points on $\mathbf P^2$ no longer assumed to be in general position have been classified by ancient geometers and all are explicitly known ({\it e.g.}\,see \cite{Segre}, \cite{Timms}, 
\cite[p.\,286]{HP}, etc. For modern references, see \cite[\S4]{CorayTsfasman} or 
\cite[\S3.4]{DerenthalSingular}).  


Since the del Pezzo webs 
 of the smooth quartic del Pezzo surfaces all satisfy so many nice features, it is natural to wonder about the properties of the webs formed by the fibrations in conics on the 
singular quartic del Pezzo surfaces. By means of direct computations, we have studied the webs by conics on the del Pezzo quartic surfaces with isolated singular points, mainly with regard to their rank and their abelian relations. We have found that way interesting webs, many of maximal rank,  that we are going to describe succinctly below. 

We consider only quartic surfaces in  $\mathbf P^4 $ with only finitely many singular points.  There are 15 possibilities  for the type of singularities for such a surface ({\it cf.}\,\cite[Prop.\,5.6]{CorayTsfasman} or 
\cite[Table 6]{DerenthalSingular}), and for each of them there are specific numbers for the lines and for the conic fibrations on the corresponding singular quartic surfaces. Since we are interested in exceptional webs, one has to restrict oneself to singular quartic surfaces $S\subset \mathbf P^4$ carrying at least 5 pencils of conics, so that the most singular types of such surfaces can be left aside and we just have to deal with the type of singularities, denoted by  $\Sigma$,  being one of the following ones: $A_1$, $2A_1$, $A_2$, $3A_1$, $A_1A_2$, $A_3$.\footnote{Here the notation $3A_1$ (resp.\,$A_1A_2$) refers to quartic surfaces whose 
singular set is formed by three singular points of type $A_1$ (resp.\,by two singular points, one of type $A_1$, the other of type $A_2$), etc.}

We use the following notations: 
\begin{itemize}
\item $\Sigma$ denotes an element of $\{ \,  \emptyset \, , \,A_1\, , \, 2A_1\, , \, A_2\, , \, 3A_1\, , \, A_1A_2\, , \, A_3\,  \}$;
\sk
\item $S=S(\Sigma)$ stands for a del Pezzo quartic surface in $\mathbf P^4$ with  $\Sigma$ as type of singularities;
\sk
\item  any singular del Pezzo quartic $S\hspace{-0.01cm}(\Sigma)$ admits a desingularization $\nu : \widetilde S\hspace{-0.01cm}(\Sigma)\rightarrow S(\Sigma)$ 
the composition of which with the inclusion $S\hspace{-0.01cm}(\Sigma)\subset \mathbf P^4$ is given by 
the anticanonical linear system $\lvert -K_{\widetilde S\hspace{-0.01cm}(\Sigma)}\lvert$. 
Moreover,  as  for smooth del Pezzo quartic surfaces, 
$\widetilde S(\Sigma)$ is obtained as the blow-up  $\beta_\Sigma : \widetilde S(\Sigma)\rightarrow \mathbf P^2$ in five points, but these points are no longer in general position (they even can be infinitely near).  The rational map $\phi_{ \lvert -K_{\widetilde S(\Sigma)}\lvert }\circ 
\beta_\Sigma^{-1} :\mathbf P^2\dashrightarrow \mathbf P^4$ is given by a linear system of cubic curves whose base locus scheme will be denoted by $B_\Sigma$; 
\sk
\item 
\vspace{-0.45cm}
any quartic  del Pezzo
 $S(\Sigma)\subset \mathbf P^4$ is cut out by two linearly independent quadratic 
equations, each being obtained from 
a symmetric $5 \times  5$ matrix. Denoting by 
$A$ and $B$ these matrices, one can always assume that $A$ is regular and 
define the {\it Segre symbol} $\mathcal S_\Sigma$ of $S$, as an uplet of integers encoding the decomposition in Jordan blocs of $A^{-1}B\in {\rm Sym}_5(\mathbf C)$;
\sk
\item ${\mathcal \mu_\Sigma}$ is the dimension of the moduli space of del Pezzo quartic surfaces 
$S\hspace{-0.01cm}(\Sigma)$;
\sk
\item ${\ell_\Sigma} $ denotes the number of lines contained in $S\hspace{-0.01cm}(\Sigma)$;
\sk
\item ${\kappa_\Sigma} $ stands for the number of pencils of conics on $S\hspace{-0.01cm}(\Sigma)$;
\sk
\item ${{\mathcal W}_{ S\hspace{-0.01cm}(\Sigma)}}$ denotes the ${\kappa_\Sigma}$-web defined by all conic fibrations on
 $S\hspace{-0.01cm}(\Sigma)$.
\sk
\end{itemize}

Depending on the type of singularities $\Sigma$, most of the elements of the list above are given in 
 Table \ref{Table:Singular-dP4} below. In the last column, we give an explicit expression for the birational model of ${{\mathcal W}_{ S\hspace{-0.01cm}(\Sigma)}}$ obtained by taking the  pull-back of this web under the map $\phi_{ \lvert -K_{\widetilde S(\Sigma)}\lvert }\circ 
\beta_\Sigma^{-1}$.  The base loci $B_\Sigma$ are pictured using the convention that genuine points on the base $\mathbf P^2$ are in black, whereas the infinitely near points are pictured white.

\begin{table}[!h]
{
\begin{tabular}{|c|c|c|c|c|c|l|}
\hline
\begin{tabular}{c} \vspace{-0.3cm}
\\
$\boldsymbol{\Sigma}$ 
\vspace{0.05cm}
\end{tabular}
 &  
$\boldsymbol{B_\Sigma}$
&
$\boldsymbol{\mathcal S_\Sigma}$
&
$\boldsymbol{\mathcal \mu_\Sigma}$
& 
$
\boldsymbol{\ell_\Sigma}
$ 
& 
$\boldsymbol{\kappa_\Sigma}$
&
\hspace{0.6cm}
  {\bf Birational model for the}
  $\boldsymbol{\kappa_\Sigma}${\bf-web} 
   $\boldsymbol{{\mathcal W}_{ S(\Sigma)}}$
  \\ \hline \hline
\begin{tabular}{c} \vspace{-0.5cm}
\\
$\emptyset$ 
\vspace{0.02cm}
\end{tabular}
  &
 \begin{tabular}{c} \vspace{-0.2cm}
\\ 
    \scalebox{0.12}{
 \includegraphics{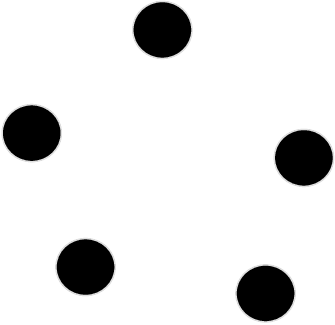}} 
 \vspace{0.02cm}
\end{tabular}
  & $\big[11111\big]$ & 2 &  16  & 10 & 
  \hspace{0.15cm}\scalebox{1}{$\boldsymbol{{\mathcal W}_{ \hspace{-0.05cm} {\rm dP}_4}}
  =\boldsymbol{\mathcal W}
\Big(x, y, \frac{x}{y},  \frac{y -1}{x -1},  \, 
\ldots \,  , \, \frac{\left(y -1\right) \left(x -\alpha \right) }{\left(x -1\right) \left(y -\beta  \right)}
 \,  , \, 
\frac{y\left(x -\alpha  \right) }{x \left(y -\beta \right)}
 \,  , \,
 \frac{x -\alpha}{y-\beta }\, \Big)$ }
 \\
  \hline
\begin{tabular}{c} \vspace{-0.35cm}
\\
$A_1$
\vspace{0.03cm}
\end{tabular}
  &   \begin{tabular}{c} \vspace{-0.2cm}
\\ 
    \scalebox{0.12}{
 \includegraphics{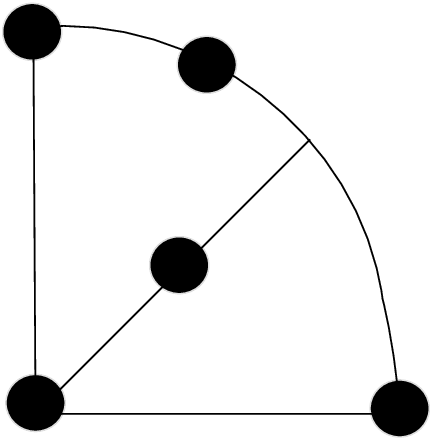}} 
 \vspace{0.02cm}
\end{tabular}  & $\big[2111\big]$ & 1 &  12  &  8 & 
\hspace{0.15cm}\scalebox{1}{$\boldsymbol{\mathcal W}\Big(\, x,y,  x-a y , \frac{x}{y},
 \frac{y-1}{x-1}, \frac{x(y-1)}{y(x-1)} \,  , \, 
\frac{\left(x-a y  \right) \left(y-1 \right)}{y \left(x-a y +a -1\right)}
 \,  , \,
\frac{\left(x -1\right) \left(x-a y  \right)}{x \left(x-a y +a  -1\right)}\, 
\Big)$}
 \\
  \hline 
     \multirow{2}{*}{$2A_1$}
 &   \begin{tabular}{c} \vspace{-0.2cm}
\\ 
    \scalebox{0.12}{
 \includegraphics{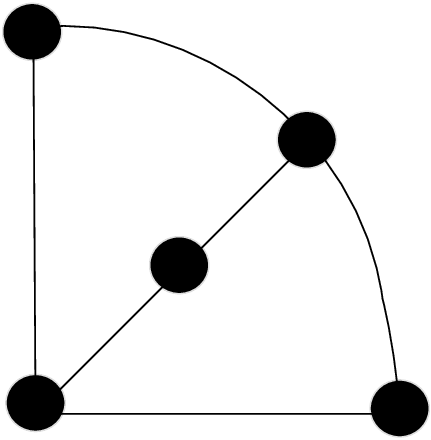}} 
 \vspace{0.02cm}
\end{tabular} & $\big[221\big]$ & 0 &  9  &  6 & 
\hspace{0.15cm} \scalebox{1}{$\boldsymbol{\mathcal W}\Big(x,y,  x-y , \frac{x}{y},
 \frac{y-1}{x-1}, \frac{x(y-1)}{y(x-1)}
\Big)$}
\vspace{0.0cm} \\
 &   \begin{tabular}{c} \vspace{-0.2cm}
\\ 
    \scalebox{0.12}{
 \includegraphics{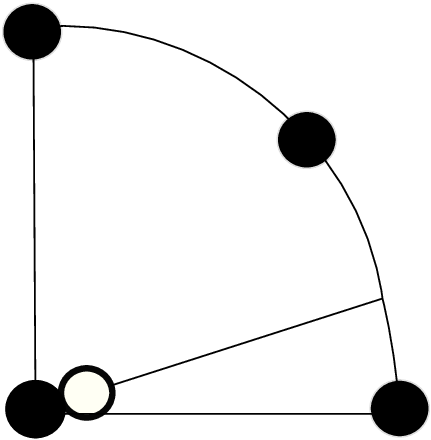}} 
 \vspace{0.02cm}
\end{tabular} & $\big[(11)111\big]$ & 1 &  8  &  7 & 
\hspace{0.15cm}
\scalebox{1}{$\boldsymbol{\mathcal W}\Big(x,y,  x-y , \frac{x}{y},
\frac{x \left(x -y \right)}{x-a y }, 
\frac{y \left(x -y \right)}{x-a y}, \frac{x y}{x-a y}
\Big)$}
 \\
  \hline 
\begin{tabular}{c} \vspace{-0.3cm}
\\
$3A_1$
\vspace{0.04cm}
\end{tabular}  
&  
 \begin{tabular}{c} \vspace{-0.2cm}
\\ 
    \scalebox{0.12}{
 \includegraphics{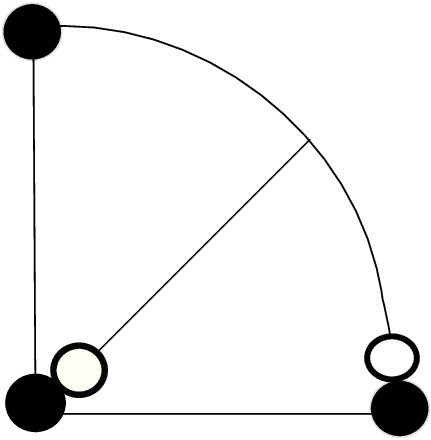}} 
 \vspace{0.02cm}
\end{tabular}
  & $\big[(11)21\big]$ & 0 &  6  &  5 & 
\hspace{0.15cm} $\boldsymbol{\mathcal W}\Big( x, y, \frac{x}{y}, \frac{x y}{x +y}, \frac{y^{2}}{x +y}\Big)$ 
 \\
  \hline 
\begin{tabular}{c} \vspace{-0.35cm}
\\
$A_2$
\vspace{0.02cm}
\end{tabular}  &  
\begin{tabular}{c} \vspace{-0.2cm}
\\ 
    \scalebox{0.12}{
 \includegraphics{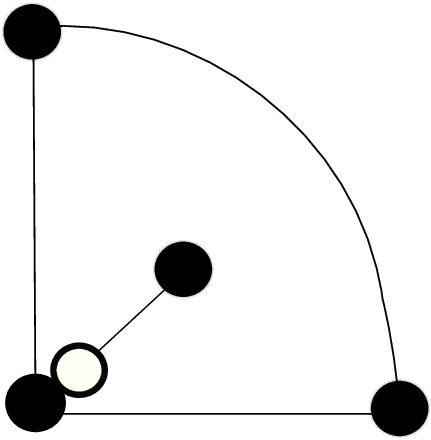}} 
 \vspace{0.02cm}
\end{tabular}
  & $\big[311\big]$ & 0 &  8  &  6& 
\hspace{0.15cm}\scalebox{1}{$\boldsymbol{\mathcal W}\Big(x,y, \frac{x}{y},
 \frac{y-1}{x-1}, \frac{x(y-1)}{y(x-1)},
  \frac{x-y}{xy}
\Big)$} 
 \\
  \hline 
\begin{tabular}{c} \vspace{-0.35cm}
\\
$A_1A_2$
\vspace{0.02cm}
\end{tabular}  & 
 \begin{tabular}{c} \vspace{-0.2cm}
\\ 
    \scalebox{0.12}{
 \includegraphics{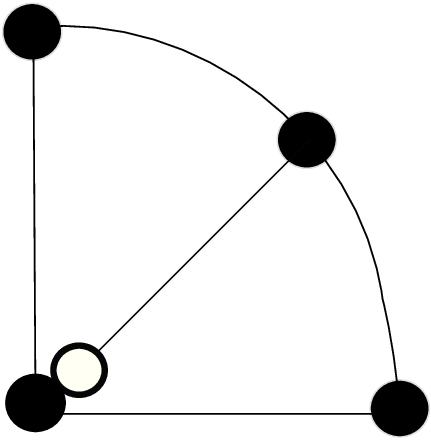}} 
 \vspace{0.02cm}
\end{tabular}
   & $\big[32\big]$ & 0 &  6  &  4 & 
\hspace{0.15cm}
\scalebox{1}{$\boldsymbol{\mathcal W}\Big(x,y,  x-y , \frac{x}{y}, \frac{x y}{x- y}
\Big)$} 
 \\
  \hline 
     $A_3$
 &  \begin{tabular}{c} \vspace{-0.2cm}
\\ 
    \scalebox{0.12}{
 \includegraphics{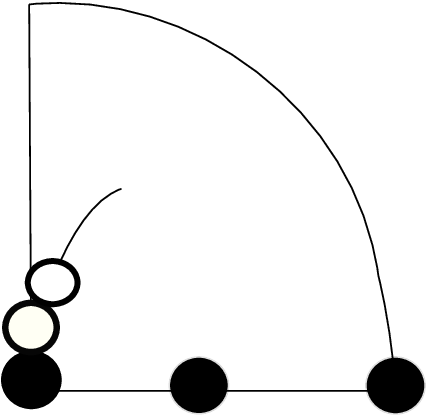}} 
 \vspace{0.02cm}
\end{tabular} & $\big[(21)11\big]$ & 0 &  4  &  5 & 
\hspace{0.15cm}
\scalebox{1}{$\boldsymbol{\mathcal W}\Big(
y, \frac{x}{y}, \frac{1+x}{y}, \frac{y^{2}+x}{x y}, 
\frac{x^{2}+y^{2}+x}{x y}
\Big)$} 
 \\
  \hline 
\end{tabular}\bk\sk
}
\caption{Webs by conics on quartic del Pezzo surfaces with finitely many singular points, for all types of singularities $\Sigma$ such that ${\kappa_\Sigma} \geq 5$. (Base points on the base $\mathbf P^2$ are in black, whereas those in white are infinitely near the other points they are touching).}
\label{Table:Singular-dP4}
\end{table}

Using several effective tools, 
we have studied all the webs explicited in this table, especially with regard  
to their abelian relations and their rank. From this, we get the
\begin{prop}
The del Pezzo's webs in Table \ref{Table:Singular-dP4} all have maximal rank hence are exceptional.
\end{prop}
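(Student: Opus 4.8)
The plan is to unwind the two adjectives in the statement. Recall that for a planar web of maximal rank, being \emph{exceptional} means exactly being non-algebraizable, and that by Bol's inequality any $\kappa_\Sigma$-web satisfies ${\rm rk}(\boldsymbol{\mathcal W}_{S(\Sigma)})\leq (\kappa_\Sigma-1)(\kappa_\Sigma-2)/2$. So the proposition splits into two independent tasks, to be carried out for each of the eight explicit webs of Table~\ref{Table:Singular-dP4}: first, that the rank attains Bol's bound; second, that the web is not equivalent to any algebraic web.

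For the rank I would run the symbolic hyperlogarithmic machinery of \S\ref{SSS:Hyperlog-AR} verbatim. For a fixed $\Sigma$ and the given rational first integrals $U_1,\dots,U_{\kappa_\Sigma}$, each singular fibre of $U_i$ is a union of lines, so the spectrum $\mathfrak R_i\subset\mathbf P^1$ is finite and one obtains the spaces ${\bf H}_i$ of pulled-back logarithmic forms, the ambient space ${\bf H}=\sum_i{\bf H}_i$, and the maps $\Phi^w$. Computing $\dim\ker\Phi^w$ for $w\geq 1$ is pure linear algebra over the field $\mathbf Q(\text{moduli})$, and I would check in each case that $\sum_w\dim{\bf HLogAR}^w$ equals $(\kappa_\Sigma-1)(\kappa_\Sigma-2)/2$. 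Since ${\rm HLrk}\leq{\rm rk}\leq(\kappa_\Sigma-1)(\kappa_\Sigma-2)/2$, this single equality forces all three quantities to coincide, giving maximal rank without even having to prove a priori that every AR is hyperlogarithmic. The only delicate point here is genericity: for the families with positive moduli ($\Sigma\in\{\emptyset,A_1\}$ and the $[(11)111]$ type of $2A_1$) one must ensure that the rank does not drop, i.e. that the computed generic value is attained for all parameters satisfying the stated general-position conditions.

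For non-algebraizability I would proceed through non-linearizability, splitting into two groups. When $\boldsymbol{\mathcal W}_{S(\Sigma)}$ contains at least four pencils of lines (the cases $\emptyset$, $A_1$, $A_2$ and both types of $2A_1$), the argument used for Bol's web applies word for word: these four line-foliations form a linear $4$-subweb, whose every linearization is induced by a projective transformation of $\mathbf P^2$ (\cite[Cor.\,6.1.9]{PP}); such a map cannot send a smooth conic to a line, and each of these webs carries at least one pencil of irreducible conics, so no linearization exists. For the three remaining cases ($3A_1$, $A_3$, $A_1A_2$), which contain only three pencils of lines, this shortcut is unavailable, and I would instead exploit the output of the rank computation: each of these webs is seen to carry a nonzero abelian relation of weight $\geq 2$ (a di- or higher-logarithmic one), whereas the abelian relations of any algebraic web $\boldsymbol{\mathcal W}_C$ are abelian integrals of the first or third kind, hence all of weight $1$; since the hyperlogarithmic weight is an invariant of web equivalence, the presence of a transcendental AR of weight $\geq 2$ rules out algebraizability. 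Alternatively, for these low-$\kappa$ cases one may test linearizability directly through the obstruction to a compatible projective connection (\cite[\S30]{BB}, \cite[\S6.1.5]{PP}), which is again an effective differential-algebraic computation.

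The main obstacle is the sheer volume and the uniformity of the rank computations rather than any conceptual difficulty: one must organise the linear algebra so that the kernels of the $\Phi^w$ are computed reliably for eight distinct webs, several carrying parameters, and certify that Bol's bound is reached in every case. The secondary difficulty is the non-algebraizability of the three webs with only three line-pencils, where one cannot appeal to projective rigidity and must instead rely on the finer weight structure of the abelian relations produced by the symbolic method (or on a direct projective-connection obstruction).
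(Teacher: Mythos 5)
Your proposal is, in substance, the paper's own proof: the paper gives no argument beyond asserting that the ranks and abelian relations of the eight webs of Table \ref{Table:Singular-dP4} were determined by direct effective computation, and the toolkit you invoke — the symbolic spaces ${\bf HLogAR}^w$ of \S\ref{SSS:Hyperlog-AR} together with the chain ${\rm HLrk}\leq {\rm rk}\leq (\kappa_\Sigma-1)(\kappa_\Sigma-2)/2$, checked case by case (with the genericity caveat you rightly flag for the positive-moduli families) — is exactly the paper's own computational machinery, while the non-linearizability step is the same one the paper uses for Bol's web and leaves implicit in the word ``hence''.

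Two details in your exceptionality discussion need repair, though neither is fatal. First, your case split is off: the $A_1A_2$ web $\boldsymbol{\mathcal W}\big(x,\,y,\,x-y,\,x/y,\,xy/(x-y)\big)$ contains \emph{four} pencils of lines, not three, so the projective-rigidity argument (\cite[Cor.\,6.1.9]{PP}) disposes of it directly; only the $3A_1$ and $A_3$ webs have just three line pencils. Second, your claim that the abelian relations of an algebraic web $\boldsymbol{\mathcal W}_C$ ``are all of weight $1$'' is not well formed: those ARs are abelian integrals on $C$, which for $p_a(C)>0$ are not hyperlogarithms at all and carry no hyperlogarithmic weight, so the inference as stated is vacuous. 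The invariant statement you actually need is about monodromy: every component of an AR of $\boldsymbol{\mathcal W}_C$ has \emph{constant} variation $\mathcal M_{[\gamma]}-{\bf I}{\rm d}$ (analytic continuation only adds periods), whereas a genuine weight-$2$ hyperlogarithmic AR has a component whose variation is a non-constant weight-$1$ function; since a web equivalence identifies AR spaces compatibly with monodromy, exhibiting one dilogarithmic AR in the computation rules out algebraizability. Alternatively, the fallback you mention — the effective projective-connection linearizability test of \cite[\S30]{BB} and \cite[Prop.\,6.1.10]{PP}, legitimate here because these $5$-webs have maximal rank — closes the two remaining cases without any monodromy discussion.
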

Generally speaking, the webs by conics on del Pezzo quartic surfaces with finitely many singularities 
all are maximal rank and this seems to be verified as well for many cubic surfaces of $\mathbf P^3$ with the same type of singularities (however, recall that $\boldsymbol{\mathcal W}_{ \hspace{-0.03cm} {\rm dP}_3}$ does not have maximal rank for a generic smooth 
del Pezzo cubic $ {\rm dP}_3$). 
Given that being of maximum rank (and exceptional when non-linearizable) is a rather exceptional phenomenon for a web, we believe that the fact that many del Pezzo webs on smooth or singular del Pezzo surfaces are of maximum rank cannot be a mere coincidence. This leads us to ask the following question 
\begin{question}
Does it exist (and if yes, which is it) a general conceptual reason explaining why so many del Pezzo's webs (that is, webs formed by considering all the conic fibrations) of del Pezzo surfaces 
without or with  only finitely many singular points  
are of maximal rank? 
\end{question}


\subsection{\bf Behavior of the ${\mathcal W} \hspace{-0.5cm}{\mathcal W}_{{\bf dP}_d}$'s in families and degeneracies}
\label{S:degen}
For $d\in \{1,2,3,4\}$ there is a $2(5-d)$-dimensional algebraic family 
$d{\mathcal P}_d$
of degree $d$ del Pezzo surfaces  giving rise to a family of webs 
${\mathscr W}_{ \hspace{-0.05cm}{\rm dP}_d} \rightarrow d{\mathcal P}_d$ over it.  
First, it is interesting to ask how the webs $\boldsymbol{\mathcal W}_{{\rm dP}_d}$'s behave when ${\rm dP}_d$ varies in $d{\mathcal P}_d$.

Another kind of natural questions is  how the 
$\boldsymbol{\mathcal W}_{{\rm dP}_d}$'s may degenerate: there are modular compactifications 
$\overline{d{\mathcal P}}_d$ of the 
 moduli space $d{\mathcal P}_d$
 and it is natural to wonder first whether ${\mathscr W}_{{\rm dP}_d} \rightarrow d{\mathcal P}_d$ can be extended over $\overline{d{\mathcal P}}_d$, then if it is the case, what are the  properties (in particular those concerning its  ARs) of a web $\boldsymbol{\mathcal W}_{{\rm dP}'_d}$ on a degenerate del Pezzo surface 
 ${\rm dP}'_d\in \overline{d{\mathcal P}}_d \setminus d{\mathcal P}$. Since the 
 singular del Pezzo surfaces considered in \S\ref{S:Singul} 
are such degenerations, questions asked in the current subsection are generalizations of those raised in the above subsection.

 In view of considering a very concrete example,  let us consider the webs on 
$\mathbf P^2$  
 defined by the 10 rational first integrals 
\eqref{Eq:WdP4-Ui},  denoted by 
$\boldsymbol{\mathcal W}_{{\rm dP}_4(\gamma,\pi)}$. Assuming that \eqref{Eq:PG-pi-gamma} is satisfied, each such web is a planar model of the 10-web by conics on a smooth del Pezzo quartic surface
which will be denoted by ${\rm dP}_4(\gamma,\pi)$.  Let us consider the naive specialization $\boldsymbol{\mathcal W}_{{\rm dP}_4,-1}
=\lim_{\pi,\gamma\rightarrow -1}\boldsymbol{\mathcal W}_{{\rm dP}_4(\gamma,\pi)}$
of the webs $\boldsymbol{\mathcal W}_{{\rm dP}_4(\gamma,\pi)}$'s: it is the web whose first integrals are the 
limits of all the $U_i$'s 
 in \eqref{Eq:WdP4-Ui}  when both parameters $\pi$ and $\gamma$ go to -1.
It is no longer a 10-web but the following 8-web 
$$\boldsymbol{\mathcal W}_{{\rm dP}_4,-1}
 =\boldsymbol{\mathcal W}\left(\, 
x\, , \,  
\frac{1}{y} \, , \,
 \frac{y}{x} \, , \,
  \frac{x - y}{x - 1}\, , \,
   \frac{x + 1}{x - y} \, , \,
    \frac{x - y}{(x - 1)(y + 1)} \, , \,
    \frac{y(x + 1)}{x(y + 1)} \, , \,
     \frac{x(y - 1)}{y(x - 1)}
  \,\right)\, . 
$$
One  verifies that this web has maximal rank 21 with all its ARs being generalized hyperlogarithms of weight 1 or 2.\footnote{Here, the adjective phrase `{\it generalized hyperlogarithmic'} refers to the notion of `{\it generalized iterated integrals'} considered in \cite[\S1.5.3]{ClusterWebs}.} 
   Denote by ${\bf Hlog}^3_{\gamma,\pi}$ the weight 3 hyperlogarithmic  functional identity carried by  $\boldsymbol{\mathcal W}_{{\rm dP}_4(\gamma,\pi)}$ for
$\gamma$ and $\pi$  generic (that is, satisfying \eqref{Eq:PG-pi-gamma}). From the description of the ARs of $\boldsymbol{\mathcal W}_{{\rm dP}_4,-1}$ just mentioned, it comes that ${\bf Hlog}^3_{\gamma,\pi}$ somehow disappears under the specialization $(\gamma,\pi)\rightarrow (-1,-1)$. It would be interesting to understand this phenomenon better and more generally to see how  the ARs of the generic 
web $\boldsymbol{\mathcal W}_{{\rm dP}_4(\gamma,\pi)}$ and those of the specialization $\boldsymbol{\mathcal W}_{{\rm dP}_4,-1}$ are related.

\subsection{\bf Global single-valued version of ${\bf HLog}^{3}$}
\label{S:single-valued}
An interesting well-known feature of Abel's identity $\boldsymbol{\big(\mathcal Ab\big)}$
is that there is a global single-valued but real analytic version of it. More precisely, 
let $D$ stand for the {\it `Bloch-Wigner dilogarithm'}, which is  defined 
for any $z\in \mathbf P^1\setminus\{0,1,\infty\}$ 
by   
\begin{equation}
\label{Eq:Bloch-Wigner}
D(z)={\rm Im}\Big( {\bf L}{\rm i}_2(z) \Big)+{\rm Arg}\big(1-z\big)\cdot {\rm Log}\,\lvert \, z\, \lvert\, , 
\end{equation}
where ${\rm Arg} : \mathbf C^*\rightarrow  ]-\pi, \pi]$ denotes   
the main branch of the complex argument.
Bloch-Wigner's dilogarithm $D$ is real-analytic and extends continuously to the whole Riemann sphere. Denoting again by $D$ this extension, one has $D(0)=D(1)=D(\infty)=0$  and the most remarkable of its features is that it satisfies the following global version of the identity $\boldsymbol{\big(\mathcal Ab\big)}$: one has 
 $$
D(x)-D(y)-D\bigg(\frac{x}{y}\bigg)-D\left(\frac{1-y}{1-x}\right)
+D\left(\frac{x(1-y)}{y(1-x)}\right)=0\,
$$
for any $x,y\in \mathbf P^1$ such that none of the five arguments of $D$ in this identity be indeterminate. \sk 

The hyperlogarithmic identity ${\bf HLog}^{3}$ is holomorphic, because the functions $AH^{3}_i$'s involved in it are holomorphic.  On the other hand this functional identity 
 is only locally satisfied, because the $AH^{3}$'s extend to global but multivalued functions on the whole projective line.  
Remark that  the function $AH^{2}$ involved in ${\bf HLog}^{2}\simeq \boldsymbol{\mathcal Ab}$ is a holomorphic version of Rogers dilogarithm $R$ which admits $D$ as a single-valued global cousin.  The point is that it has been proved in
 \cite[\S7]{Brown2004} (see also \cite[\S2.5]{VanhoveZerbini2018} or 
 \cite[\S3]{CDG2021}) that the most general hyperlogarithm $HL$ admits a single-valued global version $HL^{\rm sing}$.  Specializing this to the case of the weight 3 antisymmetric hyperlogaritms $AH^{3}_i$ involved in 
 ${\bf HLog}^{3}$ gives 
 global single-valued functions ${AH}^{3,sing}_i$ for $i=1,\ldots, 10$, regarding which the following questions immediately arise: 
\begin{question}
\begin{itemize}
\item[] ${}^{}$ \hspace{-1.1cm}{\rm 1.}  {\it Can one give an explicit formula for the ${AH}^{3,sing}_i$'s?
 }
\sk 
\item[] ${}^{}$ \hspace{-0.8cm} {\rm 2.} {\it Do the ${AH}^{3,sing}_i$'s satisfy the global univalued version of ${\bf HLog}^{3}$, {\it i.e.}\,using the same nota-
  \\
${}^{}$ \hspace{-0.6cm} tions
 than in  the first page of this text, do we have identically  
$\sum_{i=1}^{10} 
 {AH}^{3,sing}_i\big( U_i\big)=0$ 
on the \\
${}^{}$ \hspace{-0.6cm} whole  considered del Pezzo quartic surface ${\rm dP}_4$?}
\sk 
\end{itemize}
\end{question}
We believe that  the answer to 2. is positive, possibly up to considering suitable 
 single-valued versions of the antisymmetric hyperlogarithms $AH_i^{3}$. An approach for answering this question might be to use Theorem 1.1 of  \cite[\S3]{CDG2021} but this would require first to build a `motivic lift' of the identity ${\bf HLog}^{3}$.

\subsection{\bf Motivic lift}
In \cite{CDG2021}, the authors discuss motivic avatars of hyperlogarithms, some single-valued (still motivic) versions of these, classical realisations of them, as well as a motivic approach of the functional identities satisfied by the hyperlogarithms.  
Thanks to its algebraic nature, 
we expect that ${\bf HLog}^{3}$  indeed can  be lifted into the `motivic world'. 
 It would be interesting to investigate this more rigourously.

\subsection{\bf Supersymmetric version of ${\bf HLog}^3$}
Several authors have undertaken to build a theory of cluster superalgebras. 
In the preprint \cite{GMSV}, the authors  explore physical applications of this notion. In particular, in the fourth section, they discuss a `super cluster polylogarithm identity' which can be seen as a supersymmetric version of Abel's identity 
$\boldsymbol{\mathcal Ab}\simeq  {\bf HLog}^{2} $. 
Because ${\bf HLog}^{3} $ is cluster as well (as shown above in \S\ref{SS:WdP4-as-a-cluster-web}), it is natural to ask the 
\begin{question}
Does it exist a cluster supersymmetric version of ${\bf HLog}^3$?
\end{question}

\subsection{\bf Non-archimedean version  version of ${\bf HLog}^3$}
Analytic $p$-adic versions of classical polylogarithms have been constructed by Coleman in \cite{Coleman} where he moreover proved that the $p$-adic version of Rogers dilogarithm  satisfies a (or `the'?) $p$-adic analog of the 5 terms identity ({\it cf.}\,\cite{Coleman} or \cite{Wojtkowiak}. Since Coleman's work (where another approach was used), a theory of non-archimedean iterated integrals has become available (see \cite{Berkovich}) which led us to think that the antisymmetric hyperlogarithms $AH^3_i$ involved in  ${\bf HLog}^3$ should have non-archimedean avatars. If this is so, it is then natural to ask the  
\begin{question}
Does it exist a non-archimedean version of ${\bf HLog}^3$?
\end{question}

\subsection{\bf Quantum version of ${\bf HLog}^3$}
\label{SS:Quantum-Version}
The identity $\boldsymbol{\mathcal Ab}\simeq {\bf HLog}^2$ can be seen as a classical limit of a quantum pentagon dilogarithmic  identity 
$\boldsymbol{q\mathcal Ab}$ 
(see \cite[\S8.6.1]{ClusterWebs} and the references given there). Does this generalize to ${\bf HLog}^3$? More precisely: 
\begin{question}
Does it exist a quantum version ${\bf qHLog}^3$ of the identity ${\bf HLog}^3$ such that the latter could be seen as a classical limit of the former?
\end{question}

\subsection{\bf Scattering diagram interpretation of ${\bf HLog}^3$}
There is now a conceptual interpretation of why the 5-terms dilogarithmic identity  (or even its quantum version $\boldsymbol{q\mathcal Ab}$) holds true in terms of a certain `scattering diagram'. Indeed, that $\big(\boldsymbol{\mathcal Ab}\big)$ (or 
$\boldsymbol{q\mathcal Ab}$) 
holds 
reflects the fact that a certain (quantum) `scattering diagram' is consistent (see the preprints \cite{Nakanishi1,Nakanishi2} by Nakanishi, or his forthcoming book `Cluster Algebras and Scattering Diagrams'\footnote{This book is available  on arXiv (in three parts).}).

On another hand, there is a scattering diagram $S\hspace{-0.06cm}C_{ {\rm dP}_4}$ associated to any del Pezzo surface ${\rm dP}_4$ which has been made entirely explicit by Arg\"uz \cite{Arguz}. Considering this and what has been said in the preceding paragraph, it is natural to wonder whether something similar does occur for ${\bf HLog}^3$ with respect to $S\hspace{-0.06cm}C_{ {\rm dP}_4}$: 
\begin{question}
Can the identity ${\bf HLog}^3$ be more conceptually interpreted in terms of some (algebraic, combinatorial or topological) properties of the scattering diagram 
$S\hspace{-0.06cm}C_{ {\rm dP}_4}$?
\end{question}

\subsection{Construction of ${\bf HLog}^{3}$ \`a la Gelfand-MacPherson}
\label{SS:HLog3-a-la-GM}
One of the most exciting results obtained by Gelfand and MacPherson in \cite{GM} is the geometric construction, henceforth providing a conceptual explanation for it, which they give of Abel's identity $\boldsymbol{\big(\mathcal Ab\big)}$ of the dilogarithm. 
In concise terms, in a real setting: 
\begin{itemize}
\item first they identify, for any $n\geq 1$,  the quotient of a certain Zariski open subset $G_2^*(\mathbf R^{n+3})$ formed of `generic' 2-planes in $\mathbf R^{n+3}$
by the action of the positive part $H_{>0}=H(\mathbf R_{>0})\simeq (\mathbf R_{>0})^{n+m-1}$ of the Cartan torus $H$ of ${\rm SL}_{m+n}(\mathbf R)$, with the real moduli space $\mathcal M_{0,n+3}({\mathbf R})$ of  projective configurations 
of $n+3$ points on the real projective line;
\sk
\item then they define a general notion of {\it `polylogarithmic differential form'} on 
 $\mathcal M_{0,n+3}({\mathbf R})$, which is any form $\omega(P)_{n+3}$ obtained by integrating   along the fibers of the 
$H_{>0}$-action on $G_2^*(\mathbf R^{n+3})$,  a ${\rm SO}_{n+3}(\mathbf R)$-invariant closed differential form 
$\Omega(P)$ whose cohomology class coincides with a previously given 
stable characteristic class $P\in {\bf H}^*\big( 
G_2(\mathbf R^{n+3}), \mathbf R\big)$;
\sk
\item via an isomorphism induced by the moment map, the (closure in the grassmannian of the) 
$H_{>0}$-orbit of any generic 2-plane can be seen to have 
the combinatorial structure of the hypersimplex $\Delta_2^{n+3}=\{\, (t_i)_{i=1}^{n+3}\in [0,1]^{n+3}\, \big\lvert \, \sum_i t_i=2\, \}$. This polytope has $2(n+3)$-facets (1-codimensional faces),  which are the intersections $\Delta_{2,i,\nu}^{n+3}=
\Delta_2^{n+3}\cap \{ \, t_i=\nu\, \}$ for $i=1,\ldots,n+3$ and $\nu\in \{0,1\}$. Up 
to isomorphisms induced by natural linear inclusions $
\mathbf R^{n+2}\hookrightarrow \mathbf R^{n+3}$), the $\Delta_{2,i,0}^{n+3}$'s are $(n+1)$-simplices while the $\Delta_{2,i,1}^{n+3}$'s
are hypersimplices $\Delta_2^{n+2}$;
\sk
\vspace{-0.4cm}
\item for any $i$, let $\mathbf R^{n+2}_i$ be the coordinate hyperplane 
in $ \mathbf R^{n+3}$
defined by the vanishing of the $i$-th coordinate. 
Then $G_2(\mathbf R^{n+2}_i)$ is naturally contained in $G_2(\mathbf R^{n+3})$ and is $H_{>0}$-stable.  Moreover, there 
exists a  $H_{>0}$-equivariant rational map  $F_{i}: G_2(\mathbf R^{n+3})\dashrightarrow G_2(\mathbf R^{n+2}_i)$ the $H_{>0}$-reduction of which is the $i$-th forgetful map $f_i : \mathcal M_{0,n+3}({\mathbf R})\rightarrow \mathcal M_{0,n+2}({\mathbf R})$;
\sk
%
\item when $P \in {\bf H}^4\big( G_2(\mathbf R^{n+2}), \mathbf R\big)$ stands for the first Pontryagin class of the rank 2 tautological bundle on $G_2(\mathbf R^{n+2})$,  Gelfand and MacPherson prove that the following facts hold true: 
\begin{enumerate}
\item[$-$] the polylogarithmic differential forms $\omega(P)_{n+3}$  for $n=1,2$ 
satisfy  
\begin{equation}
\label{Eq:dOmegaP5}
d\omega(P)_{5}=\sum_{i=1}^5 f_i^*\big( 
\omega(P)_{4}
\big)\,, 
\end{equation}
 a differential relation
 in $\Omega^1\big(  \mathcal M_{0,5}({\mathbf R})\big)$ which follows 
from Stokes' theorem for integration along the fibers of the $H_{>0}$-actions on 
$G_2(\mathbf R^{5})$ and $G_2(\mathbf R^{4})$;
\sk
\item[$-$] the function $\omega(P)_{5}$ vanishes identically 
whereas up to 
the identification $\mathcal M_{0,4}({\mathbf R})\simeq \mathbf R\setminus \{0,1\}$ provided by the cross-ratio, the differential 1-form $\omega(P)_{4}$ coincides with the total derivative of Rogers dilogarithm $R$: one has
\begin{equation}
\label{Eq:OmegaP5-OmegaP4}
\omega(P)_{5}= 0 \qquad 
\mbox{ and } 
\qquad 
\omega(P)_{4}= dR\, ;
\end{equation}
\end{enumerate}
\item from \eqref{Eq:dOmegaP5} and \eqref{Eq:OmegaP5-OmegaP4} together, it follows immediately that  $
0= \sum_{i=1}^5 f_i^*\big( 
dR \big)
$, 
which is nothing else but the differential form of Abel's identity $\boldsymbol{\big(\mathcal Ab\big)}$.\mk 
\end{itemize}

Because $\boldsymbol{\mathcal W}_{ {\rm dP}_4}$ can be obtained from 
Gelfand-MacPherson web 
$ \boldsymbol{\mathcal W}^{GM}_{ {\mathcal Y}_5}
=\big(\boldsymbol{\mathcal W}^{GM}_{ {\mathbb S}_5}\big)\big/ H_5$ and because of the many similarities between ${\bf HLog}^2\simeq \boldsymbol{\mathcal Ab}$ and 
${\bf HLog}^3$, we are naturally led to ask the following 
\begin{question}
\label{Question:tolo}
Can the functional identity ${\bf HLog}^3$ be obtained \`a la Gelfand-MacPherson by means of an invariant differential form $\Omega(P)$ on a real form ${\bf S}_5$ of the spinor tenfold $\mathbb S_5$ which represents a certain characteristic class $P\in 
{\bf H}^*\big( {\bf S}_5,\mathbf R\big)${\rm ?} 
\end{question}

We believe that the proper answer to this question is `no' but this is because it is not the relevant question to ask, as we explain in the next subsection.

\subsection{Study of the Gelfand-MacPherson web 
$ {\mathcal W} \hspace{-0.46cm}{\mathcal W}^{G\hspace{-0.18cm} G\hspace{-0.02cm}   M\hspace{-0.24cm}M}_{
\hspace{-0.06cm}
 {\mathcal Y}_5\hspace{-0.36cm} {\mathcal Y}_5
}$ and of its abelian relations}
\label{Eq:GM-Webs-WGM-r}

The so many similarities between the webs $\boldsymbol{\mathcal W}_{ \hspace{-0.05cm}{\rm dP}_5}$ and $\boldsymbol{\mathcal W}_{ {\rm dP}_4}$ may let us think that the del Pezzo's webs $\boldsymbol{\mathcal W}_{ {\rm dP}_d}$ are the most natural generalizations of $\boldsymbol{\mathcal W}_{ \hspace{-0.05cm}{\rm dP}_5}\simeq \boldsymbol{\mathcal B}$ but this expectation is too naive, as it appears as soon as one considers the next web 
$\boldsymbol{\mathcal W}_{ {\rm dP}_3}$ since,  by a direct computation, one verifies that the Blaschke-Dubourdieux's curvature of this web is nonzero, which implies that it does not have maximal rank contrarily to $\boldsymbol{\mathcal W}_{ \hspace{-0.05cm}{\rm dP}_5}$ and $\boldsymbol{\mathcal W}_{ {\rm dP}_4}$. Another point which can be opposed to the claim that the $\boldsymbol{\mathcal W}_{ {\rm dP}_d}$'s are the most natural generalization of $\boldsymbol{\mathcal W}_{ \hspace{-0.05cm}{\rm dP}_5}$ is that starting from $d=4$ included, there is actually a positive dimensional moduli of such webs, which is not the case for 
 the previous two webs, namely $\boldsymbol{\mathcal W}_{ \hspace{-0.05cm}{\rm dP}_5}$
and also $\boldsymbol{\mathcal W}_{ \hspace{-0.05cm}{\rm dP}_6}=
 \boldsymbol{\mathcal W}(\,x\, , \, y\, , \, x/y\big)$ which are unique. \sk

Since the  $\boldsymbol{\mathcal W}_{  \hspace{-0.05cm} {\rm dP}_4}$'s can all be obtained from 
Gelfand-MacPherson web 
 $ \boldsymbol{\mathcal W}^{GM}_{
\boldsymbol{\mathcal Y}_5
}$, it is not unreasonable to ask whether this latter web cannot be seen as a  more natural generalization of $\boldsymbol{\mathcal W}_{  \hspace{-0.05cm}{\rm dP}_5}\simeq 
 \boldsymbol{\mathcal W}^{GM}_{ \hspace{-0.05cm}
\boldsymbol{\mathcal Y}_4}$ than the $\boldsymbol{\mathcal W}_{  \hspace{-0.05cm}{\rm dP}_4}$'s.  Preliminary results indicate that it is indeed the case.   The web $ \boldsymbol{\mathcal W}^{GM}_{ \hspace{-0.05cm}
\boldsymbol{\mathcal Y}_5}$ is a well-defined 10-web on the 5-dimensional rational variety $\boldsymbol{\mathcal Y}_5$, defined by 10 rational first integrals 
$\psi_i^{\epsilon}: \boldsymbol{\mathcal Y}_5\dashrightarrow \mathbb S_4^\times/H_{D_4}\simeq \mathbf P^2$ with $i=1,\ldots,5$ and $\epsilon=\pm $.  What seems to be the relevant abelian relations to be considered for this web are the 2-ARs, namely 
the tuples of 2-forms $\big( \eta_{i}^\epsilon\big)_{i,\epsilon}$ such that 
$\sum_{i,\epsilon} \big(\psi_i^{\epsilon}\big)^* \big( 
\eta_{i}^\epsilon \big)=0$  (possibly just locally) on $\boldsymbol{\mathcal Y}_5$.
They form a vector space which we denote by $\boldsymbol{AR}^{2}\big( 
 \boldsymbol{\mathcal W}^{GM}_{ \hspace{-0.05cm}
\boldsymbol{\mathcal Y}_5}\big)$. 
Following the same approach as the one described in \cite[\S1.3.4.]{ClusterWebs}, 
one can define the `virtual 2-rank' $\rho^{(2)}(\boldsymbol{\mathcal W})$ of any subweb $\boldsymbol{\mathcal W}$ of $ \boldsymbol{\mathcal W}^{GM}_{ \hspace{-0.05cm}
\boldsymbol{\mathcal Y}_5}$. If $\psi$ stands for one of the first integrals  
$\psi_i^\epsilon$ of $ \boldsymbol{\mathcal W}^{GM}_{ \hspace{-0.05cm}
\boldsymbol{\mathcal Y}_5}$, we denote by 
\begin{itemize}
\item 
$\mathbf C(\psi)$ the subalgebra of rational functions on $\boldsymbol{\mathcal Y}_5$ formed by compositions $f\circ \psi$ with $f \in \mathbf C\big(\mathbb S_4/H_{D_4}\big)\simeq \mathbf C( \mathbf P^2)$;\sk
\item ${\rm Log}\mathbf C(\psi)$ the family of multivalued functions on $\boldsymbol{\mathcal Y}_5$ of the form  ${\rm Log}(f\circ \psi)$ with $f \in \mathbf C\big(\mathbb S_4/H_{D_4}\big)\simeq \mathbf C( \mathbf P^2)$;
\sk
\item $d{\rm Log}\mathbf C(\psi)$ the space of $\psi$-logarithmic differential 1-forms, that is of rational 1-forms on 
$\boldsymbol{\mathcal Y}_5$ of the form  $d{\rm Log}(f\circ \psi)=
d(f\circ \psi)/(f\circ \psi)$ with $f \in \mathbf C\big(\mathbb S_4/H_{D_4}\big)\simeq \mathbf C( \mathbf P^2)$.
\end{itemize}

By means of explicit computations in Maple, we have established the 
\begin{thm} 
\label{THM:WdP5-WGMS5-similarities}
%
%
\begin{enumerate}
\item[]  \hspace{-1.2cm} {\rm 1.} One has  $\rho^{(2)}\Big(
\boldsymbol{\mathcal W}^{GM}_{ \hspace{-0.05cm}
\boldsymbol{\mathcal Y}_5}
\Big)= 11$ 
and $\rho^{(2)}\big(
\boldsymbol{\mathcal W}
\big)\leq 1$  for every 5-subweb  $\boldsymbol{\mathcal W}$ of $\boldsymbol{\mathcal W}^{GM}_{ \hspace{-0.05cm}
\boldsymbol{\mathcal Y}_5}$. 
\sk
\sk
\item[2.] Among all the 
 5-subwebs of $\boldsymbol{\mathcal W}^{GM}_{ \hspace{-0.05cm}
\boldsymbol{\mathcal Y}_5}$, exactly 16 have virtual 2-rank equal to 1. These are the subwebs $\boldsymbol{\mathcal W}^{\underline{\epsilon}}=\boldsymbol{\mathcal W}\big(\, \psi_1^{\epsilon_1}\, , \, \ldots,  
\psi_5^{\epsilon_5}\,\big)$ for the sixteen 5-tuples $\underline{\epsilon}=(\epsilon_i)_{i=1}^5\in \{\pm 1\}$
such that $p(\underline{\epsilon})=\#\,\{ i
 \, \lvert \, \epsilon_i=1\,\}$ is odd.\footnote{This has to be compared with the description of Bol's subwebs of 
$\boldsymbol{\mathcal W}_{  \hspace{-0.05cm} {\rm dP}_4}$ given page \pageref{papage}.} Each such subweb $\boldsymbol{\mathcal W}^{\underline{\epsilon}}$ actually  has maximal rank 1, with 
$\boldsymbol{AR}^{2}\big( \boldsymbol{\mathcal W}^{\underline{\epsilon}}\big)$
spanned by 
a
 2-AR 
${\bf LogAR}^{\underline{\epsilon}}$ which is complete, irreducible and logarithmic, in the sense  that the $\psi_i^{\epsilon_i}$-th component of 
 ${\bf LogAR}^{\underline{\epsilon}}$
belongs to $d{\rm Log}\mathbf C(\psi_i^{\epsilon_i})$  for every 
$i$.
\sk
\item[3.]  The ${\bf LogAR}^{\underline{\epsilon}}$'s for all odd 5-tuples 
$\underline{\epsilon}$'s span 
a vector space denoted by 
$\boldsymbol{AR}^{2}_C\big( 
 \boldsymbol{\mathcal W}^{GM}_{ \hspace{-0.05cm}
\boldsymbol{\mathcal Y}_5}\big)$ and called the space of `combinatorial ARs' of 
$\boldsymbol{\mathcal W}^{GM}_{ \hspace{-0.05cm}
\boldsymbol{\mathcal Y}_5}$.  Moreover, one has 
$$
{\rm rk}^{(2)}_C
\Big( 
 \boldsymbol{\mathcal W}^{GM}_{ \hspace{-0.05cm}
\boldsymbol{\mathcal Y}_5}\Big)=\dim\, 
\boldsymbol{AR}^{2}_C\Big( 
 \boldsymbol{\mathcal W}^{GM}_{ \hspace{-0.05cm}
\boldsymbol{\mathcal Y}_5}\Big)
=\rho^{(2)}\Big(
\boldsymbol{\mathcal W}^{GM}_{ \hspace{-0.05cm}
\boldsymbol{\mathcal Y}_5}
\Big)-1=10
\,.
$$
\item[4.] There exists an AR of $\boldsymbol{\mathcal W}^{GM}_{ \hspace{-0.05cm}
\boldsymbol{\mathcal Y}_5}$, denoted by ${\bf DilogAR}^{(2)}$, which is 
complete, irreducible and 
 whose components are dilogarithmic, in the sense that  
  for any 
 first integral $\psi_i^\epsilon$ of $\boldsymbol{\mathcal W}^{GM}_{ \hspace{-0.05cm}
\boldsymbol{\mathcal Y}_5}$, 
 the $\psi_i^\epsilon$-th component of 
 ${\bf DilogAR}^{(2)}$
  belongs to 
 ${\rm Log}\mathbf C(\psi_i^\epsilon)\, d{\rm Log}\mathbf C(\psi_i^\epsilon)$. Moreover, one has 
\begin{equation}
\label{Eq:Decomp}
\boldsymbol{AR}^{2}\Big( 
 \boldsymbol{\mathcal W}^{GM}_{ \hspace{-0.05cm}
\boldsymbol{\mathcal Y}_5}\Big)=
\boldsymbol{AR}^{2}_C\Big( 
 \boldsymbol{\mathcal W}^{GM}_{ \hspace{-0.05cm}
\boldsymbol{\mathcal Y}_5}\Big)\oplus 
\Big\langle\,
{\bf DilogAR}^{(2)} 
\, 
\Big\rangle
\end{equation}
which implies $
{\rm rk}^{(2)}
\Big( 
 \boldsymbol{\mathcal W}^{GM}_{ \hspace{-0.05cm}
\boldsymbol{\mathcal Y}_5}\Big)
=\rho^{(2)}\Big(
\boldsymbol{\mathcal W}^{GM}_{ \hspace{-0.05cm}
\boldsymbol{\mathcal Y}_5}
\Big)=11
$: the web $\boldsymbol{\mathcal W}^{GM}_{ \hspace{-0.05cm}
\boldsymbol{\mathcal Y}_5}$  has maximal 2-rank.\footnote{It would be more rigorous to  state this as {\it `the 2-rank of $\boldsymbol{\mathcal W}^{GM}_{ \hspace{-0.05cm}
\boldsymbol{\mathcal Y}_5}$ is AMP'}, using the terminology introduced in \cite[\S1.3.5]{ClusterWebs}.}
\sk 
\item[5.] One can associate a divisor $\boldsymbol{\mathcal D}_w$ of 
$\boldsymbol{\mathcal Y}_5$ to each weight $w$ of the minuscule half-spin representation $S_5^+$. Then  the 2-ARs of $\boldsymbol{\mathcal W}^{GM}_{ \hspace{-0.05cm}
\boldsymbol{\mathcal Y}_5}$ are regular on 
 the complement in $\boldsymbol{\mathcal Y}_5$ of the  union of all the $\boldsymbol{\mathcal D}_w$'s which coincides with $\boldsymbol{\mathcal Y}_5^* $. Moreover, the sixteen ARs ${\bf LogAR}^{\underline{\epsilon}}$'s are exactly the logarithmic ARs obtained by considering the residues 
of ${\bf DilogAR}^{(2)}$ along the $\boldsymbol{\mathcal D}_w$'s: for any 
odd 5-tuple $\underline{\epsilon}$, there exists a uniquely defined weight $w(\underline{\epsilon})$ such that,  up to multiplication by a non-zero constant, one has ${\rm Res}_{ \boldsymbol{\mathcal D}_{w(\underline{\epsilon})} }\big( 
{\bf DilogAR}^{(2)}
\big) = {\bf LogAR}^{\underline{\epsilon}}
$. 
\sk 
\item[6.] The action of the Weyl group $W(D_5)$ 
by automorphisms on $\boldsymbol{\mathcal Y}_5$ ({\it cf.}\,\cite[Theorem 2.2]{Skorobogatov}{\rm )} gives rise to a linear 
$W(D_5)$-action  on $
\boldsymbol{AR}^{2}\big( 
 \boldsymbol{\mathcal W}^{GM}_{ \hspace{-0.05cm}
\boldsymbol{\mathcal Y}_5}\big)$ which lets invariant the decomposition in direct sum \eqref{Eq:Decomp}. This decomposition actually is the one in $W(D_5)$-irreducibles: the irrep associated to the 1-dimensional component 
$\big\langle\,
{\bf DilogAR}^{(2)} 
\, 
\big\rangle$ is the signature representation whereas $\boldsymbol{AR}^{2}_C\Big( 
 \boldsymbol{\mathcal W}^{GM}_{ \hspace{-0.05cm}
\boldsymbol{\mathcal Y}_5}\Big)$ is the $W(D_5)$-irreducible module $V^{10}_{[11,111]}$.\footnote{Seeing it as  a decomposition in $W(D_5)$-irreducibles,  
\eqref{Eq:Decomp},
must be compared with some results given in \S\ref{SSS:subspaces-of-ARs-as-W-modules}: in some sense which could be made precise, 
$\boldsymbol{AR}^{2}_C\big( 
 \boldsymbol{\mathcal W}^{GM}_{ \hspace{-0.05cm}
\boldsymbol{\mathcal Y}_5}\big)$ corresponds to ${\bf HLogAR}^2_{\rm asym}$ 
and ${\bf DilogAR}^{(2)}$ to ${\bf HLog}^3$.}
\sk
\item[7.] For any del Pezzo surface ${\rm dP}_4$, using Serganova-Skorobogatov embedding $f_{S\hspace{-0.05cm} S}: {\rm dP}_4 \hookrightarrow \boldsymbol{\mathcal Y}_5$ (cf.\,\eqref{Eq:Embedding-SS} above), 
the weight 3 hyperlogarithmic abelian relation ${\bf HLog}^3$ 
of $\boldsymbol{\mathcal W}_{  \hspace{-0.05cm} {\rm dP}_4}$ 
(resp.\,the sixteen elements of ${\bf HLogAR}^2_{\rm asym}$
 equivalent to ${\bf HLog}^2$)  
can be obtained in a natural way from 
${\bf DilogAR}^{(2)} $ (resp.\,from the sixteen logarithmic 2-abelian relations ${\bf LogAR}^{\underline{\epsilon}}\in \boldsymbol{AR}^{2}_C\big( 
 \boldsymbol{\mathcal W}^{GM}_{ \hspace{-0.05cm}
\boldsymbol{\mathcal Y}_5}\big)${\rm )} by means of one single integration. 
\end{enumerate}
\end{thm}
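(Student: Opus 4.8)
The plan is to reduce the whole statement to a sequence of explicit finite-dimensional linear-algebra computations over the function field $\mathbf C(\boldsymbol{\mathcal Y}_5)\simeq\mathbf C(y_{1,3},y_{1,4},y_{2,4},y_{2,5},y_{3,5})$, carried out in the spirit of the symbolic treatment of $\boldsymbol{\mathcal W}_{{\rm dP}_4}$ in \S\ref{SS:Symbolic-ARs-WdP4} but now for $2$-forms on the five-dimensional base, the only input being the explicit rational first integrals $\psi_i^{\pm}$ of Proposition \ref{P:W-GM-Y5}. First I would fix, for each first integral $\psi=\psi_i^\epsilon$, the space ${\bf H}_i^\epsilon=(\psi_i^\epsilon)^*{\bf H}^0(\mathbf P^2,\Omega^1_{\mathbf P^2}({\rm Log}))$ of logarithmic $1$-forms it pulls back, and let ${\bf H}=\sum_{i,\epsilon}{\bf H}_i^\epsilon$ be the ambient space of logarithmic $1$-forms on $\boldsymbol{\mathcal Y}_5$ with poles along the divisors $\boldsymbol{\mathcal D}_w$. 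Writing each generator of ${\bf H}_i^\epsilon$ in a fixed basis of ${\bf H}$ turns the defining conditions for $2$-abelian relations into linear ones: the combinatorial (weight $1$) $2$-ARs are the kernel of the wedge-summation map $\bigoplus_{i,\epsilon}\wedge^2{\bf H}_i^\epsilon\to\wedge^2{\bf H}$, while the dilogarithmic (weight $2$) ones are detected by a companion weight-$2$ symbol map, whose source pairs a tensor factor of ${\bf H}_i^\epsilon$ with a wedge factor of $\wedge^2{\bf H}_i^\epsilon$. Computing these kernels by elimination, together with the virtual-rank count of \cite[\S1.3.4]{ClusterWebs}, yields the numerics of points \textbf{1} through \textbf{4}.

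Concretely, for point \textbf{1} the virtual $2$-rank is the a priori bound read off the first-order jet data of the ten foliations; evaluating the corresponding dimension count gives $\rho^{(2)}=11$, and running it on each $5$-subweb gives the bound $\le1$. For point \textbf{2} I would, using the $W(D_5)$-symmetry to reduce the number of cases, check that among all $5$-subwebs exactly the sixteen $\boldsymbol{\mathcal W}^{\underline\epsilon}$ with $p(\underline\epsilon)$ odd reach $\rho^{(2)}=1$, and exhibit for each the one-dimensional logarithmic relation ${\bf LogAR}^{\underline\epsilon}$ by solving the kernel above; its components then automatically lie in $\wedge^2 d{\rm Log}\,\mathbf C(\psi_i^{\epsilon_i})$. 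Point \textbf{3} follows once one verifies that these sixteen relations span a $10$-dimensional space. The delicate point is \textbf{4}: one must produce the genuinely transcendental relation ${\bf DilogAR}^{(2)}$, certify that it is complete, irreducible and dilogarithmic, and prove it is independent of the combinatorial space, so that $\boldsymbol{AR}^2=\boldsymbol{AR}^2_C\oplus\langle{\bf DilogAR}^{(2)}\rangle$ has dimension $11$ and meets the virtual bound.

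Points \textbf{5} and \textbf{6} I would treat structurally. The divisors $\boldsymbol{\mathcal D}_w$ are the images in $\boldsymbol{\mathcal Y}_5$ of the weight-coordinate hyperplanes of $\mathbf PS_5^+$; applying the residue calculus of \S\ref{SSS:Monodromies-Residues} to ${\bf DilogAR}^{(2)}$ along each $\boldsymbol{\mathcal D}_w$ produces a logarithmic $2$-AR, and a comparison of symbols identifies it up to scale with one ${\bf LogAR}^{\underline\epsilon}$, which gives \textbf{5} and simultaneously reproves that the combinatorial relations are spanned by the residues of the single dilogarithmic one. For \textbf{6}, the $W(D_5)$-action on $\boldsymbol{\mathcal Y}_5$ of \cite[Theorem 2.2]{Skorobogatov} linearizes on $\boldsymbol{AR}^2$, evidently preserves the weight filtration and hence respects \eqref{Eq:Decomp}; a character computation of the kind performed in \S\ref{SSS:subspaces-of-ARs-as-W-modules} then identifies $\langle{\bf DilogAR}^{(2)}\rangle$ with the signature and $\boldsymbol{AR}^2_C$ with $V^{10}_{[11,111]}$, the very irreducibles already met for $\boldsymbol{\mathcal W}_{{\rm dP}_4}$.

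Point \textbf{7} is where the two pictures are glued. By Proposition \ref{Prop:WdP4-from-WGMS5} one has $\boldsymbol{\mathcal W}_{{\rm dP}_4}=f_{S\hspace{-0.05cm}S}^*(\boldsymbol{\mathcal W}^{GM}_{\boldsymbol{\mathcal Y}_5})$, so restriction along the Serganova--Skorobogatov embedding relates the $2$-ARs upstairs to relations on the surface. Since each composite $\psi_i^\epsilon\circ f_{S\hspace{-0.05cm}S}$ has rank one, the $2$-form components vanish under naive pull-back; the correct mechanism is the one exhibited in \eqref{Eq:RA2-from-LogARl}, a single integration (a primitive in the direction contracted by the embedding) that raises the iterated-integral weight by one. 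I would verify in the explicit coordinates of \S\ref{SS:Explicit-WdP4} that this operation sends ${\bf DilogAR}^{(2)}$ to ${\bf HLog}^3$ and each ${\bf LogAR}^{\underline\epsilon}$ to the matching element of ${\bf HLogAR}^2_{\rm asym}$, in accordance with the residue dictionary \eqref{Eq:lomo}. The main obstacle throughout is point \textbf{4}: exhibiting the transcendental relation ${\bf DilogAR}^{(2)}$ and certifying that the actual $2$-rank attains the virtual bound is the genuinely non-routine step, exactly as the discovery of Abel's five-term relation is for Bol's web, everything else being linear-algebra bookkeeping or a consequence of the established $W(D_5)$-equivariance.
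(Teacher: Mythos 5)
Your proposal is sound and matches the paper's approach: the paper establishes this theorem purely by explicit symbolic computations (performed in Maple) on the rational first integrals $\psi_i^{\pm}$ of Proposition \ref{P:W-GM-Y5}, deferring all further detail to the forthcoming work \cite{PirioGMWebs}, and your plan --- reducing everything to kernel computations for symbol maps built from the logarithmic data of the $\psi_i^{\pm}$, in the style of \S\ref{SS:Symbolic-ARs-WdP4}, plus residue and $W(D_5)$-character arguments for the structural points --- is precisely that kind of computation. Note only that the paper gives no more detail than you do, so the step you rightly single out as non-routine (exhibiting ${\bf DilogAR}^{(2)}$ and certifying that the actual $2$-rank attains the virtual bound $11$) is in the paper likewise settled only by the asserted machine computation.
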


The similarities between the statements above with the corresponding ones  for Bol's web $\boldsymbol{\mathcal B}\simeq
\boldsymbol{\mathcal W}_{  \hspace{-0.05cm} {\rm dP}_5}$ in \S\ref{S:Web-WdP5-properties}   are even more striking than those between the latter statements and those in 
\S\ref{S:Web-WdP4-properties} about $\boldsymbol{\mathcal W}_{  \hspace{-0.05cm} {\rm dP}_4}$.  For this reason, and also because the theorem above can be generalized to all the Gelfand-MacPherson webs 
$ \boldsymbol{\mathcal W}^{GM}_{ \hspace{-0.05cm}
\boldsymbol{\mathcal Y}_r}$ for $r=4,\ldots,8$ (see the next subsection below), 
we believe that these latter webs are those which must really be considered as the most direct/fundamental generalizations of $\boldsymbol{\mathcal B}\simeq
\boldsymbol{\mathcal W}_{  \hspace{-0.05cm} {\rm dP}_5}\simeq 
 \boldsymbol{\mathcal W}^{GM}_{ \hspace{-0.05cm}
\boldsymbol{\mathcal Y}_4}$, and not really the del Pezzo's webs $\boldsymbol{\mathcal W}_{  \hspace{-0.05cm} {\rm dP}_{9-r}}$ which actually are 2-dimensional slices of the Gelfand-MacPherson webs.  All these considerations 
make it natural to ask the following question, which we believe is the one to be considered instead of {\bf Question \ref{Question:tolo}}:
\begin{question}
\label{Question:DilogAR-a-la-GM}
Can the dilogarithmic 2-abelian relation ${\bf DilogAR}^{(2)}$ 
of the web $ \boldsymbol{\mathcal W}^{GM}_{ \hspace{-0.05cm}
\boldsymbol{\mathcal Y}_5}$
be obtained following the geometric approach of  Gelfand-MacPherson,  
by integrating along the orbits of the Cartan torus $H_{D_5}$, 
 of an invariant differential form $\Omega(P)$ a real form ${\bf S}_5$ of the spinor tenfold $\mathbb S_5$ which represents a certain characteristic class $P\in 
{\bf H}^*\big( {\bf S}_5,\mathbf R\big)${\rm ?} 
\end{question}

We plan to dedicate a future paper to the theorem and the question above (see \cite{PirioGMWebs}).

\subsection{\bf Generalizations of the questions above to the  identities ${\bf HLog}^{r-2}$ for $r=4,\ldots,8$}
The questions asked for ${\bf HLog}^{3}$ until subsection \S\ref{SS:Quantum-Version} (included) actually generalize straightforwardly to the identities ${\bf HLog}^{r-2}$ for any $r=4,\ldots,8$. 
\sk

It is not clear for the moment whether the webs $\boldsymbol{\mathcal W}_{{\rm dP}_d}$ are of cluster type for $d\in \{3,2,1\}$ but since they are Fano surfaces, there exists in each case a  $d$-gone with rational components $D\subset {\rm dP}_d$ 
 such that $({\rm dP}_d,D)$ be a log Calabi-Yau pair ({\it i.e.}\,$K_{ {\rm dP}_d}+D$ is trivial).

In a work in progress, we have established that: 
\begin{itemize}
\item  all the  del Pezzo's webs $\boldsymbol{\mathcal W}_{ {\rm dP}_d}$'s can be obtained from 
Gelfand-MacPherson web 
 $ \boldsymbol{\mathcal W}^{GM}_{
{\mathcal Y}_{9-d}
}$ or, in other terms:  there are versions of Proposition \ref{Prop:WdP4-from-WGMS5} for
$\boldsymbol{\mathcal W}_{ \hspace{-0.05cm}{\rm dP}_d}$ for any $d\in \{4,\ldots,8\}$; 
\sk
\item Theorem \ref{THM:WdP5-WGMS5-similarities} generalizes to any Gelfand-MacPherson web  $ \boldsymbol{\mathcal W}^{GM}_{
{\mathcal Y}_{9-d}
}$ for $d=4,\ldots,7$.\footnote{We believe that there exists a version of Theorem \ref{THM:WdP5-WGMS5-similarities} for 
the 2160-web $ \boldsymbol{\mathcal W}^{GM}_{
{\mathcal Y}_{8}
}$ but this is not proved yet.} 
\sk
\end{itemize}
These preliminary but already interesting results make  it  possible to ask a suitable version of Question \ref{Question:DilogAR-a-la-GM} for any  Gelfand-MacPherson web  $ \boldsymbol{\mathcal W}^{GM}_{
\boldsymbol{\mathcal Y}_{9-d}
}$ for $d=4,\ldots,7$ (and possibly for $d=8$ as well). We will study all this more in depth  in our coming work \cite{PirioGMWebs}.

\subsection{Playing with canonical maps}
\label{SS:H-Phi-H(WM0N)}
In addition of its `naturalness', the notion of canonical map of a planar web introduced before in this text (see \S\ref{SSS:Via-Canonical-Map-WdP5}) seems to be relevant with respect to the question of constructing planar webs carrying many abelian relations.  Since hexagonal webs are of maximal rank, it is a natural and rather easy task to look at some new webs which can be constructed from hexagonal webs and their canonical maps. We discuss this for some interesting examples below. 
\sk 

For $k\geq 4$, let $\boldsymbol{\mathcal H}$ be a hexagonal planar $k$-web:  
there exist $k$ pairwise distinct points $p_1,\ldots,p_k$ in $\mathbf P^2$, not necessarily in general position, such that 
$\boldsymbol{\mathcal H}=\boldsymbol{\mathcal W}\big( \mathcal L_{p_1}\, , \, \ldots
\, , \,  \mathcal L_{p_k}\,\big)$, where $\mathcal L_{p_i}$ stands for the pencil of lines through $p_i$ for any $i=1,\ldots,k$.  We define the {\it `canonical extension'} of 
$\boldsymbol{\mathcal H}$ as the planar web denoted by $\overline{\boldsymbol{\mathcal H}}^{can}$, 
obtained by juxtaposing to it the preimage of $\boldsymbol{\mathcal W}_{\hspace{-0.1cm}\boldsymbol{\mathcal M}_{0,k}}$ under its canonical map
$\Phi_{\boldsymbol{\mathcal H}}: \mathbf P^2\dashrightarrow \boldsymbol{\mathcal M}_{0,k}$:  in mathematical symbols, one has
$$\overline{\boldsymbol{\mathcal H}}^{can}=\boldsymbol{\mathcal H}\boxtimes 
\Phi_{\boldsymbol{\mathcal H}}^*\big( \boldsymbol{\mathcal W}_{\hspace{-0.1cm}\boldsymbol{\mathcal M}_{0,k}}\big)\, . $$

The first two examples to have in mind are those with the $p_i$'s in general position for  $k=4$ and $k=5$: the corresponding canonical extensions are (models of) the two del Pezzo's webs studied in this text, namely $\boldsymbol{\mathcal W}_{  \hspace{-0.05cm}{\rm dP}_5}$  and 
$\boldsymbol{\mathcal W}_{  \hspace{-0.05cm}{\rm dP}_4}$ respectively. Since these two webs carry interesting hyperlogarithmic ARs, it is natural to ask about the general case: \sk

\noindent{\bf Questions:} {\it given $k$ points $p_1,\ldots,p_k\in \mathbf P^2$, pairwise distinct but  not necessarily in general position, what can be said about the canonical extension of 
the hexagonal web $\boldsymbol{\mathcal H}=\boldsymbol{\mathcal W}\big( \mathcal L_{p_1}\, , \, \ldots
\, , \,  \mathcal L_{p_k}\,\big)$, in particular regarding its abelian relations ?}\mk 

If our main interest lies in the ARs of $\overline{\boldsymbol{\mathcal H}}^{can}$, even determining its basic invariants already is  interesting. For instance, even determining the degree $\overline{k}^{can}$ of $\overline{\boldsymbol{\mathcal H}}^{can}$ does not seem obvious: 
one expects that $\overline{k}^{can}$ only depends on the combinatorial type 
  of the projective configuration $[p_i]_{i=1}^k \in \boldsymbol{\mathcal M}_{0,k}$ but what might be a closed formula for $\overline{k}^{can}$ is not clear (at least to the author of these lines). 
\sk 

\vspace{-0.4cm}
Here is a sample of a few explicit examples we have considered for some degenerated configurations of $k$ points in the plane for $k=5,6$. One checks that $\Phi_{\boldsymbol{\mathcal H}}$ is constant (hence  ${\boldsymbol{\mathcal H}}
=\overline{\boldsymbol{\mathcal H}}^{can}$) when all the vertices $p_i$'s are aligned 
hence we will not consider this case further. 
\\

  {\bf { $\big[$Five points$\big]$}}: there are four possible combinatorial types for the 
  $p_i$'s, we consider each of\\
  ${}^{}$\hspace{0.3cm}  them separately: 
\begin{itemize}
\item when four of the $p_i$'s are on a same line but not the fifth point, then again 
${\boldsymbol{\mathcal H}}
=\overline{\boldsymbol{\mathcal H}}^{can}$ hence there is not much interesting to add.
\mk 
\item  When there are two distinct sets of 3 aligned points among the $p_i$'s, 
then a normal form for the associated hexagonal 5-web is 
 $\boldsymbol{\mathcal H}=\boldsymbol{\mathcal W}
\big(\, 
x\, , \,  y\, , \,  x - y
\, , \,  \frac{x}{y}\, , \, 
\frac{y-1}{x-1 }\, 
\big)$.  From elementary computations, we get that 
$\overline{\boldsymbol{\mathcal H}}^{can}={\boldsymbol{\mathcal H}}\boxtimes \mathcal F_{\frac{x(y-1)}{y(x-1)}}  = 
\boldsymbol{\mathcal W}
\Big(
x\, , \,  y\, , \,  x - y
\, , \,  \frac{x}{y}\, , \, 
\frac{y-1}{x-1 }\, , 
\, \frac{x(y-1)}{y(x-1) }\, 
\Big)
$
 and that this 6-web has maximal rank (hence is exceptional), with the following invariants: 
$$ {}^{}\qquad 
 {\rm Hex}_\bullet\Big(\,\overline{\boldsymbol{\mathcal H}}^{can}\,\Big)=(16,9,2,0) 
\qquad \mbox{ and }\qquad 
{\rm Flat}_\bullet\Big(\,\overline{\boldsymbol{\mathcal H}}^{can}\,\Big)=(16,11,2,1)\,.$$
\item  If exactly three of the $p_i$'s are aligned, then 
one can assume that 
$\boldsymbol{\mathcal H}=\boldsymbol{\mathcal W}
\Big(\, 
x\, , \,  y\, , \,  x - y
\, , \,  \frac{x}{y}\, , \, 
\frac{y-b}{x-a }\, 
\Big)$ with $ab(a-b)\neq 0$.  
Elementary computations give us that 
$$\overline{\boldsymbol{\mathcal H}}^{can}={\boldsymbol{\mathcal H}}\boxtimes
\boldsymbol{\mathcal W}\bigg( \, 
\frac{
 y(y-x + a  - b)}{(y - b)(x - y)}
\, , \, 
\frac{x(y-x+a-b)}{(x-a)(x-y)}
\, , \,  
\frac{x(y-b)}{y(x-a)} 
\,\bigg)
$$
and that this 8-web is of maximal rank (hence is exceptional) and has the following numerical invariants: 
$$ {}^{}\qquad {\rm Hex}_\bullet\Big(\,\overline{\boldsymbol{\mathcal H}}^{can}\,\Big)=(
38, 28, 8, 0, 0, 0)
\quad \mbox{ and }\quad 
{\rm Flat}_\bullet\Big(\,\overline{\boldsymbol{\mathcal H}}^{can}\,\Big)=(
38, 34, 8, 4, 0, 1)\,.$$
\item  Finally, when the $p_i$'s are in general position, then one can take 
$\boldsymbol{\mathcal H}=\boldsymbol{\mathcal W}
\Big(\, 
x\, , \,  y\, , \,   \frac{x}{y}
\, , \,  \frac{y-1}{x-1}\, , \, 
\frac{y-b}{x-a }\, 
\Big)$ with $ab(a-1)(b-1)(a-b)\neq 0$. Hence one has $\overline{\boldsymbol{\mathcal H}}=\boldsymbol{\mathcal W}_{  \hspace{-0.05cm}{\rm dP}_4}$ where ${\rm dP}_4$ is the del Pezzo quartic surface obtained as the blow-up of $\mathbf P^2$ at the $p_i$'s. 
Moreover, one has 
$$ {}^{}\qquad {\rm Hex}_\bullet\Big(\,\boldsymbol{\mathcal W}_{  \hspace{-0.05cm}{\rm dP}_4}\,\Big)=(80, 80, 32, 0, 0, 0, 0, 0
)
\quad \mbox{ and }\quad 
{\rm Flat}_\bullet\Big(\,\boldsymbol{\mathcal W}_{  \hspace{-0.05cm}{\rm dP}_4}\,\Big)=(80, 90, 32, 10, 0, 5, 0, 1
)\,.$$
\sk
\end{itemize}

\vspace{-0.3cm}
  {\bf { $\big[$Six points$\big]$}}: there are eight possible combinatorial types for the 
  $p_i$'s (see \cite[p.\,141]{GS}). Several  \\
  ${}^{}$\hspace{0.3cm} 
computations lead us to believe that in essentially any case, the canonical extension is not flat (hence not of maximal rank), except when the combinatorial type of the $p_i$'s is of a specific type.
\begin{itemize}
\item  Assuming that the $p_i$'s are the vertices of the non Fano arrangement 
({\it eg.}\,see Figure 2 in \,\cite{Hlog}), one can assume that 
$\boldsymbol{\mathcal H}=\boldsymbol{\mathcal W}\big(\, 
x\, , \, y\, , \,  \frac{x}{y}\, , \,  \frac{y-1}{x-1}\, , \,   \frac{y-1}{x}\, , \,  \frac{y}{x-1}  
\,\big)$ in which case one gets
$$
\overline{\boldsymbol{\mathcal H}}^{can}=
{\boldsymbol{\mathcal H}}\boxtimes 
\boldsymbol{\mathcal W}\bigg( \, 
\frac{x(y - 1)}{y(x - 1)}
\, , \, 
\frac{(x - 1)(y - 1)}{xy}
\, , \,  
\frac{y(y - 1)}{x(x - 1)}\,\bigg)\, .
$$
It can be verified ({\it cf.}\,\cite[\S4.3]{Hlog}) that the 9-web $\overline{\boldsymbol{\mathcal H}}^{can}$ is a model of the well-known trilogarithmic Spence-Kummer's web  
$\boldsymbol{\mathcal W}_{\hspace{-0.05cm}{\mathcal S}{\mathcal K}}$ 
which is exceptional and carries two linearly independant trilogarithmic abelian relations (see \cite[\S2.2.3.1]{ClusterWebs} for more details). 
\sk 
\end{itemize}

The consideration of the preceding examples indicates (in our opinion) that the study 
of the configurations of points in the plane giving hexagonal webs ${\boldsymbol{\mathcal H}}$
such that $\overline{\boldsymbol{\mathcal H}}^{can}$ be of maximal rank is interesting and deserves a further study.

\newpage

\section*{\bf Appendix: some representation-theoretic computations in GAP}
\label{Appendix}
In this appendix we briefly explain the computations we used to establish some representation-theoretic facts 
stated above in the text.
These computations were 
mainly performed using the software GAP3 \cite{GAP3} which has the advantage of being available 
online.\footnote{One can run GAP3 
by typing the command {\ttfamily  /ext/bin/gap3} in a \href{https://cocalc.com/features/terminal}{CoCalc terminal window}.}

\subsection*{The set-up}
First, to define the Weyl group we will work with 
and return the corresponding Dynkin diagram within GAP, we run the following commands: 
\begin{lstlisting}[language=GAP]
   gap> WD5:=CoxeterGroup("D",5);
   gap> PrintDiagram( WD5 );
\end{lstlisting}
The last command returns the following diagram : 
\begin{equation}
\label{Eq:Diag-Gap3-WD5}
  \xymatrix@R=0.1cm@C=0.4cm{ 
  1\ar@{-}[rd]   & &   &
 \\
  & 3  \ar@{-}[r] & 4   \ar@{-}[r]  & 5\, . \\
   2\ar@{-}[ru]   & &   &
  }
\end{equation}

To define and get the corresponding characters table, we type
\begin{lstlisting}[language=GAP]
   gap> TableWD5:=CharTable(WD5);
   gap> Display(TableWD5);
\end{lstlisting}
From the outcome of the last command, one can extract the  character table \ref{Table:CharTableWD5} (in which the dot $\cdot$ is used instead of $0$ to make its reading easier). 

\begin{table}[!h]
\resizebox{5.8in}{1.5in}
{
\begin{tabular}{|l||c|c|c|c|c|c|c|c|c|c|c|c|c|c|c|c|c|c|}
\hline
         & \scalebox{0.8}{$\boldsymbol{\big(1^5.\big)}$} &  \scalebox{0.8}{$\boldsymbol{(1^3.1^2)}$}  & 
          \scalebox{0.8}{$\boldsymbol{\big(1.1^4\big)}$ } & 
       \scalebox{0.8}{   $\boldsymbol{\big(21^3.\big)}$ } & 
       \scalebox{0.8}{   $\boldsymbol{\big(1^2.21\big)}$} & 
       \scalebox{0.8}{   $\boldsymbol{\big(21.1^2\big)}$} & 
       \scalebox{0.8}{   $\boldsymbol{\big(.21^3\big)}$ } & 
      \scalebox{0.8}{    $\boldsymbol{\big(221.\big)}$}  &
      \scalebox{0.8}{    $ \boldsymbol{\big(1.22\big)}$ } &  
     \scalebox{0.8}{     $\boldsymbol{\big(2.21\big)}$} &  
     \scalebox{0.8}{     $\boldsymbol{\big(311.\big)}$ }  & 
     \scalebox{0.8}{     $\boldsymbol{\big(1.31\big)}$} & 
      \scalebox{0.8}{    $\boldsymbol{\big(3.11\big)}$}  & 
     \scalebox{0.8}{     $\boldsymbol{\big(32.\big)}$}  & 
    \scalebox{0.8}{      $\boldsymbol{\big(.32\big)}$} & 
    \scalebox{0.8}{      $\boldsymbol{\big(41.\big)}$} & 
    \scalebox{0.8}{      $\boldsymbol{\big(.41\big)}$} &   
    \scalebox{0.8}{      $ \boldsymbol{\big(5.\big)}$}
  \\
      \hline \hline 
$\boldsymbol{[1^2.1^3]}$     &   10   &  -2   &   2  &   -4   &   2   &   .  &   -2  &    2  &   -2&  .    &  1 &    -1  &    1  &  -1  &   1  &  .  &  .   &   . \\
 \hline
$\boldsymbol{[1.1^4]}$    &     5    &  1  &   -3  &   -3   &  -1   &   1   &   3   &   1   &   1&   -1    &  2  &    .     &-2   &  .  &   .  & -1 &   1   &   .   \\ \hline
$\boldsymbol{[.1^5 ]}$   &     1  &    1  &    1   &  -1  &   -1  &   -1  &   -1   &   1  &    1 & 1 &     1   &   1   &   1    &-1   & -1  & -1  & -1   &   1 \\ \hline
$\boldsymbol{[1^3.2]}$    &     10 &    -2   &   2   &  -2   &   .    &  2 &    -4  &   -2  &    2  &  .  &    1  &   -1  &    1   &  1  &  -1  &  .  &  .  &    .
\\ \hline
$\boldsymbol{[1^2.21]}$    &     20   &  -4    &  4   &  -2    &  2  &   -2    &  2   &   .   &   .  &  .   &  -1   &   1  &   -1  &   1  &  -1  &  .  &  .  &    .
\\ \hline
$\boldsymbol{[1.21^2]}$    &     15  &    3  &   -9    & -3    & -1   &   1    &  3  &   -1   &  -1  & 1  &    .   &   .   &   .   &  . &    .  &  1 &  -1  &    .
\\ \hline
$\boldsymbol{\big[.21^3\big]}$    &      4  &    4   &   4  &   -2  &   -2  &   -2  &   -2 &     .   &   .   &   .   &   1    &  1      & 1   &  1   &  1  &  . &   .    & -1 
\\ \hline
$\boldsymbol{\big[1.2^2\big]}$      &    10  &    2   &  -6   &   .   &   .     & .  &    .   &   2    &  2   &   -2 &    -2   &   . &     2   &  . &    . &   . &   . &     .
\\ \hline
$\boldsymbol{\big[2.21\big]}$      &    20  &   -4  &    4   &   2  &   -2  &    2   &  -2 &     .  &    .  &  .   &  -1   &   1   &  -1  &  -1    & 1  &  .  &  .   &   . \\
 \hline
$\boldsymbol{\big[.2^21\big]}$    &       5  &    5    &  5  &   -1  &   -1   &  -1  &   -1   &   1   &   1 &    1  &   -1  &   -1   &  -1  &  -1   & -1  &  1  &  1  &    . \\
 \hline
$\boldsymbol{\big[1^2.3\big]}$   &      10  &   -2  &    2   &   2   &   .  &   -2  &    4  &   -2  &    2&    .  &    1   &  -1   &   1 &   -1  &   1  &  . &   .  &    .
\\
 \hline
$\boldsymbol{\big[1.31\big]}$     &     15   &   3   &  -9   &   3   &   1  &   -1  &   -3   &  -1  &   -1&1   &   .   &   .    &  .    & .   &  . &  -1 &   1  &    . 
\\
 \hline
$\boldsymbol{\big[.31^2\big]}$      &     6  &    6  &    6   &   .    &  .    &  .   &   .  &   -2   &  -2&    -2   &   .   &   .    &  .  &   .  &   .   & . &   .  &    1
\\
 \hline
$\boldsymbol{\big[2.3\big]}$       &    10  &   -2  &    2   &   4   &  -2  &    .  &    2  &    2   &  -2&  .   &   1  &   -1   &   1   &  1 &   -1   & .  &  .   &   .
\\
 \hline
$\boldsymbol{\big[.32\big]}$    &        5   &   5  &    5   &   1   &   1    &  1   &   1   &   1  &    1&      1   &  -1  &   -1 &    -1  &   1  &   1 &  -1  & -1  &    .
\\ \hline
$\boldsymbol{\big[1.4\big]}$     &       5   &   1  &   -3    &  3   &   1  &   -1   &  -3   &   1  &    1&  -1   &   2   &   .     &-2  &   .  &   .  &  1 &  -1  &    .
\\ \hline
$\boldsymbol{\big[.41\big]}$   &         4  &    4  &    4    &  2    &  2   &   2  &    2  &    .    &  .&  .    &  1  &    1   &   1  &  -1  &  -1  &  .  &  .  &   -1
\\ \hline
$\boldsymbol{\big[.5\big]}$     &        1   &   1   &   1    &  1   &   1     & 1     & 1     & 1   &   1&
 1   &   1  &    1  &    1 &    1     &1   & 1  &  1    &  1
 \\ \hline 
\end{tabular}}
\bk
\caption{Character table of the Weyl group  of type $D_5$.}
\label{Table:CharTableWD5}
\end{table}

A few explanations are in order about this table. As is well-known ({\it e.g.}\,see \cite[\S5.6]{GP}), for any odd integer $n\geq 4$,\footnote{The case when $n$ is even is a bit more involved but fully understood as well.} the characters as well as the conjugacy classes of the Weyl group of type $D_n$ can be labelled by means of the `bipartitions  of $n$', that is pairs $(\lambda,\lambda')$ of partitions such that 
$\lvert \lambda\lvert+\lvert \lambda'\lvert=n$. 
In Table \ref{Table:CharTableWD5},  we use  
$[\lambda.\lambda']$ for labelling the line associated to the corresponding character  while 
$(\lambda.\lambda')$ is used to label the column associated to the corresponding conjugacy class. When $\lambda $ is the empty partition, we just write $[.\lambda']$  and $(.\lambda')$ respectively and similarly when $\lambda'=\emptyset$. 
\sk

\vspace{-0.4cm}
The roots of the root space of type $D_5$ associated to a del Pezzo quartic surface we considered in \S\ref{SSS} are the $\boldsymbol{\rho}_i$'s for $i=1,\ldots,5$, defined as follows: one has 
$\boldsymbol{\rho}_i=\boldsymbol{e}_i-\boldsymbol{e}_{i+1}$ for $i=1,\ldots,4$ and $\boldsymbol{\rho}_5=\boldsymbol{h}-\boldsymbol{e}_1-\boldsymbol{e}_2-\boldsymbol{e}_3$. 
One deduces that the associated Dynkin diagram is the following 
\begin{equation}
\label{Eq:Diag-dP-WD5}
  \xymatrix@R=0.1cm@C=0.4cm{ 
     & &   & \boldsymbol{\rho}_4
 \\
  \boldsymbol{\rho}_1 \ar@{-}[r] & \boldsymbol{\rho}_2  \ar@{-}[r] & \boldsymbol{\rho}_3  \ar@{-}[ru]  \ar@{-}[rd] &   \\
    & &   & \boldsymbol{\rho}_5
  }
\end{equation}


Let $\sigma_1,\ldots,\sigma_5$ be the generators of {\ttfamily  WD5} as encoded in GAP3, with $\sigma_i$ corresponding to the vertex labeled by $i$ in \eqref{Eq:Diag-Gap3-WD5}. We recall that 
the generators of $W=\langle s_1,\ldots,s_5\rangle$ are $s_1,\ldots,s_5$ where 
$s_i=s_{\rho_i}$ for any $i$ (cf.\,\S\ref{SSS}). 
Comparing \eqref{Eq:Diag-dP-WD5} with
\eqref{Eq:Diag-Gap3-WD5}, it comes that the isomorphism between {\ttfamily  WD5} and $W$ that we are considering is induced by the following relations between the $\sigma_i$'s and the $s_j$'s: 
\begin{equation}
\label{Eq:sigma-i-s-i}
\sigma_1=s_4\, , \quad 
\sigma_2=s_5\, , \quad 
\sigma_3=s_3\, , \quad 
\sigma_4=s_2\, , \quad
\sigma_5=s_1\, . 
\end{equation}

For $\boldsymbol{\mathfrak c}_1=\boldsymbol{h}-\boldsymbol{e}_1\in 
=\boldsymbol{\mathcal K}$, we label as follows the elements of 
$\boldsymbol{\mathfrak c}_{1}^{red}$: 
$$ 
\boldsymbol{c}_{1,2}= \boldsymbol{e}_2+ \big(\boldsymbol{h} -\boldsymbol{ e}_1 - \boldsymbol{e}_2\big)
\quad 
\boldsymbol{c}_{1,3}=
 \boldsymbol{e}_3 +\big(\boldsymbol{h} -\boldsymbol{ e}_1 - \boldsymbol{e}_3\big)
 \quad 
\boldsymbol{c}_{1,4}=
 \boldsymbol{e}_4+ \big(\boldsymbol{h} -\boldsymbol{ e}_1 - \boldsymbol{e}_4\big)
  \quad 
\boldsymbol{c}_{1,5}=
   \boldsymbol{e}_5+\big(\boldsymbol{h} -\boldsymbol{ e}_1 - \boldsymbol{e}_5\big) 
$$
(recall our convention: here $\boldsymbol{e}_i+ \big(\boldsymbol{h} - \boldsymbol{e}_1 - \boldsymbol{e}_i\big)$ stands for the non irreducible conic on the (fixed) del Pezzo quartic surface we are working with, whose two irreducible components are the two lines $\boldsymbol{e}_i$ and $\boldsymbol{h}-\boldsymbol{e}_1-\boldsymbol{e}_i$, this for any $i=2,\ldots,5$. 

The stabilizer $W_{\boldsymbol{\mathfrak c}_1}$ of $\boldsymbol{\mathfrak c}_1$ in $W$ is the subgroup spanned by the $s_j$'s for $j=2,\ldots,5$. It is a Weyl group of type $D_4$: 
$$
W_{\boldsymbol{\mathfrak c}_1}=\big\langle  s_2,\ldots,s_5 \big\rangle \simeq W(D_4)\, .
$$
From \eqref{Eq:sigma-i-s-i}, it comes that  the corresponding subgroup   in GAP can be defined  by means of the following command
\begin{lstlisting}[language=GAP]
   gap> Wc1 := ReflectionSubgroup( WD5, [1,2,3,4] );
\end{lstlisting}
and we get the characters table of $W_{\boldsymbol{\mathfrak c}_1}$ (see Table \ref{Table:Wc1} below) 
using the GAP command
\begin{lstlisting}[language=GAP]
   gap> TableWc1:=CharTable(Wc1); Display(TableWc1);
\end{lstlisting}

\begin{table}[!h]
\resizebox{5.8in}{1.5in}
{
\begin{tabular}{|c||c|c|c|c|c|c|c|c|c|c|c|c|c|}
\hline 
 & $\boldsymbol{1111.}$ & $\boldsymbol{11.11}$ & $\boldsymbol{.1111}$ & $\boldsymbol{211.}$ & $\boldsymbol{1.21}$ & $\boldsymbol{2.11}$ & $\boldsymbol{22.+}$ & $\boldsymbol{22.-}$  & $\boldsymbol{.22}$  & $\boldsymbol{31.}$ &  $\boldsymbol{.31}$&
 $\boldsymbol{4.+}$ & $\boldsymbol{4.-}$ \\
 \hline \hline
$\boldsymbol{11^+}$ &3& -1& 3& -1& 1& -1& 3& -1& -1& 0& 0& -1& 1  \\ 
$\boldsymbol{11^-}$  &   3& -1& 3& -1& 1& -1& -1& 3& -1& 0& 0& 1& -1  \\ 
$\boldsymbol{1.111}$  &   4& 0& -4& -2& 0& 2& 0& 0& 0& 1& -1& 0& 0  \\ 
$\boldsymbol{.1111}$   &  1& 1& 1& -1& -1& -1& 1& 1& 1& 1& 1& -1& -1  \\ 
$\boldsymbol{11.2}$   &  6& -2& 6& 0& 0& 0& -2& -2& 2& 0& 0& 0& 0  \\ 
$\boldsymbol{1.21}$   &  8& 0& -8& 0& 0& 0& 0& 0& 0& -1& 1& 0& 0  \\ 
$\boldsymbol{ .211}$  &  3& 3& 3& -1& -1& -1& -1& -1& -1& 0& 0& 1& 1  \\ 
 $\boldsymbol{2^+}$  &  3& -1& 3& 1& -1& 1& 3& -1& -1& 0& 0& 1& -1  \\ 
$\boldsymbol{2^-}$   &  3& -1& 3& 1& -1& 1& -1& 3& -1& 0& 0& -1& 1  \\ 
$\boldsymbol{.22}$   &  2& 2& 2& 0& 0& 0& 2& 2& 2& -1& -1& 0& 0  \\ 
 $\boldsymbol{1.3}$  &  4& 0& -4& 2& 0& -2& 0& 0& 0& 1& -1& 0& 0  \\ 
$\boldsymbol{.31}$   &  3& 3& 3& 1& 1& 1& -1& -1& -1& 0& 0& -1& -1  \\ 
$\boldsymbol{.4}$   &  1& 1& 1& 1& 1& 1& 1& 1& 1& 1& 1& 1& 1 \\ \hline
\end{tabular}}
\bk
\caption{Character table of the Weyl subgroup $W_{\boldsymbol{\mathfrak c}_1}$ (of type $D_4$).}
\label{Table:Wc1}
\end{table}

Our goal is now to determine the character, denoted by $\chi_{\boldsymbol{\mathfrak c}_{1}}$, of the $W_{\boldsymbol{\mathfrak c}_1}$-representation 
$\mathbf Z^{\boldsymbol{\mathfrak c}_{1}^{red}}$. To this end, one first computes the permutation matrix $M_i$ of 
$s_i$ viewed as an endomorphism of  $\mathbf Z^{\boldsymbol{\mathfrak c}_{1}^{red}}$ when 
expressed in the basis $\big(\boldsymbol{c}_{1,i}\big)_{i=2}^5$. Straightforward computations give us the following formulas: 
$$ M_2=\left[\begin{array}{cccc}
0 & 1 & 0 & 0 
\\
 1 & 0 & 0 & 0 
\\
 0 & 0 & 1 & 0 
\\
 0 & 0 & 0 & 1 
\end{array}\right], 
\hspace{0.3cm}
 M_3=
\left[\begin{array}{cccc}
1 & 0 & 0 & 0 
\\
 0 & 0 & 1 & 0 
\\
 0 & 1 & 0 & 0 
\\
 0 & 0 & 0 & 1 
\end{array}\right],
\hspace{0.3cm}
 M_4=
\left[\begin{array}{cccc}
1 & 0 & 0 & 0 
\\
 0 & 1 & 0 & 0 
\\
 0 & 0 & 0 & 1 
\\
 0 & 0 & 1 & 0 
\end{array}\right], 
\hspace{0.3cm}
 M_5=
\left[\begin{array}{cccc}
0 & 1 & 0 & 0 
\\
 1 & 0 & 0 & 0 
\\
 0 & 0 & 1 & 0 
\\
 0 & 0 & 0 & 1 
\end{array}\right]\,. $$

In view of determining $\chi_{\boldsymbol{\mathfrak c}_{1}}=
\chi_{\mathbf C^{\boldsymbol{\mathfrak c}_{1}^{red}}}
$, we need to construct explicit representatives of the conjugacy classes in $W_{\boldsymbol{\mathfrak c}_1}$. This can be achieved in GAP3 by means of the following command: 
\begin{lstlisting}[language=GAP]
   gap> List(ConjugacyClasses(Wc1),x->CoxeterWord(Wc1,Representative(x)));
\end{lstlisting}
%
%
which returns the following string of 13 sequences: 
{\begin{lstlisting}[language=GAP]
     [ [], [1, 2], [1, 2, 3, 1, 2, 3, 4, 3, 1, 2, 3, 4], [1], [1, 2, 3], 
     [1, 2, 4], [1, 4], [2, 4], [1, 3, 1, 2, 3, 4], [1, 3], [1, 2, 3, 4], 
     [1, 4, 3], [2, 4, 3] ]
\end{lstlisting}}

The $k$-th element of the above list corresponds to an explicit representative of the conjugacy class associated to the $k$-th column of Table \ref{Table:CharTableWD5} in terms of the generators of {\ttfamily  Wc1} the software GAP is dealing with. More precisely, if this string is $[i_1,\ldots,i_m]$, the  corresponding representative is $\sigma_{i_1}\cdots \sigma_{i_m}$, where $\sigma_1,\ldots,\sigma_5$ stand for the generators of  {\ttfamily  WD5} implicitly defined when the latter group was defined.
More explicitly, let $\nu$ be the permutation such that the relations \eqref{Eq:sigma-i-s-i} are equivalent to $\sigma_i=s_{\nu(i)}$ for $i=1,\ldots,5$ (namely one has $\nu=(1425)$). For $[i_1,\ldots,i_m]$ in the list above, 
the value taken by the character $\chi_{\boldsymbol{\mathfrak c}_{1}}$ when evaluated on the corresponding conjugacy class 
is given by the trace 
$${\rm Tr}\Big( M_{\nu(i_1)}\cdots M_{\nu(i_m)}
\Big)\in \mathbf Z\, . $$

From the material above, it is then just a (straightforward but lengthy) computational matter to determine 
$\chi_{ {\boldsymbol{\mathfrak c}_1} }$ explicitly: one gets 
$\chi_{ {\boldsymbol{\mathfrak c}_1} }={\bf 1}
+ \chi_{ {\bf H}_{{\boldsymbol{\mathfrak c}_1}} }$
where ${\bf 1}$ denotes  the trivial character and where 
$\chi_{ {\bf H}_{{\boldsymbol{\mathfrak c}_1}} }$
 is given by 
$$
 \chi_{ {\bf H}_{{\boldsymbol{\mathfrak c}_1}} } = 
(3, -1, 3, 1, -1, 1, -1, 3, -1, 0, 0, -1, 1)\ . 
$$ 
%
%
Then using Table \ref{Table:Wc1}, one finally gets that as $W_{\boldsymbol{\mathfrak c}_1}$-representation, one has 
\begin{equation}
\label{Eq:Hc1}
\mathbf Z^{  \boldsymbol{\mathfrak c}_1^{red}}= {\bf 1}\oplus {\bf H}_{ \boldsymbol{\mathfrak c}_1} 
\qquad \mbox{ with } \qquad {\bf H}_{ \boldsymbol{\mathfrak c}_1}\simeq 
V^3_{[2.2]^-}\, .
\end{equation}

Because one obviously has $\oplus_{ \boldsymbol{\mathfrak c}\in \boldsymbol{\mathcal K} } {\bf H}_{ \boldsymbol{\mathfrak c}}\simeq {\rm Ind}_{W_{\boldsymbol{\mathfrak c}_1}}^W\big(  {\bf H}_{ \boldsymbol{\mathfrak c}_1}\big)$ as $W$-representations, one can use 
\eqref{Eq:Hc1} to determine the decomposition of $\oplus_{ \boldsymbol{\mathfrak c}\in \boldsymbol{\mathcal K} } {\bf H}_{ \mathfrak c}$ in irreducibles. For that purpose, one only needs the induction table from $W_{\boldsymbol{\mathfrak c}_1}$ to $W$ which can be obtained via the following GAP command: 
\begin{lstlisting}[language=GAP]
   gap>Display( InductionTable(  Wc1, WD5) );
\end{lstlisting}

We get the table below which has to be used as follows:  each column corresponds to 
the $W$-induction of the $W_{\boldsymbol{\mathfrak c}_1}$-irrep labeled by the bipartition of 4 given in the top entry of the column.  
Denoting by ${[\lambda.\lambda']}$ this bipartition, the coefficients of the column correspond to the multiplicity of the $W$-irreducibles appearing in the decomposition of the induction of $V_{[\lambda.\lambda']}$ from $W_{\boldsymbol{\mathfrak c}_1}$ to $W$.
\begin{table}[!h]
\resizebox{5.8in}{1.9in}
{
\begin{tabular}{|l|c|c|c|c|c|c|c|c|c|c|c|c|c|}
\hline
       & $\boldsymbol{[11]^+}$ &  $\boldsymbol{[11]^-}$  & $\boldsymbol{[1.1^3]}$ & $ \boldsymbol{[.1111]}$ &$\boldsymbol{[11.2]}$ &$\boldsymbol{[1.21]}$ & $\boldsymbol{[.211]}$ & $\boldsymbol{[2.2]^+}$ & 
       $\boldsymbol{[2.2]^-}$ & $\boldsymbol{[.22]}$  & $\boldsymbol{[1.3]}$ & $\boldsymbol{[.31]}$ & $\boldsymbol{[.4]}$
 \\ \hline \hline 
$\boldsymbol{[1^2.1^3]}$ &  1 &  1  &   1    &   &   &   & & &  &  &  & & 
 \\ \hline
$\boldsymbol{[1.1^4]}$ & &  &     1  &   1   &   &   & & &  &  &  & & 
 \\ \hline
$\boldsymbol{[.1^5]}$ & &  &    &     1   &   &   & & &  &  &  & & 
 \\ \hline
$\boldsymbol{[1^3.2]}$ &  &  &     1    &    & 1   &   & & &  &  &  & & 
 \\ \hline
$\boldsymbol{[1^2.21]}$ &   1 &  1    &  &   &    1  &  1   & & &  &  &  & & 
 \\ \hline
$\boldsymbol{[1.21^2]}$  & &  &     1    &   &  &   1  &  1 & &  &  &  & & 
 \\ \hline
$\boldsymbol{[.21^3]}$  & &  &    &     1   &    & &    1 & &  &  &  & & 
 \\ \hline
$\boldsymbol{[1.2^2]}$   & &  &    &    &   &    1   & &  & &   1  &  & & 
 \\ \hline
$\boldsymbol{[2.21]}$   & &  &    &    &    1  &  1   & &  1 & 1  &  &  & & 
 \\ \hline
$\boldsymbol{[.2^21]}$   & &  &    &    &   &   &    1 &  & &   1  &  & & 
 \\ \hline
$\boldsymbol{[1^2.3]}$   & &  &    &    &    1   &   & & &  & &   1  & & 
 \\ \hline
$\boldsymbol{[1.31]}$   & &  &    &    &   &    1   & & &  & &   1    & 1 &  
 \\ \hline
$\boldsymbol{[.31^2]}$   & &  &    &    &   &   &    1 & &  &  &  &   1 & 
 \\ \hline
$\boldsymbol{[2.3]}$   & &  &    &    &   &   &   &  1  & 1  &  &  1  & & 
 \\ \hline
$\boldsymbol{[.32]}$    & &  &    &    &   &   &   & & &   1  &   & 1 &  
 \\ \hline
$\boldsymbol{[1.4 ]}$   & &  &    &    &   &   &   & & &  &   1  & &   1 
 \\ \hline
$\boldsymbol{[.41]}$    & &  &    &    &   &   &   & & &  &  & 1   &  1  
\\ \hline
$\boldsymbol{[.5]}$     & &  &    &    &   &   &   & & &  &  &  &  1 
\\ \hline
\end{tabular}}
\bk
\caption{Induction table from $W_{\boldsymbol{\mathfrak c}_1}$ to $W$ (of type $D_4$ and $D_5$ respectively).}
\label{Table:Ind-WD4-WD5}
\end{table}

\vspace{-0.3cm}
Because $H_{ \boldsymbol{\mathfrak c}_1}$ is isomorphic to the  $W_{\boldsymbol{\mathfrak c}_1}$-representation $V_{[2.2]^-}$, the above table gives us that 
$$
\oplus_{ \boldsymbol{\mathfrak c}\in \boldsymbol{\mathcal K} } {\bf H}_{ \boldsymbol{\mathfrak c}}\simeq {\rm Ind}_{W_{\boldsymbol{\mathfrak c}_1}}^W\big(  V^3_{[22]^-}\big)\simeq V_{[2.21]}^{20}\oplus V_{[2.3]}^{10}\, .
$$

\newpage

 \subsubsection*{\bf The weight 2 hyperlogarithmic ARs of ${\mathcal W} \hspace{-0.46cm}{\mathcal W}_{ {\rm dP}_4
 \hspace{-0.4cm}
  {\rm dP}_4}$}
One has  ${\bf H}_{{\rm dP}_4}\simeq V_{[2,3]}^{10}$ (cf.\,\eqref{Eq:HLogAR1-W-structure}) hence the associated $W$-character $\chi_{{\bf H}_{{\rm dP}_4}}$ as well as  $\wedge^2 \chi_{{\bf H}_{{\rm dP}_4}}$ 
can be defined and the latter can be determined using the following commands in GAP3: 
\begin{lstlisting}[language=GAP]
  gap>chi_HdP4:=TableWD5.irreducibles[14];
  gap>Wedge2_chi_HdP4:=AntiSymmetricParts(TableWD5,[chi_HdP4],2)[1]; 
  [ 45, -3, -3, 3, 3, -5, 3, -3, 1, 1, 0, 0, 0, 0, 0, -1, 1, 0 ]
\end{lstlisting}

The multiplicities appearing in the decomposition of $\wedge^2 {{\bf H}_{{\rm dP}_4}}$ in irreducibles are obtained by typing  
\begin{lstlisting}[language=GAP]
    gap>List(TableWD5.irreducibles,t->ScalarProduct(TableWD5,t,Wedge2_chi_HdP4[1])); 
  \end{lstlisting}
  which returns the string  
  \begin{lstlisting}[language=GAP]
    [ 0, 0, 0, 0, 1, 0, 0, 0, 0, 0, 1, 1, 0, 0, 0, 0, 0, 0 ]
  \end{lstlisting}
  from which one deduces that,  
%
 as $W$-modules, one has
\begin{equation}
\label{rrr1}
\wedge^2  {{\bf H}_{{\rm dP}_4}}\simeq V_{[11.21]}^{20}\oplus 
V_{[11.3]}^{10}\oplus 
V_{[1.31]}^{15}\,.
\end{equation}

Working similarly, one gets  $
\wedge^2 
 {{\bf H}_{\boldsymbol{\mathfrak c}_1}}\simeq V_{[1.1]^-}^{3}
$ as $W_{ \boldsymbol{\mathfrak c}_1}$-modules. Using Table\ref{Table:Ind-WD4-WD5}, one deduces that as $W$-modules, one has
\begin{equation}
\label{rrr2}
\oplus_{\boldsymbol{ \mathfrak c} \in \boldsymbol{\mathcal K} } \wedge^2 {\bf H}_{\boldsymbol{\mathfrak c}}\simeq 
 {\rm Ind}_{W_{\boldsymbol{\mathfrak c}_1}}^W\big( \wedge^2 {\bf H}_{\boldsymbol{\mathfrak c}_1} \big) 
 \simeq V^{10}_{[11.111]}\oplus V_{[11.21]}^{20}\, .
\end{equation}
 Because the map $\oplus_{ \boldsymbol{\mathfrak c} \in \boldsymbol{\mathcal K} } \wedge^2 {\bf H}_{\boldsymbol{\mathfrak c} }\rightarrow \wedge^2  {{\bf H}_{{\rm dP}_4}}$ is non zero, it follows from 
 \eqref{rrr1} and \eqref{rrr2}  that as a $W$-representation, one has necessarily
$${\bf HLogAR}^2_{\rm asym} \simeq 
V^{10}_{[11.111]}
\, .$$
 
 We now consider the representations involved in the description of ${\bf HLogAR}^2_{\rm sym}$ as a $W$-module. First we construct the character of the second symmetric product of ${\bf H}_{{\rm dP}_4}$, then determine 
 its decomposition in irreducibles $W$-representations by typing
\begin{lstlisting}[language=GAP]
  gap> Sym2_chi_HdP4:=SymmetricParts(TableWD5,[chi_HdP4],2)[1]; 
  gap> ScalarProduct(TableWD5,t,Sym2_chi_HdP4));
 \end{lstlisting}
 The last command returns the following string
\begin{lstlisting}[language=GAP]
  [ 0, 0, 0, 0, 0, 0, 0, 1, 1, 0, 0, 0, 0, 1, 1, 1, 1, 1 ]
 \end{lstlisting}
  from which we deduce (using  Schur orthogonality relations) the decomposition we are looking for: 
$$
\big(  {{\bf H}_{{\rm dP}_4}}\big)^{\odot 2}\simeq 
{\bf 1} \oplus 
V_{[.41]}^{4}\oplus 
V_{[1.4]}^{5}\oplus 
V_{[.32]}^{5}\oplus 
V_{[2.3]}^{10}\oplus 
V_{[1.22]}^{10}\oplus 
V_{[2.21]}^{20}
\,.
$$

We proceed similarly as in the antisymmetric case for determining the decomposition of 
$\oplus_{ \mathfrak c \in \boldsymbol{\mathcal K} }
\big(  {{\bf H}_{{\boldsymbol{\mathfrak c}}}}\big)^{\odot 2}$.  First we get  the 
decomposition $\big(   {{\bf H}_{\boldsymbol{\mathfrak c}_1}} \big)^{\odot 2}$ as a $W_{\boldsymbol{\mathfrak c}_1}$-representation by means of the following commands: 
\begin{lstlisting}[language=GAP]
  gap> chi_Hc1:=TableWc1.irreducibles[9];
  gap> Sym2_chi_Hc1:=SymmetricParts(TableWc1,[chi_Hc1],2)[1]:
  gap> List(TableWc1.irreducibles,t->ScalarProduct(TableWc1,t,Sym2_chi_Hc1));
            [ 0, 0, 0, 0, 0, 0, 0, 0, 1, 1, 0, 0, 1 ]
\end{lstlisting}
Hence  we obtain that 
 $
\big(   {{\bf H}_{\boldsymbol{\mathfrak c}_1}} \big)^{\odot 2}\simeq 
V_{[2.2]^-}^{3}\oplus 
V_{[.22]}^{2}\oplus 
{\bf 1}$ as a $W_{\boldsymbol{\mathfrak c}_1}$-representation. Then as explained above in \eqref{Eq:III}, one deduces that  ${\bf HLogAR}^2_{\rm sym}\simeq V^5_{[.221]}$ as a $W$-representation. 
\mk

\noindent 
{\bf The $\boldsymbol{W}_{\boldsymbol{\mathfrak c}_1}$-representation ${\bf H}_{\boldsymbol{\mathfrak c}_1}$  when $\boldsymbol{r=6}$.}
First, in order to define the Weyl group we will work with in this case 
and return the corresponding Dynkin diagram within GAP, we run the following commands: 
\begin{lstlisting}[language=GAP]
   gap> WE6:=CoxeterGroup("E",6);
   gap> PrintDiagram( WE6 );
\end{lstlisting}
The last command returns the following diagram : 
\begin{equation}
\label{Eq:Diag-Gap3-WE6}
  \xymatrix@R=0.3cm@C=0.4cm{ 
 &  & 2\ar@{-}[d]   & &   &
 \\
1 \ar@{-}[r] & 3  \ar@{-}[r] & 4   \ar@{-}[r]  & 5\ar@{-}[r]  & 6 
  }
\end{equation}

We define and get the corresponding characters table by typing 
\begin{lstlisting}[language=GAP]
   gap> TableWE6:=CharTable(WE6);
   gap> Display(TableWE6);
\end{lstlisting}

The following diagram illustrates the labelling of the fundamental roots considered when $r=6$ (see \eqref{Eq:Fundamental-Roots}): 
\begin{equation}
\label{Eq:Diag-Gap3-WE6-rho}
  \xymatrix@R=0.3cm@C=0.4cm{ 
 &  & \boldsymbol{\rho}_6\ar@{-}[d]   & &   &
 \\
\boldsymbol{\rho}_1 \ar@{-}[r] &\boldsymbol{\rho}_2  \ar@{-}[r] & \boldsymbol{\rho}_3   \ar@{-}[r]  & \boldsymbol{\rho}_4\ar@{-}[r]  & \boldsymbol{\rho}_5
  }
\end{equation}

Let us denote by $\sigma_1,\ldots,\sigma_6$ the generators of the group 
{\ttfamily  WE6}
in GAP. Comparing   \eqref{Eq:Diag-Gap3-WE6} and \eqref{Eq:Diag-Gap3-WE6-rho}, one deduces 
the following relations between the generators $s_i=s_{\boldsymbol{\rho}_i}$ ($i=1,\ldots,r$) of the Weyl group considered in 
\S\ref{SSS} and the  $\sigma_i$'s implemented in GAP by means of the command 
{\ttfamily  WE6:=CoxeterGroup("E",6)}: 
\begin{equation}
\label{Eq:ss-sigma}
\sigma_1=s_1\, , \quad \sigma_3=s_2\, , \quad  \sigma_4=s_3 
\, , \quad \sigma_5=s_4\, , \quad \sigma_6=s_5 \quad 
\mbox{and} \quad \sigma_2=s_6
\end{equation}

The stabilizer $W_{\boldsymbol{\mathfrak c}_1}$ of $\boldsymbol{\mathfrak c}_1$ in $W$ is the subgroup spanned by the $s_j$'s for $j=2,\ldots,6$. It is a Weyl group of type $D_5$:
\begin{equation}
\label{Eq:WWcc11}
W_{\boldsymbol{\mathfrak c}_1}=\big\langle  s_2,\ldots,s_6 \big\rangle \simeq W(D_5)\, .
\end{equation}
The elements of the set 
$\boldsymbol{\mathfrak c}_1^{red}$ of  reducible conics  in the pencil $\lvert \, \boldsymbol{\mathfrak c}_1\, \lvert$ 
are the
$\boldsymbol{c}_{1,i}=(\boldsymbol{e}_i)+(\boldsymbol{h}-\boldsymbol{e}_1-\boldsymbol{e}_i)$'s  for 
$i=2,\ldots,6$.  The action of the $s_k$'s on $\boldsymbol{\mathfrak c}_1^{red}$ for $k=2,\ldots,6$  are as follows: for $k=2,\ldots,5$, $s_k$ acts as the transposition exchanging $\boldsymbol{c}_{1,k}$ and $\boldsymbol{c}_{1,k+1}$ while $s_6$ acts as $s_2$.  The corresponding matrices in the basis $\mathfrak  C_1=(\boldsymbol{c}_{1,i})_{i=2}^6$ of $\mathbf C^{\boldsymbol{\mathfrak c}_1^{red}}\simeq \mathbf C^5$ are the corresponding transposition matrices and will be denoted by 
$$ 
M_k={\rm Mat}_{{\mathfrak C}_1}( s_k)\in {\rm GL}_5(\mathbf Z)  \qquad 
k=2,\ldots,6.
$$

From \eqref{Eq:ss-sigma}, the matrices $\mathcal M_k
={\rm Mat}_{{\mathfrak C}_1}( \sigma_k) $ for $k=2,\ldots,6$ 
 are the following ones: 
\begin{equation}
\mathcal M_1 = M_1 \, , \quad 
\mathcal M_2 = M_6 \, , \quad 
\mathcal M_3 = M_2 \, , \quad 
\mathcal M_4 = M_3 \, , \quad 
\mathcal M_5 = M_4 \, , \quad 
\mathcal M_6 = M_5 \, .
\end{equation}

From \eqref{Eq:ss-sigma} and 
\eqref{Eq:WWcc11}, it comes that  the group 
{\ttfamily  Wc1} corresponding to $W_{\boldsymbol{\mathfrak c}_1}$ 
as well as its characters table 
are defined as follows in GAP: 
\begin{lstlisting}[language=GAP]
   gap> Wc1 := ReflectionSubgroup( WE6, [2,3,4,5,6] );
   gap> TableWc1:=CharTable(Wc1); Display(TableWc1);
\end{lstlisting}

In view of determining the character $\chi_{\boldsymbol{\mathfrak c}_{1}}$
 of the $W_{\boldsymbol{\mathfrak c}_{1}}$-representation $H_{\boldsymbol{\mathfrak c}_{1}}$, we need to construct explicit representatives of the conjugacy classes in $W_{\boldsymbol{\mathfrak c}_1}$. This can be achieved in GAP3 by means of the following command: 
\begin{lstlisting}[language=GAP]
   gap> List(ConjugacyClasses(Wc1),x->CoxeterWord(Wc1,Representative(x)));
\end{lstlisting}
which returns the following collection of strings: 
\begin{lstlisting}[language=GAP]
 [ [  ], [ 2, 3 ], [ 2, 3, 4, 2, 3, 4, 5, 4, 2, 3, 4, 5 ], [ 2 ], [ 2, 3, 4 ], 
   [ 2, 3, 5 ], [ 2, 3, 4, 2, 3, 4, 5, 4, 2, 3, 4, 5, 6 ], [ 2, 5 ], 
   [ 2, 4, 2, 3, 4, 5 ], [ 2, 3, 4, 6 ], [ 2, 4 ], [ 2, 3, 4, 5 ], 
   [ 2, 3, 5, 6 ], [ 2, 4, 6 ], [ 2, 4, 2, 3, 4, 5, 6 ], [ 2, 5, 4 ], 
   [ 2, 3, 4, 5, 6 ], [ 2, 5, 4, 6 ] ]
\end{lstlisting}

It is then just a matter of elementary computations with the matrices $\mathcal M_2,\ldots,\mathcal M_6$ to get the character of $\mathbf C^{ \boldsymbol{\mathfrak c}_1^{red} }$ as a $W_{\boldsymbol{\mathfrak c}_1}$-representation.  For instance, the value of this character evaluated on the conjugacy class encoded by the string {\ttfamily  [ 2, 3, 5 ]}  is the trace 
of the matricial product $\mathcal M_2\mathcal M_3\mathcal M_5$, etc. 
Proceeding in this way, one easily obtains that 
$$
\chi_{\mathbf C^{ \boldsymbol{\mathfrak c}_1^{red}}} = 
\big( 5, 5, 5, 3, 3, 3, 3, 1, 1, 1, 2, 2, 2, 0, 0, 1, 1, 0\big) \, .
$$  
This character can be decomposed as the sum of the trivial character with the one associated with the $W_{\boldsymbol{\mathfrak c}_1}$-representation $V_{[.41]}^4$.
It follows that 
\begin{equation}
\label{Eq:Hc1-dP3}
{\bf H}_{ \boldsymbol{\mathfrak c}_1 }= V_{[.41]}^4\, . 
\end{equation}
\begin{center}
$\star$
\end{center}

In fact,  if one assumes that ${\bf H}_{ \boldsymbol{\mathfrak c}_1 }$ is irreducible, there is another way for establishing \eqref{Eq:Hc1-dP3}. Indeed, there are only two irreducible representations of $W_{ \boldsymbol{\mathfrak c}_1}\simeq W(D_5)$ of degree 4, the two associated   with the bipartitions $[.2111]$ and $[.41]$ respectively.  Moreover, we know that ${\rm Ind}_{W_{ \boldsymbol{\mathfrak c}_1}}^{W(E_6)}( {\bf H}_{ \boldsymbol{\mathfrak c}_1 })$ must contain ${\bf H}_{X_6}\simeq V^{20,2}$ as an irreducible $W(E_6)$-subrepresentation (see \S\ref{SSS:Weyl-group-action-on-ARs}). One obtains the induction table from $W_{ \boldsymbol{\mathfrak c}_1}$ to $W(E_6)$ by typing the following command in GAP3 : 
\begin{lstlisting}[language=GAP]
   gap> Display( InductionTable(  Wc1, WE6) );
\end{lstlisting}
We obtain easily that 
$$
{\rm Ind}_{W_{ \boldsymbol{\mathfrak c}_1}}^{W_6}\Big(V_{[.41]}^4\Big)= V^{20,2}\oplus V^{24,6}\oplus V^{64,4} 
\qquad \mbox{ while } 
\qquad 
{\rm Ind}_{W_{ \boldsymbol{\mathfrak c}_1}}^{W_6}\Big(V_{[.2111]}^4\Big)=V^{20,20}\oplus V^{24,12}\oplus V^{64,13} 
\, . $$
Because ${\bf H}_{X_6}={\bf H}^0\big( X_6, \Omega^1_{X_6}\big( {\bf Log}\, L_6 \big) \big) $ is $V^{20,2}$ according to Corollary \ref{Cor:H-Xr},  only the first case can occur hence  necessarily \eqref{Eq:Hc1-dP3} holds true.


\newpage

\bigskip\bigskip


\vfill 
{\small  ${}^{}$ \hspace{-0.6cm} {Luc Pirio} ({\tt luc.pirio@uvsq.fr})\\
 LMV -- Universit\'e Paris-Saclay UVSQ \& CNRS  (UMR 8100)\\

\end{document}